\newtheoremstyle{newremark}
  {5pt}
  {5pt}
  {\rmfamily}
  {}
  {\rmfamily\bf}
  {.}
  {.5em}
  {}
\newtheorem{theorem}{Theorem}
\newtheorem{lemma}[theorem]{Lemma}
\newtheorem{corollary}[theorem]{Corollary}
\newtheorem{proposition}[theorem]{Proposition}
\theoremstyle{newremark}
\newtheorem{remark}[theorem]{Remark}
\newtheorem{definition}[theorem]{Definition}
\newtheorem*{definition*}{Definition} 
\newtheorem*{notations*}{Notations}
\numberwithin{theorem}{section}
\numberwithin{equation}{section}
\newcommand{\N}{\mathbb{N}} 
\newcommand{\R}{\mathbb{R}} 
\newcommand{\C}{\mathbb{C}} 
\newcommand{\D}{\mathbb{D}} 
\newcommand{\bbC}{\mathbb{C}}
\newcommand{\bbD}{\mathbb{D}}
\newcommand{\bbS}{\mathbb{S}}
\DeclareMathOperator{\rmdiv}{\mathrm{div}} 
\DeclareMathOperator{\rmLip}{\mathrm{Lip}} 
\DeclareMathOperator{\rmsing}{\mathrm{sing}} 
\DeclareMathOperator{\SO}{SO} 
\DeclareMathOperator{\On}{O}
\def\XXint#1#2#3{{%
\setbox0=\hbox{$#1{#2#3}{\int}$}
\vcenter{\hbox{$#2#3$}}\kern-.5\wd0}}
\newcommand{\veps}{\varepsilon}
\renewcommand{\leq}{\leqslant}
\renewcommand{\geq}{\geqslant}
\renewcommand{\subset}{\subseteq}
\renewcommand{\supset}{\supseteq}
\newcommand{\bb}[1]{{\mathbb #1}}
\newcommand{\eps}{\varepsilon}
\newcommand{\abs}[1]{\left| #1 \right|} 
\newcommand{\pd}[2]{\frac{\partial #1}{\partial #2}} 
\newcommand{\trans}{\mathsf{t}}
\newcommand{\trac}{{\rm tr}}
\newcommand{\eo}{{\bf e}_0}  
\newcommand{\euu}{{\bf e}^{(1)}_1} 
\newcommand{\eud}{{\bf e}^{(1)}_2} 
\newcommand{\edu}{{\bf e}^{(2)}_1} 
\newcommand{\edd}{{\bf e}^{(2)}_2} 
\begin{document}


\title[Torus-like solutions for the LDG model]{Torus-like  solutions for the Landau-de Gennes model. \\ Part III: torus vs split minimizers}

\author{Federico Dipasquale}
\address{Dipartimento di Informatica, Universit\`{a} di Verona, Strada Le Grazie 15, 37134 Verona, Italy}
\email{federicoluigi.dipasquale@univr.it}

\author{Vincent Millot}
\address{LAMA, Univ Paris Est Creteil, Univ Gustave Eiffel, UPEM, CNRS, F-94010, Cr\'{e}teil, France}
\email{vincent.millot@u-pec.fr}

\author{Adriano Pisante}
\address{Dipartimento di Matematica, Sapienza Universit\`{a} di Roma, P.le Aldo Moro 5, 00185 Roma, Italy}
\email{pisante@mat.uniroma1.it}

\date{\today}


\begin{abstract}
We study the behaviour of global minimizers of a continuum Landau-de Gennes energy functional for nematic liquid crystals, in three-dimensional axially symmetric domains domains diffeomorphic to a ball (a nematic droplet) and in a restricted class of $\bbS^1$-equivariant configurations. It is known from our previous paper \cite{DMP2} that, assuming smooth and uniaxial (e.g. homeotropic) boundary conditions and a physically relevant norm constraint in the interior (Lyuksyutov constraint), minimizing configurations are either of \emph{torus} or of \emph{split} type. Here, starting from a nematic droplet with the homeotropic boundary condition, we show how singular (split) solutions or smooth (torus) solutions (or even both) for the Euler-Lagrange equations do appear as energy minimizers by suitably deforming either the domain or the boundary data. As a consequence, 
we derive symmetry breaking results for the minimization among all competitors.
\end{abstract}

\keywords{Liquid crystals; axisymmetric torus solutions; harmonic maps}


\maketitle


\tableofcontents


\section{Introduction}\label{sec:intro}

The present  article is the third of a series in  a project on the analysis of 
the Landau-de Gennes model for nematic liquid crystal. Relying  on our previous results \cite{DMP1,DMP2} (see \cite{Pi} for a short overview),   we pursue our investigations on the qualitative properties of  minimizers  of the Landau-de Gennes functional restricted to a class of axially symmetric configurations with pointwise unit norm (the Lyuksyutov constraint). We refer to  \cite{DMP1,DMP2} and the references therein for an extensive discussion on this model and its physical background. For the sake of concision, we shall simply recall the main elements and basic features of the model.

As customary in LdG $Q$-tensor theory (see, e.g., \cite{MoNe,Vi}),   we consider $\mathscr{M}_{3\times3}(\R)$ the vector space made of $3\times3$-matrices with real entries and its $5$-dimensional subspace of admissible matrices
\[
	\mathcal{S}_0:=\Big\{Q=(Q_{ij})\in\mathscr{M}_{3\times 3}(\R) : Q=Q^\trans \, , \; \trac(Q)=0 \Big\}\,.
\]
Here $Q^\trans$ denotes the transpose of $Q$, $\trac(Q)$ the trace of $Q$. The space $\mathcal{S}_0$ is endowed with the usual (Frobenius) inner product. 
 As in \cite{DMP1,DMP2}, the  indicator function of physical interest is provided by the {\em signed biaxiality} parameter
 \begin{equation}
\label{signedbiaxiality}
\widetilde{\beta}(Q):=\sqrt{6} \,\frac{{\rm{tr}}( Q^3)}{| Q|^{3}}  \in [-1,1] \, , \qquad Q \neq 0 \, .
\end{equation}
For a matrix $Q$ satisfying $|Q|=1$, the extremal values $\widetilde{\beta}(Q)=\pm 1$ occur iff the minimal/maximal eigenvalue of $Q$ is double which corresponds to the purely positive/negative uniaxial phase in the language of liquid crystals. In turn, the case $-1< \widetilde{\beta}(Q)<1$ corresponds to the biaxial phase, and it is maximal for $\widetilde{\beta}(Q)=0$ (i.e., maximal gap between the distinct eigenvalues).

Following \cite{DMP1,DMP2}, (rescaled) liquid crystal configurations occupying a given  bounded domain $\Omega \subset \mathbb{R}^3$ (with $C^{1}$-smooth boundary at least) 
 are described through Sobolev maps $Q \in W^{1,2}(\Omega; \bb S^4)$. The choice of the target $\bb S^4 \subset \mathcal{S}_0$,  the unit sphere of $\mathcal{S}_0$, encodes the Lyuksyutov constraint typical of soft biaxial nematics~\cite{LoTr} (see also~\cite{MuNi}).
As first suggested in \cite{DMP1}, the  qualitative properties of a smooth (or merely Sobolev) configuration  $Q:\Omega \to \bb S^4$  can be described by means of the 
 signed biaxiality function $\widetilde \beta\circ Q$, through the \emph{biaxiality regions}, i.e.,
\begin{equation}\label{biaxialityregions}
	\big\{\beta \leq t\big\} := \big\{ x \in \overline{\Omega} : \widetilde{\beta} \circ Q(x) \leq t \big\}\,,\quad \big\{\beta \geq t\big\} := \big\{ x \in \overline{\Omega} : \widetilde{\beta} \circ Q(x) \geq t\big\}\,,\quad t \in [-1,1]\,, 
\end{equation}
and the corresponding \emph{biaxial surfaces} $\big\{ \beta = t \big\} :=\big \{ x \in \overline{\Omega} : \widetilde{\beta} \circ Q(x) = t \big\}$. Among these sets, a crucial role 
is played by $\big\{ \beta=-1\big\}$, which should correspond to the experimentally observed \emph{disclination lines}, where eigenvalues exchange occurs (see, e.g., \cite{KV,KVZ}).
   
 After rescaling and under the Lyuksyutov constraint, the reduced LdG energy functional obtained in \cite{DMP1}  takes the form 
\begin{equation}
\label{LDGenergytilde}
\mathcal{E}_\lambda(Q):=\int_\Omega \frac{1}{2} |\nabla Q|^2+\lambda W(Q)\, dx\,,
\end{equation}
for a material-dependent constant  $\lambda>0$. It reduces to the Dirichlet integral $\mathcal{E}_0$ for maps into $\bb S^4$ when $\lambda=0$. The parameter $\lambda^{-1/2}$ is known as  the \emph{biaxial coherence length}.  The  functional $\mathcal{E}_\lambda$ formally corresponds to   a LdG energy  with quartic potential in the 1-constant approximation for the elastic energy and in the regime of zero \emph{uniaxial correlation length} reflecting the norm constraint (see the discussion in \cite[Section 1]{DMP1}).
  The reduced potential $W:\mathcal{S}_0\to\R$, when restricted to unit norm matrices, is given by 
\begin{equation}
\label{redpotential}
W(Q)=\frac{1}{3\sqrt{6}}\Big(1-\widetilde{\beta}(Q)\Big) \qquad \forall Q\in \mathbb{S}^4\,.
\end{equation}
Hence $W$ is nonnegative on $\mathbb{S}^4$. Its set of minima is called the vacuum manifold $\mathcal{Q}_{\rm min}:=\{W=0\}\cap\mathbb{S}^4$, and $\nabla_{\rm tan} W (Q)=0$ for any $Q \in \mathcal{Q}_{\rm min}$.  The minimum of $W$ is achieved when the signed biaxiality is maximal, so that
 $W(Q)=0$ iff $Q \in \mathcal{Q}_{\rm min}=\bb RP^2 \subset \bb S^4$, where we regard the projective plane $\bb RP^2 \subset \bb S^4$  embedded as the set of positive uniaxial matrices 
\begin{equation}
\label{vacuum}
\mathcal{Q}_{\rm min}= \left\{ Q \in \bb S^4:  Q=\sqrt{\frac32} \left( n \otimes n -\frac13 {\rm Id} \right) \, , \quad n \in \mathbb{S}^2  \right\} \, .
\end{equation}
Since $\mathcal{Q}_{\rm min}=\mathbb{R}P^2$,  it has nontrivial topology{{,}} and both homotopy groups $\pi_2(\mathcal{Q}_{\rm min})=\mathbb{Z}$ and $\pi_1(\mathcal{Q}_{\rm min})=\mathbb{Z}_2$  play a role in the presence of defects, especially in the restricted class of axisymmetric configurations. 
A critical point $Q_\lambda\in W^{1,2}(\Omega;\mathbb{S}^4)$ of $\mathcal{E}_\lambda$ among $\mathbb{S}^4$-valued maps satisfies in the weak sense the Euler-Lagrange equations
\begin{equation}\label{MasterEq}
\Delta Q_\lambda+|\nabla Q_\lambda|^2Q_\lambda = \lambda \nabla_{\rm tan} W ( Q_\lambda) \, ,
\end{equation}
where the tangential gradient of $W$ at  $Q\in\mathbb{S}^4 \subset \mathcal{S}_0$ is given by
\[  
	\nabla_{\rm tan} W (Q)=- \Big(Q^2-\frac{1}{3}{\rm Id} - {\rm tr}(Q^3)Q\Big)  \,.
\]
 The left hand side in \eqref{MasterEq} is the tension field of the $\mathbb{S}^4$-valued map $Q_\lambda$ as in the theory of harmonic maps, see e.g. \cite{LiWa}.

Symmetry ans\"atze  have been considered in several recent articles dedicated to Landau-de Gennes models in dimension two or three, see e.g. \cite{INSZ2,INSZ3,ABL,INSZ,Yu,ABGL, TaYu, ACS}. In the present paper, we consider the LdG functional  $\mathcal{E}_\lambda$ restricted to a class of $\mathbb{S}^1$-equivariant configurations,  continuing the analysis initiated in \cite{DMP2}.  As reviewed in Section \ref{sec:preliminaries}, we identify the group $\mathbb{S}^1$ with the subgroup of $\SO(3)$   made of rotations around the vertical axis of $\mathbb{R}^3$, and we consider the induced action on $\mathcal{S}_0$ given by $\mathcal{S}_0 \ni A \mapsto R A R^\trans \in \mathcal{S}_0$. Assuming that the open set $\Omega \subset \mathbb{R}^3$ is bounded, smooth, and $\mathbb{S}^1$-invariant, i.e., $R \cdot \Omega=\Omega$ for any $R \in \mathbb{S}^1$, we restrict ourselves to maps $Q \colon \Omega \to \mathcal{S}_0$ which are $\mathbb{S}^1$-equivariant, i.e.,
\begin{equation}
\label{S1equivariance}
Q(R x)= R Q(x) R^\trans  \quad \mbox{for a.e. } x \in \Omega \, , \quad \forall R \in \mathbb{S}^1 \, ,
\end{equation}
with the obvious analogue definition for maps defined on the boundary.  
Following our notations from \cite{DMP1,DMP2}, given an $\bbS^1$-equivariant Dirichlet boundary data $Q_{\rm b}:\partial\Omega\to\mathbb{S}^4$, we set 
$$ \mathcal{A}_{Q_{\rm b}}(\Omega):=\Big\{Q\in W^{1,2}(\Omega;\mathbb{S}^4): Q=Q_{\rm b}\text{ on $\partial\Omega$}\Big\}\,,$$
and 
\begin{equation}
\label{S1admissibleconf}
\mathcal{A}^{{\rm sym}}_{Q_{\rm b}}(\Omega):=\Big\{ Q \in \mathcal{A}_{Q_{\rm b}}(\Omega) : Q \text{ is $\bb S^1$-equivariant }\Big\} \subsetneq \mathcal{A}_{Q_{\rm b}}(\Omega) \, .
\end{equation}
We are then interested in minimizers $Q_\lambda$ of  $\mathcal{E}_\lambda$  over the restricted class $\mathcal{A}^{{\rm sym}}_{Q_{\rm b}}(\Omega)$. 
As already discussed in \cite[Theorem~1.1]{DMP2} (and reviewed in the next sections), if $\partial\Omega$ and $Q_{\rm b}$ are smooth enough, then minimizers always exist and they are smooth up to a singular set, denoted by ${\rm sing}(Q_\lambda)$, made of (at most) finitely many interior point singularities located on the symmetry axis.  When present, these singular points are due either to a topological obstruction related to the equivariance constraint or to an energy efficiency mechanism.
\vskip3pt

The main  purpose of this article is to shed some light on the delicate interplay between the geometry of the boundary and the properties of the Dirichlet boundary  condition in determining the  qualitative properties of the corresponding  minimizers.  As initiated in \cite{DMP2}, 
we   investigate nonexistence vs existence of singularities for maps minimizing $\mathcal{E}_\lambda$ over the symmetric class $\mathcal{A}^{{\rm sym}}_{Q_{\rm b}}(\Omega)$ for  a boundary data $Q_{\rm b}$ exploiting the topology of the vacuum manifold $\mathcal{Q}_{\rm min}=\bb RP^2$. In this way, the topology of minimizers will be either of torus type or of split type respectively, according to our qualitative description of the biaxiality regions and surfaces in \cite[Section~7]{DMP2} and \cite[Theorems~1.4 \&~1.5]{DMP2}. For the sake of concision, we refer to \cite{DMP2} for the relevant definitions and the precise results. In few words, the {\sl torus type} holds whenever the minimizer $Q_\lambda$ is smooth (i.e., ${\rm sing}(Q_\lambda)=\emptyset$) and our terminology refers to the fact that a biaxial surface must have a connected component of positive genus enclosing at least a circle of negative uniaxiallity,  i.e., a (invariant) disclination ring. In turn, the {\sl split type} holds if the minimizer is singular (i.e., ${\rm sing}(Q_\lambda)\not=\emptyset$) and in this case, singularities come in pairs and biaxiality equals $-1$ in  between (i.e., there are disclination segments on the vertical axis), while biaxial surfaces contain spheres with poles at the singular points.  
\vskip3pt

For   simplicity, we restrict ourselves to axisymmetric cylinder-type domains diffeomorphic to a ball (see Definition~\ref{def:smooth-cyl}), or to the model case of a nematic droplet, i.e., the unit ball $\Omega=B_1 \subset \bb R^3$. Concerning the boundary data, a natural choice is to take it smooth (at least of class $C^1$) and valued in the vacuum manifold, i.e., $Q_{\rm b}\in C^1(\partial \Omega; \mathbb{R}P^2)$.  Since $\partial \Omega \simeq \bb S^2$, every such map can be written in the form
\begin{equation}
  \label{Qblift}
  Q_{\rm b}(x)=\sqrt{\frac32} \left( v(x) \otimes v(x) -\frac13 {\rm Id}\right) \qquad \hbox{for all }  x\in \partial \Omega \, , \quad v\in C^1(\partial \Omega;\mathbb{S}^2) \, .
\end{equation}
 Since $\Omega$ is axisymmetric, such map $Q_{\rm b}$ is $\mathbb{S}^1$-equivariant if and only if its lift $v$ is itself $\bbS^1$-equivariant (with respect to the obvious action of $\mathbb{S}^1$ on $\mathbb{S}^2\subset \mathbb{R}^3$ by rotation). Then, the topological nontriviality of $Q_{\rm b}$ introduced in \cite{DMP2} and required here amounts to the assumption that the topological degree $\deg \, v \in \bb Z$ of the lift is odd (this assumption only depends on $Q_{\rm b}$ and not on a particular choice of the lift $v$). 
For instance, if $\partial \Omega$ is of class $C^2$ and $v$ in \eqref{Qblift} is the  outer unit normal field on $\partial\Omega$ (i.e., $v(x)=\overset{\to}{n}(x)$), then we obtain the so-called {\em homeotropic boundary condition} (see \eqref{eq:radial-anchoring}) which is $\bb S^1$-equivariant and its lift $v$ satisfies $\deg \,  v=  1$, i.e., it satisfies our topological requirement.
A main example entering in our discussion below is the case  a nematic droplet  $\Omega=B_1$ with homeotropic boundary data. Then $\overset{\to}{n}(x)=\frac{x}{|x|}$ and $Q_{\rm b}(x)=H(x)$ where $H$ is the {\em unit-norm hedgehog}
\begin{equation}
  \label{Hsphere}
  H(x)=\sqrt{\frac32} \left( \frac{x}{|x|}\otimes \frac{x}{|x|} -\frac13 {\rm Id}\right) \qquad \hbox{for all } x\in \partial B_1 \, ,
\end{equation}
which is  actually equivariant in the sense of \eqref{S1equivariance} with respect to the full orthogonal group $\On(3)$.

Besides $\R P^2$-valued maps, we shall also consider more general $\bbS^4$-valued boundary data.  
According to \cite{DMP1,DMP2},  we shall always assume that $\Omega$ and $Q_{\rm b}$ are  smooth enough, axisymmetric, and satisfying the conditions:
\vskip2pt
\begin{itemize}
	\item[($HP_1$)] $\bar{\beta}:=\min_{x \in \partial \Omega} \widetilde{\beta}\circ Q_{\rm b}(x)>-1$; 
	\vskip5pt
	\item[($HP_2$)] $\Omega$ is 
	diffeomorphic (equivariantly and up to the boundary) to a ball;
	\vskip5pt
	\item[($HP_3$)] $\deg(v, \partial \Omega)$  is odd;
\end{itemize}
\vskip2pt
where $(HP_3)$ has to be understood in the following way. 
 In view of ($HP_1$), the maximal eigenvalue $\lambda_{\rm max}(x)$ of $Q_{\rm b}(x)$ is simple and the function $\lambda_{\rm max}:\partial\Omega\to \R$ is smooth, hence there is a well defined and smooth eigenspace map $V_{\rm max}: \partial \Omega \to \mathbb{R}P^2$ (which inherits equivariance). Since $\partial \Omega \simeq \bb S^2$ by ($HP_2$), the mapping $V_{\rm max}$ has a (nonunique) smooth lifting $v: \partial \Omega \to \mathbb{S}^2${{,}} which is required to satisfy ($HP_3$). In the case $\Omega=B_1$, besides the radial hedgehog $H$, the main examples of boundary data satisfying our general assumptions are the $\mathbb{S}^1$-equivariant harmonic spheres $ \mathbf{\omega}_{\mu_1,\mu_2} : \bb S^2 \to \bb S^4$, for positive parameters $\mu_1$ and $\mu_2$ (see the full classification \cite[Proposition 3.8 and proof of Theorem 1.3]{DMP2}).
 \vskip3pt
 
 In \cite[Theorem~1.4 \&~1.5]{DMP2}, we have shown that under assumptions  ($HP_1$)-($HP_3$), minimizers of $\mathcal{E}_\lambda$ over the class $\mathcal{A}^{{\rm sym}}_{Q_{\rm b}}(\Omega)$ must be   either of torus type (when smooth) or of split type (when singular), in agreement with some physical expectations based on  numerical simulations (e.g., \cite{GaMk, KV, KVZ, DLR, HQZ}).  To complement this result, \cite[Theorem~1.2 \&~1.3]{DMP2} provide in the case $\Omega=B_1$ two explicit continuous deformations\footnote{$C^{2,\alpha}_{\rm sym}(\partial B_1;\mathbb{S}^4)$ stands for the subset of $C^{2,\alpha}(\partial B_1;\mathbb{S}^4)$ made of all $\mathbb{S}^1$-equivariant maps. More generally, we shall use the sub/supscript sym on a functional space to indicate that the mappings involved are $\mathbb{S}^1$-equivariant.} $\Gamma:[0,1]\to C^{2,\alpha}_{\rm sym}(\partial B_1;\mathbb{S}^4)$ of the hedgehog map $H$  along which ($HP_1$)-($HP_3$) are preserved and such that minimizers corresponding to the final map $Q_{\rm b}=\Gamma(1)$ are  either all of  torus type or all of split type respectively  (see also Remark~\ref{rmk:curve-coex}). Our first main result actually shows that both type of minimizers coexist for the same boundary data when suitably chosen at some intermediate stage of one these deformations.
 
\begin{theorem}
\label{main-coexistence}
 Let $\alpha\in(0,1)$, $\lambda\geq 0$, and $\Gamma:[0,1]\to C^{2,\alpha}_{\rm sym}(\partial B_1;\mathbb{S}^4)$  a continuous curve along which ($HP_1$)-($HP_3$) are satisfied.  Assume that for $Q_{\rm b}=\Gamma(0)$ and $Q_{\rm b}=\Gamma(1)$, the minimizers of $\mathcal{E}_\lambda$ over $\mathcal{A}^{{\rm sym}}_{Q_{\rm b}}(B_1)$ are all of torus type and all of split type, respectively. Then there exist $0<t_1\leq t_2<1$ such that 
\begin{enumerate}
\item[(i)] for every $0\leq t<t_1$ and $Q_{\rm b}=\Gamma(t)$, any minimizer of $\mathcal{E}_\lambda$ over $\mathcal{A}^{{\rm sym}}_{Q_{\rm b}}(B_1)$ is smooth and thus of torus type; 
\vskip5pt
\item[(ii)] for every $t_2<t\leq 1$ and $Q_{\rm b}=\Gamma(t)$, any minimizer of $\mathcal{E}_\lambda$ over $\mathcal{A}^{{\rm sym}}_{Q_{\rm b}}(B_1)$ is singular and thus of split type; 
\vskip5pt

\item[(iii)] for $t=t_1$ or $t=t_2$  and $Q_{\rm b}=\Gamma(t)$,  there exist a smooth and a singular minimizer of $\mathcal{E}_\lambda$ over the class $\mathcal{A}^{{\rm sym}}_{Q_{\rm b}}(B_1)$, hence of torus and split type respectively.
\end{enumerate} 

\noindent As a consequence, there exists $Q_{\rm b} \in C^{2,\alpha}_{\rm sym}(\partial B_1;\mathbb{S}^4)$ satisfying ($HP_1$)-($HP_3$) which yields coexistence of torus and split  minimizers of $\mathcal{E}_\lambda$ over $\mathcal{A}_{Q_{\rm b}}^{\rm sym}(B_1)$.
\end{theorem}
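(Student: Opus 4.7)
The last assertion follows from (iii) applied at $Q_{\rm b}=\Gamma(t_1)$, so I focus on (i)--(iii). The natural framework is to introduce
\begin{align*}
T^* &:= \{t \in [0,1] : \text{some minimizer for } \mathcal{E}_\lambda \text{ over } \mathcal{A}^{\rm sym}_{\Gamma(t)}(B_1) \text{ is of torus type}\}, \\
S^* &:= \{t \in [0,1] : \text{some minimizer for } \mathcal{E}_\lambda \text{ over } \mathcal{A}^{\rm sym}_{\Gamma(t)}(B_1) \text{ is of split type}\}.
\end{align*}
By existence of minimizers and the torus/split dichotomy of \cite{DMP2}, $T^* \cup S^* = [0,1]$, while the hypotheses give $0 \in T^* \setminus S^*$ and $1 \in S^* \setminus T^*$. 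Setting $t_1 := \inf S^*$ and $t_2 := \sup T^*$, I claim that (i)--(iii) and the inequalities $0 < t_1 \leq t_2 < 1$ all reduce to showing that \emph{both $T^*$ and $S^*$ are closed in $[0,1]$}: for $t \in [0,t_1)$ one has $t \notin S^*$, so every minimizer is of torus type; symmetrically for $t > t_2$; coexistence at $t_1$ follows from $t_1 \in S^*$ (closedness) together with $[0, t_1) \subset T^*$ and closedness, yielding $t_1 \in T^* \cap S^*$ (and similarly at $t_2$).

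The central technical ingredient is a compactness statement along $\Gamma$: whenever $t_n \to t$ and $Q_n$ is a minimizer for $\Gamma(t_n)$, up to a subsequence $Q_n \to Q$ strongly in $W^{1,2}(B_1;\mathbb{S}^4)$, where $Q$ is a minimizer for $\Gamma(t)$. Weak convergence, boundary-trace identification, and minimality of $Q$ are standard given the $C^{2,\alpha}$-continuity of $\Gamma$; strong convergence (equivalently, $\mathcal{E}_\lambda[Q_n] \to \mathcal{E}_\lambda[Q]$) is obtained by constructing admissible competitors for $\Gamma(t_n)$ out of any fixed minimizer for $\Gamma(t)$ via a thin boundary-collar interpolation between $\Gamma(t)$ and $\Gamma(t_n)$.

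Granted compactness, closedness of $S^*$ is the easier side: choosing $x_n \in {\rm sing}(Q_n)$, uniform boundary regularity forces ${\rm dist}(x_n, \partial B_1) \geq r_0 > 0$, and an axisymmetric $\varepsilon$-regularity combined with monotonicity yield the quantized bound $r^{-1}\int_{B_r(x_n)}|\nabla Q_n|^2 \geq \varepsilon_0$ for all small $r$, which strong $W^{1,2}$-convergence transfers to the axis limit point $x^* := \lim x_n$, forcing $x^* \in {\rm sing}(Q)$ and so $t \in S^*$. The main obstacle is closedness of $T^*$, since a strong $W^{1,2}$-limit of smooth minimizers need not itself be smooth. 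I would exploit the rigid biaxial topology of torus minimizers developed in \cite[Sections~6--7]{DMP2}: each $Q_n$ carries a disclination ring of prescribed topological character (a biaxial component of positive genus enclosing a circle of $\widetilde\beta = -1$), and I would argue that such a ring cannot degenerate in the $W^{1,2}$-limit to a split configuration (two axis point singularities linked by a disclination segment) without violating either the $\pi_1(\R P^2)=\Z_2$ mod-$2$ obstruction around small axis loops or an energy-quantization bound at the putative limit singularity. Making this step fully rigorous is what I expect to be the most delicate part of the argument.
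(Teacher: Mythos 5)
Your overall skeleton coincides with the paper's: the paper also works with the dichotomy ``all minimizers smooth'' vs.\ ``all minimizers singular'' vs.\ coexistence (Theorem~\ref{decompobdspacethm} and Corollary~\ref{corocoexist}), proves strong $W^{1,2}$-compactness of minimizers along converging data by a boundary-collar interpolation followed by projection to $\mathbb{S}^4$ (Lemma~\ref{compacteasy}), and obtains persistence of singularities exactly as you sketch, via interior/boundary $\varepsilon$-regularity with uniform constants (Lemma~\ref{persissmoothloc}(i), Corollary~\ref{corolpersissmooth}(i)). The genuine gap is the step you yourself flag: closedness of $T^*$, i.e.\ that a strong $W^{1,2}$ limit of \emph{smooth} equivariant minimizers cannot be singular. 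Your proposed route is both incomplete and unlikely to close as stated. Energy quantization at a putative limit singularity is not contradictory with smoothness of the approximants: the bound $\frac1r\int_{B_r(x^*)}\frac12|\nabla Q_*|^2\to 4\pi$ concerns the limit $r\to 0$, and strong convergence only transfers scaled energies at fixed scales, so nothing prevents smooth $Q_n$ from developing a limit singularity as far as energy alone is concerned (this is precisely why, even for minimizing maps into $\mathbb{S}^2$, smoothness persistence is a theorem requiring a specific mechanism, cf.\ \cite{AlLi,HL2}). Likewise, the positive-genus biaxial component of a torus minimizer is not stable under mere $W^{1,2}$ convergence, and a $\pi_1(\mathbb{R}P^2)$ obstruction ``around small axis loops'' is not available in the equivariant class, where the relevant rigidity is already encoded differently.

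The missing idea in the paper is elementary once identified: by Lemma~\ref{lemma:s1eq-emb} and Corollary~\ref{verticaltrace}, equivariant $W^{1,2}$ maps into $\mathbb{S}^4$ have a trace on the symmetry axis with values in $\{\pm\eo\}$, and this trace operator is \emph{strongly continuous} from $W^{1,2}_{\rm sym}$ into $L^1$ of the axis. By the classification of tangent maps (Theorem~\ref{intregthm}, formula \eqref{formtangmaps}), every singular point of an equivariant minimizer is a point where this axis trace jumps between $+\eo$ and $-\eo$; a smooth equivariant map, in contrast, has constant axis trace on each axis segment. Hence if smooth minimizers $Q_n\to Q_*$ strongly and $Q_*$ were singular, the trace of $Q_*$ on a suitable axis interval would be a nonconstant $\{\pm\eo\}$-valued function arising as the $L^1$-limit of constants, a contradiction (Lemma~\ref{persissmoothloc}(ii), Corollary~\ref{corolpersissmooth}(ii)). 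With this ingredient both $T^*$ and $S^*$ are closed and your interval bookkeeping ($t_1=\inf S^*$, $t_2=\sup T^*$) delivers (i)--(iii) exactly as in the paper; without it, the proposal does not yet prove the theorem.
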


 The proof of Theorem~\ref{main-coexistence},  in Section~\ref{sectcoexball}, essentially relies on the interior and boundary regularity theory   developed in \cite{DMP1,DMP2}   and suitably presented in Section \ref{secregth}. Along with further refinements, it follows that both smoothness and presence of singularities persist under strong $W^{1,2}$-convergence as the pair $(Q_{\rm b}, \lambda)$ varies in the space of data $C^{2,\alpha}_{\rm sym}(\partial \Omega;\bb S^4)\times [0,\infty)$, in analogy with \cite{AlLi,HL2} in the case of minimizing harmonic maps into  $\bbS^2$. Using these properties together with unique continuation arguments, we prove  in Theorem~\ref{decompobdspacethm}  a decomposition of the space of data into two open sets, for which all the minimizers are of the same type (smooth or singular), and their common boundary, where coexistence  occurs. Then Theorem~\ref{main-coexistence} follows as a direct consequence (see Corollary \ref{corocoexist}) as the two open sets are not empty by \cite[Theorem~1.2 \&~1.3]{DMP2} and  there exists an explicit continuous path connecting them (as already mentioned). 
 It is a natural open question to understand if for such explicit path deforming the data used in \cite[Theorem~1.2]{DMP2} into the one used in \cite[Theorem~1.3]{DMP2}  and passing through the hedgehog $H$,  
the coexistence parameters given in Theorem~\ref{main-coexistence} are precisely those of the hedgehog, i.e., if $Q_{\rm b}=H$ yields coexistence of torus and split minimizers in the class $\mathcal{A}_{H}^{\rm sym}(B_1)$.

Our coexistence property is somehow related to a similar result established in the recent article~\cite{Yu}, although the methods employed are completely different.  As already commented in more details in \cite[Section~7]{DMP2}, the analysis in \cite{Yu} is performed to the case   $\Omega=B_1$ with boundary condition given by the unit norm hedgehog $H$,  and the minimization is restricted  to the strictly smaller class of $\On(2)\times \mathbb{Z}_2$-equivariant configurations (the extra $\bb Z_2$-symmetry corresponding to the reflection across the horizontal plane). In this restricted class, the author performs a clever further  constrained minimization which yields coexistence of minimizers of ``torus'' and ``split'' type, although these notions are in a sense weaker than ours in \cite{DMP2}. However,  their energy minimality in the full symmetric class $\mathcal{A}_{H}^{\rm sym}(B_1)$ remains unclear. 
 \vskip5pt

The second part of the article is dedicated to minimizers of $\mathcal{E}_\lambda$ over the equivariant class \eqref{S1admissibleconf} with homeotropic boundary conditions on axisymmetric domains $\Omega \subseteq \bb R^3$ diffeomorphic to the unit ball. Here the goal is to show that the presence of smooth or singular minimizers and even their coexistence depends in a subtle way on the shape of $\Omega$. To capture the essence of these phenomena, we restrict ourselves to an explicit family of axisymmetric cylinder-type domains denoted by $\mathfrak{C}^h_{\ell,\rho}$ and obtained as a regularization (near the angles) of vertical cylinders of height $2h$ and radius~$\ell$, the parameter $\rho$ being the smoothing parameter (see Definition~\ref{def:smooth-cyl}). The boundary condition $Q_{\rm b}$ is the homeotropic boundary data given by \eqref{Qblift} with $v=\overrightarrow{n}$  the outer unit normal field. 
Under these choices of $\Omega$ and $Q_{\rm b}$, assumptions ($HP_1$)-($HP_3$) above are satisfied and the results in \cite{DMP2} apply. Exploiting these facts, we discuss here the nature of minimizers, i.e., smooth or singular, and thus their type, torus or split, as the characteristic lengths $h$ and $\ell$ vary.    
Borrowing a terminology from physics (see, e.g., \cite{AZB}, for the case of Bose-Einstein condensates in trapping potentials), we are interested in two opposite regimes, namely: (i) the case $h \gg \ell$ of long and thin cylinders, (the ``cigar shape''), and its opposite, i.e., (ii) the case $h \ll \ell$ of flat and very large cylinders (the ``pancake shape''). Both cases are somehow natural, as they are a mathematical idealization of the case in which the liquid crystals occupy a long pipe or it is arranged as a thin film respectively. 

We shall see in Theorem \ref{thm:vertical-cylinders} below that, 
in the asymptotic regime $h\gg \ell$, a 2D-reduction phenomenon occurs and the 3D-minimizers in $\mathfrak{C}^h_{\ell,\rho}$ tend to minimize the 2D-energy on most of the horizontal cross-sections of the domain. To present and describe this dimension reduction, it is useful to anticipate and analyse the effective 2D-variational problem which involves maps defined on a generic horizontal cross-section $\bb D_\ell$ of the smoothed cylinder $\mathfrak{C}^h_{\ell,\rho}$. This 2D-minimization problem is of independent interest and ressembles the one considered in \cite{INSZ} without the norm constraint. 

For simplicity, we rescale the disc $\bb D_\ell$ of radius $\ell$ to the unit disc $\bb D$, and to distinguish the 2D from the 3D case, we shall use the notation $E_\lambda$ (instead of $\mathcal{E}_\lambda$) to refer to the LdG energy in two dimensions. In other words, we consider for each $\lambda\geq 0$ the 2D-LdG energy
\begin{equation}\label{eq:def-2d-energy}
	E_\lambda(Q) := \int_{\bbD} \frac12\abs{\nabla Q}^2 + \lambda W(Q) \,dx\,,
\end{equation}
defined for configurations in the class $W^{1,2}(\bbD; \bbS^4)$.  Note that in the case $\lambda=0$, the energy $E_0$ still  reduces to the Dirichlet integral. 

In the 2D-problem, we are interested in minimizers of $E_\lambda$ over the $\mathbb{S}^1$-equivariant class
\begin{equation}\label{eq:def-2d-adm}
	\mathcal{A}^{\rm sym}_{\overline{H}}(\bbD) := \big\{ Q \in W^{1,2}_{\rm sym}(\bbD;\bbS^4) : Q  = \overline{H} \text{ on $\partial\bbD$} \big\} \, ,
\end{equation}
where  $\overline H:\R^2\setminus\{0\}\to\mathbb{R}P^2 \subset \mathbb{S}^4$ is the radial anchoring map (or constant norm hedgehog), i.e., 
\begin{equation}\label{defbarH}
\overline H(x):=\sqrt{\frac{3}{2}}\left( \frac{1}{|x|^2}
\begin{pmatrix}
x_1 \\
x_2\\
0
\end{pmatrix} \otimes \begin{pmatrix}
x_1 \\
x_2\\
0
\end{pmatrix}
-\frac{1}{3}{\rm Id}\right)\,. 
\end{equation}
 The restriction of $\overline{H}$ to $\partial \bb D_\ell$ corresponds precisely to the homeotropic boundary condition at the boundary of the cross-section $\partial \bb D_\ell \subseteq \partial \mathfrak{C}^h_{\ell,\rho}$ where the outer normal is horizontal. We observe that maps belonging to $\mathcal{A}^{\rm sym}_{\overline{H}}(\bbD)$ 
are continuous in $\overline{\bb D}$ (see Section~\ref{sec:preliminaries}), hence there is a natural decomposition $\mathcal{A}_{\overline{H}}^{\rm sym}(\bbD) = \mathcal{A}_{\rm N} \cup \mathcal{A}_{\rm S}$ (with disjoint union) according to the   respective value at the origin $Q(0)=\pm \eo$, where $\eo$ is the matrix given by \eqref{eq:basis-S0}. Indeed, $\pm \eo$ are the only unit norm matrices invariant under the action of $\bbS^1$ on $\bbS^4$, so that  equivariance,  norm constraint, and continuity imply this decomposition.
\vskip3pt

Our second main result discusses  the nature of 2D-minimizers as the parameter $\lambda \geq 0$ varies, that is the belonging to the class $ \mathcal{A}_{\rm N}$ or to the class $\mathcal{A}_{\rm S}$. Note that fixing the cross-section of the sample and varying the biaxial coherence length $\lambda^{-1/2}$ is mathematically equivalent, by rescaling, to fixing the material-dependent length $\lambda^{-1/2}$ and varying the width of the sample, which is physically more realistic.

\begin{theorem}
\label{2d-biaxial-escape}
There exist $0<\lambda_0<\lambda_*<\lambda^*$ such that the following statements hold.

\begin{itemize}
\item[(i)] The maps $\overline Q\simeq \bar u$ with $\bar{u}(z)=g_{\overline{H}}(\pm z)$  explicitly given by \eqref{eq:Hbar}, 
are (positively) uniaxial, they are minimizers of $E_\lambda$ over $\mathcal{A}_{\rm N}$,  and   local minimizers of $E_\lambda$ over $\mathcal{A}^{\rm sym}_{\overline{H}}(\bbD)$ for every $\lambda \geq 0$. In addition, these maps are the unique absolute minimizers for $\lambda \in( \lambda_*,\infty)$.
\vskip5pt

\item[(ii)] If $\lambda \in [0,\lambda^*)$ then there exist minimizers $Q_\lambda$ of $E_\lambda$ over $\mathcal{A}_{\rm S}$. Moreover, 
they are local minimizers of $E_\lambda$ over $\mathcal{A}^{\rm sym}_{\overline{H}}(\bbD)$, and  satisfy $\widetilde\beta\circ Q_\lambda(\overline\bbD)=[-1,1]$. In addition, if $\lambda \in[0, \lambda_*)$, then minimizers over $\mathcal{A}_{\rm S}$ are the  the only minimizers of $E_\lambda$ over $\mathcal{A}^{\rm sym}_{\overline{H}}(\bbD)$, and uniqueness holds for $\lambda<\lambda_0$.
\vskip5pt

\item[(iii)] If $\lambda=\lambda_*$, then the maps $\overline Q$ in (i) and $Q_\lambda$ in (ii) are both minimizers of $E_\lambda$ over $\mathcal{A}^{\rm sym}_{\overline{H}}(\bbD)$.
\end{itemize}
  	
\end{theorem}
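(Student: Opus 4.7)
The plan is to exploit the topological decomposition $\mathcal{A}^{\rm sym}_{\overline H}(\bbD) = \mathcal{A}_{\rm N}\sqcup\mathcal{A}_{\rm S}$ introduced before the statement (the ``N'' class being positively uniaxial at the origin, $Q(0)=-\eo$, and the ``S'' class negatively uniaxial, $Q(0)=+\eo$), to minimize $E_\lambda$ separately in each class, and then to compare the two infima as $\lambda$ varies. First I would observe that each class is weakly sequentially closed in $W^{1,2}$ on bounded-energy subsets: this uses the continuity of $\bbS^1$-equivariant $W^{1,2}$-maps in $\overline\bbD$ recalled from Section~\ref{sec:preliminaries} together with the fact that $\pm\eo$ are the only $\bbS^1$-fixed points of $\bbS^4$, so that the value at the origin is preserved under $W^{1,2}$-limits with bounded energy. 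The direct method then produces minimizers in each class for every $\lambda\geq 0$, and any such minimizer is automatically a \emph{local} minimizer over the full class $\mathcal{A}^{\rm sym}_{\overline H}(\bbD)$ because the two sectors are $W^{1,2}$-separated by the (continuous) point-evaluation at the origin.

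For part (i), the explicit map $\bar u$ given by \eqref{eq:Hbar} is (positively) uniaxial, i.e., $\mathbb{R}P^2$-valued, with $\bbS^2$-lift of the form $v(r,\theta)=(\sin f(r)\cos\theta,\sin f(r)\sin\theta,\cos f(r))$ and escape profile $f(r)=2\arctan r$; in particular $W(\bar u)\equiv 0$, hence $E_\lambda(\bar u)=E_0(\bar u)=:I$ is independent of $\lambda$. To show $\bar u$ minimizes $E_\lambda$ over $\mathcal{A}_{\rm N}$ I would use $\bbS^1$-equivariance to reduce the problem to the radial profile $\gamma(r):=Q(r,0)\in\bbS^4$, obtaining
\[
  E_0(Q) \;=\; \pi\int_0^1\Big(r\,|\gamma'(r)|^2 + \tfrac{1}{r}\,|[J,\gamma(r)]|^2\Big)\,dr,
\]
where $J\in\mathfrak{so}(3)$ is the infinitesimal generator of $\bbS^1$. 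Decomposing $\gamma=a_0\eo+\gamma_1+\gamma_2$ along the $\bbS^1$-weight decomposition $\mathcal{S}_0=\mathbb{R}\eo\oplus V_1\oplus V_2$, a calibration/comparison argument using the unit-norm constraint and the vanishing boundary data $\gamma_1(0)=\gamma_1(1)=0$ will reduce the analysis to $\gamma_1\equiv 0$, leaving a strictly convex 1D variational problem in the plane $\mathrm{span}(\eo,\edu)$ whose unique solution in the N-sector is precisely the profile corresponding to $\bar u$. Combined with $W\geq 0$ and $W(\bar u)\equiv 0$, this yields $E_\lambda(Q)\geq E_\lambda(\bar u)$ for every $\lambda\geq 0$, and strict inequality for $Q\neq\bar u$ gives uniqueness whenever $J(\lambda)>I$ (see below).

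For parts (ii)--(iii) I set $J(\lambda):=\min_{\mathcal{A}_{\rm S}} E_\lambda$; this is non-decreasing in $\lambda$ (as $W\geq 0$) and continuous. Since $Q_\lambda(0)=+\eo$ has $\widetilde\beta=-1$ while $Q_\lambda|_{\partial\bbD}=\overline H$ has $\widetilde\beta=+1$, continuity of $\widetilde\beta\circ Q_\lambda$ forces $\widetilde\beta\circ Q_\lambda(\overline\bbD)=[-1,1]$. The decisive comparisons are $J(0)<I$ and $J(\lambda)\to+\infty$ as $\lambda\to\infty$: the first follows from the same 1D reduction applied in the S-sector, where the shorter transit in the $\sin^2\phi$-weight gives a strictly smaller Dirichlet infimum than in the N-sector; the second is a Ginzburg--Landau-type lower bound exploiting that $+\eo\notin\mathbb{R}P^2$ forces a biaxial core of uniformly controlled $W$-content, so that after the rescaling $x\mapsto x/\sqrt\lambda$ one gets $J(\lambda)\gtrsim\log\lambda$. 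Continuity and monotonicity then produce a unique $\lambda_*\in(0,\infty)$ with $J(\lambda_*)=I$, giving (iii) and the dichotomy $J<I$ for $\lambda<\lambda_*$ vs.\ $J>I$ for $\lambda>\lambda_*$ behind (i) and (ii). The threshold $\lambda^*\geq\lambda_*$ is defined as the supremum of $\lambda$'s for which $\mathcal{A}_{\rm S}$-minimizers persist (strict inequality allowing bubbling beyond $\lambda_*$), and the uniqueness threshold $\lambda_0\in(0,\lambda_*)$ follows by perturbation from the $\lambda=0$ problem, where the reduced 1D functional is strictly convex and the minimizer is unique.

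The main obstacle will be the reduction of the $\mathcal{A}_{\rm N}$-minimization to the 1D problem, specifically the elimination of the weight-1 component $\gamma_1$ under the nonlinear unit-norm constraint: the naive uniaxial retraction onto $\mathbb{R}P^2$ via the largest-eigenvalue eigenvector degenerates on $\{\widetilde\beta\circ Q=-1\}$, and circumventing this may require a Luckhaus-type approximation or a direct calibration exploiting the conformal structure of $\bar u$ as a harmonic map. A secondary difficulty is making the Ginzburg--Landau lower bound $J(\lambda)\gtrsim\log\lambda$ quantitative enough, together with an accurate computation of $J(0)$, in order to identify the thresholds $\lambda_0,\lambda_*,\lambda^*$ and establish their strict ordering.
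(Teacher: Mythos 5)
Your overall architecture -- split $\mathcal{A}^{\rm sym}_{\overline H}(\bbD)$ into the two sectors, minimize in each, and locate $\lambda_*$ by comparing the two infima -- is the same as the paper's, but several of your key steps fail. First, and fatally, the sectors are \emph{not} weakly $W^{1,2}$-closed: Lemma~\ref{lemma:s1eq-emb} preserves the value $Q(0)\in\{\pm\eo\}$ only under \emph{strong} $W^{1,2}$-convergence, and Remark~\ref{bubbling} exhibits maps $u_{\mu_1}\in\widetilde{\mathcal{A}}_{\rm N}$ converging weakly to $u_{\rm S}\in\widetilde{\mathcal{A}}_{\rm S}$ with a loss of $4\pi$ of energy at the origin. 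So the direct method does not produce minimizers in each sector for every $\lambda$, and indeed for $\lambda>\lambda^*$ there is \emph{no} minimizer in the sector $\{Q(0)=-\eo\}$ (Proposition~\ref{ASlambdaminimization}(ii)); your proposal is also internally inconsistent here, since you later define $\lambda^*$ as the supremum of $\lambda$'s for which such minimizers ``persist''. The paper overcomes the lack of weak closedness by minimizing in the regularized subclasses \eqref{eq:ANrho}, \eqref{eq:ASrho} and running a quantized concentration--compactness alternative (Lemmas~\ref{profile-extraction} and~\ref{ASprofile-extraction}): if the weak limit jumps sector, a nonconstant equivariant harmonic sphere bubbles off costing at least $4\pi$, and the precise energy bookkeeping ($6\pi$ for the N-sector, $2\pi$ for $u_{\rm S}$, $10\pi$ for the bubbled configuration) is exactly what yields both the gap Theorem~\ref{thm:gap-u} and existence in $\mathcal{A}_{\rm S}$ for all $\lambda<\lambda^*$. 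Your sketch contains none of these quantitative ingredients, and without them existence in the S-sector and the value of the N-sector infimum are unproved.

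Second, the asserted divergence $J(\lambda)\gtrsim\log\lambda$ of the S-sector infimum is false: inserting a shrinking harmonic-sphere bubble at the origin on top of the uniaxial map (Lemma~\ref{sphereattaching}) shows $\mathfrak{e}^*_\lambda\leq 10\pi$ for \emph{every} $\lambda$, so the infimum saturates rather than diverges (Proposition~\ref{lambdainfimum}); the pointwise constraint at a single point carries no topological charge, hence no Ginzburg--Landau vortex-type lower bound on the infimum (a coarea argument gives $E_\lambda(Q)\gtrsim\sqrt\lambda$ for each \emph{fixed} $Q\in\mathcal{A}_{\rm S}$, which proves non-attainment for large $\lambda$, not growth of the infimum). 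The crossing $\lambda_*$ must instead come from monotonicity, continuity and concavity of $\lambda\mapsto\mathfrak{e}^*_\lambda$ together with the bound showing it eventually exceeds $6\pi$, as in Propositions~\ref{lambdainfimum} and~\ref{energyprofile}. Third, your 1D reduction of the N-sector problem -- eliminating the weight-one component and solving a ``strictly convex'' radial problem with a unique solution -- cannot work: at $\lambda=0$ the minimizers of $E_0$ over $\mathcal{A}_{\rm N}$ form the full noncompact family \eqref{eq:class-harm}--\eqref{eq:D(z)} parametrized by $\mu_1\in\C$, generically with $f_1\not\equiv0$, so uniqueness fails and the weight-one component cannot be removed; the paper computes the value $6\pi$ by classifying all large equivariant solutions via conformality, isotropy and the twistor lift (Lemma~\ref{equiv-extension}, Proposition~\ref{prop:largeharm}), and then identifies the uniaxial minimizers for $\lambda>0$ via Lemma~\ref{uniaxiality}. (Note also that your labels are swapped relative to the paper: here $\mathcal{A}_{\rm N}=\{Q(0)=\eo\}$ and $\mathcal{A}_{\rm S}=\{Q(0)=-\eo\}$.) What does survive from your sketch: the strong-topology sector separation giving local minimality, the continuity argument for $\widetilde\beta\circ Q_\lambda(\overline\bbD)=[-1,1]$, and the idea of small-$\lambda$ uniqueness by perturbation from $\lambda=0$ -- although in the paper this rests on the J\"ager--Kaul uniqueness for the half-sphere-valued harmonic map and a uniform second-variation estimate (Theorem~\ref{uniqueminimizer} in the Appendix), not on convexity of a reduced 1D functional.
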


The previous theorem provides a purely energetic explanation of the biaxial escape phenomenon in 2D-biaxial nematics, at least under norm and axial symmetry constraints. The escape mechanism here is explained  in a completely different way   compared to \cite{Can1}, where complete biaxial escape in 2D is inferred in the low-temperature limit. In our case, the boundary data \ref{defbarH} is trivial in $\pi_1(\bb RP^2)$, while in  
\cite{Can1}, its nontriviality implies that almost uniaxial extensions cannot exist, even without the norm constraint. Indeed, according to claim (i), if $\lambda$ is very large (equivalently, when the size of the sample is large compared to the characteristic length $\lambda^{-1/2}$), then energy minimizers are purely positively uniaxial (and even explicit, due to the norm constraint), because of the strong penalization of the biaxial phase induced by the potential $W$. On the other hand,  claim (ii) shows that reducing $\lambda$ to smaller values (equivalently,  reducing the size the sample compared to $\lambda^{-1/2}$) makes uniaxiality non necessarily   favorable. Indeed, for $\lambda$ below the coexistence threshold $\lambda_*$, the biaxiality parameter of minimizers attains its full range $[-1,1]$, and complete \emph{biaxial escape} occurs.   

The proof of Theorem~\ref{2d-biaxial-escape} is presented in Section~\ref{2Dminimization}. As commented in more details there, the   cornerstone is Theorem~\ref{thm:gap-u} which gives  an energy gap phenomenon  between the infimum of Dirichlet integral $E_0$ over the class $\mathcal{A}_{\rm N}$ and the class $\mathcal{A}_{\rm S}$ together with a   complete classification of the corresponding optimal maps following the lines of \cite{DMP2}.  The main difficulties come from the conformal invariance of the Dirichlet integral in 2D and the associated concentration/compactness alternative with possible bubbling-off of harmonic spheres along minimizing sequences (see the proof of Proposition~\ref{ASlambdaminimization}) as the pointwise constraints $Q(0)=\pm \eo$ are not weakly closed.   This intermediate step and Theorem~\ref{2d-biaxial-escape} can be seen  as analogues of the construction of small and large solutions for $\bbS^2$-valued harmonic maps in two dimensions, see \cite{BrCo,Jo}. Borrowing the terminology from the $\bbS^2$-valued case, the {\em large solutions} $\overline{Q}$ in (i) escape from the (small) spherical cap of $\bb S^4$ centered at $-\eo$ containing the image of the boundary datum $ \overline{H}$, as opposed to {\em smalls solutions} $Q_\lambda$ in (ii)  (at least for $\lambda$ small enough) for which the escape phenomenon does not happen. 
\vskip5pt

In Section \ref{SectSplit}, we take advantage of the previous 2D result to describe the asymptotic behaviour of minimizers  in the 3D cylindrical domains $\mathfrak{C}^h_{\ell,\rho}$ with homeotropic boundary condition in the regime $h\gg \ell$. Our third main result below shows that, for such long ``cigar shaped'' domains,   any minimizing configuration must be singular, hence of split type in the sense of \cite{DMP2}.

\begin{theorem}
\label{thm:vertical-cylinders}

Let $\lambda \geq 0$ be a fixed number and $\lambda_0$, $\lambda_*$ the values provided by Theorem~\ref{2d-biaxial-escape}. 
 Given $0<2\rho<\ell$  and a sequence $h_n\to+\infty$ satisfying $h_n>\ell$, set $\Omega_n:=\mathfrak{C}^{h_n}_{\ell,\rho}$ and let $Q_{\rm b}^{(n)}$ be the homeotropic boundary data on  $\partial \Omega_n$. If, for each $n$,  $Q^{(n)}$ is a minimizer of $\mathcal{E}_\lambda$ over $\mathcal{A}^{\rm sym}_{Q_{\rm b}^{(n)}}(\Omega_n)$, then the following statements hold for $n$ large enough.
\vskip5pt

\begin{enumerate}
\item[(i)] (Split Structure) If $\ell<\sqrt{\lambda_*/\lambda}$, then $\rmsing{(Q^{(n)})} \neq \emptyset$. As a consequence, $Q^{(n)}$ is   of split type 
 and $\beta_n:=\widetilde{\beta}\circ Q^{(n)}$ satisfies $\beta_n(\overline{\Omega_n})=[-1,1]$. 
\vskip5pt

\item[(ii)]  (2D-reduction)   If $\ell<\sqrt{\lambda_0/\lambda}$ and  $\widehat{Q}_\ell$ denotes the unique minimizer of $E_\lambda(\,\cdot \,; \bb D_\ell)$ over  
$\mathcal{A}^{\rm sym}_{\overline{H}}(\bb D_\ell)$, then $Q^{(n)}\to \widehat{Q}_{\ell}$ strongly in $W^{1,2}_{\rm loc}(\mathfrak{C}^\infty_\ell)$ and in fact, locally smoothly in $\overline{\mathfrak{C}^\infty_\ell}$ as $n\to+\infty$.
\vskip5pt

\item[(iii)] (Singularities Ejection)  If $\ell<\sqrt{\lambda_0/\lambda}$, then   $\rmsing{(Q^{(n)})} \cap\{x_3\geq 0\}$ and  $\rmsing{(Q^{(n)})} \cap\{x_3< 0\}$ are both nonempty, each one of them contains an odd number of points,   $ \rmsing{(Q^{(n)})}\subset \{x_3\text{-axis}\}\cap\{h_n-\alpha \leq |x_3|\leq h_n-\frac{1}{\alpha} \}$ for some constant $\alpha\geq1$ independent of $n$, and   
$Q^{(n)} = -\eo$ on $\{x_3\mbox{-axis}\}\cap\{|x_3|< h_n-\alpha\}$.  
In addition, ${\rm Card}\big(\rmsing{(Q^{(n)})}\big)$ remains bounded as $n\to\infty$.
\end{enumerate}
\end{theorem}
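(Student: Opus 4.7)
The plan is to exploit a dimensional reduction as $h_n\to\infty$ with $\ell$ fixed. After the rescaling $\bbD_\ell\to\bbD$, the 2D problem of Theorem~\ref{2d-biaxial-escape} applies with parameter $\tilde\lambda:=\lambda\ell^2$, so the hypotheses $\ell<\sqrt{\lambda_*/\lambda}$ and $\ell<\sqrt{\lambda_0/\lambda}$ respectively read $\tilde\lambda<\lambda_*$ and $\tilde\lambda<\lambda_0$. The core ingredient is a sharp upper bound: I would construct a split competitor $\widetilde Q^{(n)}\in\mathcal{A}^{\rm sym}_{Q_{\rm b}^{(n)}}(\Omega_n)$ that coincides on the cylindrical core $\{|x_3|\leq h_n-\alpha_0\}$ with the $x_3$-independent lift of the 2D minimizer $\widehat{Q}_\ell$ from Theorem~\ref{2d-biaxial-escape}(ii), and plug, in each of the two cap regions, a fixed finite-energy $\bbS^1$-equivariant interpolation between $\widehat{Q}_\ell$ and the homeotropic datum. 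Such an interpolation exists because the homeotropic normal field has degree one on each cap, matching the split topology of $\widehat{Q}_\ell$ (see \cite{DMP2}). This yields
\begin{equation*}
\mathcal{E}_\lambda(Q^{(n)}) \leq \mathcal{E}_\lambda(\widetilde Q^{(n)}) \leq 2h_n\, E_\lambda(\widehat{Q}_\ell;\bbD_\ell) + C(\ell,\rho,\lambda).
\end{equation*}

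For (i), I would slice by Fubini: $\mathcal{E}_\lambda(Q^{(n)})\geq \int_{-h_n+\rho}^{h_n-\rho} E_\lambda(Q^{(n)}(\cdot,x_3);\bbD_\ell)\,dx_3$, where each slice lies in $\mathcal{A}^{\rm sym}_{\overline H}(\bbD_\ell)=\mathcal{A}_{\rm N}\sqcup\mathcal{A}_{\rm S}$. If $Q^{(n)}$ were smooth (torus type), continuity and $\bbS^1$-equivariance along the vertical axis combined with the value $+\eo$ imposed at the poles (read off the homeotropic data on the caps via \eqref{Hsphere}) would force $Q^{(n)}(0,x_3)=+\eo$ throughout, so every slice would sit in $\mathcal{A}_{\rm N}$. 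Theorem~\ref{2d-biaxial-escape}(ii) then provides a strict gap $\inf_{\mathcal{A}_{\rm N}} E_\lambda - E_\lambda(\widehat{Q}_\ell;\bbD_\ell)=\delta>0$ when $\tilde\lambda<\lambda_*$; integrating produces a lower bound strictly larger than the upper bound for $n$ large, a contradiction. Hence $\mathrm{sing}(Q^{(n)})\neq\emptyset$, and the split structure together with $\widetilde\beta\circ Q^{(n)}(\overline{\Omega_n})=[-1,1]$ follows from \cite{DMP2}.

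For (ii), under $\tilde\lambda<\lambda_0$, I would couple the upper bound with a vertical translation/compactness argument. For any sequence $z_n\in[-h_n+h_n^{1/2},h_n-h_n^{1/2}]$, the translates $Q^{(n)}(\cdot,\cdot+z_n)$ have, by Fubini and the upper bound, $W^{1,2}$-energy bounded uniformly in $n$ on every fixed box $\bbD_\ell\times[-R,R]$. A subsequence converges weakly to a limit $Q^\infty$ on $\mathfrak{C}^\infty_\ell$; comparison with the constant-in-$x_3$ competitor $\widehat{Q}_\ell$ combined with saturation of the per-unit-length energy forces $Q^\infty$ to be $x_3$-independent and each slice to attain $E_\lambda(\widehat{Q}_\ell;\bbD_\ell)$. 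Uniqueness in Theorem~\ref{2d-biaxial-escape}(ii) then gives $Q^\infty=\widehat{Q}_\ell$; strong $W^{1,2}_{\rm loc}$ convergence follows from energy saturation, and locally smooth convergence in $\overline{\mathfrak{C}^\infty_\ell}$ from the $\varepsilon$-regularity of Section~\ref{secregth} applied to the smooth limit.

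Finally, for (iii), the smooth convergence in (ii) shows singularities of $Q^{(n)}$ satisfy $h_n-|x_3|\leq\alpha$ for some $\alpha$ independent of $n$, while the boundary $\varepsilon$-regularity provides the complementary bound $h_n-|x_3|\geq 1/\alpha$. A quantization argument (each bubble contributes at least $4\pi$ to the bounded residual $\mathcal{E}_\lambda(Q^{(n)})-2h_n\, E_\lambda(\widehat{Q}_\ell;\bbD_\ell)$) bounds the total number of singularities uniformly in $n$. The odd count on each half-cylinder is a topological consequence of the split classification of \cite{DMP2} applied separately to the upper and lower halves, where the degree of the homeotropic datum on each cap equals one; the identity $Q^{(n)}=-\eo$ between consecutive singularities follows from smooth convergence to $\widehat{Q}_\ell$ (whose axial value is $-\eo$) combined with the split local structure. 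The main obstacle I expect is the sharp upper bound and the matching saturation argument used to identify $Q^\infty$ with $\widehat{Q}_\ell$ slice by slice: handling the cap interpolation with the correct equivariant topology while keeping its cost $O(1)$ uniformly in $n$ is where the odd-degree hypothesis $(HP_3)$ enters crucially.
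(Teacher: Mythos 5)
Your treatment of (i) and (ii) follows essentially the paper's route: a linear upper bound $\mathcal{E}_\lambda(Q^{(n)})\le 2h_n\,\mathfrak{e}_{\lambda\ell^2}+C(\ell,\rho,\lambda)$ via an $x_3$-independent core competitor with $O(1)$ caps, the slice-wise lower bound and the strict gap $\mathfrak{e}_{\lambda\ell^2}<6\pi$ for (i), and compactness plus a rigidity/uniqueness identification of the limit with $\widehat Q_\ell$ for (ii). Two remarks there. The cap interpolation needs no degree matching and no use of (HP$_3$): a $0$-homogeneous equivariant extension of the (Lipschitz, $\bbS^4$-valued) boundary values from an interior point of each cap has finite energy in 3D, so the ``main obstacle'' you flag dissolves. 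Also, the saturation step in (ii) requires either transferring local minimality to the limit or gluing the $x_3$-independent competitor to $Q^{(n)}$ along good horizontal slices; this gluing rests on uniform convergence of the slice traces, i.e.\ the 2D continuity of equivariant $W^{1,2}$ maps (Lemma~\ref{lemma:s1eq-emb}), which is precisely the content of Lemma~\ref{verticalcompactness} and is left implicit in your sketch.

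In (iii) two steps are genuinely missing. First, the confinement $\rmsing(Q^{(n)})\subset\{h_n-\alpha\le|x_3|\le h_n-\frac1\alpha\}$ does not follow from the locally smooth convergence of (ii): that convergence only excludes singularities from fixed bounded boxes, so a singular point at, say, $x_3\simeq h_n/2$ is perfectly compatible with (ii). To exclude it one must translate the minimizers to a would-be far singularity, rerun the compactness and rigidity arguments to see the translated maps converge to the smooth map $\widehat Q_\ell$, and invoke persistence of smoothness (Lemma~\ref{persissmoothloc}) for a contradiction; the complementary bound $h_n-|x_3|\ge 1/\alpha$ likewise needs a blow-up at the cap with a limiting harmonic map in a half-ball, boundary monotonicity and Scheven-type boundary regularity of the limit, before boundary $\varepsilon$-regularity can be applied --- smallness of the scaled energy at a definite scale is not free. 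Second, the bound on ${\rm Card}(\rmsing(Q^{(n)}))$ via ``each bubble costs at least $4\pi$ of the bounded residual'' is not justified: the residual is an excess over the 2D baseline, and the density $4\pi$ at a singular point is only a limit as the radius vanishes, so it yields no uniform per-singularity cost unless singular points are uniformly separated. What is actually needed is a lower bound on the mutual distance of singularities of minimizers with bounded energy (Lemma~\ref{infdistsing}, in the spirit of Almgren--Lieb), combined with the $O(1)$ energy bound near the caps (Corollary~\ref{Corshiftedballs3}). The odd parity on each half and the identity $Q^{(n)}=-\eo$ on $\{x_3\text{-axis}\}\cap\{|x_3|<h_n-\alpha\}$ are fine once confinement is established, via the piecewise-constant axis trace with values in $\{\pm\eo\}$ jumping exactly at singular points.
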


This theorem shows that singularities occur purely for reasons of energy efficiency, in analogy with the case of minimizing harmonic maps into $\bbS^2$ first described in \cite{HL}. 
 Claim (ii) in the  theorem above states that minimizers tend to become two-dimensional (i.e., independent of $x_3$) on each fixed   bounded portion of the (smoothed) cylinder as the height goes to infinity.  For sufficiently thin cylinders (below the critical threshold $\sqrt{\lambda_0/\lambda}$), 2D minimizers on the cross sections assume the value $-{\bf e}_0$ at the origin by Theorem~\ref{2d-biaxial-escape}, so that negative uniaxiality must occur  on the symmetry axis for 3D minimizers. This property, in combination with the boundary data, forces the presence of point singularities,  and thus the split structure. Finally, according to (iii),  singularities have to escape to infinity along the symmetry axis in a certain quantitative way, whereas full regularity on each fixed bounded portion of the cylinders is inherited from the limiting map.   From the presence of singularities, we derive in Corollary~\ref{inst+symmbreaking} the instability 
of minimizers over  $\mathcal{A}^{\rm sym}_{Q_{\rm b}^{(n)}}(\Omega_n)$ in the full class $ \mathcal{A}_{Q_{\rm b}^{(n)}}(\Omega_n)$. As a consequence, minimizers of $\mathcal{E}_\lambda$ over $ \mathcal{A}_{Q_{\rm b}^{(n)}}(\Omega_n)$ are not symmetric and non uniqueness holds, in  
analogy with our previous result \cite[Corollary~7.15]{DMP2}.  Such symmetry breaking phenomena were already proved in \cite{AlLi} and \cite{HKL}   for minimizing harmonic maps into $\mathbb{S}^2$ (i.e., for the Frank-Oseen model).  Hence, our result is a natural counterpart for the Landau-de Gennes model, in agreement with the numerical simulations in \cite{DLR}. 

 The proof of Theorem~\ref{thm:vertical-cylinders} relies on various energy identities leading to uniform a priori bounds and compactness properties. But the heart of  the matter is a 2D-rigidity result for local minimizers in infinite cylinders, see Proposition~\ref{vertical-liouville}. Relying on the 2D-uniqueness property in Theorem~\ref{2d-biaxial-escape}, we obtain $x_3$-independence by constructing comparison maps  with optimal energy growth, and to this purpose  it is crucial to assume that the cylinders are sufficiently thin.   Our analysis also shows that the number of singularities is bounded and that, near each tip of the cylinder, there must be an odd number of them. It remains an open question whether or not there is exactly one singular point near each tip for $h$ large enough.
\vskip5pt

The next result describes the asymptotic behaviour of   minimizers over the equivariant class in the opposite regime $h \ll \ell$. It shows that for such ``pancake shaped'' domains the minimizing configurations must be smooth, hence of torus type in the sense of \cite{DMP2}.

\begin{theorem}\label{thmlargecyl}
	Let $\lambda \geq 0$ be a fixed number.  Given $0<2\rho<h$ and an increasing sequence $\ell_n\to+\infty$ satisfying $\ell_n>\sqrt{2}h$, set $\Omega_n:=\mathfrak{C}^h_{\ell_n,\rho}$ and let $Q_{\rm b}^{(n)}$ be the homeotropic boundary data on~$\partial \Omega_n$. If, for each $n$,  $Q^{(n)}$ is a minimizer of $\mathcal{E}_\lambda$ over $\mathcal{A}^{\rm sym}_{Q_{\rm b}^{(n)}}(\Omega_n)$, then the following statements hold for $n$ large enough. 
\vskip5pt

\begin{enumerate}
\item[(i)] (Torus Structure) We have $\rmsing{(Q^{(n)})} = \emptyset$. As a consequence, $Q^{(n)}$ is of torus type,  
$\beta_n:=\widetilde{\beta}\circ Q^{(n)}$ satisfies $\beta_n(\overline{\Omega_n})=[-1,1]$, and the level set $\{\beta_n = -1\}$ contains an invariant horizontal circle mutually linked to the vertical axis. 
\vskip5pt

\item[(ii)]  (Asymptotic Behaviour)  $Q^{(n)}\to \eo$ strongly in $W^{1,2}_{\rm loc}(\mathfrak{C}^h_\infty)$ and in fact, locally smoothly in $\overline{\mathfrak{C}^h_\infty}$ as $n\to+\infty$.
\vskip5pt

\item[(iii)] (Biaxiality Ejection) For any $t \in [-1,1)$, there exist $n_t \in \N$ and a value $d_t > 0$ independent of $n$ such that $\{\beta_n \leq t \} \cap \mathfrak{C}_{\ell_n-d_t}^h=\emptyset$ for any $n \geq n_t$. 
\end{enumerate}
\end{theorem}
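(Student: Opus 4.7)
The plan is to mirror, with vertical and horizontal roles swapped, the strategy of Theorem~\ref{thm:vertical-cylinders}: first derive an explicit energy upper bound linear in $\ell_n$, then run a sliced comparison forcing $Q^{(n)}\to\eo$ locally, and finally invoke the $\varepsilon$-regularity and topological classification of \cite{DMP1,DMP2} to conclude smoothness and the torus structure. The crucial 2D input is Theorem~\ref{2d-biaxial-escape}(i): for $\lambda\ell^2>\lambda_*$, the unique minimizer of $E_\lambda$ over $\mathcal{A}^{\rm sym}_{\overline H}(\bbD_\ell)$ is the positively uniaxial map $\overline{Q}_\ell$, and one checks from its explicit form that $\overline{Q}_\ell\to\eo$ locally smoothly as $\ell\to\infty$.

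For the upper bound, I would construct an explicit $\bbS^1$-equivariant competitor $\widehat{Q}^{(n)}\in\mathcal{A}^{\rm sym}_{Q_{\rm b}^{(n)}}(\Omega_n)$ equal to $\eo$ on the bulk of $\Omega_n$ and interpolating to the homeotropic boundary data via a geodesic in $\bbS^4$ inside a tubular collar of width one around the lateral part of $\partial\Omega_n$ (with minor adjustments at the smoothed corners). Since $Q_{\rm b}^{(n)}=\eo$ on the top and bottom, no transition is needed there, and a direct estimate yields $\mathcal{E}_\lambda(\widehat{Q}^{(n)})\leq C\ell_n$ for some $C=C(h,\rho,\lambda)$; hence $\mathcal{E}_\lambda(Q^{(n)})\leq C\ell_n$ by minimality.

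To establish (ii), I would fix $R>0$ and, for $n$ with $\ell_n>2R$, use Fubini and a pigeonhole argument on the cylindrical shell $\{R<r<\ell_n/2\}$, together with the $O(\ell_n)$ energy bound, to find a good radius $r_n\in(R,\ell_n/2)$ along which $Q^{(n)}|_{\partial\mathfrak{C}^h_{r_n}}$ is quantitatively close to $\eo$ in a trace norm. The key geometric point is that each horizontal section $Q^{(n)}(\cdot,x_3)$ with $|x_3|<h-\rho$ belongs to $\mathcal{A}^{\rm sym}_{\overline H}(\bbD_{\ell_n})$ and, by smoothness of $Q^{(n)}$ (obtained a posteriori) together with continuity on the vertical axis forced by the top/bottom datum $\eo$, in fact to the class $\mathcal{A}_{\rm N}$; Theorem~\ref{2d-biaxial-escape}(i) then yields $E_\lambda(Q^{(n)}(\cdot,x_3);\bbD_{\ell_n})\geq E_\lambda(\overline{Q}_{\ell_n};\bbD_{\ell_n})$. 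Combined with the upper bound of the previous paragraph, this forces the cross-sections to be close to $\overline{Q}_{\ell_n}$ in the bulk of each disk and hence close to $\eo$ on $\mathfrak{C}^h_R$. A cut-and-paste comparison against the competitor equal to $\eo$ on $\mathfrak{C}^h_{r_n-1}$, interpolated geodesically in the annular layer $\{r_n-1<r<r_n\}$, and equal to $Q^{(n)}$ outside then forces $\mathcal{E}_\lambda(Q^{(n)};\mathfrak{C}^h_R)\to 0$. Smooth convergence in $\overline{\mathfrak{C}^h_\infty}$ follows from $\varepsilon$-regularity and Schauder estimates.

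For claim (i), for $n$ large the local smallness of the energy on compact subsets, together with the interior and boundary $\varepsilon$-regularity of \cite{DMP1,DMP2}, forces $\rmsing(Q^{(n)})=\emptyset$; the torus-type biaxial structure ($\beta_n(\overline{\Omega_n})=[-1,1]$ and the existence of an invariant horizontal disclination circle linked with the vertical axis) is then the topological conclusion of \cite[Theorems~1.4 \&~1.5]{DMP2} under assumptions ($HP_1$)--($HP_3$). For (iii), smooth local convergence gives $\widetilde\beta(Q^{(n)})\to\widetilde\beta(\eo)=1$ uniformly on every fixed compact subset of $\overline{\mathfrak{C}^h_\infty}$, so a quantitative analysis of the uniform boundary-layer width near $\{r=\ell_n\}$, depending only on $h$, $\lambda$ and $t$ (and not on $n$), selects a $d_t>0$ with $\{\beta_n\leq t\}\cap\mathfrak{C}^h_{\ell_n-d_t}=\emptyset$ for $n\geq n_t$. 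The most delicate part of the proof is the quantitative coupling in (ii): simultaneously selecting the good radius $r_n$ and linking the 2D sectional information of Theorem~\ref{2d-biaxial-escape} to the 3D minimality via the axis value $+\eo$, so that the sliced comparison indeed closes up to vanishing of the local energy.
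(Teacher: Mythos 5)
Your skeleton (linear upper bound via an explicit competitor, a good radius, a cut-and-paste to $\eo$, compactness, $\varepsilon$-regularity, and \cite[Theorem 1.4]{DMP2} for the torus structure) matches the paper's, but the two mechanisms you put at the center do not close. First, the 2D sectional argument is circular and quantitatively empty: to place the slices $Q^{(n)}(\cdot,x_3)$ in $\mathcal{A}_{\rm N}$ you need $Q^{(n)}=\eo$ on the axis, i.e.\ absence of singularities, which is precisely claim (i) that you propose to deduce afterwards from (ii) (in the long-cylinder regime the slices are in $\mathcal{A}_{\rm S}$, so this is not a formality); even granting it, the resulting lower bound $\int_{-h+\rho}^{h-\rho}E_\lambda(Q^{(n)}(\cdot,x_3);\bbD_{\ell_n})\,dx_3\geq 12\pi(h-\rho)$ is $O(1)$ while your upper bound is $C\ell_n$ (the lateral boundary layer itself costs order $\ell_n$), so nothing forces the cross-sections to be close to 2D minimizers; and for $\lambda=0$, which is allowed in the statement, your ``crucial 2D input'' is false, the unique minimizer over $\mathcal{A}^{\rm sym}_{\overline H}$ being $u_{\rm S}$ (Proposition \ref{ASminimization}), not a uniaxial map. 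Second, the cut-and-paste at the good radius does not by itself give $\mathcal{E}_\lambda(Q^{(n)},\mathfrak{C}^h_R)\to 0$: pigeonholing on the shell $(R,\ell_n/2)$ gives $\varepsilon_n:=\tfrac{1}{r_n}\mathcal{E}_\lambda(Q^{(n)},\partial^{\rm lat}\mathfrak{C}^h_{r_n})\lesssim \ell_n^{-1}$ with $r_n$ possibly of order $\ell_n$, and the comparison then yields only $\mathcal{E}_\lambda(Q^{(n)},\mathfrak{C}^h_{r_n})\lesssim \varepsilon_n r_n=O(1)$. To localize this smallness to a fixed cylinder $\mathfrak{C}^h_R$ one needs the radial monotonicity of $r\mapsto \tfrac1r\mathcal{E}_\lambda(Q^{(n)},B_r\cap\Omega_n)$ for $r>h$, valid here because $Q^{(n)}_{\rm b}=\eo$ on the top/bottom so the extra boundary term has a favourable sign (Lemma \ref{monotformpacake}); this monotonicity formula, entirely absent from your proposal, is the real engine of the paper's proof, which runs through Lemma \ref{lemma:energy-estimates-large}, the compactness Lemma \ref{lemma:local-hor-cpt}, the sublinear estimate of Lemma \ref{lemma:lin-sublin}, and the rigidity Corollary \ref{cor:rigidity-large}, whose conclusion is that the limit is the constant $\eo$ -- no 2D result from Section \ref{2Dminimization} is used.

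Claim (iii) is essentially not addressed. Locally smooth convergence to $\eo$ only excludes $\{\beta_n\leq t\}$ from each fixed compact $\mathfrak{C}^h_R$ for $n\geq n_{R,t}$; the assertion is the much stronger statement that the biaxial set stays within a distance $d_t$ of the lateral boundary $\{r=\ell_n\}$ \emph{uniformly in $n$}, and ``a quantitative analysis of the uniform boundary-layer width'' is a restatement of this claim, not an argument. In the paper this step needs the $n$-independent bound $\mathcal{E}_\lambda\big(Q^{(n)},B_{\ell_n-d_\varepsilon}\cap\mathfrak{C}^h_{\ell_n-\rho}\big)\leq C_*\varepsilon(\ell_n-d_\varepsilon)$ from Lemma \ref{lemma:lin-sublin}, combined with the monotonicity formula and equivariance (comparing a ball centred off the axis with the invariant solid torus it sweeps out) to get, via $\varepsilon$-regularity, a uniform gradient bound on $B_{\ell_n-2d_\varepsilon}\cap\mathfrak{C}^h_{\ell_n-\rho}$, and then a contradiction between the potential energy $\gtrsim \lambda c_t\,\ell_t$ carried by an invariant torus around a putative point with $\beta_n\leq t$ and the smallness $C_*\varepsilon$ with $\varepsilon<\lambda c_t/(2C_*)$. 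None of this follows from what you wrote; as it stands (iii) is unproven, and through the missing monotonicity so are (ii) and hence (i).
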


According to claim (ii),  minimizers approach the constant map $\eo$ on each fixed bounded portion of the cylinder as the width increases to infinity. Indeed, 
the influence of the nonconstant part of the boundary data, which is present only on the curved part of $\partial \Omega_n$, fades as this curved part is sent to infinity when $\ell_n\to+\infty$. Then, full regularity near the symmetry axis (and hence everywhere) is inherited from the limiting map, whence the torus type structure. 
 Furthermore,  the local smooth convergence to a constant uniaxial map pushes the biaxial sets to infinity, in such a way that they remain at finite distance from the lateral boundary.

The proof of Theorem~\ref{thmlargecyl} also relies on monotonicity formulae, local energy bounds and compactness arguments.  The first key estimate is a linear law for the growth of the total energy with respect to $\ell$ obtained through comparison maps. Refining it into a sublinear estimate slightly in the interior (see Lemma \ref{lemma:lin-sublin})  leads to the constancy of the limiting map and to a uniform bound for the distance of the biaxial sets from the lateral boundary.

\medskip
In   our last main result, we discuss how the nature of  minimizers of $\mathcal{E}_\lambda$ over the symmetric class changes under deformations of the domain, in analogy with and complementing Theorem~\ref{main-coexistence} when varying the boundary data.   Theorem \ref{thmcoexmovdom} below refines the conclusions in Theorem~\ref{thm:vertical-cylinders} \&~\ref{thmlargecyl}, and it shows how the transition from the ``cigar shape'' to the ``pancake shape'' naturally leads to coexistence of torus and split minimizers under homeotropic boundary data for domains of suitable limiting size.   More precisely, starting from a cigar shape domain provided by Theorem~\ref{thm:vertical-cylinders} where any minimizer is of split type, and then enlarging it sufficiently we arrive at a pancake shape where any minimizer is of torus type by~Theorem \ref{thmlargecyl}. Then Theorem~\ref{thmcoexmovdom} shows that split and torus minimizers must coexist in some domains of  intermediate size.The proof is similar in  spirit to the one for Theorem~\ref{main-coexistence} and it is still based on persistence of smoothness and persistence of singularities.

\begin{theorem}\label{thmcoexmovdom}
Let $\lambda\geq 0$ and $h,\ell_0,\rho>0$ be fixed numbers such that $2\rho<\ell_0/6$ and $\ell_0<3h$. For  $\ell\geq \ell_0$, set $\Omega_\ell:=\mathfrak{C}^h_{\ell,\rho}$ and let $Q^{(\ell)}_{\rm b}$ be the homeotropic boundary data on $\partial\Omega_\ell$. 
Assume that every minimizer of $\mathcal{E}_\lambda$ over $\mathcal{A}^{\rm sym}_{Q_{\rm b}^{(\ell_0)}}(\Omega_{\ell_0})$ is of split type (i.e., it has a non empty singular set). 
Then there exist numbers $\ell_2\geq \ell_1>\ell_0$ such that 
\begin{enumerate}
\item[(i)] for every $\ell_0\leq \ell<\ell_1$, every minimizer of $\mathcal{E}_\lambda$ over $\mathcal{A}^{\rm sym}_{Q_{\rm b}^{(\ell)}}(\Omega_{\ell})$ is of split type (i.e., singular); 
\vskip5pt
\item[(ii)] for every $\ell>\ell_2$, every minimizer of $\mathcal{E}_\lambda$ over $\mathcal{A}^{\rm sym}_{Q_{\rm b}^{(\ell)}}(\Omega_{\ell})$ is of torus type (i.e., smooth); 
\vskip5pt
\item[(iii)] for $\ell\in\{\ell_1,\ell_2\}$, $\mathcal{E}_\lambda$ admits both a split and a torus minimizer over $\mathcal{A}^{\rm sym}_{Q_{\rm b}^{(\ell)}}(\Omega_{\ell})$. 
\end{enumerate}
\end{theorem}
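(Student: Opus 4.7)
The plan follows the overall strategy of Theorem~\ref{main-coexistence}, now with the radius $\ell\geq \ell_0$ playing the role of the running parameter rather than a curve of boundary data. A first preliminary step is to transport the variable-domain problem to a fixed-domain one: since all $\Omega_\ell=\mathfrak{C}^h_{\ell,\rho}$ share the same height $2h$ and corner smoothing radius $\rho$, one can construct an explicit smooth family of $\mathbb{S}^1$-equivariant diffeomorphisms $\Phi_\ell\colon \overline{\Omega_{\ell_0}}\to\overline{\Omega_\ell}$, depending continuously on $\ell\geq\ell_0$, essentially rescaling the horizontal radial variable and leaving the vertical coordinate unchanged away from the smoothed corners, and mapping the homeotropic boundary datum on $\partial\Omega_{\ell_0}$ to the one on $\partial\Omega_\ell$. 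Pulling back by $\Phi_\ell$ turns $\mathcal{E}_\lambda$ on $\Omega_\ell$ into an $\ell$-dependent energy $\mathcal{F}_\ell$ on the fixed domain $\Omega_{\ell_0}$ with fixed boundary condition, and $\Phi_\ell$ preserves smoothness vs.\ singularity, hence the torus/split dichotomy.

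With this reduction, the argument parallels that of Theorem~\ref{decompobdspacethm}. Define
\begin{equation*}
B:=\big\{\ell\geq \ell_0 : \text{every minimizer of } \mathcal{E}_\lambda \text{ over } \mathcal{A}^{\rm sym}_{Q_{\rm b}^{(\ell)}}(\Omega_{\ell}) \text{ is of split type}\big\}\,,
\end{equation*}
\begin{equation*}
A:=\big\{\ell\geq \ell_0 : \text{every minimizer of } \mathcal{E}_\lambda \text{ over } \mathcal{A}^{\rm sym}_{Q_{\rm b}^{(\ell)}}(\Omega_{\ell}) \text{ is of torus type}\big\}\,.
\end{equation*}
The key claim is that both $A$ and $B$ are relatively open in $[\ell_0,\infty)$. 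Openness of $B$ reduces to \emph{persistence of smoothness}: if $\ell_n\to \ell^*$ and each $\ell_n$ admits a smooth minimizer $Q^{(n)}$, then the pullbacks $Q^{(n)}\circ \Phi_{\ell_n}$ are bounded in $W^{1,2}(\Omega_{\ell_0})$ (via comparison with a suitable fixed competitor) and, up to subsequence, converge strongly in $W^{1,2}$ to a smooth minimizer of $\mathcal{F}_{\ell^*}$, by the regularity theory of Section~\ref{secregth}. Openness of $A$ follows analogously from \emph{persistence of singularities}. Both properties rely on the $\varepsilon$-regularity and tangent-map classification of Section~\ref{secregth}, together with unique continuation on the symmetry axis, applied with coefficients depending smoothly on $\ell$ and uniformly bounded on compact subintervals.

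Once openness of $A$ and $B$ is granted, set
\begin{equation*}
\ell_1:=\inf\big\{\ell\geq \ell_0: \ell\notin B\big\}\,,\qquad \ell_2:=\sup\big\{\ell\geq \ell_0: \ell\notin A\big\}\,.
\end{equation*}
The hypothesis at $\ell=\ell_0$ together with openness of $B$ yields $\ell_1>\ell_0$, Theorem~\ref{thmlargecyl} gives $\ell_2<\infty$, and the disjointness $A\cap B=\emptyset$ gives $\ell_1\leq \ell_2$. Claims~(i) and~(ii) are now immediate from $[\ell_0,\ell_1)\subset B$ and $(\ell_2,\infty)\subset A$. For (iii) at $\ell=\ell_1$: since $[\ell_0,\ell_1)\subset B$, for any sequence $\ell_n\nearrow \ell_1$ and any split minimizers $Q^{(n)}$ at $\ell_n$, persistence of singularities produces a split minimizer at $\ell_1$; on the other hand, closedness of $B^c$ forces $\ell_1\in B^c$, so some minimizer at $\ell_1$ is smooth. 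The same argument applied at $\ell_2$, using closedness of $A^c$ and a sequence $\ell_n\searrow \ell_2$ of smooth minimizers, yields coexistence at $\ell_2$.

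The main obstacle is the uniform-in-$\ell$ verification of the two persistence properties: namely, that the $\varepsilon$-regularity thresholds and the tangent-map analysis of Section~\ref{secregth} transport uniformly under the diffeomorphisms $\Phi_\ell$ on compact ranges of $\ell$. Once this uniform regularity theory is in place, the remaining argument is a topological separation on $[\ell_0,\infty)$ entirely analogous to the one for Theorem~\ref{main-coexistence} via Theorem~\ref{decompobdspacethm}.
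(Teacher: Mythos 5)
Your proposal is correct and follows essentially the same route as the paper: compactness of minimizers under thickness perturbation (the paper's Lemma~\ref{complemmovdom}, which is where the diffeomorphisms $\Phi_\ell$ actually appear), persistence of smoothness (Proposition~\ref{perssmoothmovdom}) and of singularities (Proposition~\ref{perssingmovdom}) for moving cylinders, plus Theorem~\ref{thmlargecyl} for finiteness. The only cosmetic difference is that you phrase the argument as openness of the two sets $A,B$ and closedness of their complements (echoing Theorem~\ref{decompobdspacethm}), whereas the paper runs a direct $\sup/\inf$ argument along increasing and decreasing sequences $\ell_n\to\ell_1,\ell_2$; also, the paper does not fully transport the regularity analysis to a fixed domain via $\Phi_\ell$ as your preliminary step suggests, but instead tracks $C^2$-convergence on a fixed interior control surface $\partial\Omega_{\ell_\flat/2}$ and reduces to the fixed-domain Corollary~\ref{corolpersissmooth}, which sidesteps the uniform-in-$\ell$ transport of $\varepsilon$-regularity constants that you flag as the main obstacle.
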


In the previous statement, we actually expect that $\ell_1=\ell_2$, i.e., only one critical size of the domain provides the coexistence property, but it seems to be a quite difficult problem.  
Existence of a singular minimizer in the symmetric class at the intermediate sizes $\ell=\ell_1$ and $\ell=\ell_2$ indicates once again that  a symmetry breaking occurs for global minimizers of $\mathcal{E}_\lambda$ over the global class  $\mathcal{A}_{Q_{\rm b}^{(\ell)}}(\Omega_{\ell})$. We shall prove in Corollary \ref{symbreakintermcyl} that symmetry breaking still occurs in a neighborhood of $\ell=\ell_1$ and  $\ell=\ell_2$, even for $\ell>\ell_2$ when all minimizers in the symmetric class are smooth. This fact enlightens the difficulty of proving or disproving axial symmetry of  minimizers over the full class. For instance, it would be already very interesting to determine whether or not minimizers of $\mathcal{E}_\lambda$ over $\mathcal{A}_{Q_{\rm b}^{(\ell)}}(\Omega_{\ell})$ are actually $\mathbb{S}^1$-equivariant for $\ell\gg\ell_2$ large enough. 
\vskip3pt

To conclude, we would like to mention that all the results presented here should have an analogue when the Lyuksyutov constraint is replaced by the Lyuksyutov (asymptotic)  regime as in \cite[Section 4]{DMP1},  and isotropic points playing the role of singular points. This will be the object of future investigations. 
\vskip10pt

\noindent \textbf{Acknowledgements.} F.D. and A.P. would like to warmly thank V.M. for kind invitation to visit Universit\'e Paris Diderot (currently, Universit\'e de Paris) in March~2019, where this third part of our project was initiated. 



\section{Axisymmetric domains, symmetric criticality, and Euler-Lagrange equations}\label{sec:preliminaries}

	\subsection{Axially symmetric domains}

In  this preliminary subsection, we define the relevant class of cylindrical domains of interest in the present paper. For geometric and topological properties of arbitrary axisymmetric domains $\Omega \subset \R^3$, we refer to \cite[Section 2]{DMP2}.
\vskip3pt

 First, we recall that  the unit circle $\mathbb{S}^1$ is identified with the subgroup of $\SO(3)$ made of all rotations around the vertical $x_3$-axis (see \eqref{rotmatr}),
  so that a matrix $R\in\mathscr{M}_{3\times 3}(\mathbb{R})$ represents a rotation of angle $\theta$ around the vertical axis iff it writes
\begin{equation}\label{rotmatr}
R=\begin{pmatrix} \widetilde R & 0 \\ 
  0 & 1 \end{pmatrix}
\quad\text{with}\quad \widetilde R:= \begin{pmatrix} \cos\theta & -\sin\theta  \\ \sin\theta & \cos\theta  \end{pmatrix}\,. 
\end{equation}
Axisymmetry is defined  accordingly.
 
 \begin{definition}
 \label{domain-section}
 A set $\Omega\subset \R^3$ is said to be \emph{axisymmetric} (or \emph{$\bbS^1$-invariant}, or \emph{rotationally symmetric}) if it is invariant under the action of $\mathbb{S}^1$, i.e., $R\cdot\Omega=\Omega$ for every $R\in \mathbb{S}^1$. Equivalently, $\Omega$ is axisymmetric if  
 $$\Omega=\bigcup_{R\in\mathbb{S}^1}R\cdot\mathcal{D}_\Omega \quad\text{where}\quad\mathcal{D}_\Omega:=\Omega\cap\{x_2=0\}\,.$$
 \end{definition}
 
For such domains, it is also useful to consider 
 the (relatively) open subsets 
\begin{equation}\label{vertsectsymdom}
\mathcal{D}^+_\Omega:=\mathcal{D}_\Omega\cap\{x_1>0\} \text{ and } \mathcal{D}^-_\Omega:=\mathcal{D}_\Omega\cap\{x_1<0\}
\end{equation}
 of the vertical plane $\{x_2=0\}$, so that $R_\pi\mathcal{D}^{\pm}_\Omega=\mathcal{D}^{\mp}_\Omega$. Indeed, if $I=\Omega \cap \{ x_3\hbox{-axis}\}$ then the following obvious identities hold:

\begin{equation}
\label{reconstruction}	
 \Omega \setminus I=\mathbb{S}^1 \cdot \mathcal{D}^+_\Omega \,  , \qquad \partial \Omega \cup I=  \mathbb{S}^1 \cdot \partial \mathcal{D}^+_\Omega \, , \qquad \overline{\Omega} =\mathbb{S}^1 \cdot \overline{\mathcal{D}^+_\Omega} \, , 
\end{equation}
with $\partial \mathcal{D}^+_\Omega \subset \overline{\mathcal{D}^+_\Omega} \subset \{ x_2=0\}$. Note that if $\Omega\subset \R^3$ is a bounded and smooth open set
then $\mathcal{D}_\Omega$ (or $\mathcal{D}^\pm_\Omega$) is a bounded and smooth (resp. piecewise smooth and Lipschitz) relatively open subset of the plane $\{x_2=0\}$. 

 \begin{remark}[{\bf homeotropic boundary data}]
We observe that if $\Omega$ is axisymmetric and $C^3$-smooth (resp. $C^{k,\alpha}$-smooth with $k\geq 3$), then the same property holds for the function  given by the signed distance to the boundary. Hence its gradient  is an $\mathbb{S}^1$-equivariant map, and in particular the outer normal field $\overrightarrow{n}(x)$ along $\partial \Omega$ is $C^2$-smooth (resp. $C^{k-1,\alpha}$-smooth) and equivariant. As a consequence, the corresponding {\sl homeotropic boundary data}  given by 
\begin{equation}
\label{eq:radial-anchoring}
		Q_{\rm b}(x) := \left( \overrightarrow{n}(x) \otimes \overrightarrow{n}(x) - \frac{1}{3}{\rm Id} \right)
	\end{equation}
	is $C^2$-smooth (resp. $C^{k-1,\alpha}$-smooth) and equivariant.  
\end{remark}

We shall be mainly concerned with axisymmetric domains $\Omega \subset \R^3$ which are homeomorphic to a cylinder. To define properly those domains, let us first set some useful notations. 
\vskip5pt

\noindent{\bf Notation} {\bf (rectangles \& cylinders).} Let $h,\ell\in(0,\infty]$ and $y\in\R^3$. 
\begin{itemize}
\item[(i)] The rectangle $\mathfrak{R}_\ell^h$ centered at the origin and the rectangle $\mathfrak{R}_\ell^h(y)$ centered at $y \in \{x_2=0\}$ are the sets 
\begin{equation}
\label{def:rectangles}	
\mathfrak{R}_\ell^h :=(-\ell,\ell)\times \{0\} \times (-h,h) \quad\text{and}\quad \mathfrak{R}_\ell^h (y):=y+ \mathfrak{R}_\ell^h \,.
\end{equation}
\item[(ii)] The  cylinder $\mathfrak{C}_\ell^h$ centered at the origin and the  cylinder  $\mathfrak{C}_\ell^h(y)$  centered at $y\in \R^3$ are the sets 
\begin{equation}
\label{def:cylinders}	
\mathfrak{C}_\ell^h := \big\{ x_1^2+x_2^2<\ell^2\big\} \times\{ |x_3|<h \}\, , \qquad \mathfrak{C}_\ell^h (y):=y+ \mathfrak{C}_\ell^h \,.
\end{equation}
\end{itemize}
We shall refer to $h$ as the \emph{height} and $\ell$ as the \emph{thickness} (or radius) of a cylinder.
 \vskip5pt

In order to apply our boundary regularity theory in \cite{DMP2} for energy minimizers under $\mathbb{S}^1$-symmetry constraint,  we need to consider some regularized version of the cylinders in \eqref{def:cylinders}. 
To define those, we first recall that for $p\in (1,\infty)$, a \emph{$p$-disc}  centered at $y=(y_1,0,y_3)$ and radius $\rho>0$ included in the vertical plane $\{x_2 = 0\}$ is a set of the form
$$ D^{(p)}_\rho(y) := \big\{ x=(x_1,0,x_3) \in \R^3 : \left( \abs{x_1-y_1}^p + \abs{x_3-y_3}^p \right)^{1/p} < \rho \big\}\,.$$
We shall use $p$-discs with $p=4$ to obtain inner $C^3$-regularizations of rectangles and cylinders. The scale of regularization $\rho>0$ will usually be a fixed number to be explicitly specified in terms of $h$ and $\ell$ in the calculations.
 
 \begin{definition}[\bf smoothed rectangles \& cylinders]\label{def:smooth-cyl}
 Let $h,\ell>0$ and $0<2\rho< \min \{h,\ell\}$.
 \begin{itemize}
	\item[($i$)] For vertical rectangles $\mathfrak{R}_\ell^h$ (resp. $\mathfrak{R}_\ell^h(y)$) as in \eqref{def:rectangles}, the corresponding \emph{smoothed $\rho$-rectangle} $\mathfrak{R}_{\ell,\rho}^h$ (resp. $\mathfrak{R}_{\ell,\rho}^h(y)$) is the union of all $4$-discs $D_\rho^{(4)}(z)$, $z=(z_1,z_3) \in \{x_2=0\}$, contained in $\mathfrak{R}_\ell^h$ (resp. $\mathfrak{R}_\ell^h(y)$). 
 	\item[($ii$)] For vertical cylinders $\mathfrak{C}_\ell^h$ and $\mathfrak{C}_\ell^h(y)$ as in \eqref{def:cylinders}, the corresponding \emph{smoothed $\rho$-cylinder} $\mathfrak{C}^{h}_{\ell,\rho}$ and  $\mathfrak{C}^{h}_{\ell,\rho}(y)$, $y\in \R^3$, are defined as
 	 \[
 	 	\mathfrak{C}^h_{\ell,\rho} := \bigcup_{R \in \bbS^1} R \cdot \mathfrak{R}^h_{\ell,\rho}\, , \qquad  \mathfrak{C}^{h}_{\ell,\rho}(y):=y+ \mathfrak{C}^{h}_{\ell,\rho} \, .
 	 \]
 \end{itemize}
The radius $\rho$ is called \emph{smoothing scale} of $\mathfrak{R}^h_\ell$ and $\mathfrak{C}^h_\ell$. When it is not relevant, we shall simply speak of \emph{smoothed rectangles} and \emph{smoothed cylinders}.
 \end{definition}
In view of the previous definition, $\mathfrak{C}^{h}_{\ell,\rho}$ is axially symmetric and the same holds for $\mathfrak{C}^{h}_{\ell,\rho}(y)$ if and only if $y$ belongs to the vertical axis, i.e., $y=(0,0,y_3)$, $y_3\in \R$. Moreover, $\mathfrak{C}^{h}_{\ell,\rho}(y) \cap \{x_2=0\}=\mathfrak{R}^h_{\ell,\rho}(y)$ whenever $y \in \{x_2=0\}$.
 
 \begin{remark}
 \label{rmk:rho-fine-reg} 
The boundary of a smooth rectangle is of class $C^{3,1}$ by our choice of $D^{(p)}_\rho(y)$ with $p=4$  (more generally, it is of class $C^{p -1,1}$ for each integer $p\geq 2$). The radius $\rho>0$ of the approximating discs gives the size of the region near the angles on which smoothing takes place. In addition, it is straightforward to check that
 $\mathfrak{R}^h_{\ell,\rho} \uparrow \mathfrak{R}^h_\ell$ and $\mathfrak{C}^h_{\ell,\rho} \uparrow \mathfrak{C}^h_\ell$ (and similarly for their translated counterparts) in the Hausdorff distance as $\rho \downarrow 0$ as a consequence of the elementary inclusions (recall that $0<2\rho<\min \{h,\ell\}$)
\begin{equation}\label{eq:RC-inclusions} \mathfrak{R}^h_{\ell-\rho} \cup \mathfrak{R}^{h-\rho}_\ell\subset \mathfrak{R}^h_{\ell,\rho} \subset \mathfrak{R}^h_\ell \, , \qquad \mathfrak{C}^h_{\ell-\rho} \cup \mathfrak{C}^{h-\rho}_\ell\subset \mathfrak{C}^h_{\ell,\rho} \subset \mathfrak{C}^h_\ell\, , \end{equation}
and the obvious analogues for their translated counterparts.
 \end{remark}


\subsection{Decomposition of $\mathcal{S}_0$ into invariant subspaces}

In order to give an efficient description of $\bbS^1$-equivariant  configurations, we will use the following decomposition results from \cite[Section~2]{DMP2} for the space $\mathcal{S}_0$ of admissible tensors.

\begin{lemma}[{\cite[Lemmas~2.1 \& 2.2, and Remark~2.3]{DMP2}}]\label{lemma:decomposition-S0}
There is a distinguished orthonormal basis $\big\{\eo, \euu, \eud, \edu, \edd\big\}$ of $\mathcal S_0$ given by 
\begin{multline}\label{eq:basis-S0}
\eo := \frac{1}{\sqrt 6}\begin{pmatrix} -1 & 0 & 0 \\ 0 & -1 & 0 \\ 0 & 0 & 2 \end{pmatrix}\,,\;\euu := \frac{1}{\sqrt 2}\begin{pmatrix} 0 & 0 & 1 \\ 0 & 0 & 0 \\ 1 & 0 & 0 \end{pmatrix}\,, \;\eud := \frac{1}{\sqrt 2}\begin{pmatrix} 0 & 0 & 0 \\ 0 & 0 & 1 \\ 0 & 1 & 0 \end{pmatrix}\,,\\[5pt]
 \edu:= \frac{1}{\sqrt 2}\begin{pmatrix} 1 & 0 & 0 \\ 0 & -1 & 0 \\ 0 & 0 & 0 \end{pmatrix}\,, \; \edd: = \frac{1}{\sqrt 2}\begin{pmatrix} 0 & 1 & 0 \\ 1 & 0 & 0 \\ 0 & 0 & 0 \end{pmatrix}\,,\qquad\qquad\qquad\qquad
\end{multline}
such that the subspaces
$$L_0 := \bb{R}\eo\,,\quad L_1:= \bb{R} \euu \oplus \bb{R} \eud\,,\quad L_2 := \bb{R} \edu \oplus \bb{R} \edd\,,$$
are invariant under the induced action of $\bb{S}^1$ on $\mathcal{S}_0$, namely, $\mathcal{S}_0 \ni A \mapsto RAR^{\rm t} \in \mathcal{S}_0$, and 
\begin{equation}
		\label{eq:decomposition-S0}
		\mathcal{S}_0 =L_0 \oplus L_1 \oplus L_2 \simeq \R \oplus \C \oplus \C\,.
\end{equation}
	Moreover, the $\bb{S}^1$-action on $\mathcal{S}_0$ corresponds to an $\bbS^1$-action on each $L_k$ by rotations of degree $k$, in the sense that the induced $\bbS^1$-action on $\R \oplus \C \oplus \C$ is given by
	\begin{equation}\label{eq:deg-action}
		R_\alpha \cdot (t, \zeta_1, \zeta_2) = (t, e^{i\alpha} \zeta_1, e^{2i\alpha}\zeta_2) \quad \forall R_\alpha \in \bbS^1\,.
	\end{equation}
\end{lemma}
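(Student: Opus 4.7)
My plan is to prove the lemma by direct linear-algebraic computation, viewing the statement as the decomposition of the $\bbS^1$-representation on $\mathcal{S}_0$ (induced by conjugation) into irreducible isotypic components of weights $0$, $1$ and $2$.

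First, I would verify by inspection that each of the five matrices $\eo, \euu, \eud, \edu, \edd$ is symmetric and traceless, hence lies in $\mathcal{S}_0$, and that they are pairwise orthogonal of unit norm with respect to the Frobenius inner product $\langle A,B\rangle = \trac(A^\trans B)$. Since $\dim \mathcal{S}_0 = 5$, they automatically form an orthonormal basis. The orthogonal direct sum decomposition $\mathcal{S}_0 = L_0 \oplus L_1 \oplus L_2$ then follows at once from the definition of the three subspaces.

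Next, I would analyze the $\bbS^1$-action $A \mapsto R_\alpha A R_\alpha^\trans$ with $R_\alpha$ written in the block form \eqref{rotmatr}. A block computation shows that the conjugation preserves three natural pieces of a symmetric $A \in \mathcal{S}_0$: the scalar $A_{33}$, the column $(A_{13},A_{23})^\trans$ (together with its transposed row), and the top-left symmetric $2\times 2$ block. For $L_0$ the matrix $\eo$ has top-left block proportional to the identity and vanishing mixed entries, so it is pointwise fixed, which accounts for the weight-$0$ component. For $L_1$ the mixed column $(a,b)^\trans$ is transformed by $\widetilde R_\alpha$, so $L_1$ is invariant and the identification $a\euu + b\eud \leftrightarrow a+ib \in \C$ intertwines the action with multiplication by $e^{i\alpha}$. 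For $L_2$ the top-left $2\times 2$ block is an arbitrary traceless symmetric matrix $M = \tfrac{1}{\sqrt 2}\bigl(\begin{smallmatrix} c & d \\ d & -c\end{smallmatrix}\bigr)$, and the conjugation acts as $M \mapsto \widetilde R_\alpha M \widetilde R_\alpha^\trans$; a short calculation using the double-angle formulas
\[
\cos^2\alpha-\sin^2\alpha = \cos 2\alpha\,, \qquad 2\sin\alpha\cos\alpha = \sin 2\alpha\,,
\]
shows that the pair $(c,d)$ is rotated by the angle $2\alpha$, so the identification $c\edu + d\edd \leftrightarrow c+id \in \C$ intertwines the action with multiplication by $e^{2i\alpha}$.

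Gluing the three pieces together yields both the invariance $R_\alpha L_k R_\alpha^\trans \subseteq L_k$ and the explicit formula \eqref{eq:deg-action}. I do not anticipate a genuine obstacle: the whole argument is a sequence of elementary computations whose conceptual content is that the complexified representation $\mathcal{S}_0 \otimes \C$ of $\SO(2)$ splits into weight spaces of weights $0$, $\pm 1$, $\pm 2$ (each weight giving one complex dimension), and the real forms of these pieces are precisely $L_0$, $L_1$ and $L_2$. The only point requiring a modicum of care is checking the signs and factors of $\sqrt 2$ in the identification with $\C$, which I would do by testing the action on a single basis vector (say $\euu$ for $L_1$ and $\edu$ for $L_2$) and invoking $\R$-linearity.
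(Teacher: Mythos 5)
Your proposal is correct. The paper does not reprove this lemma but simply quotes it from \cite{DMP2}, and the argument there is exactly the elementary computation you describe: orthonormality of the five matrices in the Frobenius product (hence a basis since $\dim\mathcal{S}_0=5$), block-wise invariance of the $(3,3)$ entry, the mixed column, and the traceless top-left $2\times2$ block under conjugation by \eqref{rotmatr}, with the double-angle formulas producing the weight-$2$ action on $L_2$; your check of signs and normalizations on a single basis vector of each $L_k$ is all that is needed to pin down \eqref{eq:deg-action}.
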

\vskip5pt

As a straightforward consequence of the decomposition~\eqref{eq:decomposition-S0} in the orthonormal basis \eqref{eq:basis-S0}, we derive the following explicit formulas for a tensor $Q$ and its determinant.

\begin{lemma}
\label{lemma:Q-detQ}
Elements $Q \in \mathcal{S}_0$ are in one-to-one (linear) correspondence with elements  $u=(u_0,u_1,u_2) \in \R \oplus \C \oplus \C$. This correspondence, denoted as $Q\simeq u$, is given by
\begin{equation}\label{eq:correspondence}
		 Q= \frac{1}{\sqrt{2}} \left( 
\begin{array}{ccc}
 -\frac{u_0}{\sqrt{3}}+ {\rm Re} (u_2) & {\rm Im} (u_2) & {\rm Re} (u_1) \\
 {\rm Im} (u_2) & -\frac{u_0}{\sqrt{3}}-{\rm Re} (u_2) & {\rm Im}(u_1) \\
 {\rm Re} (u_1) & {\rm Im} (u_1) & \frac{2 u_0}{\sqrt{3}}
\end{array}
\right) \, .
 \end{equation}
 In addition, it is isometric, i.e., $|Q|^2={\rm Tr}(Q^2) =|u|^2= u_0^2 + \abs{u_1}^2 + \abs{u_2}^2$, and
 \begin{equation}
 \label{detQ}
 {\rm det} \,Q= \frac{1}{2\sqrt{2}}\left[ \frac{2u_0}{\sqrt{3}} \left( \frac{u_0^2}{3}+\frac12|u_1|^2-|u_2|^2\right)+{\rm Re} (u_1^2 \overline{u_2})\right]\, .
\end{equation}
 
\end{lemma}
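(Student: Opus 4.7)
The plan is to treat this lemma as a direct unwinding of Lemma~\ref{lemma:decomposition-S0}. Every $Q \in \mathcal{S}_0$ expands uniquely in the orthonormal basis \eqref{eq:basis-S0}; identifying the two real coordinates along $L_1$ with a complex number $u_1$, and similarly those along $L_2$ with $u_2$, yields the claimed $\R$-linear bijection $\mathcal{S}_0 \simeq \R \oplus \C \oplus \C$. Concretely, I would define $u_0 \in \R$ and $u_1, u_2 \in \C$ by requiring
\[
Q \;=\; u_0\, \eo \,+\, {\rm Re}(u_1)\, \euu \,+\, {\rm Im}(u_1)\, \eud \,+\, {\rm Re}(u_2)\, \edu \,+\, {\rm Im}(u_2)\, \edd.
\]
The isometry $|Q|^2 = u_0^2 + |u_1|^2 + |u_2|^2$ is then immediate from orthonormality of the basis, and the identification with ${\rm Tr}(Q^2)$ follows from the symmetry of $Q$.

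For the explicit matrix formula \eqref{eq:correspondence}, I would simply substitute the five matrices from \eqref{eq:basis-S0} into the above expansion, pull out the common $1/\sqrt{2}$, and read off the nine entries; the only bookkeeping point is the factor $1/\sqrt{3}$ coming from the normalization of $\eo$.

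For the determinant identity \eqref{detQ}, two routes are available. The direct route is to expand the determinant of the explicit matrix along its third row and collect terms by degree in $(u_0, u_1, u_2)$. A cleaner alternative exploits equivariance: by Lemma~\ref{lemma:decomposition-S0}, the map $Q \mapsto \det Q$ corresponds to a real homogeneous cubic polynomial on $\R \oplus \C \oplus \C$ that is invariant under the $\bbS^1$-action \eqref{eq:deg-action} with weights $(0,1,2)$. A short enumeration of monomials $u_0^{n_0} u_1^{a_1} \bar u_1^{b_1} u_2^{a_2} \bar u_2^{b_2}$ of total degree $3$ satisfying the phase condition $a_1 - b_1 + 2(a_2 - b_2) = 0$ shows that the space of such real invariants is four-dimensional, spanned by $u_0^3$, $u_0|u_1|^2$, $u_0|u_2|^2$, and ${\rm Re}(u_1^2 \overline{u_2})$. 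Hence
\[
\det Q \;=\; \alpha\, u_0^3 \,+\, \beta\, u_0|u_1|^2 \,+\, \gamma\, u_0|u_2|^2 \,+\, \delta\, {\rm Re}(u_1^2 \overline{u_2}),
\]
and the four real coefficients are pinned down by testing on four simple matrices (e.g.\ $Q = \eo$, $Q = \eo+\euu$, $Q = \eo+\edu$, and $Q = \euu+\edu$), each reducing to an elementary $3\times 3$ determinant.

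The only real hurdle is the arithmetic bookkeeping of the normalization constants $1/\sqrt{2}$ and $1/\sqrt{3}$ propagating through these computations; no conceptual difficulty arises, and in essence the lemma is nothing more than the coordinate presentation of Lemma~\ref{lemma:decomposition-S0}.
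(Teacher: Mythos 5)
Your direct route is exactly the paper's (unwritten) argument: the paper states the lemma as a straightforward consequence of Lemma~\ref{lemma:decomposition-S0}, i.e.\ expand $Q$ in the orthonormal basis \eqref{eq:basis-S0} with coordinates $u_0,{\rm Re}(u_1),{\rm Im}(u_1),{\rm Re}(u_2),{\rm Im}(u_2)$, read off \eqref{eq:correspondence} by substitution, get the isometry from orthonormality, and obtain \eqref{detQ} by expanding the determinant of the explicit matrix. On that route your proposal is complete and correct.

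One caveat about your ``cleaner alternative'': the invariant count is off. The monomial $u_1^2\overline{u_2}$ has weight $1+1-2=0$ under \eqref{eq:deg-action}, so \emph{both} its real and imaginary parts are $\bbS^1$-invariant; the space of real invariant cubics is therefore five-dimensional, spanned also by ${\rm Im}(u_1^2\overline{u_2})$. Moreover, your four test matrices all satisfy ${\rm Im}(u_1^2\overline{u_2})=0$, so they cannot rule that term out, and the four-term ansatz as written does not close. The fix is cheap: $\det Q=\det(DQD)$ for $D={\rm diag}(1,-1,1)$, and in the coordinates of \eqref{eq:correspondence} conjugation by $D$ acts as $(u_0,u_1,u_2)\mapsto(u_0,\overline{u_1},\overline{u_2})$, which flips the sign of ${\rm Im}(u_1^2\overline{u_2})$ while fixing the other four invariants; hence its coefficient vanishes and your four test matrices then pin down $\alpha,\beta,\gamma,\delta$ as intended. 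Alternatively, simply perform the direct expansion, which requires no symmetry argument at all.
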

 
The previous lemmas yield in the obvious way a (linear, isometric) correspondence between $Q$-tensor fields on $\Omega$ and maps from $\Omega$ into $\R \oplus \C \oplus \C$. The following corollary is a direct consequence of \eqref{eq:basis-S0}, \eqref{eq:decomposition-S0}, and \eqref{eq:correspondence}. The proof is elementary and  left to the reader.

\begin{corollary}\label{cor:dec}
Let $\Omega$ be an open subset of  $\R^d$.  Elements $Q \in W^{1,2}(\Omega;\mathcal{S}_0)$ are in one-to-one (linear)  correspondence with elements  $u=(u_0,u_1,u_2) \in W^{1,2}(\Omega; \R \oplus \C \oplus \C)$. This correspondence, still denoted as $Q\simeq u$, is given by relation \eqref{eq:correspondence} holding a.e. in $\Omega$. In addition, if $Q\simeq u$, then $|Q|^2=|u|^2$ and $\abs{\nabla Q}^2 = \abs{\nabla u}^2$ a.e. in $\Omega$. In particular, $Q \in W^{1,2}(\Omega;\bbS^4)$ if and only if $u \in W^{1,2}(\Omega ; \bb S^4)$.
\end{corollary}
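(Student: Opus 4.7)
The plan is to lift the pointwise linear isometry provided by Lemma \ref{lemma:Q-detQ} to the Sobolev level in a completely formal way. Identifying $\R \oplus \C \oplus \C$ with $\R^5$ via $u=(u_0, \mathrm{Re}\, u_1, \mathrm{Im}\, u_1, \mathrm{Re}\, u_2, \mathrm{Im}\, u_2)$, and $\mathcal{S}_0$ with $\R^5$ via the orthonormal basis \eqref{eq:basis-S0}, the correspondence $u \mapsto Q$ given by \eqref{eq:correspondence} is precisely the coordinate map
\[
\Phi(u) = u_0\,\eo + \mathrm{Re}(u_1)\,\euu + \mathrm{Im}(u_1)\,\eud + \mathrm{Re}(u_2)\,\edu + \mathrm{Im}(u_2)\,\edd\,,
\]
i.e., a linear isometric isomorphism from $\R^5$ onto $\mathcal{S}_0$. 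I would first note that, since $\Phi$ is linear (with constant coefficients) and bijective, composition with $\Phi$ (resp.\ $\Phi^{-1}$) defines a linear bijection between the vector-valued Sobolev spaces $W^{1,2}(\Omega;\R\oplus\C\oplus\C)$ and $W^{1,2}(\Omega;\mathcal{S}_0)$. This is standard: for $u\in W^{1,2}(\Omega;\R^5)$ the map $Q:=\Phi\circ u$ has distributional partial derivatives $\partial_i Q = \Phi(\partial_i u)$ obtained componentwise from the basis \eqref{eq:basis-S0}, and conversely any $Q\in W^{1,2}(\Omega;\mathcal{S}_0)$ has components in $W^{1,2}(\Omega;\R)$ by taking inner products against the fixed basis vectors of $\mathcal{S}_0$.

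Next, since $\Phi$ is a linear isometry, the identities $|Q(x)|^2=|u(x)|^2$ and $|\nabla Q(x)|^2=|\nabla u(x)|^2$ hold pointwise a.e.\ in $\Omega$: the first is the isometry of $\Phi$ applied to $u(x)$, while the second follows by applying the same isometry to each partial derivative $\partial_i u(x)\in \R^5$ and summing in $i$, using that $\partial_i Q=\Phi(\partial_i u)$. In particular, the constraint $Q(x)\in \bbS^4\subset\mathcal{S}_0$ is equivalent to $|u(x)|=1$, i.e., $u(x)\in \bbS^4\subset\R\oplus\C\oplus\C$, so the bijection $\Phi$ restricts to a bijection between $W^{1,2}(\Omega;\bbS^4)$ and $W^{1,2}(\Omega;\bbS^4)$. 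There is no genuine obstacle here; the only point to double-check is the componentwise characterization of $W^{1,2}(\Omega;\mathcal{S}_0)$, which follows immediately from the equivalence of any two norms on the finite-dimensional space $\mathcal{S}_0$ (or more concretely, from the fact that the basis \eqref{eq:basis-S0} is orthonormal for the Frobenius inner product).
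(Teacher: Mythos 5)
Your proof is correct and follows exactly the route the paper intends: the paper states the result is a "direct consequence" of the orthonormal basis \eqref{eq:basis-S0}, the decomposition \eqref{eq:decomposition-S0}, and the correspondence \eqref{eq:correspondence}, with the proof "elementary and left to the reader". Lifting the constant-coefficient linear isometry $\Phi$ to the Sobolev level componentwise and using that it commutes with distributional derivatives is precisely that elementary argument.
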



\subsection{$\bbS^1$-equivariant $Q$-tensor fields}
We now specialize our previous discussion to $\bbS^1$-equivariant $Q$-tensor fields on rotationally invariant bounded open sets. It is natural to describe such sets and $Q$-tensor fields in terms of cylindrical coordinates $(r,x_3,\phi)$ (which of course reduce to polar coordinates $(r,\phi)$ in the case of horizontal discs). This description yields the following refinement of the decomposition in Corollary~\ref{cor:dec}.

\begin{lemma}\label{lemma:dec-equiv}
Let $\Omega \subset \R^3$ be a bounded and axisymmetric open set and $\mathcal{D}_\Omega^+$ its vertical section given by \eqref{vertsectsymdom}. If $Q \in W^{1,2}_{\rm sym}(\Omega;\mathcal{S}_0)$ and  $Q\simeq u = (u_0,u_1,u_2) \in W^{1,2}(\Omega;\R \oplus \C \oplus \C)$ is the corresponding map in the sense of Corollary~\ref{cor:dec},  then $u$ is $\bbS^1$-equivariant  with respect to the action \eqref{eq:deg-action} on $\R \oplus \C \oplus \C$. As a consequence, for each $k \in \{0,1,2\}$, $u_k$ can be decomposed as 
$$u_k(x) = f_k(r,x_3) e^{i k \phi}\,,$$ 
for functions $f_k \in W^{1,2}(\mathcal{D}_\Omega^+,rdrdx_3)$ which are $\C$-valued for $k=1,2$, and $\R$-valued for $k=0$. 
Thus, 
\begin{equation}\label{eq:equiv-nabla-2}
	\abs{\nabla Q}^2 = \abs{\nabla f_0}^2 + \abs{\nabla f_1}^2 + \abs{\nabla f_2}^2 + \frac{\abs{f_1}^2 + 4\abs{f_2}^2}{r^2}\quad\text{a.e. in $\Omega$}\,,
\end{equation}
where $\abs{\nabla f_k}^2 := \abs{\partial_r f_k}^2 + \abs{\partial_{x_3} f_k}^2$. In particular, $\abs{\nabla Q}^2$ does not depend on $\phi$, and
\begin{equation}\label{Eq:equiv-energy-E0}
	 \mathcal{E}_0(Q) =\pi \int_{\mathcal{D}_\Omega^+} \left(\abs{\nabla f_0}^2 + \abs{\nabla  f_1}^2 + \abs{\nabla f_2}^2 + \frac{\abs{f_1}^2 + 4\abs{f_2}^2}{r^2}\right)\,r\,drdx_3<\infty\,.
\end{equation}
\end{lemma}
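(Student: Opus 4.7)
The plan is to exploit the equivariance of the correspondence $Q \simeq u$ furnished by Corollary~\ref{cor:dec}. Since this correspondence is the pointwise extension of the linear isomorphism $\mathcal{S}_0 \simeq \R \oplus \C \oplus \C$ from Lemma~\ref{lemma:decomposition-S0}, and since that isomorphism intertwines the $\bbS^1$-action by conjugation on $\mathcal{S}_0$ with the action \eqref{eq:deg-action} on $\R \oplus \C \oplus \C$, the hypothesis $Q(Rx)=RQ(x)R^{\rm t}$ for a.e.\ $x \in \Omega$ and every $R \in \bbS^1$ translates immediately into $u(Rx)=R \cdot u(x)$ under the action \eqref{eq:deg-action}. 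In coordinates, this reads $u_0(Rx) = u_0(x)$, $u_1(R_\alpha x)= e^{i\alpha} u_1(x)$ and $u_2(R_\alpha x)= e^{2 i\alpha} u_2(x)$.

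Next, I would introduce cylindrical coordinates $(r,\phi,x_3)$ on $\Omega \setminus \{x_3\text{-axis}\}$, which by \eqref{reconstruction} is parametrized by $(r,x_3) \in \mathcal{D}_\Omega^+$ and $\phi \in [0,2\pi)$. For $x=(r\cos\phi,r\sin\phi,x_3)$ with $r>0$, applying the equivariance identity with $R=R_{-\phi}$ sends $x$ to the point $(r,0,x_3)$ in $\overline{\mathcal{D}_\Omega^+}$, so defining $f_k(r,x_3):= u_k(r,0,x_3)$ yields the representation $u_k(x)= f_k(r,x_3) e^{i k \phi}$ for a.e.\ $x$. The fact that $f_k$ belongs to the weighted Sobolev space $W^{1,2}(\mathcal{D}_\Omega^+, r\,dr\,dx_3)$ follows from the change-of-variables formula applied to $u \in W^{1,2}(\Omega;\R\oplus\C\oplus\C)$, using that the vertical axis has zero $W^{1,2}$-capacity in $\R^3$ (so no boundary term is created).

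Then I would compute the gradient norm by pulling back through cylindrical coordinates. For any function of the form $g=f(r,x_3)e^{ik\phi}$, the standard decomposition of $\nabla$ in the orthonormal frame $(\partial_r, r^{-1}\partial_\phi, \partial_{x_3})$ gives
\[
	|\nabla g|^2=|\partial_r f|^2+|\partial_{x_3} f|^2 + \frac{k^2 |f|^2}{r^2}\,.
\]
Summing over $k=0,1,2$ and invoking the isometric identity $|\nabla Q|^2=|\nabla u|^2$ from Corollary~\ref{cor:dec} yields \eqref{eq:equiv-nabla-2}; note in particular that the right-hand side is $\phi$-independent.

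Finally, to obtain \eqref{Eq:equiv-energy-E0} I would integrate $\tfrac12|\nabla Q|^2$ against the Jacobian $r\,dr\,dx_3\,d\phi$ over $\mathcal{D}_\Omega^+\times[0,2\pi)$, which covers $\Omega$ up to the $\bbS^1$-orbit of the axis (a Lebesgue-null set by \eqref{reconstruction}). Since the integrand does not depend on $\phi$, the angular integration produces a factor $2\pi$, and combined with the prefactor $\tfrac12$ gives the announced factor $\pi$. Finiteness of the right-hand side is a direct consequence of $u \in W^{1,2}(\Omega;\R\oplus\C\oplus\C)$. The only mildly delicate point in the whole argument is the weighted Sobolev regularity of the $f_k$ near the axis $\{r=0\}$, but this is a standard consequence of axial symmetry plus zero capacity of the axis, and does not require any approximation.
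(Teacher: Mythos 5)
Your proposal is correct and follows essentially the same route as the paper's proof: translate $\bbS^1$-equivariance of $Q$ into the componentwise identities $u_0(R_\alpha x)=u_0(x)$, $u_k(R_\alpha x)=e^{ik\alpha}u_k(x)$ for $k=1,2$, deduce the form $u_k=f_k(r,x_3)e^{ik\phi}$ in cylindrical coordinates, compute $|\nabla u_k|^2$ in the cylindrical frame, and integrate out the angular variable by Fubini. The extra scaffolding you add (defining $f_k$ via restriction to the half-plane $\{\phi=0\}$, and the zero-capacity remark about the axis) is harmless but not actually needed; the paper simply reads off the weighted Sobolev regularity of $f_k$ from $u_k\in W^{1,2}(\Omega)$ and the Fubini identity.
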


\begin{proof}
	In view of \eqref{eq:deg-action}, the $\bbS^1$-equivariance of $Q$ translates into the identities
	\[
		u_0(R_\alpha x) = u_0(x) \,,\quad u_1(R_\alpha x) = e^{i\alpha} u_1(x) \,, \quad u_2(R_\alpha x) = e^{2i\alpha} u_2(x)\,,
	\]
	which hold for every $R_\alpha \in \bbS^1$ and a.e. $x \in \Omega$. In terms of cylindrical coordinates, those identities imply
	\begin{equation}
	\label{eq:dec-equiv}
		u_0(x) = f_0(r,x_3) \,,\quad u_1(x) = f_1(r,x_3) e^{i\phi} \,,\quad u_2(x) = f_2(r,x_3) e^{2i\phi}\,,
	\end{equation}
	a.e. in $\Omega$. Hence $f_k \in W^{1,2}(\mathcal{D}_\Omega^+,rdrdx_3)$ since $u_k \in W^{1,2}(\Omega)$ for each $k \in \{0,1,2\}$.
	
	 Moreover, Corollary \ref{cor:dec} yields
	\[
	 	\abs{\nabla Q}^2 = \sum_{k=0}^2 \abs{\nabla u_k}^2 = \sum_{k=0}^2 \abs{\nabla f_k}^2 + \frac{k^2 \abs{f_k}^2}{r^2}\quad\text{a.e. in $\Omega$}\,,
	\]
	which proves \eqref{eq:equiv-nabla-2}. Finally,
	since the right hand side above only  depends on $(r,x_3)$, applying Fubini's theorem leads to 
	\[
		\int_{\Omega} \abs{\nabla u_k}^2 \,dx = 2\pi \int_{\mathcal{D}_\Omega^+} \abs{\nabla f_k}^2 + \frac{k^2 \abs{f_k}^2}{r^2} \,r\,dr dx_3< +\infty\,. 
	\]
Then \eqref{Eq:equiv-energy-E0} follows summing this equality over $k=0,1,2$. 
\end{proof}

\begin{remark}
\label{2d-dec-equiv}
It is straightforward to check that the previous lemma also holds in two dimensions, i.e., if $\Omega=\bb D_\rho \subset \R^2 $ is a disc of radius $\rho>0$ centered at the origin. In this case, if  $Q \in W^{1,2}_{\rm sym}(\bb D_\rho;\mathcal{S}_0)$ and  $Q\simeq u = (u_0,u_1,u_2)$, then 
\begin{equation} \label{eq:dec-equiv2D} 
u_k(x)=f_k(r)e^{ik\phi}
\end{equation}
where $(r,\phi)$ are the polar coordinates, and each $f_k$ belongs to $W^{1,2}((0,\rho), rdr)$. In addition, 
\eqref{eq:equiv-nabla-2} and \eqref{Eq:equiv-energy-E0} still holds under the forms 
\begin{equation}\label{eq:equiv-nabla-22D}
\abs{\nabla Q}^2=|\nabla u|^2= \abs{f^\prime_0}^2 + \abs{f^\prime_1}^2 + \abs{f^\prime_2}^2 + \frac{\abs{f_1}^2 + 4\abs{f_2}^2}{r^2}\quad\text{a.e. in $\mathbb{D}_\rho$}\,,
\end{equation}
and
\begin{equation}\label{Eq:equiv-energy-E02D}
\frac{1}{2}\int_{\mathbb{D}_\rho}|\nabla Q|^2\,dx=\pi \int_0^\rho \left(\abs{f^\prime_0}^2 + \abs{f^\prime_1}^2 + \abs{f^\prime_2}^2 + \frac{\abs{f_1}^2 + 4\abs{f_2}^2}{r^2}\right)\,r\,dr<\infty\,,
\end{equation}
respectively. 
\end{remark}

The next result describes a  fine property of the space $W^{1,2}_{\rm sym}(\bb D_\rho;\bbS^4)$ in the 2D-case $D_\rho \subset \R^2$. Symmetry and norm constraints yield the inclusion $W^{1,2}_{\rm sym}(\bb D_\rho;\bbS^4) \subset C^0(\overline{\bb D_\rho};\bb S^4)$, a property which will be of crucial importance for the 2D-minimization problems discussed in Section \ref{2Dminimization}. Up to a rescaling, we may assume without loss of generality in the following statement that $\rho = 1$.

\begin{lemma}\label{lemma:s1eq-emb}
	Let $\bb D \subset \R^2$ be the unit disc. If $Q \in W^{1,2}_{\rm sym}(\bbD;\bbS^4)$, then 
	\begin{itemize}
		\item[($i$)] $Q \in C^0(\overline{\bbD}; \bb S^4)$ and either $Q(0) = \eo$ or $Q(0) = -\eo$.
	\end{itemize}
	Moreover, for $\{Q_n\} \subset W^{1,2}_{\rm sym}(\bbD;\bbS^4)$ and $Q_* \in W^{1,2}(\bbD;\mathcal{S}_0)$, the following statements hold.
	\begin{itemize}
		\item[($ii$)] If $Q_n \rightharpoonup Q_*$ weakly in $W^{1,2}(\bbD)$, then $Q _*\in W^{1,2}_{\rm sym}(\bbD;\bbS^4)$
		and $Q_n \to Q_*$ in $C^0_{\rm loc}(\overline{\bbD} \setminus \{0\})$. In particular, $Q_* \vert_{\partial \bbD}\to Q_n \vert_{\partial \bbD}$ uniformly on $\partial \bbD$. 
		\vskip5pt
		\item[($iii$)] If $Q_n \to Q_*$ strongly in $W^{1,2}(\bbD)$ then $Q _*\in W^{1,2}_{\rm sym}(\bbD;\bbS^4)$, 
		$Q_*(0)\equiv Q_n(0)$ for $n$ large enough, and $Q_n \to Q_*$ uniformly on $\overline{\bbD}$. 
	\end{itemize}
 	Claims ($i$), ($ii$), and ($iii$) still hold replacing $Q$, $Q_n$, and $Q_*$ with the corresponding maps with values into $\R \oplus \C \oplus \C$. 
\end{lemma}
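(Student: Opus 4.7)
My plan is to reduce everything to the radial profiles via Remark~\ref{2d-dec-equiv}: each $Q \in W^{1,2}_{\rm sym}(\bbD; \bbS^4)$ corresponds bijectively (and isometrically) to $u = (u_0, u_1, u_2)$ with $u_k(x) = f_k(r) e^{ik\phi}$, subject to the pointwise constraint $f_0^2 + |f_1|^2 + |f_2|^2 = 1$ a.e.\ on $(0,1)$ and the energy formula~\eqref{Eq:equiv-energy-E02D}. Since the correspondence is linear and norm-preserving, it suffices to argue with the $f_k$'s. Two features are central: on any $[\eps,1]$ the weight $r$ is comparable to $1$, so $f_k\in H^1((\eps,1))$ embeds into $C^0([\eps,1])$ by the 1D Sobolev embedding (giving continuity of $u$ on $\overline{\bbD}\setminus\{0\}$); and the finiteness of $\int_0^1 (|f_1|^2+4|f_2|^2)/r\,dr$ encodes the $1/r^2$ angular cost and forces $f_1, f_2$ to vanish at $0$ in an averaged sense. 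The whole subtlety thus concentrates at the origin.

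For (i), I would set $g(r):=|f_1(r)|^2+4|f_2(r)|^2$ and upgrade averaged vanishing to a pointwise limit by Cauchy-Schwarz:
\[
|g(\delta)-g(\eps)| \leq C \sum_{k=1}^2 \Big(\int_\eps^\delta |f_k'|^2 r\,dr\Big)^{\!1/2} \Big(\int_\eps^\delta |f_k|^2/r\,dr\Big)^{\!1/2}.
\]
Both factors vanish as $\eps,\delta\to 0^+$ as tails of convergent integrals, so $g$ is Cauchy at $0$. Combined with $\liminf_{r\to 0^+} g(r)=0$ (forced by $\int_0^1 g/r\,dr<\infty$), this gives $g(r)\to 0$, hence $f_0^2(r)\to 1$. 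Continuity of $f_0$ on a punctured neighborhood of $0$ then pins it to a constant sign there, so $f_0(r)\to\pm 1$, and $u$ extends continuously to the origin with $u(0)=(\pm 1,0,0)$, i.e., $Q(0)=\pm\eo$.

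For (ii), the equivariance condition~\eqref{S1equivariance} is linear, hence preserved under weak $W^{1,2}$ convergence, while Rellich yields a.e.\ convergence up to a subsequence, preserving $|Q|=1$; thus $Q_*\in W^{1,2}_{\rm sym}(\bbD;\bbS^4)$ and (i) applies to it. For local uniform convergence, I would fix a compact $K\subset\overline{\bbD}\setminus\{0\}$, contained in $\{\eps\leq |x|\leq 1\}$ for some $\eps>0$, where $W^{1,2}$ coincides with standard $H^1$; the 1D compact embedding $H^1((\eps,1))\hookrightarrow C^0([\eps,1])$ gives $f_k^{(n)}\to f_k^{(*)}$ uniformly on $[\eps,1]$, and combined with the uniform continuity of the angular factors $e^{ik\phi}$ on $K$ yields uniform convergence of $Q_n\to Q_*$ on $K$. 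Taking $\eps$ close to $1$ covers the boundary.

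For (iii), strong $W^{1,2}$ convergence promotes $|\nabla Q_n|^2\to |\nabla Q_*|^2$ in $L^1(\bbD)$, so by the Vitali criterion the family $\{|\nabla Q_n|^2\}$ is equi-integrable; for each $\eta>0$ there exists $\eps_0>0$ with $\int_{B_{\eps_0}}|\nabla Q_n|^2<\eta$ uniformly in $n$. Plugging this uniform tail estimate into the Cauchy-Schwarz bound of (i) yields $g^{(n)}(r)\leq C\eta$ on $[0,\eps_0]$, uniformly in $n$; for $\eta$ small this forces $|f_0^{(n)}|>1/2$ on $[0,\eps_0]$, so $f_0^{(n)}$ has a constant sign there, which by (ii) applied at $r=\eps_0$ must match that of $f_0^{(*)}$, hence $Q_n(0)=Q_*(0)$ for $n$ large. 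The same tail estimate controls $|Q_n(x)-Q_n(0)|$ and $|Q_*(x)-Q_*(0)|$ uniformly on $B_{\eps_0}$, and combined with (ii) on $\overline{\bbD}\setminus B_{\eps_0}$ delivers uniform convergence on $\overline{\bbD}$. The main obstacle is precisely this final step: weak $W^{1,2}$ convergence alone is insufficient near the origin because the $1/r^2$ term in~\eqref{eq:equiv-nabla-22D} cannot be controlled by a mere $L^2$ weak bound on $\nabla Q_n$, and without equi-integrability the signs $f_0^{(n)}(0)$ could in principle oscillate with $n$.
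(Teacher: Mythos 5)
Your proposal is correct and follows essentially the same route as the paper: reduction to the radial profiles $f_k$ via Remark~\ref{2d-dec-equiv}, the 1D Sobolev embedding away from the origin, a Cauchy/Young-type estimate on $|f_1|^2+|f_2|^2$ showing Cauchy behaviour at $r=0$, and equi-integrability (Vitali) of $\{|\nabla Q_n|^2\}$ to make that estimate uniform in $n$ for claim (iii). The only differences are cosmetic (Cauchy--Schwarz in place of Young's inequality, and grouping $|f_1|^2+4|f_2|^2$ into a single function $g$).
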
	
	
\begin{proof}
	($i$) According to Corollary~\ref{cor:dec}, we write $Q\simeq u=(u_0,u_1, u_2)$ with $u_k \in W^{1,2}(\bb D)$, $k=0,1,2$. By Remark~\ref{2d-dec-equiv} above, each function $f_k$ in \eqref{eq:dec-equiv2D} belongs to $W^{1,2}((0,1),rdr)$. Then the 1D-Sobolev embedding implies that $f_k \in C^{0,\frac{1}{2}}_{\rm loc}((0,1])$, and in turn $Q \in C^{0,\frac{1}{2}}_{\rm loc}(\overline{\bbD} \setminus \{0\}) \subset C^0(\overline{\bbD} \setminus \{0\}) $ by $\bbS^1$-equivariance. Then it only remains to prove continuity at the origin. To this purpose, we fix $0 < \rho'<\rho<1$. Combining Young's inequality with \eqref{Eq:equiv-energy-E02D} and Remark~\ref{2d-dec-equiv}, we compute
\begin{align}
\nonumber	\abs{\abs{f_1(\rho)}^2-\abs{f_1(\rho')}^2 }+& \abs{\abs{f_2(\rho)}^2-\abs{f_2(\rho')}^2} =\abs{\int_{\rho'}^\rho \partial_r \abs{f_1}^2 dr}+ \abs{\int_{\rho'}^\rho \partial_r \abs{f_2}^2 dr} \\
\nonumber	&\leq   \int_{\rho'}^\rho \left(\abs{ f^\prime_1}^2 + \frac{\abs{f_1}^2}{r^2} \right)\,r\,dr +  \int_{\rho'}^\rho \left(\abs{f^\prime_2}^2 + \frac{\abs{f_2}^2}{r^2} \right)\,r\,dr\\
\nonumber	& \leq \int_{\rho'}^\rho \left(\abs{ f^\prime_0}^2 + \abs{f^\prime_1}^2 + \abs{ f^\prime_2}^2 + \frac{\abs{f_1}^2+4\abs{f_2}^2}{r^2} \right)\,r\,dr\\
\label{eq:modica-trick}	&  \leq \frac{1}{2\pi}\int_{\bbD_\rho}|\nabla Q|^2\,dx\, . 
\end{align}	
Since $Q$ belongs to $W^{1,2}(\bbD)$, we have $\int_{\bbD_\rho}|\nabla Q|^2\,dx \to 0$ as $\rho \to 0$. Hence both $\ell_1:=\lim_{r \to 0} |f_1(r)|$ and $\ell_2:=\lim_{r \to 0} |f_2(r)|$ exist. On the other hand, it follows  
from \eqref{Eq:equiv-energy-E02D}  
that $\ell_1=\ell_2=0$. Thus, both $f_1$ and $f_2$ extend by continuity to elements of $C^0([0,1];\C)$ with $f_1(0)=f_2(0)=0$. In turn, \eqref{eq:dec-equiv2D} yields $u_k \in C^0(\overline{\bb D};\C)$ with $u_k(0)=0$ for $k=1,2$. 
	
	Finally, combining Corollary~\ref{cor:dec} with \eqref{eq:dec-equiv2D} leads to $|Q(re^{i \phi})|^2=|f_0(r)|^2+|f_1(r)|^2+|f_2(r)|^2\equiv 1$. Since $f_1(0)=f_2(0)=0$, we have $|f_0(r)|\to 1$ as $r \to 0$. Moreover, either $f_0(r) \to 1$ or $f_0(r) \to -1$ as $r \to 0$. Indeed, if the limit does not exist, then $\liminf_{r \to 0}f_0(r)=-1<1=\limsup_{r \to 0} f_0(r)$. By continuity, it would imply the existence of $r_n \downarrow 0$ such that $f_0(r_n)\equiv 0$, and leading to the identity $1 \equiv |f_0(r_n)|^2+|f_1(r_n)|^2+|f_2(r_n)|^2\to 0$ as $n \to \infty$, a contradiction. Thus, $f_0$ extends by continuity to a function in $C^0([0,1];\R)$ with $f_0(0)=\pm 1$, and in turn $u \in C^0(\overline{\bb D};\bb S^4)$ with $u(0)=(\pm 1,0,0)$. As a consequence, $Q \simeq u$ in continuous on $\overline{\bb D}$, and $Q(0)=\pm \eo$ which proves{\it (i)}. 
\vskip5pt
	
	($ii$)	In view of ($i$) all the maps involved are continuous. Moreover, $\bbS^1$-equivariance allows us to use the continuous embedding $W^{1,2}((0,1),rdr) \hookrightarrow C^{0,\frac{1}{2}}_{\rm loc}((0,1])$  and the compact embedding $C^{0,\frac{1}{2}}_{\rm loc}((0,1]) \hookrightarrow C^0_{\rm loc}((0,1])$ to deduce that $Q_n \to Q_*$ locally uniformly on $\overline{\bbD} \setminus \{0\}$. As the convergence is also pointwise on $\overline{\bbD} \setminus \{0\}$, both equivariance and norm constraints persist, and we have $Q_* \in W^{1,2}_{\rm sym}(\bbD;\bbS^4)\cap C^0(\overline{\bbD})  $. Moreover $Q_* \vert_{\partial \bbD} \to Q_n \vert_{\partial \bbD}$ uniformly on $\partial \bbD$.
\vskip5pt
	
	($iii$) Assume now $Q_n \to Q_*$ strongly in $W^{1,2}(\bbD)$. By ($ii$), it only remains to prove uniform convergence in a (small) disc centered at the origin. 
	 To achieve this,  it suffices to show that $Q_n(0)\equiv Q_*(0)$ for $n$ large enough and that the sequence $\{Q_n\}$ is equicontinuous at the origin. To check these properties, we first notice that \eqref{eq:modica-trick} holds for each $Q_n$. By ($i$), we can choose $\rho^\prime=0$ and any fixed $\rho \in (0,1)$ to obtain from \eqref{eq:modica-trick}, 
	\begin{equation}
	\label{eq:hor-mod-cont}
	\abs{f^{(n)}_1(\rho)}^2+ \abs{f^{(n)}_2(\rho)}^2\leq \frac{1}{2\pi}\int_{\bb D_\rho}|\nabla Q_n|^2\,dx \, .
	\end{equation}
Letting $n \to \infty$ above, the same inequality holds for the components $f^*_k$, $k=1,2$, of $Q_*$. 
	
	By the Vitali-Hahn-Saks theorem (see e.g. \cite[Theorem 1.30]{AFP}), the strong $W^{1,2}$-convergence of the sequence $\{Q_n\}$ implies that $\{ \abs{\nabla Q_n}^2 \}$ is equiintegrable. Combining this fact with \eqref{eq:hor-mod-cont}, it follows that $\{f^{(n)}_1\}$ and $\{f^{(n)}_2\}$ are equicontinuous at the origin. Moreover, there exists $\bar{\rho}>0$ such that 
$$ \int_{\bbD_{\bar\rho}}|\nabla Q_n|^2\,dx + \int_{\bbD_{\bar\rho}}|\nabla Q_*|^2\,dx\leq  \frac{3\pi}{2} \quad\text{for $n$ large enough}\,.$$ 
Hence $\big|f_1^{(n)}\big|^2+ \big|f^{(n)}_2\big|^2\leq \frac34$ in $[0,\bar\rho]$ for $n$ large enough,  which in turn implies that $\big|f_0^{(n)}\big|\geq \frac12$  in $[0,\bar\rho]$ for $n$ large enough. By continuity, it follows that each $f_0^{(n)}$ has constant sign in $[0,\bar\rho]$  for $n$ large enough.   
The same property holds for $f^*_0$, and the sign of $f_0^{(n)}$ must be the same of $f_0^*$ for $n$ large enough because of the pointwise convergence in $\bb D_{\bar{\rho}}\setminus\{0\}$. This proves that $Q_n(0)\equiv Q_*(0)$ for $n$ large enough. 
	
	Finally, combining the pointwise inequalities on $f_0^{(n)}$, the norm constraint, and \eqref{eq:hor-mod-cont}, we have for every $0<\rho \leq \bar{\rho}$, 
\[ \abs{f^{(n)}_0(\rho)-f^{(n)}_0(0) }=\frac{ 1- \abs{f^{(n)}_0(\rho)}^2}{ \abs{f^{(n)}_0(\rho)+f^{(n)}_0(0) } } \leq \abs{f^{(n)}_1(\rho)}^2+ \abs{f^{(n)}_2(\rho)}^2\leq \frac{1}{2\pi} \int_{\bbD_\rho} |\nabla Q_n|^2\,dx \, .\]
Hence the sequence $\{f_0^{(n)}\}$ is also equicontinuous at the origin by the Vitali-Hahn-Saks theorem. Going back to \eqref{eq:dec-equiv2D}, we deduce that the maps $\{u^{(n)}\}$ are equicontinuous at the origin, and thus thus the same holds for $\{Q_n\}$ which  completes the proof of the uniform convergence. 

The final claim concerning the corresponding maps into $\R \oplus \C \oplus \C$ follows taking scalar products with the orthonormal basis in \eqref{eq:basis-S0}. 
\end{proof}

With Lemma~\ref{lemma:s1eq-emb} in hands, we can easily prove that  $W^{1,2}$-tensor fields on a 3D-axisymmetric domain $\Omega$ have a well-defined trace on the vertical axis.

\begin{corollary}
\label{verticaltrace}
Let $\Omega\subset \R^3$ be a bounded and axisymmetric open set with Lipschitz boundary, and set $I:=\Omega \cap \{x_3\hbox{-axis}\}$. There is a (strongly)  continuous trace operator ${\rm Tr} : W^{1,2}_{\rm sym}(\Omega;\bbS^4) \to L^1(I;\{ \pm \eo\})$ satisfying ${\rm Tr}\,Q=Q_{\vert_I}$ whenever $Q \in W^{1,2}_{\rm sym}(\Omega;\bbS^4)\cap C^0(\overline{\Omega})$.
\end{corollary}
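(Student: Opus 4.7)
The plan is to reduce the statement to the two-dimensional embedding of Lemma \ref{lemma:s1eq-emb} by slicing $\Omega$ horizontally. Set $I_\pi := \{s \in \R : (0,0,s) \in I\}$, a bounded relatively open subset of $\R$, and for each $s \in I_\pi$ choose $r(s)>0$ with $\bb D_{r(s)} \times \{s\} \subset \Omega$. For $Q \in W^{1,2}_{\rm sym}(\Omega;\bbS^4)$, the Sobolev--Fubini theorem yields, for a.e.\ $s \in I_\pi$, that the horizontal slice $Q_s := Q(\cdot,s)$ lies in $W^{1,2}(\bb D_{r(s)};\bbS^4)$; the $\bbS^1$-equivariance of $Q$ descends to a.e.\ slice, so in fact $Q_s \in W^{1,2}_{\rm sym}(\bb D_{r(s)};\bbS^4)$. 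Lemma \ref{lemma:s1eq-emb}(i) then provides a continuous representative of $Q_s$ on $\overline{\bb D_{r(s)}}$ with $Q_s(0) \in \{\pm\eo\}$, so we may set $({\rm Tr}\,Q)(s) := Q_s(0)$ for a.e.\ $s \in I$. Measurability follows because $({\rm Tr}\,Q)(s)$ is the pointwise a.e.\ limit, as $\delta \downarrow 0$, of the measurable averages $\phi_\delta(s) := |\bb D_\delta|^{-1}\int_{\bb D_\delta \times \{s\}} Q$, which converge to $Q_s(0)$ by slicewise continuity at the origin. Since $|I|<\infty$ and $|{\rm Tr}\,Q| \equiv 1$, this gives ${\rm Tr}\,Q \in L^\infty(I;\{\pm\eo\}) \subset L^1$, and the identification ${\rm Tr}\,Q = Q_{|I}$ for $Q \in C^0(\overline\Omega)$ is immediate from the construction.

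To establish strong continuity of ${\rm Tr}$, we argue by the standard subsequence trick. Let $Q_n \to Q$ in $W^{1,2}(\Omega;\bbS^4)$, all equivariant; it suffices to show that every subsequence admits a further subsequence along which ${\rm Tr}\,Q_n \to {\rm Tr}\,Q$ in $L^1(I)$. Along such a further subsequence (not relabelled) we may arrange $\sum_n \|Q_n - Q\|_{W^{1,2}(\Omega)}^2 < \infty$. Applying Fubini to the pointwise sum $\sum_n (|\nabla Q_n - \nabla Q|^2 + |Q_n - Q|^2)$ gives, for a.e.\ $s \in I_\pi$, strong $W^{1,2}$-convergence $Q_n(\cdot,s) \to Q(\cdot,s)$ on $\bb D_{r(s)}$. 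Lemma \ref{lemma:s1eq-emb}(iii) applied slice by slice then delivers, for each such $s$, an integer $N(s)$ with $Q_n(0,0,s) = Q(0,0,s)$ for all $n \geq N(s)$. Hence $({\rm Tr}\,Q_n)(s) \to ({\rm Tr}\,Q)(s)$ pointwise a.e.\ on $I$, and dominated convergence (with pointwise bound $1$) concludes the argument.

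The argument is conceptually straightforward, and the main obstacle is bookkeeping: one must fix once and for all a single full-measure subset of $I_\pi$ on which, simultaneously, Fubini produces a slice in $W^{1,2}_{\rm sym}(\bb D_{r(s)};\bbS^4)$, Lemma \ref{lemma:s1eq-emb}(i) applies, the averages $\phi_\delta(s)$ converge to $Q_s(0)$, and (for the continuity step) slicewise strong $W^{1,2}$-convergence holds. Since each of these conditions fails only on a countable union of null sets, such a subset can be extracted. A minor technical point is that $r(s)$ may shrink to $0$ as $s$ approaches the endpoints of $I_\pi$; but Lemma \ref{lemma:s1eq-emb} applies on any disc of positive radius around the origin, so this degeneration causes no obstruction.
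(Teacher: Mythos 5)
Your proof is correct, and it rests on the same key ingredient as the paper's argument, namely Lemma~\ref{lemma:s1eq-emb} (parts (i) and (iii)) applied to horizontal slices; but the route you take to reduce to that lemma is genuinely different. The paper first straightens the geometry: it observes that $\overline{\Omega\cap\mathfrak{C}^h_\ell}$ is equivariantly biLipschitz homeomorphic to a finite union of disjoint invariant closed cylinders (identity on the axis), and then, on a straight cylinder $\bb D_\ell\times I$, it uses the Bochner-type inclusion $W^{1,2}_{\rm sym}(\mathfrak{C}^h_\ell;\bbS^4)\subset L^1\big(I;W^{1,2}_{\rm sym}(\bb D_\ell;\bbS^4)\big)$ and defines ${\rm Tr}$ as the composition of this inclusion with the strongly continuous evaluation $W^{1,2}_{\rm sym}(\bb D_\ell;\bbS^4)\ni Q\mapsto Q(0)\in\{\pm\eo\}$; continuity is then automatic. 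You instead work directly in $\Omega$, covering a neighbourhood of the axis by countably many straight sub-cylinders, defining the trace slicewise a.e.\ via Fubini, proving measurability through the averaged quantities $\phi_\delta$, and obtaining strong continuity by the summable-subsequence plus dominated-convergence argument combined with Lemma~\ref{lemma:s1eq-emb}(iii). What your version buys is that it bypasses the biLipschitz straightening (the one geometric step in the paper that needs care near the points where the axis meets $\partial\Omega$) and it makes measurability explicit, at the cost of more null-set bookkeeping, which you acknowledge and handle correctly. One assertion you should make explicit is that equivariance passes to a.e.\ slice: this is immediate from the decomposition of Lemma~\ref{lemma:dec-equiv}, since $u_k(x)=f_k(r,x_3)e^{ik\phi}$ forces a.e.\ horizontal slice to have the equivariant form \eqref{eq:dec-equiv2D}; with that remark added, your argument is complete (the pointwise bound in the dominated-convergence step is $2$ rather than $1$, which is immaterial).
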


\begin{proof}
We first notice that for $\ell>0$ small enough and $h>0$ large enough, the set $\overline{\Omega \cap \mathfrak{C}_\ell^h}$ is (equivariantly) biLipschitz homeomorphic to a finite union of disjoint $\bb S^1$-invariant closed cylinders, the homeomorphism being the identity on the vertical axis. 
Hence, up to {{a}} change of variables, it is enough to construct the trace operator when the domain is an arbitrary cylinder to have a well defined induced operator ${\rm Tr} : W^{1,2}_{\rm sym}(\Omega \cap \mathfrak{C}_\ell^h;\bbS^4) \to L^1(I;\{ \pm \eo\})$.  In turn, the conclusion follows by composition with the continuous restriction operator $W^{1,2}(\Omega) \to W^{1,2}(\Omega\cap \mathfrak{C}_\ell^h) $.

Assuming now that $\Omega=\mathfrak{C}_\ell^h=\bb D_\ell \times (-h,h)=\bb D_\ell\times I$, then we have 
$$W^{1,2}_{\rm sym}(\mathfrak{C}_\ell^h;\bb S^4)\subset L^2\big(I; W^{1,2}_{\rm sym}(\bb D_\ell;\bb S^4)\big) \subset L^1\big(I; W^{1,2}_{\rm  sym}(\bb D_\ell;\bb S^4)\big)$$ 
with continuous inclusions. In view of Lemma~\ref{lemma:s1eq-emb} the mapping $W^{1,2}_{\rm sym}(\bb D_\ell;\bb S^4) \ni Q \mapsto Q(0) \in \{ \pm \eo\}$ is well defined and (strongly) continuous. Hence, by composition of this map with the inclusion maps above, we have a well defined and (strongly) continuous map ${\rm Tr} : W^{1,2}_{\rm sym}(\mathfrak{C}_\ell^h;\bbS^4) \to L^1(I;\{ \pm \eo\})$ with all the desired properties.
\end{proof}

\subsection{Existence of minimizers and Euler-Lagrange equations}
We recall from \cite{DMP2} the following results about ``symmetric criticality'' and existence of minimizers over the class $\mathcal{A}_{Q_{\rm b}}^{\rm sym}(\Omega)$. Even if the results were stated in case of 3D  domain, their proofs hold with obvious modifications in the planar case, i.e., when $\Omega$ is disc a centered at the origin. 

\begin{proposition}[{\cite[Proposition~6.1 and 6.2]{DMP2}}]\label{prop:symmetric-criticality}
Let $\Omega \subset \R^3$ be a bounded and axisymmetric open set. 
\begin{itemize}
	\item[(i)] If $Q_\lambda \in W^{1,2}_{\rm sym}(\Omega;\mathbb{S}^4)$ is a critical point of $\mathcal{E}_\lambda$ over $W^{1,2}_{\rm sym}(\Omega;\bbS^4)$, then $Q_\lambda$ is a critical point of $\mathcal{E}_\lambda$ among all maps $W^{1,2}(\Omega;\bbS^4)$.
	\vskip5pt
	
	\item[(ii)] If $\partial \Omega$ is Lipschitz regular and $Q_{\rm b} \in \rmLip(\partial \Omega;\bbS^4)$ is $\mathbb{S}^1$-equivariant, then $\mathcal{A}_{Q_{\rm b}}^{\rm sym}(\Omega)$ is not empty and there exists at least one minimizer of $\mathcal{E}_\lambda$ over $\mathcal{A}_{Q_{\rm b}}^{\rm sym}(\Omega)$. 
\end{itemize}
\end{proposition}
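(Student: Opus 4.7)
\emph{Strategy.} My plan is to reduce both claims to the $\mathbb{S}^1$-action on $W^{1,2}(\Omega;\mathcal{S}_0)$ defined by $(R\cdot Q)(x) := R\,Q(R^{-1}x)\,R^{\trans}$ for $R\in\mathbb{S}^1 \subset \SO(3)$. Since $R\in \SO(3)$, the change of variables is volume preserving, so this action is isometric on $W^{1,2}$, preserves the pointwise norm constraint, and leaves $W$ invariant (because $\widetilde\beta(RAR^{\trans})=\widetilde\beta(A)$). Consequently $\mathcal{E}_\lambda$ is $\mathbb{S}^1$-invariant, and its fixed-point set in $W^{1,2}(\Omega;\mathbb{S}^4)$ is exactly $W^{1,2}_{\rm sym}(\Omega;\mathbb{S}^4)$.

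For (i), I would invoke Palais's principle of symmetric criticality. Given a tangential test field $\varphi \in C^\infty_c(\Omega;\mathcal{S}_0)$ with $\varphi : Q_\lambda \equiv 0$, the plan is to set
$$ \bar{\varphi}(x) := \int_{\mathbb{S}^1} R\,\varphi(R^{-1} x)\, R^{\trans}\, d\mu(R), $$
where $d\mu$ is the normalized Haar measure on $\mathbb{S}^1$. Then $\bar\varphi$ is equivariant by construction, and the identity $R\varphi(R^{-1}x) R^{\trans} : Q_\lambda(x) = \varphi(R^{-1}x) : (R^{\trans}Q_\lambda(x) R) = \varphi(R^{-1}x) : Q_\lambda(R^{-1}x) \equiv 0$ (using the adjoint property of the action together with equivariance of $Q_\lambda$) shows that $\bar\varphi$ remains tangential. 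From $\mathcal{E}_\lambda(R\cdot Q_t) = \mathcal{E}_\lambda(Q_t)$ for any variation $Q_t$, I obtain $d\mathcal{E}_\lambda(Q_\lambda)[R\cdot \varphi] = d\mathcal{E}_\lambda(R^{-1}\cdot Q_\lambda)[\varphi] = d\mathcal{E}_\lambda(Q_\lambda)[\varphi]$ for every $R$, using again the equivariance of $Q_\lambda$. Integrating over $\mathbb{S}^1$ yields $d\mathcal{E}_\lambda(Q_\lambda)[\varphi] = d\mathcal{E}_\lambda(Q_\lambda)[\bar\varphi]$, and the right-hand side vanishes by the symmetric criticality hypothesis. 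This completes (i).

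For (ii), the first step is to construct some $\tilde Q \in \mathcal{A}^{\rm sym}_{Q_{\rm b}}(\Omega)$. I plan to apply McShane–Whitney componentwise in the basis \eqref{eq:basis-S0} to obtain a Lipschitz extension $\hat Q : \overline\Omega \to \mathcal{S}_0$ of $Q_{\rm b}$, then symmetrize via the same averaging formula to get a Lipschitz equivariant $\mathcal{S}_0$-valued extension $\hat Q_{\rm sym}$ still matching $Q_{\rm b}$ on $\partial\Omega$. By uniform continuity, $|\hat Q_{\rm sym}|\geq 1/2$ on an $\mathbb{S}^1$-invariant tubular neighborhood $U$ of $\partial\Omega$, so on $U$ the map $\hat Q_{\rm sym}/|\hat Q_{\rm sym}|$ is a Lipschitz equivariant $\mathbb{S}^4$-valued extension of $Q_{\rm b}$. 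Inside $\Omega\setminus U$ I would interpolate, in a thin collar, between this map and the constant $x\mapsto \eo$ (an $\mathbb{S}^1$-fixed point of $\mathbb{S}^4$); working equivariantly on the vertical section $\mathcal{D}_\Omega^+$ via Lemma~\ref{lemma:dec-equiv}, no topological obstruction arises because $\pi_1(\mathbb{S}^4)=0$ and the reduced extension problem is two-dimensional.

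With non-emptiness secured, existence follows by the direct method: any minimizing sequence $\{Q_n\} \subset \mathcal{A}^{\rm sym}_{Q_{\rm b}}(\Omega)$ satisfies $\int_\Omega |\nabla Q_n|^2\,dx \leq 2\mathcal{E}_\lambda(Q_n)$ (since $W\geq 0$) and $|Q_n|\equiv 1$, so it is bounded in $W^{1,2}$; extracting $Q_n \rightharpoonup Q_*$ weakly in $W^{1,2}$ and a.e.\ via Rellich–Kondrachov, the pointwise norm and equivariance constraints \eqref{S1equivariance} pass to the a.e.\ limit, the Dirichlet boundary condition is preserved by $L^2$-continuity of the trace on the bounded Lipschitz domain $\Omega$, and weak $W^{1,2}$-lower semicontinuity of $\int|\nabla Q|^2$ combined with dominated convergence for the bounded continuous integrand $W\circ Q_n \to W\circ Q_*$ yield $\mathcal{E}_\lambda(Q_*) \leq \liminf_n \mathcal{E}_\lambda(Q_n)$, so $Q_*$ is a minimizer. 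The genuinely delicate step will be the non-emptiness of $\mathcal{A}^{\rm sym}_{Q_{\rm b}}(\Omega)$: a Lipschitz extension to $\mathcal{S}_0$ may vanish somewhere, blocking a naive radial projection, and the equivariance requirement must be reconciled with $\mathbb{S}^4$-valuedness simultaneously—this is precisely where the reduction to the vertical half-plane $\mathcal{D}_\Omega^+$, enabled by Lemma~\ref{lemma:dec-equiv}, becomes essential.
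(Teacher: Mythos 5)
Part (i) of your argument is correct: averaging a tangential test field over the compact group $\mathbb{S}^1$, checking that tangentiality, equivariance and compact support survive the averaging, and combining the invariance of $\mathcal{E}_\lambda$ with the linearity of the first variation in the test direction is precisely the Palais-type symmetric-criticality argument. Note that the present paper does not reprove this statement but quotes \cite[Propositions~6.1 and 6.2]{DMP2}, where this is the route taken. The direct-method part of (ii) is also fine: boundedness of minimizing sequences, weak closedness of the equivariance and unit-norm constraints, weak continuity of the trace, lower semicontinuity of the Dirichlet term and convergence of the potential term are all handled correctly.

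The genuine gap is the non-emptiness of $\mathcal{A}^{\rm sym}_{Q_{\rm b}}(\Omega)$, i.e.\ the interior filling. Your justification --- ``no topological obstruction arises because $\pi_1(\mathbb{S}^4)=0$ and the reduced extension problem is two-dimensional'' --- points at the wrong difficulty. After reduction to the section $\mathcal{D}^+_\Omega$ via Lemma~\ref{lemma:dec-equiv}, membership of the reconstructed map in $W^{1,2}(\Omega;\mathbb{S}^4)$ is \emph{not} equivalent to $f=(f_0,f_1,f_2)$ lying in an unweighted $W^{1,2}$ space on the section: by \eqref{eq:equiv-nabla-2}--\eqref{Eq:equiv-energy-E0} one also needs $\int (|f_1|^2+4|f_2|^2)\,r^{-2}\, r\,dr\,dx_3<\infty$, so the $k=1,2$ modes must vanish on the axis with a quantitative rate (a Lipschitz extension with $f_1=f_2=0$ on the axis would do, but a generic $W^{1,2}$ or even continuous extension into $\mathbb{S}^4$ need not satisfy this, since the axis has codimension two), and the trace on the axis is forced into $\{\pm\eo\}$ (Corollary~\ref{verticaltrace}). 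Hence the reduced problem is not a free two-dimensional extension problem into the simply connected $\mathbb{S}^4$, and simple connectedness of the target says nothing about the axis condition. Concretely, wherever $\partial\Omega$ meets the axis with $Q_{\rm b}=-\eo$, or wherever your near-boundary map equals $-\eo$ on the inner matching surface, the ``thin collar'' interpolation towards the constant $\eo$ either degenerates (a convex interpolation followed by normalization passes through $0$ exactly where the map is $-\eo$) or must be replaced by an explicit insertion of finite-energy axis defects, e.g.\ rescaled $0$-homogeneous equivariant profiles as in \eqref{formtangmaps}, glued so that $f_1,f_2$ still vanish linearly at the axis away from those finitely many points. You do flag this as the delicate step, but the reason you give for its solvability does not produce a finite-energy equivariant extension; this step requires an actual construction on the section, with axis values prescribed in $\{\pm\eo\}$ and explicit defect profiles where the axis value must jump, which is what \cite[Proposition~6.2]{DMP2} carries out.
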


In other words, critical points of  $\mathcal{E}_\lambda$ among equivariant compactly supported perturbations preserving  the $\mathbb{S}^4$-constraint are critical points with respect to every compactly supported perturbation still  preserving  the $\mathbb{S}^4$-constraint  (i.e., even with respect to those which are not equivariant). In other words, they are weak solutions to \eqref{MasterEq}. 

\begin{remark}
For a map $Q\in W^{1,2}_{\rm sym}(\Omega;\bb S^4)$, the energy functional \eqref{LDGenergytilde} can be rewritten in terms the correspondence $Q\simeq u=(f_0, f_1 e^{i \phi},f_2 e^{i2\phi})$ in Lemma \ref{lemma:dec-equiv}. By  \eqref{detQ}, we have 
\begin{equation}\label{eq:beta-f}
	\widetilde{\beta}(f):=\widetilde{\beta}(Q) = 3 \sqrt{6} \det Q = f_0\left( f_0^2 +\frac{3}{2}\abs{f_1}^2 - 3 \abs{f_2}^2 \right)+ \frac{3\sqrt{3}}{2}{\rm Re}(f_1^2 \overline{f_2})\,,
\end{equation}
where $f:=(f_0,f_1,f_2)$. Combining identity \eqref{redpotential} with \eqref{Eq:equiv-energy-E0} yields 
\begin{equation}
\label{eq:equiv-energy-f}
\mathcal{E}_\lambda(Q)=\pi \int_{\mathcal{D}^+_\Omega}\left( \abs{\nabla f}^2+ \frac{\abs{f_1}^2 + 4\abs{f_2}^2}{r^2} +2\lambda \frac{1-\tilde{\beta}(f)}{3\sqrt{6}}    \right) \, rdrdx_3 \, .\end{equation}
If $Q$ is a critical point  of  $\mathcal{E}_\lambda$ among equivariant compactly supported perturbations (preserving  the $\mathbb{S}^4$-constraint), then $Q$ weakly solves \eqref{MasterEq}  from the proposition above. To rephrase the equations in terms of $f$, we may project \eqref{MasterEq} onto the orthonormal frame \eqref{eq:basis-S0} or, equivalently, take variations in the energy functional \eqref{eq:equiv-energy-f}.  The criticality condition \eqref{MasterEq} then translates into the following nonlinear system for $f=(f_0,f_1,f_2)\in W^{1,2}(\mathcal{D}^+_\Omega;\bb S^4,rdrdx_3)$, namely, 
\begin{equation}\label{eq:EL-symm-PDE}
	\left\{
		\begin{aligned}
			\partial^2_{r} f_0 + \frac{1}{r} \partial_r f_0 + \partial^2_{x_3} f_0 &= - \abs{\nabla Q}^2 f_0 +  \frac{\lambda}{\sqrt{6}} \left(\abs{f_2}^2-f_0^2 - \frac{1}{2}\abs{f_1}^2 +  \widetilde{\beta}(f) f_0 \right) \,,\\
			\partial^2_{r} f_1 + \frac{1}{r} \partial_r f_1 + \partial^2_{x_3} f_1 &= - \abs{\nabla Q}^2 f_1 - \frac{1}{r^2} f_1 + \frac{\lambda}{\sqrt{6}} \left(- \sqrt3f_2 \overline{f_1}-f_0 f_1+\widetilde{\beta}(f)  f_1    \right) \,,\\
			\partial^2_{r} f_2 + \frac{1}{r} \partial_r f_2 + \partial^2_{x_3} f_2 &= - \abs{\nabla Q}^2 f_2 - \frac{4}{r^2}f_2 + \frac{\lambda}{\sqrt6} \left( -\frac{\sqrt3}2 f_1^2 +  2f_0 f_2+ \widetilde{\beta}(f) f_2 \right) \,,
		\end{aligned}
	\right.
\end{equation}
with $\widetilde{\beta}(f)$ as in \eqref{eq:beta-f}  and $|\nabla Q|^2$ as in \eqref{eq:equiv-nabla-2}, both depending only on $f=(f_0,f_1,f_2)$.

\end{remark}

\begin{remark}[\bf 2D-case]\label{rmk:2d-EL-eq}
In Section~\ref{2Dminimization} (mostly), we shall consider the two dimensional case $\Omega=\bb D_\rho \subset \R^2$. To differentiate the 2D from the 3D case, we shall use the notation $E_\lambda(Q,\mathbb{D}_\rho)$ (instead of $\mathcal{E}_\lambda$) for the {\sl 2D-energy} of a configuration $Q\in W^{1,2}_{\rm sym}(\bbD_\rho;\mathbb{S}^4)$. In view of Remark~\ref{2d-dec-equiv}, and as in \eqref{eq:equiv-energy-f}, the energy of $Q\simeq u=(f_0, f_1 e^{i \phi},f_2 e^{i2\phi})$ can be written in terms of $f$, leading to 
$$E_\lambda(Q,\mathbb{D}_\rho)=\pi\int_0^\rho \left( \abs{ f^\prime}^2+ \frac{\abs{f_1}^2 + 4\abs{f_2}^2}{r^2} +2\lambda \frac{1-\tilde{\beta}(f)}{3\sqrt{6}}    \right) \, rdr\,.$$
Then the criticality condition (in terms of $f$) for the functional $E_\lambda$ is almost identical to \eqref{eq:EL-symm-PDE}. It is obtained from it simply neglecting in each equation the terms $\partial^2_{x_3}$  and $\partial_{x_3}$  in the left hand side and the right hand side respectively.
\end{remark}



\section{Coexistence of smooth and singular minimizers}\label{sectcoexball}

\subsection{Regularity theory}\label{secregth}
The purpose of this subsection is to gather (and slightly refine) the main regularity results and tools obtained in \cite{DMP1,DMP2}  to have them at disposal in the most convenient form when they will be repeatedly used in the next subsections. To this end, let us recall the usual definition of {\sl singular set} for a map $Q$ defined on an open set $\Omega$. It is then defined as 
$${\rm sing}(Q):=\Omega\setminus\big\{x\in\Omega: \text{$Q$ is continuous in a neighborhood of $x$}\big\}\,.$$
\vskip3pt

The following interior regularity theorem, even if not explicitly stated in \cite{DMP2}, is a direct consequence of the discussion in \cite[Section 6]{DMP2}. In particular, formula \eqref{singularitycost} below is a combination of the strong $W^{1,2}$-convergence of the rescaled maps $Q_\lambda^{\bar x,\rho}$ together with the explicit form \eqref{formtangmaps} of all possible blow-up limits at a singular point. In our statement below, we  require Lipschitz regularity of the boundary only to ensure that the $W^{1,2}$-trace operator on $\partial\Omega$ is well defined. 

\begin{theorem}[\bf \cite{DMP2}, interior regularity]\label{intregthm}
Let $\Omega\subset\R^3$ be a bounded and axisymmetric open set with Lipschitz boundary,  and $Q_\lambda \in W^{1,2}_{\rm sym}(\Omega;\mathbb{S}^4)$ minimizing $\mathcal{E}_\lambda$ among all $Q \in W^{1,2}_{\rm sym}(\Omega;\mathbb{S}^4)$ satisfying $Q=Q_\lambda$ on $\partial \Omega$. Then $Q_\lambda\in C^\omega(\Omega\setminus {\rm sing}(Q_\lambda))$ and ${\rm sing}(Q_\lambda)\subset \{x_3\text{-axis}\}\cap\Omega$ is locally finite in $\Omega$. In addition, for every $\bar x\in {\rm sing}(Q_\lambda)$, there exist a rotation $R_\alpha\in\mathbb{S}^1$ and  $Q_*\in\{\pm Q^{(\alpha)}\}$   such that 
\begin{itemize}
\item[(i)] $Q_\lambda^{\bar x,\rho}\to Q_*$ strongly in $W^{1,2}_{\rm loc}(\R^3)$ as $\rho\to 0$;
\vskip5pt
\item[(ii)] $\|Q_\lambda^{\bar x,\rho}-  Q_*\|_{C^2(\overline B_2\setminus B_1)}=O(\rho^\nu)$ as $\rho\to 0$ for some $\nu>0$;  
\end{itemize}
where $Q_\lambda^{\bar x,\rho}(x):=Q_\lambda(\bar x+\rho x)$ and 
\begin{equation}\label{formtangmaps}
Q^{(\alpha)}(x):=R_\alpha \cdot \frac{1}{\sqrt{6}}\frac{1}{|x|}
\left(\begin{array}{ccc}
-x_3 & 0 & \sqrt{3} x_1\\
0 & -x_3 & \sqrt{3}x_2 \\
\sqrt{3} x_1 & \sqrt{3} x_2 & 2 x_3
\end{array}\right)\,.
\end{equation}
In particular,
\begin{equation}\label{singularitycost}
\lim_{\rho\to 0}\frac{1}{\rho}\int_{B_\rho(\bar x)} \frac{1}{2} |\nabla Q_\lambda|^2\,dx= \lim_{\rho\to 0}\frac{1}{\rho}\mathcal{E}_\lambda\big(Q_\lambda,B_\rho(\bar x)\big)=4\pi\quad \text{for every $\bar x\in{\rm sing}(Q_\lambda)$}\,.
\end{equation}
\end{theorem}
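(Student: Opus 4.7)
The plan is to show that Theorem~\ref{intregthm} follows by combining three ingredients already established in \cite{DMP1,DMP2}: the partial regularity theory for minimizers of $\mathcal{E}_\lambda$ into $\bbS^4$, the $\bbS^1$-equivariant classification of tangent maps, and the uniqueness of tangent maps in the symmetric class. By Proposition~\ref{prop:symmetric-criticality}, the map $Q_\lambda$ is in fact a minimizer of $\mathcal{E}_\lambda$ under \emph{arbitrary} compactly supported perturbations preserving the $\bbS^4$-constraint. This allows us to invoke the interior partial regularity theory developed in \cite{DMP1}, which yields analyticity of $Q_\lambda$ away from a closed set $\rmsing(Q_\lambda)$ of Hausdorff dimension at most one. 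Exploiting the $\bbS^1$-equivariance, the singular set must itself be $\bbS^1$-invariant; but any nonempty invariant set off the axis would contain a full circle of singularities, forcing $\mathscr{H}^1(\rmsing(Q_\lambda))>0$ on a compact subset of $\Omega$, in contradiction with the scale-invariant energy bound $\mathscr{H}^1(\rmsing(Q_\lambda))=0$ given by \cite{DMP1}. Hence $\rmsing(Q_\lambda)\subset \{x_3\text{-axis}\}\cap\Omega$, and local finiteness is then a consequence of an $\varepsilon$-regularity statement combined with the monotonicity formula for $\mathcal{E}_\lambda$, which provides a uniform lower bound on energy concentration at each singular point.

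Next, I would perform a blow-up analysis at a prescribed $\bar x\in \rmsing(Q_\lambda)$. The rescaled maps $Q_\lambda^{\bar x,\rho}$ minimize a rescaled energy $\mathcal{E}_{\lambda\rho^2}$ on expanding balls, and the classical monotonicity formula yields uniform energy bounds on compact sets as $\rho\to 0$. Standard compactness for minimizers (as in \cite{DMP1}) produces, along a subsequence, a strong $W^{1,2}_{\rm loc}$-limit $Q_*$ which is a $0$-homogeneous, $\bbS^1$-equivariant, $\bbS^4$-valued minimizing tangent map, i.e.\ a minimizing harmonic map from $\mathbb{R}^3$ into $\bbS^4$ whose restriction $\varphi_*=Q_*\vert_{\bbS^2}$ is an equivariant minimizing harmonic sphere. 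The classification of such spheres carried out in \cite[Section~6]{DMP2} (building on the explicit characterization of $\bbS^1$-equivariant harmonic maps $\bbS^2\to\bbS^4$) identifies the only admissible candidates as $\varphi_*=\pm Q^{(\alpha)}\vert_{\bbS^2}$ for some $R_\alpha\in\bbS^1$, which yields (i) along a subsequence; then uniqueness of the tangent map (see below) upgrades this to convergence as $\rho\to 0^+$.

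To obtain the decay estimate (ii), I would invoke the analytic (Simon--\L ojasiewicz-type) uniqueness of tangent maps for minimizing harmonic maps at isolated energy-minimizing singularities, applied to the integrable deformation family $\{\pm Q^{(\alpha)}\}_{\alpha\in\bbS^1}$; this strategy is precisely the one implemented in \cite[Section~6]{DMP2} and gives both uniqueness of $Q_*$ and the polynomial rate $\|Q_\lambda^{\bar x,\rho}-Q_*\|_{C^2(\overline B_2\setminus B_1)}=O(\rho^\nu)$ for some $\nu>0$. Finally, formula \eqref{singularitycost} is a direct computation: by the strong convergence $Q_\lambda^{\bar x,\rho}\to Q_*$ in $W^{1,2}_{\rm loc}(\mathbb{R}^3)$, the change of variables $y=\rho x$ yields
\[
\lim_{\rho\to 0}\frac{1}{\rho}\int_{B_\rho(\bar x)}\frac{1}{2}|\nabla Q_\lambda|^2\,dx=\int_{B_1}\frac{1}{2}|\nabla Q_*|^2\,dx=\frac{1}{2}\int_{\bbS^2}|\nabla_{\bbS^2}\varphi_*|^2\,d\sigma,
\]
and the right-hand side equals $4\pi$ after computing $|\nabla_{\bbS^2}\varphi_*|^2\equiv 2$ from the explicit form~\eqref{formtangmaps}; the potential contribution $\lambda\int_{B_\rho(\bar x)}W(Q_\lambda)\,dx=O(\rho^3)$ is absorbed in the $\mathcal{E}_\lambda$ version by scaling. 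The main conceptual obstacle in the above plan is the uniqueness and rate for the tangent map; this is precisely where one must exploit the integrable one-parameter family $\{R_\alpha\}$ of minimizers through \L ojasiewicz inequalities, as in \cite[Section~6]{DMP2}.
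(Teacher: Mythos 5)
Your proposal is largely aligned in structure with the actual argument in \cite[Sections~5--6]{DMP2}, which the paper invokes here without reproducing (the preamble explicitly says this theorem is ``a direct consequence of the discussion in \cite[Section~6]{DMP2}''). The blow-up analysis, the classification of $\bbS^1$-equivariant $0$-homogeneous tangent maps, the \L ojasiewicz--Simon uniqueness and rate through the integrable one-parameter family $\{R_\alpha\}$, and the computation of the singularity cost from the explicit form of $Q^{(\alpha)}$ via strong $W^{1,2}_{\rm loc}$-convergence are all the right ingredients and are in fact what \cite{DMP2} carries out.

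However, there is a genuine conceptual error at the opening step that would, if taken literally, make the theorem vacuous. You write that by Proposition~\ref{prop:symmetric-criticality} the map $Q_\lambda$ is ``a minimizer of $\mathcal{E}_\lambda$ under arbitrary compactly supported perturbations.'' This is false: the symmetric criticality principle transfers \emph{criticality}, not minimality. If $Q_\lambda$ really were a minimizer in the unconstrained class $\mathcal{A}_{Q_{\rm b}}(\Omega)$, then the full interior regularity theorem \cite[Theorem~1.1]{DMP1} would force ${\rm sing}(Q_\lambda)=\emptyset$, and there would be nothing left to prove. Indeed the whole point of the split/torus dichotomy and of Corollary~\ref{inst+symmbreaking} in this paper is that symmetric minimizers can be singular precisely because they need \emph{not} be global minimizers, and the presence of a singularity then forces symmetry breaking of the global minimizer.

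Consequently, the argument you build on top of this---applying the DMP1 partial-regularity/dimension bound to $Q_\lambda$, and then excluding singular circles off the axis because $\mathscr{H}^1({\rm sing}(Q_\lambda))=0$---does not directly go through, because that package was proved in \cite{DMP1} for unconstrained minimizers. What actually has to be done (and is done in \cite{DMP2}) is to rebuild the compactness, $\varepsilon$-regularity, and monotonicity machinery \emph{within the symmetric class}, constructing comparison maps that preserve $\bbS^1$-equivariance; the conclusion that ${\rm sing}(Q_\lambda)$ is contained in the $x_3$-axis is then obtained by reducing, off the axis, the $\bbS^1$-equivariant $3$D problem to an effective $2$D problem on the slice $\mathcal{D}^+_\Omega$, where no singularity can form. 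Once you replace the incorrect invocation of ``minimality among all competitors'' with ``minimality in the symmetric class, combined with symmetric comparison maps and the symmetric $\varepsilon$-regularity/monotonicity theory of \cite{DMP2},'' the rest of your plan is sound.
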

\vskip5pt

Regularity at the boundary holds whenever the boundary of $\Omega$ and the boundary data are smooth enough. In this case, the singular set is  made of finitely many points inside the domain $\Omega$.

\begin{theorem}[\bf \cite{DMP2}, regularity up to the boundary]\label{bdrregthm}
Let $\Omega\subset\R^3$ be a bounded and axisymmetric open set with boundary of class $C^3$, and let $Q_{\rm b}\in C^{1,1}(\partial\Omega;\mathbb{S}^4)$ be an $\mathbb{S}^1$-equivariant map. If $Q_\lambda$ is a minimizer of $\mathcal{E}_\lambda$ over $\mathcal{A}^{\rm sym}_{Q_{\rm b}}(\Omega)$, then $Q_\lambda\in C^\omega(\Omega\setminus{\rm sing}(Q_\lambda))\cap C^{1,\alpha}(\overline\Omega\setminus{\rm sing}(Q_\lambda))$ for every $\alpha\in(0,1)$ and ${\rm sing}(Q_\lambda)$ is a finite subset of $\Omega\cap\{x_3\text{-axis}\}$. Moreover, 
\begin{itemize}
\item[(i)] if $Q_{\rm b}\in C^{2,\alpha}(\partial\Omega)$, then $Q_\lambda\in C^{2,\alpha}(\overline\Omega\setminus{\rm sing}(Q_\lambda))$; 
\vskip3pt
\item[(ii)] if $\partial\Omega$ is of class $C^{k,\alpha}$ and $Q_{\rm b}\in C^{k,\alpha}(\partial\Omega)$ with  $k\geq 3$, then $Q_\lambda\in C^{k,\alpha}(\overline\Omega\setminus{\rm sing}(Q_\lambda))$; 
\vskip3pt
\item[(iii)] if $\partial\Omega$ is analytic and $Q_{\rm b}\in C^\omega(\partial\Omega)$, then $Q_\lambda\in C^{\omega}(\overline\Omega\setminus{\rm sing}(Q_\lambda))$. 
\end{itemize}
\end{theorem}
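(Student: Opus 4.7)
The plan is to build on the interior theory of Theorem~\ref{intregthm} and promote it to the boundary by a case analysis depending on whether a boundary point $\bar x\in\partial\Omega$ lies on the symmetry axis or not. Once continuity up to the boundary (away from a finite axial set) is established, the higher regularity claims (i)--(iii) will follow by a standard Schauder bootstrap applied to the Euler--Lagrange system~\eqref{MasterEq}, which is a semilinear elliptic system with polynomial (hence analytic) nonlinearity tangent to the smooth manifold $\mathbb{S}^4$. The main point is therefore the continuity up to $\partial\Omega$, and afterwards the fact that $\mathrm{sing}(Q_\lambda)\subset\Omega\cap\{x_3\text{-axis}\}$ is \emph{finite} rather than just locally finite as in Theorem~\ref{intregthm}.

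For off-axis boundary points $\bar x\in\partial\Omega\setminus\{x_3\text{-axis}\}$, the $\mathbb{S}^1$-orbit of $\bar x$ is a smooth circle lying in $\partial\Omega$ at positive distance from the axis. Using the cylindrical decomposition of Lemma~\ref{lemma:dec-equiv}, the minimization problem becomes a weighted two-dimensional boundary value problem for the profile $f=(f_0,f_1,f_2)$ on $\mathcal{D}_\Omega^+$. Since $r$ is bounded away from $0$ near $\bar x$, the weight $r\,dr\,dx_3$ is comparable to Lebesgue measure and the singular lower-order terms $|f_k|^2/r^2$ become smooth coefficients. A standard boundary monotonicity formula combined with a Luckhaus-type boundary $\varepsilon$-regularity theorem for energy minimizers into a smooth closed manifold then yields continuity at $\bar x$; alternatively, one can invoke directly the 2D interior regularity for energy-minimizing maps (which are always continuous in dimension two) applied in the meridian half-plane with the $C^{1,1}$ Dirichlet condition inherited from $Q_{\rm b}$.

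The delicate case, and the main obstacle, is that of an axial boundary point $\bar x\in\partial\Omega\cap\{x_3\text{-axis}\}$, where the $\mathbb{S}^1$-action degenerates and the 2D reduction just used breaks down. Here I would perform a blow-up analysis at $\bar x$: the rescalings $Q_\lambda^{\bar x,\rho}(x):=Q_\lambda(\bar x+\rho x)$ live on $\mathbb{S}^1$-invariant domains converging to a half-space $\mathbb{H}$ whose boundary is the tangent plane of $\partial\Omega$ at $\bar x$ (this plane is necessarily orthogonal to the axis since $\partial\Omega$ is $C^3$-smooth and $\mathbb{S}^1$-invariant, so the axis meets $\partial\Omega$ orthogonally). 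The boundary monotonicity formula extracts a weakly converging subsequence whose limit $Q_*$ is an $\mathbb{S}^1$-equivariant minimizing harmonic map $\mathbb{H}\to\mathbb{S}^4$ with smooth constant boundary datum $Q_{\rm b}(\bar x)$ on $\partial\mathbb{H}$. Reflecting across $\partial\mathbb{H}$ (which is permissible since $Q_{\rm b}(\bar x)$ is constant) produces a minimizing harmonic map on $\mathbb{R}^3$, and the combination of the classification of tangent maps provided by~\eqref{formtangmaps} with the equivariance forces $Q_*$ to be constant equal to $Q_{\rm b}(\bar x)$. The vanishing of the blow-up limit together with the boundary version of the energy density upper semicontinuity yields $\varepsilon$-smallness of the rescaled energy at $\bar x$ for $\rho$ small, and boundary $\varepsilon$-regularity then gives continuity at $\bar x$.

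Combining the two cases above, $Q_\lambda$ is continuous on $\overline{\Omega}$ outside a set $\mathrm{sing}(Q_\lambda)\subset\Omega\cap\{x_3\text{-axis}\}$ which is locally finite in $\Omega$ by Theorem~\ref{intregthm} and does not accumulate at $\partial\Omega$ by the boundary continuity established at axial boundary points. Since $\overline{\Omega}\cap\{x_3\text{-axis}\}$ is compact, $\mathrm{sing}(Q_\lambda)$ is finite. Finally, on the open set $\overline{\Omega}\setminus\mathrm{sing}(Q_\lambda)$ the map $Q_\lambda$ is a continuous weak solution of~\eqref{MasterEq}; writing this system as a semilinear elliptic system tangent to $\mathbb{S}^4$ with smooth right-hand side, the Schauder theory up to the boundary combined with $Q_{\rm b}\in C^{1,1}$ gives $Q_\lambda\in C^{1,\alpha}(\overline{\Omega}\setminus\mathrm{sing}(Q_\lambda))$ for every $\alpha\in(0,1)$, while interior analyticity is automatic by Morrey's theorem applied to the real-analytic nonlinearity. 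The refinements (i)--(iii) follow by iterating the Schauder estimates at the boundary, and by Morrey's analytic regularity theorem in case (iii).
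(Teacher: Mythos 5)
The approach you take is genuinely different from the one in~\cite{DMP1,DMP2}, where boundary regularity is obtained by a uniform Scheven-type reflection across $\partial\Omega$ (see the sketch of Proposition~\ref{bdrepsregprop} and its reference to~\cite[(2.22)]{DMP1}), giving a boundary $\boldsymbol{\varepsilon}$-regularity theorem that does not distinguish axial from off-axis boundary points. You instead do a case analysis, reduce to a weighted 2D problem off-axis and perform a blow-up at axial boundary points. This alternative strategy is plausible in outline, but the key step for the axial case has a genuine gap.

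The problematic step is the claim that ``reflecting across $\partial\mathbb{H}$ (which is permissible since $Q_{\rm b}(\bar x)$ is constant) produces a minimizing harmonic map on $\mathbb{R}^3$.'' Reflecting via $\tilde Q_*(x',-x_3):=\sigma_c\circ Q_*(x',x_3)$, with $\sigma_c$ the geodesic reflection of $\mathbb{S}^4$ through the constant boundary value $c$, does produce a map that is \emph{harmonic} across $\{x_3=0\}$ (because $\sigma_c$ is an isometry and $d\sigma_c|_{T_c}=-\mathrm{id}$, so the normal derivatives match). But this does \emph{not} give that $\tilde Q_*$ is energy \emph{minimizing} on balls of $\mathbb{R}^3$: given a competitor $V$ on $B_1$ agreeing with $\tilde Q_*$ on $\partial B_1$, there is no reason that $V$ agrees with $c$ on $\{x_3=0\}\cap B_1$, so one cannot split $B_1$ into two half-balls and apply minimality of $Q_*$ on each side, nor does a naive symmetrization of $V$ respect the boundary condition on the lower hemisphere. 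Yet minimality is precisely what you need in order to invoke the classification~\eqref{formtangmaps}, which concerns tangent maps of \emph{minimizers}: without it one only knows that $\tilde Q_*$ is an equivariant, degree-zero homogeneous harmonic map, a much larger class. Thus the constancy of the boundary tangent map, which is the crux of your argument and the non-trivial part of Schoen--Uhlenbeck/Scheven boundary regularity, is asserted but not proved. The correct route is either a Luckhaus-type extension and decay estimate, or the reflection formula of~\cite[Section~2.2]{DMP1} combined with the boundary monotonicity formula of Proposition~\ref{bdrmonoform} and the boundary $\boldsymbol{\varepsilon}$-regularity of Corollary~\ref{bdrepsregcor}, neither of which depends on the reflected map being minimizing. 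The remainder of your proposal (finiteness from compactness of $\overline\Omega\cap\{x_3\text{-axis}\}$ once boundary regularity is in hand, and the Schauder/Morrey bootstrap for items (i)--(iii)) is fine.
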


Those two theorems rest on analytical tools that we shall repeatedly use. The first one is a monotonicity formula for the energy on balls, and we have to distinguish between balls inside the domain and balls centered at the boundary. Our statement below about the interior monotonicity formula is slightly different from the one in \cite[Proposition 6.6]{DMP2} (in the sense that we do not impose here a smooth boundary data), but a quick inspection of the proof (which is based on \cite[Proposition 2.4]{DMP1}) reveals that smoothness at the boundary is only used to establish the boundary monotonicity formula. Concerning the boundary case, the formula in  \cite[Proposition 6.6]{DMP2}  involve constants depending only on $\lambda$, the domain $\Omega$, and the boundary data. We provide in Proposition~\ref{bdrmonoform} below a statement with a control on those constant which is transparent from the proof of  \cite[Proposition 6.6]{DMP2}. 

\begin{proposition}[\bf \cite{DMP1,DMP2}, interior monotonicity formula]\label{intmonoform}
Let $\Omega\subset\R^3$ be a bounded and axisymmetric open set with Lipschitz boundary,  and $Q_\lambda \in W^{1,2}_{\rm sym}(\Omega;\mathbb{S}^4)$ minimizing $\mathcal{E}_\lambda$ among all $Q \in W^{1,2}_{\rm sym}(\Omega;\mathbb{S}^4)$ satisfying $Q=Q_\lambda$ on $\partial \Omega$. Then, 
\begin{multline*}
\frac{1}{\rho}\mathcal{E}_\lambda\big(Q_\lambda,B_\rho(\bar x)\big) - \frac{1}{\sigma}\mathcal{E}_\lambda\big(Q_\lambda,B_\sigma(\bar x)\big) \\
= \int_{B_\rho(\bar x)\setminus B_\sigma(\bar x)}\frac{1}{|x-\bar x|}\bigg|\frac{\partial Q_\lambda}{\partial|x-\bar x|}\bigg|^2\,dx +2\lambda\int_\sigma^\rho\bigg(\frac{1}{t^2}\int_{B_t(\bar x)}W(Q_\lambda)\,dx\bigg)\,dt
\end{multline*}
for every $\bar x\in \Omega$ and $0<\sigma<\rho<{\rm dist}(\bar x,\partial\Omega)$. 
\end{proposition}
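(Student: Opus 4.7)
The formula is a standard consequence of the inner-variation identity for minimizers into a fixed target, and the scheme of proof is identical to that of \cite[Proposition 2.4]{DMP1}. The only adjustment to be made here is to ensure that the argument does not use any regularity of $Q_\lambda$ at $\partial\Omega$, which is harmless since the statement only concerns balls compactly contained in $\Omega$.

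First, I invoke Proposition~\ref{prop:symmetric-criticality}(i) to upgrade $\mathbb{S}^1$-equivariant minimality to criticality in the full (non-equivariant) class of $\bbS^4$-valued competitors with fixed trace on $\partial\Omega$. Consequently, given any $C^1$-diffeomorphism $\phi_t\colon\Omega\to\Omega$ with $\phi_0=\rmid$ and $\phi_t=\rmid$ outside a compact subset of $\Omega$, the competitor $Q_\lambda\circ\phi_t^{-1}$ belongs to $W^{1,2}(\Omega;\bbS^4)$ and has the same trace as $Q_\lambda$. Differentiating $t\mapsto\mathcal{E}_\lambda(Q_\lambda\circ\phi_t^{-1})$ at $t=0$ along an arbitrary vector field $X\in C^1_c(\Omega;\R^3)$ yields the stress-energy identity
\begin{equation}\label{stresseng}
\int_\Omega \left[\Big(\tfrac12|\nabla Q_\lambda|^2+\lambda W(Q_\lambda)\Big)\rmdiv X \,-\, \partial_i Q_\lambda\cdot\partial_j Q_\lambda\,\partial_j X_i\right]dx = 0.
\end{equation}

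Second, I fix $\bar x\in\Omega$, set $r:=|x-\bar x|$ and $R:=\rmdist(\bar x,\partial\Omega)$, and test \eqref{stresseng} with the radial vector field $X_\eps(x):=\chi_\eps(r)(x-\bar x)$, where $\{\chi_\eps\}_{\eps>0}\subset C^1_c([0,R))$ is a standard approximation of $\mathbf{1}_{[0,\rho]}$. The explicit identities
$$\rmdiv X_\eps = 3\chi_\eps(r)+r\chi_\eps'(r),\qquad \partial_i Q_\lambda\cdot\partial_j Q_\lambda\,\partial_j X_i^\eps = |\nabla Q_\lambda|^2\chi_\eps(r)+r\chi_\eps'(r)\,|\partial_r Q_\lambda|^2,$$
in which $\partial_r$ denotes the radial derivative centered at $\bar x$, allow one to rewrite \eqref{stresseng} as a scalar identity in $\chi_\eps$ and $\chi_\eps'$. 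A routine Fubini/coarea argument shows that, for a.e. $\rho\in(0,R)$, passing to the limit $\eps\downarrow 0$ is legitimate and produces
\begin{equation}\label{pohozaev}
\int_{B_\rho(\bar x)}\Big(\tfrac12|\nabla Q_\lambda|^2+3\lambda W(Q_\lambda)\Big)dx = \rho\int_{\partial B_\rho(\bar x)}\Big(\tfrac12|\nabla Q_\lambda|^2+\lambda W(Q_\lambda)-|\partial_r Q_\lambda|^2\Big)dS.
\end{equation}

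Third, setting $\Phi(\rho):=\rho^{-1}\mathcal{E}_\lambda(Q_\lambda,B_\rho(\bar x))$, the elementary identity
$$\rho^2\Phi'(\rho) = \rho\int_{\partial B_\rho(\bar x)}\Big(\tfrac12|\nabla Q_\lambda|^2+\lambda W(Q_\lambda)\Big)dS - \int_{B_\rho(\bar x)}\Big(\tfrac12|\nabla Q_\lambda|^2+\lambda W(Q_\lambda)\Big)dx$$
combined with \eqref{pohozaev} gives, for a.e. $\rho\in(0,R)$,
$$\Phi'(\rho) = \frac{1}{\rho}\int_{\partial B_\rho(\bar x)}|\partial_r Q_\lambda|^2\,dS + \frac{2\lambda}{\rho^2}\int_{B_\rho(\bar x)}W(Q_\lambda)\,dx\;\geq\;0.$$
Since $\Phi$ is the antiderivative of a locally integrable function on $(0,R)$, integrating from $\sigma$ to $\rho$ and rewriting the first term on the right-hand side via the coarea formula yields exactly the claimed formula.

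The only point that is not entirely mechanical is the passage to the limit $\eps\downarrow 0$ in the radial test vector field and the attendant justification that $\Phi$ is absolutely continuous with the derivative computed above; but this is standard and performed in detail in \cite[Proposition 2.4]{DMP1}. As already noted in the excerpt, all manipulations above are supported in compact subsets of $\Omega$, so no boundary regularity is used, and Lipschitz regularity of $\partial\Omega$ is only needed to ensure that the trace of $Q_\lambda$ is well defined and that the symmetric criticality of Proposition~\ref{prop:symmetric-criticality}(i) applies.
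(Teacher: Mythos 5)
Your computations from the stress--energy identity onward (radial test field, the Pohozaev-type identity \eqref{pohozaev}, the derivative of $\Phi$, absolute continuity, coarea) are correct and follow the same route the paper implicitly relies on by citing \cite[Proposition~2.4]{DMP1} and \cite[Proposition~6.6]{DMP2}. The problem is the very first step. Proposition~\ref{prop:symmetric-criticality}(i) only upgrades equivariant criticality to \emph{outer} criticality: $Q_\lambda$ is a weak solution of \eqref{MasterEq} among all $\bbS^4$-valued maps. This does not imply that the first \emph{inner} variation vanishes, i.e.\ that $\frac{d}{dt}\mathcal{E}_\lambda(Q_\lambda\circ\phi_t^{-1})\big|_{t=0}=0$ for arbitrary compactly supported diffeomorphisms: for $W^{1,2}$ weak solutions of harmonic-map type systems, "weakly harmonic" and "stationary" are genuinely different notions (the formal identification requires multiplying the equation by $X\cdot\nabla Q_\lambda$ and integrating by parts, which needs $W^{2,2}_{\rm loc}$ or smoothness — and interior regularity in this paper is itself built on the monotonicity formula, so that route would be circular). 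Nor can you invoke minimality directly, because for a non-equivariant $\phi_t$ the competitor $Q_\lambda\circ\phi_t^{-1}$ leaves the class $W^{1,2}_{\rm sym}$, so it is not admissible for the constrained minimization. As written, \eqref{stresseng} is therefore unjustified, and it is exactly the point where the symmetry constraint must be confronted.

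The gap is fixable with one extra argument. For an \emph{equivariant} vector field $X$ the flow $\phi_t$ is equivariant, $Q_\lambda\circ\phi_t^{-1}$ stays in $W^{1,2}_{\rm sym}$ with the same trace, and minimality gives $\frac{d}{dt}\mathcal{E}_\lambda(Q_\lambda\circ\phi_t^{-1})\big|_{t=0}=0$, i.e.\ \eqref{stresseng} for such $X$. To pass to arbitrary $X\in C^1_c(\Omega;\R^3)$, use the equivariance of $Q_\lambda$: the energy density is $\bbS^1$-invariant and the stress tensor $T_{ij}=\big(\tfrac12|\nabla Q_\lambda|^2+\lambda W(Q_\lambda)\big)\delta_{ij}-\partial_iQ_\lambda\cdot\partial_jQ_\lambda$ satisfies $T(Rx)=R\,T(x)\,R^\trans$, so a change of variables shows that the left-hand side of \eqref{stresseng} is unchanged when $X$ is replaced by $X_R(x):=R^\trans X(Rx)$, hence also by its average $\widetilde X(x):=\fint_{\bbS^1}R^\trans X(Rx)\,dR$, which \emph{is} equivariant; applying the equivariant case to $\widetilde X$ then yields \eqref{stresseng} for $X$ itself. (Note that for $\bar x$ on the $x_3$-axis your radial field is already equivariant, but the statement requires every $\bar x\in\Omega$, and the $\varepsilon$-regularity results of the paper genuinely use off-axis centers, so this extension step cannot be skipped.) With that insertion, the rest of your proof goes through and coincides with the argument of \cite{DMP1,DMP2} that the paper invokes.
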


\begin{proposition}[\bf \cite{DMP1,DMP2}, boundary monotonicity formula]\label{bdrmonoform}
Let $\Lambda,L>0$ and $\Omega\subset\R^3$  a bounded and axisymmetric open set with boundary of class $C^3$. Let $Q_{\rm b}\in C^{1,1}(\partial\Omega;\mathbb{S}^4)$ be an $\mathbb{S}^1$-equivariant map satisfying 
$\|Q_{\rm b}\|_{C^1(\partial\Omega)}\leq L$. If $\lambda\in[0,\Lambda]$ and  $Q_\lambda$ is a minimizer of $\mathcal{E}_\lambda$ over $\mathcal{A}^{\rm sym}_{Q_{\rm b}}(\Omega)$, then
\begin{multline*}
\frac{1}{\rho}\mathcal{E}_\lambda\big(Q_\lambda,B_\rho(\bar x)\cap \Omega\big) - \frac{1}{\sigma}\mathcal{E}_\lambda\big(Q_\lambda,B_\sigma(\bar x)\cap\Omega\big)\geq -K_*(\rho-\sigma) \\
+ \int_{\big(B_\rho(\bar x)\setminus B_\sigma(\bar x)\big)\cap\Omega}\frac{1}{|x-\bar x|}\bigg|\frac{\partial Q_\lambda}{\partial|x-\bar x|}\bigg|^2\,dx +2\lambda\int_\sigma^\rho\bigg(\frac{1}{t^2}\int_{B_t(\bar x)\cap\Omega}W(Q_\lambda)\,dx\bigg)\,dt
\end{multline*}
for every $\bar x\in\partial\Omega$ and every $0<\sigma<\rho<\mathbf{r}_*$, where the radius $\mathbf{r}_*>0$ only depends on $\Omega$, and $K_*>0$ is a constant depending only on  $\Lambda$, $L$, and $\Omega$. 
\end{proposition}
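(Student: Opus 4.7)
The plan is to follow the scheme underlying \cite[Proposition~6.6]{DMP2}, while keeping explicit track of how the correction constant depends on the data. The first step is a suitable extension of $Q_{\rm b}$ inside $\Omega$. Since $\partial\Omega$ is of class $C^3$, the signed distance to $\partial\Omega$ is $C^3$ on a one-sided tubular neighborhood $U$ of $\partial\Omega$, and the associated nearest-point projection $\pi : U \to \partial\Omega$ is of class $C^2$ on a strip of width $\mathbf{r}_* > 0$ depending only on $\Omega$. Setting $\widetilde{Q}_{\rm b} := \Pi_{\mathbb{S}^4}(Q_{\rm b} \circ \pi)$, with $\Pi_{\mathbb{S}^4}$ the smooth nearest-point projection onto $\mathbb{S}^4$ defined in a neighborhood of $\mathbb{S}^4 \subset \mathcal{S}_0$, yields an $\mathbb{S}^1$-equivariant $C^{1,1}$-map with values in $\mathbb{S}^4$ satisfying $\|\widetilde{Q}_{\rm b}\|_{C^1(U)} \leq C_\Omega L$ for a constant $C_\Omega$ depending only on $\Omega$.

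For each $\bar x \in \partial\Omega$ and $\rho \in (0, \mathbf{r}_*)$, the next step is to construct a comparison competitor $\widehat Q \in \mathcal{A}_{Q_{\rm b}}^{\rm sym}(\Omega)$ that agrees with $Q_\lambda$ outside $B_\rho(\bar x) \cap \Omega$. Inside this set, I would take $\widehat Q$ to be the projection onto $\mathbb{S}^4$ of a cut-off combination between the $0$-homogeneous (with respect to $\bar x$) extension of the trace of $Q_\lambda$ on $\partial B_\rho(\bar x) \cap \Omega$ and the reference map $\widetilde{Q}_{\rm b}$, the transition being confined to a boundary layer of thickness proportional to $\rho$ along $\partial\Omega \cap B_\rho(\bar x)$. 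By construction $\widehat Q$ is $\mathbb{S}^1$-equivariant, $\mathbb{S}^4$-valued, and satisfies the boundary condition on $\partial\Omega$. A direct computation exploiting the homogeneity of the main piece (which makes the radial derivative vanish there) together with the $L^\infty$-bound on $\nabla\widetilde{Q}_{\rm b}$, the boundedness of $W$ on $\mathbb{S}^4$, and the restriction $\lambda \leq \Lambda$, gives an energy estimate of the form
\[
\mathcal{E}_\lambda(\widehat Q, B_\rho(\bar x) \cap \Omega) \leq \rho \int_{\partial B_\rho(\bar x) \cap \Omega} \frac{1}{2}\left|\nabla_\tau Q_\lambda\right|^2 d\mathcal{H}^2 + K_*\rho^3,
\]
where $\nabla_\tau$ is the tangential gradient along $\partial B_\rho(\bar x)$ and $K_*$ depends only on $\Lambda$, $L$, and $\Omega$: the curvature of $\partial\Omega$ provides the $\Omega$-dependence, $\|\widetilde{Q}_{\rm b}\|_{C^1} \leq C_\Omega L$ provides the $L$-dependence, and $\Lambda \cdot \sup_{\mathbb{S}^4} W$ provides the $\Lambda$-dependence.

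Inserting this estimate into the minimality inequality $\mathcal{E}_\lambda(Q_\lambda, B_\rho(\bar x) \cap \Omega) \leq \mathcal{E}_\lambda(\widehat Q, B_\rho(\bar x) \cap \Omega)$ and splitting $|\nabla Q_\lambda|^2 = |\partial_r Q_\lambda|^2 + |\nabla_\tau Q_\lambda|^2$ with $r = |x-\bar x|$ on $\partial B_\rho(\bar x)$, one derives after rearrangement a differential inequality
\[
\frac{d}{d\rho}\!\left[\frac{1}{\rho}\mathcal{E}_\lambda(Q_\lambda, B_\rho(\bar x) \cap \Omega)\right] \geq \int_{\partial B_\rho(\bar x) \cap \Omega}\! \frac{1}{|x-\bar x|}\left|\frac{\partial Q_\lambda}{\partial |x-\bar x|}\right|^2 d\mathcal{H}^2 + \frac{2\lambda}{\rho^2}\int_{B_\rho(\bar x) \cap \Omega} W(Q_\lambda)\,dx - K_*,
\]
valid for a.e. $\rho \in (0, \mathbf{r}_*)$. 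Integrating from $\sigma$ to $\rho$ yields the stated formula. The main obstacle is the construction of $\widehat Q$ enforcing simultaneously the $\mathbb{S}^4$-constraint, $\mathbb{S}^1$-equivariance, and the inhomogeneous boundary condition, while producing a cubic-in-$\rho$ error term with a constant depending only on $\Lambda$, $L$, and $\Omega$; in particular, the $C^{1,1}$-regularity of $Q_{\rm b}$ (hence of $\widetilde{Q}_{\rm b}$) is essential to secure the $L^\infty$-bound on $\nabla\widetilde{Q}_{\rm b}$ by $C_\Omega L$ rather than merely an integral bound, and the tubular-neighborhood structure provided by the $C^3$-regularity of $\partial\Omega$ is what fixes the radius $\mathbf{r}_*$ uniformly in the data.
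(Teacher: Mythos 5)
Your route is genuinely different from the one the paper relies on: the statement is imported from \cite{DMP1,DMP2}, where it is obtained by a stationarity/Pohozaev argument — multiply \eqref{MasterEq} by $(x-\bar x)\cdot\nabla Q_\lambda$, integrate by parts over $B_t(\bar x)\cap\Omega$ avoiding the finitely many singular points, and control the boundary terms on $\partial\Omega\cap B_t(\bar x)$ using $\|Q_{\rm b}\|_{C^1}\leq L$ and the $C^3$ geometry of $\partial\Omega$; this is exactly the mechanism of Lemma~\ref{conservation-laws} and Remark~\ref{specgeomremreg}, and it is also where the dependence of $K_*$ on $\Lambda$, $L$, $\Omega$ becomes transparent. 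Your comparison-map scheme has two genuine gaps. First, admissibility: $Q_\lambda$ is a minimizer only over $\mathcal{A}^{\rm sym}_{Q_{\rm b}}(\Omega)$, while your competitor modifies $Q_\lambda$ inside $B_\rho(\bar x)\cap\Omega$ by a $0$-homogeneous extension centered at $\bar x$, which is $\mathbb{S}^1$-equivariant only when $\bar x$ lies on the symmetry axis. For the generic boundary point $\bar x\in\partial\Omega$ off the axis the competitor is not equivariant, so minimality cannot be tested against it; Proposition~\ref{prop:symmetric-criticality} upgrades criticality, not minimality, and unconstrained minimality genuinely fails in general (cf.\ Corollary~\ref{inst+symmbreaking}), so this cannot be patched by dropping the symmetry constraint. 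Rotating the modification to the whole $\mathbb{S}^1$-orbit of $B_\rho(\bar x)$ does not salvage the computation either, since the tube is not a union of disjoint rotated balls and the energies no longer compare in the way your estimate requires.

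Second, even where the comparison is admissible, the constants do not come out. From your competitor bound $\mathcal{E}_\lambda(\widehat Q,B_\rho(\bar x)\cap\Omega)\leq \rho\int_{\partial B_\rho(\bar x)\cap\Omega}\tfrac12|\nabla_\tau Q_\lambda|^2\,d\mathcal{H}^2+K_*\rho^3$ and minimality, differentiating $\rho\mapsto \rho^{-1}\mathcal{E}_\lambda(Q_\lambda,B_\rho(\bar x)\cap\Omega)$ only gives a lower bound of the form $\frac{1}{2\rho}\int_{\partial B_\rho(\bar x)\cap\Omega}|\partial_r Q_\lambda|^2\,d\mathcal{H}^2+\frac{\lambda}{\rho}\int_{\partial B_\rho(\bar x)\cap\Omega}W(Q_\lambda)\,d\mathcal{H}^2-K_*\rho$: the full-weight term $\int |x-\bar x|^{-1}|\partial_{|x-\bar x|}Q_\lambda|^2\,dx$ with coefficient $1$ and the bulk potential term $2\lambda t^{-2}\int_{B_t\cap\Omega}W(Q_\lambda)\,dx$ claimed in your differential inequality simply do not follow; comparison arguments intrinsically lose a factor $2$ here (and an extra integration by parts in $t$ recovers the bulk potential term only with coefficient $\lambda$, modulo further errors). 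Those exact coefficients are the signature of the first-variation identity, as the flat-boundary case of Remark~\ref{specgeomremreg} makes explicit. So, as written, your argument would at best prove a strictly weaker inequality than the proposition; to obtain the stated formula you need the Pohozaev route (where the $C^{1,1}$ bound on $Q_{\rm b}$ and the curvature of $\partial\Omega$ enter only through the boundary terms, producing $-K_*(\rho-\sigma)$). The claim that the boundary-layer gluing costs only $K_*\rho^3$ would also need a genuine proof, but this is secondary to the two issues above.
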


The second main ingredient we need to emphasize is an {\sl epsilon-regularity result}, consequence of  a more general regularity theorem in \cite[Theorem 2.12 \& Proposition 2.18]{DMP1}. Here again, we have to distinguish the interior and the boundary case, and our statements  below provide a better control on the involved constants inherited from  their proofs (see \cite[Section 6]{DMP2} and \cite[Section~2]{DMP1}).

\begin{proposition}[\bf \cite{DMP1,DMP2}, interior $\boldsymbol{\varepsilon}$-regularity]\label{intepsregprop}
Let $\Lambda>0$ and $\Omega\subset\R^3$ be a bounded and axisymmetric open set with Lipschitz boundary.  Let $\lambda\in[0,\Lambda]$ and $Q_\lambda \in W^{1,2}_{\rm sym}(\Omega;\mathbb{S}^4)$ minimizing $\mathcal{E}_\lambda$ among all $Q \in W^{1,2}_{\rm sym}(\Omega;\mathbb{S}^4)$ satisfying $Q=Q_\lambda$ on $\partial \Omega$. There exist a universal constant $\boldsymbol{\varepsilon}_{\rm in}>0$ such that for every ball $B_r(x_0)\subset \Omega$ with $r$ small enough (depending only on $\Lambda$), the condition 
$$\frac{1}{r}\int_{B_r(x_0)}|\nabla Q_\lambda|^2\,dx\leq \frac{\boldsymbol{\varepsilon}_{\rm in}}{4} $$
implies $Q_\lambda\in C^\omega(B_{r/8}(x_0))$, and $\|\nabla^k Q_\lambda\|_{L^\infty(B_{r/16}(x_0))}\leq C_k r^{-k}$ for each $k\in\mathbb{N}$ and a constant $C_k$ depending only on $k$. 
\end{proposition}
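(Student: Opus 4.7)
The plan is to reduce the statement to the general $\varepsilon$-regularity theorem for minimizing Landau--de Gennes maps established in \cite[Theorem 2.12, Proposition 2.18]{DMP1}, and then to track the dependence of the constants. First, I would upgrade ``minimizer in the symmetric class'' to ``minimizer in the full class $W^{1,2}(\Omega;\bbS^4)$ (with its own boundary trace)''. This is the standard consequence of symmetric criticality: any competitor with the same boundary values can be replaced by an equivariant competitor with no greater energy by averaging $|\nabla Q|^2$ and $W(Q)$ over the $\bbS^1$-action (both are invariant), so $Q_\lambda$ in fact minimizes $\mathcal{E}_\lambda$ in the full class $W^{1,2}(\Omega;\bbS^4)$. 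This step is needed because the general $\varepsilon$-regularity of \cite{DMP1} is phrased for such (non-symmetric) minimizers.

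Next, I would rescale to the unit scale. For $B_r(x_0)\subset\Omega$, set $\tilde{Q}(y):=Q_\lambda(x_0+ry)$ on $B_1$. Then $\tilde{Q}$ minimizes, under its own trace on $\partial B_1$, the energy
\begin{equation*}
\int_{B_1}\tfrac{1}{2}|\nabla \tilde{Q}|^{2}+\tilde{\lambda}\, W(\tilde{Q})\,dy\,,\qquad \tilde{\lambda}:=r^{2}\lambda\,.
\end{equation*}
Choosing $r$ so that $r^{2}\Lambda\leq 1$ (this is where the smallness of $r$ depending on $\Lambda$ enters) makes $\tilde{\lambda}\leq 1$ universal, and the hypothesis becomes the standard smallness condition $\int_{B_{1}}|\nabla \tilde{Q}|^{2}\leq \boldsymbol{\varepsilon}_{\rm in}/4$. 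I would then invoke \cite[Theorem 2.12]{DMP1}: provided $\boldsymbol{\varepsilon}_{\rm in}$ is chosen universally small, $\tilde{Q}$ belongs to $C^{1,\alpha}(B_{1/8})$ with quantitative bounds depending only on $\tilde{\lambda}\leq 1$, hence universal. Elliptic bootstrap applied to the Euler--Lagrange system \eqref{MasterEq} (whose leading nonlinearity $|\nabla\tilde Q|^{2}\tilde Q$ and potential term $\nabla_{\rm tan}W(\tilde Q)$ are analytic in their arguments) upgrades this to $C^{k,\alpha}(B_{1/16})$ bounds for all $k$, and ultimately to real-analyticity in $B_{1/8}$ by the standard Morrey--Friedman argument, with constants $C_{k}$ depending only on $k$.

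Finally, rescaling back yields the stated bounds: $\|\nabla^{k}Q_\lambda\|_{L^\infty(B_{r/16}(x_{0}))}=r^{-k}\|\nabla^{k}\tilde{Q}\|_{L^\infty(B_{1/16})}\leq C_{k}r^{-k}$, and real-analyticity transfers verbatim to $B_{r/8}(x_{0})$. The only delicate point — and the main thing one must verify by inspecting the proofs in \cite{DMP1} rather than merely citing the statements — is the \emph{uniformity} of all constants with respect to $\lambda\in[0,\Lambda]$. This is precisely what the normalization $r^{2}\lambda\leq 1$ achieves: it turns the potential into a subcritical perturbation of the pure harmonic-map problem into $\bbS^{4}$, for which the $\varepsilon$-regularity constants and the elliptic iteration constants are universal. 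No part of the argument uses the axial symmetry of $\Omega$ beyond ensuring the symmetrization in the first step is legitimate, so the result holds for the general class of minimizers introduced in the statement.
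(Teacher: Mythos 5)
Your first step is a genuine error, and it is the load-bearing one. A minimizer of $\mathcal{E}_\lambda$ in the equivariant class is \emph{not} in general a minimizer in the full class $W^{1,2}(\Omega;\bbS^4)$: the ``averaging over the $\bbS^1$-action'' you invoke does not produce an admissible competitor (pointwise averaging of maps $R\mapsto RQ(R^\trans x)R^\trans$ destroys the $\bbS^4$-constraint, and averaging the energy densities of a non-symmetric competitor does not yield a map at all). The principle of symmetric criticality (Proposition~\ref{prop:symmetric-criticality}) gives only that $Q_\lambda$ is a \emph{critical point} of $\mathcal{E}_\lambda$ among all competitors, i.e.\ a weak solution of \eqref{MasterEq}; it does not upgrade minimality. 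In fact this very paper shows your claim is false: symmetric minimizers can be singular while unconstrained minimizers are always smooth by \cite[Theorem~1.1]{DMP1}, and ${\rm Val}(\ell)<{\rm Val}^{\rm sym}(\ell)$ occurs (see Corollary~\ref{inst+symmbreaking} and Corollary~\ref{symbreakintermcyl}), so a symmetric minimizer cannot in general minimize in the full class.

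Because of this, your reduction is also missing the key analytic step. After rescaling, you feed the $\varepsilon$-regularity of \cite{DMP1} with smallness of the scaled Dirichlet energy at the \emph{single} scale $r$; but for a weak solution of \eqref{MasterEq} (which is all you legitimately have after symmetric criticality), one-scale smallness does not propagate to smaller balls, and the $\varepsilon$-regularity input of \cite[Corollary~2.19]{DMP1} requires smallness of $\rho^{-1}\int_{B_\rho(\bar x)}|\nabla Q_\lambda|^2$ uniformly over all balls $B_\rho(\bar x)\subset B_{r/2}(x_0)$. The paper supplies exactly this missing link: the interior monotonicity formula of Proposition~\ref{intmonoform}, valid for minimizers \emph{within the symmetric class}, shows
$\sup_{B_\rho(\bar x)\subset B_{r/2}(x_0)}\rho^{-1}\mathcal{E}_\lambda(Q_\lambda,B_\rho(\bar x))\leq 2r^{-1}\mathcal{E}_\lambda(Q_\lambda,B_r(x_0))\leq r^{-1}\int_{B_r(x_0)}|\nabla Q_\lambda|^2\,dx+\boldsymbol{\varepsilon}_{\rm in}/4$,
where the potential term is absorbed for $r$ small depending on $\Lambda$ (this is where your normalization $r^2\Lambda\leq 1$ correctly enters), and only then does \cite[Corollary~2.19]{DMP1} apply to the weak solution $Q_\lambda$ to give analyticity and the scaled $C^k$ bounds. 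So the correct architecture is: symmetric criticality for the equation, symmetric-minimizer monotonicity for the multi-scale smallness, then the $\varepsilon$-regularity for weak solutions — not unconstrained minimality, which is unavailable. Your final rescaling of the estimates and the treatment of the $\lambda$-dependence are fine once these two points are repaired.
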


\begin{proof}[Sketch of the proof]
We consider the universal constant  $\boldsymbol{\varepsilon}_{\rm in}>0$ provided by \cite[Corollary 2.19]{DMP1}. By Proposition~\ref{intmonoform}, $Q_\lambda$ satisfies the interior monotonicity formula which allows us to argue as in \cite[proof of Lemma~2.6]{DMP1} and obtain
\begin{multline*}
\sup_{B_\rho(\bar x)\subset B_{r/2}(x_0)} \frac{1}{\rho}\int_{B_\rho(\bar x)} \frac{1}{2} |\nabla Q_\lambda|^2\,dx\leq \sup_{B_\rho(\bar x)\subset B_{r/2}(x_0)} \frac{1}{\rho} \mathcal{E}_\lambda\big(Q_\lambda,B_\rho(\bar x)\big)\\
\leq \frac{2}{r}\mathcal{E}_\lambda\big(Q_\lambda,B_r(x_0)\big)\leq \frac{1}{r} \int_{B_r(x_0)}|\nabla Q_\lambda|^2\,dx+ \frac{\boldsymbol{\varepsilon}_{\rm in}}{4}\leq \frac{\boldsymbol{\varepsilon}_{\rm in}}{2}
\end{multline*}
for $r$ small enough (depending only on $\Lambda$). By Proposition \ref{prop:symmetric-criticality}, $Q_\lambda$ is a weak solution of \eqref{MasterEq} in~$\Omega$. Hence \cite[Corollary 2.19]{DMP1} applies, and we conclude that 
$Q_\lambda\in C^\omega(B_{r/8}(x_0))$ with the announced estimates. 
\end{proof}

Compared to \cite{DMP1,DMP2}, we provide below a localized version (in terms of the data) of the boundary epsilon-regularity. This statement will be of first importance when varying the domain $\Omega$. The arguments remain essentially the same so that we only sketch the main changes. The first version we state here holds under uniform smallness of the scaled Dirichlet integral.

\begin{proposition}[\bf \cite{DMP1,DMP2}, boundary $\boldsymbol{\varepsilon}$-regularity 1]\label{bdrepsregprop}
Let $\Lambda,L>0$ and $\Omega\subset\R^3$  a bounded and axisymmetric open set with boundary of class $C^3$. Let $Q_{\rm b}\in C^{1,1}(\partial\Omega;\mathbb{S}^4)$ be an $\mathbb{S}^1$-equivariant map. Let $\lambda\in[0,\Lambda]$ and $Q_\lambda$ be a minimizer of $\mathcal{E}_\lambda$ over $\mathcal{A}^{\rm sym}_{Q_{\rm b}}(\Omega)$. Let $x_*\in\partial\Omega$ and $r_*>0$ be such that $\|Q_{\rm b}\|_{C^{1,1}(\partial\Omega\cap B_{r_*}(x_*))}\leq L$.  There exist $\bar{\boldsymbol{\varepsilon}}_{\rm bd}>0$ and $\bar{\boldsymbol{\kappa}}\in(0,1)$ depending only on $\partial\Omega\cap B_{r_*}(x_*)$ such that 
for every $x_0\in\partial\Omega\cap B_{r_*/4}(x_*)$ and every radius $r\in(0,r_*/4)$ small enough (depending only on  $\partial \Omega\cap B_{r_*}(x_*)$, $\Lambda$, and $L$), the condition 
$$\sup_{B_\rho(\bar x)\subset B_{r}(x_0)}  \frac{1}{\rho} \int_{B_\rho(\bar x)\cap\Omega}|\nabla Q_\lambda|^2\,dx\leq \bar{\boldsymbol{\varepsilon}}_{\rm bd}$$
implies $Q_\lambda\in C^{\omega}(B_{\bar{\boldsymbol{\kappa}}r}(x_0)\cap\Omega)\cap  C^{1,\alpha}(B_{\bar{\boldsymbol{\kappa}}r}(x_0)\cap\overline\Omega)$ for every $\alpha\in(0,1)$ with the estimate 
$\|\nabla Q_\lambda\|_{L^\infty(B_{\bar{\boldsymbol{\kappa}}r}(x_0)\cap\Omega)}\leq C r^{-1}$ and a constant $C>0$ depending only on $\partial\Omega\cap B_{r_*}(x_*)$ and $L$.  In addition, if $\partial\Omega\cap B_{r_*}(x_*)$ is of class 
$C^{k,\alpha}$ (of class $C^3$ for $k=2$) and $Q_{\rm b}\in C^{k,\alpha}(\partial\Omega\cap B_{r_*}(x_*))$ with $k\geq 2$, then $Q_\lambda\in C^{k,\alpha}(B_{\bar{\boldsymbol{\kappa}}r/2}(x_0)\cap\overline\Omega)$ and 
$\|Q_\lambda\|_{C^{k,\alpha}(B_{\bar{\boldsymbol{\kappa}}r/2}(x_0)\cap\overline\Omega)}\leq C_{k,\alpha,r}$ for a constant $C_{k,\alpha, r}>0$ depending only on $r$,  $\partial\Omega\cap B_{r_*}(x_*)$,  and 
$\|Q_{\rm b}\|_{ C^{k,\alpha}(\partial\Omega\cap B_{r_*}(x_*))}$. 
\end{proposition}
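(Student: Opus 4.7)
The plan is to reduce the statement to a slight localization of the boundary $\varepsilon$-regularity already established in \cite[Section~2]{DMP1} and \cite[Section~6]{DMP2}, paying close attention to how the relevant constants depend on the data.

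First, I would flatten $\partial\Omega$ locally near $x_*$ using an $\bbS^1$-equivariant $C^3$-diffeomorphism $\Psi$ defined on $B_{r_*}(x_*)$, mapping $\partial\Omega\cap B_{r_*}(x_*)$ into a piece of $\{x_3=0\}$. The existence of an equivariant $\Psi$ follows from rotational invariance of $\partial\Omega$: one can build it via the signed distance function to $\partial\Omega$, which is equivariant. Under $\Psi$, $Q_\lambda$ becomes a minimizer of a transformed functional whose integrand is a variable coefficient, quadratic perturbation of the Dirichlet integral plus a lower-order potential term. The $C^{2}$-norm of the new coefficients and the $C^{1,1}$-norm of the transported boundary data are controlled by $\|\Psi\|_{C^3}$ (and its inverse), which in turn depends only on $\partial\Omega\cap B_{r_*}(x_*)$ and on $L$. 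In particular, all the monotonicity and comparison arguments from \cite{DMP1,DMP2} go through with constants depending only on these local quantities.

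Second, once in the flat setting, I would apply the boundary monotonicity formula (Proposition~\ref{bdrmonoform}) together with the cutoff-and-extension trick to the boundary data used in \cite[Section~2]{DMP1}: the scaled Dirichlet smallness assumption, combined with the bound on the potential controlled by $\Lambda$, persists at all scales up to a factor $1+Cr$ coming from the monotonicity error $K_*(\rho-\sigma)$. Choosing $r$ small enough (depending on $\Lambda$, $L$, $\partial\Omega\cap B_{r_*}(x_*)$) makes that error negligible and brings the scaled energy below a universal threshold $\bar{\boldsymbol{\varepsilon}}_{\rm bd}$ on every sub-ball in $B_{\bar{\boldsymbol{\kappa}} r}(x_0)$. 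Then the boundary $\varepsilon$-regularity of \cite[Proposition~2.18]{DMP1}, applied to the transformed equation with the transplanted $C^{1,1}$ boundary data, yields continuity and then $C^{1,\alpha}$ regularity up to the flat boundary together with the quantitative estimate $\|\nabla Q_\lambda\|_{L^\infty}\leq C/r$, the constant $C$ depending only on the local data.

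Third, once $C^{1,\alpha}$ regularity is obtained, the bootstrap to $C^{k,\alpha}$ in (ii) and (iii) is a standard application of Schauder theory for the linearized boundary value problem associated with~\eqref{MasterEq}. Writing the tension field equation in divergence form, differentiating tangentially and using elliptic regularity for the oblique/normal derivative problem, one gains one derivative at a time; the Schauder constants depend only on the $C^{k,\alpha}$-norm of the boundary data in $\partial\Omega\cap B_{r_*}(x_*)$ and on the local geometry of $\partial\Omega$. Analyticity in case (iii) follows from the classical interior analyticity theorem of Morrey combined with its boundary analogue for analytic data.

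The main technical obstacle, compared to the statements already in \cite{DMP1,DMP2}, is the precise control of the dependence of constants on the \emph{local} data only. Concretely, this forces one to construct competitors using cutoffs supported strictly inside $B_{r_*}(x_*)$ in the comparison arguments behind the $\varepsilon$-regularity, so that the extension of the boundary values to the interior depends only on $\|Q_{\rm b}\|_{C^{1,1}(\partial\Omega\cap B_{r_*}(x_*))}$ and on the local geometry, and never on values of $Q_{\rm b}$ elsewhere on $\partial\Omega$. Once this careful localization is carried out, the rest of the proof proceeds essentially verbatim as in \cite{DMP1,DMP2}.
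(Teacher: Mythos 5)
Your plan is workable and lands on the same conclusions, but it follows a genuinely different route from the paper. The paper does not flatten the boundary at all: it takes the nearest-point projection $\boldsymbol{\pi}_\Omega$ (well defined and $C^2$ in a tubular neighborhood of $\partial\Omega\cap B_{r_*/2}(x_*)$ since $\partial\Omega$ is $C^3$), forms the geodesic reflection $\boldsymbol{\sigma}_\Omega=2\boldsymbol{\pi}_\Omega-{\rm id}$, and extends $Q_\lambda$ across $\partial\Omega$ by the reflection construction of \cite[Section~2.2]{DMP1}; the hypothesis on sub-balls of $B_r(x_0)$ is transferred to the reflected map as in \cite[Lemma~2.10]{DMP1}, giving a bound $C_1\bar{\boldsymbol{\varepsilon}}_{\rm bd}+C_2r$ with constants depending only on the local geometry and $L$, and then the interior-type small-energy machinery of \cite[Corollaries~2.17 and~2.20]{DMP1} applies verbatim with $\bar{\boldsymbol{\kappa}}=\boldsymbol{\kappa}/4$; higher regularity is Schauder bootstrap, as in your step three. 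The reflection route buys exactly what your plan has to work harder for: since the $\varepsilon$-regularity results of \cite{DMP1} were proved for this reflected system in the original geometry, they can be quoted with constants that are manifestly local, whereas after your flattening the Euler--Lagrange system acquires variable coefficients and you cannot invoke \cite[Proposition~2.18]{DMP1} for the ``transformed equation'' as stated — you would have to rework the small-energy estimates for the perturbed (metric-dependent) harmonic-map system, which is standard (in the spirit of \cite{Scheven}) but is precisely the content you are trying to outsource. Two further remarks: the equivariance of your flattening map is irrelevant, since by Proposition~\ref{prop:symmetric-criticality} the minimizer is already a critical point with respect to all perturbations; and your use of the boundary monotonicity formula (Proposition~\ref{bdrmonoform}) to propagate smallness is redundant here, because the hypothesis of this proposition already assumes smallness uniformly over all sub-balls $B_\rho(\bar x)\subset B_r(x_0)$ — that monotonicity step is what the paper uses later, in Corollary~\ref{bdrepsregcor}, to upgrade smallness at a single scale to the present hypothesis, and as stated it carries a constant depending on a global $C^{1}$ bound on $Q_{\rm b}$, which sits uneasily with the purely local dependence you are aiming for. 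With those two points cleaned up, your localization-by-cutoff strategy for the comparison maps is exactly the right mechanism to keep all constants depending only on $\partial\Omega\cap B_{r_*}(x_*)$, $\Lambda$, and $L$.
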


\begin{proof}[Sketch of the proof]
Since $\partial\Omega$ is of class $C^3$, we can find $\delta>0$ such that the nearest point projection $\boldsymbol{\pi}_\Omega$  on $\partial\Omega$ is well defined and of class $C^2$ in the $2\delta$-tubular neighborhood of $\partial\Omega\cap B_{r_*/2}(x_*)$. Then we argue as in \cite[Section 2.2]{DMP1}, and we consider the  reflection of $Q_\lambda$ across $\partial\Omega$  given by \cite[(2.22)]{DMP1} and denoted by $\widehat Q_\lambda$. Then we choose  $r\in(0,\delta/2)$ small enough in such a way that $\boldsymbol{\pi}_\Omega(B_r(y))\subset \partial\Omega\cap B_{2r}(y)$ and  $\boldsymbol{\sigma}_\Omega(B_r(y))\subset B_{2r}(y)$
for every $y\in \partial\Omega\cap B_{r_*/2}(x_*)$, where $\boldsymbol{\sigma}_\Omega:=2\boldsymbol{\pi}_\Omega-{\rm id}$ is the geodesic reflection across $\partial\Omega$. 

Arguing as in the proof of \cite[Lemma 2.10]{DMP1}, there exists a constant $\boldsymbol{\kappa}\in(0,1)$ depending only on $\partial\Omega\cap B_{r_*}(x_*)$ such that 
$$\sup_{B_\sigma(z)\subset B_{\boldsymbol{\kappa}r}(x_0)}  \frac{1}{\sigma} \int_{B_\sigma(z)}|\nabla \widehat Q_\lambda|^2\,dx\leq \sup_{B_\rho(\bar x)\subset B_{r}(x_0)}  \frac{C_1}{\rho} \int_{B_\rho(\bar x)\cap\Omega}|\nabla Q_\lambda|^2\,dx + C_2 r \leq C_1 \bar{\boldsymbol{\varepsilon}}_{\rm bd} +C_2 r\,,$$
for a constant $C_1>0$ depending only on $\partial\Omega\cap B_{r_*}(x_*)$, and a constant $C_2>0$ depending only on  $\partial\Omega\cap B_{r_*}(x_*)$ and $L$. Then we choose $ \bar{\boldsymbol{\varepsilon}}_{\rm bd}$ and $r$ in such a way that $C_1 \bar{\boldsymbol{\varepsilon}}_{\rm bd} +C_2 r\leq \boldsymbol{\varepsilon}_{\rm bd}$, where $\boldsymbol{\varepsilon}_{\rm bd}>0$ is the constant provided by \cite[Corollary 2.17]{DMP1} (note that $\boldsymbol{\varepsilon}_{\rm bd}$ only depends on $\partial\Omega\cap B_{r_*}(x_*)$). By Proposition \ref{prop:symmetric-criticality}, $Q_\lambda$ is a weak solution of \eqref{MasterEq} in~$\Omega$. By our choice of $\boldsymbol{\varepsilon}_{\rm bd}>0$, the proofs of \cite[Corollary 2.17 and Corollary 2.20]{DMP1} apply and lead to the main conclusions with $\bar{\boldsymbol{\kappa}}:=\boldsymbol{\kappa}/4$. Once the gradient estimate is obtained, higher order estimates follow from standard elliptic theory (see e.g. \cite{GilbTrud}). 
\end{proof}

Combining Proposition \ref{bdrepsregprop} with the boundary monotonicity formula in Proposition \ref{bdrmonoform}, we recover the following (more usual) epsilon-regularity at the boundary, which holds under smallness of the Dirichlet integral in a neighborhood of a single point.

\begin{corollary}[\bf \cite{DMP1,DMP2}, boundary $\boldsymbol{\varepsilon}$-regularity 2]\label{bdrepsregcor}
Let $\Lambda,L>0$ and $\Omega\subset\R^3$  a bounded and axisymmetric open set with boundary of class $C^3$. Let $Q_{\rm b}\in C^{1,1}(\partial\Omega;\mathbb{S}^4)$ be an $\mathbb{S}^1$-equivariant map such that $\|Q_{\rm b}\|_{C^{1,1}(\partial\Omega)}\leq L$. Let $\lambda\in[0,\Lambda]$ and $Q_\lambda$ a minimizer of $\mathcal{E}_\lambda$ over $\mathcal{A}^{\rm sym}_{Q_{\rm b}}(\Omega)$.  
There exist $\bar{\boldsymbol{\varepsilon}}^\prime_{\rm bd}>0$ and $\bar{\boldsymbol{\kappa}}^\prime\in(0,1)$ depending only on $\Omega$ such that 
for every $x_0\in\partial\Omega$ and every radius $r>0$ small enough (depending only on  $\Omega$, $\Lambda$, and $L$), the condition 
$$ \frac{1}{r} \int_{B_r(x_0)\cap\Omega}|\nabla Q_\lambda|^2\,dx\leq \bar{\boldsymbol{\varepsilon}}^\prime_{\rm bd}$$
implies $Q_\lambda\in C^{\omega}(B_{\bar{\boldsymbol{\kappa}}^\prime r}(x_0)\cap\Omega)\cap  C^{1,\alpha}(B_{\bar{\boldsymbol{\kappa}}^\prime r}(x_0)\cap\overline\Omega)$ for every $\alpha\in(0,1)$ with the estimate 
$\|\nabla Q_\lambda\|_{L^\infty(B_{\bar{\boldsymbol{\kappa}}^\prime r}(x_0)\cap\Omega)}\leq C r^{-1}$ and a constant $C>0$ depending only on $\Omega$ and $L$.  In addition, if $\partial\Omega$ is of class 
$C^{k,\alpha}$ (of class $C^3$ for $k=2$) and $Q_{\rm b}\in C^{k,\alpha}(\partial\Omega)$ with $k\geq 2$, then $Q_\lambda\in C^{k,\alpha}(B_{\bar{\boldsymbol{\kappa}}^\prime r/2}(x_0)\cap\overline\Omega)$ and 
$\|Q_\lambda\|_{C^{k,\alpha}(B_{\bar{\boldsymbol{\kappa}}^\prime r/2}(x_0)\cap\overline\Omega)}\leq C_{k,\alpha}$ for a constant $C_{k,\alpha}>0$ depending only on  $\Omega$,  and 
$\|Q_{\rm b}\|_{ C^{k,\alpha}(\partial\Omega)}$. 
\end{corollary}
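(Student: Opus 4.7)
The plan is to derive Corollary~\ref{bdrepsregcor} from Proposition~\ref{bdrepsregprop} by using the boundary monotonicity formula (Proposition~\ref{bdrmonoform}) together with the interior monotonicity formula (Proposition~\ref{intmonoform}) to upgrade the single-ball smallness hypothesis
$$\frac{1}{r}\int_{B_r(x_0)\cap\Omega}|\nabla Q_\lambda|^2\,dx\leq \bar{\boldsymbol{\varepsilon}}^\prime_{\rm bd}$$
to the uniform smallness on all sub-balls required by Proposition~\ref{bdrepsregprop}. First I would cover $\partial\Omega$ (compact, of class $C^3$) by finitely many boundary balls $B_{r_*/4}(x_*^{(i)})$, and let $\bar{\boldsymbol{\varepsilon}}_{\rm bd},\bar{\boldsymbol{\kappa}}\in(0,1)$ be the minimum over $i$ of the constants given by Proposition~\ref{bdrepsregprop} on each piece; these depend only on $\Omega$. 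Then I define $\bar{\boldsymbol{\kappa}}^\prime := \bar{\boldsymbol{\kappa}}/8$ (the precise fraction is not important) and aim to choose $\bar{\boldsymbol{\varepsilon}}^\prime_{\rm bd}>0$ together with $r$ small enough (depending on $\Omega,\Lambda, L$) so that the Proposition~\ref{bdrepsregprop} smallness condition holds on $B_{r/2}(x_0)$.

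The technical implementation goes as follows. Fix a ball $B_\rho(\bar x)\subset B_{r/2}(x_0)$ and consider two cases. If $\bar x\in\partial\Omega$, Proposition~\ref{bdrmonoform} applied at $\bar x$ between scales $\rho$ and $r/2$ (choosing $r<2\mathbf{r}_*$) gives
$$\frac{1}{\rho}\mathcal{E}_\lambda\bigl(Q_\lambda,B_\rho(\bar x)\cap\Omega\bigr)\leq \frac{2}{r}\mathcal{E}_\lambda\bigl(Q_\lambda,B_{r/2}(\bar x)\cap\Omega\bigr)+K_*\frac{r}{2}.$$
Since $B_{r/2}(\bar x)\subset B_r(x_0)$ and $W$ is bounded by a universal constant on $\bbS^4$, the right-hand side is bounded by $2\bar{\boldsymbol{\varepsilon}}^\prime_{\rm bd}+C\Lambda r^2 + K_* r/2$. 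If instead $\bar x\in\Omega$ with $d:=\mathrm{dist}(\bar x,\partial\Omega)$ and $y\in\partial\Omega$ the nearest boundary point, then I distinguish two sub-cases: if $\rho\geq d$ then $B_\rho(\bar x)\subset B_{2\rho}(y)\subset B_r(x_0)$ and I apply boundary monotonicity at $y$ between scales $2\rho$ and $r/2$; if $\rho<d$, then $B_\rho(\bar x)\subset B_d(\bar x)\subset\Omega$ so Proposition~\ref{intmonoform} (which has no boundary error) gives $\frac{1}{\rho}\int_{B_\rho(\bar x)}|\nabla Q_\lambda|^2\leq\frac{2}{d}\mathcal{E}_\lambda(Q_\lambda,B_d(\bar x))$, and $B_d(\bar x)\subset B_{2d}(y)\cap\Omega$, after which boundary monotonicity at $y$ (between $2d$ and $r/2$) transfers the bound to scale $r/2$, ultimately bounded by a quantity of the form $C\bar{\boldsymbol{\varepsilon}}^\prime_{\rm bd}+C\Lambda r^2 +C K_* r$ with $C$ absolute.

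Combining the two cases and using that $\mathcal{E}_\lambda$ dominates half the Dirichlet integral, one obtains an estimate of the form
$$\sup_{B_\rho(\bar x)\subset B_{r/2}(x_0)}\frac{1}{\rho}\int_{B_\rho(\bar x)\cap\Omega}|\nabla Q_\lambda|^2\,dx\leq C_1\bar{\boldsymbol{\varepsilon}}^\prime_{\rm bd}+C_2(\Omega,\Lambda,L)\, r,$$
where $C_1$ is universal. I then choose $\bar{\boldsymbol{\varepsilon}}^\prime_{\rm bd}:=\bar{\boldsymbol{\varepsilon}}_{\rm bd}/(2C_1)$ and $r$ small enough (depending only on $\Omega,\Lambda,L$) so that $C_2 r\leq\bar{\boldsymbol{\varepsilon}}_{\rm bd}/2$, which verifies the hypothesis of Proposition~\ref{bdrepsregprop} on $B_{r/2}(x_0)$. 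Applying Proposition~\ref{bdrepsregprop} delivers analyticity on $B_{\bar{\boldsymbol{\kappa}} r/2}(x_0)\cap\Omega$, the $C^{1,\alpha}$ regularity up to $\overline\Omega$, the gradient estimate $\|\nabla Q_\lambda\|_{L^\infty}\leq C/r$, and the higher-order $C^{k,\alpha}$-estimates under higher regularity assumptions on $\partial\Omega$ and $Q_{\rm b}$; the factor $\bar{\boldsymbol{\kappa}}^\prime=\bar{\boldsymbol{\kappa}}/4$ arises from composing the two scale contractions.

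The main obstacle will be the bookkeeping of constants, particularly the dependence of $K_*$ on $L$ and $\Lambda$ (which forces the error $K_* r$ to be absorbed by choosing $r$ small depending on $L$ and $\Lambda$), and the careful treatment of the interior case when $\rho<d$, where one must pass through the intermediate interior-monotonicity step before invoking boundary monotonicity at the nearest boundary point. Once these constants are tracked, the passage from the localized version in Proposition~\ref{bdrepsregprop} to the present single-ball formulation is essentially a standard consequence of monotonicity.
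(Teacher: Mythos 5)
Your proposal is correct and follows essentially the same route as the paper: the paper likewise uses the boundary monotonicity formula (arguing as in the proof of Lemma 2.6, Step 2 of \cite{DMP1}, which is exactly the two-case interior/boundary monotonicity argument you spell out) to get $\sup_{B_\rho(\bar x)\subset B_{r/6}(x_0)}\frac1\rho\int_{B_\rho(\bar x)\cap\Omega}|\nabla Q_\lambda|^2\leq \frac{4}{r}\int_{B_r(x_0)\cap\Omega}|\nabla Q_\lambda|^2+C_2 r$, then covers $\partial\Omega$ by finitely many patches, takes the minima of the constants from Proposition~\ref{bdrepsregprop}, and chooses $\bar{\boldsymbol{\varepsilon}}^\prime_{\rm bd}$ and $r$ small to invoke that proposition. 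Only the radius bookkeeping differs slightly (e.g.\ in your $\rho<d$ sub-case the intermediate boundary ball should be taken of radius comparable to $r/4$ rather than $r/2$ to stay inside $B_r(x_0)$, and one must handle $d$ comparable to $r$ by interior monotonicity alone), which only changes absolute constants.
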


\begin{proof}
Using the boundary monotonicity formula in Proposition \ref{bdrmonoform}, we can argue as \cite[Proof of Lemma 2.6, Step 2]{DMP1} to show that 
$$ \sup_{B_\rho(\bar x)\subset B_{r/6}(x_0)}\frac{1}{\rho}\mathcal{E}_\lambda(Q_\lambda,B_\rho(\bar x)\cap\Omega)\leq \frac{4}{r}\mathcal{E}_\lambda(Q_\lambda,B_r(x_0)\cap\Omega)+C_1 r\,,$$
for a constant $C_1>0$ depending only on  $\Lambda$, $L$, and $\Omega$. Hence, 
$$ \sup_{B_\rho(\bar x)\subset B_{r/6}(x_0)}  \frac{1}{\rho} \int_{B_\rho(\bar x)\cap\Omega}|\nabla Q_\lambda|^2\,dx\leq \frac{4}{r} \int_{B_r(x_0)\cap\Omega}|\nabla Q_\lambda|^2\,dx +C_2 r\,,$$
for $r>0$ small and a further constant $C_2>0$ depending only on  $\Lambda$, $L$, and $\Omega$.  

Next we set $r_*:=4$, and we consider a finite covering of $\partial\Omega$ by balls $B_1(x_*^k)$, $k=1,\ldots, K$.  We denoted by  $\bar{\boldsymbol{\varepsilon}}^k_{\rm bd}$ and $\bar{\boldsymbol{\kappa}}_k$ the constants  provided by Proposition \ref{bdrepsregprop}  with $x_*=x_*^k$. Choosing  $\bar{\boldsymbol{\varepsilon}}^\prime_{\rm bd}:= \frac{1}{8}\min_k\bar{\boldsymbol{\varepsilon}}^k_{\rm bd}$,  $\bar{\boldsymbol{\kappa}}^\prime:=\frac{1}{6}\min_k\bar{\boldsymbol{\kappa}}_k$, and then $r>0$ small enough such that $C_2r\leq \bar{\boldsymbol{\varepsilon}}^\prime_{\rm bd}$ 
(depending only on $\Omega$, $\Lambda$, and~$L$),  we obtain that $Q_\lambda$ satisfies 
$$\sup_{B_\rho(\bar x)\subset B_{r/6}(x_0)}  \frac{1}{\rho} \int_{B_\rho(\bar x)\cap\Omega}|\nabla Q_\lambda|^2\,dx\leq \bar{\boldsymbol{\varepsilon}}^k_{\rm bd}\,,$$
for an index $k$ such that $x_0\in \partial\Omega\cap B_1(x_*^k)$. Then the conclusion follows from Proposition \ref{bdrepsregprop}.  
\end{proof}

\begin{remark}[\bf Locally flat geometry]\label{specgeomremreg}
In the following sections, we shall  consider the situation where, for some $x_*\in\{x_3\text{-axis}\}$ and $r_*>0$, $\Omega\cap B_{2r_*}(x_*)=x_*+\{\pm x_3>0\}\cap B_{2r_*}(0)$ and $Q_{\rm b}={\bf e}_0$ on   
$\partial \Omega\cap B_{2r_*}(x_*)=x_*+\{x_3=0\}\cap B_{2r_*}(0)$. 
According to \cite[Remark 2.5]{DMP1} (see the proof of \cite[Proposition 6.6]{DMP2}), if $Q_\lambda$ is as in Proposition~\ref{bdrepsregprop}, then
\begin{multline}\label{specifbdmonotform}
\frac{1}{\rho}\mathcal{E}_\lambda\big(Q_\lambda,B_\rho(\bar x)\cap\Omega\big) - \frac{1}{\sigma}\mathcal{E}_\lambda\big(Q_\lambda,B_\sigma(\bar x)\cap\Omega\big) \\
= \int_{\big(B_\rho(\bar x)\setminus B_\sigma(\bar x)\big)\cap\Omega}\frac{1}{|x-\bar x|}\bigg|\frac{\partial Q_\lambda}{\partial|x-\bar x|}\bigg|^2\,dx +2\lambda\int_\sigma^\rho\bigg(\frac{1}{t^2}\int_{B_t(\bar x)\cap\Omega}W(Q_\lambda)\,dx\bigg)\,dt
\end{multline}
for every $\bar x\in \partial \Omega\cap B_{2r_*}(x_*)$ and $0<\rho<\sigma<2r_*-|x_*-\bar x|$. 
As in \cite[Remark 2.7]{DMP1}, it implies that
$$ \sup_{B_\rho(\bar x)\subset B_{r/6}(x_0)}\frac{1}{\rho}\mathcal{E}_\lambda(Q_\lambda,B_\rho(\bar x)\cap\Omega)\leq \frac{4}{r}\mathcal{E}_\lambda(Q_\lambda,B_r(x_0)\cap\Omega)$$ 
for every $x_0\in \partial \Omega\cap B_{r_*}(x_*)$ and $0<r<r_*$. Repeating the proof of Corollary~\ref{bdrepsregcor}, we can apply Proposition \ref{bdrepsregprop} to obtain the existence of {\sl universal constants} 
$\boldsymbol{\varepsilon}^\sharp_{\rm bd}>0$ and  $\boldsymbol{\kappa}^\sharp\in(0,1)$, such that for every $x_0\in \partial \Omega\cap B_{r_*/4}(x_*)$ and $r\in(0,r_*/4)$ small enough (depending only on $\Lambda$), the 
condition 
$$\frac{1}{r}\int_{B_r(x_0)\cap \Omega}|\nabla Q_\lambda|^2\,dx\leq \boldsymbol{\varepsilon}^\sharp_{\rm bd}$$
implies the same conclusions as in Proposition \ref{bdrepsregprop} in $B_{\boldsymbol{\kappa}^\sharp r}(x_0)\cap \Omega$. 
\end{remark}


\subsection{Persistence of smoothness}

We now apply the regularity theory of the previous subsection to show that absence of singularities in energy minimizing configurations (within the equivariant class) is  a strongly $W^{1,2}$-open/closed  property.

\begin{lemma}\label{persissmoothloc}
Let $(Q_*,\lambda_*)$ and a sequence $\{(Q_n,\lambda_n)\}$ in $W^{1,2}_{\rm sym}(B_r;\mathbb{S}^4)\times[0,\infty)$ be such that $Q_n\to Q_*$ strongly in $W^{1,2}(B_r)$ and $\lambda_n\to\lambda_*$ as $n\to\infty$. Assume that  
$Q_*$ is  minimizing $\mathcal{E}_{\lambda_*}$ among all $Q \in W^{1,2}_{\rm sym}(B_r;\mathbb{S}^4)$ satisfying $Q=Q_*$ on $\partial B_r$, and that $Q_n$ is  minimizing $\mathcal{E}_{\lambda_n}$ among all $Q \in W^{1,2}_{\rm sym}(B_r;\mathbb{S}^4)$ satisfying $Q=Q_n$ on $\partial B_r$. 
\begin{itemize}
\item[(i)] If ${\rm sing}(Q_*)\cap B_r=\emptyset$, then for every $0<\rho<r$, there exists an integer $n_\rho$ such that ${\rm sing}(Q_n) \cap B_\rho=\emptyset$ whenever $n\geq n_\rho$.
\vskip5pt   
\item[(ii)] If ${\rm sing}(Q_n)\cap B_r=\emptyset$ for every $n$, then ${\rm sing}(Q_*)\cap B_r=\emptyset$.  
\end{itemize}
\end{lemma}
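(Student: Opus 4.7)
Both parts rest on the interior $\varepsilon$-regularity (Proposition~\ref{intepsregprop}) combined with the interior monotonicity formula (Proposition~\ref{intmonoform}) and the singularity cost identity~\eqref{singularitycost}, applied with the uniform bound $\Lambda:=\sup_n\lambda_n<\infty$ provided by $\lambda_n\to\lambda_*$.

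For (i), I fix $\rho<\rho'<r$ and exploit $Q_*\in C^\omega(B_r)$ to obtain an $L^\infty$-bound on $|\nabla Q_*|$ on $\overline{B_{\rho'}}$, yielding $\sigma^{-1}\int_{B_\sigma(x_0)}|\nabla Q_*|^2\,dx\leq C\sigma^2$ uniformly in $x_0\in\overline{B_\rho}$ for every $\sigma\in(0,\rho'-\rho)$. For $\sigma$ small enough (independently of $x_0$), this bound drops below $\boldsymbol{\varepsilon}_{\rm in}/16$. The strong $W^{1,2}$-convergence $Q_n\to Q_*$ ensures that the additional error $\sigma^{-1}\|\nabla(Q_n-Q_*)\|_{L^2(B_r)}^2$ is also bounded by $\boldsymbol{\varepsilon}_{\rm in}/16$ for $n\geq n(\sigma)$ uniformly in $x_0$; combining these via the pointwise inequality $|\nabla Q_n|^2\leq 2|\nabla Q_*|^2+2|\nabla(Q_n-Q_*)|^2$ brings the rescaled Dirichlet of $Q_n$ below the threshold $\boldsymbol{\varepsilon}_{\rm in}/4$. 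Proposition~\ref{intepsregprop} then gives smoothness of $Q_n$ on $B_{\sigma/8}(x_0)$, and a finite covering of $\overline{B_\rho}$ concludes (i).

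For (ii) I argue by contradiction, assuming $\bar x\in{\rm sing}(Q_*)\cap B_r$. Combining~\eqref{singularitycost} with Proposition~\ref{intmonoform} gives $\rho^{-1}\int_{B_\rho(\bar x)}|\nabla Q_*|^2\geq 8\pi$ for all admissible $\rho$, and strong $W^{1,2}$-convergence transfers this to $\rho_0^{-1}\int_{B_{\rho_0}(\bar x)}|\nabla Q_n|^2>\boldsymbol{\varepsilon}_{\rm in}/4$ at some fixed $\rho_0>0$ small and $n$ large. Since $Q_n\in C^\omega(B_r)$, the rescaled Dirichlet $\sigma\mapsto\sigma^{-1}\int_{B_\sigma(\bar x)}|\nabla Q_n|^2$ is continuous, monotone non-decreasing in $\sigma$ (again by Proposition~\ref{intmonoform}), and tends to $0$ as $\sigma\to 0$; so there exists $\sigma_n\in(0,\rho_0)$ with $\sigma_n^{-1}\int_{B_{\sigma_n}(\bar x)}|\nabla Q_n|^2=\boldsymbol{\varepsilon}_{\rm in}/8$. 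Strong convergence together with the lower density bound $\geq 8\pi$ for $Q_*$ at every fixed scale forces $\sigma_n\to 0$. I then blow up: the maps $\widetilde Q_n(y):=Q_n(\bar x+\sigma_n y)$ are smooth $\bbS^1$-equivariant local minimizers of the scaled energy $\mathcal{E}_{\lambda_n\sigma_n^2}$ (with $\lambda_n\sigma_n^2\to 0$) on the growing balls $B_{(r-|\bar x|)/\sigma_n}$, they satisfy $\int_{B_1}|\nabla \widetilde Q_n|^2=\boldsymbol{\varepsilon}_{\rm in}/8$, and the monotonicity formula at $\bar x$ for $Q_n$ yields uniform $W^{1,2}$-bounds on every fixed ball of $\R^3$. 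By standard compactness for equivariant minimizers, a subsequence converges strongly in $W^{1,2}_{\rm loc}(\R^3)$ to a nonconstant $\bbS^1$-equivariant entire minimizer $\widetilde Q_\infty$ of the Dirichlet integral into $\bbS^4$, and the $\varepsilon$-regularity at origin and scale $1$ (where the rescaled Dirichlet equals $\boldsymbol{\varepsilon}_{\rm in}/8<\boldsymbol{\varepsilon}_{\rm in}/4$) gives uniform $C^k$-bounds on $B_{1/16}(0)$, so that $\widetilde Q_\infty$ is smooth at the origin.

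The main obstacle, and hardest step, is to rule out the existence of such a map $\widetilde Q_\infty$. I would tackle this via a blow-down at infinity: the rescaled maps $\widetilde Q_\infty(R\cdot)$ sub-converge as $R\to\infty$ to a $0$-homogeneous equivariant entire minimizer which, by the classification of tangent maps underlying Theorem~\ref{intregthm}, must be either constant or of the form $\pm Q^{(\alpha)}$. The constant case forces, via monotonicity, $\widetilde Q_\infty$ itself to be constant, contradicting $\int_{B_1}|\nabla \widetilde Q_\infty|^2>0$. The remaining case, in which the smooth limit $\widetilde Q_\infty$ is asymptotic to a singular tangent map at infinity, would be excluded by a Liouville-type rigidity, which I expect to prove by energy comparison with an explicit competitor coinciding with $\pm Q^{(\alpha)}$ outside a small neighbourhood of the origin (possibly via a Luckhaus-type gluing across a thin shell), or by leveraging the classification and uniqueness results for equivariant minimizing harmonic maps on $\R^3$ developed in \cite{DMP1,DMP2}.
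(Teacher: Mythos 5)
Your part (i) is correct and is essentially the paper's argument: smoothness of $Q_*$ gives a small scaled Dirichlet energy at a fixed scale, strong $W^{1,2}$-convergence transfers it to $Q_n$, and Proposition~\ref{intepsregprop} (with $\Lambda=\sup_n\lambda_n$) yields smoothness of $Q_n$ on a covering of $\overline{B_\rho}$; whether one phrases this by contradiction at a limit of singular points (as the paper does) or by a uniform covering (as you do) is immaterial.

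Part (ii), however, contains a genuine gap exactly where you flag it. Your blow-up at a presumed singular point $\bar x$ of $Q_*$ produces a nonconstant, everywhere-smooth, entire $\bbS^1$-equivariant minimizer $\widetilde Q_\infty$ of the Dirichlet energy, and the whole proof then hinges on excluding such a map. That exclusion is not a routine consequence of anything you cite: the blow-down classification gives only that the tangent map at infinity is constant or $\pm Q^{(\alpha)}$, and in the second case you would need a Liouville-type rigidity ruling out smooth entire equivariant minimizers asymptotic to a \emph{singular} cone. This is a Hardt--Simon-type question; neither the comparison-map/Luckhaus gluing you sketch nor the classification results in \cite{DMP1,DMP2} are shown to deliver it, and you explicitly present it as something you ``expect to prove''. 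As written, (ii) is therefore not established. (Minor additional points: the monotone quantity in Proposition~\ref{intmonoform} is $\sigma^{-1}\mathcal{E}_\lambda$, not the scaled Dirichlet integral alone, and the density lower bound from \eqref{singularitycost} concerns $\mathcal{E}_{\lambda_*}$, so the potential term must be absorbed at small scales; these are fixable but should be stated.)

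The paper avoids all of this with a much softer mechanism, which you could adopt: by Corollary~\ref{verticaltrace} the trace on $I_\rho:=B_\rho\cap\{x_3\text{-axis}\}$ is a well-defined, $\{\pm\eo\}$-valued map, \emph{strongly continuous} under strong $W^{1,2}$-convergence, and by Theorem~\ref{intregthm} together with the explicit form \eqref{formtangmaps} of the tangent maps, every interior singular point of an equivariant minimizer is a jump point of this axis trace (since $Q^{(\alpha)}(0,0,x_3)=\mathrm{sign}(x_3)\,\eo$). Hence each smooth $Q_n$ has constant axis trace $\equiv\eo$ or $\equiv-\eo$ on $I_\rho$; after extracting a subsequence the sign is fixed, strong convergence and continuity of the trace operator force the same constant trace for $Q_*$, whereas ${\rm sing}(Q_*)\cap B_\rho\neq\emptyset$ would make the trace of $Q_*$ a nontrivial piecewise constant function --- a contradiction. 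Replacing your blow-up argument for (ii) by this trace argument closes the gap without any rigidity theorem on $\R^3$.
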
		

\begin{proof}
 We start proving claim (i). Fix a radius $0<\rho<r$, and assume by contradiction that there exists a (not relabeled) subsequence such that ${\rm sing}(Q_n) \cap B_\rho\not=\emptyset$ for every $n$. Then we choose for each $n$ a point $x_n\in {\rm sing}(Q_n) \cap B_\rho$. Extracting a further subsequence if necessary, we may assume that $x_n\to x_*\in \overline B_\rho$. On the other hand, since $Q_*$ is smooth in $B_r$, we can find a small enough radius $0<\sigma<r-\rho$ such that  
$$\frac{1}{\sigma}\int_{B_\sigma(x_*)} |\nabla Q_*|^2\,dx\leq \frac{\boldsymbol{\varepsilon}_{\rm in}}{8}\,,$$
where the universal constant $\boldsymbol{\varepsilon}_{\rm in}>0$ is given by Proposition \ref{intepsregprop}. From the strong convergence of $Q_n$ toward $Q_*$, we deduce that 
$$\frac{1}{\sigma}\int_{B_\sigma(x_*)} |\nabla Q_n|^2\,dx\leq \frac{\boldsymbol{\varepsilon}_{\rm in}}{4}$$
for $n$ large enough. By Proposition \ref{intepsregprop}, it implies that $Q_n$ is smooth in $B_{\sigma/8}(x_*)$. Since $x_n\to x_*$, we have $x_n\in B_{\sigma/8}(x_*)$ for $n$ large enough, contradicting the fact that $Q_n$ is singular at $x_n$. 
\vskip3pt

We now prove claim (ii). To this purpose, it is enough to show that ${\rm sing}(Q_*)\cap B_\rho=\emptyset$ for every $0<\rho<r$. Hence we fix an arbitrary radius $0<\rho<r$, and we assume by contradiction that ${\rm sing}(Q_*)\cap B_\rho\not=\emptyset$. By Theorem \ref{intregthm}, ${\rm sing}(Q_*)\cap \overline B_\rho\subset \{x_3\mbox{-axis}\}$ is finite, and setting $I_\rho:=B_\rho\cap \{x_3\mbox{-axis}\}$, the trace of $Q_*$ on $I_\rho$ (see Corollary \ref{verticaltrace}) is a non trivial piecewise constant function with values in $\{\pm {\bf e}_0\}$ (since we are assuming that ${\rm sing}(Q_*)\cap B_\rho\not=\emptyset$). On the other hand,  $Q_n$ is smooth in $\overline B_\rho$, so that either $Q_n\equiv {\bf e}_0$ or 
$Q_n\equiv -{\bf e}_0$ on $I_\rho$. Extracting a subsequence if necessary, we may assume for instance that ${Q_n}_{|I_\rho}\equiv {\bf e}_0$ for every $n$. By the strong $W^{1,2}$-convergence of $Q_n$ and the continuity of the trace operator established in Corollary \ref{verticaltrace}, we infer that ${Q_n}_{|I_\rho}\to {Q_*}_{|I_\rho}$ in $L^1(I_\rho)$ as $n\to \infty$. Hence ${Q_*}_{|I_\rho}\equiv {\bf e}_0$ contradicting its non triviality. 
\end{proof}

\begin{corollary}\label{corolpersissmooth}
Let $\Omega\subset \R^3$ be a bounded and axisymmetric open set with boundary of class $C^3$. Let $(Q^*_{\rm b},\lambda_*)$ and a sequence $\{(Q^{(n)}_{\rm b},\lambda_n)\}$ in $C^2_{\rm sym}(\partial\Omega;\mathbb{S}^4)\times[0,\infty)$ be such that 
$Q^{(n)}_{\rm b}\to Q^*_{\rm b}$ in $C^2(\partial\Omega)$, and $\lambda_n\to\lambda_*$ as $n\to\infty$. For each $n\in\mathbb{N}$, let $Q_n$ be a minimizer of $\mathcal{E}_{\lambda_n}$ over $\mathcal{A}^{\rm sym}_{Q^{(n)}_{\rm b}}(\Omega)$, $Q_*$   a minimizer of $\mathcal{E}_{\lambda_*}$ over $\mathcal{A}^{\rm sym}_{Q^*_{\rm b}}(\Omega)$, and assume that $Q_n\to Q_*$ strongly in $W^{1,2}(\Omega)$. 
\begin{itemize}
\item[(i)] If ${\rm sing}(Q_*)=\emptyset$, then  there exists an integer $n_*$ such that ${\rm sing}(Q_n)=\emptyset$ whenever $n\geq n_*$.   
\vskip3pt
\item[(ii)] If ${\rm sing}(Q_n)=\emptyset$ for every $n$, then ${\rm sing}(Q_*)=\emptyset$.  
\end{itemize}
\end{corollary}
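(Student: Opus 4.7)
The plan is to reduce to the local Lemma~\ref{persissmoothloc} and combine it with the boundary $\varepsilon$-regularity from Corollary~\ref{bdrepsregcor} and the interior $\varepsilon$-regularity from Proposition~\ref{intepsregprop}. Claim (ii) will follow from a pure localization to interior balls, whereas claim (i) will require a Morrey-type absorption argument over a finite cover of $\overline\Omega$.

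For claim (ii), I would fix an arbitrary $x_0 \in \Omega$ and a radius $r>0$ so small that $\overline{B_r(x_0)} \subset \Omega$. By locality of $\mathcal{E}_\lambda$, the restrictions $Q_n|_{B_r(x_0)}$ and $Q_*|_{B_r(x_0)}$ minimize $\mathcal{E}_{\lambda_n}$ and $\mathcal{E}_{\lambda_*}$, respectively, among equivariant $\mathbb{S}^4$-valued maps agreeing with their own boundary trace on $\partial B_r(x_0)$. Since $Q_n \to Q_*$ strongly in $W^{1,2}(B_r(x_0))$ and $\lambda_n \to \lambda_*$, Lemma~\ref{persissmoothloc}(ii) yields ${\rm sing}(Q_*) \cap B_r(x_0)=\emptyset$. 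Because Theorem~\ref{bdrregthm} confines ${\rm sing}(Q_*)$ to interior points of $\Omega$ on the vertical axis, letting $x_0$ vary through $\Omega$ completes the proof.

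For claim (i), since $Q_*$ has empty singular set and $Q^*_{\rm b} \in C^2_{\rm sym}(\partial\Omega)$, Theorem~\ref{bdrregthm}(i) yields $Q_* \in C^{2,\alpha}(\overline\Omega)$; in particular $\|\nabla Q_*\|_{L^\infty(\Omega)}<\infty$, so
\[ \sup_{x_0 \in \overline\Omega}\,\frac{1}{r}\int_{B_r(x_0)\cap\Omega}|\nabla Q_*|^2\,dx \leq C\, r^2 \underset{r\to 0}{\longrightarrow} 0. \]
Set $\Lambda:=\sup_n \lambda_n$ and $L:=\sup_n \|Q^{(n)}_{\rm b}\|_{C^{1,1}(\partial\Omega)}$, both finite since $\lambda_n \to \lambda_*$ in $[0,\infty)$ and $Q^{(n)}_{\rm b} \to Q^*_{\rm b}$ in $C^2(\partial\Omega)$. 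Let $\boldsymbol{\varepsilon}_{\rm in}$ be the universal constant from Proposition~\ref{intepsregprop} and $\bar{\boldsymbol{\varepsilon}}^\prime_{\rm bd}$, $\bar{\boldsymbol{\kappa}}^\prime$ those from Corollary~\ref{bdrepsregcor}. I would then fix $r>0$ small enough so that $C r^2 < \min(\boldsymbol{\varepsilon}_{\rm in}, \bar{\boldsymbol{\varepsilon}}^\prime_{\rm bd})/8$ and that the smallness requirements of both statements are met. The elementary inequality
\[ \frac{1}{r}\int_{B_r(x_0)\cap\Omega}|\nabla Q_n|^2\,dx \leq \frac{2}{r}\int_{B_r(x_0)\cap\Omega}|\nabla Q_*|^2\,dx + \frac{2}{r}\int_{\Omega}|\nabla(Q_n - Q_*)|^2\,dx \]
combined with the strong $W^{1,2}$-convergence shows that, for $n$ large enough, the scaled Dirichlet integral of $Q_n$ is below the corresponding $\varepsilon$-threshold uniformly in $x_0 \in \overline\Omega$. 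Applying Proposition~\ref{intepsregprop} at points with $B_r(x_0)\subset\Omega$ and Corollary~\ref{bdrepsregcor} at points near $\partial\Omega$ then covers $\overline\Omega$ with balls on which $Q_n$ is smooth, so that ${\rm sing}(Q_n) = \emptyset$ for all $n$ sufficiently large by Theorem~\ref{bdrregthm}.

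The main technical point, though essentially bookkeeping, will be ensuring that the constants entering the boundary $\varepsilon$-regularity are independent of $n$. This is built into Corollary~\ref{bdrepsregcor}, whose constants depend only on $\Omega$, $\Lambda$, and $L$, each of which is an $n$-independent uniform bound.
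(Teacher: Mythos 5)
Your proposal is essentially correct and relies on the same ingredients as the paper; part (ii) coincides in substance with the paper's argument (localize to a ball and invoke Lemma~\ref{persissmoothloc}(ii)), while part (i) is organized differently. The paper uses Corollary~\ref{bdrepsregcor} only to obtain uniform $C^1$ bounds for $Q_n$ in a fixed neighborhood of $\partial\Omega$, which confines ${\rm sing}(Q_n)$ to a compact interior portion of the $x_3$-axis, and then applies Lemma~\ref{persissmoothloc}(i) on a finite cover of that portion; you instead exploit ${\rm sing}(Q_*)=\emptyset$ to get the Morrey-type smallness $\frac1r\int_{B_r(x_0)\cap\Omega}|\nabla Q_*|^2\,dx\leq Cr^2$ uniformly in $x_0\in\overline\Omega$, transfer it to $Q_n$ by the absorption inequality and strong $W^{1,2}$-convergence, and then run interior and boundary $\varepsilon$-regularity directly on a covering of $\overline\Omega$. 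This is a valid shortcut that bypasses Lemma~\ref{persissmoothloc} in (i): the constants in Proposition~\ref{intepsregprop} and Corollary~\ref{bdrepsregcor} are indeed uniform in $n$ once $\Lambda$ and $L$ are fixed, and the covering bookkeeping (possibly with different radii for interior and boundary balls) is routine.

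Two small corrections are needed. First, in (ii) you apply Lemma~\ref{persissmoothloc} on balls $B_r(x_0)$ with arbitrary $x_0\in\Omega$: for $x_0$ off the $x_3$-axis the ball is not axisymmetric, so neither $W^{1,2}_{\rm sym}(B_r(x_0);\mathbb{S}^4)$ nor minimality within the equivariant class makes sense there, and the lemma does not apply. This is harmless, because ${\rm sing}(Q_*)\subset\{x_3\text{-axis}\}$ by Theorem~\ref{intregthm}, so it suffices to apply the lemma on a ball centered at a putative singular point of $Q_*$ (which lies on the axis and can be enclosed in a ball compactly contained in $\Omega$) --- this is exactly the paper's contradiction argument --- but the phrase ``arbitrary $x_0$'' must be replaced accordingly. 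Second, Theorem~\ref{bdrregthm}(i) requires $Q_{\rm b}\in C^{2,\alpha}(\partial\Omega)$, whereas here $Q^*_{\rm b}$ is only assumed to be of class $C^2$; however, the basic statement of Theorem~\ref{bdrregthm} with $C^{1,1}$ data already yields $Q_*\in C^{1,\alpha}(\overline\Omega)$ when ${\rm sing}(Q_*)=\emptyset$, and the resulting $L^\infty$ bound on $\nabla Q_*$ is all that your Morrey estimate needs.
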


\begin{proof}
 We start proving claim (i). To prove it, it is enough to show that there exists $\delta>0$ independent of $n$ such that the $C^1$-norms of $Q_n$ are uniformly bounded in a $\delta$-neighborhood of $\partial\Omega$ (recall that ${\rm sing}(Q_n)$ coincides with the discontinuity points of $Q_n$). Indeed, in this case we have ${\rm sing}(Q_n)\subset \Omega\cap\{{\rm dist}(\cdot,\partial\Omega)\geq\delta\}$ for every $n$. Recalling that ${\rm sing}(Q_n)\subset \{x_3\text{-axis}\}$ by Theorem~\ref{intregthm}, we choose a finite covering of $\Omega\cap\{{\rm dist}(\cdot,\partial\Omega)\geq\delta\}\cap \{x_3\text{-axis}\}$ by open balls $B_{\delta/2}(x_1),\ldots,B_{\delta/2}(x_K)$. We apply Lemma \ref{persissmoothloc} in each $B_{\delta}(x_j)$ to find an integer $n_*$ such that ${\rm sing}(Q_n)\cap B_{\delta/2}(x_j)=\emptyset$ for each $j$ and every $n\geq n_*$. Hence ${\rm sing}(Q_n)=\emptyset$ for every $n\geq n_*$. 

To show that the $C^1$-norm of $Q_n$ remains bounded in a $\delta$-neighborhood of $\partial\Omega$, we shall make use of the regularity estimates from Section \ref{secregth}. By Theorem \ref{bdrregthm}, $Q_*$ is  of class $C^{1,\alpha}$ for every $\alpha\in(0,1)$ in a neighborhood of $\partial\Omega$. Hence, for a radius $\eta>0$ to be chosen small enough, we have 
$$\frac{1}{\eta} \int_{B_{\eta}(y)\cap \Omega}|\nabla Q_*|^2\,dx \leq \frac{\bar{\boldsymbol{\varepsilon}}^\prime_{\rm bd}}{2}\quad\text{for every $y\in\partial\Omega$}\,,$$
where the constant $\bar{\boldsymbol{\varepsilon}}^\prime_{\rm bd}>0$ (depending only on $\Omega$) is provided by Corollary \ref{bdrepsregcor}. Next we set $\Lambda:=\sup_n\lambda_n<\infty$, and 
$L:=\sup_n\|Q^{(n)}_{\rm b}\|_{C^{1,1}(\partial\Omega)}<\infty$. We now choose $\eta>0$ small enough (depending only on $\Lambda$, $L$, and $\Omega$) such that the conclusion of Corollary \ref{bdrepsregcor} holds. We also set $r_*:=\bar{\boldsymbol{\kappa}}^\prime\eta$ with constant $\bar{\boldsymbol{\kappa}}^\prime\in(0,1)$ still given by Corollary \ref{bdrepsregcor} (depending only on $\Omega$),  
and we consider a finite covering $B_{r_*}(y_1),\ldots, B_{r_*}(y_J)$ of $\partial \Omega$ with $y_j\in\partial\Omega$. Since $Q_n\to Q_*$ strongly in $W^{1,2}(\Omega)$, we can find an integer $n_*$ such that 
$$\frac{1}{\eta} \int_{ B_{\eta}(y_j)\cap \Omega}|\nabla Q_n|^2\,dx\leq \bar{\boldsymbol{\varepsilon}}^\prime_{\rm bd}\quad\text{for each $j=1,\ldots,J$ and every $n\geq n_*$}\,.$$ 
Applying Corollary \ref{bdrepsregcor}, we infer that $Q_n\in C^{1,\alpha}(B_{r_*}(y_j)\cap\Omega)$ for every $\alpha\in(0,1)$ and each $j$ with the estimate $\|\nabla Q_n\|_{L^\infty(B_{r_*}(y_j)\cap\Omega)}\leq C r_*^{-1}$ and a constant $C$ independent of $n$. Since the balls $B_{r_*}(y_1),\ldots, B_{r_*}(y_J)$ cover $\partial\Omega$, the $C^1$-norm of $Q_n$ remains bounded in a $\delta$-neighborhood of $\partial\Omega$ for some $\delta\in(0,r_*)$. 
\vskip3pt

We now prove claim (ii). Assume by contradiction that ${\rm sing}(Q_*)\not=\emptyset$, i.e., $Q_*$ has at least one  singular point $x_*\in \Omega${{,}} which must belong to the $\{x_3\text{-axis}\}$ by Theorem \ref{intregthm}. Choose a radius $r>0$ such that $B_r(x_*)\subset \Omega$. 
Since $Q_n\to Q_*$ strongly in $W^{1,2}(\Omega)$, we can apply Lemma \ref{persissmoothloc} in  the ball $B_r(x_*)$ to infer that $Q_*$ is smooth in $B_r(x_*)$, a contradiction. 
\end{proof}


\subsection{Persistence of singularities}

By analogy with the previous subsection, we now study the behavior of the singular set along strongly $W^{1,2}$-convergent sequences of minimizers, proving that singular points converge to singular points. The following result is the counterpart in the present context of \cite[Theorem~1.8]{AlLi} (see also \cite{HL2}).
		
\begin{proposition}\label{singgotosing}
Let $(Q_*,\lambda_*)$ and a sequence $\{(Q_n,\lambda_n)\}$ in $W^{1,2}_{\rm sym}(B_r;\mathbb{S}^4)\times[0,\infty)$ be such that $Q_n\to Q_*$ strongly in $W^{1,2}(B_r)$ and $\lambda_n\to\lambda_*$ as $n\to\infty$. 
Assume that  
$Q_*$ is  minimizing $\mathcal{E}_{\lambda_*}$ among all $Q \in W^{1,2}_{\rm sym}(B_r;\mathbb{S}^4)$ satisfying $Q=Q_*$ on $\partial B_r$, and that $Q_n$ is  minimizing $\mathcal{E}_{\lambda_n}$ among all $Q \in W^{1,2}_{\rm sym}(B_r;\mathbb{S}^4)$ satisfying $Q=Q_n$ on $\partial B_r$. Then, for every radius $\rho\in(0,r)$ such that ${\rm sing}(Q_*)\cap\partial B_\rho=\emptyset$ and ${\rm sing}(Q_*)\cap B_\rho=\{a_1^*,\ldots, a_K^*\}$, there exists an integer $n_\rho$ such that for every $n\geq n_\rho$, 
${\rm sing}(Q_n)\cap\partial B_\rho=\emptyset$ and ${\rm sing}(Q_n)\cap B_\rho=\{a_1^n,\ldots, a_K^n\}$ for some distinct points $a_1^n, \ldots,a_K^n\in B_\rho$ satisfying $|a_j^n-a_j^*|\to 0$ as $n\to \infty$ for $j = 1, \dots, K$. 
\end{proposition}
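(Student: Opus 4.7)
The plan is to combine three ingredients: Lemma~\ref{persissmoothloc}(i) to localize the singularities of $Q_n$ near ${\rm sing}(Q_*)\cap\overline{B_\rho}=\{a_1^*,\dots,a_K^*\}$; the trace continuity of Corollary~\ref{verticaltrace}, together with the explicit form \eqref{formtangmaps} of the tangent maps at each $a_j^*$, to force at least one singularity of $Q_n$ in each small ball $B_\sigma(a_j^*)$; and a blow-up argument based on Proposition~\ref{intmonoform}, the quantization \eqref{singularitycost}, and the classification of tangent maps in Theorem~\ref{intregthm} to forbid two or more singularities of $Q_n$ from collapsing to the same $a_j^*$. Throughout I fix $\sigma>0$ so small that the closed balls $\overline{B_{2\sigma}(a_j^*)}$ are pairwise disjoint, contained in $B_\rho$, and $Q_*$ is smooth both on $\overline{B_{2\sigma}(a_j^*)}\setminus\{a_j^*\}$ and in a neighborhood of $\partial B_\rho$.

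First, I would cover the compact set $\overline{B_\rho}\setminus\bigcup_j B_\sigma(a_j^*)$ (and a neighborhood of $\partial B_\rho$) by finitely many balls in which $Q_*$ is smooth, and apply Lemma~\ref{persissmoothloc}(i) in each of them to conclude that, for $n$ large, ${\rm sing}(Q_n)\cap\partial B_\rho=\emptyset$ and ${\rm sing}(Q_n)\cap B_\rho\subset\bigcup_j B_\sigma(a_j^*)$. To obtain at least one singularity in each $B_\sigma(a_j^*)$, I would look at the trace on $I_\rho:=B_\rho\cap\{x_3\text{-axis}\}$: by \eqref{formtangmaps}, ${\rm Tr}(Q_*)$ is a piecewise constant $\{\pm\eo\}$-valued function with exactly $K$ sign changes, one at each $a_j^*$. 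Corollary~\ref{verticaltrace}, combined with the $\{\pm\eo\}$-discreteness and with the smoothness of $Q_n$ on $I_\rho\setminus\bigcup_j I_j^\sigma$ (where $I_j^\sigma:=B_\sigma(a_j^*)\cap\{x_3\text{-axis}\}$) from the previous step, forces ${\rm Tr}(Q_n)={\rm Tr}(Q_*)$ on each connected component of $I_\rho\setminus\bigcup_j\overline{I_j^\sigma}$ for $n$ large. Hence ${\rm Tr}(Q_n)$ must still change sign inside each $I_j^\sigma$, and since $Q_n$ is smooth and $\{\pm\eo\}$-valued on $I_\rho\setminus{\rm sing}(Q_n)$, such a sign change must occur at a singular point of $Q_n$.

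The main obstacle is the upper bound, which I would obtain by a blow-up contradiction. Assume, along a subsequence, that two distinct singular points $a_n^{(1)}\neq a_n^{(2)}$ of $Q_n$ lie in $B_\sigma(a_j^*)$; both converge to $a_j^*$ by the previous step. Set $\rho_n:=\frac12|a_n^{(1)}-a_n^{(2)}|\to 0$ and rescale $\tilde Q_n(y):=Q_n(a_n^{(1)}+\rho_n y)$. Each $\tilde Q_n$ is an $\mathbb{S}^1$-equivariant minimizer of $\mathcal{E}_{\tilde\lambda_n}$ with $\tilde\lambda_n:=\lambda_n\rho_n^2\to 0$, with singularities at $0$ and at $y_n:=(a_n^{(2)}-a_n^{(1)})/\rho_n\in\{x_3\text{-axis}\}$, $|y_n|=2$. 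Applying Proposition~\ref{intmonoform} centered at $a_n^{(1)}$, using the inclusion $B_{\sigma_0}(a_n^{(1)})\subset B_{\sigma_0+|a_n^{(1)}-a_j^*|}(a_j^*)$ and \eqref{singularitycost} for $Q_*$ at $a_j^*$, I get, for every fixed $R>0$ and every $\varepsilon>0$, the uniform bound $R^{-1}\mathcal{E}_{\tilde\lambda_n}(\tilde Q_n,B_R)\leq 4\pi+\varepsilon$ for $n$ large. Standard $W^{1,2}_{\rm loc}$-compactness for equivariant minimizing $\mathbb{S}^4$-valued maps with vanishing~$\lambda$ then produces, up to subsequence, $\tilde Q_n\to\tilde Q_\infty$ strongly in $W^{1,2}_{\rm loc}(\R^3)$ for some $\mathbb{S}^1$-equivariant minimizing harmonic map $\tilde Q_\infty:\R^3\to\mathbb{S}^4$ with $0\in{\rm sing}(\tilde Q_\infty)$ and $R^{-1}\mathcal{E}_0(\tilde Q_\infty,B_R)\leq 4\pi$ for every $R>0$.

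Then \eqref{singularitycost} at the origin combined with monotonicity gives $R^{-1}\mathcal{E}_0(\tilde Q_\infty,B_R)\equiv 4\pi$, and the equality case of Proposition~\ref{intmonoform} forces $\partial\tilde Q_\infty/\partial|x|\equiv 0$ and $W(\tilde Q_\infty)\equiv 0$. Thus $\tilde Q_\infty$ is homogeneous of degree~$0$ from the origin with values in $\mathcal{Q}_{\rm min}=\mathbb{R}P^2$, and the $\mathbb{S}^1$-equivariant classification underlying Theorem~\ref{intregthm} identifies it with $\pm Q^{(\alpha)}$ of \eqref{formtangmaps} for some $\alpha$, which is smooth on $\R^3\setminus\{0\}$. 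However, up to a further extraction $y_n\to y_\infty\in\{x_3\text{-axis}\}$ with $|y_\infty|=2$, and Lemma~\ref{persissmoothloc}(i) applied in a ball around $y_\infty$ forces $y_\infty\in{\rm sing}(\tilde Q_\infty)$, a contradiction. Combining the two bounds and letting $\sigma\downarrow 0$ with $n_\rho$ adjusted accordingly yields exactly one singular point $a_j^n\in{\rm sing}(Q_n)\cap B_\sigma(a_j^*)$ for each $j$ and every $n$ large, with $a_j^n\to a_j^*$. The hardest step is the density matching in the blow-up argument, which hinges on strong $W^{1,2}_{\rm loc}$-compactness of equivariant minimizers, on the sharp quantization \eqref{singularitycost}, and on the rigidity of the equality case in the monotonicity formula.
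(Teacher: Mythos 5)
Your proof is correct and follows the same overall architecture as the paper's: localize the singularities of $Q_n$ near $\{a_1^*,\dots,a_K^*\}$ via the persistence-of-smoothness lemma, show each $a_j^*$ captures at least one singularity of $Q_n$, and rule out collapse of two singularities of $Q_n$ onto a single $a_j^*$ by a blow-up argument driven by the monotonicity formula and the density quantization \eqref{singularitycost}. The paper encapsulates the last step as a separate Lemma (a uniform lower bound on mutual distance between singularities for bounded-energy equivariant minimizers, proved in the spirit of Almgren--Lieb), then applies it to conclude; you fold that argument directly into the proof.

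You diverge from the paper in two places, both legitimate. For the ``at least one per ball'' step you invoke Corollary~\ref{verticaltrace} and the sign change of ${\rm Tr}(Q_*)$ at each $a_j^*$ (read off from \eqref{formtangmaps}), whereas the paper simply reapplies Lemma~\ref{persissmoothloc}(ii) by contradiction inside $B_\eta(a_j^*)$. Your route makes the topological mechanism more explicit at the cost of invoking one more tool. For the final contradiction in the blow-up you use the density-$4\pi$ equality plus the equality case of the monotonicity formula to pin down $\tilde Q_\infty$ as a degree-zero homogeneous map $\pm Q^{(\alpha)}$ (smooth away from $0$) and then contradict the presence of the second singularity $y_\infty$; the paper centers the blow-up at the \emph{midpoint} of the two colliding points, obtains two singularities at $(0,0,\pm1/2)$, and uses the disjoint-ball density count $4\pi t + 4\pi(1-t)=4\pi$ swept over $t$ to conclude the limit is constant off the segment $[p_1,p_2]$ and hence everywhere, contradicting the presence of singularities. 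Your endgame leans more explicitly on the classification of tangent maps; the paper's leans on a more elementary ball-packing argument, though both ultimately rest on the same quantization. Two small points worth cleaning up in your write-up: centering the rescaling at $a_n^{(1)}$ rather than the midpoint is fine (both lie on the $x_3$-axis, so equivariance persists), and the clause ``$W(\tilde Q_\infty)\equiv 0$'' is vacuous once $\tilde\lambda_n\to 0$ since the limiting functional is the pure Dirichlet energy $\mathcal{E}_0$, so the potential term has dropped out of the monotonicity identity before equality is invoked.
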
		

\begin{proof}
By Theorem~\ref{intregthm}, ${\rm sing}(Q_*)$ and ${\rm sing}(Q_n)$ are made of locally finitely many points in $B_r\cap\{x_3\text{-axis}\}$. 
If ${\rm sing}(Q_*)\cap \partial B_\rho=\emptyset$, then $Q_*$ is smooth in a neighborhood of $\partial B_\rho$. 
Applying Lemma~\ref{persissmoothloc} at the north and south pole of $\partial B_\rho$, we infer that there exists an integer $\bar n_\rho$ such that $Q_n$ is smooth in a uniform neighborhood of $\partial B_\rho$  for every $n\geq \bar n_\rho$. Then we set $\Sigma^\rho_*:={\rm sing}(Q_*)\cap B_\rho$ and $\Sigma^\rho_n:={\rm sing}(Q_n)\cap B_\rho$. We claim that $\Sigma^\rho_n\to \Sigma^\rho_*$ in the Hausdorff distance.  To prove this claim, let us first consider $a_*\in \Sigma^\rho_*$, and prove that there exists  $a_n\in \Sigma^\rho_n$ such that $a_n\to a_*$. By contradiction, assume that $\Sigma^\rho_n$ remains at a positive distance from $a_*$ for $n$ large. Then we can find $\eta>0$ such that $B_\eta(a_*)\cap  \Sigma^\rho_n=\emptyset$ for $n$ large enough. Applying Lemma \ref{persissmoothloc} in $B_\eta(a_*)$, we deduce that $B_\eta(a_*)\cap  \Sigma^\rho_*=\emptyset$, a contradiction. The other way around, let $a_n\in \Sigma^\rho_n$ be a sequence converging to some point $a_*$. Since $\Sigma^\rho_n$ remains at a positive distance from $\partial B_\rho$, we have $a_*\in B_\rho$, and let us show that $a_*\in \Sigma^\rho_*$. Again by contradiction, assume that  $a_*\not\in \Sigma^\rho_*$. Then we can find $\eta>0$ such that $B_{2\eta}(a_*)\cap \Sigma^\rho_*=\emptyset$. Applying Lemma \ref{persissmoothloc} in $B_{2\eta}(a_*)$, we infer that $B_{\eta}(a_*)\cap \Sigma^\rho_n=\emptyset$ for $n$ large enough, which contradicts the fact that $a_n\to a_*$. Hence $\Sigma^\rho_n\to \Sigma^\rho_*$ in the Hausdorff distance. 
\vskip5pt

To complete the proof of Proposition~\ref{singgotosing}, we shall make use of the following key lemma, giving a lower bound on the mutual distance between singularities for minimizers, in the spirit of \cite[Theorem~2.1]{AlLi} for minimizing harmonic maps into $\bbS^2$.

\begin{lemma}\label{infdistsing}
Let $M,\Lambda>0$ and $\lambda\in[0,\Lambda]$. Assume that $Q_\lambda\in W^{1,2}_{\rm sym}(B_1;\mathbb{S}^4)$ is minimizing $\mathcal{E}_\lambda(\cdot,B_1)$ among all maps $Q\in W^{1,2}_{\rm sym}(B_1;\mathbb{S}^4)$ satisfying $Q=Q_\lambda$ on $\partial B_1$, and  that $\mathcal{E}_\lambda(Q_\lambda,B_1)\leq M$. Then there exists a constant $\kappa=\kappa(M,\Lambda)>0$ depending only on $M$ and $\Lambda$ such that 
$$|a-b|\geq \kappa\quad \text{for every $a,b\in{\rm sing}(Q_\lambda)\cap \overline B_{1/2}$, $a\not=b$}\,.$$
\end{lemma}		
		
\begin{proof}
We argue by contradiction assuming that there exists a sequence $\{Q_n\}$ in $W^{1,2}_{\rm sym}(B_1;\mathbb{S}^4)$ and $\lambda_n\in[0,\Lambda]$ such that $Q_n$ is minimizing $\mathcal{E}_{\lambda_n}(\cdot,B_1)$ among all maps $Q\in W^{1,2}_{\rm sym}(B_1;\mathbb{S}^4)$ satisfying $Q=Q_n$ on $\partial B_1$, and   $\mathcal{E}_{\lambda_n}(Q_{n},B_1)\leq M$, and such that there exists two distinct points $a_n,b_n\in {\rm sing}(Q_\lambda)\cap \overline B_{1/2}$ satisfying $r_n:=|a_n-b_n|\to 0$ as $n\to\infty$. Extracting a subsequence if necessary, we may assume that $\lambda_n\to \lambda_*\in [0,\Lambda]$, $\lim_{n\to\infty} a_n=\lim_{n\to\infty} b_n=  c_*\in\overline B_{1/2}$. By the compactness theorem in \cite[Theorem 5.1]{DMP2}, we can find a (not relabeled) subsequence such that $Q_n\to Q_*$ strongly in $W^{1,2}_{\rm loc}(B_1)$ for a map $Q_*\in W^{1,2}_{\rm sym}(B_1;\mathbb{S}^4)$ which is minimizing $\mathcal{E}_{\lambda_*}(\cdot,B_1)$ among all maps $Q\in W^{1,2}_{\rm sym}(B_1;\mathbb{S}^4)$ satisfying $Q=Q_*$ on $\partial B_1$. Arguing as in the proof of Proposition~\ref{singgotosing}, we infer that $c_*\in {\rm sing}(Q_*)$. Setting $c_n:=(a_n+b_n)/2$, we have $c_n\to c_*$, and we define for $x\in B_2$ and $n$ large enough, 
$$\bar Q_n(x):=Q_n(c_n+r_n x)\,.$$
Since $a_n,b_n\in \{x_3\text{-axis}\}$, we have $c_n\in\{x_3\text{-axis}\}$, and thus $\bar Q_n\in W^{1,2}_{\rm sym}(B_2;\mathbb{S}^4)$. From the minimality of $Q_n$ and a change of variables, we infer that $\bar Q_n$ minimizes $\mathcal{E}_{r_n^2\lambda_n}(\cdot,B_2)$ among all maps $Q\in W^{1,2}_{\rm sym}(B_2;\mathbb{S}^4)$ such that $Q=\bar Q_n$ on $\partial B_2$. Extracting a subsequence if necessary, we may assume that $p_1:=(a_n-c_n)/r_n=(0,0,1/2)$ and  $p_2:=(b_n-c_n)/r_n=(0,0,-1/2)$. Then, by construction, $p_1,p_2\in {\rm sing}(\bar Q_n)$. 

By the interior monotonicity formula in Proposition \ref{intmonoform}, we have for every $x_0\in B_2$, every $t\in(0,{\rm dist}(x_0,\partial B_2)]$ and $r \in (0,1)$, 
\begin{align}
\nonumber\frac{1}{t}\mathcal{E}_{r^2_n\lambda_n}\big(\bar Q_n,B_t(x_0)\big) & = \frac{1}{r_n t}\mathcal{E}_{\lambda_n}\big(Q_n,B_{r_nt}(c_n+r_n x_0))\big)\\
\nonumber&\leq \frac{1}{r-|c_n+r_nx_0-c_*|}\mathcal{E}_{\lambda_n}\big(Q_n,B_{r-|c_n+r_n x_0-c_*|}(c_n+r_n x_0)\big)\\
&\leq \frac{1}{r-|c_n+r_nx_0-c_*|}\mathcal{E}_{\lambda_n}\big(Q_n,B_{r+|c_n+r_n x_0-c_*|}(c_*)\big)\label{distsing1} 
\end{align}
whenever $n$ is large enough. Since $Q_n\to Q_*$ strongly in $W^{1,2}_{\rm loc}(B_1)$ and $\lambda_n\to \lambda_*$, we have 
\begin{equation}\label{distsing2}
\lim_{n\to\infty}\frac{1}{r-|c_n+r_nx_0-c_*|}\mathcal{E}_{\lambda_n}\big(Q_n,B_{r+|c_n+r_n x_0-c_*|}(c_*)\big)=\frac{1}{r}\mathcal{E}_{\lambda_*}\big(Q_*,B_{r}(c_*)\big)\,.
\end{equation}
In view of \eqref{distsing1}-\eqref{distsing2} with $x_0=0$ and $t=2$, we first deduce that $\sup_n\mathcal{E}_{r_n^2\lambda_n}(\bar Q_n,B_2) <\infty$. By the compactness result in \cite[Theorem 5.1]{DMP2}, we can find a (not relabeled) subsequence such that $\bar Q_n\to \bar Q_*$ strongly in $W^{1,2}_{\rm loc}(B_2)$ for a map $\bar Q_*\in W^{1,2}_{\rm sym}(B_2;\mathbb{S}^4)$ which is minimizing $\mathcal{E}_{0}(\cdot,B_2)$ among all maps $Q\in W^{1,2}_{\rm sym}(B_2;\mathbb{S}^4)$ satisfying $Q=\bar Q_*$ on $\partial B_2$. 

Letting $n\to \infty$ in \eqref{distsing1}, we infer from  \eqref{distsing2} that for every $x_0\in B_2$, every $t\in(0,{\rm dist}(x_0,\partial B_2))$, and $r \in (0,1/2)$ small enough, 
\begin{equation}\label{distsing3}
 \frac{1}{t}\mathcal{E}_{0}\big(\bar Q_*,B_t(x_0)\big)\leq \frac{1}{r}\mathcal{E}_{\lambda_*}\big(Q_*,B_{r}(c_*)\big)\,. 
 \end{equation}
On the other hand, since $c_*\in{\rm sing}(Q_*)$, Theorem \ref{intregthm} tells us that 
$$\lim_{r\to 0}\frac{1}{r}\int_{B_r(c_*)} \frac{1}{2} |\nabla Q_*|^2\,dx= 4\pi\,.$$
Letting now $r\to 0$ in \eqref{distsing3} yields 
\begin{equation}\label{distsing4}
 \frac{1}{t}\mathcal{E}_{0}\big(\bar Q_*,B_t(x_0)\big)\leq 4\pi\quad\text{for every $x_0\in B_2$ and $t\in(0,{\rm dist}(x_0,\partial B_2))$}\,.
 \end{equation}
On the other hand, $p_1$ and $p_2$ are singular points of $\bar Q_n$ for each $n$, and thus $p_1,p_2\in {\rm sing}(\bar Q_*)$ by Lemma \ref{persissmoothloc}. As a consequence, Theorem \ref{intregthm} and the interior  monotonicity formula in Proposition~\ref{intmonoform} imply that for $j=1,2$, 
\begin{equation}\label{distsing5}
\frac{1}{t}\mathcal{E}_{0}\big(\bar Q_*,B_t(p_j)\big)\geq 4\pi \quad \forall t\in(0,1)\,.
\end{equation}
Setting $y_t:=(0,0,t-1/2)$ for $t\in(0,1)$, since $B_t(p_1) \cup B_{1-t}(p_2) \subset B_1(y_t)$, we gather \eqref{distsing4} and \eqref{distsing5} to derive 
$$ 4\pi \geq \mathcal{E}_{0}\big(\bar Q_*,B_1(y_t)\big)\geq \mathcal{E}_{0}\big(\bar Q_*,B_t(p_1)\big)+  \mathcal{E}_{0}\big(\bar Q_*,B_{1-t}(p_2)\big)\geq 4\pi t+ 4\pi(1-t)=4\pi \quad \forall t\in(0,1)\,.$$
Therefore $|\nabla \bar Q_*|^2\equiv0$ a.e. in $B_1(y_t)\setminus\big(B_t(p_1)\cup B_{1-t}(p_2)\big)$ for every $t\in(0,1)$. Since
$$B_1\cap \bigcup_{0<t<1}\Big( B_1(y_t)\setminus\big(B_t(p_1)\cup B_{1-t}(p_2)\big)\Big)= B_1\setminus [p_1,p_2] \,,$$
we conclude that $|\nabla \bar Q_*|^2\equiv0$ a.e. in $B_1$. Thus $ \bar Q_*$ is constant in $B_1$, which contradicts the fact that $\bar Q_*$ is singular at $p_1,p_2\in B_1$. 
\end{proof}

\noindent {\it Proof of Proposition  \ref{singgotosing} Completed.} To complete the proof, it remains to show that there exists  an integer $n_\rho\geq\bar n_\rho$ such that ${\rm Card}\,\Sigma^\rho_n={\rm Card}\,\Sigma^\rho_*$ for $n\geq n_\rho$. Once again we argue by contradiction assuming that for some (not relabeled) subsequence, we have ${\rm Card}\,\Sigma^\rho_n \not ={\rm Card}\,\Sigma^\rho_*$. In view of the previous discussion, ${\rm Card}\,\Sigma^\rho_n >{\rm Card}\,\Sigma^\rho_*$ for $n$ large enough. As a consequence, there exist at least two points $a_n,b_n\in \Sigma^\rho_n$ such that $a_n\not=b_n$ and $\lim_n a_n= \lim_n b_n= c_*$ for a point $c_*\in \Sigma^\rho_*$. In particular, $|a_n-b_n|\to 0$. Then we choose a radius $\eta>0$ such that $B_\eta(c_*)\subset B_r$. For $n$ large enough, we  have $a_n,b_n\in B_{\eta/2}(c_*)$. Rescaling variables, we can apply Lemma \ref{infdistsing} in $B_\eta(c_*)$ to deduce that $|a_n-b_n|\geq \kappa\eta $ for some constant $\kappa>0$ depending only on $\sup_n\frac{1}{\eta}\mathcal{E}_{\lambda_n}\big(Q_n,B_\eta(c_*)\big)<\infty$ and $\sup_n \eta^2\lambda_n<\infty$, which contradicts the fact that $|a_n-b_n|\to 0$.
\end{proof}
		
The following result is the global counterpart of Proposition~\ref{singgotosing}.

\begin{corollary}\label{corosinggotosingglobal}		
Let $\Omega\subset \R^3$ be a bounded and axisymmetric open set with boundary of class~$C^3$. Let $(Q^*_{\rm b},\lambda_*)$ and a sequence $\{(Q^{(n)}_{\rm b},\lambda_n)\}$ in $C^2_{\rm sym}(\partial\Omega;\mathbb{S}^4)\times[0,\infty)$ be such that 
$Q^{(n)}_{\rm b}\to Q^*_{\rm b}$ in $C^2(\partial\Omega)$, and $\lambda_n\to\lambda_*$ as $n\to\infty$. For each $n\in\mathbb{N}$, let $Q_n$ be a minimizer of $\mathcal{E}_{\lambda_n}$ over $\mathcal{A}^{\rm sym}_{Q^{(n)}_{\rm b}}(\Omega)$, $Q_*$ a minimizer of $\mathcal{E}_{\lambda_*}$ in $\mathcal{A}^{\rm sym}_{Q^*_{\rm b}}(\Omega)$, and assume that $Q_n\to Q_*$ strongly in $W^{1,2}(\Omega)$. If ${\rm sing}(Q_*)=\{a^*_1,\ldots,a_K^*\}$, then there exists an integer $n_*$ such that for every $n\geq n_*$, 	${\rm sing}(Q_n)=\{a^n_1,\ldots,a_K^n\}$ for some distinct points $a_1^n,\ldots,a_K^n$ satisfying $|a_j^n-a_j^*|\to 0$ as $n\to\infty$.  
\end{corollary}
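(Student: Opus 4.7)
The plan is to reduce the global statement to the local proposition \ref{singgotosing} by decomposing $\overline\Omega$ into three pieces: a neighborhood of $\partial\Omega$, small isolating balls around the points $a_1^*,\ldots,a_K^*$, and the complementary compact region in the interior of $\Omega$ away from ${\rm sing}(Q_*)$. On the first region no singularities of $Q_n$ can appear for large $n$; on the second we get exactly one corresponding singular point per ball via \ref{singgotosing}; on the third we rule out extra singularities via \ref{persissmoothloc}.

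For the boundary region, I would copy the argument from the proof of Corollary \ref{corolpersissmooth}(i): by Theorem \ref{bdrregthm} applied to $Q_*$, the map $Q_*$ is $C^{1,\alpha}$ up to $\partial\Omega$, so given $\bar{\boldsymbol\varepsilon}^\prime_{\rm bd}>0$ from Corollary \ref{bdrepsregcor} one can choose $\eta>0$ (depending only on $\Omega$, $\Lambda:=\sup_n\lambda_n$ and $L:=\sup_n\|Q^{(n)}_{\rm b}\|_{C^{1,1}}$, the latter being finite by the $C^2$ convergence of the boundary data) and $n_0$ such that $\frac1\eta\int_{B_\eta(y)\cap\Omega}|\nabla Q_n|^2\,dx\leq \bar{\boldsymbol\varepsilon}^\prime_{\rm bd}$ for every $y\in\partial\Omega$ and $n\geq n_0$. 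Corollary \ref{bdrepsregcor} then gives a uniform $C^{1,\alpha}$ bound on $Q_n$ in some open $\delta$-neighborhood $U_\delta$ of $\partial\Omega$, hence ${\rm sing}(Q_n)\cap U_\delta=\emptyset$ for $n\geq n_0$. Shrinking $\delta$ if necessary, we may further assume $\{a_1^*,\ldots,a_K^*\}\subset\Omega\setminus\overline{U_{2\delta}}$.

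Next I choose radii $r_j>0$ so small that the closed balls $\overline{B_{r_j}(a_j^*)}$ are pairwise disjoint, contained in $\Omega\setminus\overline{U_\delta}$, and satisfy ${\rm sing}(Q_*)\cap\overline{B_{r_j}(a_j^*)}=\{a_j^*\}$; this is possible because ${\rm sing}(Q_*)$ is finite. Since each $a_j^*$ lies on the $x_3$-axis (Theorem \ref{bdrregthm}), the translation $x\mapsto x-a_j^*$ commutes with the $\bbS^1$-action, and the restrictions $Q_n|_{B_{r_j}(a_j^*)}$, $Q_*|_{B_{r_j}(a_j^*)}$ minimize $\mathcal{E}_{\lambda_n}$, $\mathcal{E}_{\lambda_*}$ in the equivariant class with their own traces (any equivariant competitor on $B_{r_j}(a_j^*)$ extends via $Q_n$ or $Q_*$ outside to an admissible competitor for the global problem). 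Since strong $W^{1,2}(\Omega)$-convergence localizes, Proposition \ref{singgotosing} applies after translation in each ball: picking $\rho_j<r_j$ with ${\rm sing}(Q_*)\cap\partial B_{\rho_j}(a_j^*)=\emptyset$ yields an integer $n_j$ such that for $n\geq n_j$, ${\rm sing}(Q_n)\cap B_{\rho_j}(a_j^*)=\{a_j^n\}$ with $a_j^n\to a_j^*$. Finally, on the compact set $F:=\overline\Omega\setminus\big(U_\delta\cup\bigcup_j B_{\rho_j/2}(a_j^*)\big)$, $Q_*$ is smooth; since by Theorem \ref{intregthm} every singularity of $Q_n$ lies on the $x_3$-axis, I cover the compact set $F\cap\{x_3\text{-axis}\}$ by finitely many balls $B_{\sigma_k}(y_k)\subset\Omega$ on which $Q_*$ is smooth, apply Lemma \ref{persissmoothloc}(i) in each $B_{2\sigma_k}(y_k)$ to obtain $n_k$ such that ${\rm sing}(Q_n)\cap B_{\sigma_k}(y_k)=\emptyset$ for $n\geq n_k$, and conclude ${\rm sing}(Q_n)\cap F=\emptyset$ for $n$ larger than $\max(n_0,\max_j n_j,\max_k n_k)$. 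Combining the three steps yields ${\rm sing}(Q_n)=\{a_1^n,\ldots,a_K^n\}$ with the claimed convergence. The main subtlety is merely bookkeeping: verifying that local minimality in $B_{r_j}(a_j^*)$ is inherited from global minimality (which follows from the comparison argument via extension), and that the $\bbS^1$-equivariance is unaffected by the relevant translations because all $a_j^*$ sit on the symmetry axis.
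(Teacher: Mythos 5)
Your proposal is correct and uses the same essential tools as the paper (boundary $\varepsilon$-regularity, Lemma~\ref{persissmoothloc}, Proposition~\ref{singgotosing}), but the bookkeeping differs in a way worth noting. The paper handles the boundary neighborhood and the interior compact region simultaneously by first proving $Q_n\to Q_*$ in $C^2(K_\delta)$ for $K_\delta:=\{{\rm dist}(\cdot,\partial\Omega)\geq\delta\}\setminus\cup_j B_\delta(a_j^*)$, which allows it to apply Corollary~\ref{corolpersissmooth} to the subdomain $\Omega_\delta:=\Omega\setminus\cup_j B_{2\delta}(a_j^*)$ (treating $Q_n\vert_{\partial\Omega_\delta}$ as new boundary data), and only then invokes Proposition~\ref{singgotosing} in the balls $B_{3\delta}(a_j^*)$. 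You bypass the intermediate $C^2$-convergence step by splitting into three pieces and treating the interior compact region directly with a finite covering of $F\cap\{x_3\text{-axis}\}$ and Lemma~\ref{persissmoothloc}(i); this is slightly more direct since you never need to set up an auxiliary boundary value problem on $\Omega_\delta$ or verify the $C^2$ hypotheses of Corollary~\ref{corolpersissmooth} for the induced traces. Both routes are valid, and the two points you flag at the end (localization of minimality by extension, and compatibility of axial translations with $\bbS^1$-equivariance) are indeed exactly the hypotheses one must verify to invoke Proposition~\ref{singgotosing} in translated balls.
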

			
\begin{proof}
By Theorem \ref{bdrregthm}, $Q_*$ is smooth in $\Omega\setminus{\rm sing}(Q_*)$ and ${\rm sing}(Q_*)$ is a finite subset of $\Omega\cap\{x_3\text{-axis}\}$, i.e., ${\rm sing}(Q_*)=\{a^*_1,\ldots,a_K^*\}\subset \Omega\cap\{x_3\text{-axis}\}$. Let us fix  $\delta>0$ such that $B_{3\delta}(a_i^*)\cap B_{3\delta}(a_j^*)=\emptyset$ if $i\not=j$, and ${\rm dist}(a^*_j,\partial\Omega)\geq 3\delta$. We  set $K_\delta:=\{x\in\Omega: {\rm dist}(x,\partial\Omega)\geq\delta\}\setminus\cup_jB_{\delta}(a^*_j)$, and we claim that $Q_n \to Q_*$ in 
$C^2(K_\delta)$. Indeed, by smoothness of $Q_*$ away from  ${\rm sing}(Q_*)$, we can find a radius $r\in(0,\delta/2)$ such that 
$$\frac{1}{r}\int_{B_r(x_0)}|\nabla Q_*|^2\,dx\leq \frac{\boldsymbol{\varepsilon}_{\rm in}}{8} \quad\text{for every $x_0\in K_\delta$}\,,$$
where the universal constant $\boldsymbol{\varepsilon}_{\rm in}>0$ is provided by  Proposition \ref{intepsregprop}. 
Choosing $r$ smaller if necessary (depending only on $\Lambda:=\sup_n\lambda_n<\infty$), we may assume that the conclusion of Proposition \ref{intepsregprop} holds for every $\lambda_n$. 
Then we consider a finite covering $B_{r/16}(y_1),\ldots, B_{r/16}(y_J)$ of $K_\delta$. Since $Q_n\to Q_*$ strongly in $W^{1,2}(\Omega)$, we have for $n$ large enough, 
$$\frac{1}{r}\int_{B_r(y_j)}|\nabla Q_n|^2\,dx\leq \frac{\boldsymbol{\varepsilon}_{\rm in}}{4} \quad\text{for every $j=1,\ldots,J$}\,. $$
By Proposition~\ref{intepsregprop}, for $n$ large enough, $Q_n$ is  smooth in each  $B_{r/16}(y_j)$ and $\|Q_n\|_{C^3(B_{r/16}(y_j))}\leq C_r$ for some constant $C_r>0$. Therefore $Q_n$ remains bounded in $C^3(K_\delta)$ for $n$ large enough. From the $W^{1,2}$-convergence of $Q_n$ towards $Q_*$ and the Arzel\`a-Ascoli Theorem, we deduce that $Q_n\to Q_*$ in $C^2(K_\delta)$. 

Now we set $\Omega_\delta:=\Omega\setminus\cup_jB_{2\delta}(a^*_j)$ which is a bounded and axisymmetric open set with boundary of class~$C^3$. By our discussion above and the assumption on $Q^{(n)}_{\rm b}$, the restriction of $Q_n$ to $\partial\Omega_\delta$ converges in the $C^2$-topology to the restriction of $Q_*$ to $\partial\Omega_\delta$. Applying Corollary \ref{corolpersissmooth} in $\Omega_\delta$, we infer that ${\rm sing}(Q_n)\cap \Omega_\delta=\emptyset$ for $n$ large enough. Then we can apply Proposition \ref{singgotosing} in each ball $B_{3\delta}(a^*_j)$ with $\rho=2\delta$. It shows that for $n$ large enough, ${\rm sing}(Q_n)\cap\overline B_{2\delta}(a^*_j)=\{a^n_j\}$ for some point $a^n_j\to a^*_j$ as $n\to\infty$. 
\end{proof}


	\subsection{Coexistence results in a ball}

In this subsection, we take advantage of the results above to study the space $(\text{boundary condition})\times(\text{$\lambda$-parameter})$. We are interested in the nature of the sets of data leading to smooth or/and singular solutions. To motivate this question, we recall the results in~\cite{DMP2} showing the existence for $\lambda\geq 0$ arbitrary of  boundary conditions $Q^{\rm smooth}_{\rm b}$ and $Q^{\rm sing}_{\rm b}$ in $C^\infty_{\rm sym}(\partial B_1;\bbS^4)$ such that any minimizer of $\mathcal{E}_\lambda$ over $\mathcal{A}^{\rm sym}_{Q^{\rm smooth}_{\rm b}}(B_1)$, resp. over $\mathcal{A}^{\rm sym}_{Q^{\rm sing}_{\rm b}}(B_1)$, is smooth, resp. singular. 
To apply the results of the previous subsection, the  topology for the space of boundary conditions we shall working with is the $C^{2,\alpha}$-topology for some $\alpha\in(0,1)$. 

Given $\alpha\in(0,1)$, we consider the sets 	
$$BD^{\rm smooth}_\alpha:=\Bigg\{(Q_{\rm b},\lambda) \in C_{\rm sym}^{2,\alpha}(\partial B_1;\mathbb{S}^4)\times [0,\infty): {\rm sing}(Q_\lambda)=\emptyset\text{ for every }Q_\lambda\in \mathop{\rm argmin}_{\mathcal{A}^{\rm sym}_{Q_{\rm b}}(B_1)}\mathcal{E}_\lambda \Bigg\} \,,$$		 		
$$BD^{\rm sing}_\alpha:=\Bigg\{(Q_{\rm b},\lambda) \in C_{\rm sym}^{2,\alpha}(\partial B_1;\mathbb{S}^4)\times [0,\infty): {\rm sing}(Q_\lambda)\not=\emptyset\text{ for every }Q_\lambda\in \mathop{\rm argmin}_{\mathcal{A}^{\rm sym}_{Q_{\rm b}}(B_1)}\mathcal{E}_\lambda \Bigg\} \,,$$			
and		
$$BD^{\rm coexist}_\alpha:=\Big(C_{\rm sym}^{2,\alpha}(\partial B_1;\mathbb{S}^4)\times [0,\infty)\Big)\setminus \Big(BD^{\rm smooth}_\alpha\cup BD^{\rm sing}_\alpha\Big)\,.$$
As already mentioned,  $BD^{\rm smooth}_\alpha\not=\emptyset$ and 	$BD^{\rm sing}_\alpha\not=\emptyset$ by \cite[Theorem 1.2 \& Theorem 1.3]{DMP2}, and more precisely, 
$$BD^{\rm smooth}_\alpha\cap\big(C_{\rm sym}^{2,\alpha}(\partial B_1;\mathbb{S}^4)\times\{\lambda\}\big)\neq\emptyset \text{ and } BD^{\rm sing}_\alpha\cap\big(C_{\rm sym}^{2,\alpha}(\partial B_1;\mathbb{S}^4)\times\{\lambda\}\big)\neq\emptyset \text{ for every $\lambda\geq0$}\,.$$

The main result of this subsection is the following theorem whose proof is postponed to the end of the subsection. 	
	
\begin{theorem}\label{decompobdspacethm}
Let $\alpha\in(0,1)$. The (disjoint) sets $BD^{\rm smooth}_\alpha$ and $BD^{\rm sing}_\alpha$ are open in $C^{2,\alpha}_{\rm sym}(\partial B_1;\mathbb{S}^4)\times [0,\infty)$, and $BD^{\rm coexist}_\alpha$ coincides with their common boundary, i.e., 
$$\partial BD^{\rm smooth}_\alpha= BD^{\rm coexist}_\alpha=\partial BD^{\rm sing}_\alpha\,.$$ 
\end{theorem}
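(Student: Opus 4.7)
The argument splits into three steps: openness of $BD^{\rm smooth}_\alpha$ and $BD^{\rm sing}_\alpha$, the easy inclusion of each boundary into $BD^{\rm coexist}_\alpha$, and the reverse inclusion via a unique-continuation argument. For \emph{openness} of $BD^{\rm smooth}_\alpha$, assume by contradiction that $(Q^{(n)}_{\rm b},\lambda_n)\to(Q^*_{\rm b},\lambda_*)\in BD^{\rm smooth}_\alpha$ in $C^{2,\alpha}_{\rm sym}(\partial B_1;\mathbb{S}^4)\times[0,\infty)$ while each $(Q^{(n)}_{\rm b},\lambda_n)$ admits a singular minimizer $Q_n$. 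The strong $W^{1,2}$-compactness of minimizers as the data vary in $C^{2,\alpha}\times[0,\infty)$ (the global version of \cite[Thm~5.1]{DMP2} invoked in the proof of Lemma~\ref{infdistsing}, upgraded up to $\partial B_1$ via the boundary regularity of Theorem~\ref{bdrregthm}) yields a subsequence $Q_n\to Q_*$ strongly in $W^{1,2}(B_1)$, with $Q_*$ minimising $\mathcal{E}_{\lambda_*}$ over $\mathcal{A}^{\rm sym}_{Q^*_{\rm b}}(B_1)$; by assumption $Q_*$ is smooth, so Corollary~\ref{corolpersissmooth}(i) forces $Q_n$ to be smooth for $n$ large, a contradiction. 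The openness of $BD^{\rm sing}_\alpha$ is symmetric, invoking Corollary~\ref{corolpersissmooth}(ii). Once both sets are open and disjoint, the inclusion $\partial BD^{\rm smooth}_\alpha\subseteq BD^{\rm coexist}_\alpha$ is purely topological: a boundary point lies in the complement of $BD^{\rm smooth}_\alpha$, hence in $BD^{\rm sing}_\alpha\cup BD^{\rm coexist}_\alpha$, and cannot sit in the open $BD^{\rm sing}_\alpha$ as a limit of $BD^{\rm smooth}_\alpha$-points; likewise for $\partial BD^{\rm sing}_\alpha$.

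The \emph{reverse inclusion} $BD^{\rm coexist}_\alpha\subseteq\partial BD^{\rm smooth}_\alpha\cap\partial BD^{\rm sing}_\alpha$ is where unique continuation enters. Fix $(Q^*_{\rm b},\lambda_*)\in BD^{\rm coexist}_\alpha$ with a smooth minimizer $Q^s$ and a singular minimizer $Q^\sigma$, both of energy $E_*$. By Theorem~\ref{bdrregthm} both are $C^{1,\alpha}$ up to $\partial B_1$, so classical Cauchy data are defined there. An Aronszajn-type unique continuation for the quasilinear elliptic system \eqref{MasterEq}, applied in a boundary strip where $Q^\sigma$ is smooth and then propagated by the interior analyticity in Theorem~\ref{intregthm} through the connected set $B_1\setminus{\rm sing}(Q^\sigma)$, forces $\partial_\nu Q^s\not\equiv\partial_\nu Q^\sigma$ on $\partial B_1$; otherwise $Q^s\equiv Q^\sigma$ on $B_1\setminus{\rm sing}(Q^\sigma)$, contradicting smoothness of $Q^s$ at any singular point of $Q^\sigma$. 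The same dichotomy holds for every pair of smooth/singular minimizers at $(Q^*_{\rm b},\lambda_*)$. The plan is then to choose an equivariant perturbation $\eta\in C^{2,\alpha}_{\rm sym}(\partial B_1;\mathcal{S}_0)$ tangent to $\mathbb{S}^4$ along $Q^*_{\rm b}$, together with a real parameter $\mu$, satisfying
\[
\int_{\partial B_1}\!\!(\partial_\nu Q^s-\partial_\nu Q^{\sigma'})\cdot\eta\,d\sigma+\mu\Big(\int_{B_1}\!\!W(Q^s)\,dx-\int_{B_1}\!\!W(Q^{\sigma'})\,dx\Big)<0
\]
for every singular minimizer $Q^{\sigma'}$ at $(Q^*_{\rm b},\lambda_*)$. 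The first-variation formula (using that $Q^s$ and $Q^{\sigma'}$ are critical in the interior) then yields $\mathcal{E}_{\lambda_*+t\mu}(\widetilde Q^s_t)<\mathcal{E}_{\lambda_*+t\mu}(\widetilde Q^{\sigma'}_t)+O(t^2)$, uniformly in $\sigma'$, where $\widetilde Q_t$ denotes any smooth equivariant extension of the projected datum. Combined with compactness of minimizers and persistence of singularities (Corollary~\ref{corosinggotosingglobal})—the latter applied to a putative singular minimizer $Q_t$ of the perturbed problem, which would converge strongly to some $Q^{\sigma'}\in\mathcal{M}^\sigma$ and hence contradict the strict first-order comparison upon expanding $\mathcal{E}_{\lambda_*+t\mu}(Q_t)$ around it—this shows that for $t>0$ small every minimizer of the perturbed problem is smooth, producing points of $BD^{\rm smooth}_\alpha$ arbitrarily close to $(Q^*_{\rm b},\lambda_*)$; reversing the sign of $t$ produces points of $BD^{\rm sing}_\alpha$.

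The \emph{main obstacle} is the uniform existence of a single $(\eta,\mu)$ effective against the whole family $\mathcal{M}^\sigma$ of singular minimizers at $(Q^*_{\rm b},\lambda_*)$. I would address it as follows: $\mathcal{M}^\sigma$ is compact in $W^{1,2}$ (again by \cite[Thm~5.1]{DMP2}); a refinement of Lemma~\ref{infdistsing} shows that the singular points of its elements remain in a fixed compact subset of $B_1$ bounded away from $\partial B_1$; and the boundary $\varepsilon$-regularity of Corollary~\ref{bdrepsregcor} then upgrades the $W^{1,2}$-convergence to $C^{1,\alpha}$-convergence of the normal-derivative trace, making the map $\mathcal{M}^\sigma\ni Q^{\sigma'}\mapsto\partial_\nu Q^{\sigma'}\in C^{1,\alpha}(\partial B_1)$ continuous with compact image. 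Unique continuation provides a uniform gap $\|\partial_\nu Q^s-\partial_\nu Q^{\sigma'}\|_{C^{1,\alpha}}\geq c>0$ on this compact set, after which a Hahn–Banach separation in $C^{1,\alpha}(\partial B_1)\times\R$—where the scalar freedom in $\mu$ is crucial to rule out $0$ lying in the closed convex hull of the relevant compact set—produces the desired $(\eta,\mu)$ and closes the proof.
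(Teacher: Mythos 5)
Your Steps 1--2 (openness of $BD^{\rm smooth}_\alpha$ and $BD^{\rm sing}_\alpha$ via compactness and Corollary~\ref{corolpersissmooth}) and the topological inclusion $\partial BD^{\rm smooth}_\alpha\cup\partial BD^{\rm sing}_\alpha\subseteq BD^{\rm coexist}_\alpha$ match the paper. The divergence is entirely in the reverse inclusion $BD^{\rm coexist}_\alpha\subseteq\partial BD^{\rm smooth}_\alpha\cap\partial BD^{\rm sing}_\alpha$, and there your proposed route has several gaps that I don't think can be closed as sketched.

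\textbf{The boundary Cauchy unique continuation does not come for free.} You want to conclude $\partial_\nu Q^{\rm s}\not\equiv\partial_\nu Q^\sigma$ on $\partial B_1$ by running unique continuation from boundary Cauchy data. But with data only in $C^{2,\alpha}$, Theorem~\ref{bdrregthm}(i) gives $C^{2,\alpha}$ regularity up to $\partial B_1$, not analyticity up to $\partial B_1$. So there is no analytic continuation starting \emph{at} the boundary; you would need a Carleman-type Cauchy uniqueness theorem for the constrained elliptic system \eqref{MasterEq} valid in a half-ball with merely H\"older coefficients, which is far from routine and is not available among the paper's tools. The interior analyticity of Theorem~\ref{intregthm} only propagates information from one interior open set to another, not from a $(n-1)$-dimensional boundary piece inward.

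\textbf{The separation/first-variation step is underdetermined.} Even granting the gap $\|\partial_\nu Q^{\rm s}-\partial_\nu Q^{\sigma'}\|\geq c>0$ over the compact family $\mathcal{M}^\sigma$, the Hahn--Banach argument needs $\bigl(\partial_\nu Q^{\rm s},\int_{B_1}W(Q^{\rm s})\bigr)$ to lie outside the \emph{closed convex hull} of $\{(\partial_\nu Q^{\sigma'},\int_{B_1}W(Q^{\sigma'})):Q^{\sigma'}\in\mathcal{M}^\sigma\}$, not merely outside the set itself; a uniform distance to a compact set does not give this. Moreover the first-order comparison $\mathcal{E}_{\lambda_*+t\mu}(\widetilde Q^{\rm s}_t)<\mathcal{E}_{\lambda_*+t\mu}(\widetilde Q^{\sigma'}_t)+O(t^2)$ concerns specific competitors built from the critical points at $t=0$, not the minima of the perturbed problems; to conclude that the perturbed minimum is attained only by smooth maps one needs a Danskin-type expansion of $t\mapsto\min_{\mathcal{A}^{\rm sym}}\mathcal{E}_{\lambda_*+t\mu}$ distinguishing the "smooth branch" from the "singular branch," and your sketch does not supply a proof of the uniformity of the $O(t^2)$ over $\mathcal{M}^\sigma$ nor of the stability of the comparison under the $W^{1,2}$-small correction from $Q^{\sigma'}$ to the true perturbed minimizer.

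\textbf{The paper's argument sidesteps all of this.} Given a coexistence datum $(Q^*_{\rm b},\lambda_*)$ with a smooth minimizer $Q^{\rm t}_*$ and a singular one $Q^{\rm s}_*$, simply restrict to $B_{\rho_n}$ with $\rho_n\uparrow 1$ and rescale: the maps $x\mapsto Q^{\rm t}_*(\rho_n x)$ and $x\mapsto Q^{\rm s}_*(\rho_n x)$ solve the minimization on $B_1$ with $\lambda$ replaced by $\rho_n^2\lambda_*$ and boundary data $Q^{\rm t}_*(\rho_n\cdot)$, $Q^{\rm s}_*(\rho_n\cdot)$ respectively, and they are the \emph{unique} minimizers by Lemma~\ref{uniqstatinside}. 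The crucial point is that Lemma~\ref{uniqstatinside} only needs unique continuation across an \emph{interior} annulus $B_{\rho'}\setminus\overline{B_\rho}$, where both competitors are real analytic by Theorem~\ref{intregthm}; no boundary Cauchy problem appears because shrinking the domain moves the "boundary" into the interior. Uniqueness of the rescaled minimizers turns the restricted data into points of $BD^{\rm smooth}_\alpha$ and $BD^{\rm sing}_\alpha$ respectively, and both converge to $(Q^*_{\rm b},\lambda_*)$ in $C^{2,\alpha}\times[0,\infty)$, which is exactly the claim. I would replace your Steps 4--5 with this rescaling argument.
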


As a direct consequence of Theorem~\ref{decompobdspacethm}, we obtain the following corollary proving immediately claims (i), (ii), and (iii) of Theorem~\ref{main-coexistence}. With the aid of Remark~\ref{rmk:curve-coex} below, also the last claim of Theorem~\ref{main-coexistence}, and hence its full proof, follows at once from the corollary. 

\begin{corollary}\label{corocoexist}
Let $\alpha\in(0,1)$, $\lambda>0$, and $\Gamma:[0,1]\to C^{2,\alpha}_{\rm sym}(\partial B_1;\mathbb{S}^4)$  a continuous curve such that $(\Gamma(0),\lambda)\in BD^{\rm sing}_\alpha$ and $(\Gamma(1),\lambda)\in BD^{\rm smooth}_\alpha$.  There exist $0<t_1\leq t_2<1$ such that  
\begin{enumerate}
\item[(i)] $(\Gamma(t),\lambda)\in BD^{\rm sing}_\alpha$  for every $0\leq t<t_1$; 
\vskip5pt
\item[(ii)] $(\Gamma(t),\lambda)\in BD^{\rm smooth}_\alpha$  for every $t_2<t\leq 1$; 
\vskip5pt
\item[(iii)] $(\Gamma(t_1),\lambda), (\Gamma(t_2),\lambda)\in BD^{\rm coexist}_\alpha$.
\end{enumerate} 
\end{corollary}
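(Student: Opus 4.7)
Given Theorem~\ref{decompobdspacethm}, the corollary is a purely topological consequence of the decomposition of the data space into the two open sets $BD^{\rm smooth}_\alpha$, $BD^{\rm sing}_\alpha$ and their common boundary $BD^{\rm coexist}_\alpha$. My plan is to pull back this decomposition along the curve $\Gamma$ and argue by connectedness on $[0,1]$.

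Concretely, I would set
$$A := \{t \in [0,1] : (\Gamma(t),\lambda) \in BD^{\rm sing}_\alpha\}, \qquad B := \{t \in [0,1] : (\Gamma(t),\lambda) \in BD^{\rm smooth}_\alpha\}.$$
Since the map $t \mapsto (\Gamma(t),\lambda)$ is continuous from $[0,1]$ into $C^{2,\alpha}_{\rm sym}(\partial B_1;\mathbb{S}^4)\times[0,\infty)$, Theorem~\ref{decompobdspacethm} ensures that $A$ and $B$ are open in $[0,1]$, pairwise disjoint, with $0\in A$ and $1\in B$. I would then define
$$t_1 := \sup\bigl\{t \in [0,1] : [0,t] \subset A\bigr\}, \qquad t_2 := \inf\bigl\{t \in [0,1] : [t,1] \subset B\bigr\}.$$
Openness of $A$ at $0$ and of $B$ at $1$ gives immediately $t_1>0$ and $t_2<1$. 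Moreover, by definition of the supremum (resp.\ infimum), any $t<t_1$ lies in some $[0,t']\subset A$ with $t'\geq t$, so $[0,t_1)\subset A$; symmetrically $(t_2,1]\subset B$. These are exactly claims (i) and~(ii).

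For $t_1\leq t_2$: were $t_1>t_2$, any $s\in(t_2,t_1)$ would simultaneously lie in $A$ and in $B$, contradicting $A\cap B=\emptyset$. To establish (iii), I would show that $t_1\notin A\cup B$ and analogously for $t_2$. If $t_1\in A$, then openness yields $\delta>0$ with $(t_1-\delta,t_1+\delta)\cap[0,1]\subset A$; combined with $[0,t_1)\subset A$ this gives $[0,t_1+\delta/2]\subset A$, which (using $t_1\leq t_2<1$) contradicts the maximality of $t_1$. If instead $t_1\in B$, then an open neighborhood of $t_1$ in $B$ would meet $[0,t_1)\subset A$ (using $t_1>0$), again contradicting $A\cap B=\emptyset$. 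Hence $t_1\in BD^{\rm coexist}_\alpha$, and the symmetric argument applies to~$t_2$.

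The substantive input is entirely contained in Theorem~\ref{decompobdspacethm} (openness of the two sets and the structure of their common boundary); what remains is a standard one-dimensional connectedness argument on a continuous curve crossing between two disjoint open sets. For this reason, I do not expect any genuine obstacle beyond what is already encoded in Theorem~\ref{decompobdspacethm}.
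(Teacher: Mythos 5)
Your proof is correct and follows essentially the same route as the paper: both define $t_1$ as the supremum of initial parameters whose whole segment stays in the singular set and $t_2$ as the corresponding infimum for the smooth set, then use the openness from Theorem~\ref{decompobdspacethm} together with continuity of $t\mapsto(\Gamma(t),\lambda)$ to get (i), (ii), and membership of the endpoints in $BD^{\rm coexist}_\alpha$. Your explicit verification that $t_1,t_2\notin A\cup B$ is just a slightly more detailed rendering of the paper's one-line appeal to $\partial BD^{\rm sing}_\alpha=BD^{\rm coexist}_\alpha=\partial BD^{\rm smooth}_\alpha$.
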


\begin{proof}
Consider the continuous curve $\widehat \Gamma:[0,1]\to  C^{2,\alpha}_{\rm sym}(\partial B_1;\mathbb{S}^4)\times [0,\infty)$ given by $\widehat \Gamma(t):=(\Gamma(t),\lambda)$. Then $\widehat \Gamma(0)\in  BD^{\rm sing}_\alpha$ and $\widehat \Gamma(1)\in  BD^{\rm smooth}_\alpha$. Consider
$$t_1:=\sup\big\{t\in[0,1]: \widehat \Gamma(s)\in BD^{\rm sing}_\alpha\text{ for every $0\leq s\leq t$}\}\,. $$
By Theorem \ref{decompobdspacethm} and the continuity of $\widehat\Gamma$, we have $t_1\in(0,1)$ and $\widehat \Gamma(t_1)\in \partial BD^{\rm sing}_\alpha= BD^{\rm coexist}_\alpha$, so that (i) holds. 

Then we consider
$$t_2:=\inf\big\{t\in[0,1]: \widehat \Gamma(s)\in BD^{\rm smooth}_\alpha\text{ for every $t\leq s\leq 1$}\}\,. $$
Clearly $t_1\leq t_2$, and as above, Theorem \ref{decompobdspacethm} and the continuity of $\widehat\Gamma$ imply $t_2<1$ and $\widehat \Gamma(t_2)\in \partial BD^{\rm smooth}_\alpha= BD^{\rm coexist}_\alpha$ proving (ii), and completing the proof.
\end{proof}

\begin{remark}\label{rmk:curve-coex}
	As already alluded in the Introduction, there exists at least one curve $\Gamma$ with the properties required by Corollary~\ref{corocoexist}. This is obtained by concatenating the curves built in (the proofs of) \cite[Theorem~1.2 and Theorem~1.3]{DMP2}. Thus, the corollary shows in particular that $BD^{\rm coexist}_{\alpha}$ is not empty, clearly implying the last claim of Theorem~\ref{main-coexistence}, and concluding its proof.
\end{remark}

The proof of Theorem \ref{decompobdspacethm} rests on our regularity results together with the unique  continuation property for real analytic maps. This tool leads to the following uniqueness statement.

\begin{lemma}\label{uniqstatinside}
Let $Q_\lambda\in W^{1,2}_{\rm sym}(B_1;\mathbb{S}^4)$  be a minimizer of $\mathcal{E}_\lambda(\cdot,B_1)$ among all $Q\in W^{1,2}_{\rm sym}(B_1;\mathbb{S}^4)$ satisfying $Q=Q_\lambda$ on $\partial B_1$. For every radius $\rho\in(0,1)$ such that  ${\rm sing}(Q_\lambda)\cap\partial B_\rho=\emptyset$, the restriction of $Q_\lambda$ to $B_\rho$ is the unique minimizer of $\mathcal{E}_\lambda$ over the class $\mathcal{A}^{\rm sym}_{Q_{\lambda}}(B_\rho)$.
\end{lemma}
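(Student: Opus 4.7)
Let $\widetilde{Q}_\lambda\in\mathcal{A}^{\rm sym}_{Q_\lambda}(B_\rho)$ be any minimizer of $\mathcal{E}_\lambda$ over this class. The strategy is the classical gluing plus unique continuation argument. Define the map
\[
\widehat{Q}(x):=
\begin{cases}
\widetilde{Q}_\lambda(x) & \text{if } x\in B_\rho, \\
Q_\lambda(x) & \text{if } x\in B_1\setminus B_\rho.
\end{cases}
\]
Since both pieces take values in $\mathbb{S}^4$, are $\mathbb{S}^1$-equivariant, and share the same trace $Q_\lambda|_{\partial B_\rho}$ on the common interface (which is well-defined as $\widetilde Q_\lambda \in \mathcal{A}^{\rm sym}_{Q_\lambda}(B_\rho)$), a standard gluing argument gives $\widehat{Q}\in W^{1,2}_{\rm sym}(B_1;\mathbb{S}^4)$ with $\widehat Q=Q_\lambda$ on $\partial B_1$. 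By additivity of the energy and the two minimality properties,
\[
\mathcal{E}_\lambda(\widehat{Q},B_1)=\mathcal{E}_\lambda(\widetilde{Q}_\lambda,B_\rho)+\mathcal{E}_\lambda(Q_\lambda,B_1\setminus B_\rho)=\mathcal{E}_\lambda(Q_\lambda,B_\rho)+\mathcal{E}_\lambda(Q_\lambda,B_1\setminus B_\rho)=\mathcal{E}_\lambda(Q_\lambda,B_1),
\]
so $\widehat{Q}$ is itself a minimizer of $\mathcal{E}_\lambda$ over the equivariant class with boundary datum $Q_\lambda|_{\partial B_1}$.

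Next I invoke the interior regularity theorem (Theorem~\ref{intregthm}) applied to both $Q_\lambda$ and $\widehat Q$: each is real analytic on the open set obtained from $B_1$ by removing a locally finite subset of $B_1\cap\{x_3\text{-axis}\}$, and each weakly solves the Euler--Lagrange system \eqref{MasterEq} on $B_1$ thanks to Proposition~\ref{prop:symmetric-criticality}. Set $\Sigma:=\mathrm{sing}(Q_\lambda)\cup\mathrm{sing}(\widehat Q)$, a locally finite subset of the vertical axis. Since removing locally finitely many points from the connected open set $B_1\subset\mathbb{R}^3$ preserves connectedness, the set $U:=B_1\setminus\Sigma$ is open and connected, and both $Q_\lambda$ and $\widehat Q$ are real analytic on $U$.

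By hypothesis, $\mathrm{sing}(Q_\lambda)\cap\partial B_\rho=\emptyset$, so there exists $\delta>0$ such that the open annulus $A_\delta:=B_{\rho+\delta}\setminus\overline{B_{\rho-\delta}}$ avoids $\mathrm{sing}(Q_\lambda)$. By construction $\widehat Q\equiv Q_\lambda$ on the nonempty open set $A_\delta\setminus\overline{B_\rho}\subset U$. Since $Q_\lambda$ and $\widehat Q$ are real analytic on the connected open set $U$ and agree on a nonempty open subset, the identity principle for real analytic maps forces $\widehat Q\equiv Q_\lambda$ throughout $U$, and thus $\widetilde{Q}_\lambda=Q_\lambda$ almost everywhere in $B_\rho$, which proves the claimed uniqueness.

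The only delicate point is the connectedness of $U$, which is the reason the statement requires $\mathrm{sing}(Q_\lambda)\cap\partial B_\rho=\emptyset$: if singular points were allowed on $\partial B_\rho$, the annular ``bridge'' $A_\delta\setminus\overline{B_\rho}$ used to transmit the coincidence of $\widehat Q$ and $Q_\lambda$ from the exterior into the interior might fail to lie in $U$, and the analytic continuation argument would break down. Granted the geometric hypothesis, this obstacle disappears and the proof reduces to the two standard ingredients recalled above.
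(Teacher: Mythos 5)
Your proof is correct and follows essentially the same route as the paper's: glue the competing minimizer to $Q_\lambda$ outside $B_\rho$, use the sandwich of minimality properties to see the glued map is itself a minimizer in $B_1$, then invoke the interior regularity theorem to get real analyticity off a locally finite subset of the axis and conclude by unique continuation on the connected set $B_1\setminus\Sigma$. The only difference is cosmetic (the paper writes the energy comparison as a chain of inequalities and fixes an annulus $B_{\rho'}\setminus\overline{B_\rho}$ where $Q_\lambda$ is analytic, playing the role of your $A_\delta\setminus\overline{B_\rho}$), so no substantive changes are needed.
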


\begin{proof}
By Theorem \ref{intregthm} and since ${\rm sing}(Q_\lambda)\cap\partial B_\rho=\emptyset$, $Q_\lambda$ is (real) analytic in a neighborhood of $\partial B_\rho$. We fix a further radius $\rho^\prime\in(\rho,1)$ such that $Q_\lambda$ is analytic 
in the open annulus $A:=B_{\rho^\prime}\setminus \overline B_\rho$. 
Now, let $Q_\rho$ be a minimizer of $\mathcal{E}_\lambda$ over the class $\mathcal{A}^{\rm sym}_{Q_{\lambda}}(B_\rho)$. We consider the comparison map
$$\bar Q_\lambda(x):=\begin{cases} 
Q_\rho(x) & \text{if $x\in B_\rho$}\\
Q_\lambda(x) & \text{if $x\in B_1\setminus B_\rho$}\,,
\end{cases}$$
which belongs to $W^{1,2}_{\rm sym}(B_1;\mathbb{S}^4)$ and agrees with $Q_\lambda$ on $\partial B_1$. Hence, 
\begin{multline*}
\mathcal{E}_\lambda(Q_\rho,B_\rho)+\mathcal{E}_\lambda(Q_\lambda,B_1\setminus B_\rho) \leq \mathcal{E}_\lambda(Q_\lambda,B_\rho)+\mathcal{E}_\lambda(Q_\lambda,B_1\setminus B_\rho)\\
=\mathcal{E}_\lambda(Q_\lambda,B_1)
\leq \mathcal{E}_\lambda(\bar Q_\lambda,B_1)= \mathcal{E}_\lambda(Q_\rho,B_\rho)+\mathcal{E}_\lambda(Q_\lambda,B_1\setminus B_\rho)\,.
\end{multline*}
Thus $\mathcal{E}_\lambda(Q_\lambda,B_1)= \mathcal{E}_\lambda(\bar Q_\lambda,B_1)$ which in turn implies that $\bar Q_\lambda$ minimizes $\mathcal{E}_\lambda(\cdot,B_1)$ among all $Q\in W^{1,2}_{\rm sym}(B_1;\mathbb{S}^4)$ satisfying $Q=Q_\lambda$ on $\partial B_1$. Once again, by Theorem \ref{intregthm}, $\bar Q_\lambda$ is real analytic in $B_1\setminus {\rm sing}(\bar Q_\lambda)$ with ${\rm sing}(\bar Q_\lambda)$ a locally finite subset in $B_1$. As a consequence, the map $\bar Q_\lambda-Q_\lambda$ is analytic in the open set $B_1 \setminus \Sigma$ containing $A$, where $\Sigma = {\rm sing}(Q_\lambda)\cup {\rm sing}(\bar Q_\lambda)$ is a locally finite set in $B_1$, and $\bar Q_\lambda-Q_\lambda\equiv 0$ in $A$. 
Thus, by unique continuation for real analytic maps, $\bar Q_\lambda=Q_\lambda$ in $B_1 \setminus \Sigma$, which shows that $Q_\rho=Q_\lambda$ in $B_\rho$. 
\end{proof}

In order to apply the results of the previous subsections, we establish now the following strong $W^{1,2}$-compactness property of minimizers. 

\begin{lemma}\label{compacteasy}
Let  $\alpha\in(0,1)$ and $\{(Q^{(n)}_{\rm b},\lambda_n)\}$ be a converging sequence in $C^{2,\alpha}_{\rm sym}(\partial B_1;\mathbb{S}^4)\times[0,\infty)$, and denote by $(Q^*_{\rm b},\lambda_*)$ its limit. Every sequence $\{Q_n\}$ such that $Q_n$ minimizes $\mathcal{E}_{\lambda_n}$ over $\mathcal{A}^{\rm sym}_{Q^{(n)}_{\rm b}}(B_1)$ admits a subsequence strongly converging in $W^{1,2}(B_1)$ to some  $Q_*$ minimizing $\mathcal{E}_{\lambda_*}$ over $\mathcal{A}^{\rm sym}_{Q^*_{\rm b}}(B_1)$. 
\end{lemma}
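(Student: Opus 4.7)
My plan is a direct-method compactness argument, whose only non-routine ingredient is the construction of equivariant admissible competitors that transfer the upper energy bound from the limit problem to the approximating ones.

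First, I will produce a uniform $W^{1,2}$-bound. Using Proposition~\ref{prop:symmetric-criticality}(ii), I fix one minimizer $Q^* \in \mathcal{A}^{\rm sym}_{Q^*_{\rm b}}(B_1)$ of $\mathcal{E}_{\lambda_*}$. To modify $Q^*$ into a competitor with the correct trace $Q^{(n)}_{\rm b}$, I pick a cut-off $\chi \in C^\infty([0,1])$ with $\chi \equiv 0$ on $[0,1/2]$ and $\chi(1) = 1$, set
\[
    \hat Q_n(x) := Q^*(x) + \chi(|x|)\bigl(Q^{(n)}_{\rm b}(x/|x|) - Q^*_{\rm b}(x/|x|)\bigr) \quad \text{for } x \in B_1 \setminus \{0\},
\]
and define $\tilde Q_n := \hat Q_n/|\hat Q_n|$. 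The $C^0$-smallness of $Q^{(n)}_{\rm b} - Q^*_{\rm b}$ ensures $|\hat Q_n| \geq 1/2$ for $n$ large, so $\tilde Q_n$ is well-defined, lies in $W^{1,2}_{\rm sym}(B_1;\bbS^4)$ (equivariance being inherited from $Q^*$ and the boundary data), and has trace $Q^{(n)}_{\rm b}$ on $\partial B_1$. The $C^{2,\alpha}$-convergence of the boundary data yields $\tilde Q_n \to Q^*$ strongly in $W^{1,2}(B_1)$, whence $\mathcal{E}_{\lambda_n}(\tilde Q_n) \to \mathcal{E}_{\lambda_*}(Q^*)$, and the minimality of $Q_n$ gives
\[
    \limsup_{n\to\infty} \mathcal{E}_{\lambda_n}(Q_n) \leq \mathcal{E}_{\lambda_*}(Q^*).
\]

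Second, I extract a (non-relabelled) subsequence with $Q_n \rightharpoonup Q_*$ weakly in $W^{1,2}(B_1)$ and a.e. Then $|Q_*|=1$ a.e., $\bbS^1$-equivariance passes to the a.e.\ limit, and uniform convergence of the boundary data together with trace continuity forces $Q_*|_{\partial B_1} = Q^*_{\rm b}$, so $Q_* \in \mathcal{A}^{\rm sym}_{Q^*_{\rm b}}(B_1)$. Third, I identify $Q_*$ as a minimizer: since $W$ is bounded and continuous on $\bbS^4$, dominated convergence gives $\int W(Q_n) \to \int W(Q_*)$, and with $\lambda_n \to \lambda_*$ the potential terms converge. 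Weak lower semicontinuity of the Dirichlet integral combined with the upper bound above then yields
\[
    \mathcal{E}_{\lambda_*}(Q_*) \leq \liminf_n \mathcal{E}_{\lambda_n}(Q_n) \leq \limsup_n \mathcal{E}_{\lambda_n}(Q_n) \leq \mathcal{E}_{\lambda_*}(Q^*),
\]
so all inequalities are equalities and $Q_*$ is a minimizer. Convergence of the full energies together with convergence of the potential parts forces $\int |\nabla Q_n|^2 \to \int |\nabla Q_*|^2$, which combined with weak $W^{1,2}$-convergence upgrades to strong $W^{1,2}$-convergence of $Q_n$ to $Q_*$.

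The main (mild) obstacle is the construction of $\tilde Q_n$: it must simultaneously satisfy the $\bbS^4$-constraint, carry the \emph{moving} boundary trace $Q^{(n)}_{\rm b}$, and remain $\bbS^1$-equivariant. The radial interpolation followed by pointwise normalization handles all three requirements at once, with the normalization being well-defined by $C^0$-smallness of $Q^{(n)}_{\rm b} - Q^*_{\rm b}$ (while the stronger $C^{2,\alpha}$-topology enters only to secure the $W^{1,2}$-convergence $\tilde Q_n \to Q^*$).
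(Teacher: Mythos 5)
Your proof is correct and takes a genuinely different route from the paper's. The paper first establishes a uniform energy bound via the $0$-homogeneous competitor $\bar Q_n(x) := Q^{(n)}_{\rm b}(x/|x|)$, then invokes the compactness theorem \cite[Theorem~5.1]{DMP2} to extract a weak limit $Q_*$ that is already known to minimize $\mathcal{E}_{\lambda_*}$, and finally constructs a \emph{separate} competitor --- a rescaled copy of $Q_*$ glued to an annular interpolation between the two boundary data and normalized --- to upgrade the convergence from $W^{1,2}_{\rm loc}$ to full $W^{1,2}$-strong. Your argument instead fixes a minimizer $Q^*$ of the \emph{limit} problem once and for all, perturbs it near $\partial B_1$ with a cut-off interpolation plus pointwise normalization to obtain an admissible competitor $\tilde Q_n$ converging to $Q^*$ strongly in $W^{1,2}$, and then closes the chain $\mathcal{E}_{\lambda_*}(Q_*) \leq \liminf_n \mathcal{E}_{\lambda_n}(Q_n) \leq \limsup_n \mathcal{E}_{\lambda_n}(Q_n) \leq \mathcal{E}_{\lambda_*}(Q^*)$ by weak lower semicontinuity. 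This simultaneously delivers (a) admissibility and minimality of $Q_*$, and (b) convergence of the Dirichlet energies, hence strong $W^{1,2}$-convergence. The trade-off is that your route bypasses \cite[Theorem~5.1]{DMP2} entirely (which is a substantive compactness result for constrained minimizers), making the argument more self-contained for this particular lemma, at the modest cost of a slightly more elaborate competitor construction; the paper's version needs two distinct competitors but keeps the uniform bound part as light as possible. One small point worth stating explicitly if you were to flesh this out: the perturbation $\chi(|x|)\bigl(Q^{(n)}_{\rm b}(x/|x|)-Q^*_{\rm b}(x/|x|)\bigr)$ tends to $0$ in $C^1(\overline{B_1}\setminus B_{1/2})$ (not merely in $W^{1,2}$), and combined with $Q^*\cdot\nabla Q^*\equiv 0$ this is what makes the term $\nabla|\hat Q_n|$ vanish in $L^2$ and hence the normalization harmless --- this is exactly where the strong $C^{2,\alpha}$-topology on boundary data is used, as you correctly note.
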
	
	
\begin{proof}
We consider the comparison map $\bar Q_n$ defined by $\bar Q_n(x):=Q^{(n)}_{\rm b}(x/|x|)$. A direct computation shows that $\sup_n\mathcal{E}_{\lambda_n}(\bar Q_n,B_1)<\infty$  since $Q^{(n)}_{\rm b}$ is bounded in the $C^{2,\alpha}$-topology. By minimality of $Q_n$, we have $\mathcal{E}_{\lambda_n}(Q_n,B_1)\leq \mathcal{E}_{\lambda_n}(\bar Q_n,B_1)\leq C$ for a constant $C$ independent of $n$. Applying \cite[Theorem 5.1]{DMP2}, we deduce that for a (not relabeled) subsequence, $Q_n\rightharpoonup Q_*$ weakly in $W^{1,2}(B_1)$ and $Q_n\to Q_*$ strongly in $W^{1,2}_{\rm loc}(B_1)$ for some $Q_*$ minimizing $\mathcal{E}_{\lambda_*}$ over $\mathcal{A}^{\rm sym}_{Q^*_{\rm b}}(B_1)$. Hence it remains to prove that  $Q_n\to Q_*$ strongly in $W^{1,2}(B_1)$. 

First we   notice that $Q_n\to Q_*$ strongly in $L^4(B_1)$ by the compact Sobolev embedding $W^{1,2}(B_1)\hookrightarrow L^4(B_1)$. Therefore, 
\begin{equation}\label{compeasy1}
\lambda_n\int_\Omega W(Q_n)\,dx\to \lambda_* \int_\Omega W(Q_*)\,dx\,.
\end{equation}
Now we fix an arbitrary small $\delta\in(0,1)$, and we define for $x\in B_1$, 
$$\widetilde Q_n(x):=\begin{cases} 
\displaystyle Q_*\bigg(\frac{x}{1-\delta}\bigg) & \text{if $x\in B_{1-\delta}$}\,,\\[10pt]
\displaystyle \bigg(1-\frac{1-|x|}{\delta}\bigg)Q_{\rm b}^n\bigg(\frac{x}{|x|}\bigg) + \frac{1-|x|}{\delta}Q_{\rm b}^*\bigg(\frac{x}{|x|}\bigg) & \text{if $x\in B_1\setminus B_{1-\delta}$}\,.
\end{cases}$$
Then $\widetilde Q_n\in W^{1,2}_{\rm sym}(B_1;\mathcal{S}_0)$ satisfies $\widetilde Q_n=Q_{\rm b}^{(n)}$ on $\partial B_1$. Since $Q_{\rm b}^{(n)}\to Q_{\rm b}^*$ in $C^{2,\alpha}(\partial B_1)$, $\widetilde Q_n$ converges to the mapping $ x\mapsto Q_{\rm b}^*(x/|x|)$ in $C^{2,\alpha}(\overline B_1\setminus B_{1-\delta})$. In particular, $|\widetilde Q_n|\geq 1/2$ for $n$ large enough which allows us to define 
$$\widehat Q_n:= \frac{\widetilde Q_n}{|\widetilde Q_n|} \in \mathcal{A}^{\rm sym}_{Q^n_{\rm b}}(B_1)\,.$$
By minimality of $Q_n$, we have $\mathcal{E}_{\lambda_n}(Q_n,B_1)\leq \mathcal{E}_{\lambda_n}(\widehat Q_n,B_1)$. Since $\widehat Q_n$ also converges to $ x\mapsto Q_{\rm b}^*(x/|x|)$ in $C^{2,\alpha}(\overline B_1\setminus B_{1-\delta})$, we have 
\begin{multline*}
\limsup_{n\to\infty} \mathcal{E}_{\lambda_n}(Q_n,B_1)\leq \lim_{n\to\infty} \mathcal{E}_{\lambda_n}(\widehat Q_n,B_{1-\delta})+ \lim_{n\to\infty} \mathcal{E}_{\lambda_n}(\widehat Q_n,B_1\setminus B_{1-\delta}) \\
= (1-\delta)\mathcal{E}_{(1-\delta)^2\lambda_*}(Q_*,B_{1}) + \mathcal{E}_{\lambda^*}\big( Q^*_{\rm b}(x/\abs{x}), B_1 \setminus B_{1-\delta}\big)\,, 
\end{multline*}
Now letting $\delta\to 0$, we deduce that 
$$\limsup_{n\to\infty} \mathcal{E}_{\lambda_n}(Q_n,B_1)\leq \mathcal{E}_{\lambda_*}(Q_*,B_{1}) \,.$$
In view of \eqref{compeasy1}, we thus have $\limsup_{n}\int_{B_1}|\nabla Q_n|^2\,dx\leq \int_{B_1}|\nabla Q_*|^2\,dx$. On the other hand, $\liminf_n\int_{B_1}|\nabla Q_n|^2\,dx\geq \int_{B_1}|\nabla Q_*|^2\,dx$  by lower semicontinuity of the Dirichlet energy. Hence $\lim_n\int_{B_1}|\nabla Q_n|^2\,dx= \int_{B_1}|\nabla Q_*|^2\,dx$, and the conclusion classically follows. 
\end{proof}	
	
\begin{proof}[Proof of Theorem \ref{decompobdspacethm}]	
{\it Step 1.} We first prove that $BD^{\rm smooth}_\alpha$ is open. Let $\{(Q^{(n)}_{\rm b},\lambda_n)\}$ be a sequence in $C^{2,\alpha}(\partial B_1;\mathbb{S}^4)\times [0,\infty)$ such that $\lambda_n\to \lambda_*$ and 
$Q^{(n)}_{\rm b}\to Q^*_{\rm b}$ in $C^{2,\alpha}(\partial B_1)$ for some $(Q^*_{\rm b},\lambda_*)\in BD^{\rm smooth}_\alpha$.  We aim to prove that $(Q^{(n)}_{\rm b},\lambda_n)\in BD^{\rm smooth}_\alpha$ for $n$ large enough. By contradiction, 
assume that $(Q^{(n)}_{\rm b},\lambda_n)\not\in BD^{\rm smooth}_\alpha$ for some (not relabeled) subsequence. Then, for each $n$ we can find  a minimizer $Q_n$ of $\mathcal{E}_{\lambda_n}$ over $\mathcal{A}^{\rm sym}_{Q^{(n)}_{\rm b}}(B_1)$ such that ${\rm sing}(Q_n)\not=\emptyset$. By Lemma \ref{compacteasy}, we can extract a further subsequence such that $Q_n\to Q_*$ strongly in $W^{1,2}(B_1)$ for some map $Q_*$ minimizing $\mathcal{E}_{\lambda_*}$ over $\mathcal{A}^{\rm sym}_{Q^*_{\rm b}}(B_1)$. Since $(Q^*_{\rm b},\lambda_*)\in BD^{\rm smooth}_\alpha$, we have ${\rm sing}(Q_*)=\emptyset$, and we infer from Corollary \ref{corolpersissmooth} that ${\rm sing}(Q_n)=\emptyset$ for $n$ large enough, a contradiction. 
\vskip5pt

\noindent {\it Step 2.} We now prove that $BD^{\rm sing}_\alpha$ is open following the same argument as above. Assume that $\{(Q^{(n)}_{\rm b},\lambda_n)\}$ is a sequence  in $C^{2,\alpha}(\partial B_1;\mathbb{S}^4)\times [0,\infty)$ converging to some $(Q^*_{\rm b},\lambda_*)\in BD^{\rm sing}_\alpha$. Assume also by contradiction that  $(Q^{(n)}_{\rm b},\lambda_n)\not \in  BD^{\rm sing}_\alpha$. Then we can find minimizers $Q_n$ of $\mathcal{E}_{\lambda_n}$ over $\mathcal{A}^{\rm sym}_{Q^{(n)}_{\rm b}}(B_1)$ such that ${\rm sing}(Q_n)=\emptyset$. Then  $Q_n\to Q_*$ strongly in $W^{1,2}(B_1)$ for some  $Q_*$ minimizing $\mathcal{E}_{\lambda_*}$ over $\mathcal{A}^{\rm sym}_{Q^*_{\rm b}}(B_1)$ (up to a subsequence). Since $(Q^*_{\rm b},\lambda_*)\in BD^{\rm sing}_\alpha$, we have ${\rm sing}(Q_*)\not=\emptyset$ which is in contradiction with ${\rm sing}(Q_n)=\emptyset$ and Corollary \ref{corolpersissmooth}. 
\vskip5pt

\noindent {\it Step 3.} To conclude the proof, it remains to prove that $BD^{\rm coexist}_\alpha$ is the common boundary of $BD^{\rm smooth}_\alpha$ and $BD^{\rm sing}_\alpha$. Let $(Q^*_{\rm b},\lambda_*)\in BD^{\rm coexist}_\alpha$, and $Q^{\rm t}_*$ and $Q^{\rm s}_*$ be two minimizers of $\mathcal{E}_{\lambda_*}$ over $\mathcal{A}^{\rm sym}_{Q^*_{\rm b}}(B_1)$ such that ${\rm sing}(Q^{\rm t}_*)=\emptyset$ and ${\rm sing}(Q^{\rm s}_*)\not=\emptyset$. By Theorem \ref{bdrregthm},  we have $Q^{\rm t}_*\in C^{2,\alpha}(\overline B_1)$ and we can find a radius $\rho_*\in(0,1)$ such that $Q^{\rm s}_*\in C^{2,\alpha}(\overline B_1\setminus B_{\rho_*})$ (i.e., ${\rm sing}(Q^{\rm s}_*)\subset \overline B_{\rho_*}$). We fix an arbitrary sequence $\{\rho_n\} \subset (\rho_*,1)$ such that $\rho_n\to1$ as $n\to\infty$, and we set for $x\in \partial B_1$, 
$$Q^{{\rm t},n}_{\rm b}(x):= Q^{\rm t}_*(\rho_n x)\quad\text{and}\quad Q^{{\rm s},n}_{\rm b}(x):= Q^{\rm s}_*(\rho_n x)\,.$$
Then  $(Q^{{\rm t},n}_{\rm b},\rho_n^2\lambda_*)\to (Q^*_{\rm b},\lambda_*)$ and $(Q^{{\rm s},n}_{\rm b},\rho_n^2\lambda_*)\to (Q^*_{\rm b},\lambda_*)$ in $C^{2,\alpha}(\partial B_1)\times[0,\infty)$. On the other hand, rescaling variables we infer from Lemma \ref{uniqstatinside} that the maps $Q^{\rm t}_n:x\mapsto Q^{\rm t}_*(\rho_n x)$ and $Q^{\rm s}_n:x\mapsto Q^{\rm s}_*(\rho_n x)$ are the unique minimizers of $\mathcal{E}_{\rho_n^2\lambda_*}$ over  
$\mathcal{A}^{\rm sym}_{Q^{{\rm t},n}_{\rm b}}(B_1)$ and $\mathcal{A}^{\rm sym}_{Q^{{\rm s},n}_{\rm b}}(B_1)$ respectively. Since ${\rm sing}(Q^{\rm t}_n)=\emptyset$ and ${\rm sing}(Q^{\rm s}_n)\not =\emptyset$, it shows that 
$(Q^{{\rm t},n}_{\rm b},\rho_n^2\lambda_*)\in BD^{\rm smooth}_\alpha$ and $(Q^{{\rm s},n}_{\rm b},\rho_n^2\lambda_*)\in BD^{\rm sing}_\alpha$ for $n$ large enough. Hence $(Q^*_{\rm b},\lambda_*)\in \partial BD^{\rm smooth}_\alpha \cap \partial BD^{\rm sing}_\alpha$, thus $BD^{\rm coexist}_\alpha \subset \partial BD^{\rm smooth}_\alpha \cap \partial BD_\alpha^{\rm sing}$. Now, to reach the claimed conclusion, it is enough to prove that $BD^{\rm coexists}_\alpha \supset \partial BD^{\rm smooth}_\alpha \cup \partial BD^{\rm sing}$. Indeed, this and the previous inclusion together imply as announced $BD^{\rm coexists}_\alpha = \partial BD^{\rm smooth}_\alpha = \partial BD^{\rm smooth}_\alpha$. To this end, notice that $C^{2,\alpha}(\partial B_1;\bbS^4) \times [0,\infty)$ is the union of the disjoint sets $BD_\alpha^{\rm smooth}$, $BD_\alpha^{\rm sing}$, and $BD^{\rm coexist}_\alpha$. Since the first two sets are open, $BD_\alpha^{\rm smooth} \cup BD^{\rm coexist}_\alpha$ and $BD_\alpha^{\rm sing} \cup BD^{\rm coexist}_\alpha$ are closed, hence they must contain the closures of $BD_\alpha^{\rm smooth}$ and $BD_\alpha^{\rm sing}$, respectively. In turn, this means that $BD_\alpha^{\rm coexist}$ contains both $\partial BD_\alpha^{\rm smooth}$ and $\partial BD_\alpha^{\rm sing}$ (indeed, $BD_\alpha^{\rm smooth}$ and $BD_\alpha^{\rm sing}$, being open, are disjoint from their boundaries). Therefore, $\partial BD_\alpha^{\rm smooth} \cup \partial BD_\alpha^{\rm sing} \subset BD_\alpha^{\rm coexist}$, and the conclusion follows.
\end{proof}

		

\section{Landau-de Gennes minimizers in 2D}\label{2Dminimization}

In this section, we examine a two dimensional minimization problem whose importance (beyond its own interest) will be revealed mostly in Section \ref{SectSplit}. We consider the minimization of the LdG energy among equivariant unit norm configurations defined on a two dimensional disc $\bbD_\rho := \left\{ z \in \C : \abs{z} < \rho \right\}$. We will always assume  $\rho=1$ (discarding the subscript for simplicity) which can always be  achieved by rescaling the domain. In view of Lemma~\ref{lemma:Q-detQ} and Corollary~\ref{cor:dec}, admissible configurations can be described as maps in the space $W_{\rm sym}^{1,2}(\bb D;\bb S^4)$ with the two equivalent forms: either in terms of tensors $Q \in \bb S^4 \subset \mathcal{S}_0$, or in terms of $u \in \bb S^4 \subset \R \oplus \C \oplus \C$. However, we shall mostly rely on the $\R \oplus \C \oplus \C$-description as it is more suited for our purposes. 
 \vskip5pt

We consider, for  fixed $\lambda\geq 0$, the 2D-LdG energy
$E_\lambda$ as in \eqref{eq:def-2d-energy} and we aim to minimize it over the $\mathbb{S}^1$-equivariant class $\mathcal{A}^{\rm sym}_{\overline{H}}(\bbD)$ defined in \eqref{eq:def-2d-adm},
where $\overline H:\partial\bbD\to\mathbb{R}P^2 \subset \mathbb{S}^4$ denotes the radial anchoring map defined in \eqref{defbarH}. According to the correspondence in Corollary \ref{cor:dec}, we have $\overline H\simeq  g_{\overline{H}}$ where $ g_{\overline{H}}:\partial \bbD\to \mathbb{S}^4\subset \R \oplus \C \oplus \C$ is given by 
\begin{equation}
\label{eq:hor-anchoring}
 g_{\overline{H}}(z):=\left( -\frac{1}{2},0,\frac{\sqrt{3}}{2}z^2\right)\,.
\end{equation}
Here and in the sequel, we make use of the complex variable $z:=x_1+ix_2$, identifying in this way $\R^2$ with the complex plane $\C$. 
\vskip5pt

As we announced in Theorem~\ref{2d-biaxial-escape} and proved in the present section, the uniaxial or biaxial character of any minimizer of $E_\lambda$ over $\mathcal{A}^{\rm sym}_{\overline{H}}(\bbD) $  
depends on $\lambda$ in a crucial way. More precisely, a sharp transition in the qualitative properties of minimizers occurs through a biaxial escape mechanism, as the strength parameter $\lambda$ of the confining potential $W$ in \eqref{eq:def-2d-energy} decreases. 

Note that, by Lemma~\ref{lemma:s1eq-emb}, we have   $\mathcal{A}_{\overline{H}}^{\rm sym}(\bbD) = \mathcal{A}_{\rm N} \cup \mathcal{A}_{\rm S}$ with disjoint union, and 
\begin{equation}\label{eq:AN}
		\mathcal{A}_{\rm N} := \big\{ Q \in \mathcal{A}^{\rm sym}_{\overline{H}}(\bbD) : Q(0) = \eo \big\}\simeq \big\{ u \in \widetilde{\mathcal{A}}^{\rm sym}_{ g_{\overline{H}}}(\bbD) : u(0) = (1,0,0)  \big\}=: \widetilde{\mathcal{A}}_{\rm N}\,,
	\end{equation}
	and
	\begin{equation}\label{eq:AS}
		\mathcal{A}_{\rm S} := \big\{ Q \in \mathcal{A}^{\rm sym}_{ \overline{H}}(\bbD) : Q(0)= -\eo \big\} \simeq \big\{ u \in \widetilde{\mathcal{A}}^{\rm sym}_{ g_{\overline{H}}}(\bbD) : u(0) = (-1,0,0)  \big\}=: \widetilde{\mathcal{A}}_{\rm S}\, .
	\end{equation}
We aim to describe precisely to which of these two components the minimizers of $E_\lambda$ over $\mathcal{A}^{\rm sym}_{\overline{H}}(\bbD) $ belong to as the parameter $\lambda$ varies. 
To tackle this question, we rely in an essential way on a gap phenomenon for the Dirichlet energy $E_0$ over the two components of the class $\mathcal{A}_{\overline{H}}^{\rm sym}(\bbD) = \mathcal{A}_{\rm N} \cup \mathcal{A}_{\rm S}$ which is of independent interest. This is the object of the next two subsections. 
By studying the minimization problem of $E_0$ in each class $\mathcal{A}_{\rm N}$ or $\mathcal{A}_{\rm S}$, we shall prove that the corresponding infima are different. Describing the set of minimizers for both the infima, we shall also make the energy gap fully explicit.


\subsection{Large equivariant harmonic maps in 2D}

In this subsection, we classify all critical points of the Dirichlet energy $E_0$ in the class $\mathcal{A}_{\overline{H}}^{\rm sym}(\bbD)$ satisfying $Q(0)= \eo$. According to Proposition~ \ref{prop:symmetric-criticality}, those are critical points of $E_0$ over $W^{1,2}(\bbD;\mathbb{S}^4)$, and thus equivariant (weakly) harmonic maps from $\bbD$ into $\bbS^4$ satisfying $Q=\overline{H}$ on $\partial \bbD$. In terms of the isometric correspondence $Q\simeq u$ from Corollary~\ref{cor:dec}, we are interested in equivariant (weakly) harmonic maps $u$ into $\bbS^4\subset \R \oplus \C \oplus \C$ satisfying $u=\ g_{\overline{H}}$ on $\partial \bbD$ and $u(0)=(1,0,0)$.  
As recalled in the Introduction, these harmonic maps are usually referred to in the literature as the {\em large solutions} of the harmonic map system. They escape from the (small) spherical cap containing the image of the boundary data $g_{\overline{H}}$ given in \eqref{eq:hor-anchoring}, as opposed to the {\em small solution} discussed in Proposition~\ref{ASminimization}, for which the escape phenomenon does not hold. 
\vskip5pt

Recall that $u \in W^{1,2}(\bbD;\bbS^4)$ is a \emph{weakly harmonic map} in $\bbD$ if $u$ is a critical point of the Dirichlet energy
\begin{equation}\label{eq:2d-LDGenergy-tilde}
	\widetilde{E}_0(u) := \int_{\bbD} \frac{1}{2}\abs{\nabla u}^2\,dx
\end{equation}
with respect to compactly supported perturbations preserving the $\bbS^4$-constraint. If, in addition, $u=g_{\overline{H}}$ on $\partial \bbD$ in the sense of traces, then $u$ is a distributional solution of the following boundary value problem  
\begin{equation}
\label{eq:harmonic-bdvp}
		\begin{cases}
			\Delta u + \abs{\nabla u}^2 u=0 & \mbox{ in } \bbD\,,\\
			u =  g_{\overline{H}} & \mbox{ on } \partial\bbD\,.
		\end{cases}
	\end{equation} 
By H\'{e}lein's theorem \cite{He} and the general analyticity results for elliptic systems from \cite[Chapter~6]{Morrey}, such a map $u$ is real analytic in the interior. 
Under the Dirichlet boundary condition $g_{\overline{H}}$, the map $u$ is actually real analytic up to the boundary by \cite{Morrey,Quing}. Hence it is harmonic in $\overline{\bbD}$ in the classical sense. According to \eqref{eq:dec-equiv2D}, an equivariant harmonic map $u$ has the form 
\begin{equation}\label{formequivmap}
u(re^{i\phi})=(f_0(r),f_1(r) e^{i \phi},f_2(r) e^{i2\phi})\,,
\end{equation} 
and the Euler-Lagrange equation in \eqref{eq:harmonic-bdvp} rewrites into a system of ODEs for $r \in (0,1]$ (see Remark~\ref{rmk:2d-EL-eq}), 
\begin{equation}
\label{eq:harmonic-odes}
		\begin{cases}
\displaystyle			f^{\prime\prime}_0+\frac1r f^\prime_0 = -\abs{\nabla u}^2 f_0 \,,\\[5pt]
			\displaystyle f^{\prime\prime}_1+\frac1r  f^\prime_1 = -\abs{\nabla u}^2 f_1 -\frac1{r^2}f_1 \,,\\[5pt]
				\displaystyle  f^{\prime\prime}_2+\frac1r  f^\prime_2 = -\abs{\nabla u}^2 f_2 -\frac4{r^2}f_2 \,.
		\end{cases}
	\end{equation} 
Here $f(r):=(f_0(r),f_1(r),f_2(r)) \in \R\oplus\bbC\oplus\C$, and by \eqref{eq:equiv-nabla-22D}, 
\begin{equation}
\label{eq:gradsquare}
 \abs{\nabla u}^2=\abs{\partial_r u}^2+\frac1{r^2}\abs{\partial_\phi u}^2=\abs{f^\prime}^2+\frac1{r^2}\left( \abs{f_1}^2 +4\abs{f_2}^2 \right) \, . 
\end{equation}
\vskip3pt

In order to describe the equivariant solutions to \eqref{eq:harmonic-bdvp} satisfying the condition $u(0)=(1,0,0)$, we shall combine \eqref{eq:harmonic-odes}  with the classification of    
equivariant harmonic spheres from \cite{DMP2}. 
Following \cite{DMP2}, it is convenient to use complex differentiation through the usual Wirtinger's operators
\[\partial_z=\frac12 \left( \frac{\partial}{\partial x_1} -i \frac{\partial}{\partial x_2}\right) \, , \quad \partial_{\bar{z}}=\frac12 \left( \frac{\partial}{\partial x_1} +i \frac{\partial}{\partial x_2}\right) \, . \] 
Since $\abs{\nabla u}^2=2 \abs{\partial_z u}^2+2 \abs{\partial_{\bar{z}} u}^2$ and $\Delta u=4 \partial_{\bar{z}z}u$,  
the harmonic map equation  \eqref{eq:harmonic-bdvp} rewrites as 
$$\partial_{\bar{z}z}u +\frac12 \big( \abs{\partial_z u}^2+ \abs{\partial_{\bar{z}} u}^2\big) u=0\,.$$ 

Let us now recall the classical notions of conformality and isotropy. 

\begin{definition}\label{defconformisot}
A smooth map $u:\Omega \to \bb S^4 \subset\R\oplus\bbC\oplus\C$ defined on an open set $\Omega \subset \bb C$ is said to be 
\begin{itemize}
\item[(i)]{\em conformal} if
\begin{equation}
\label{eq:conformality}
\partial_z u \cdot \partial_z u:= \sum_{j=0}^4 (\partial_z u_j)^2= \frac14 \abs{\partial_{x_1} u}^2-  \frac14 \abs{\partial_{x_2} u}^2- \frac{i}2\partial_{x_1}u \cdot \partial_{x_2}u \equiv 0 \,, 
\end{equation}
where ``$\,\cdot\,$'' denotes the Euclidean scalar product in $\R\oplus\bbC\oplus\C\simeq \R^5$ extended by bilinearity to $\C^5$; 
\item[(ii)] {\em isotropic} if
\begin{equation}
\label{eq:isotropicmap}
\partial^2_{z} u \cdot \partial^2_{z} u:= \sum_{j=0}^4 (\partial^2_{z} u_j)^2\equiv 0 \, ,
\end{equation}
where 
$\partial^2_{z} = \frac14 (\partial^2_{x_1}  -\partial^2_{x_2}) -\frac{i}2 \partial_{x_1x_2}$. 
\end{itemize}
\end{definition}
\vskip5pt

Here we shall not need the full definition of {\em total isotropy} from \cite[Chapter 6]{He2}{{,}} which is satisfied by the harmonic spheres discussed in \cite[Section 3]{DMP2}. Actually, under \eqref{eq:conformality}-\eqref{eq:isotropicmap}, it will be automatically satisfied for the equivariant solutions to \eqref{eq:harmonic-bdvp}, as we are going to show in the following lemma.
This extension result is the starting point of the classification of all large equivariant solutions to \eqref{eq:harmonic-bdvp}.

\begin{lemma}
\label{equiv-extension}
If $u \in W^{1,2}_{\rm sym}(\bbD; \bbS^4)$ is a weak solution of \eqref{eq:harmonic-bdvp}, then $u$ is real analytic and conformal in $\overline{\bb D}$. Moreover, $u$  uniquely extends  to a map $U \in C^\omega(\C;\bb S^4)$ which is equivariant, harmonic, conformal, and isotropic in the whole $\C$.
\end{lemma}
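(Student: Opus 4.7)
The plan is to establish the three assertions---real analyticity up to $\partial\bbD$, conformality, and existence of the extension $U$---by combining classical regularity for harmonic maps with the rigid structure imposed by $\bbS^1$-equivariance. For the regularity part, Proposition~\ref{prop:symmetric-criticality} guarantees that $u$ is a genuine weak solution of \eqref{eq:harmonic-bdvp}, so H\'elein's theorem yields interior smoothness and Morrey's elliptic analyticity results upgrade this to real analyticity in $\bbD$; since $g_{\overline H}$ in \eqref{eq:hor-anchoring} is a trigonometric polynomial with values in $\bbS^4$, the boundary regularity results of Morrey and Qing (already cited in the text) give real analyticity up to $\partial\bbD$.

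Conformality on $\bbD$ will be obtained via a Hopf-differential argument based on equivariance. For any harmonic map into a round sphere, $h(z):=\partial_z u\cdot\partial_z u$ is holomorphic on $\bbD$. Writing $u=(u_0,u_1,u_2)\in\R\oplus\C\oplus\C$ and differentiating the equivariance relations \eqref{eq:deg-action}, one obtains $(\partial_z u_k)(R_\alpha z)=e^{i(k-1)\alpha}\partial_z u_k(z)$ for $k=0,1,2$ (and the conjugate versions); combined with the identity $h=(\partial_z u_0)^2+(\partial_z u_1)(\partial_z\bar u_1)+(\partial_z u_2)(\partial_z\bar u_2)$, this yields the transformation law $h(e^{i\alpha}z)=e^{-2i\alpha}h(z)$. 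Expanding the holomorphic $h$ in Taylor series at $0$, this identity forces every coefficient to vanish, whence $h\equiv0$, which is \eqref{eq:conformality}.

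The extension to $\C$ will be produced via the radial ODE system. Writing $u$ in the equivariant form \eqref{formequivmap}, the profile $f=(f_0,f_1,f_2)$ solves the second order system \eqref{eq:harmonic-odes} on $(0,1]$; its coefficients are real analytic on $(0,\infty)$, so standard ODE theory yields a unique analytic extension of $f$ to $(0,\infty)$, and the identity $|f|^2\equiv 1$ persists by unique analytic continuation from $(0,1]$. Setting
\[
U(re^{i\phi}):=\big(f_0(r),\,f_1(r)e^{i\phi},\,f_2(r)e^{2i\phi}\big)\text{ for }r>0,\qquad U(0):=(1,0,0),
\]
produces an $\bbS^1$-equivariant map $U\colon\C\to\bbS^4$, real analytic on $\C\setminus\{0\}$ and solving the harmonic map system there by construction. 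Analyticity at the origin is inherited from the analyticity of $u$ at $0$ (which by Lemma~\ref{lemma:s1eq-emb} forces $f_1(0)=f_2(0)=0$, ensuring a genuine analytic expansion near $0$ in Cartesian coordinates), and harmonicity extends across $0$ by analyticity. Uniqueness of $U$ then follows at once from the identity principle for real analytic maps.

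The main obstacle is verifying isotropy \eqref{eq:isotropicmap} for the extension $U$. The plan is to show that $k_U:=\partial_z^2 U\cdot\partial_z^2 U$ is entire and then invoke equivariance. Conformality of $U$ on $\C$---which itself is immediate, since $h_U$ is entire and vanishes on $\bbD$ by the previous paragraph---yields the auxiliary identities $\partial_z U\cdot U=\tfrac12\partial_z|U|^2=0$, $\partial_z^2 U\cdot U=-h_U=0$, and $\partial_z^2 U\cdot\partial_z U=\tfrac12\partial_z h_U=0$. Differentiating $k_U$ in $\bar z$ and using the harmonic map relation $\partial_{\bar z z}U=-|\partial_z U|^2 U$ together with $\partial_z|\partial_z U|^2=\partial_z^2 U\cdot\partial_{\bar z}U$ (obtained by differentiating $|\partial_z U|^2=\partial_z U\cdot\partial_{\bar z}U$ and using $\partial_z U\cdot U=0$), one finds
\[
\partial_{\bar z}k_U=-2(\partial_z^2 U\cdot U)(\partial_z^2 U\cdot\partial_{\bar z}U)-2|\partial_z U|^2(\partial_z^2 U\cdot\partial_z U)=0,
\]
so $k_U$ is entire. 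Since equivariance gives $k_U(e^{i\alpha}z)=e^{-4i\alpha}k_U(z)$ and $k_U$ is holomorphic at $0$, the same Taylor-expansion argument as in the conformality step forces $k_U\equiv 0$, which is exactly \eqref{eq:isotropicmap}.
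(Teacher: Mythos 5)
The one genuine gap is in the extension step. You assert that ``standard ODE theory yields a unique analytic extension of $f$ to $(0,\infty)$'' because the coefficients of \eqref{eq:harmonic-odes} are analytic on $(0,\infty)$; but for a \emph{nonlinear} system, analyticity of the data only provides a maximal interval of existence $(0,r_{\max})$, and $r_{\max}$ could a priori be finite: the right-hand side grows quadratically in $f'$ through $|\nabla u|^2$, so finite-time blow-up must be excluded (compare $y'=y^2$). This is exactly the point the paper's proof addresses: it shows that both conservation identities \eqref{eq:conservations} --- the norm constraint \emph{and} the conformality relation $|f'|^2=r^{-2}(|f_1|^2+4|f_2|^2)$ --- persist on the maximal interval by analytic continuation, and these give uniform bounds on $(f,f')$ away from $r=0$, forcing $r_{\max}=+\infty$. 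You only record the persistence of $|f|^2\equiv 1$, which bounds $f$ but not $f'$, so global existence is asserted rather than proved. The repair is available inside your own scheme: your Hopf-differential step already gives conformality on $(0,1]$, i.e.\ the second identity in \eqref{eq:conservations}, and this too extends by unique analytic continuation, yielding $|f'|\le 2/r$ and hence no blow-up. A second, minor slip: you set $U(0):=(1,0,0)$, but the lemma concerns arbitrary weak solutions of \eqref{eq:harmonic-bdvp}, for which $u(0)$ may equal $-\eo$ (e.g.\ $u_{\rm S}$ in \eqref{eq:uS}); one must take $U(0)=u(0)$, which is what your ``analyticity inherited from $u$ at $0$'' argument in fact presupposes.

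Apart from this, your route is correct and differs from the paper's mainly in the conformality step: the paper multiplies \eqref{eq:harmonic-odes} by $r^2 f'$ and integrates from well-chosen radii $r_j\downarrow 0$ supplied by the finiteness of the energy (a Pohozaev-type argument), whereas you use holomorphy of the Hopf differential $h=\partial_z u\cdot\partial_z u$ together with the rotation covariance $h(e^{i\alpha}z)=e^{-2i\alpha}h(z)$ forced by equivariance, which annihilates all Taylor coefficients at the origin. This is a clean and legitimate alternative, and it is the same mechanism that both you and the paper use for isotropy, where your computation of $\partial_{\bar z}k_U$ and the covariance $k_U(e^{i\alpha}z)=e^{-4i\alpha}k_U(z)$ essentially reproduce the paper's argument (the paper concludes via $g(z)=c/z^4$ and smoothness at $0$, you via the Taylor expansion; both are fine).
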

\begin{proof}
The map $u$ being a weak solution to  \eqref{eq:harmonic-bdvp}, it is real analytic up to the boundary, as we already remarked. Being equivariant, it is of  the form \eqref{formequivmap}, where  
the map $f=(f_0, f_1 ,f_2)$ satisfies $|f|\equiv1$ and solves system  \eqref{eq:harmonic-odes} for $r \in (0,1]$. 

Since $u$ belongs to $W^{1,2}(\bbD)$, we infer from \eqref{eq:gradsquare} that 
$$\int_0^1 \big(r^2 \abs{f^\prime}^2+\abs{f_1}^2+4\abs{f_2}^2\big) \frac{dr}{r} <\infty\,.$$
Hence $ r_j^2 \abs{f^\prime (r_j)}^2+\abs{f_1(r_j)}^2+4\abs{f_2(r_j)}^2 \to 0 $ for some sequence $r_j \downarrow 0$. Since $|f|^2\equiv 1$, we have $ f^\prime\cdot f\equiv 0$. 
Hence, taking the scalar product of \eqref{eq:harmonic-odes} with $r^2  f^\prime$ and  integrating between $r_j$ and $r$ leads to 
\[ r^2 \abs{\partial_r f (r)}^2-\abs{f_1(r)}^2-4\abs{f_2(r)}^2 = r_j^2 \abs{\partial_r f (r_j)}^2-\abs{f_1(r_j)}^2-4\abs{f_2(r_j)}^2  \to 0 \text{ as $j\to\infty$} \,.\]
Thus $\abs{\partial_r u}^2- \frac1{r^2} \abs{\partial_\phi u}^2 =\abs{\partial_r f (r)}^2-\frac{1}{r^2}\big(\abs{f_1(r)}^2+4\abs{f_2(r)}^2\big)\equiv 0$. 
On the other hand, it follows from \eqref{formequivmap} that 
$ \partial_r u \cdot \frac1r \partial_\phi u \equiv 0$. Therefore $u$ is conformal in the sense of Definition~\ref{defconformisot} since such property is independent of the chosen orthonormal frame.
\vskip3pt

Now we solve the Cauchy problem for \eqref{eq:harmonic-odes} with Cauchy data $(f(1),  f^\prime (1))$ to extend $f$ to its maximal interval of existence $(0,r_{\max}) \supseteq (0,1]$. We denote by  $\widetilde{f}$ the maximal solution. Then $\widetilde f$ is real analytic, and therefore it satisfies 
\begin{equation}
\label{eq:conservations}
|\widetilde{f}(r)|^2 =1 \quad\text{and}\quad | \widetilde{f}^\prime|^2=\frac{1}{r^2} \big( |\widetilde{f}_1|^2+4|\widetilde{f}_2|^2\big) \quad\text{for every $r\in (0, r_{\max})$}\,,
\end{equation}
since these identities hold for every $r \in (0,1]$. As a consequence of the uniform a priori bounds induced by \eqref{eq:conservations}, it follows that $r_{\max}=+\infty$, i.e., $\widetilde{f}$ solves \eqref{eq:harmonic-odes} for $r \in (0,\infty)$.  

Setting 
$$U(re^{i\phi}):=\big(\widetilde{f}_0(r),\widetilde{f}_1(r) e^{i \phi},\widetilde{f}_2(r) e^{i2\phi}\big)\,,$$
it follows  by construction that $U$ is an equivariant real analytic harmonic map from $\C$ into $\bb S^4$, extending $u$ to the whole plane. 
Repeating the argument above on $\widetilde{f}$ with $r \in (0,\infty)$, we infer that $U$ is conformal in $\C$.   
To complete the proof, it thus remains to show that $U$ is isotropic, i.e., it satisfies \eqref{eq:isotropicmap}. To this purpose, we adapt to the equivariant setting the strategy in \cite[Proposition~6.1]{He2}. First, we notice that 
$$ \partial_z U\cdot U= \frac12 \partial_z |U|^2 \equiv 0  \, , $$
$$\partial^2_{z} U\cdot U= \partial_z (\partial_z U\cdot U ) - \partial_z U \cdot \partial_z U \equiv 0 \, ,  $$
and
$$  \partial^2_{z} U\cdot \partial_z U= \frac12 \partial_z (\partial_z U\cdot \partial_z U) \equiv 0 \, , $$
since $|U|^2=1$ and $U$ is conformal.  
Then we consider $g:=\partial^2_{z}U  \cdot \partial^2_{z}U$ which is a complex-valued smooth function. 
Since $U$ is a harmonic map, we have
\begin{equation*}
\begin{split}
\partial_{\bar{z}} g&=\partial_{\bar{z}} (\partial^2_{z}U \cdot \partial^2_{z}U )= \partial^2_{z} U \cdot \partial_z ( 2\partial_{\bar{z}z}U )=-\partial^2_{z} U \cdot \partial_z \left( \big(\abs{\partial_z U}^2+ \abs{\partial_{\bar{z}} U}^2\big) U \right) \\
&= -\big(\partial^2_{z} U \cdot U\big) \partial_z \big( \abs{\partial_z U}^2+ \abs{\partial_{\bar{z}} U}^2 \big)  -\big(\partial^2_{z} U \cdot \partial_z U\big) \big( \abs{\partial_z U}^2+ \abs{\partial_{\bar{z}} U}^2 \big) \equiv 0 \, ,
\end{split}
\end{equation*}
and thus $g$ is an entire holomorphic function. On the other hand, w.r.to the $\bb S^1$-action on $\R \oplus \C \oplus \C$ given in \eqref{eq:deg-action}, the map $U$ satisfies the equivariance property $R \cdot U (z)=U ( R z)$ for all $R=e^{i \theta} \in \bb S^1$ and for all $z \in \C$.   
Long but elementary calculations now give
\begin{equation*}
\begin{split}
\partial^2_{z} U (z) \cdot \partial^2_{z}  U(z) &= \partial^2_{z} \big(R \cdot U (z)\big) \cdot \partial^2_{z} \big(R \cdot U (z)\big)\\
&= \partial^2_{z} \big(U (Rz)\big) \cdot \partial^2_{z} \big( U (Rz)\big)=R^4\big( \partial^2_{z} U (Rz) \cdot \partial^2_{z} U (Rz) \big)  \,.
\end{split}
\end{equation*}
Hence $g(e^{i\theta}z)\equiv e^{-4i \theta} g(z)$. Since $g$ is holomorphic, from the identity principle on the domain $\bb C\setminus \{0\}$, we infer that $g(z)\equiv c/z^4$ for some $c \in \bb C$. Since $g$ is smooth at the origin, we conclude that $c=0$, and thus $g \equiv 0$. Therefore \eqref{eq:isotropicmap} holds, and the proof is complete.
\end{proof}

We now are ready to classify all large solutions  to \eqref{eq:harmonic-bdvp}, i.e., solutions in the class $\widetilde{\mathcal{A}}_{\rm N}$. The proof of this classification parallels the one for harmonic mappings $\omega$ from $\C \cup \{ \infty\}\simeq\bbS^2 $ into $\bbS^4$ satisfying $\omega(0)=\eo$ (which is a combination of \cite[Proposition~3.6, Proposition~3.8, Remark~3.11 and Theorem 3.19]{DMP2}). It shows that large solutions are precisely the restriction to the unit disc of those entire harmonic maps satisfying the boundary condition and the constraint at the origin. 
 	
	\begin{proposition}
	\label{prop:largeharm} 
	If $u \in W^{1,2}_{\rm sym}(\bbD; \bbS^4)$ is a weak solution to \eqref{eq:harmonic-bdvp} satisfying $u(0)=\eo$, then 
	there exists $\mu_1 \in \C$ such that
\begin{multline}\label{eq:class-harm}
	u(z) = \frac{1}{D(z)} \left( 1 - \abs{\mu_1}^2 \abs{z}^2-  3 \abs{z}^4  +\frac{\abs{\mu_1}^2}{3}\abs{z}^6, \right. \\
	\vphantom{\frac{1}{D(z)}} \left. 2 \mu_1 z \left( 1 - \abs{z}^4 \right),   2 \sqrt{3} z^2 \left( 1 + \frac{\abs{\mu_1}^2}{3} \abs{z}^2 \right)  \right),
\end{multline}
with 
	\begin{equation}\label{eq:D(z)}
		D(z) :=  1 + \abs{\mu_1}^2 \abs{z}^2+  3 \abs{z}^4  +\frac{\abs{\mu_1}^2}{3}\abs{z}^6 \,.
	\end{equation}
In particular, 
$$\widetilde E_0(u):=\int_{\bbD}\frac{1}{2}|\nabla u|^2\,dx=6\pi\,. $$
\end{proposition}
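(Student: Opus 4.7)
The plan is to reduce the classification of large equivariant solutions in $\bbD$ to the classification of equivariant harmonic spheres $\omega: \bbS^2 \to \bbS^4$ carried out in \cite[Propositions 3.6 \& 3.8, Remark 3.11, Theorem 3.19]{DMP2}. As a first step, by Lemma~\ref{equiv-extension}, $u$ extends to an equivariant, harmonic, conformal, and isotropic real-analytic map $U : \C \to \bbS^4$, with $U(0) = \eo$ and $U|_{\partial \bbD} = g_{\overline{H}}$. Writing
\begin{equation*}
U(re^{i\phi}) = \big(\widetilde f_0(r),\, \widetilde f_1(r) e^{i\phi},\, \widetilde f_2(r) e^{i2\phi}\big)
\end{equation*}
in terms of the maximal radial solution $\widetilde f$ built in the proof of Lemma~\ref{equiv-extension}, the conformality identity \eqref{eq:conservations} together with the isotropy condition provides enough a priori control to conclude that $U$ has finite total Dirichlet energy on $\C$. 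By the removable-singularity theorem of Sacks--Uhlenbeck, $U$ then extends smoothly at $\infty$ to a harmonic sphere $\widetilde U : \bbS^2 \to \bbS^4$, still equivariant and totally isotropic. Equivariance and continuity force $\widetilde U(\infty) \in \{\pm \eo\}$, and the asymptotic behavior of the ODE system further identifies this value.

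Second, I would apply the classification of equivariant, totally isotropic harmonic spheres in \cite[Proposition 3.8]{DMP2} to $\widetilde U$, obtaining an explicit finite-parameter family of rational maps in $z$ and $\bar{z}$. The pointwise constraint $U(0) = \eo$ (equivalently $\widetilde f_0(0)=1$, $\widetilde f_1(0)=\widetilde f_2(0)=0$) selects a subfamily which, after the reduction provided by \cite[Remark~3.11 \& Theorem~3.19]{DMP2}, is parameterized by two complex numbers $(\mu_1, \mu_2)\in \C^2$. Imposing next the boundary condition $U|_{\partial \bbD} = g_{\overline{H}}$, i.e.\ $\widetilde f_0(1) = -\tfrac{1}{2}$, $\widetilde f_1(1) = 0$, and $\widetilde f_2(1) = \tfrac{\sqrt{3}}{2}$, produces algebraic equations for $(\mu_1, \mu_2)$ that pin down $\mu_2$ completely as a fixed numerical value (the one making the asymptotic behavior at infinity compatible with the prescribed boundary data), while leaving $\mu_1 \in \C$ free. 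Substituting this back into the candidate expressions yields \eqref{eq:class-harm}--\eqref{eq:D(z)}, and a direct check on the boundary and at the origin confirms the formula.

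The energy identity $\widetilde E_0(u) = 6\pi$ then follows from a direct computation. Using the target $\bbS^1$-action $\mu_1 \mapsto e^{i\theta}\mu_1$ (which corresponds to a rotation $R_\theta$ on the image and preserves the Dirichlet integral) I would reduce to the case $\mu_1 \geq 0$; then the conformality identity $\widetilde E_0(u) = \int_{\bbD} 2|\partial_z u|^2\,dx$ reduces the computation to a radial integral of a rational function of $r=|z|$ and the single real parameter $\mu_1$, which evaluates in closed form to $6\pi$ independently of $\mu_1$. The main technical obstacle is the finite-energy extension in the first step: the ODE system \eqref{eq:harmonic-odes} does not a priori enforce decay of $|\nabla U|$ at infinity, so one must exploit the isotropy condition $\partial_z^2 U \cdot \partial_z^2 U \equiv 0$ together with the equivariant ansatz to control the radial functions $\widetilde f_k$ for $r\to\infty$ and thereby rule out blow-up of the energy density; the remaining arguments are essentially an algebraic exercise based on \cite{DMP2}.
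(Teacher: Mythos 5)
Your overall strategy --- extend $u$ to $\C$ via Lemma~\ref{equiv-extension} and reduce to the classification of equivariant harmonic spheres from \cite{DMP2} --- is the same as the paper's, but your pivotal step is not justified, and it is precisely the step the paper is organized to avoid. You claim that the conformality identity \eqref{eq:conservations} together with isotropy gives finite total Dirichlet energy of the extension $U$ on $\C$, and then invoke the Sacks--Uhlenbeck removable singularity theorem. Conformality only yields $|\nabla U(z)|^2=\frac{2}{|z|^2}\big(|\widetilde f_1|^2+4|\widetilde f_2|^2\big)\le 8|z|^{-2}$, which is borderline: the energy on $\{1<|z|<R\}$ may still grow like $\log R$, and nothing in the ODE system \eqref{eq:harmonic-odes} forces the decay of $|\widetilde f_1|,|\widetilde f_2|$ at infinity that integrability would require. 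You acknowledge this as ``the main technical obstacle'' but offer no mechanism to resolve it, so the harmonic-sphere extension --- on which the entire classification rests --- is a genuine gap. The paper never proves an a priori energy bound at infinity; instead, conformality and isotropy are exploited through the twistor construction of \cite[Lemma~3.12 and Theorem~3.19]{DMP2}: the positive lift $\widetilde U^+:\C\to\C P^3$ is holomorphic, equivariance and the identity principle force it to extend holomorphically across $\infty$, and only then does one conclude that $U=\boldsymbol{\tau}\circ\widetilde U^+$ is the restriction of a finite-energy equivariant harmonic sphere, to which \cite[Proposition~3.8]{DMP2} applies. If you keep your route you must either reproduce this lift argument (after which Sacks--Uhlenbeck is superfluous) or supply an independent proof of energy finiteness, which is not available from \eqref{eq:conservations}--\eqref{eq:harmonic-odes} alone.

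Two smaller points. First, \cite[Proposition~3.8]{DMP2} classifies \emph{linearly full} harmonic spheres ($\mu_1,\mu_2\in\C\setminus\{0\}$), so your uniform appeal to it misses the degenerate case in which $U$ takes values in a two-sphere of $\R\oplus\{0\}\oplus\C$; the paper handles this separately (its Step~1) by stereographic projection and the identity principle, and it produces exactly the case $\mu_1=0$ of \eqref{eq:class-harm}--\eqref{eq:D(z)}. Second, your energy identity is only asserted: a direct radial integral depending on $|\mu_1|$ can in principle be evaluated, but the paper's argument is cleaner --- the energy depends continuously on $|\mu_1|$, equals $6\pi$ at $\mu_1=0$, and its derivative in $|\mu_1|$ vanishes because $\partial_{|\mu_1|}u$ is tangent to $\bbS^4$ and vanishes on $\partial\bbD$, so integrating by parts against the harmonic map equation kills the derivative.
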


\begin{proof}
In view of Lemma \ref{equiv-extension}, $u$ extends to a harmonic map $U \in C^\omega(\C;\bb S^4)$ which equivariant, conformal and isotropic in the whole $\C$. 
By equivariance, it writes  
 $U(re^{i\phi})=(f_0(r),f_1(r) e^{i \phi},f_2(r) e^{i2\phi})$. 
\vskip5pt

\noindent{\it Step 1.} We assume in this step that $U$ is not linearly full, and we aim to show that \eqref{eq:class-harm}-\eqref{eq:D(z)} hold with $\mu_1=0$. First we notice that, in this case,  $f_1\equiv 0$ by  \cite[Remark~2.4]{DMP2} and the boundary condition $U=g_{\overline H}$ on $\partial \bbD$.  Hence $U$ takes values in the unit $2$-sphere of $\R\oplus\{0\}\oplus\C$, that we denote by $\bbS^2_2$. Setting $\boldsymbol\sigma_2 : \bbS^2_2 \to \C \cup \{\infty\}$ to be the stereographic projection from its south pole $(-1,0,0)$, we consider  
\[\eta(re^{i\phi}):={\boldsymbol{\sigma}}_2\circ U(re^{i\phi})= \frac{{f}_2(r)}{1+{f}_0(r)}e^{i2\phi} \, .\]
Since $U(0)=(1,0,0)$, we have $U(z)\not=(-1,0,0)$ for all $z \in \C$ by \cite[Remark 3.4]{DMP2}. 
Therefore $\eta:\C \to \C$ is well defined, real analytic, and  conformal since $U$ and $\boldsymbol\sigma_2$ are. Then, $\eta$ being conformal, it is either holomorphic or anti-holomorphic. Anti-holomorphicity is easily excluded. Indeed, it would give $\eta(z)=c/\bar{z}^2$ by the identity principle on $\C \setminus \{0\}$ for a suitable $c \in \C$ (since the two functions coincide on $\{|z|=1\}$ by equivariance). But $\eta$ is smooth near the origin, so that $c=0$. In turn $\eta \equiv 0$ which is clearly impossible because $U(z)\neq (1,0,0)$ for $|z|=1$. Then, $\eta$ being holomorphic on $\C$, we have $\eta(z)=c z^2$ for a suitable $c \in \C \setminus \{0\}$, again by the identity principle and equivariance. Therefore, 
$$U(z)=\boldsymbol{\sigma}_2^{-1} \circ \eta(z)=\left(\frac{1-|c|^2|z|^4}{1+|c|^2|z|^4},0,\frac{2cz^2}{1+|c|^2|z|^4}\right)\,.$$ 
Since $U(z)=g_{\overline{H}}(z)$ for $|z|=1$, we obtain $c=\sqrt{3}$ which shows that  \eqref{eq:class-harm}-\eqref{eq:D(z)} hold with $\mu_1=0$. 
As a consequence, we have
$f_2(r)=\frac{2\sqrt{3}r^2}{1+3r^4}$, and by conformality and equivariance of $u$, 
\[\widetilde E_0(u)= \int_{\bb D} \frac12 |\nabla u|^2 dx= \int_{\bb D} \frac{1}{r^2} |\partial_\phi u|^2 dx=2\pi \int_0^1 \frac{4|f_2(r)|^2}{r^2} r dr =2\pi \int_0^1 \frac{48 r^3 }{(1+3r^4)^2} dr=6 \pi \, . \]
\vskip5pt

\noindent{\it Step 2.} We now assume $U$ is linearly full, and we claim that \eqref{eq:class-harm}-\eqref{eq:D(z)} hold for some $\mu_1\in\C\setminus\{0\}$. 
Following \cite[Section 3.3]{DMP2}, we set $\boldsymbol\sigma_4 : \bbS^4 \to \C \cup \{\infty\}$ to be the stereographic projection from the south pole $(-1,0,0)$, and we consider 
$$ \big(\xi (re^{i\phi}),\eta(re^{i\phi})\big):={\boldsymbol\sigma}_4\circ U(re^{i\phi})= \left(\frac{{f}_1(r)}{1+{f}_0(r)}e^{i\phi} ,\frac{{f}_2(r)}{1+{f}_0(r)}e^{i2\phi}\right) \, .$$
Once again, since $U(0)=(1,0,0)$, we have $U(z)\not=(-1,0,0)$ for all $z \in \C$ by \cite[Remark 3.4]{DMP2}. 
Hence $(\xi,\eta):\C \to \C^2$ is well defined and real analytic. Notice that the  conclusions of \cite[Lemma 3.12]{DMP2} still hold in the present case (although we don't know yet that $U$ extends to a harmonic sphere $U:\C\cup\{\infty\}\simeq\bb S^2 \to \bb S^4$) because $U$ is conformal and isotropic on the whole $\C$ by Lemma \ref{equiv-extension}. 

Now we can transpose word-by-word the argument in the proof of \cite[Theorem 3.19]{DMP2} to show that $U$ extends to a finite energy harmonic sphere $U \in C^\omega(\bb S^2;\bb S^4)$ (indeed, the positive lift $\widetilde{U}^+: \C \to \C P^3$ defined there extends holomorphically to the whole $\C P^1\simeq\bb S^2\simeq\C \cup \{\infty\}$ and $U=\boldsymbol{\tau} \circ \widetilde{U}^+$ on $\bb S^2$, where $\boldsymbol\tau :\C P^3 \to \bb S^4$ is the twistor fibration). As a consequence (compare with \cite[Proposition 3.8]{DMP2}), there exist $\mu_1, \mu_2 \in \C \setminus \{0\}$ such that  
\begin{multline}\label{eq:class-harm-sphere}
	U(z) = \frac{1}{D(z)}  \left( 1 - \abs{\mu_1}^2 \abs{z}^2-  \abs{\mu_2}^2 \abs{z}^4  +\frac{\abs{\mu_1}^2\abs{\mu_2}^2}{9}\abs{z}^6 , \right. \\
	\vphantom{\frac{1}{D(Z)}} \left. 2 \mu_1 z \left( 1 - \frac{\abs{\mu_2}^2}{3}\abs{z}^4 \right), 2 \mu_2 z^2 \left( 1 + \frac{\abs{\mu_1}^2}{3} \abs{z}^2 \right)  \right),
\end{multline}
with
	\begin{equation}\label{eq:D(z)-sphere}
		D(z) :=  1 + \abs{\mu_1}^2 \abs{z}^2+  \abs{\mu_2}^2 \abs{z}^4  +\frac{\abs{\mu_1}^2\abs{\mu_2}^2}{9}\abs{z}^6 \,.
	\end{equation}
The constraint $U=(U_0,U_1,U_2)\equiv g_{\overline{H}}$ on $\partial \bb D$ first implies  $U_1\equiv 0$ on $\partial\bb D$, which in turn yields 
$|\mu_2|=\sqrt{3}$.  Then $U(z)=\left( -\frac12, 0, \frac{\mu_2}{2} z^2\right)=g_{\overline{H}}(z)$ for every $z\in\partial\bbD$,
whence $\mu_2=\sqrt{3}$. Thus, \eqref{eq:class-harm-sphere}-\eqref{eq:D(z)-sphere} hold. 

To complete the proof, it remains to show that $\widetilde E_0(u)=6\pi$ for all $\mu_1 \in \C$ in \eqref{eq:class-harm}-\eqref{eq:D(z)}.
In view of \eqref{eq:gradsquare}-\eqref{eq:conservations}, the energy $\widetilde E_0(u)$ just depends on $|\mu_1|$. It is continuous with respect to $\mu_1$, and $\widetilde E_0(u)=6\pi$ for $\mu_1=0$ as already computed in the previous step. Then it is enough to check that $\widetilde E_0(u)$ is independent of $|\mu_1|$ by showing that it has zero derivative for $|\mu_1|$ positive. To see this, we first notice that $\langle u , \partial_{|\mu_1|} u \rangle\equiv 0$ since $|u|^2=1$, and $\partial_{|\mu_1|} u= 0$ on $\partial \bb D$ since $u=g_{\overline{H}}$ on $\partial\bbD$. Differentiating under integral sign, integrating by parts and using \eqref{eq:harmonic-bdvp},  we obtain
\begin{equation*}
\begin{split}
		\partial_{|\mu_1|} \widetilde E_0(u) &= \int_{\bbD} \partial_ {\abs{\mu_1}}\Big( \frac12 \abs{\nabla u}^2\Big) \,dx = \int_{\bbD}  \nabla u \cdot \nabla \big(\partial_{|\mu_1|} u\big) \,dx \\
	&= - \int_{\bbD} \Delta u \cdot \partial_{|\mu_1|} u\,dx = \int_{\bbD}  \abs{\nabla u}^2 u\cdot  \partial_{|\mu_1|} u \,dx =0 \,.
\end{split}
	\end{equation*}
which concludes the proof. 
\end{proof}


\subsection{Energy gap for the Dirichlet integral of maps into $\bbS^4$}\label{sec:gap}

In this subsection, we compute explicitly the minimum values and describe the minimizers of the minimization problems
	\begin{equation}
		\label{eq:minAN} \min_{u \in \widetilde{\mathcal{A}}_{\rm N}} \widetilde E_0(u)\,,
\end{equation}
and
\begin{equation}\label{eq:minAS} 
\min_{u \in \widetilde{\mathcal{A}}_{\rm S}} \widetilde E_0(u) \, ,
\end{equation}
where $\widetilde{E}_0$ is given in \eqref{eq:2d-LDGenergy-tilde}, thus making explicit a corresponding \emph{gap phenomenon}. The following theorem is the main result of the subsection, and it is a direct consequence of Propositions~\ref{ASminimization} and~\ref{ANminimization} below.
	\begin{theorem}
	\label{thm:gap-u} 
	The following gap holds for the Dirichlet energy \eqref{eq:2d-LDGenergy-tilde}:
	\begin{equation}\label{eq:gap}
		2\pi=\min_{u \in \widetilde{\mathcal{A}}_{\rm S}} \widetilde{E}_0(u) < \min_{u \in \widetilde{\mathcal{A}}_{\rm N}} \widetilde{E}_0(u)=6\pi\,.
	\end{equation}
In addition, the minimum value of $\widetilde{E}_0$ over $ \widetilde{\mathcal{A}}_{\rm S}$  is uniquely achieved by 
\begin{equation}\label{eq:uS}
 u_{\rm S}(z) := \left( \frac{\abs{z}^4-3}{\abs{z}^4+3}, \, \, 0 , \, \, \frac{2 \sqrt{3} z^2}{{\abs{z}^4+3}} \right) \,,
 \end{equation}
while the minimum value of $\widetilde{E}_0$ over $ \widetilde{\mathcal{A}}_{\rm N}$ is attained at $u\in \widetilde{\mathcal{A}}_{\rm N}$ iff $u$ is of the form \eqref{eq:class-harm}-\eqref{eq:D(z)}.  
\end{theorem}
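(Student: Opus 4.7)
The plan is to establish the two minimization problems separately, as the theorem follows by directly combining them. Both upper bounds are immediate by explicit construction: a direct computation (or equivalently, the observation that $u_{\rm S}$ is the inverse stereographic projection from the north pole of the holomorphic map $z\mapsto(0,z^2/\sqrt 3)$, hence conformal, with image the spherical cap $\{f_0\leq -1/2\}\subset\bbS^2_2$ of area $\pi$ covered exactly twice) gives $u_{\rm S}\in\widetilde{\mathcal{A}}_{\rm S}$ with $\widetilde E_0(u_{\rm S})=2\pi$, while every map of the form \eqref{eq:class-harm}--\eqref{eq:D(z)} belongs to $\widetilde{\mathcal{A}}_{\rm N}$ and has energy exactly $6\pi$ by Proposition \ref{prop:largeharm}. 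Hence $\inf_{\widetilde{\mathcal{A}}_{\rm S}}\widetilde E_0\leq 2\pi$ and $\inf_{\widetilde{\mathcal{A}}_{\rm N}}\widetilde E_0\leq 6\pi$. The matching lower bounds, existence, and uniqueness then follow from the direct method combined with a bubbling analysis and a classification of equivariant harmonic maps.

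For the $\widetilde{\mathcal{A}}_{\rm S}$-problem, I would first take a minimizing sequence $\{u_n\}$ with $\widetilde E_0(u_n)\to m_{\rm S}\leq 2\pi$. The $W^{1,2}$-bound yields, up to subsequence, a weak limit $u_\ast$ with $u_\ast=g_{\overline H}$ on $\partial\bbD$ by trace compactness, and Lemma \ref{lemma:s1eq-emb} forces $u_\ast(0)\in\{\pm\eo\}$. The alternative $u_\ast(0)=+\eo$ is ruled out by a standard Sacks--Uhlenbeck-type concentration-compactness argument: a jump in the value at the origin from $-\eo$ to $+\eo$ forces a non-trivial equivariant harmonic bubble $\omega:\bbS^2\to\bbS^4$ with $\omega(0)=-\eo$, $\omega(\infty)=+\eo$ to form at $z=0$; by the classification of equivariant harmonic spheres in \cite[Proposition 3.8]{DMP2}, every non-constant such sphere has energy at least $4\pi$ (the minimum being realized by $\bbS^2_1$-valued degree-one maps), so the energy inequality gives $m_{\rm S}\geq\widetilde E_0(u_\ast)+\widetilde E_0(\omega)\geq 4\pi>2\pi$, a contradiction. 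Consequently $u_\ast(0)=-\eo$, no bubbling occurs, strong $W^{1,2}$-convergence follows from the energy identity, and $u_\ast\in\widetilde{\mathcal{A}}_{\rm S}$ is a minimizer. For the identification $u_\ast=u_{\rm S}$, Proposition \ref{prop:symmetric-criticality} ensures that $u_\ast$ solves \eqref{eq:harmonic-bdvp}, and by Lemma \ref{equiv-extension} it extends to a conformal, isotropic, entire harmonic map $U_\ast:\C\to\bbS^4$ with $U_\ast(0)=-\eo$. Adapting Step 2 in the proof of Proposition \ref{prop:largeharm} (the positive twistor lift together with \cite[Proposition 3.8]{DMP2}, now in the pole configuration $U_\ast(0)=-\eo$, which formally corresponds to the limit $\mu_1\to\infty$ in \eqref{eq:class-harm-sphere}--\eqref{eq:D(z)-sphere}) yields an explicit rational-type formula for $U_\ast$; imposing the boundary condition $U_\ast|_{\partial\bbD}=g_{\overline H}$ and the pole value $U_\ast(0)=-\eo$ forces all free parameters to coincide with those of $u_{\rm S}$, giving uniqueness and $m_{\rm S}=2\pi$.

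The $\widetilde{\mathcal{A}}_{\rm N}$-problem is handled analogously, now relying on the already-established $\inf_{\widetilde{\mathcal{A}}_{\rm S}}\widetilde E_0=2\pi$. A minimizing sequence $\{v_n\}\subset\widetilde{\mathcal{A}}_{\rm N}$ admits a weak limit $v_\ast$ with $v_\ast(0)\in\{\pm\eo\}$. If $v_\ast(0)=+\eo$, strong convergence holds, $v_\ast$ is a minimizer, and by Proposition \ref{prop:symmetric-criticality} combined with Proposition \ref{prop:largeharm} it has the form \eqref{eq:class-harm}--\eqref{eq:D(z)} with energy exactly $6\pi$. If instead $v_\ast(0)=-\eo$, bubbling forces $v_\ast\in\widetilde{\mathcal{A}}_{\rm S}$ with $\widetilde E_0(v_\ast)\geq 2\pi$ (by the first part) and a bubble $\omega$ with $\widetilde E_0(\omega)\geq 4\pi$, so that $\liminf_n\widetilde E_0(v_n)\geq 6\pi$ matches the upper bound. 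In either case $\inf_{\widetilde{\mathcal{A}}_{\rm N}}\widetilde E_0=6\pi$, and any minimizer is a critical point, hence of the form \eqref{eq:class-harm}--\eqref{eq:D(z)} by Proposition \ref{prop:largeharm}.

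The main analytical obstacle will be the adaptation of the twistor-lift argument of \cite[Section 3]{DMP2} from the north-pole configuration $U(0)=+\eo$ (treated in Proposition \ref{prop:largeharm}) to the south-pole case $U(0)=-\eo$, needed to close the uniqueness statement in $\widetilde{\mathcal{A}}_{\rm S}$; once this adaptation is in place, the bubbling analysis and the quantization bounds on bubble energies are standard in the equivariant setting and complete the proof.
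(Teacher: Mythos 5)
Your overall architecture (explicit upper bounds $2\pi$ and $6\pi$, dichotomy on the value at the origin of a weak limit, quantization $2\pi+4\pi=6\pi$ in the noncompact case, identification of minimizers as critical points via Proposition~\ref{prop:symmetric-criticality} and Proposition~\ref{prop:largeharm}) is for the $\widetilde{\mathcal{A}}_{\rm N}$-part essentially the paper's route, but there is a genuine gap in how you run the concentration--compactness step: you apply Sacks--Uhlenbeck bubbling to a \emph{raw} minimizing sequence. Elements of a minimizing sequence in $\widetilde{\mathcal{A}}_{\rm N}$ (or $\widetilde{\mathcal{A}}_{\rm S}$) are not critical points, so the claim that ``a jump of the value at the origin forces a non-trivial equivariant harmonic bubble of energy at least $4\pi$'' has no proof as stated; there is no cheap quantized lower bound for the energy needed by an arbitrary equivariant $W^{1,2}$-map to move its value at $0$ from $-\eo$ to near $\eo$ (thin-tentacle competitors show the image-area heuristic fails). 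The paper fixes exactly this by replacing the minimizing sequence with an \emph{enhanced} one: minimizers over the weakly closed relaxed classes $\widetilde{\mathcal{A}}_{\rm N}^{1/n}$ of \eqref{eq:ANrho} (Lemma~\ref{ANdensity}), which are genuine harmonic maps on $\bbD\setminus\overline{\bbD}_{1/n}$; only for such sequences does the rescaling argument of Lemma~\ref{profile-extraction} produce a limit solving the harmonic map equation on $\C$, to which the $\geq 4\pi$ classification bound of \cite[Section~3]{DMP2} applies. Without this harmonic-replacement/regularization device (or a substitute), both your $\widetilde{\mathcal{A}}_{\rm S}$-existence argument and your Case~II for $\widetilde{\mathcal{A}}_{\rm N}$ are incomplete.

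The second gap is the one you yourself flag: the uniqueness of the minimizer over $\widetilde{\mathcal{A}}_{\rm S}$. Your plan is to classify equivariant solutions of \eqref{eq:harmonic-bdvp} with $u(0)=-\eo$ by adapting the twistor-lift argument, but this ``south-pole'' classification is not carried out, and even granting it, uniqueness of the \emph{minimizer} would still require comparing the energies of all such critical points (a priori there could be a family, as in the north-pole case where every solution has energy $6\pi$). The paper avoids the direct method and the classification altogether for the $\rm S$-problem: since $u_{\rm S}(\overline\bbD)\subset\bbS^4_-=\{u_0<0\}$, the Schoen--Uhlenbeck lemma \cite[Lemma~2.1]{SU} shows $u_{\rm S}$ minimizes $\widetilde E_0$ over the \emph{full} class $W^{1,2}_{g_{\overline H}}(\bbD;\bbS^4)$ (hence over $\widetilde{\mathcal{A}}_{\rm S}$, with no compactness issue, since $u_{\rm S}\in\widetilde{\mathcal{A}}_{\rm S}$); uniqueness follows from the J\"ager--Kaul theorem \cite{JK} once one shows any minimizer has range in $\bbS^4_-$, which is done by the reflection trick $u_0\mapsto -|u_0|$ and the maximum principle applied to $-\Delta|u_0|=|\nabla u|^2|u_0|\geq 0$ with boundary value $\tfrac12$. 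Besides closing the uniqueness gap, this argument is both shorter and stronger than what you propose, as it yields unconstrained minimality of $u_{\rm S}$, which the paper then reuses (exactly as you do) to obtain $\widetilde E_0(u_*)\geq 2\pi$ in the noncompact alternative of the $\widetilde{\mathcal{A}}_{\rm N}$-problem.
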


\begin{remark}[``Bubbling-off'' of harmonic spheres]
\label{bubbling}
The resolution of \eqref{eq:minAN}-\eqref{eq:minAS}  suffers two main difficulties: (i) the conformal invariance of the functional $\widetilde E_0$ and the induced lack of compactness of  energy-bounded sequences; (ii) the fact that the classes $\widetilde{\mathcal{A}}_{\rm N}$ and $\widetilde{\mathcal{A}}_{\rm S}$ are not closed under weak $W^{1,2}$-convergence. To illustrate these facts, let us consider for $\mu_1 \in \C$, the mapping  $u_{\mu_1} \in \widetilde{\mathcal{A}}_{\rm N}$ given by Proposition~\ref{prop:largeharm} and satisfying $\widetilde{E}_0(u_{\mu_1})= 6 \pi$. As $|\mu_1|\to \infty$, we have $u_{\mu_1} \rightharpoonup u_{\rm S} \in \widetilde{\mathcal{A}}_{\rm S}$ weakly in $W^{1,2}(\bbD)$, where $ u_{\rm S}$ is given by \eqref{eq:uS}. 
Note that $u_{\rm S}$ solves \eqref{eq:harmonic-bdvp} and satisfies $\widetilde E_0(u_{\rm S})=2\pi$. As the convergence is smooth away from the origin,  $\frac12 \abs{\nabla u_{\mu_1}}^2 dx \rightharpoonup \frac12 \abs{\nabla u_{\rm S}}^2 dx +4\pi \delta_0$ as measures on $\overline{\bb D}$. Finally, if $\mu_1/|\mu_1| \to e^{i \theta} $ as $|\mu_1| \to \infty$,  then  $u_{\mu_1}(z/\mu_1) \to \tilde{u}(z)$ strongly in $W_{\rm loc}^{1,2}(\C ;\bb S^4)$, where
\begin{equation}\label{eq:ustar}
	\tilde{u}(z):=  \left( \frac{1 - \abs{z}^2}{1+\abs{z}^2}  ,  \frac{ 2 e^{i\theta} z}{1+\abs{z}^2}, 0	 \right)
	\end{equation}		
is a finite energy harmonic $2$-sphere (a ``bubble''), $\tilde{u}: \C\cup \{\infty\} \simeq\bb S^2 \to \bb S^4$ with $\widetilde{E}_0(\tilde{u};\C)=4\pi$.
\end{remark}

To discuss the minimization problem \eqref{eq:minAS}, we rely on existing results in the literature \cite{JK,SU}, and we actually prove that  the minimality of  $u_{\rm S}$ holds even among non symmetric competitors.

\begin{proposition}
\label{ASminimization}
The map $u_{\rm S}$ given by \eqref{eq:uS}  is the unique minimizer of $\widetilde{E}_0$ in $W^{1,2}_{g_{\overline{H}}}(\bb D; \mathbb S^4)$. As a consequence, $\min_{u \in \widetilde{\mathcal{A}}_{\rm S}} \widetilde{E}_0(u)=2\pi$ and $u_{\rm S}$ is the unique minimizer of $\widetilde{E}_0$ over $\widetilde{\mathcal{A}}_{\rm S}$. 
\end{proposition}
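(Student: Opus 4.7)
The key geometric observation I would start from is that the boundary image $g_{\overline{H}}(\partial \bbD)$ lies at constant geodesic distance $\pi/3$ from $-\eo \in \bbS^4$ (indeed $g_{\overline{H}}(z) \cdot (-\eo) = 1/2$ for every $z \in \partial \bbD$), hence is contained in the closed geodesic ball $\overline B := \overline B_{\pi/3}(-\eo) \subset \bbS^4$ of radius strictly less than $\pi/2$. This places the problem squarely in the classical framework of minimizing harmonic maps into a regular ball as treated in \cite{JK, SU} (compare also the Hildebrandt--Kaul--Widman uniqueness theorem).

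First, invoking the classical convex-ball argument (the nearest-point retraction onto $\overline B$ is $1$-Lipschitz, as $\overline B$ has radius $< \pi/2$ in $\bbS^4$), one reduces the minimization of $\widetilde E_0$ over $W^{1,2}_{g_{\overline{H}}}(\bbD; \bbS^4)$ to its minimization over the subclass of maps with image in $\overline B$: indeed, for any such $u$, the retracted map belongs to the same class, has image in $\overline B$, and has energy no larger than that of $u$. On this subclass the Dirichlet energy is strictly geodesically convex — since in a ball of radius $< \pi/2$ the squared distance from the center is a strictly convex function along geodesics — so the direct method combined with this strict convexity yields a unique minimizer $u_* \in W^{1,2}_{g_{\overline{H}}}(\bbD; \bbS^4)$, which is smooth and harmonic by H\'elein's regularity theorem.

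To identify $u_*$ with the explicit map $u_{\rm S}$, I would verify that $u_{\rm S}$ is itself a harmonic map with image in $\overline B$. Writing $u_{\rm S} \simeq (f_0(r), 0, f_2(r) e^{2i\phi})$ with $f_0(r) = (r^4-3)/(r^4+3)$ and $f_2(r) = 2\sqrt{3}\, r^2/(r^4+3)$, the following hold: (i) $u_{\rm S} \in W^{1,2}_{g_{\overline{H}}}(\bbD; \bbS^4)$ with the correct boundary values; (ii) $u_{\rm S}(\overline{\bbD}) \subset \overline B$, as $f_0 \leq -1/2$ on $[0,1]$; and (iii) $u_{\rm S}$ is harmonic because its stereographic projection $\boldsymbol{\sigma}_2 \circ u_{\rm S}(z) = \sqrt{3}/\bar{z}^{2}$ is antiholomorphic on $\bbD \setminus \{0\}$ into the totally geodesic subsphere $\bbS^2_2 \subset \bbS^4$, with a removable singularity at the origin (the map extends smoothly thanks to the $z^2$ factor in $u_2$ and to Lemma~\ref{lemma:s1eq-emb}). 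By the uniqueness just established, $u_* = u_{\rm S}$. A direct computation using the reduced equivariant energy \eqref{Eq:equiv-energy-E02D} then yields $\widetilde E_0(u_{\rm S}) = 2\pi$. Finally, since $u_{\rm S}(0) = -\eo$, the map $u_{\rm S}$ belongs to $\widetilde{\mathcal A}_{\rm S}$, and the two assertions about minimization over $\widetilde{\mathcal A}_{\rm S}$ follow immediately from the minimality and uniqueness in the larger class.

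The main technical obstacle is the strict geodesic convexity of the Dirichlet energy on maps into $\overline B$: both the $1$-Lipschitz retraction and the strict convexity require the radius of the ball to be strictly less than $\pi/2$. Our computation $\pi/3 < \pi/2$ for the specific boundary data $g_{\overline{H}}$ is the sharp geometric condition that makes the classical convex-ball theory apply and delivers simultaneously the existence, the uniqueness in the full class $W^{1,2}_{g_{\overline{H}}}(\bbD; \bbS^4)$, and the explicit identification with $u_{\rm S}$.
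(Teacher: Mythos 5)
Your reduction step is where the argument breaks. You claim that the nearest-point retraction of $\bbS^4$ onto the geodesic ball $\overline B_{\pi/3}(-\eo)$ is $1$-Lipschitz, so that composing any competitor with it does not increase the Dirichlet energy. This is false on a sphere: the nearest-point projection onto a geodesic ball is distance-\emph{increasing} for pairs of points near the antipode of the center (on $\bbS^2$, two points at distance $2\veps$ near the north pole project onto points at distance $2\rho$ on the boundary circle of the ball $\overline B_\rho(S)$ around the south pole, and $2\rho>2\veps$ for $\veps<\rho$), and it is not even single-valued at the antipode itself. This is not a removable technicality here, because admissible competitors genuinely reach the antipode $+\eo$ of your ball's center — every map in $\widetilde{\mathcal A}_{\rm N}$ satisfies $u(0)=\eo$ — so there is no energy-decreasing projection of the unconstrained class onto ball-valued maps. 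Establishing that an \emph{unconstrained} minimizer over $W^{1,2}_{g_{\overline H}}(\bbD;\bbS^4)$ does not escape a convex region is precisely the crux of the proposition, and your proposal assumes it rather than proves it. (A secondary point: even granting the reduction, a minimizer constrained to a closed ball is a priori a solution of an obstacle-type problem, and your boundary datum lies exactly on $\partial B_{\pi/3}(-\eo)$, so identifying the constrained minimizer with a genuine smooth harmonic map needs an extra argument; H\'elein's theorem as you invoke it applies to $\bbS^4$-valued harmonic maps, not to ball-constrained minimizers.)

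The correct substitute, which is what the paper does, is to use the folding map $(u_0,u_1,\dots,u_4)\mapsto(-|u_0|,u_1,\dots,u_4)$: this \emph{is} $1$-Lipschitz and retracts $\bbS^4$ onto the closed lower half-sphere, so if $u$ is any minimizer, the folded map is again a minimizer, hence a smooth harmonic map, and $v=|u_0|$ satisfies $-\Delta v=|\nabla u|^2 v\geq 0$ with $v=\tfrac12$ on $\partial\bbD$; the minimum principle then forces $v\geq\tfrac12$, i.e.\ the image of $u$ lies strictly in the open hemisphere $\bbS^4_-$. Only at that point do the convex-target tools you have in mind apply: minimality of $u_{\rm S}$ follows from \cite[Lemma 2.1]{SU} (smooth harmonic map into an open hemisphere), and uniqueness from the J\"ager--Kaul theorem in the geodesically convex set $\bbS^4_-$, giving $u=u_{\rm S}$. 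Your observation that the image of $u_{\rm S}$ and of the boundary datum lies in $\overline B_{\pi/3}(-\eo)$ is true but plays no role once the hemisphere containment is obtained this way; as it stands, your proof has a gap exactly at the step that the rest of the argument depends on.
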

\begin{proof}
We shall use the real coordinates $u=(u_0, \dots, u_4)\in  \bb R^5 \simeq \R \oplus \C \oplus \C$, and we shall denote by $\bb S^4_\pm=\{ u \in \bb S^4: u_0 \gtrless 0  \}$ the upper/lower open half spheres. 

First, we observe that $u_{\rm S}(\overline\bbD)\subset \bbS^4_-$. Since $u_{\rm S}$ is a smooth harmonic map (see Remark~\ref{bubbling}), we deduce from  \cite[Lemma 2.1]{SU} that $u_{\rm S}$ minimizes  $\widetilde{E}_0$ over the whole $W^{1,2}_{g_{\overline{H}}}(\bb D; \mathbb S^4)$.  Now we claim that $u_{\rm S}$ is actually the unique minimizer over  $W^{1,2}_{g_{\overline{H}}}(\bb D; \mathbb S^4)$. Since $\bbS^4_-$ is geodesically convex, the uniqueness result from \cite{JK} tells us that $u_{\rm S}$ is the unique (smooth) solution to \eqref{eq:harmonic-bdvp} whose range is strictly included in $\bbS^4_-$. Now, if $u\in W^{1,2}_{g_{\overline{H}}}(\bb D;\bb S^4)$ is any minimizer of $\widetilde{E}_0$, then  $u$ is a harmonic map smooth up to the boundary. Hence it suffices to show that $u(\overline{\bbD})\subset\bbS^4_-$ to conclude that $u=u_{\rm S}$. 
Assume by contradiction  that $u(z)=(u_0(z), u_1(z),\ldots, u_4(z))$ satisfies $u_0(z_*)=0$ for some $z_*\in \bb D$. Then the competitor $\tilde{u}(z):=(-|u_0(z)|, u_1(z), \ldots, u_4(z))$ belongs to $W^{1,2}_{g_{\overline{H}}}(\bbD;\bbS^4)$, and $\widetilde{E}_0(\tilde u)\leq \widetilde{E}_0(u)$. Thus, $\tilde{u}$ is also a minimizer, whence a harmonic map in $\bbD$ smooth up to the boundary. 
Then the function $v(z):=|u_0(z)|$ is a smooth  solution in $\D$ to $-\Delta v =|\nabla u|^2 v \geq 0$, with $v(z) = \frac{1}{2}$ on $\partial \bbD$. By the maximum principle, we have $v\geq \frac{1}{2}$ in $\bbD$, in contradiction with the assumption $v(z_*) = 0$. Therefore $u(\overline{\bbD})\subset\bbS^4_-$, leading to $u=u_{\rm S}$. 
Finally, since $u_{\rm S}\in \widetilde{\mathcal{A}}_{\rm S}$, it obviously follows that  $u_{\rm S}$ is the unique minimizer of $\widetilde{E}_0$ over $\widetilde{\mathcal{A}}_{\rm S}$, and a direct computation yields  $\widetilde{E}_0(u_{\rm S})=2\pi$ (see the the proof of Proposition \ref{prop:largeharm}).
\end{proof}

Concerning \eqref{eq:minAN}, we have the following result.

\begin{proposition}\label{ANminimization}
		It holds 
		\begin{equation}\label{eq:inf-energies-N}
			\min_{u \in \widetilde{\mathcal{A}}_{\rm N}} \widetilde{E}_0(u) = 6\pi\,,
		\end{equation}
and the minimum is attained at a map $u$ if and only if $u$ is of the form  \eqref{eq:class-harm}-\eqref{eq:D(z)}.
\end{proposition}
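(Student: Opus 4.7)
The plan is to prove $\inf_{\widetilde{\mathcal{A}}_{\rm N}}\widetilde{E}_0 = 6\pi$ by matching upper and lower bounds, and simultaneously to identify all minimizers via Proposition~\ref{prop:largeharm}. The upper bound is immediate: the explicit map of Proposition~\ref{prop:largeharm} corresponding for instance to $\mu_1 = 0$ belongs to $\widetilde{\mathcal{A}}_{\rm N}$ and has energy exactly $6\pi$. For the matching lower bound, I would take an arbitrary minimizing sequence $\{u_n\}\subset\widetilde{\mathcal{A}}_{\rm N}$, extract a weakly convergent subsequence $u_n\rightharpoonup u_*$ in $W^{1,2}(\bbD)$, and apply Lemma~\ref{lemma:s1eq-emb}(ii) to conclude that $u_*\in\widetilde{\mathcal{A}}^{\rm sym}_{g_{\overline{H}}}(\bbD)=\widetilde{\mathcal{A}}_{\rm N}\cup\widetilde{\mathcal{A}}_{\rm S}$. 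The argument then splits according to the value $u_*(0)\in\{\pm(1,0,0)\}$.

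If $u_*(0)=(1,0,0)$, so that $u_*\in\widetilde{\mathcal{A}}_{\rm N}$ is admissible, lower semicontinuity together with admissibility forces $\widetilde{E}_0(u_*)=\lim_n \widetilde{E}_0(u_n)=\inf\widetilde{E}_0$. Combining this energy convergence with the compact embedding $W^{1,2}\hookrightarrow L^2$ upgrades the weak convergence to strong $W^{1,2}$-convergence, and identifies $u_*$ as a minimizer. The key point is then that $u_*$ is weakly harmonic: any smooth $\bbS^1$-equivariant tangent perturbation $V$ of $u_*$ compactly supported in $\bbD$ automatically satisfies $V(0)=0$, since the subspace of $T_{(1,0,0)}\bbS^4\subset \R\oplus\C\oplus\C$ fixed by the $\bbS^1$-action \eqref{eq:deg-action} is trivial. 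Such perturbations therefore preserve the constraint $u(0)=(1,0,0)$ along with the $\bbS^4$-constraint and the Dirichlet boundary data, so the minimality of $u_*$ yields vanishing first variation in every equivariant direction; the symmetric criticality principle (Proposition~\ref{prop:symmetric-criticality}(i), in its 2D version) then promotes this to a weak solution of \eqref{eq:harmonic-bdvp}. Proposition~\ref{prop:largeharm} thus forces $\widetilde{E}_0(u_*)=6\pi$, giving $\inf\widetilde{E}_0 = 6\pi$ in this case.

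If instead $u_*(0)=-(1,0,0)$, so that $u_*\in\widetilde{\mathcal{A}}_{\rm S}$, then $\widetilde{E}_0(u_*)\geq 2\pi$ by Proposition~\ref{ASminimization}, and a bubble must form at the origin since $u_n(0)=(1,0,0)$ but $u_*(0)=-(1,0,0)$ while the convergence is uniform on $\overline{\bbD}\setminus\{0\}$ by Lemma~\ref{lemma:s1eq-emb}(ii). I would quantify the bubble energy in the spirit of Remark~\ref{bubbling}: pick scales $r_n\downarrow 0$ for which the rescaled maps $v_n(z):=u_n(r_n z)$ exit a fixed small neighborhood of $(1,0,0)$ in $\bbS^4$ by a definite amount, extract a weak $W^{1,2}_{\rm loc}(\C)$-limit $\omega:\C\to\bbS^4$, and argue via the standard bubble analysis for $\bbS^4$-valued maps in two dimensions -- combined with Lemma~\ref{equiv-extension} and removability at infinity -- that $\omega$ is a weakly harmonic map extending to a non-constant equivariant harmonic sphere $\bbS^2\to\bbS^4$, and thus carries energy at least $4\pi$. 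Lower semicontinuity applied separately to $\{v_n\}$ on expanding balls $B_R\subset \C$ and to $\{u_n\}$ on the complements $\bbD\setminus B_{r_n R}$, followed by the limit $R\to\infty$, yields $\liminf_n \widetilde{E}_0(u_n)\geq \widetilde{E}_0(u_*) + \widetilde{E}_0(\omega;\C)\geq 2\pi + 4\pi = 6\pi$, completing the lower bound.

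The characterization of minimizers then follows at once: any minimizer $u\in\widetilde{\mathcal{A}}_{\rm N}$ falls into the first case (via the constant sequence $u_n\equiv u$), hence is a weakly harmonic map, and is thus of the form \eqref{eq:class-harm}-\eqref{eq:D(z)} by Proposition~\ref{prop:largeharm}. The principal technical obstacle is the bubble quantization in the second case, since $\{u_n\}$ is not a priori a Palais-Smale sequence; I expect to handle it by a local replacement argument on small discs away from the origin, substituting $u_n$ by local harmonic extensions -- justified by the equivariant $\varepsilon$-regularity underlying Propositions~\ref{intepsregprop}-\ref{bdrepsregcor} -- so as to produce an almost-harmonic minimizing sequence for which the energy identity with the bubble is classical.
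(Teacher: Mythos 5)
Your upper bound, your compact case, and your characterization of minimizers are all sound and essentially parallel to the paper: in particular, your observation that a minimizer over $\widetilde{\mathcal{A}}_{\rm N}$ is automatically a critical point (because equivariant perturbations cannot move the value at the origin, $\widetilde{\mathcal{A}}_{\rm N}$ being open and closed in $\widetilde{\mathcal{A}}^{\rm sym}_{g_{\overline{H}}}(\bbD)$) and hence, by symmetric criticality, a weak solution of \eqref{eq:harmonic-bdvp} classified by Proposition~\ref{prop:largeharm}, is exactly how the paper disposes of the compact case and of uniqueness of the form \eqref{eq:class-harm}--\eqref{eq:D(z)}.

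The genuine gap is in your noncompact case. For an \emph{arbitrary} minimizing sequence the maps $u_n$ are not harmonic, not even approximately so, and the claim that the concentration at the origin must carry at least $4\pi$ of energy is precisely the quantization statement that has no off-the-shelf justification here: weak limits of rescalings of non-harmonic maps need not be harmonic, and without harmonicity the classification of equivariant harmonic spheres (the only source of the $4\pi$ bound) cannot be invoked. Your proposed fix --- harmonic replacement on small discs away from the origin --- does not reach the difficulty, because the region that matters is the one \emph{containing} the origin, where the constraint $u(0)=\eo$ lives; that constraint has zero capacity and is not weakly closed, so an unconstrained (or even equivariant but unconstrained-at-$0$) replacement on a disc around the origin can flip the value at $0$, while a replacement constrained to keep $u(0)=\eo$ is just the original minimization problem at a smaller scale. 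Moreover, replacement must be done equivariantly (on $\bbS^1$-invariant sets) for the class and for the point value at $0$ to make sense, a point your sketch does not address. The paper circumvents all of this by \emph{building} the minimizing sequence: it minimizes in the weakly closed subclasses $\widetilde{\mathcal{A}}_{\rm N}^{1/n}$ of \eqref{eq:ANrho}, whose elements are identically $\eo$ on $\bbD_{1/n}$, shows these minimizers form a minimizing sequence (Lemma~\ref{ANdensity}), and then exploits that each one is harmonic on the annulus $\{1/n<|z|<1\}$ and constant on the core: the first zero $r_n$ of $f_0^{(n)}$, the logarithmic estimate $1\lesssim\sqrt{\widetilde{E}_0(u_n)}\sqrt{\log(nr_n)}$ keeping $1/(nr_n)$ bounded away from $1$, Lemaire's constancy theorem to rule out a residual inner annulus, and the classification of equivariant harmonic spheres then yield the $4\pi$ bubble energy (Lemma~\ref{profile-extraction}), after which the $2\pi$ from Proposition~\ref{ASminimization} on the base gives the $6\pi$ lower bound exactly as you intend. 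Unless you supply a replacement/quantization argument that handles the core at the origin within the equivariant, non-weakly-closed constraint, your lower bound in the noncompact case is not established.
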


The proof of Proposition \ref{ANminimization} is postponed to the end of this subsection. In contrast with the proof of Proposition \ref{ASminimization}, we now have to overcome  the possible lack of compactness of minimizing sequences and  concentration of energy. To this purpose, we shall construct suitable minimizing sequences  considering a regularization of problem \eqref{eq:minAN}. This regularization is based on the following subclasses of $\widetilde{\mathcal{A}}_{\rm N}$,  
\begin{equation}
\label{eq:ANrho} 
\widetilde{\mathcal{A}}_{\rm N}^\rho:=\big\{ u \in \widetilde{\mathcal{A}}_{\rm N} : u= (1,0,0) \mbox{ a.e. on } \bbD_\rho \big\}
 \text{ with } 0<\rho<1 \, , \quad \widetilde{\mathcal{A}}^0_{\rm N}:=\bigcup_{0<\rho<1} \widetilde{\mathcal{A}}_{\rm N}^\rho \, .
\end{equation}
As opposed to $\widetilde{\mathcal{A}}_{\rm N}$, the subsets $\widetilde{\mathcal{A}}_{\rm N}^\rho$ are closed under  weak $W^{1,2}$-convergence. The following lemma relates those different classes and their corresponding minimization problems. 

\begin{lemma}
\label{ANdensity}
The following properties hold.
$\text{ }$
\begin{itemize}
\item[(i)] $\widetilde{\mathcal{A}}_{\rm N}^0$ is a strongly dense subset of $\widetilde{\mathcal{A}}_{\rm N}$ in $W^{1,2}(\bbD)$.
\vskip5pt

\item[(ii)]   $\displaystyle{\inf_{u \in \widetilde{\mathcal{A}}_{\rm N}} \widetilde{E}_0(u) = \inf_{u \in \widetilde{\mathcal{A}}_{\rm N}^0} \widetilde{E}_0(u)=\lim_{\rho \to 0} \inf_{u \in \widetilde{\mathcal{A}}_{\rm N}^\rho} \widetilde{E}_0(u)}$\,.
\vskip5pt

\item[(iii)] For each integer $n \geq 1$, the minimization problem
\begin{equation}\label{regminppbAN}
\min_{u\in \widetilde{\mathcal{A}}_{\rm N}^{\frac{1}{n}}}  \widetilde{E}_0(u)
\end{equation}
admits a solution. In addition, for any solution $u_n\in \widetilde{\mathcal{A}}_{\rm N}^{\frac{1}{n}}$, we have 
\begin{equation}\label{limvalueregminppbAN}
\displaystyle \lim_{n\to\infty}\widetilde{E}_0(u_n)=\inf_{u \in \widetilde{\mathcal{A}}_{\rm N}} \widetilde{E}_0(u) \,.
\end{equation}
\end{itemize}	
\end{lemma}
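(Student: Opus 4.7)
The plan is to reduce parts (ii) and (iii) to the density in (i), which is the main technical step, proved by a cut-off construction near the origin that exploits the continuity provided by Lemma~\ref{lemma:s1eq-emb}.

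For (i), given $u \in \widetilde{\mathcal{A}}_{\rm N}$, we have $u \in C^0(\overline{\bbD};\bbS^4)$ with $u(0)=(1,0,0)$ by Lemma~\ref{lemma:s1eq-emb}(i). Fix a smooth function $\eta:[0,\infty)\to[0,1]$ with $\eta\equiv 0$ on $[0,1]$ and $\eta\equiv 1$ on $[2,\infty)$, and for $0<\rho<1/2$ define
\[
\tilde u_\rho(x) := (1,0,0) + \eta(|x|/\rho)\big(u(x)-(1,0,0)\big)\,.
\]
By construction $\tilde u_\rho\equiv (1,0,0)$ on $\bbD_\rho$ and $\tilde u_\rho\equiv u$ outside $\bbD_{2\rho}$, so the boundary datum is preserved. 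Since $(1,0,0)$ is a fixed point of the $\bbS^1$-action \eqref{eq:deg-action} and $\eta(|x|/\rho)$ is radial, $\tilde u_\rho$ is $\bbS^1$-equivariant. The continuity of $u$ with $u(0)=(1,0,0)$ yields $\sup_{\bbD_{2\rho}}|u-(1,0,0)|\to 0$ as $\rho\to 0$, so for $\rho$ small $|\tilde u_\rho|\geq 1/2$ throughout $\bbD$, and the projection $u_\rho := \tilde u_\rho/|\tilde u_\rho|$ yields a well-defined element of $\widetilde{\mathcal{A}}_{\rm N}^\rho$.

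The strong $W^{1,2}$-convergence $u_\rho\to u$ follows from the estimate
\[
|\nabla \tilde u_\rho|^2 \leq C\rho^{-2}|u-(1,0,0)|^2\mathbbm{1}_{\bbD_{2\rho}\setminus\bbD_\rho} + C|\nabla u|^2\mathbbm{1}_{\bbD_{2\rho}}\,,
\]
which, integrated on $\bbD_{2\rho}$ and combined with $|\bbD_{2\rho}|=4\pi\rho^2$, gives
\[
\int_{\bbD_{2\rho}}|\nabla\tilde u_\rho|^2\,dx \leq 4\pi C\sup_{\bbD_{2\rho}}|u-(1,0,0)|^2 + C\int_{\bbD_{2\rho}}|\nabla u|^2\,dx\longrightarrow 0\,,
\]
where the last integral tends to $0$ by absolute continuity since $u\in W^{1,2}(\bbD)$. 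Passing to $u_\rho$ only costs a multiplicative constant because $|\tilde u_\rho|\geq 1/2$, and since $u_\rho\equiv u$ outside $\bbD_{2\rho}$, one concludes $\|\nabla u_\rho - \nabla u\|_{L^2(\bbD)}\to 0$, proving density.

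For (ii), the inclusions $\widetilde{\mathcal{A}}_{\rm N}^\rho \subset \widetilde{\mathcal{A}}_{\rm N}^{\rho'}$ for $\rho>\rho'$ with $\widetilde{\mathcal{A}}_{\rm N}^0=\bigcup_\rho \widetilde{\mathcal{A}}_{\rm N}^\rho$ give at once $\inf_{\widetilde{\mathcal{A}}_{\rm N}^0}\widetilde{E}_0 = \lim_{\rho\to 0}\inf_{\widetilde{\mathcal{A}}_{\rm N}^\rho}\widetilde{E}_0$, while the reverse inequality to $\inf_{\widetilde{\mathcal{A}}_{\rm N}}\widetilde{E}_0 \leq \inf_{\widetilde{\mathcal{A}}_{\rm N}^0}\widetilde{E}_0$ follows from (i) and continuity of $\widetilde{E}_0$ under strong $W^{1,2}$-convergence. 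For (iii), the direct method applies since each class $\widetilde{\mathcal{A}}_{\rm N}^{1/n}$ is sequentially weakly closed in $W^{1,2}(\bbD)$: given a minimizing sequence $\{u_k\}$, weakly convergent up to subsequences to some $u$, Lemma~\ref{lemma:s1eq-emb}(ii) preserves equivariance, norm constraint and boundary datum, and the local uniform convergence on $\overline{\bbD}\setminus\{0\}$ forces $u\equiv (1,0,0)$ on $\bbD_{1/n}$. Weak lower semicontinuity of $\widetilde{E}_0$ then yields the minimizer. The limit formula \eqref{limvalueregminppbAN} is an immediate consequence of (ii).

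The main obstacle lies in part (i): the cut-off has to preserve simultaneously the unit-norm constraint, the $\bbS^1$-equivariance and the value at the origin, while keeping the annular energy small. The first two are handled by using a \emph{radial} cut-off around the invariant point $(1,0,0)$ and projecting to $\bbS^4$; the decisive ingredient to ensure that the energy on $\bbD_{2\rho}\setminus\bbD_\rho$ tends to zero is the continuity of equivariant finite-energy $\bbS^4$-valued maps in 2D, which gives the small factor $\sup_{\bbD_{2\rho}}|u-(1,0,0)|^2$ in the gradient estimate.
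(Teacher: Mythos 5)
Your proof is correct and follows essentially the same strategy as the paper: truncate near the origin exploiting the continuity and the value $u(0)=(1,0,0)$ from Lemma~\ref{lemma:s1eq-emb}, project back onto $\bbS^4$ using $|\tilde u_\rho|\geq 1/2$, and deduce (ii)--(iii) from monotonicity of $\rho\mapsto\inf_{\widetilde{\mathcal{A}}_{\rm N}^\rho}\widetilde E_0$, weak closedness of $\widetilde{\mathcal{A}}_{\rm N}^{1/n}$, and the direct method. The only (immaterial) difference is the cut-off: the paper interpolates linearly on the annulus $\bbD_{\sqrt\rho}\setminus\bbD_\rho$ between $\eo$ and the trace $u(\sqrt\rho\,e^{i\phi})$, while you use a radial cut-off at scale $\rho$ whose $O(\rho^{-1})$ gradient is compensated by the $L^\infty$-smallness of $u-\eo$ on $\bbD_{2\rho}$ — both hinge on the same continuity ingredient.
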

\begin{proof}
We start proving claim (i). Let us fix $u \in \widetilde{\mathcal{A}}_{\rm N}$ arbitrary. We aim to construct  
$u_\rho \in \widetilde{\mathcal{A}}_{\rm N}^\rho$ such that $u_\rho \to u$ strongly  in $W^{1,2}(\bbD)$ as $\rho \to 0$. Writing 
$u(re^{i\phi})=(f_0(r),f_1(r) e^{i\phi},f_2(r)e^{i2\phi})$, we first set 
\[
\tilde{u}_\rho(re^{i\phi}):=
\begin{cases}
\eo  & \text{if $r \in [0 , \rho]$} \, , \\[3pt]
\displaystyle\eo +\frac{r-\rho}{\sqrt{\rho}-\rho} \big(u(\sqrt{\rho}e^{i\phi})-\eo\big)  & \text{if $r \in [\rho, \sqrt{\rho}]$} \, , \\[10pt]
u(re^{i\phi})  & \text{if $r \in [\sqrt{\rho}, 1]$} \, .	
\end{cases}
\]
Then $\tilde{u}_\rho \in W^{1,2}_{{\rm sym}}(\bb D; \R\oplus\C\oplus\C)\cap C^0(\overline\bbD)$ and $\tilde{u}_\rho=g_{\overline{H}}$ on $\partial \bbD$. Moreover, 
$\tilde{u}_\rho \to u$ uniformly in~$\overline{\bb D}$, which implies that $|\tilde{u}_\rho| \to 1$ uniformly in $\overline{\bb D}$ as $\rho \to 0$. For $\rho>0$ small enough, we thus have $|\tilde{u}_\rho|\geq 1/2 $ in $\overline{\bbD}$, and we can define 
$$u_\rho(z):= \frac{\tilde{u}_\rho(z)}{|\tilde{u}_\rho(z)|}\,.$$
By construction, we have  
$u_\rho \in \widetilde{\mathcal{A}}_{\rm N}^\rho$, and $u_\rho \to u$ uniformly in $\overline{\bb D}$ as $\rho \to 0$. In addition, 
$$\widetilde{E}_0(u_\rho) = \widetilde{E}_0\big(u_\rho; \bb D_{\sqrt{\rho}}\setminus \bbD_\rho\big)+\widetilde{E}_0(u;\bb D \setminus \bb D_{\sqrt{\rho}}) \,,$$
and 
$$ \widetilde{E}_0\big(u_\rho; \bb D_{\sqrt{\rho}}\setminus \bbD_\rho\big)\leq C  \widetilde{E}_0\big(\tilde{u}_\rho; \bb D_{\sqrt{\rho}}\setminus \bbD_\rho\big)  \\
\leq C\big(|1-f_0(\sqrt{\rho})|^2+|f_1(\sqrt{\rho})|^2+|f_2(\sqrt{\rho})|^2\big) \mathop{\longrightarrow}\limits_{\rho\to 0} 0\,.
$$
Hence $\widetilde{E}_0(u_\rho) \to \widetilde{E}_0(u)$ as $\rho\to 0$, which implies that $u_\rho \to u$ strongly in $W^{1,2}(\bbD)$.  
 \vskip3pt
 
 Concerning (ii), the first equality is an obvious consequence of (i) since $\widetilde{E}_0$ is (strongly) $W^{1,2}$-continuous. 
Then we observe that $\rho \mapsto \inf_{ \widetilde{\mathcal{A}}_{\rm N}^\rho} \widetilde{E}_0$ is non decreasing. Therefore, 
$$ \inf_{\widetilde{\mathcal{A}}_{\rm N}^0} \widetilde{E}_0=\inf_{0<\rho<1} \inf_{\widetilde{\mathcal{A}}_{\rm N}^\rho} \widetilde{E}_0= \lim_{\rho \to 0}\inf_{\widetilde{\mathcal{A}}_{\rm N}^\rho} \widetilde{E}_0\,.$$
To prove claim (iii), we recall that $ \widetilde{\mathcal{A}}_{\rm N}^{\frac{1}{n}}$ is weakly $W^{1,2}$-closed. Hence existence of solutions to \eqref{regminppbAN} follows from the direct method of calculus of variations. Finally, \eqref{limvalueregminppbAN} is a consequence of~(ii) together with the monotonicity of $\rho \mapsto \inf_{ \widetilde{\mathcal{A}}_{\rm N}^\rho} \widetilde{E}_0$. 
\end{proof}

By the previous lemma, a sequence $\{u_n\}$ of solutions to \eqref{regminppbAN} provides a minimizing sequence for \eqref{eq:minAN}. 
In the next result, we provide the key step for the asymptotic analysis  of such a sequence.   

\begin{lemma}
\label{profile-extraction}
Let $\{ u_n\} \subset \widetilde{\mathcal{A}}_{\rm N}^0$ be such that $u_n$ solves  \eqref{regminppbAN} for every $n\geq 1$. 
Assume that, for some (not relabelled) subsequence, $u_n \rightharpoonup u_*$ weakly in $W^{1,2}(\bb D)$.  Then $u_* \in \widetilde{\mathcal{A}}^{\rm sym}_{ g_{\overline{H}}}(\bbD)$ and $u_*$ is a smooth harmonic map in $\overline{\bbD}$. Moreover, 
if $u_* \in \widetilde{\mathcal{A}}_{\rm S}$, then there exists a further (not relabelled) subsequence and $r_n\to 0^+$ such that $\tilde{u}_n(z):=u_n(r_n z)$ satisfies $\tilde{u}_n \rightharpoonup \tilde{u}$ weakly in $W^{1,2}_{\rm loc} (\C)$ for some equivariant nonconstant finite energy smooth harmonic map $\tilde{u} :\C \to \bb S^4$. 
\end{lemma}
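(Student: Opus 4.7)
\textbf{The plan} is to establish the three assertions in succession, via weak convergence analysis, energy concentration, and blow-up extraction.

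First, extracting a subsequence via Rellich--Kondrachov gives $u_n\to u_*$ pointwise a.e., which preserves equivariance and the norm constraint $|u|=1$. Trace continuity for weak $W^{1,2}$-convergence preserves the boundary condition $u_*|_{\partial\bbD}=g_{\overline{H}}$, so $u_*\in\widetilde{\mathcal{A}}^{\rm sym}_{g_{\overline{H}}}(\bbD)$. For harmonicity, Proposition~\ref{prop:symmetric-criticality} implies each $u_n$ is a weakly harmonic map in $\bbD\setminus\overline{\bbD_{1/n}}$, equivalently satisfying Shatah's conservation laws
\[ \rmdiv\bigl( u_n^i\nabla u_n^j - u_n^j\nabla u_n^i \bigr) = 0 \qquad \text{in } \bbD\setminus\overline{\bbD_{1/n}} \]
for all pairs of components. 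Since this equation is linear in $\nabla u_n$, strong $L^2$-convergence of $u_n$ (Rellich) combined with weak $L^2$-convergence of $\nabla u_n$ lets me pass to the limit against any $\eta\in C^\infty_c(\bbD\setminus\{0\})$, whose support lies in the harmonic region once $n$ is large. Hence $u_*$ is weakly harmonic on $\bbD\setminus\{0\}$; the isolated singularity at the origin is removable since $|\nabla u_*|^2\in L^1$, and H\'elein's theorem together with the boundary regularity cited in the excerpt upgrade $u_*$ to a smooth harmonic map on $\overline{\bbD}$.

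Next, assume $u_*\in\widetilde{\mathcal{A}}_{\rm S}$, so $u_*(0)=-\eo$ but $u_n(0)=\eo$. By Lemma~\ref{lemma:s1eq-emb}(iii), strong $W^{1,2}$-convergence $u_n\to u_*$ would force $u_n(0)=u_*(0)$ for $n$ large, a contradiction. Since in Hilbert spaces weak convergence together with convergence of norms is equivalent to strong convergence, and $\|u_n\|_{L^2}^2\equiv\pi$ is constant on $\bbS^4$, I conclude
\[ \delta := \liminf_n \widetilde{E}_0(u_n) - \widetilde{E}_0(u_*) > 0. \]
Equivariance forces any bubble point to lie on the $\bbS^1$-fixed set $\{0\}$, so on each annulus $\bbD\setminus\overline{\bbD_\rho}$ local harmonic-map compactness provides strong $W^{1,2}$-convergence $u_n\to u_*$; consequently $\liminf_n\widetilde{E}_0(u_n;\bbD_\rho)\geq\delta-\widetilde{E}_0(u_*;\bbD_\rho)$, which remains $\geq\delta/2$ for $\rho$ small. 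This encodes a definite quantum of energy concentrated at the origin.

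To extract the bubble, fix $\epsilon_0>0$ smaller than both $\delta/2$ and the 2D $\epsilon$-regularity threshold, and set
\[ r_n := \inf\{ r > 0 : \widetilde{E}_0(u_n;\bbD_r) \geq \epsilon_0 \}. \]
Then $r_n>1/n$, $r_n\to 0$, and $\widetilde{E}_0(u_n;\bbD_{r_n})=\epsilon_0$. Setting $\tilde u_n(z):=u_n(r_n z)$, conformal invariance gives $\widetilde{E}_0(\tilde u_n;\bbD_R)\leq M$ for every $R<1/r_n$, so a diagonal extraction yields $\tilde u_n\rightharpoonup\tilde u$ in $W^{1,2}_{\rm loc}(\C)$, with $\tilde u$ equivariant, $\bbS^4$-valued, of finite energy. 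The rescaled maps are weakly harmonic on $\bbD_{1/r_n}\setminus\overline{\bbD_{1/(nr_n)}}$; passing to the limit in Shatah's conservation laws as in the first step, plus removable singularity and H\'elein's theorem, shows $\tilde u\in C^\infty(\C;\bbS^4)$ is a finite-energy harmonic map. Nonconstancy follows from $\widetilde{E}_0(\tilde u_n;\bbD_r)<\epsilon_0$ for $r<1$, which by $\epsilon$-regularity yields uniform $C^1$ control away from $0$ and hence locally smooth convergence $\tilde u_n\to\tilde u$ on $\C\setminus\{0\}$, transferring the energy $\epsilon_0$ captured on $\bbD_1$ to the limit. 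The main obstacle is guaranteeing that $nr_n\to\infty$, so that the rescaled constraint disc $\bbD_{1/(nr_n)}$ collapses to $\{0\}$ and $\tilde u$ is genuinely harmonic on all of $\C$ rather than on a punctured plane with a macroscopic constant core. A logarithmic Cauchy--Schwarz bound --- exploiting that the radial profile $f_0^{(n)}$ must traverse from $+1$ at $r=1/n$ to values close to $-1$ on the limit profile inside a small disc --- forces $\log(nr_n)$ to blow up; should this only provide a lower bound, the definition of $r_n$ may need to be refined through a Sacks--Uhlenbeck-style dyadic energy capture to secure the divergence.
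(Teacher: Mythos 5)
Your argument for the first assertion (that $u_*$ is a smooth harmonic map) is correct; passing to the limit via Shatah's divergence-form conservation laws is a valid, essentially equivalent alternative to the paper's direct appeal to compensated compactness, and the removable singularity / zero capacity step matches.

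The blow-up extraction, however, has a genuine gap that you have yourself flagged but not closed, and your suggested fix does not work. You choose $r_n$ as the Sacks--Uhlenbeck energy-concentration scale and observe that you would need $nr_n\to\infty$ so that the rescaled constraint disc $\bbD_{1/(nr_n)}$ collapses and $\tilde u$ is harmonic on the whole plane. Your ``logarithmic Cauchy--Schwarz'' estimate can only give a \emph{fixed} positive lower bound on $\log(nr_n)$: the radial profile $f_0^{(n)}$ travels a distance bounded by the diameter of $\bbS^4$, so the inequality $|f_0^{(n)}(r_n)-f_0^{(n)}(1/n)|\lesssim\sqrt{\log(nr_n)}$ yields $nr_n\geq c>1$, not divergence. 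A ``dyadic energy capture'' refinement does not repair this --- the same diameter obstruction persists at each dyadic step.

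The paper avoids the issue entirely by a different two-step argument. First, it takes $r_n$ to be the \emph{first zero} of $f_0^{(n)}$, so that $f_0^{(n)}$ traverses exactly from $1$ to $0$ on $[1/n,r_n]$ and Cauchy--Schwarz gives the clean bound $r_*:=\limsup_n (nr_n)^{-1}<1$; the rescaled maps converge to a limit $\tilde u$ known a priori to be harmonic only on $\{|z|>r_*\}$. Second --- and this is the step missing from your proposal --- it shows $r_*=0$ by contradiction: if $r_*\in(0,1)$, then $\tilde u\equiv(1,0,0)$ on $\bbD_{r_*}$; inverting via $z\mapsto 1/\bar z$ produces a finite-energy weakly harmonic map on $\bbD_{1/r_*}\setminus\{0\}$ with constant boundary value $(1,0,0)$; removability of the puncture and Lemaire's constancy theorem for harmonic maps from a disc with constant boundary data force $\tilde u$ to be constant on $\{|z|>r_*\}$, contradicting the normalization $\tilde f_0(1)=0$ built into the choice of $r_n$. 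Whichever scale $r_n$ you use, you will need some version of this Lemaire/inversion argument to rule out a macroscopic constant core in the limit; it cannot be bypassed by sharpening the logarithmic estimate.

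One minor slip in your energy-concentration step: from strong convergence on annuli you should get $\lim_n\widetilde E_0(u_n;\bbD_\rho)=\delta+\widetilde E_0(u_*;\bbD_\rho)\geq\delta$, not $\geq\delta-\widetilde E_0(u_*;\bbD_\rho)$; the conclusion is unaffected but the sign is wrong as written.
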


\begin{proof}
Using maps of the form \eqref{eq:class-harm}-\eqref{eq:D(z)} as competitors, we infer from  Lemma \ref{ANdensity} that
\begin{equation}\label{energbdbubminseq2d}
\lim_{n \to \infty} \widetilde{E}_0(u_n)=\inf_{u\in \widetilde{\mathcal{A}}_{\rm N}} \widetilde{E}_0(u)\leq 6\pi\,.
\end{equation}
The class $\widetilde{\mathcal{A}}^{\rm sym}_{ g_{\overline{H}}}(\bbD)$ being weakly $W^{1,2}$-closed, we have $u_*\in \widetilde{\mathcal{A}}^{\rm sym}_{ g_{\overline{H}}}(\bbD)$.  By minimality, each $u_n$ is a harmonic map in $\bbD\setminus \overline{\bbD}_{1/n}$.  Since $u_n\rightharpoonup u_*$ weakly in $W^{1,2}(\bbD)$, it classically follows that $u_*$ is a (weakly) harmonic map in $\bbD\setminus\{0\}$, see e.g. \cite[Theorem~1, p.~50]{Evans}. Moreover, since $u_*$ belongs to $W^{1,2}(\bbD)$ and the set $\{0\}$ has zero capacity,  $u_*$ is actually a weakly harmonic map in the whole disc $\bbD$, and thus a smooth harmonic map in $\overline{\bbD}$ by regularity theory.  
\vskip3pt

We now assume that $u_* \in \widetilde{\mathcal{A}}_{\rm S}$. Recalling Lemma \ref{lemma:s1eq-emb}, we write 
\[ u_n(re^{i\phi})=:\big(f^{(n)}_0(r), f^{(n)}_1(r) e^{i\phi}, f^{(n)}_2(r) e^{i 2\phi} \big)\,\,\text{ and }\,\, u_*(re^{i\phi})=:(f^*_0(r), f^*_1(r) e^{i \phi}, f^*_2(r) e^{i2\phi}) \, , \]  	
so that 
\[   \big(f^{(n)}_0(0), f^{(n)}_1(0), f^{(n)}_2(0)\big)=(1,0,0) \quad \text{ and } \quad \big(f^*_0(0), f^*_1(0), f^*_2(0)\big)=(-1,0,0) \, . \] 
The functions $f^{(n)}_0$ and $ f^*_0$ are continuous in $[0,1]$ and taking values in $[-1,1]$ by the $\bbS^4$-constraint. 
 In addition, we have $f^*_0(r) \in (-1,1)$ for every $r \in (0,1]$. Indeed, assume by contradiction that $f^*_0(t)=\pm1$ for some $t\in(0,1)$. Then, $u_*=(\pm 1,0,0)$ on $\partial \bbD_t$ which implies that $u_*=(\pm 1,0,0)$ in $\bbD_t$ by 
 Lemaire's constancy theorem \cite{Le}. Then $u_* \equiv (\pm 1,0,0)$ in $\bb D$ by unique continuation, in contradiction with the boundary condition.

By Lemma~\ref{lemma:s1eq-emb}, $u_n \to u_*$ locally uniformly in $\overline{\bb D} \setminus\{0\}$, and thus  $f^{(n)}_0 \to f^*_0$ locally uniformly in~$(0,1]$. 	
Since $f^*_0(0)=-1$, we have 
\begin{equation}
	\label{radialminima}
\lim_{n\to\infty}\, \min_{[0,1]} f^{(n)}_0=-1\,.
\end{equation}	
Recalling that $f_0^{(n)}(1)=-\frac{1}{2}$ and $f_0^{(n)}(0)=1$, each function $f_0^{(n)}$ must vanish on the interval $[0,1]$. We can thus define 
$$r_n:=\min\big\{r\in[0,1]: f_0^{(n)}(r)=0\big\}\in(0,1)\,, $$
and 
$$r^{\rm min}_n:=\min\Big\{r\in[0,1]: f_0^{(n)}(r)=  \min_{[0,1]} f^{(n)}_0\Big\}\in(0,1)\,. $$
Since $u_n \in \widetilde{\mathcal{A}}_{\rm N}^{\frac{1}{n}} $ and $f_0^*(r)>-1$ for every $r>0$, we infer from \eqref{radialminima}  that
$$\frac{1}{n} < r_n < r_n^{\rm min}\mathop{\longrightarrow}\limits_{n\to\infty}0\,,$$
whence $r_n\to 0$. Combining Cauchy-Schwarz inequality and \eqref{eq:gradsquare} leads to 
\[ 1=|f_0^{(n)}(r_n)-f_0^{(n)}(1/n)| \leq\int_{1/n}^{r_n} \sqrt{r}|\partial_r f_0^{(n)}(r)| \frac{dr}{\sqrt{r}} \leq  \sqrt{\pi^{-1} \widetilde{E}_0(u_n)} \sqrt{ \log \left( n r_n\right)}\, .\]
From the energy bound in \eqref{energbdbubminseq2d}, we conclude that 
$$r_*:=\limsup_{n\to \infty} \frac{1}{n r_n} <1\,.$$
Now we set 
$$\tilde{u}_n(z):=u_n(r_n z)\,,$$ 
so that $\tilde{u}_n \in W^{1,2}_{\rm sym}(\bb D_{1/r_n};\bb S^4)$, $\tilde{u}_n = (1,0,0)$ in $\bbD_{1/nr_n}$, and $\tilde{u}_n$ is a harmonic map in the annulus 
$$\Omega_n:=\big\{ 1/{nr_n} <|z|< 1/r_n\big\}\,.$$ 
Setting $\tilde{u}_n(re^{i\phi})=:(\tilde{f}^{(n)}_0(r), \tilde{f}^{(n)}_1(r) e^{i\phi}, \tilde{f}^{(n)}_2(r) e^{i 2\phi} ) $,  we also have $\tilde{f}^{(n)}_0(1)=0$ by construction.

In view of \eqref{energbdbubminseq2d}, we have for every $r>0$,
\[
	\limsup_{n\to\infty} \widetilde{E}_0(\tilde{u}_n,\bb D_r)\leq \limsup_{n\to\infty} \widetilde{E}_0(\tilde{u}_n,\bb D_{1/r_n}) 
=\limsup_{n\to\infty} \widetilde{E}_0(u_n,\bb D)\leq 6\pi  \,.
\]
Therefore we can extract a (not relabelled) subsequence such that  $\tilde{u}_n \rightharpoonup \tilde{u}$ in $W^{1,2}_{\rm loc}(\C)$ for some equivariant map $\tilde{u} \in W^{1,2}_{\rm loc}(\C ;\bb S^4)$ satisfying $\widetilde{E}_0(\tilde{u};\C) \leq 6 \pi$ by lower semicontinuity of the Dirichlet energy. By Lemma~\ref{lemma:s1eq-emb} again,  $\tilde{u}_n \to \tilde{u}$ locally uniformly in $\C\setminus\{0\}$, so that $\tilde{f}^{(n)}_0 \to \tilde{f}_0$ locally uniformly in~$(0,\infty)$ where $\tilde u(re^{i\phi})=:( \tilde{f}_0(r), \tilde{f}_1(r)e^{i\phi}, \tilde{f}_2(r)e^{2i\phi})$. Then $\tilde f_0(1)=\lim_n\tilde{f}^{(n)}_0(1)=0$, and $|\tilde f_0(0)|=1$ by equivariance. In particular, $\tilde u$ is nonconstant.  

Extracting a further subsequence if necessary, we have $1/nr_n\to r_*$, so that  
$$\Omega_ n \to \Omega_*:=\{ |z|> r_*\}\,.$$ 
Arguing as above, we infer that $\tilde{u}$ is a weakly harmonic map in $\Omega_*$, and hence a classical (smooth)  harmonic map in $\Omega_*$. 
Next, we claim that $r_*=0$. Indeed, assume by contradiction that  $0<r_*<1$. Then  $\tilde{u}= (1,0,0)$ in  $\bbD_{r_*}$  since $\tilde{u}_n \to \tilde{u}$ locally uniformly on $\C \setminus\{0\}$. For $z \in \overline{ \bb D}_{1/r_*}\setminus \{0\}$, we now consider the inverted map $v(z):= \tilde{u}(1/\bar{z})$. By conformal invariance of $\widetilde{E}_0$,  we have $\widetilde{E}_0(v,\bb D_{1/r_*})=\widetilde{E}_0(\tilde{u}, \Omega_*) \leq 6\pi$,  and $v$ is a weakly harmonic map in $\bb D_{1/r_*}\setminus\{0\}$ satisfying $v = (1,0,0)$ on $\partial \bb D_{1/r_*}$. Again, since $\{0\} $ has a vanishing capacity, 
 $v$ is actually a weakly harmonic map in the whole disc $\bb D_{1/r_*}$, and thus a  smooth harmonic map in  $\overline{\bb D}_{1/r_*}$. Since $v = (1,0,0)$ on $\partial \bb D_{1/r_*}$, Lemaire's theorem \cite{Le} tells us that $v\equiv (1,0,0)$ in  $\bb D_{1/r_*}$.  Hence, $\tilde u\equiv (1,0,0)$ in $\Omega_*$, in contradiction with the fact that $\tilde f_0(1)=0$. 

Since $r_*=0$, $\tilde u$ is weakly harmonic in $\C \setminus\{0\}$, and thus  weakly harmonic in the whole $\C$ as argued above. Hence $\tilde u$ is a smooth, nonconstant,  equivariant harmonic map satisfying  $\widetilde{E}_0(\tilde{u};\C) \leq 6 \pi$. 
\end{proof}

\begin{proof}[Proof of Proposition \ref{ANminimization}]
Using  maps of the form \eqref{eq:class-harm}-\eqref{eq:D(z)} as competitors, we obtain 
\begin{equation}\label{dim1704}
\inf_{u \in \widetilde{\mathcal{A}}_{\rm N}} \widetilde{E}_0(u)\leq 6 \pi\,.
\end{equation}
We are going to show that equality actually holds, so that any map of the form \eqref{eq:class-harm}-\eqref{eq:D(z)} is a minimizer. Moreover, since any minimizer is a solution of \eqref{eq:harmonic-bdvp}, it must be of the form \eqref{eq:class-harm}-\eqref{eq:D(z)} by Proposition~\ref{prop:largeharm}, so that no other minimizers exist. 

Let us now consider a sequence $\{ u_n\} \subset \widetilde{\mathcal{A}}_{\rm N}^0$ such that $u_n$ solves  \eqref{regminppbAN} for every $n\geq 1$. In view of Lemma \ref{ANdensity},  $\{ u_n\} $ is a minimizing sequence for \eqref{eq:minAN}. To show that equality holds in \eqref{dim1704}, it thus suffices to prove that $\lim_{n} \widetilde{E}_0(u_n)=6\pi$. 
By construction, $\{u_n\}$ is bounded in $W^{1,2}(\bbD)$, so that we can find a (not relabelled) subsequence such that $u_n\rightharpoonup u_*$ weakly in $W^{1,2}(\bbD)$. By Lemma~\ref{profile-extraction}, $u_* \in \widetilde{\mathcal{A}}_{\overline{H}}^{\rm sym}$ is a smooth harmonic map in $\bbD$. 
\vskip3pt

We now distinguish between two  scenarios. 
\vskip3pt

\noindent {\it Case I. Compact case: $u_* \in \widetilde{\mathcal{A}}_{\rm N}$}. Under this assumption, we have   $\widetilde{E}_0(u_*)=6\pi$ by Proposition \ref{prop:largeharm}. In addition, by
weak lower semicontinuity of the Dirichlet energy, 
$$6\pi=\widetilde{E}_0(u_*) \leq \lim_{n\to \infty} \widetilde{E}_0(u_n)= \inf_{ \widetilde{\mathcal{A}}_{\rm N}} \widetilde{E}_0\leq 6\pi\,,$$
which proves \eqref{eq:inf-energies-N}. 
\vskip3pt

\noindent {\it Case II. Noncompact case: $u_* \in \widetilde{\mathcal{A}}_{\rm S}$.} Under this assumption, we have $\widetilde{E}_0(u_*)\geq 2\pi$ by Proposition~\ref{ASminimization}. In view of Lemma~\ref{profile-extraction}, there exists a (not relabelled) subsequence and $r_n\to 0$ such that the rescaled sequence $\tilde{u}_n(z):=u_n(r_n z)$ converges weakly in $W^{1,2}_{\rm loc}(\C)$ to an entire nonconstant equivariant smooth harmonic map $\tilde u$ of finite Dirichlet energy. Being of finite energy, $\tilde u$ extends to $\C\cup \{\infty\} \simeq \bb S^2$ to  an equivariant weakly harmonic map, and thus a smooth equivariant harmonic $2$-sphere into $\bbS^4$. By the classification result in \cite[Section 3]{DMP2}, we thus have 
$\widetilde{E}_0(\tilde{u},\C)\geq 4\pi$. 

Setting  $r_n'=\sqrt{r_n}\to 0$ and using the weak lower semicontinuity of the Dirichlet energy, we infer that 
\begin{equation*}
\begin{split}
6\pi&\geq \inf_{\widetilde{\mathcal{A}}_{\rm N}} \widetilde{E}_0=\lim_{n\to \infty} \widetilde{E}_0(u_n)\geq \liminf_{n\to \infty} \widetilde{E}_0(u_n; \bb D_{r_n'})+\liminf_{n \to \infty}\widetilde{E}_0(u_n; \bb D \setminus \overline{\bb D}_{r_n'})\\
 &\geq \liminf_{n\to \infty} \widetilde{E}_0(\tilde{u}_n; \bb D_{r_n'/r_n})+ \widetilde{E}_0(u_*) \geq \widetilde{E}_0(\tilde{u};\C)+\widetilde{E}_0(u_*) \geq 6\pi \, ,
\end{split}
\end{equation*}
which again proves \eqref{eq:inf-energies-N}. 
\end{proof}


\subsection{Uniaxiality vs Biaxiality in the 2D-LdG minimization}\label{sec:biax-esc}

In the light of the previous section, we now discuss for $\lambda > 0$ the variational problems
\begin{equation}\label{eq:minANlambda} 
		\min_{Q \in \mathcal{A}_{\rm N}} E_\lambda(Q)\,,  
\end{equation}		
and
\begin{equation}\label{eq:minASlambda} 
\min_{Q \in \mathcal{A}_{\rm S}} E_\lambda(Q) \,,
\end{equation}
where $E_\lambda$ is the 2D-LdG energy in  \eqref{eq:def-2d-energy}, and $\mathcal{A}_{\rm N}$, $\mathcal{A}_{\rm S}$ are the classes defined in \eqref{eq:AN}-\eqref{eq:AS}. 

Once again we rely in an essential way on the (isometric) identification $\mathcal{S}_0 \simeq \R \oplus \C \oplus \C $ and the induced correspondence $Q\simeq u$ between 
$Q$-tensor maps and $\R \oplus \C \oplus \C $-valued maps from Corollary~\ref{cor:dec}. Recalling that ${\rm tr}(Q^3)= 3 \det Q$ for every $Q\in\mathcal{S}_0$, we infer from \eqref{detQ} that 
\begin{equation}\label{calcWusec4.3}
W(Q)=\frac{1}{3\sqrt{6}}\bigg(1 - u_0\big(u_0^2+\frac32|u_1|^2-3|u_2|^2\big)-\frac{3\sqrt{3}}{2}{\rm Re} \big(u_1^2 \overline{u_2}\big)\bigg)=:\widetilde W(u)\quad\text{for $Q\simeq u$}\,.
\end{equation}
Setting, for $u\in W^{1,2}(\bbD;\bbS^4)$, 
$$\widetilde E_\lambda(u):=  \int_{\bbD} \frac12 |\nabla u|^2 + \lambda \widetilde W(u) \,dx\,,$$
we obtain 
$$E_\lambda(Q)=\widetilde E_\lambda(u) \quad \text{for $Q\simeq u$}\,.$$
If $Q\simeq u\in W^{1,2}_{\rm sym}(\bbD;\bbS^4)$ and $u(re^{i\phi})=(f_0(r),f_1(r)e^{i\phi},f_2(r)e^{i2\phi})$, then 
 \begin{equation}
\label{eq:equiv-energy-f-2d}
E_\lambda(Q)=\widetilde E_\lambda(u) =
 \pi \int_0^1\left( \abs{f^\prime}^2+ \frac{\abs{f_1}^2 + 4\abs{f_2}^2}{r^2} +2\lambda \frac{1-\widetilde{\beta}(f)}{3\sqrt{6}}    \right) \, rdr \, ,
  \end{equation}
with $f:=(f_0,f_1,f_2)$ and $\widetilde{\beta}(f)$ given in \eqref{eq:beta-f}.  Equivariant critical points $Q\simeq u$ of the energy functional $E_\lambda$ satisfy the following system of ODEs 
 \begin{equation}\label{eq:EL-symm-ODE}
	\left\{
		\begin{aligned}
			 f^{\prime\prime}_0 + \frac{1}{r} f^\prime_0  &= - |\nabla u|^2 f_0 +  \frac{\lambda}{\sqrt{6}} \left(\abs{f_2}^2-f_0^2 - \frac{1}{2}\abs{f_1}^2 +  \widetilde{\beta}(f) f_0 \right) \,,\\
			f^{\prime\prime}_1 + \frac{1}{r}  f^\prime_1  &= -  |\nabla u|^2 f_1 - \frac{1}{r^2} f_1 + \frac{\lambda}{\sqrt{6}} \left(- \sqrt3f_2 \overline{f_1}-f_0 f_1+\widetilde{\beta}(f)  f_1    \right) \,,\\
			 f^{\prime\prime}_2 + \frac{1}{r}  f^\prime_2  &= -  |\nabla u|^2 f_2 - \frac{4}{r^2}f_2 + \frac{\lambda}{\sqrt6} \left( -\frac{\sqrt3}2 f_1^2 +  2f_0 f_2+ \widetilde{\beta}(f) f_2 \right) \,,
		\end{aligned}
	\right.
\end{equation}
with $ |\nabla u|^2$ as in \eqref{eq:gradsquare} depending also on $f$ only.
\vskip5pt
	
In the sequel, our goal is to establish  existence/nonexistence of solutions to \eqref{eq:minANlambda}-\eqref{eq:minASlambda} starting from the gap phenomenon in Theorem~\ref{thm:gap-u}. In turn, we shall derive qualitative properties of minimizers of $E_\lambda$ in $\mathcal{A}_{\overline{H}}^{\rm sym}(\bbD)=\mathcal{A}_{\rm S} \cup \mathcal{A}_{\rm N}$. The main result, Theorem~\ref{2d-biaxial-escape}, is postponed to the end of the subsection. It reveals the nature of minimizers of $E_\lambda$ in $\mathcal{A}_{\overline{H}}^{\rm sym}(\bbD)$ as $\lambda$ varies. In particular, we shall see that biaxial escape occurs for reasons of energy efficiency.
\vskip5pt

 We start with the following proposition providing the complete description of solutions to \eqref{eq:minANlambda}.
 
 \begin{proposition}
 \label{ANlambdaminimization}	
 For all $\lambda > 0$, 
		\begin{equation}\label{eq:inf-lambdaenergies-N}
			\min_{Q \in \mathcal{A}_{\rm N}} E_\lambda(Q) = 6\pi\,,
		\end{equation}
and the minimum is attained at $Q\simeq u$ if and only if  $u(z)=g_{\overline{H}}(\pm z)$ with $g_{\overline{H}}$  given by \eqref{eq:Hbar} below.
\end{proposition}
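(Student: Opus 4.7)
The proof combines the non-negativity of the quartic term $\widetilde W$ with the Dirichlet energy gap of Proposition \ref{ANminimization}, and identifies the equality case via a Bogomolny argument applied to the uniaxial director lift.

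For the upper bound, the plan is to exhibit an explicit positively uniaxial competitor in $\mathcal{A}_{\rm N}$. Consider the director field
\[
\bar n(re^{i\phi}) := \big(\sin\theta(r)\cos\phi,\; \sin\theta(r)\sin\phi,\; \cos\theta(r)\big)\,, \qquad \theta(r) := 2\arctan r\,,
\]
and set $\overline Q := \sqrt{3/2}\big(\bar n \otimes \bar n - {\rm Id}/3\big)$. Then $\overline Q \in \mathcal{A}_{\rm N}$ is positively uniaxial, so $W(\overline Q) \equiv 0$, and using the identity $|\nabla Q|^2 = 2|\nabla n|^2$ valid for any uniaxial field with unit director, a direct calculation with $|\nabla \bar n|^2 = 8/(1+r^2)^2$ yields
\[
E_\lambda(\overline Q) \;=\; E_0(\overline Q) \;=\; \tfrac{3}{2}\int_\bbD |\nabla \bar n|^2\,dx \;=\; 6\pi\,.
\]
A second competitor $\overline Q^-$ with the same energy is obtained by pre-composing with $z \mapsto -z$; these two maps correspond precisely to $g_{\overline H}(\pm z)$ of formula \eqref{eq:Hbar}.

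For the lower bound, writing $Q \simeq u$ for $Q \in \mathcal{A}_{\rm N}$, the non-negativity of $\widetilde W$ on $\bbS^4$ and Proposition \ref{ANminimization} together give
\[
E_\lambda(Q) \;=\; \widetilde E_0(u) + \lambda \int_\bbD \widetilde W(u)\,dx \;\geq\; \widetilde E_0(u) \;\geq\; 6\pi\,,
\]
proving \eqref{eq:inf-lambdaenergies-N}. Moreover, if $Q$ is a minimizer, both inequalities must be equalities: by Proposition \ref{ANminimization}, $u$ then belongs to the family \eqref{eq:class-harm}-\eqref{eq:D(z)} for some $\mu_1 \in \C$, in particular $u$ is real-analytic on $\overline\bbD$, and continuity of $\widetilde W$ upgrades $\int_\bbD \widetilde W(u)\,dx = 0$ to $\widetilde W(u) \equiv 0$, so $Q$ is positively uniaxial at every point.

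It remains to identify the admissible values of $\mu_1$. The smooth uniaxial $Q$ factors as $Q = \sqrt{3/2}(n \otimes n - {\rm Id}/3)$ for a smooth director $n : \overline\bbD \to \bbS^2$; this lift exists and is unique up to a global sign because $\overline\bbD$ is simply connected and $\bbS^2$ double-covers $\mathbb{R}P^2$. The $\bbS^1$-equivariance of $Q$ transfers to $n$ since the sign defect $\epsilon(z,\alpha) \in \{\pm 1\}$ defined by $n(e^{i\alpha}z) = \epsilon(z,\alpha) R_\alpha n(z)$ is continuous hence locally constant, with $\epsilon(\,\cdot\,,0) \equiv +1$. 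After normalizing $n(0) = \mathbf{e}_3$ (possible up to a global sign flip), the equivariant ansatz reads
\[
n(re^{i\phi}) = \big(\sin\theta(r)\cos(\phi + \phi_0),\; \sin\theta(r)\sin(\phi + \phi_0),\; \cos\theta(r)\big)
\]
with $\theta(0) = 0$ and, from the boundary condition, $\theta(1) = \pi/2$ and $\phi_0 \in \{0,\pi\}$. Using $|\nabla Q|^2 = 2|\nabla n|^2$ together with the substitution $s = \log r$, the equality $E_0(Q) = (3/2)\int_\bbD |\nabla n|^2\,dx = 6\pi$ becomes $\int_{-\infty}^{0}(\theta_s^2 + \sin^2\theta)\,ds = 2$. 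The Bogomolny decomposition
\[
\theta_s^2 + \sin^2\theta = (\theta_s - \sin\theta)^2 + 2\theta_s \sin\theta
\]
integrated on $(-\infty,0]$ using $\theta(-\infty) = 0$ and $\theta(0) = \pi/2$ produces the sharp lower bound $-2[\cos\theta]_{-\infty}^{0} = 2$, with equality if and only if $\theta_s = \sin\theta$, i.e., $\theta(r) = 2\arctan r$. The two admissible phases $\phi_0 \in \{0, \pi\}$ then produce exactly the two minimizers $g_{\overline H}(\pm z)$, concluding the characterization.

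The technical heart is the final paragraph: the existence, uniqueness and $\bbS^1$-equivariance of the $\bbS^2$-valued lift must be handled carefully, and the reduction to the 1D Bogomolny problem is what pins down both the profile $\theta$ and the two admissible phases, selecting exactly $\mu_1 = \pm\sqrt 3$ from the one-parameter family of large harmonic maps in \eqref{eq:class-harm}-\eqref{eq:D(z)}.
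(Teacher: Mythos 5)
Your upper and lower bounds coincide with the paper's argument: the competitor you build from the director $\bar n$ with $\theta(r)=2\arctan r$ is exactly $H\circ\boldsymbol{\sigma}_2^{-1}\simeq g_{\overline H}$, and the chain $E_\lambda(Q)\geq \widetilde E_0(u)\geq 6\pi$ via Proposition~\ref{ANminimization} is precisely how \eqref{eq:inf-lambdaenergies-N} is obtained there. Where you genuinely diverge is the equality case: the paper proves an algebraic statement (Lemma~\ref{uniaxiality}) that within the explicit family \eqref{eq:class-harm}-\eqref{eq:D(z)} the condition $W(Q)\equiv 0$ forces $\mu_1=\pm\sqrt 3$, by reducing $W=0$ to $u_0+\sqrt3|u_2|=1$ and plugging in the formula; you instead lift the smooth uniaxial minimizer to an $\bbS^2$-valued director, transfer equivariance to the lift, and pin down the profile and the two admissible phases by a 1D Bogomolny argument. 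This is essentially the alternative route the paper itself sketches in the remark following the proof (there via the Ball--Zarnescu lifting and the known classification of minimizing equivariant $\bbS^2$-harmonic maps); your version is self-contained and buys an independent derivation of the two minimizers, at the cost of some covering-space and coordinate bookkeeping that the algebraic lemma avoids. Note that your lift is legitimate because, as you observe, equality already places $u$ in the real-analytic family \eqref{eq:class-harm}-\eqref{eq:D(z)}, so $Q$ is smooth up to $\overline\bbD$.

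Two points need repair, one cosmetic and one substantive. First, with the normalization $Q=\sqrt{3/2}\,(n\otimes n-\tfrac13{\rm Id})$ one has $|\nabla Q|^2=3|\nabla n|^2$, not $2|\nabla n|^2$; the formula you actually use, $E_0(Q)=\tfrac32\int_\bbD|\nabla n|^2\,dx$, is the correct one, so this is only an internal inconsistency. Second, and more seriously, equivariance of $n$ does \emph{not} force the constant-phase ansatz you write: it only gives $n(re^{i\phi})=\big(\sin\theta(r)\cos(\phi+\psi(r)),\sin\theta(r)\sin(\phi+\psi(r)),\cos\theta(r)\big)$ with a radial phase $\psi(r)$, so your reduction as stated does not cover all competitors in the equality class. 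The fix is short: with the general ansatz the energy density acquires the extra nonnegative term $\sin^2\theta\,|\psi'|^2$, so the Bogomolny lower bound still gives $\int_{-\infty}^0(\theta_s^2+\sin^2\theta)\,ds\geq 2$, and equality forces both $\theta_s=\sin\theta$ (hence $\theta=2\arctan r$, so $\sin\theta>0$ on $(0,1]$) and $\psi'\sin\theta\equiv 0$, whence $\psi\equiv\psi(1)\in\{0,\pi\}$. With that one-line addition (and a word on why $|\nabla n|^2\geq \theta'^2+\sin^2\theta/r^2$ holds a.e.\ despite the degeneracy of spherical coordinates at the poles), your characterization of the minimizers as $u(z)=g_{\overline H}(\pm z)$ is complete and matches \eqref{eq:Hbar}.
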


The proof is essentially based on the following preliminary lemma  of independent interest.

\begin{lemma}
\label{uniaxiality}
Let $Q \in \mathcal{A}_{\rm N}$ with $Q\simeq u$ of the form \eqref{eq:class-harm}-\eqref{eq:D(z)}. Then $Q$ is  positively uniaxial if and only if $\mu_1=\pm \sqrt{3}$, that is $u(z)=g_{\overline{H}}(\pm z)$ where 	
\begin{equation}\label{eq:Hbar}
	g_{\overline{H}}(z) := \frac{1}{(1+|z|^2)^2} \left( 1 - 4 \abs{z}^2  +\abs{z}^4 , 
	 2 \sqrt{3} z \big( 1 - \abs{z}^2 \big), 2 \sqrt{3} z^2   \right) 
\end{equation}	
extends \eqref{eq:hor-anchoring} to $\bbD$. Moreover, if $H$ denotes the unit norm nematic hedgehog in \eqref{Hsphere}, then we have  $g_{\overline{H}} \simeq (H\circ\boldsymbol{\sigma}_2^{-1})$ where $\boldsymbol{\sigma}_2:\bbS^2\setminus\{(0,0,-1)\}\to\mathbb{C}$ is the stereographic projection from the south pole of $\bbS^2$. 
\end{lemma}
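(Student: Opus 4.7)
The plan is to split the argument into three coordinated computations: a forward calculation showing that the choice $\mu_1=\pm\sqrt{3}$ collapses \eqref{eq:class-harm}--\eqref{eq:D(z)} to the explicit formula \eqref{eq:Hbar}; an identification of this explicit map with the pullback of the unit-norm hedgehog $H$ by inverse stereographic projection (thereby certifying pointwise uniaxiality); and a converse algebraic argument deriving $\mu_1 \in \{\pm\sqrt{3}\}$ from uniaxiality alone.

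For the forward direction, I would substitute $|\mu_1|^2 = 3$ into \eqref{eq:D(z)} and exploit the factorization $D(z) = 1 + 3|z|^2 + 3|z|^4 + |z|^6 = (1+|z|^2)^3$, together with $1 - 3|z|^2 - 3|z|^4 + |z|^6 = (1+|z|^2)(1-4|z|^2+|z|^4)$ and $1-|z|^4 = (1-|z|^2)(1+|z|^2)$, to simplify the three components of \eqref{eq:class-harm} and recover \eqref{eq:Hbar} (the case $\mu_1=-\sqrt{3}$ then corresponds to $g_{\overline{H}}(-z)$ by the evident parity). To verify $g_{\overline{H}} \simeq H \circ \boldsymbol{\sigma}_2^{-1}$, I would apply the correspondence \eqref{eq:correspondence} to the generic positive uniaxial tensor $Q = \sqrt{3/2}(n \otimes n - \tfrac{1}{3}\mathrm{Id})$ with $n \in \bbS^2$, obtaining the universal identities
\[
u_0 = \frac{3n_3^2 - 1}{2}, \qquad u_1 = \sqrt{3}\,n_3\,(n_1 + i n_2), \qquad u_2 = \frac{\sqrt{3}}{2}(n_1 + i n_2)^2.
\]
Substituting the inverse stereographic projection $n_1 + in_2 = 2z/(1+|z|^2)$, $n_3 = (1-|z|^2)/(1+|z|^2)$ reproduces \eqref{eq:Hbar} term by term, which proves both the identification and the fact that $g_{\overline{H}}$ is valued in $\mathbb{R}P^2$.

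For the converse, assume $Q \simeq u$ of the form \eqref{eq:class-harm}--\eqref{eq:D(z)} is positively uniaxial. Since $\overline{\bbD}$ is simply connected and $u$ is continuous by Lemma~\ref{lemma:s1eq-emb}, $u$ lifts through the double cover $\bbS^2 \to \mathbb{R}P^2$ to a continuous map $n : \overline{\bbD} \to \bbS^2$ satisfying the three identities above. Comparing them with the explicit components of \eqref{eq:class-harm} yields
\[
n_3^2 = \frac{1 + 2u_0}{3} = \frac{(3 - |\mu_1|^2 |z|^2)(1-|z|^4)}{3\,D(z)}, \quad (n_1+in_2)^2 = \frac{2 u_2}{\sqrt{3}} = \frac{4 z^2 \bigl(1 + \tfrac{|\mu_1|^2}{3}|z|^2\bigr)}{D(z)},
\]
and equating $u_1^2 = 3\, n_3^2\,(n_1+in_2)^2$ with the square of the $u_1$-component of \eqref{eq:class-harm} gives the scalar identity
\[
\mu_1^2\, (1-|z|^4) = (3 - |\mu_1|^2|z|^2)\Bigl(1 + \tfrac{|\mu_1|^2}{3}|z|^2\Bigr) \quad \text{for every } z\in\bbD.
\]
The right-hand side is real and positive for $|z|<1$, so $\mu_1^2 \in \R_{\geq 0}$ and hence $\mu_1 \in \R$; expanding and regrouping as a polynomial in $|z|^4$ then yields $(|\mu_1|^2 - 3)(3 + |\mu_1|^2 |z|^4) = 0$, forcing $|\mu_1|^2 = 3$ and thus $\mu_1 = \pm\sqrt{3}$.

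The only delicate step is the converse, where the nonlinear cubic uniaxiality constraint $\widetilde{\beta}(u) \equiv 1$ must be converted into a usable pointwise identity; passing through the lift $n$ and extracting $n_3^2$ and $(n_1+in_2)^2$ as quadratic expressions in $u_0$ and $u_2$ bypasses the cubic in favor of a polynomial identity in $|z|^2$, which then factors explicitly.
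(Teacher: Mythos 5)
Your proof is correct, and in the converse step it takes a genuinely different route from the paper's. The paper first characterizes the unit-norm vacuum manifold algebraically in the $u$-coordinates: $W(Q)=0$ iff $u_1^2\overline{u_2}\geq 0$ and $u_0+\sqrt{3}|u_2|\in\{1,-2\}$; for the family \eqref{eq:class-harm}--\eqref{eq:D(z)}, the phase condition forces $\mu_1\in\R$, and continuity together with the boundary value selects the branch $u_0+\sqrt{3}|u_2|\equiv 1$, into which the explicit formulas are substituted to get $\mu_1=\pm\sqrt{3}$. You instead parametrize the vacuum manifold by a director $n$ through the double cover $\bbS^2\to\R P^2$ and use the sign-invariant quadratic relations $u_0=(3n_3^2-1)/2$, $u_1=\sqrt{3}\,n_3(n_1+in_2)$, $u_2=\tfrac{\sqrt{3}}{2}(n_1+in_2)^2$, which combine into the single relation $\sqrt{3}\,u_1^2=2(1+2u_0)u_2$; substituting the family turns this into a polynomial identity in $|z|^2$ that forces $\mu_1^2=3$ with no branch selection at all (and since the relations are quadratic in $n$, the global lift via simple connectedness, though valid, is not even needed). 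Your forward direction is also more complete than the paper's write-up: identifying $g_{\overline{H}}$ with $H\circ\boldsymbol{\sigma}_2^{-1}$ via the inverse stereographic substitution simultaneously proves that the maps with $\mu_1=\pm\sqrt{3}$ are positively uniaxial and establishes the final claim of the lemma, which the paper treats as ``easily seen.''

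One small imprecision: the right-hand side $(3-|\mu_1|^2|z|^2)\bigl(1+\tfrac{|\mu_1|^2}{3}|z|^2\bigr)$ is not positive on all of $\bbD$ when $|\mu_1|^2>3$; however, positivity near $z=0$ (where it tends to $3$) is all you need to exclude $\mu_1^2<0$, and in fact letting $z\to 0$ in your scalar identity already yields $\mu_1^2=3$ directly, so the subsequent expansion in $|z|^4$ is a confirmation rather than a necessity. With that trivial adjustment the argument is complete.
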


\begin{proof}
Let us fix $Q\in \bbS^4$ with $Q\simeq u=(u_0,u_1,u_2)$.  For a given $\theta\in\R$, we set  $\widetilde{Q}\simeq \widetilde{u}:=(u_0, \pm |u_1|e^{i\theta} , |u_2|e^{i2\theta})$.
From \eqref{calcWusec4.3}, we derive that  $W(Q) \geq W(\widetilde{Q})$ with equality if and only if $u_1^2 \overline{u_2}\geq 0$. Hence equality  holds if and only if 
 ${\rm Re}(u_1^2 \overline{u_2})=|u_1|^2 |u_2|$ and $Q=\widetilde{Q}$ for some $\theta \in \R$.  As a consequence, $W(Q)=0$ if and only if 
\begin{align*}
0&=1 - u_0\big(u_0^2+\frac32|u_1|^2-3|u_2|^2\big)-\frac{3\sqrt{3}}{2}|u_1|^2|u_2|\\ 
&= 1-u_0\bigg(\frac{3}{2}-\frac{1}{2}u_0^2-\frac{9}{2}|u_2|^2\bigg)-\frac{3\sqrt{3}}{2}\big(1-u_0^2-|u_2|^2\big)|u_2|\\
&=\frac{3}{2}\big(u_0+\sqrt{3}|u_2|-1\big)\big(u_0+\sqrt{3}|u_2|+2\big)\,,
\end{align*}
 where we have used that $|u|^2=u_0^2+|u_1|^2+|u_2|^2=1$. Hence, $W(Q)=0$ if and only if either $u_0+\sqrt{3}|u_2|=1$, or $u_0+\sqrt{3}|u_2|=-2$.
\vskip3pt

Let us now consider $Q \in \mathcal{A}_{\rm N}$ with $Q\simeq u$ a map of the form \eqref{eq:class-harm}-\eqref{eq:D(z)}, and $Q\simeq u$. 
If $W(Q)=0$ in $\bbD$, then $u_1^2 \overline{u_2}\geq 0$ in $\bbD$ which implies that  $\mu_1\in\R$. Since $u_0+\sqrt{3}|u_2|=1$ on $\partial\bbD$, we infer that  $u_0+\sqrt{3}|u_2|=1$ in $\bbD$ by continuity. Inserting   \eqref{eq:class-harm}-\eqref{eq:D(z)} in this equation leads to $\mu_1=\pm \sqrt{3}$. The other way around, if $\mu_1=\pm \sqrt 3$,  it is now easily seen that $W(Q)=0$ in $\bbD$. 
\end{proof}

\begin{proof}[Proof of Proposition \ref{ANlambdaminimization}]
Using $H\circ\boldsymbol{\sigma}_2^{-1}\simeq g_{\overline H}$ as a competitor, we infer from Proposition~\ref{ANminimization}  and Lemma~\ref{uniaxiality} that for any $Q\in \mathcal{A}_{\rm N}$ with $Q\simeq u$,  
\begin{equation}\label{dim1815}
E_\lambda(H\circ\boldsymbol{\sigma}_2^{-1})=E_0(H\circ\boldsymbol{\sigma}_2^{-1})=\widetilde{E}_0(g_{\overline{H}}) =6\pi \leq \widetilde E_0(u) \leq \widetilde E_\lambda(u)= E_\lambda(Q) \, .
\end{equation}
Hence \eqref{eq:inf-lambdaenergies-N} holds and $\overline{H}$ is a minimizer. On the other hand, if $Q\in \mathcal{A}_{\rm N}$ is a minimizer, then $E_\lambda(Q)=6\pi$ and all inequalities in \eqref{dim1815} are equalities. Hence $W(Q)\equiv 0$ and $\widetilde{E}_0(u)=6\pi$. Finally, combining again Proposition~\ref{ANminimization} with Lemma~\ref{uniaxiality}, we deduce that  $u(z)=g_{\overline{H}}(\pm z)$. 
\end{proof}
\begin{remark}
In the previous proof,  the characterization of uniaxial minimizers can be derived in a different way.  
Indeed, if a minimizer $Q$ is (positively) uniaxial, then it must be a minimizer over the restricted class of maps $\widetilde Q\in W^{1,2}_{\rm sym}(\bb D;\R P^2)$ with trace $\widetilde Q=\overline{H}$ on $\partial \bb D$. Combining with the fact that the mapping $\Pi:\bb S^2 \to \R P^2$,  
\[  \R \oplus \C \supset \bb S^2 \ni v \to \Pi(v)=\sqrt{\frac{3}{2}}\left(v \otimes v-\frac{1}{3} {\rm Id}\right) \in \R P^2 \subset \bb S^4 \, , \]
is an isometric two-fold cover with the result in \cite{BaZa}, one can lift any such $\widetilde{Q}$ to a map $\widetilde{v} \in W^{1,2}_{\rm sym}(\bb D;\bb S^2)$ with trace $\widetilde{v}(z)=(0, z)$ on  $\partial \bb D$ (equivariance of the lift being a consequence of its uniqueness when a lift at the boundary is chosen). Then we have $E_0(\widetilde{Q})=\frac{3}{2}\int_{\bbD}|\nabla \widetilde{v}|^2\,dx$. Thus, minimizing maps are of the form $Q=\Pi \circ v$ with $v(z)=\frac{1}{1+|z|^2}\left( \pm(1- |z|^2), 2z \right)$,  the unique minimizing harmonic maps in the class $W^{1,2}_{\rm sym}(\bb D;\bb S^2)$ with trace $(0,z)$ on $\partial\bbD$ (compare \cite[Section 3.1, equations (3.5)-(3.6)]{INSZ}).
\end{remark}

We now address problem  \eqref{eq:minASlambda}, and we begin with the dependence on $\lambda$ of the associated value. Existence of solutions will be the object of Proposition \ref{ASlambdaminimization}. 

\begin{proposition}
\label{lambdainfimum}
Setting 
\begin{equation}
\label{def:minASlambda} 
		\mathfrak{e}^*_\lambda:= \inf_{Q \in \mathcal{A}_{\rm S}} E_\lambda(Q) \, , 
\end{equation}
then $2\pi \leq \mathfrak{e}^*_\lambda \leq 10 \pi$ for every $\lambda \geq 0$, and the function $\lambda \mapsto \mathfrak{e}^*_\lambda$ is continuous and nondecreasing. 
In addition, there exists $\lambda^* \in \left[\frac{48 \sqrt{2}}{2\pi -3\sqrt{3}} ,   5^2 \cdot3^6 \cdot \frac{\sqrt{6}}{4} \pi^2\right] $ such that $\lambda\mapsto \mathfrak{e}^*_\lambda$ is strictly increasing in $[0,\lambda^*]$, $ \mathfrak{e}^*_0=2\pi$, and $\mathfrak{e}^*_\lambda = 10 \pi$ for $\lambda \geq \lambda^*$. 
\end{proposition}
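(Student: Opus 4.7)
I will organize the proof around four steps of increasing difficulty: elementary bounds and continuity, a concentration-compactness dichotomy producing strict monotonicity, an explicit calculation with $u_S$ giving $\lambda^*\geq\lambda_{\rm lower}$, and finally a quantitative rigidity estimate giving $\lambda^*\leq\lambda_{\rm upper}$.

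The elementary statements come first. The lower bound $\mathfrak{e}^*_\lambda\geq 2\pi$ and the identity $\mathfrak{e}^*_0=2\pi$ are immediate from Theorem~\ref{thm:gap-u} together with $W\geq 0$. Monotonicity follows from the pointwise inequality $E_{\lambda'}(Q)\geq E_\lambda(Q)$ when $\lambda'\geq\lambda$, and Lipschitz continuity follows from the uniform bound $\|W\|_{L^\infty(\bbS^4)}=\frac{2}{3\sqrt 6}$, which gives $|\mathfrak{e}^*_\lambda-\mathfrak{e}^*_{\bar\lambda}|\leq \frac{2\pi}{3\sqrt 6}|\lambda-\bar\lambda|$. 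For the upper bound $\mathfrak{e}^*_\lambda\leq 10\pi$ I would build competitors $\{Q_\rho\}\subset\mathcal{A}_{\rm S}$ parametrized by $\rho\to 0^+$: outside $\bbD_{2\rho}$, put the uniaxial large solution $\overline Q\simeq g_{\overline H}$ from Proposition~\ref{ANlambdaminimization} (Dirichlet energy $6\pi$, $W\equiv 0$); inside $\bbD_\rho$, insert a suitably rescaled copy of the equivariant harmonic $2$-sphere $v(z)=\bigl(-\frac{1-|z|^2}{1+|z|^2},\frac{2z}{1+|z|^2},0\bigr)$, chosen so that $v=-\eo$ at the origin (Dirichlet contribution $\to 4\pi$, $\lambda\int W=O(\rho^2)$ because $W\leq\|W\|_\infty$); and interpolate geodesically on the thin annulus $\bbD_{2\rho}\setminus\bbD_\rho$ with vanishing energy. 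As $\rho\to 0$, $\widetilde E_\lambda(Q_\rho)\to 10\pi$.

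The analytical engine is a concentration-compactness dichotomy for minimizing sequences in $\widetilde{\mathcal{A}}_{\rm S}$, modeled on the proof of Proposition~\ref{ANminimization}. A minimizing sequence $\{u_n\}$ has a weak $W^{1,2}$-limit $u_*\in\widetilde{\mathcal{A}}^{\rm sym}_{g_{\overline H}}(\bbD)$; either $u_*\in\widetilde{\mathcal{A}}_{\rm S}$ (compact case, $u_*$ attains the infimum), or $u_*\in\widetilde{\mathcal{A}}_{\rm N}$, in which case a rescaling as in Lemma~\ref{profile-extraction} extracts an equivariant harmonic sphere bubble of energy $\geq 4\pi$, so $\mathfrak{e}^*_\lambda\geq\widetilde E_\lambda(u_*)+4\pi\geq 6\pi+4\pi=10\pi$. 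Setting $\lambda^*:=\inf\{\lambda:\mathfrak{e}^*_\lambda=10\pi\}$, whenever $\lambda<\lambda^*$ a minimizer $u_\lambda\in\mathcal{A}_{\rm S}$ exists. Since $u_\lambda$ is continuous by Lemma~\ref{lemma:s1eq-emb} and $u_\lambda(0)=-\eo\notin\R P^2=\{W=0\}$, necessarily $\int W(u_\lambda)>0$; testing $u_\lambda$ against any $\lambda'>\lambda$ yields $\mathfrak{e}^*_{\lambda'}\geq\mathfrak{e}^*_\lambda+(\lambda'-\lambda)\int W(u_\lambda)>\mathfrak{e}^*_\lambda$, proving strict monotonicity on $[0,\lambda^*]$.

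The explicit lower bound $\lambda^*\geq\lambda_{\rm lower}$ is a direct computation with $u_S$ from \eqref{eq:uS}. Since $u_S$ has $u_1=0$, formula \eqref{calcWusec4.3} reduces to $\widetilde W(u_S)=\frac{1-4f_0^3+3f_0}{3\sqrt 6}$ with $f_0=(r^4-3)/(r^4+3)$. Writing $f_0=1-6/w$ for $w=r^4+3$ gives the clean factorization $1-4f_0^3+3f_0=54(r^4-1)^2/w^3$, and the substitutions $\rho=r^2$ followed by $\rho=\sqrt 3\tan\theta$ collapse the resulting integrand to $(3\sin^2\theta-\cos^2\theta)^2=3-4\cos 2\theta+2\cos 4\theta$ on $[0,\pi/6]$. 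A direct evaluation produces $\int_{\bbD}\widetilde W(u_S)\,dx=\frac{\pi\sqrt 2(2\pi-3\sqrt 3)}{12}$; combined with $\widetilde E_0(u_S)=2\pi$ from Theorem~\ref{thm:gap-u}, one gets $\widetilde E_\lambda(u_S)<10\pi$ precisely when $\lambda<\lambda_{\rm lower}$, whence $\mathfrak{e}^*_\lambda<10\pi$ for all such $\lambda$ and $\lambda^*\geq\lambda_{\rm lower}$.

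The main obstacle will be the upper bound $\lambda^*\leq\lambda_{\rm upper}$. By the dichotomy above this reduces to showing that for $\lambda\geq\lambda_{\rm upper}$ no minimizer can exist in $\mathcal{A}_{\rm S}$. Assuming for contradiction $u_\lambda\in\mathcal{A}_{\rm S}$ with $\widetilde E_\lambda(u_\lambda)\leq 10\pi$, my plan is to combine two ingredients: first, the Modica-type continuity estimate $|f_1(r)|^2+|f_2(r)|^2\leq\frac{1}{2\pi}\int_{\bbD_r}|\nabla u|^2$ from \eqref{eq:modica-trick}, which quantifies how fast $u_\lambda$ can escape from $-\eo$ given the energy bound; second, the sharp pointwise lower bound $\widetilde W(f)\geq\frac{(1-f_0)(1+2f_0)^2}{3\sqrt 6}$ valid on $\{|f|=1,\;f_0\in[-1,-\tfrac12]\}$, obtained by maximizing $\widetilde\beta(f)$ in \eqref{eq:beta-f} over $(f_1,f_2)$ with $f_0$ fixed (the optimizer being $f_1=0$, $|f_2|^2=1-f_0^2$, which gives $\widetilde\beta=4f_0^3-12f_0^2+9f_0-1$). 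Integrating the pointwise bound on the radial ``core region'' where $f_0\leq -3/4$, whose size is controlled from below by the Modica estimate, and carefully balancing all the numerical constants that appear, should yield the explicit threshold $\lambda_{\rm upper}=5^2\cdot 3^6\cdot\frac{\sqrt 6}{4}\pi^2$ beyond which the assumed upper bound on $\widetilde E_\lambda(u_\lambda)$ is violated; this quantitative comparison is the technical heart of the proof.
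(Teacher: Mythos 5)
Your elementary steps are sound and coincide with the paper's: the bounds $2\pi\leq\mathfrak{e}^*_\lambda\leq10\pi$, the value $\mathfrak{e}^*_0=2\pi$, monotonicity, continuity (your Lipschitz bound via $\|W\|_{L^\infty(\bbS^4)}$ is a fine substitute for the paper's concavity argument), the bubble-insertion competitor for the $10\pi$ bound, and the explicit computation $\int_{\bbD}\widetilde W(u_{\rm S})\,dx=\frac{\sqrt2}{6}\pi^2-\frac{\sqrt6}{4}\pi$ giving $\lambda^*\geq\frac{48\sqrt2}{2\pi-3\sqrt3}$ all check out.

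Two issues remain. The smaller one: your strict-monotonicity argument is written with the inequality in the wrong direction. Using $u_\lambda$ as a competitor at $\lambda'>\lambda$ gives $\mathfrak{e}^*_{\lambda'}\leq\mathfrak{e}^*_\lambda+(\lambda'-\lambda)\int_{\bbD}W(u_\lambda)\,dx$, an upper bound, not the lower bound you claim; and no uniform positive lower bound on $\int W$ over $\mathcal{A}_{\rm S}$ exists (the bubble maps of Lemma \ref{sphereattaching} make it arbitrarily small). The repair is to test with the minimizer at the \emph{larger} parameter: for $\lambda<\lambda'<\lambda^*$, $\mathfrak{e}^*_\lambda\leq E_\lambda(u_{\lambda'})=\mathfrak{e}^*_{\lambda'}-(\lambda'-\lambda)\int W(u_{\lambda'})<\mathfrak{e}^*_{\lambda'}$, with the endpoint handled by $\mathfrak{e}^*_\lambda<10\pi=\mathfrak{e}^*_{\lambda^*}$. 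Even repaired, your route makes strict monotonicity depend on existence of minimizers, i.e. on the enhanced-minimizing-sequence/bubbling machinery (the analogues of Lemmas \ref{ASdensity} and \ref{ASprofile-extraction}) that the paper only develops later for Proposition \ref{ASlambdaminimization}. The paper avoids all of it: $\lambda\mapsto E_\lambda(Q)$ is affine for each $Q$, so $\lambda\mapsto\mathfrak{e}^*_\lambda$ is concave and nondecreasing, hence continuous, and a nondecreasing concave function constant on a subinterval must remain constant from there on, contradicting $\lambda^*=\min\{\lambda:\mathfrak{e}^*_\lambda=10\pi\}$; strict monotonicity on $[0,\lambda^*]$ is one line. (Minor typo: your optimizer gives $\widetilde\beta=4f_0^3-3f_0$, not $4f_0^3-12f_0^2+9f_0-1$; the stated bound on $\widetilde W$ is nonetheless the correct one.)

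The genuine gap is the upper bound $\lambda^*\leq5^2\cdot3^6\cdot\frac{\sqrt6}{4}\pi^2$, which you leave as a sketch, and the sketched strategy would not close. Under the sole hypothesis $E_\lambda(u_\lambda)\leq10\pi$, the estimate \eqref{eq:modica-trick} controls $|f_1(\rho)|^2+|f_2(\rho)|^2$ only by $\frac{1}{2\pi}\int_{\bbD_\rho}|\nabla u_\lambda|^2\,dx$, which is of order one at arbitrarily small radii whenever the energy concentrates at the origin; this is exactly what happens along bubbling families, and in 2D there is no monotonicity formula or $\lambda$-uniform gradient bound ruling concentration out for minimizers as $\lambda$ grows. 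Hence the radius of your core region $\{f_0\leq-3/4\}$ has no $\lambda$-independent lower bound, its potential contribution scales like $\lambda\rho^2$ with $\rho$ arbitrarily small, and no explicit threshold emerges from balancing constants. The paper's argument is of a different nature and applies to \emph{every} $Q\in\mathcal{A}_{\rm S}$ without assuming a minimizer exists: setting ${\rm B}=\widetilde\beta(Q)$, the pointwise bound $|\nabla{\rm B}|\leq3\sqrt3\,|\nabla Q|$ (from ${\rm tr}(Q^4)=\frac12$ on $\bbS^4$) gives $E_\lambda(Q)\geq\int_{\bbD}\frac1{54}|\nabla{\rm B}|^2+\frac{\lambda}{3\sqrt6}(1-{\rm B})\,dx$; Young's inequality trades the \emph{full Dirichlet term}, not just the potential, against $\lambda$ to yield $E_\lambda(Q)\geq c\,\sqrt\lambda\int_{\bbD}\sqrt{1-{\rm B}}\,|\nabla{\rm B}|\,dx$; and the coarea formula together with ${\rm B}(\overline{\bbD})=[-1,1]$ (${\rm B}(0)=-1$, ${\rm B}\equiv1$ on $\partial\bbD$) bounds this below by $\frac{4}{27}\cdot\frac{\sqrt\lambda}{6^{1/4}}$, which exceeds $10\pi$ beyond the stated threshold. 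The missing idea is precisely this $\sqrt\lambda$-growth obtained by pairing the gradient of the scalar biaxiality with the potential, which is insensitive to the scale at which the escape from $-\eo$ occurs; without it, a fixed $10\pi$ energy budget cannot be contradicted.
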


To prove the proposition, we shall need the following technical lemma.

\begin{lemma}[Bubble insertion]
\label{sphereattaching}
For each $\rho \in (0,1)$ there exists $v_\rho \in \widetilde{\mathcal{A}}_{\rm S}$ such that $v_\rho \equiv (1,0,0)$ for $|z|\geq \rho$, and satisfying $\widetilde{E}_0(v_\rho) \to 4\pi$ as $\rho \to 0$. 
As a consequence, for each $u \in \widetilde{\mathcal{A}}_{\rm N}$, there exists $\{ w_\rho\} \subseteq \widetilde{\mathcal{A}}_{\rm S}$ such that  $w_\rho \rightharpoonup u$ weakly in $W^{1,2}(\bb D)$, $w_\rho \to u$ strongly  in $W^{1,2}_{\rm loc}(\overline{\bb D} \setminus \{0\})$, and $\widetilde{E}_0(w_\rho) \to \widetilde{E}_0(u)+4\pi$ as $\rho \to 0$.	
\end{lemma}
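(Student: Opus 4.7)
The plan is to construct $v_\rho$ by concentrating an equivariant finite-energy harmonic $2$-sphere into $\bbS^4$ near the origin, exploiting the conformal invariance of $\widetilde E_0$ to localize the $4\pi$ of energy in an arbitrarily small disc. The natural building block is the bubble $\tilde u$ from Remark~\ref{bubbling}, suitably inverted so that its ``north pole'' value is $(-1,0,0)$ at $z=0$ and it decays to $(1,0,0)$ at infinity. The consequence for $w_\rho$ is then a local modification of $u\in\widetilde{\mathcal A}_{\rm N}$: since $u$ is continuous at the origin with $u(0)=(1,0,0)$, one can glue the bubble at scale $\rho$ to $u$ in a thin annulus where both maps are $o(1)$-close to $(1,0,0)$, so the interpolation cost vanishes in the limit.

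Concretely, first I would set $\omega(z):=\tilde u(1/\bar z)=\bigl((|z|^2-1)/(|z|^2+1),\,2z/(|z|^2+1),\,0\bigr)$. This is an equivariant $C^\omega$ harmonic map $\C\to\bbS^4$ with $\omega(0)=(-1,0,0)$, $\omega(z)\to(1,0,0)$ as $|z|\to\infty$, and (by conformal invariance) $\widetilde E_0(\omega;\C)=4\pi$, with pointwise bounds $|\omega(z)-(1,0,0)|=O(1/|z|)$ and $|\nabla\omega(z)|=O(1/|z|^2)$ at infinity. Then I would rescale $\omega_\rho(z):=\omega(z/\rho^2)$, so that on $|z|=\rho$ one has $|\omega_\rho-(1,0,0)|=O(\rho)$ and $|\nabla\omega_\rho|=O(1)$. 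Let $\chi\in C^\infty([0,\infty);[0,1])$ with $\chi\equiv 1$ on $[0,1/2]$ and $\chi\equiv 0$ on $[1,\infty)$, and define
\[
\widetilde v_\rho(z):=\chi(|z|/\rho)\,\omega_\rho(z)+\bigl(1-\chi(|z|/\rho)\bigr)(1,0,0),\qquad v_\rho:=\widetilde v_\rho/|\widetilde v_\rho|,
\]
extended by $(1,0,0)$ for $|z|\geq \rho$. On the gluing annulus $\{\rho/2\leq|z|\leq\rho\}$ the pointwise bound $|\widetilde v_\rho-(1,0,0)|=O(\rho)$ keeps $|\widetilde v_\rho|\geq 1/2$, so $v_\rho$ is a well-defined equivariant $\bbS^4$-valued map with $v_\rho(0)=(-1,0,0)$. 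Using $|\chi'(|z|/\rho)|/\rho=O(1/\rho)$ together with the aforementioned decay estimates yields $|\nabla v_\rho|^2=O(1)$ on the annulus, whence $\int_{\{\rho/2\leq|z|\leq\rho\}}\tfrac12|\nabla v_\rho|^2\,dx=O(\rho^2)=o(1)$. The conformal change of variables gives $\int_{\bbD_{\rho/2}}\tfrac12|\nabla\omega_\rho|^2\,dx=\int_{\bbD_{1/(2\rho)}}\tfrac12|\nabla\omega|^2\,dx\to 4\pi$, while $v_\rho$ is locally constant on $\{|z|>\rho\}$. Summing gives $\widetilde E_0(v_\rho)\to 4\pi$ as claimed.

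For the second statement, I would repeat exactly the same gluing but against $u$ instead of the constant $(1,0,0)$. Since $u\in\widetilde{\mathcal A}_{\rm N}$ is continuous on $\overline{\bbD}$ with $u(0)=(1,0,0)$ by Lemma~\ref{lemma:s1eq-emb}, the quantity $\mu(\rho):=\max_{|z|\leq\rho}|u(z)-(1,0,0)|$ tends to $0$. Define
\[
\widetilde w_\rho(z):=\chi(|z|/\rho)\,\omega_\rho(z)+\bigl(1-\chi(|z|/\rho)\bigr)u(z),\qquad w_\rho:=\widetilde w_\rho/|\widetilde w_\rho|,
\]
which equals $\omega_\rho$ on $\bbD_{\rho/2}$ and $u$ on $\bbD\setminus\bbD_\rho$. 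Both $\omega_\rho$ and $u$ are equivariant for the same $\bbS^1$-action, the cutoff is radial, and the normalization preserves equivariance, so $w_\rho\in W^{1,2}_{\rm sym}(\bbD;\bbS^4)$; moreover $w_\rho|_{\partial\bbD}=u|_{\partial\bbD}=g_{\overline H}$ and $w_\rho(0)=-(1,0,0)\cdot(-1)=(-1,0,0)$, hence $w_\rho\in\widetilde{\mathcal A}_{\rm S}$. On the gluing annulus $|\widetilde w_\rho-(1,0,0)|\leq C(\rho+\mu(\rho))$ so normalization is legitimate, and a Leibniz expansion together with $|\chi'|/\rho=O(1/\rho)$ gives
\[
\int_{\{\rho/2\leq|z|\leq\rho\}}\tfrac12|\nabla w_\rho|^2\,dx \leq C\bigl(\rho^{-2}\mu(\rho)^2\cdot\rho^2+\rho^2+\int_{\bbD_\rho\setminus\bbD_{\rho/2}}|\nabla u|^2\bigr)=o(1),
\]
using absolute continuity of the integral $\int|\nabla u|^2\,dx$.

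The energy identity $\widetilde E_0(w_\rho)=\int_{\bbD_{\rho/2}}\tfrac12|\nabla\omega_\rho|^2\,dx+o(1)+\int_{\bbD\setminus\bbD_\rho}\tfrac12|\nabla u|^2\,dx\to 4\pi+\widetilde E_0(u)$ then follows. Strong $W^{1,2}_{\rm loc}$ convergence on $\overline{\bbD}\setminus\{0\}$ is immediate since $w_\rho\equiv u$ on $\{|z|\geq\rho\}$, and pointwise convergence $w_\rho(z)\to u(z)$ holds for every $z\neq 0$; combined with the uniform energy bound this forces $w_\rho\rightharpoonup u$ weakly in $W^{1,2}(\bbD)$ via uniqueness of the weak limit. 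The delicate point I expect is the bookkeeping on the annulus, where one simultaneously needs to preserve the $\bbS^4$-constraint, the equivariance, and control of the gradient; everything works thanks to the fact that on this annulus both maps are forced to be within $o(1)$ of the fixed point $(1,0,0)$ of the $\bbS^1$-action, so linear interpolation followed by radial normalization is harmless.
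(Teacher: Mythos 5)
Your proof is correct and follows essentially the same route as the paper: your bubble $\omega$ is exactly the map $\widehat v$ used there, concentrated at a small scale and glued to the ambient configuration near the fixed point $(1,0,0)$ of the $\bbS^1$-action, with a renormalization step and the observation that the annulus contribution vanishes. The only (harmless) difference is in the second claim, where the paper first replaces $u$ by the truncated map $u_\rho\in\widetilde{\mathcal{A}}_{\rm N}^\rho$ from Lemma~\ref{ANdensity} and pastes the bubble onto that, whereas you glue directly onto $u$ using its continuity at the origin (Lemma~\ref{lemma:s1eq-emb}); both give the same estimates and conclusions.
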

\begin{proof}
Define
$$\widehat{v}(z):=\frac{1}{|z|^2+1}\big(|z|^2-1, 2z,0 \big)\,,$$ 
so that $\widehat{v} \in W^{1,2}_{\rm loc}(\C;\bb S^4)$, $\widehat v$ is $\bbS^1$-equivariant, $\widehat{v}(0)=(-1,0,0)$, and $\widetilde{E}_0(\widehat{v},\C)=4 \pi$. We rescale the map $\widehat v$ setting, for $\rho\in(0,1)$, $\widehat{v}_\rho(z):=\widehat{v}(z/\rho^3)$. Then, 
$$\max_{|z|=\rho^2}|\widehat{v}_\rho(z)+\widehat{v}(0)|\mathop{\longrightarrow}\limits_{\rho\to 0}0 \quad\text{and}\quad \widetilde{E}_0(\widehat{v}_\rho,\bb D_{\rho^2})\mathop{\longrightarrow}\limits_{ \rho\to 0} 4 \pi\,.$$
Next we consider the linear interpolation between $\widehat{v}_\rho$ and $-\widehat{v}(0)$, 
\[
\widetilde{v}_\rho(re^{i\phi}):=
\begin{cases}
\widehat{v}_\rho(re^{i\phi})   & \text{if }|z| \leq \rho^2 \, , \\[5pt]
\displaystyle \widehat{v}_\rho (\rho^2 e^{i\phi}) -\frac{|z|-\rho^2}{\rho-\rho^2} \big(\widehat{v}(0)+\widehat{v}_\rho(\rho^2 e^{i \phi})\big)  & \text{if }|z| \in [\rho^2, \rho] \, ,   \\[5pt] 
-\widehat{v}(0)   & \text{if }|z| \in [\rho, 1] \, .	
\end{cases}
\]
Since $\max_{|z|=\rho^2}|\widehat{v}_\rho(z)+\widehat{v}(0)|\to 0$,  we have $|\widehat{v}_\rho(z)|\geq 1/2$ on $\partial\bbD_{\rho^2}$ for $\rho$ small enough. It allows us to define 
$$v_\rho:=\frac{\widehat v_\rho}{|\widehat v_\rho|}\in \widetilde{\mathcal{A}}_{\rm S} \,, $$
which satisfies $v_\rho(z)=(1,0,0)$ for $|z|\geq \rho$.  
Arguing in Lemma \ref{ANdensity}, we obtain 
$\widetilde{E}_0(v_\rho; \bb D_\rho \setminus \bb D_{\rho^2}) \to 0$, and consequently 
$$\lim_{\rho \to 0}\widetilde{E}_0(v_\rho)= \lim_{\rho \to 0}\widetilde{E}_0(v_\rho;\bb D_\rho) =\lim_{\rho \to 0} \widetilde{E}_0(\widehat{v}_\rho;\bb D_{\rho^2}) =4 \pi\,,$$ 
proving the first claim.
 \vskip3pt
 
To prove the second claim, we fix $u \in \widetilde{\mathcal{A}}_{\rm N}$ and we apply Lemma~\ref{ANdensity} (i) to obtain $u_\rho \in \widetilde{\mathcal{A}}_{\rm N}^\rho$ (see~\eqref{eq:ANrho}) such that  $u_\rho \to u$ strongly in $W^{1,2}(\bb D)$ as $\rho\to0$. Then  $\widetilde{E}_0(u_\rho)=\widetilde{E}_0(u_\rho;\bb D \setminus \bb D_{\rho})\to \widetilde{E}_0(u)$. 
 Finally, we set 
 \[
w_\rho(z):=
\begin{cases}
v_\rho(z)  & \text{if } |z| \leq \rho \, , \\
u_\rho(z)  & \text{if } |z| \in [\rho, 1] \, ,	
\end{cases}
\]
and it is straightforward to check that $w_\rho\in  \widetilde{\mathcal{A}}_{\rm S}$ has all the announced properties. 
\end{proof}

\begin{proof}[Proof of Proposition \ref{lambdainfimum}] 
First, we observe that for each $Q \in \mathcal{A}_{\rm S}$, the function $\lambda \mapsto E_\lambda(Q) \in \R$ is affine and nondecreasing. Hence $\lambda \mapsto \mathfrak{e}^*_\lambda$ is nondecreasing and concave, therefore continuous in~$(0,\infty)$. In view of Proposition \ref{ASminimization}, we have $\mathfrak{e}^*_0=2\pi=E_0(Q_{\rm S})$ with $Q_{\rm S}\simeq u_{\rm S}$ given by \eqref{eq:uS}. Consequently,
$$2\pi \leq \mathfrak{e}_\lambda^* \leq E_\lambda(Q_{\rm S})=\widetilde{E}_\lambda(u_{\rm S}) \mathop{\longrightarrow}\limits_{\lambda\to 0} \widetilde{E}_0(u_{\rm S})=2\pi\,,$$ 
so that continuity also holds at  $0$. 

Next we consider $H\circ\boldsymbol{\sigma}_2^{-1}$ with $H\circ\boldsymbol{\sigma}_2^{-1}\simeq g_{\overline{H}}$ as in \eqref{eq:Hbar}, and $w_\rho\in \widetilde{\mathcal{A}}_{\rm S}$ obtained by applying Lemma~\ref{sphereattaching} to  $g_{\overline{H}}$. Then, $\int_{\bb \D} \widetilde{W}(w_\rho) \,dx \to \int_{\bb \D} \widetilde{W}(g_{\overline{H}})\, dx=0$. If $Q_\rho\simeq w_\rho$, it follows from Proposition \ref{ANlambdaminimization} and Lemma~\ref{sphereattaching}  that
\[ \mathfrak{e}^*_\lambda \leq E_\lambda(Q_\rho)=\widetilde{E}_0(w_\rho)+\lambda \int_{\bb \D} \widetilde{W}(w_\rho) \,dx\mathop{\longrightarrow}\limits_{\rho\to 0} \widetilde{E}_0(g_{\overline{H}})+4\pi=10\pi \, .\]
By monotonicity we deduce that $2\pi = \mathfrak{e}^*_0  \leq \mathfrak{e}_\lambda^* \leq 10\pi$ for every $\lambda\geq 0$. 

An elementary calculation yields
\begin{equation}
\label{uSpotentialenergy}
\int_{\bb D} \widetilde{W}(u_{\rm S})\,dx=\frac{18}{\sqrt{6}}\int_{\bb D}\frac{(|z|^4-1)^2}{(|z|^4+3)^3}\,dz=3\sqrt{6}\pi \int_0^1 \frac{(t^2-1)^2}{(t^2+3)^3}\,dt=-\frac{\sqrt{6}}{4}\pi+\frac{\sqrt{2}}6 \pi^2 \, . 	\end{equation}
As a consequence, if $\mathfrak{e}^*_\lambda=10\pi$ for some $\lambda >0$, then
\[ 10 \pi=\mathfrak{e}^*_\lambda \leq E_\lambda(Q_{\rm S})=\widetilde{E}_\lambda(u_{\rm S})=2\pi+\lambda \left(  -\frac{\sqrt{6}}{4}\pi+\frac{\sqrt{2}}6 \pi^2 \right) \, , \]
which implies that $\lambda \geq \frac{48 \sqrt{2}}{2\pi -3\sqrt{3}}$.

To complete the proof, we are going to show that if $\lambda >5^2 \cdot 3^6\cdot \frac{\sqrt{6}}{4} \pi^2$, then $E_\lambda(Q)>10\pi$ for every $Q\in \mathcal{A}_{\rm S}$. As a consequence $\mathfrak{e}^*_\lambda=10 \pi$ for $\lambda > 5^2 \cdot 3^6 \cdot \frac{\sqrt{6}}{4} \pi^2 $, so that the conclusion follows by setting $\lambda^*:=\min \{\lambda \, : \, \mathfrak{e}^*_\lambda=10 \pi \}$ and noticing that $\lambda \mapsto \mathfrak{e}^*_\lambda$ is strictly increasing on $[0,\lambda^*]$ by concavity.
To derive the previous claim, we fix $Q\in \mathcal{A}_{\rm S}$ and we observe that ${\rm B}:=\widetilde{\beta}(Q)=\sqrt{6} {\rm tr} (Q^3)$ belongs to $W^{1,2}(\bb D) \cap C(\overline{\bb D})$ with 
\[|\nabla {\rm B}| \leq 3 \sqrt{6} \big( {\rm tr} (Q^4)\big)^{1/2} |\nabla Q|= 3\sqrt{3}  |\nabla Q| \, , \]
where we used that ${\rm tr} (Q^4)=\frac12$ for $Q\in \bb S^4 \subset \mathcal{S}_0$. 
From this last inequality and  Young's inequality, we deduce that 
\[E_\lambda (Q) \geq \int_\bb{D} \frac{1}{2\cdot3^3} |\nabla {\rm B}|^2+ \frac{\lambda}{3\sqrt{6}} \left( 1-{\rm B}\right) \, dx \geq \frac{\sqrt{\lambda}}{9}\cdot \frac{1}{(24)^{1/4}}  \int_{\bb D}\sqrt{1-{\rm B}}|\nabla {\rm B}| \,dx \, .\]
Since $Q\in \mathcal{A}_{\rm S}$, we have ${\rm B}(\overline{\bb D})=[-1,1]$, so that the coarea formula yields
\[E_\lambda (Q) \geq    \frac{\sqrt{\lambda}}{9}\cdot \frac{1}{(24)^{1/4}}  \int_{\bb D}\sqrt{1-{\rm B}}\,|\nabla {\rm B}| \,dx=  \frac{\sqrt{\lambda}}{9}\cdot \frac{1}{(24)^{1/4}}  \int_{-1}^1\sqrt{1-t}\, dt=\frac{4}{3^3}\cdot \frac{\sqrt{\lambda}}{6^{1/4}} \, , \]
and the conclusion follows.
\end{proof}

We are now ready to discuss existence of solutions for variational problem \eqref{eq:minASlambda} with $\lambda \geq 0$. The proof the proposition below is postponed as it requires two auxiliary results. 

\begin{proposition}
\label{ASlambdaminimization}
Let $\lambda^*>0$ be the constant defined in Proposition \ref{lambdainfimum}. The following holds.
 
 \begin{enumerate}
 \item[(i)] If $0 \leq \lambda <\lambda^*$, then $\mathfrak{e}^*_\lambda<10 \pi$ and there exists $Q_\lambda \in \mathcal{A}_{\rm S}$ solving \eqref{eq:minASlambda}. In addition, 
 $\beta_\lambda:=\widetilde{\beta} \circ Q_\lambda$ satisfies $\beta_\lambda(\overline{\bb D})=[-1,1]$.  
 \vskip5pt
 \item[(ii)] If $\lambda>\lambda^*$, then $\mathfrak{e}^*_\lambda=10 \pi$ and \eqref{eq:minASlambda} has no solution.
  \end{enumerate}
\end{proposition}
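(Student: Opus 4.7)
My plan is to treat the two parts essentially independently, exploiting the fine information on $\lambda \mapsto \mathfrak{e}^*_\lambda$ from Proposition~\ref{lambdainfimum} together with the gap/classification results in Theorem~\ref{thm:gap-u} and Proposition~\ref{ANminimization}.

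For part (ii), the argument is an easy perturbation. Proposition~\ref{lambdainfimum} already gives $\mathfrak{e}^*_\lambda = 10\pi$ for $\lambda > \lambda^*$, so I only need to rule out existence. Assume by contradiction that $Q_\lambda \in \mathcal{A}_{\rm S}$ is a minimizer. By Lemma~\ref{lemma:s1eq-emb}, $Q_\lambda$ is continuous with $Q_\lambda(0) = -\eo$; since $W(-\eo) = 2/(3\sqrt{6}) > 0$, continuity yields $\int_{\bbD} W(Q_\lambda)\, dx > 0$. Then, for any $\mu \in (\lambda^*, \lambda)$, using $Q_\lambda$ as a test in the definition of $\mathfrak{e}^*_\mu$ gives
$$\mathfrak{e}^*_\mu \leq E_\mu(Q_\lambda) = E_\lambda(Q_\lambda) - (\lambda-\mu)\int_{\bbD} W(Q_\lambda)\,dx < 10\pi = \mathfrak{e}^*_\mu,$$
a contradiction.

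For part (i), I begin by noting that strict monotonicity of $\lambda \mapsto \mathfrak{e}^*_\lambda$ on $[0,\lambda^*]$ (Proposition~\ref{lambdainfimum}) together with $\mathfrak{e}^*_{\lambda^*} = 10\pi$ gives $\mathfrak{e}^*_\lambda < 10\pi$ for $\lambda < \lambda^*$. I then apply the direct method: take a minimizing sequence $\{Q_n\} \subset \mathcal{A}_{\rm S}$ with $Q_n \simeq u_n$, extract a (not relabeled) subsequence with $u_n \rightharpoonup u_*$ weakly in $W^{1,2}(\bbD)$. By Lemma~\ref{lemma:s1eq-emb}, $u_* \in \widetilde{\mathcal{A}}^{\rm sym}_{g_{\overline{H}}}(\bbD)$ and $u_*(0) \in \{(1,0,0), (-1,0,0)\}$. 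If $u_*(0) = (-1,0,0)$, then $u_* \in \widetilde{\mathcal{A}}_{\rm S}$, and since $Q_n \to Q_*$ strongly in $L^4$ by the Rellich embedding, $\lambda\int W(Q_n) \to \lambda\int W(Q_*)$, so weak lower semicontinuity of the Dirichlet integral yields $E_\lambda(Q_*) \leq \liminf_n E_\lambda(Q_n) = \mathfrak{e}^*_\lambda$; the reverse inequality is obvious, so $Q_*$ is a minimizer.

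The real work is ruling out the noncompact alternative $u_* \in \widetilde{\mathcal{A}}_{\rm N}$, and this is where the plan essentially reproduces the rescaling machinery of Lemma~\ref{profile-extraction}. Writing $u_n(re^{i\phi}) = (f_0^{(n)}(r), f_1^{(n)}(r)e^{i\phi}, f_2^{(n)}(r)e^{i2\phi})$, the assumption $f_0^{(n)}(0) = -1$ together with pointwise convergence $f_0^{(n)} \to f_0^*$ away from the origin (Lemma~\ref{lemma:s1eq-emb}) and $f_0^*(0) = 1$ provides $r_n \downarrow 0$ with $f_0^{(n)}(r_n) = 0$. Rescaling $\tilde u_n(z) := u_n(r_n z)$ and running the same blow-up/finite-energy/removable-singularity argument as in Lemma~\ref{profile-extraction} (where the potential term drops out by scaling since $\lambda$ is replaced by $r_n^2 \lambda \to 0$) yields, up to subsequence, $\tilde u_n \rightharpoonup \tilde u$ weakly in $W^{1,2}_{\rm loc}(\C)$ with $\tilde u : \C \to \bbS^4$ a finite-energy, nonconstant, equivariant harmonic map. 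Such $\tilde u$ extends to an equivariant harmonic $2$-sphere, whose Dirichlet energy is quantized and at least $4\pi$ by the classification in \cite[Section~3]{DMP2}. Localizing energy on $\bbD_{r_n'}$ and $\bbD \setminus \overline{\bbD}_{r_n'}$ with $r_n' := \sqrt{r_n}$ then gives
$$\mathfrak{e}^*_\lambda = \lim_n E_\lambda(Q_n) \geq \widetilde{E}_0(\tilde u;\C) + \widetilde{E}_\lambda(u_*) \geq 4\pi + 6\pi = 10\pi,$$
where $\widetilde{E}_\lambda(u_*) \geq \widetilde{E}_0(u_*) \geq 6\pi$ by Proposition~\ref{ANminimization}. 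This contradicts $\mathfrak{e}^*_\lambda < 10\pi$, so $u_* \in \widetilde{\mathcal{A}}_{\rm S}$ and the construction above delivers a minimizer $Q_\lambda$. Finally, the range $\beta_\lambda(\overline{\bbD}) = [-1,1]$ is immediate: $\beta_\lambda(0) = \widetilde{\beta}(-\eo) = -1$, and $\beta_\lambda \equiv 1$ on $\partial\bbD$ since $\overline{H}\in\R P^2$; continuity of $Q_\lambda$ (Lemma~\ref{lemma:s1eq-emb}) and the intermediate value theorem close the claim. The principal obstacle is the concentration analysis in the noncompact case, which crucially needs both the explicit bubble extraction of Lemma~\ref{profile-extraction} and the quantization $\widetilde{E}_0(\tilde u;\C) \geq 4\pi$ to combine additively with the gap bound $\geq 6\pi$ on $\widetilde{\mathcal{A}}_{\rm N}$.
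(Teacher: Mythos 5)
Your treatment of part (ii) is correct and matches the paper's argument. Part (i), however, contains a genuine gap in the noncompact alternative.

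The issue is that you begin with "take a minimizing sequence $\{Q_n\} \subset \mathcal{A}_{\rm S}$" and then claim to "run the same blow-up/finite-energy/removable-singularity argument as in Lemma~\ref{profile-extraction}." But Lemma~\ref{profile-extraction} (and its $\mathcal{A}_{\rm S}$-analogue, Lemma~\ref{ASprofile-extraction}) is stated for a very particular minimizing sequence: the $u_n$ are required to be \emph{solutions} of the regularized problem \eqref{regminppbAS}, that is, energy minimizers over the weakly closed subclass $\widetilde{\mathcal{A}}_{\rm S}^{1/n}$ in which the map is frozen equal to $-\eo$ on $\bbD_{1/n}$. This hypothesis is not cosmetic: it is what makes each $u_n$ a solution of the ODE system \eqref{eq:EL-symm-ODE} on the annulus $\{1/n<|z|<1\}$, which in turn gives (a) uniform $C^2_{\rm loc}$ bounds away from the origin that allow passage to the limit in the equation, so that the weak limit $u_*$ and, after rescaling, the bubble $\tilde u$ are genuine critical points and hence real analytic; (b) the quantitative lower bound $r_n>1/n$ that pins down $r_*:=\limsup_n 1/(nr_n)<1$ and ultimately $r_*=0$; and (c) the applicability of Lemaire's constancy theorem to rule out triviality and extend $\tilde u$ to a finite-energy harmonic map on $\C$. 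For an arbitrary minimizing sequence none of this is available — the $Q_n$ need not satisfy any Euler–Lagrange equation, so the rescaled maps $\tilde u_n(z)=u_n(r_n z)$ need not converge to a harmonic map, and the chain of inequalities leading to $\widetilde{E}_0(\tilde u;\C)\geq 4\pi$ collapses.

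The paper circumvents this precisely via Lemma~\ref{ASdensity}, which constructs the enhanced minimizing sequence consisting of minimizers of the constrained problems \eqref{regminppbAS}; only then is Lemma~\ref{ASprofile-extraction} invoked. You would need to either insert this density/replacement step (or supply some other replacement mechanism, e.g.\ a harmonic-replacement argument \`a la Sacks–Uhlenbeck) to regularize your arbitrary minimizing sequence before the bubbling analysis. The compact case of your argument — $u_*\in\widetilde{\mathcal{A}}_{\rm S}$ handled by weak lower semicontinuity and the $L^4$ convergence of the potential term — is correct as stated and does not need the regularized sequence, as is the final intermediate-value argument for $\beta_\lambda(\overline{\bbD})=[-1,1]$.
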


To solve problem  \eqref{eq:minASlambda}, we proceed as for \eqref{eq:minAN} constructing an  enhanced minimizing sequence for which the eventual lack of compactness is easy to describe. It rests on the following subclasses defined for $\rho\in(0,1)$ by 
\begin{equation}
\label{eq:ASrho} 
\widetilde{\mathcal{A}}_{\rm S}^\rho:=\big\{ u \in \widetilde{\mathcal{A}}_{\rm S} : u= (-1,0,0) \mbox{ a.e. in } \bb D_\rho \big\}
 \, ,  \qquad \widetilde{\mathcal{A}}^0_{\rm S}:=\bigcup_{0<\rho<1} \widetilde{\mathcal{A}}_{\rm S}^\rho \, .
\end{equation}
Note that, as for the class $\widetilde{\mathcal{A}}^\rho_{\rm N}$  in \eqref{eq:ANrho}, the subsets $\widetilde{\mathcal{A}}_{\rm S}^\rho$ are weakly $W^{1,2}$-closed for any $\rho \in (0,1)$. 
The following lemma 
is the analogue of Lemma~\ref{ANdensity} for $\widetilde{E}_\lambda$ restricted to $\widetilde{\mathcal{A}}_{\rm S}$, instead of  $\widetilde{E}_0$ restricted to~$\widetilde{\mathcal{A}}_{\rm N}$. The proof being completely similar, it is left to the reader.

\begin{lemma}
\label{ASdensity}
The following properties hold.
\begin{itemize}
\item[(i)] $\widetilde{\mathcal{A}}_{\rm S}^0$ is a strongly dense subset of $\widetilde{\mathcal{A}}_{\rm S}$ in $W^{1,2}(\bbD)$.
\vskip5pt

\item[(ii)] $\displaystyle{\inf_{u \in \widetilde{\mathcal{A}}_{\rm S}} \widetilde{E}_\lambda(u) = \inf_{u \in \widetilde{\mathcal{A}}_{\rm S}^0} \widetilde{E}_\lambda(u)=\lim_{\rho \to 0} \inf_{u \in \widetilde{\mathcal{A}}_{\rm S}^\rho} \widetilde{E}_\lambda(u)}$\,.
\vskip5pt
 
\item[(iii)] For each integer $n \geq 1$, the minimization problem
\begin{equation}\label{regminppbAS}
\min_{u\in \widetilde{\mathcal{A}}_{\rm S}^{\frac{1}{n}}}  \widetilde{E}_\lambda(u)
\end{equation}
admits a solution. In addition, for any solution $u_n\in \widetilde{\mathcal{A}}_{\rm S}^{\frac{1}{n}}$, we have 
\begin{equation}\label{limvalueregminppbAS}
\displaystyle \lim_{n\to\infty}\widetilde{E}_\lambda(u_n)=\inf_{u \in \widetilde{\mathcal{A}}_{\rm S}} \widetilde{E}_\lambda(u) =\mathfrak{e}^*_\lambda\,.
\end{equation}
\end{itemize}	
\end{lemma}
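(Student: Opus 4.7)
The plan is to mirror verbatim the proof of Lemma~\ref{ANdensity}, replacing the base point $\eo = (1,0,0)$ with $-\eo = (-1,0,0)$ throughout. The justification for such a transposition is Lemma~\ref{lemma:s1eq-emb}(i): every $u \in \widetilde{\mathcal{A}}_{\rm S}$ is continuous on $\overline{\bbD}$ with $u(0) = -\eo$, so writing $u(re^{i\phi}) = (f_0(r), f_1(r)e^{i\phi}, f_2(r)e^{2i\phi})$ we have $f_0(0) = -1$ and $f_1(0) = f_2(0) = 0$, and all the limits invoked in Lemma~\ref{ANdensity} on $f_0,f_1,f_2$ at $r=0$ remain valid with the obvious sign change on $f_0$.

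For (i), given $u \in \widetilde{\mathcal{A}}_{\rm S}$ I would define, for $\rho \in (0,1)$ small,
\[
\tilde{u}_\rho(re^{i\phi}) := \begin{cases}
-\eo & \text{if } r \in [0,\rho], \\[2pt]
-\eo + \dfrac{r-\rho}{\sqrt{\rho}-\rho}\big(u(\sqrt{\rho}\,e^{i\phi}) + \eo\big) & \text{if } r \in [\rho, \sqrt{\rho}], \\[4pt]
u(re^{i\phi}) & \text{if } r \in [\sqrt{\rho}, 1],
\end{cases}
\]
check that $\tilde u_\rho \to u$ uniformly on $\overline{\bbD}$ (which uses continuity of $u$ at $0$), and then set $u_\rho := \tilde u_\rho / |\tilde u_\rho| \in \widetilde{\mathcal{A}}_{\rm S}^\rho$ (well-defined for $\rho$ small since $|\tilde u_\rho|\geq 1/2$). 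The only contribution that needs a bound is on the interpolation annulus: a direct computation, identical to the one after the displayed definition of $u_\rho$ in Lemma~\ref{ANdensity}, gives
\[
\widetilde{E}_\lambda\big(u_\rho;\bbD_{\sqrt{\rho}} \setminus \bbD_\rho\big) \leq C\big(|f_0(\sqrt{\rho}) + 1|^2 + |f_1(\sqrt{\rho})|^2 + |f_2(\sqrt{\rho})|^2\big) + \lambda\,\|\widetilde{W}\|_{L^\infty(\bbS^4)}\,\pi\rho,
\]
where the potential contribution is trivially controlled because $\widetilde W$ is bounded on $\bbS^4$ and the annulus has area $\pi(\rho - \rho^2) \to 0$. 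Continuity of $f$ at $0$ makes the right-hand side vanish as $\rho \to 0$, which together with $\widetilde E_\lambda(u_\rho; \bbD \setminus \bbD_{\sqrt{\rho}}) = \widetilde E_\lambda(u; \bbD \setminus \bbD_{\sqrt{\rho}}) \to \widetilde E_\lambda(u)$ yields strong $W^{1,2}$-convergence $u_\rho \to u$.

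For (ii), the first equality is immediate from (i) once we observe that $\widetilde E_\lambda$ is strongly $W^{1,2}$-continuous on $W^{1,2}(\bbD;\bbS^4)$: the Dirichlet part is trivially continuous, and the potential part is continuous because $\widetilde W$ is smooth on $\bbS^4$, which is compact. The second equality uses the obvious inclusion $\widetilde{\mathcal{A}}_{\rm S}^\rho \subset \widetilde{\mathcal{A}}_{\rm S}^{\rho'}$ for $\rho' < \rho$, whence $\rho \mapsto \inf_{\widetilde{\mathcal{A}}_{\rm S}^\rho} \widetilde{E}_\lambda$ is nondecreasing and its $\rho \downarrow 0$ limit coincides with $\inf_{0<\rho<1}\inf_{\widetilde{\mathcal{A}}_{\rm S}^\rho} \widetilde{E}_\lambda = \inf_{\widetilde{\mathcal{A}}_{\rm S}^0} \widetilde{E}_\lambda$. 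For (iii), the direct method applies to each $\widetilde{\mathcal{A}}_{\rm S}^{1/n}$: the class is weakly $W^{1,2}$-closed because all four constraints (boundary trace $g_{\overline H}$, equivariance, a.e.\ pointwise value $-\eo$ on $\bbD_{1/n}$, and unit-norm constraint, the last passing to the limit via the a.e.\ convergence guaranteed by Rellich's theorem) are preserved under weak convergence; $\widetilde E_\lambda$ is coercive and weakly lower semicontinuous; hence a minimizer $u_n$ exists, and \eqref{limvalueregminppbAS} is an immediate consequence of the monotone limit in (ii).

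There is no real obstacle beyond bookkeeping: the whole argument is a sign change in Lemma~\ref{ANdensity} together with the trivial observation that $\lambda \int_{\bbD_{\sqrt\rho}} \widetilde W \to 0$ as $\rho \to 0$ because the integrand is bounded and the domain shrinks to a point.
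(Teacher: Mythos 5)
Your proposal is correct and is exactly the argument the paper intends: the paper states that the proof is "completely similar" to Lemma~\ref{ANdensity} and leaves it to the reader, and your transposition (replacing $\eo$ by $-\eo$, controlling the extra potential term by boundedness of $\widetilde W$ on $\bbS^4$ over the shrinking region, and using weak closedness of $\widetilde{\mathcal{A}}_{\rm S}^{1/n}$ for the direct method) carries out precisely that adaptation. No gaps beyond the level of detail the paper itself accepts in Lemma~\ref{ANdensity}.
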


In the vein of Lemma~\ref{profile-extraction}, we now aim to describe the lack of compactness of the minimizing sequence $\{u_n\}$ constructed in Lemma~\ref{ASdensity}.  The proof has some similarities with the one of Lemma~\ref{profile-extraction}, and we concentrate on the main differences.  

\begin{lemma}
\label{ASprofile-extraction}
Let $\{ u_n\} \subset \widetilde{\mathcal{A}}_{\rm S}^0$ be such that $u_n$ solves \eqref{regminppbAS} for every $n\geq 1$.  
Assume that for some (not relabelled) subsequence, $u_n \rightharpoonup u_*$ weakly in $W^{1,2}(\bb D)$.  Then $u_* \in \widetilde{\mathcal{A}}^{\rm sym}_{ g_{\overline{H}}}(\bbD)$ and $u_*$ is a critical point of $\widetilde{E}_\lambda$. Moreover,  
if $u_* \in \widetilde{\mathcal{A}}_{\rm N}$, then there exist a further (not relabelled) subsequence and $r_n\to 0^+$ such that $\tilde{u}_n(z):=u_n(r_n z)$ satisfies $\tilde{u}_n \rightharpoonup \tilde{u}$ weakly in $W^{1,2}_{\rm loc} (\C)$ for some equivariant nonconstant finite energy smooth harmonic map $\tilde{u} :\C \to \bb S^4$. 
\end{lemma}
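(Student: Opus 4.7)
The plan is to follow very closely the strategy of Lemma~\ref{profile-extraction}, interchanging the roles of $(1,0,0)$ and $(-1,0,0)$ and incorporating the additional potential term $\lambda\widetilde W$. First, I would observe that $u_*\in\widetilde{\mathcal{A}}^{\rm sym}_{g_{\overline{H}}}(\bbD)$ because this class is weakly $W^{1,2}$-closed, and that by minimality of $u_n$ in \eqref{regminppbAS} each $u_n$ is a weak critical point of $\widetilde{E}_\lambda$ with respect to variations supported in $\bbD\setminus\overline{\bbD}_{1/n}$. Passing to the limit, using the compact Sobolev embedding $W^{1,2}(\bbD)\hookrightarrow L^4(\bbD)$ to handle the nonlinear terms $|\nabla u|^2u$ and $\nabla_{\rm tan}\widetilde W(u)$, would give that $u_*$ is a weak critical point on $\bbD\setminus\{0\}$, hence on all of $\bbD$ since $\{0\}$ has zero $2$-capacity; H\'elein-type regularity then promotes $u_*$ to a smooth critical point of $\widetilde E_\lambda$.

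Under the assumption $u_*\in\widetilde{\mathcal{A}}_{\rm N}$, I would write the components $u_n\simeq(f_0^{(n)},f_1^{(n)}e^{i\phi},f_2^{(n)}e^{i2\phi})$ and $u_*\simeq(f_0^*,f_1^*e^{i\phi},f_2^*e^{i2\phi})$, noting that $f_0^{(n)}\equiv -1$ on $[0,1/n]$ while $f_0^*(0)=1$. By Lemma~\ref{lemma:s1eq-emb}$(ii)$, $f_0^{(n)}\to f_0^*$ locally uniformly on $(0,1]$, so for each $\varepsilon>0$ there is some $\rho<\varepsilon$ with $f_0^*(\rho)>0$, whence $f_0^{(n)}(\rho)>0$ for $n$ large and the intermediate value theorem produces a zero of $f_0^{(n)}$ in $(1/n,\rho)$. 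I would then set
\[
r_n:=\min\bigl\{r\in[0,1]:f_0^{(n)}(r)=0\bigr\}\in(1/n,\rho),
\]
which forces $r_n\to 0^+$, and a Cauchy--Schwarz estimate based on \eqref{limvalueregminppbAS} and \eqref{eq:equiv-energy-f-2d} of the form
\[
1=|f_0^{(n)}(r_n)-f_0^{(n)}(1/n)|\leq \sqrt{\pi^{-1}\widetilde E_\lambda(u_n)}\sqrt{\log(nr_n)}
\]
would yield, up to extraction, $1/(nr_n)\to r_*\in[0,1)$.

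Next I would rescale $\tilde u_n(z):=u_n(r_nz)\in W^{1,2}_{\rm sym}(\bbD_{1/r_n};\bbS^4)$. By construction $\tilde u_n\equiv(-1,0,0)$ on $\bbD_{1/(nr_n)}$ and $\tilde u_n$ is a critical point of $\widetilde E_{r_n^2\lambda}$ on the annulus $\{1/(nr_n)<|z|<1/r_n\}$. The scaling identity
\[
\widetilde E_{r_n^2\lambda}(\tilde u_n;\bbD_R)=\widetilde E_\lambda(u_n;\bbD_{Rr_n})\leq\mathfrak{e}_\lambda^*+o(1)
\]
provides uniform energy bounds on every compact subset of $\C$, so along a further subsequence $\tilde u_n\rightharpoonup\tilde u$ weakly in $W^{1,2}_{\rm loc}(\C)$ with $\tilde u$ equivariant, $\bbS^4$-valued and of finite Dirichlet energy bounded by $\mathfrak{e}_\lambda^*$. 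Since $r_n^2\lambda\to 0$, passing to the limit in the rescaled Euler--Lagrange system kills the potential term and shows that $\tilde u$ is weakly harmonic on $\Omega_*:=\{|z|>r_*\}$. Applying Lemma~\ref{lemma:s1eq-emb}$(ii)$ on each $\bbD_R$ gives local uniform convergence away from the origin, in particular $\tilde f_0(1)=\lim_n f_0^{(n)}(r_n)=0$, so that $\tilde u$ is nonconstant.

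The hard part will be to rule out $r_*>0$, and my plan here is to reproduce verbatim the inversion argument of Lemma~\ref{profile-extraction}: if $r_*>0$, then combining local uniform convergence with $\tilde u_n\equiv(-1,0,0)$ on $\bbD_{1/(nr_n)}$ forces $\tilde u\equiv(-1,0,0)$ on $\overline{\bbD}_{r_*}\setminus\{0\}$, so the conformally inverted map $v(z):=\tilde u(1/\bar z)$ on $\bbD_{1/r_*}\setminus\{0\}$ has finite Dirichlet energy, is weakly harmonic, extends to a weakly harmonic (hence smooth) map on $\bbD_{1/r_*}$ by zero capacity of the origin, and equals $(-1,0,0)$ on $\partial\bbD_{1/r_*}$; Lemaire's constancy theorem \cite{Le} then forces $v\equiv(-1,0,0)$, contradicting $\tilde f_0(1)=0$. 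Hence $r_*=0$, and $\tilde u$ extends through the origin (again by zero capacity) to the desired smooth nonconstant equivariant finite-energy harmonic map $\tilde u:\C\to\bbS^4$.
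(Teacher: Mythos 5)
Your overall architecture coincides with the paper's: components, the first zero $r_n$ of $f_0^{(n)}$, the logarithmic Cauchy--Schwarz estimate giving $\limsup 1/(nr_n)<1$, the rescaling $\tilde u_n(z)=u_n(r_nz)$, and the inversion/Lemaire argument excluding $r_*>0$. Your shortcut for $r_n\to 0$ (pick $\rho$ small with $f_0^*(\rho)>0$, use local uniform convergence and the intermediate value theorem on $[1/n,\rho]$) is correct and in fact bypasses the paper's auxiliary claim that $f_0^*(r)<1$ for all $r\in(0,1]$, which the paper proves via a Pohozaev-type ODE identity and Cauchy uniqueness; that is a legitimate small simplification.

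There is, however, a genuine gap in both of your limit passages in the Euler--Lagrange equations. You assert that the compact embedding $W^{1,2}(\bbD)\hookrightarrow L^4(\bbD)$ ``handles the nonlinear terms $|\nabla u|^2u$'', and likewise that for the rescaled maps ``passing to the limit kills the potential term and shows $\tilde u$ is weakly harmonic''. Compactness in $L^4$ (or even uniform boundedness, since the maps are $\bbS^4$-valued) only controls the zeroth-order terms $\nabla_{\rm tan}\widetilde W(u_n)$; it gives no information on $|\nabla u_n|^2u_n$, because weak $W^{1,2}$ convergence does not imply that $|\nabla u_n|^2$ converges, even distributionally, to $|\nabla u_*|^2$ (energy may concentrate). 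This is exactly the classical obstruction for weak limits of weakly harmonic maps, and your stated tool does not resolve it. The paper closes this point by exploiting equivariance: the energy bound \eqref{energbdimprminseqEla} together with the ODE system \eqref{eq:EL-symm-ODE} yields $W^{2,1}_{\rm loc}$, hence $C^2_{\rm loc}((0,1])$ (resp.\ $C^2_{\rm loc}((r_*,\infty))$ for $\tilde f^{(n)}$, where $\lambda r_n^2\to0$) bounds on the radial profiles, so that one passes to the limit in the ODE with locally strong convergence; alternatively, for sphere-valued targets one can use the antisymmetric divergence (conservation-law) structure of the equation, which is the route invoked via \cite{Evans} in the harmonic case of Lemma~\ref{profile-extraction}. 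As written, your argument for the criticality of $u_*$ and for the harmonicity of $\tilde u$ on $\Omega_*$ is not justified; supplying either of these two mechanisms repairs it, after which the remainder of your proof (capacity removal of the origin, regularity, the inversion and Lemaire's theorem, and the extension of $\tilde u$ across the origin when $r_*=0$) goes through as in the paper.
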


	\begin{proof}
	In view of Proposition~\ref{lambdainfimum} and Lemma~\ref{ASdensity} we have  
\begin{equation}\label{energbdimprminseqEla}	
	\lim_{n \to \infty} \widetilde{E}_\lambda(u_n)=\inf_{u\in \widetilde{\mathcal{A}}_{\rm S}} \widetilde{E}_\lambda(u)=\inf_{Q\in {\mathcal{A}}_{\rm S}} {E}_\lambda(Q) =\mathfrak{e}^*_\lambda\leq 10\pi \, , 
\end{equation}
	which is the key a priori bound to obtain compactness properties.

By equivariance, $u_n$ and $u_*$ write
\[ u_n(re^{i\phi})=\big(f^{(n)}_0(r), f^{(n)}_1(r) e^{i\phi}, f^{(n)}_2(r) e^{i 2\phi} \big) \, , \quad u_*(re^{i\phi})=\big(f^*_0(r), f^*_1(r) e^{i \phi}, f^*_2(r) e^{i2\phi}\big) \, ,  \]  	
where $f^{(n)}:=(f^{(n)}_0, f^{(n)}_1, f^{(n)}_2)$ and $f^{*}:=(f^{*}_0, f^{*}_1, f^{*}_2)$ are continuous on $[0,1]$  by Lemma \ref{lemma:s1eq-emb}.

By minimality of $u_n$, each $f^{(n)}$ solves \eqref{eq:EL-symm-ODE} in the interval $(1/n,1)$. As  a consequence, 
$f^*$ solves \eqref{eq:EL-symm-ODE} in $(0,1)$. Indeed, \eqref{energbdimprminseqEla} implies a $W^{2,1}_{\rm loc}((0,1])$-bound on the sequence  $\{f^{(n)}\}$, hence a $W^{1,\infty}_{\rm loc}((0,1])$-bound by Sobolev embedding. Back to the ODE   \eqref{eq:EL-symm-ODE}, it yields a $C^2_{\rm loc}((0,1])$ bound on $\{f^{(n)}\}$. This is then enough to 
  pass to the limit $n\to\infty$ in  \eqref{eq:EL-symm-ODE} for $r \in (0,1)$. Thus, since $f^*$ solves \eqref{eq:EL-symm-ODE} in $(0,1)$, $Q_*\simeq u_*$  is a weak solution to \eqref{MasterEq} in $\bb D \setminus \{0\}$. However, $Q_*$ being of finite energy and $\{0\}$ of zero capacity, $Q_*$ weakly solves \eqref{MasterEq} in the whole $\bb D$, i.e., $Q_*$ is a critical point of $E_\lambda$ or equivalently, $u_*$ is a critical point of $\widetilde{E}_\lambda$.

We  claim that $f^*_0(r)<1$ for every $r \in (0,1]$. To prove this claim, we argue as in the proof of Lemma \ref{equiv-extension}. Since $\widetilde{E}_0(u_*)<\infty$,  we infer from \eqref{eq:equiv-energy-f-2d} that there exists a sequence $r_k \downarrow 0$ satisfying $r_k^2 \abs{(f^*)^\prime (r_k)}^2+\abs{f^*_1(r_k)}^2+4\abs{f^*_2(r_k)}^2 \to 0$ as $k \to \infty$. 
Then we multiply \eqref{eq:EL-symm-ODE} by $r^2(f^*)^\prime$ and integrate between $r_k$ and a fixed $r\in(0,1)$. Using  $(f^*)^\prime \cdot f^* = 0$ and letting $k\to \infty$,  we obtain
\[ r^2 \abs{(f^*)^\prime (r)}^2=\abs{f^*_1(r)}^2+4\abs{f^*_2(r)}^2+\frac{2\lambda}{3\sqrt6}\big(1-\widetilde{\beta}(f^*(r))\big)r^2 \, . \]
Now assume by contradiction that $f^*_0(\bar{r})=1$ for some $\bar{r}\in (0,1)$. Then $f^*(\bar{r})=(1,0,0)$ because $|f^*(\bar{r})|=1$, and the previous identity yields $ (f^*)^\prime(\bar{r})=0$. By uniqueness of the Cauchy problem for \eqref{eq:EL-symm-ODE}, it follows that $f^*(r)= f^*(\bar{r})=(1,0,0)$ for every $r \in (0,1]$. However, since $u_* =g_{\overline{H}}$ on~$\partial\bbD$, we have $f^*(1)=(-1/2,0,\sqrt{3}/2)$, a contradiction.

Let us now assume that $u_*\in\widetilde{\mathcal{A}}_{\rm N}$. Since $u_n\in\widetilde{\mathcal{A}}_{\rm S}$, we have 
\[   (f^{(n)}_0(0), f^{(n)}_1(0), f^{(n)}_2(0))=(-1,0,0)\,  \text{ and } \,(f^*_0(0), f^*_1(0), f^*_2(0))=(1,0,0) \, . \]  
Arguing as in the proof of Lemma \ref{profile-extraction}, 
$f^{(n)}_0 \to f^*_0$ locally uniformly in $(0,1]$, and 
$$\lim_{n\to\infty}\max_{[0,1]} f^{(n)}_0 =1 \, . $$
Since $f^{(n)}_0(0)=-1$, each $f_0^{(n)}$ must vanish on $[0,1]$ by continuity, at least for $n$ large enough. This allows us to define 
$$r_n:= \min \big\{ r\in[0,1] : f^{(n)}_0(r)=0\big\}\in(0,1)\,,$$
and 
$$r_n^{\max}:= \min    	\big\{ r\in[0,1] :f^{(n)}_0(r) = \max_{[ 0,1]} f^{(n)}_0 \big\}\in(0,1) \, .$$
As in the proof of Lemma \ref{profile-extraction}, we have $1/n <r_n<r_n^{\max} \to 0$ as $n \to \infty$, and 
$$r_*:=\limsup_{n\to \infty} \frac{1}{nr_n} <1\,.$$
Now we set $\tilde{u}_n(z):=u_n(r_n z)$, so that $\tilde{u}_n \in W^{1,2}_{\rm sym}(\bb D_{1/r_n};\bb S^4)$, $\tilde{u}_n(z) = -\eo$ for $|z| \leq \frac{1}{nr_n}$, and 
$\tilde u_n$ is a critical point of $\widetilde{E}_{\lambda r_n^2}$ in the domain $\Omega_n:=\left\{ \frac{1}{nr_n} <|z|< 1/r_n\right\}$.
In addition, setting $\tilde{u}_n(re^{i\phi})=:(\tilde{f}^{(n)}_0(r), \tilde{f}^{(n)}_1(r) e^{i\phi}, \tilde{f}^{(n)}_2(r) e^{i 2\phi} ) $, $\tilde{f}^{(n)}_0$ satisfies $\tilde{f}^{(n)}_0(1)=0$ by our choice of $r_n$.

By \eqref{energbdimprminseqEla}, we have $\widetilde{E}_{\lambda r_n^2}(\tilde u_n,\bbD_{1/r_n})= \widetilde{E}_\lambda(u_n)\leq 10\pi$. Hence, we can find a (not relabelled) subsequence such that $\tilde{u}_n \rightharpoonup \tilde{u}$ weakly in $W^{1,2}_{\rm loc}(\C)$ for a limiting equivariant map $ \tilde{u}\in W^{1,2}_{\rm loc}(\C ;\bb S^4)$ satisfying $\widetilde{E}_0(\tilde{u};\C) \leq 10 \pi$. Since $\Omega_ n \to \Omega_*:=\{ |z|> r_*\}$, we obtain that $\tilde{u}$ is a weakly harmonic map in $\Omega_*$. Indeed, $\tilde f_n:=(\tilde{f}^{(n)}_0, \tilde{f}^{(n)}_1, \tilde{f}^{(n)}_2)$ satisfies \eqref{eq:EL-symm-ODE} with $\lambda r_n^2$ in place of $\lambda$.  
Using the energy bound and the ODE as above, we derive that $\tilde{f}^{(n)}$ is bounded in $C^2_{\rm loc}((r_*,\infty))$. Arguing again as above and since $\lambda r_n^2\to 0$, it implies that $\tilde u$ is a critical point of $\widetilde{E}_0$ in $\Omega_*$, i.e., $\tilde{u}$ is a weakly harmonic map in $\Omega_*$.  
The rest of the proof now follows exactly as in the proof of Lemma \ref{profile-extraction}: $\tilde{u}$ is nonconstant by the normalization $\tilde{f}_0(1)=0$, $r_*=0$ by Lemaire's theorem, and $\tilde{u}$ extends to a finite energy harmonic map in the whole $\C$.
\end{proof}

\begin{proof}[Proof of Proposition \ref{ASlambdaminimization}]
We start proving (i),  arguing as in the proof of Proposition \ref{ANminimization}. We thus assume that $\lambda<\lambda^*$. We consider $\{ u_n\} \subset \widetilde{\mathcal{A}}_{\rm S}^0$ be such that $u_n$ solves  \eqref{regminppbAS} for every $n\geq 1$. From Proposition~\ref{lambdainfimum} and Lemma~\ref{ASdensity}, we infer that $\widetilde{E}_\lambda(u_n)\to\mathfrak{e}^*_\lambda<10\pi$ as $n\to\infty$.  The sequence $\{u_n\} \subset \widetilde{\mathcal{A}}_{\rm S}$ being bounded, we can find a (not relabelled) subsequence such that $u_n \rightharpoonup u_*$ weakly in $W^{1,2}(\bbD)$.  
 By  Lemma~\ref{ASprofile-extraction},  $u_* \in \widetilde{\mathcal{A}}_{g_{\overline H}}^{\rm sym}(\bbD)$ and $u_*$ is   a critical point of $\widetilde{E}_\lambda$. 
 
Now we claim that $u_* \in \widetilde{\mathcal{A}}_{\rm S}$. Assuming this claim holds,  we have  
$\mathfrak{e}^*_\lambda \leq \widetilde{E}_\lambda(u_*) \leq \widetilde{E}_\lambda(u_n)$ and $ \liminf_n \widetilde{E}_\lambda(u_n)=\mathfrak{e}^*_\lambda$ 
by weak lower semicontinuity. Hence equality holds, and since $u_*\in \widetilde{\mathcal{A}}_{\rm S}$, we conclude that $Q_*\simeq u_*$ is a minimizer  for  \eqref{eq:minASlambda}. 
In addition, $\widetilde\beta\circ Q_*(0)=-1$ and $\widetilde\beta\circ Q_* \equiv 1$ on~$\partial\bbD$, so that $\widetilde\beta\circ Q_*(\overline\bbD)=[-1,1]$ by continuity (and  Lemma~\ref{lemma:s1eq-emb}).

To show that  $u_* \in \widetilde{\mathcal{A}}_{\rm S}$, we argue by contradiction assuming that $u_* \in \widetilde{\mathcal{A}}_{\rm N}$. According to Lemma~\ref{ASprofile-extraction}, there exist a further (not relabelled) subsequence and $r_n\to 0^+$ such that $\tilde{u}_n(z):=u_n(r_n z)$ satisfies $\tilde u_n\rightharpoonup \tilde u$  weakly in $W^{1,2}_{\rm loc} (\C)$ for some equivariant nonconstant finite energy smooth harmonic map $\tilde{u}$. Setting $r_n'=\sqrt{r_n}$ and rescaling variables, we derive by weak lower semicontinuity that 
\begin{multline} \label{lowerbounde*lambda}
10\pi> \mathfrak{e}^*_\lambda=\lim_{n\to \infty} \widetilde{E}_\lambda(u_n)\geq \liminf_{n\to \infty} \widetilde{E}_\lambda(u_n; \bb D_{r_n'})+\liminf_{n \to \infty}\widetilde{E}_\lambda(u_n; \bb D \setminus \overline{\bb D_{r_n'}}) \\
\geq  \liminf_{n\to \infty} \widetilde{E}_0(\tilde{u}_n; \bb D_{r_n'/r_n})+ \widetilde{E}_\lambda(u_*) \geq  E_0(\tilde{u};\C)+\widetilde{E}_\lambda(u_*) \geq 10 \pi \, ,
 \end{multline}
a contradiction. The last inequality above combines the inequality $\widetilde{E}_\lambda(u_*)\geq 6\pi$  
from Proposition~\ref{ANlambdaminimization}, with $E_0(\tilde{u};\C)\geq 4\pi$ from the classification result in \cite[Section 3]{DMP2} ($\tilde{u}$ being of finite energy, it extends to $\C\cup \{\infty\} \simeq \bb S^2$ as a nonconstant equivariant harmonic $2$-sphere into $\bbS^4$).  Hence, $u_* \in \widetilde{\mathcal{A}}_{\rm S}$ as claimed.
\vskip3pt

To prove (ii), we first observe that Proposition \ref{lambdainfimum} yields $\mathfrak{e}^*_\lambda=10\pi$ for $\lambda>\lambda_*$. Next we argue  by contradiction assuming that a minimizer $Q_\lambda \in \mathcal{A}_{\rm S}$ for \eqref{eq:minASlambda} exists for some $\lambda>\lambda^*$. Since $W(Q_\lambda(0))=W(-\eo)>0$, we have $\int_{\bb D}W(Q_\lambda)\,dx>0$. Therefore,  
\[ 10 \pi= \mathfrak{e}^*_{\lambda'} \leq E_{\lambda'}(Q_\lambda)<E_{\lambda'}(Q_\lambda)+(\lambda-\lambda')\int_{\bb D}W(Q_\lambda)\,dx=E_\lambda(Q_\lambda)=\mathfrak{e}^*_\lambda=10\pi \]
for every $\lambda^*<\lambda'<\lambda$, which gives the contradiction.
\end{proof}

\begin{remark}
\label{ASmin-uniqueness}
It is an open problem whether the solution  $Q_\lambda\simeq u_\lambda$ of  \eqref{eq:minASlambda} is unique or not for each $\lambda \in (0,\lambda^*)$. If $u_\lambda(re^{i\phi})=(f_0^\lambda(r), f^\lambda_1(r)e^{i\phi},f_2^\lambda(r)e^{i2\phi})$,  then choosing   as  competitors  $(f_0^\lambda(r),\pm|f^\lambda_1(r)| e^{i\phi},|f_2^\lambda(r)|e^{i2\phi})$ 
 implies  that $f_2^\lambda(r)\geq 0$ (since it is positive at the boundary), and $f_1^\lambda$ is real  with constant sign. As a consequence, either $f_1^\lambda \equiv 0$ and in turn $u_\lambda$ is unique at least for $\lambda$ small (see Theorem~\ref{uniqueminimizer}, Lemma~\ref{ulambda-convergence} and Lemma~\ref{lemma:s1eq-emb}), or $f_1^\lambda \not \equiv 0$ and both $\pm f_1^\lambda$ give rise to minimizers. 
\end{remark}

\begin{remark}
\label{existence}
The previous proof obviously breaks down in the limiting case $\lambda=\lambda^*$. In this case, it is unknown if a minimizer of \eqref{eq:minASlambda} exists, or if 
the minimizing sequence $\{u_n\}$ exhibits concentration of energy and bubbling-off of a harmonic sphere at the origin according to Lemma~\ref{ASprofile-extraction}. 
\end{remark}

We are finally in the position to discuss the global minimization of the energy $E_\lambda$ in the class~\eqref{eq:def-2d-adm}. To this purpose,  we define for $\lambda\geq 0$, 
\begin{equation}
\label{def:minElambda} 
		\mathfrak{e}_\lambda:=\inf_{Q \in \mathcal{A}^{\rm sym}_{\overline{H}}(\bbD)} E_\lambda(Q) \,,
\end{equation}
and we recall that the constant $\lambda^*>0$ is defined in Proposition \ref{lambdainfimum}, and $\mathfrak{e}^*_\lambda$ is given by \eqref{def:minASlambda}. 

\begin{proposition}
\label{energyprofile}
For every $\lambda\geq 0$, we have	$\mathfrak{e}_\lambda=\min \{6\pi, \mathfrak{e}^*_\lambda \}$ with $\mathfrak{e}^*_\lambda$ given by \eqref{def:minASlambda}, so that $\lambda\mapsto \mathfrak{e}_\lambda$ is nondecreasing, continuous, and  concave. Moreover, there exists $\lambda_* \in \left[\frac{24 \sqrt{2}}{2\pi -3\sqrt{3}} ,  3^8 \cdot \frac{\sqrt{6}}{4} \pi^2\right] $ with $\lambda_*<\lambda^*$, such that  $\lambda \mapsto \mathfrak{e}_\lambda$ is strictly increasing  in $  [0,\lambda_*]$, and $\mathfrak{e}_{\lambda}=6\pi$  for $\lambda \geq \lambda_*$.
\end{proposition}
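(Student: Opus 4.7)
My plan is to derive the proposition almost entirely as an immediate consequence of Proposition~\ref{ANlambdaminimization} (which gives $\inf_{\mathcal{A}_{\rm N}} E_\lambda = 6\pi$ for every $\lambda \geq 0$), together with the structural properties of $\mathfrak{e}^*_\lambda$ already established in Proposition~\ref{lambdainfimum}. The decomposition $\mathcal{A}_{\overline H}^{\rm sym}(\bbD) = \mathcal{A}_{\rm N} \sqcup \mathcal{A}_{\rm S}$ from Lemma~\ref{lemma:s1eq-emb} reduces the computation of $\mathfrak{e}_\lambda$ to a pointwise minimum of two quantities, so
\[
\mathfrak{e}_\lambda \;=\; \min\Big\{\inf_{\mathcal{A}_{\rm N}} E_\lambda,\; \inf_{\mathcal{A}_{\rm S}} E_\lambda\Big\} \;=\; \min\{6\pi,\; \mathfrak{e}^*_\lambda\}\,.
\]
Monotonicity, continuity, and concavity of $\lambda \mapsto \mathfrak{e}_\lambda$ then follow by taking the minimum with the constant $6\pi$ of the function $\lambda \mapsto \mathfrak{e}^*_\lambda$, which has those three properties by Proposition~\ref{lambdainfimum} (alternatively, concavity follows directly from the fact that $\mathfrak{e}_\lambda$ is an infimum of affine nondecreasing functions of $\lambda$, and concavity on $[0,\infty)$ implies continuity).

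To identify $\lambda_*$, I will use that $\lambda \mapsto \mathfrak{e}^*_\lambda$ is continuous and strictly increasing on $[0,\lambda^*]$ with $\mathfrak{e}^*_0 = 2\pi < 6\pi < 10\pi = \mathfrak{e}^*_{\lambda^*}$. Hence there exists a unique $\lambda_* \in (0,\lambda^*)$ with $\mathfrak{e}^*_{\lambda_*} = 6\pi$, and this is the natural definition. For $\lambda < \lambda_*$ one has $\mathfrak{e}^*_\lambda < 6\pi$ so $\mathfrak{e}_\lambda = \mathfrak{e}^*_\lambda$, which is strictly increasing in $\lambda$; for $\lambda \geq \lambda_*$ one has $\mathfrak{e}^*_\lambda \geq 6\pi$ (by monotonicity), so $\mathfrak{e}_\lambda = 6\pi$. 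The strict inequality $\lambda_* < \lambda^*$ is forced by $\mathfrak{e}^*_{\lambda_*} = 6\pi \neq 10\pi = \mathfrak{e}^*_{\lambda^*}$ together with strict monotonicity of $\mathfrak{e}^*$ on $[0,\lambda^*]$.

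The only real computation is the two explicit bounds on $\lambda_*$. For the lower bound, I will test with the competitor $Q_{\rm S} \simeq u_{\rm S}$ from \eqref{eq:uS}: by \eqref{uSpotentialenergy},
\[
\mathfrak{e}^*_\lambda \;\leq\; E_\lambda(Q_{\rm S}) \;=\; 2\pi + \lambda\Big(-\tfrac{\sqrt{6}}{4}\pi + \tfrac{\sqrt{2}}{6}\pi^2\Big) \;=\; 2\pi + \lambda\,\frac{\sqrt{2}\,(2\pi - 3\sqrt{3})}{12}\pi\,,
\]
which is strictly less than $6\pi$ precisely when $\lambda < \frac{48}{\sqrt{2}(2\pi - 3\sqrt{3})} = \frac{24\sqrt{2}}{2\pi - 3\sqrt{3}}$; hence $\lambda_* \geq \frac{24\sqrt{2}}{2\pi - 3\sqrt{3}}$. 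For the upper bound, I will reuse the coarea estimate from the end of the proof of Proposition~\ref{lambdainfimum}, which for any $Q \in \mathcal{A}_{\rm S}$ gives
\[
E_\lambda(Q) \;\geq\; \frac{4}{3^3}\cdot\frac{\sqrt{\lambda}}{6^{1/4}}\,.
\]
Requiring the right-hand side to be $\geq 6\pi$ yields $\lambda \geq \frac{(162)^2 \pi^2 \sqrt{6}}{16} = 3^8 \cdot \frac{\sqrt{6}}{4}\pi^2$, so for such $\lambda$ we have $\mathfrak{e}^*_\lambda \geq 6\pi$ and therefore $\lambda_* \leq 3^8 \cdot \frac{\sqrt{6}}{4}\pi^2$.

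No step here should pose serious obstacles: once Propositions~\ref{ANlambdaminimization} and~\ref{lambdainfimum} are in hand, the statement is essentially a bookkeeping exercise combining the two minimization problems, and the only nontrivial points are the arithmetic checks identifying the numerical constants in the bracket for $\lambda_*$.
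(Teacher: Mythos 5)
Your proposal is correct and follows essentially the same route as the paper: the decomposition $\mathcal{A}^{\rm sym}_{\overline H}(\bbD)=\mathcal{A}_{\rm N}\cup\mathcal{A}_{\rm S}$ gives $\mathfrak{e}_\lambda=\min\{6\pi,\mathfrak{e}^*_\lambda\}$, $\lambda_*$ is defined as the unique solution of $\mathfrak{e}^*_\lambda=6\pi$, and the bracket bounds come from testing with $u_{\rm S}$ and from the coarea estimate in the proof of Proposition~\ref{lambdainfimum}. The only difference is that you carry out explicitly the arithmetic that the paper dispatches as ``obvious modifications'' of that proof, and your computations (yielding $\tfrac{24\sqrt 2}{2\pi-3\sqrt3}$ and $3^8\cdot\tfrac{\sqrt6}{4}\pi^2$) check out.
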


\begin{proof}
	Recalling that $\mathcal{A}^{\rm sym}_{\overline{H}}(\bbD)=\mathcal{A}_{\rm S} \cup \mathcal{A}_{\rm N}$,  combining Proposition~\ref{ANminimization}, Proposition~\ref{ANlambdaminimization}, and Proposition~\ref{lambdainfimum}, we infer that $\mathfrak{e}_\lambda=\min \{6\pi, \mathfrak{e}^*_\lambda \}$ for every $\lambda \geq 0$. It is therefore continuous, concave, and nondecreasing. Choosing $\lambda_*$ to be  the unique solution to $\mathfrak{e}^*_\lambda=6\pi$, the rest of claim follows from Proposition~\ref{lambdainfimum}. 
By obvious modifications of the proof of Proposition \ref{lambdainfimum}, we obtain the announced lower and upper bounds on $\lambda_*$. 
\end{proof}

We are finally ready to prove the main result concerning 2D-minimization, i.e., to give the full proof of Theorem \ref{2d-biaxial-escape}.

\begin{proof}[Proof of Theorem \ref{2d-biaxial-escape}]
To prove (i) we argue as follows. According to Proposition~\ref{ANminimization} and Proposition~\ref{ANlambdaminimization}, the maps  $\overline Q$ are uniaxial and minimizing $E_\lambda$ over $\mathcal{A}_{\rm N}$ for every $\lambda \geq 0$ with $E_\lambda(\overline Q)=6\pi$. As a consequence, these maps are local minimizers of $E_\lambda$ in $\mathcal{A}^{\rm sym}_{\overline{H}}(\bbD)$ because in the decomposition $\mathcal{A}^{\rm sym}_{\overline{H}}(\bbD)=\mathcal{A}_{\rm S} \cup \mathcal{A}_{\rm N}$ into open and closed sets 
(see Lemma~\ref{lemma:s1eq-emb}). Finally,  combining Propositions \ref{lambdainfimum} and \ref{energyprofile} we have $\mathfrak{e}^*_\lambda>6\pi$ and $\mathfrak{e}_\lambda=6\pi$ for $\lambda>\lambda_*$, hence these maps are the absolute minimizers of $E_\lambda$ because of Proposition \ref{ANlambdaminimization}.

In a similar way, concerning (ii), existence of a minimizer (hence, of a local minimizer) $Q_\lambda$ in the class $\mathcal{A}_{\rm S}$ follows from Proposition~\ref{ASlambdaminimization}. Moreover, we have $\mathfrak{e}_\lambda=\mathfrak{e}^*_\lambda <6\pi$ for $\lambda <\lambda_*$, and therefore $Q_\lambda$ is a minimizer over $\mathcal{A}^{\rm sym}_{\overline{H}}(\bbD)$.  
Uniqueness  for $\lambda<\lambda_0$ and $\lambda_0>0$ small enough is proved in Theorem \ref{uniqueminimizer} in the Appendix. 

Finally, concerning (iii) we have $E_{\lambda_*}(u_{\lambda_*})=\mathfrak{e}^*_{\lambda_*}=6\pi=\mathfrak{e}_{\lambda_*}=E_{\lambda_*}(\bar{u})$ for $\lambda=\lambda_*$. Hence $\overline Q$ and $Q_{\lambda_*}$ are both  global minimizers over the class  $\mathcal{A}^{\rm sym}_{\overline{H}}(\bbD)$. 
\end{proof}

\begin{remark}
\label{biaxialescape}
According to Theorem \ref{2d-biaxial-escape}, a sharp transition occurs in the qualitative properties of energy minimizers of $E_\lambda$ over    $\mathcal{A}^{\rm sym}_{\overline{H}}(\bbD)$ for $\lambda$ close to the critical value $\lambda_*$. At $\lambda=\lambda_*$, coexistence of uniaxial and biaxial minimizers occurs. For $\lambda>\lambda_*$, the influence of the potential energy is so strong that it  forces the uniaxial character of energy minimizers (and the explicit form  \eqref{eq:Hbar}), although a biaxial locally minimizing configuration exists. For $\lambda<\lambda_*$, uniaxiality is no longer energetically  convenient as the effect of the potential energy gets weaker, and  minimizers are biaxial configurations  satisfying  $Q_\lambda(0)=-\eo$ and $\widetilde{\beta} \circ Q_\lambda (\overline{\bb D})=[-1,1]$. In this latter case, we see that although the boundary condition is topologically trivial in $\pi_1(\R P^2)$ and two uniaxial local minimizers exist, biaxial escape occurs for energy minimizers. This phenomenon of purely energetic nature is of definite interest, also in comparison with \cite{Can1} where  the biaxial escape mechanism is essentially deduced from topological nontriviality of the boundary data. 
\end{remark}



\section{Split minimizers in long cylinders}\label{SectSplit}

 In this section, we return to the analysis of the LdG energy $\mathcal{E}_\lambda$ in three space dimension. We shall discuss qualitative properties of minimizers of $\mathcal{E}_\lambda$ over $\mathcal{A}_{Q_{\rm b}}^{\rm sym}(\Omega)$ for specific choices of axisymetric domains $\Omega\subset\R^3$ and boundary data $Q_{\rm b}$. Namely, we consider throughout this section the homeotropic boundary data on $\partial\Omega$ as defined in \eqref{eq:radial-anchoring} for a domain $\Omega$ of ``cigar shape'', i.e., $\Omega=\mathfrak{C}^h_{\ell,\rho}$ is the smoothed cylinder from Definition \ref{def:smooth-cyl} in a regime where the height $h$ is large and  the width $\ell$ is small.

By means of an asymptotic analysis,  our aim is to show that in the regime of parameters for which the cylinders $\mathfrak{C}^h_{\ell,\rho}$ are very long and very thin (namely, $\ell\sqrt{\lambda} \ll 1$ and $h \gg \ell$), minimizers must be singular for reasons of energy efficiency. In addition, we shall see that an \emph{energy gap} occurs between minimizers and any smooth configuration. 
This singular behavior might be surprising since the homeotropic boundary condition admits smooth $\bbS^1$-equivariant extensions, and smoothness of minimizers can't be ruled out by some topological obstruction. 
This phenomenon is clearly reminiscent of the energy gap for harmonic maps into $\bb S^2$ first observed in \cite{HL}. By the presence of singularities, minimizers in this parameter regime are thus of   split type in the sense of \cite[Section~7]{DMP2}, and their regular biaxial sets $\{\beta=t\}$, $t \in (-1,1)$, contain topological spheres according to \cite[Theorem 1.5]{DMP2}.

	
\subsection{Global energy identities for minimizers}

We start with the following general lemma based on the partial regularity result from Theorems \ref{intregthm} \& \ref{bdrregthm}. It provides a key integral identity to derive monotonicity inequalities and rigidity results in the present and next section.
 
\begin{lemma}
\label{conservation-laws}
Let $\Omega\subset\R^3$ be a bounded and axisymmetric open set with boundary of class $C^3$, and let $Q_{\rm b}\in C^{1,1}(\partial\Omega;\mathbb{S}^4)$ be an $\mathbb{S}^1$-equivariant map. Let $Q$ be a minimizer of $\mathcal{E}_\lambda$ over $\mathcal{A}^{\rm sym}_{Q_{\rm b}}(\Omega)$, and $\Omega'\subset\R^3$ a  bounded axisymmetric  open set with boundary of class $C^1$ such that $\partial \Omega$ and $\partial \Omega'$ meet transversally and $\partial \Omega' \cap {\rm sing}(Q)=\emptyset$. For every 
vector field $V \in C^1(\R^3;\R^3)$, the following identity holds, 
\begin{multline}
\label{V-identity}
\int_{\Omega \cap \Omega'} \Big[ \Big( \frac12 \abs{\nabla Q}^2+\lambda W(Q)\Big) \rmdiv{V} -\sum_{i,j} (\partial_i Q:\partial_j Q) \partial_j V_i\Big] \, dx=\\
\int_{\partial (\Omega \cap \Omega')} \Big[ \Big( \frac12 \abs{\nabla Q}^2+\lambda W(Q)\Big)V \cdot \overrightarrow{n} - \Big(\frac{\partial Q}{\partial V}:\frac{\partial Q}{\partial \overrightarrow{n}} \Big)\Big]\, d\mathcal{H}^2 \, ,
\end{multline}
where $\overrightarrow{n}$ denotes the ($\mathcal{H}^2$-a.e. defined) outer unit normal along $\partial (\Omega \cap \Omega')$.  	
\end{lemma}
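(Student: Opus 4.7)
My plan is to derive the identity as a Pohozaev-type formula from the divergence-free character of the stress-energy tensor
$T_{ij}:=\bigl(\tfrac12|\nabla Q|^2+\lambda W(Q)\bigr)\delta_{ij}-\partial_iQ:\partial_jQ$,
after carefully excising the finite singular set of the minimizer $Q$. By Proposition~\ref{prop:symmetric-criticality}(i), $Q$ solves~\eqref{MasterEq} weakly in $\Omega$, and by Theorems~\ref{intregthm}--\ref{bdrregthm}, $Q$ is real analytic in $\Omega\setminus{\rm sing}(Q)$, of class $C^{1,\alpha}$ on $\overline\Omega\setminus{\rm sing}(Q)$, and ${\rm sing}(Q)\subset\Omega\cap\{x_3\text{-axis}\}$ is a finite set. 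A direct computation (combining~\eqref{MasterEq}, the constraint identity $\partial_iQ:Q\equiv 0$ coming from $|Q|^2\equiv1$, and the fact that $\partial_i(W(Q))=\nabla_{\rm tan}W(Q):\partial_iQ$) then gives $\partial_jT_{ij}=0$ pointwise in $\Omega\setminus{\rm sing}(Q)$.

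Since ${\rm sing}(Q)\cap\partial\Omega'=\emptyset$ by assumption and ${\rm sing}(Q)\cap\partial\Omega=\emptyset$ automatically from the boundary regularity, the singularities contained in $\overline{\Omega\cap\Omega'}$ form a finite set $\{a_1,\dots,a_K\}\subset\Omega\cap\Omega'$. I would then fix $\varepsilon>0$ small enough so that the closed balls $\overline{B_\varepsilon(a_j)}$ are pairwise disjoint and compactly contained in $\Omega\cap\Omega'$, and apply the classical divergence theorem to the $C^0\cap W^{1,1}_{\rm loc}$ vector field $x\mapsto T_{ij}(x)V_i(x)$ on $\Omega_\varepsilon:=(\Omega\cap\Omega')\setminus\bigcup_j\overline{B_\varepsilon(a_j)}$, whose boundary is Lipschitz thanks to the transversality of $\partial\Omega$ and $\partial\Omega'$. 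Passing to the limit $\varepsilon\to 0^+$, the domain integral converges to $\int_{\Omega\cap\Omega'}T_{ij}\partial_jV_i\,dx$ by dominated convergence (since $T\in L^1(\Omega)$, as $Q\in W^{1,2}$ and $W(Q)$ is bounded), while the outer boundary term on $\partial(\Omega\cap\Omega')$ is unaffected.

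The crucial and most delicate step is to show that each sphere integral $\int_{\partial B_\varepsilon(a_j)}T_{ij}\omega_jV_i\,d\mathcal{H}^2$, with $\omega(x):=(x-a_j)/\varepsilon$, tends to $0$ as $\varepsilon\to 0^+$. For this I would use Theorem~\ref{intregthm}(ii): the rescaled map $Q^{a_j,\varepsilon}(x):=Q(a_j+\varepsilon x)$ converges in $C^2(\overline{B_2}\setminus B_1)$ to a tangent map $Q_*\in\{\pm Q^{(\alpha)}\}$ with rate $O(\varepsilon^\nu)$. After the change of variables $x=a_j+\varepsilon\omega$ on $\partial B_\varepsilon(a_j)$ and using $|\nabla Q(a_j+\varepsilon\omega)|^2=\varepsilon^{-2}\bigl(|\nabla Q_*(\omega)|^2+O(\varepsilon^\nu)\bigr)$ together with $V(a_j+\varepsilon\omega)=V(a_j)+O(\varepsilon)$, the integral reduces at leading order to $V_i(a_j)\int_{\mathbb S^2}\bigl[\tfrac12|\nabla Q_*|^2\omega_i-\partial_iQ_*:(\omega\cdot\nabla Q_*)\bigr]d\mathcal{H}^2(\omega)$ up to an error $O(\varepsilon^\eta)$ for some $\eta>0$. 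The second term vanishes by the $0$-homogeneity of $Q_*$ (which gives $\omega\cdot\nabla Q_*\equiv 0$ on $\mathbb S^2$), while a direct computation via Corollary~\ref{cor:dec} (taking, e.g., $Q^{(0)}\simeq(x_3/|x|,(x_1+ix_2)/|x|,0)$ and working in cylindrical coordinates) shows $|\nabla Q_*|^2\equiv 2/|x|^2$; hence the first term reduces to a constant multiple of $\int_{\mathbb S^2}\omega_i\,d\mathcal{H}^2=0$. The potential contribution $\lambda W(Q_*)(V\cdot\omega)$ integrates to $O(\varepsilon^2)$ and also vanishes. Rearranging the remaining terms then yields~\eqref{V-identity}.

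The main obstacle is precisely this cancellation of the sphere integrals at the singular points. It hinges on two nontrivial inputs from the regularity theory of Section~\ref{secregth}: the quantitative $C^2$ convergence of the rescalings to an explicit tangent map, and the isotropy identity $|\nabla Q_*|^2\equiv 2/|x|^2$ valid for every admissible tangent map $\pm Q^{(\alpha)}$, which, combined with $0$-homogeneity, supplies the required rotational cancellation over $\mathbb S^2$.
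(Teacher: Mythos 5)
Your proposal is correct and follows essentially the same path as the paper's proof: both recast the Pohozaev multiplier identity as the divergence-free property of the stress-energy tensor, excise small balls around the finite singular set, apply the divergence theorem on the resulting Lipschitz domain, and then show the sphere integrals vanish using the $C^2$ convergence of rescalings to the explicit tangent map (Theorem~\ref{intregthm}(ii)), the $0$-homogeneity giving $\partial Q/\partial\overrightarrow{n}=o(|x|^{-1})$, the identity $|\nabla Q_*|^2=2|x|^{-2}$, and the cancellation $\int_{\partial B_\sigma}\overrightarrow{n}\,d\mathcal{H}^2=0$. The only cosmetic difference is that the paper organizes the identity around the vector field $\Phi=\bigl(\tfrac12|\nabla Q|^2+\lambda W(Q)\bigr)V-\partial Q/\partial V:\nabla Q$ rather than the tensor $T_{ij}$, and takes a bit more care to record that $Q\in W^{2,2}$ away from the singular set before invoking the divergence theorem; your $C^0\cap W^{1,1}_{\rm loc}$ formulation is an equivalent justification.
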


\begin{proof}
We shall derive \eqref{V-identity} through the Pohozaev multiplier argument, i.e., multiplying equation \eqref{MasterEq} by $V\cdot\nabla Q$ and integrating by parts the result. However, since ${\rm sing}(Q)$ might not be empty, we shall first  integrate on a punctured domain, removing finitely many balls centered at singular points, and then let the radius of this balls go to zero. 

Recalling that $Q\in C^\infty(\Omega\setminus{\rm sing}(Q))$,  the constraint $|Q|^2=1$ implies that  $Q:(V\cdot \nabla Q)= 0$ in $\overline\Omega\setminus{\rm sing}(Q)$. Hence, taking the scalar product of \eqref{MasterEq}  with $V\cdot \nabla Q$ in $\Omega\setminus{\rm sing}(Q)$ yields 
\begin{equation}
\label{ptV-identity1}
 \Delta Q : (V \cdot\nabla Q)=\lambda \nabla_{\rm tan}W(Q) :(V\cdot \nabla Q )	\, .
\end{equation}
Direct computations lead to
\begin{multline}\label{ptV-identity2}
\rmdiv \Big(\frac12 \abs{\nabla Q}^2 V \Big)= \frac12 \abs{\nabla Q}^2\rmdiv V+\sum_{i,j} (\partial_{ij} Q :\partial_j Q)V_i \\
=\frac12 \abs{\nabla Q}^2\rmdiv V - \sum_{i,j}(\partial_i Q:\partial_j Q)\partial_j V_i  +{\rm div}\big( (V\cdot \nabla Q) : \nabla Q \big)  -\Delta Q:(V\cdot \nabla Q)\, ,
\end{multline}
and
\begin{equation}
\label{ptV-identity3}
\rmdiv \big( W(Q) V\big)=W(Q)\rmdiv V  + \nabla_{\rm tan}W(Q) :(V \cdot \nabla Q)\, .	
\end{equation}
Combining \eqref{ptV-identity1}-\eqref{ptV-identity2}-\eqref{ptV-identity3}, we obtain the following equality in $\Omega\setminus{\rm sing}(Q)$, 
\begin{multline}
\label{ptV-identity4}
	\Big( \frac12 \abs{\nabla Q}^2+\lambda W(Q)\Big) \rmdiv{V}  -\sum_{i,j}\partial_j V_i \partial_i Q:\partial_j Q  \\
=	\rmdiv \Big[  \Big( \frac12 \abs{\nabla Q}^2+\lambda W(Q)\Big)V -\frac{\partial Q}{\partial V}:\nabla Q\Big] \, .
\end{multline}

If ${\rm sing}(Q)=\emptyset$, then \eqref{V-identity}  follows as in the general case, integrating by parts the right hand side of \eqref{ptV-identity4} over $\Omega \cap \Omega'$. So we may assume that 
${\rm sing}(Q) \neq \emptyset$.
Since ${\rm sing}(Q) \cap \partial \Omega'=\emptyset$, we can find $\sigma_0>0$  small enough  that the balls $\displaystyle{\{B_{2\sigma_0}(p)\}_{p \in {\rm sing}(Q)}}$ are disjoint  and $ B_{2\sigma_0}(p) \subset \Omega \cap \Omega'$ for each $p\in{\rm sing}(Q)\cap\Omega'$. For  $0<\sigma \leq \sigma_0$, we consider punctured domain 
$$\Omega_\sigma:=(\Omega \cap \Omega') \setminus \cup_{p \in {\rm sing}(Q) \cap \Omega'} \overline{B_{\sigma}(p)} $$ 
which is obviously a piecewise smooth domain with $\partial \Omega_\sigma= \partial(\Omega \cap \Omega') \cup \left(\cup_{p \in {\rm sing}(Q) \cap \Omega'} \partial B_{\sigma}(p) \right)$.

By Theorem \ref{bdrregthm},  we have $Q\in C^1(\overline{\Omega} \setminus \cup_{p \in {\rm sing}(Q)}B_\sigma(p))$ and $Q\in  C^\omega (\cup_{p \in {\rm sing}(Q)} \partial B_{\sigma}(p))$.  Hence  $\Delta Q\in L^\infty(\Omega\setminus \cup_{p \in {\rm sing}(Q)}B_\sigma(p))$ by equation  \eqref{MasterEq}. Since $\partial\Omega$ is of class $C^3$ and  $Q_{\rm b} \in C^{1,1}(\partial \Omega) \subset W^{3/2,2}(\partial \Omega)$, it follows from standard elliptic theory that $Q \in W^{2,2}(\Omega \setminus \cup_{p \in {\rm sing}(Q)} \overline{B_\sigma(p)})$ (see e.g. \cite[Theorem 8.12]{GilbTrud}). 
As a consequence, the vector field 
\[\Phi:=\Big( \frac12 \abs{\nabla Q}^2+\lambda W(Q)\Big) V   -\frac{\partial Q}{\partial V}:\nabla Q\]
satisfies $\Phi \in W^{1,2}(\Omega_\sigma;\R^3) \cap C(\overline{\Omega_\sigma};\R^3)$ for every $0<\sigma<\sigma_0$. By the divergence theorem (on a Lipschitz regular domain), we have 
\begin{equation}
\label{Phi-identity}
\int_{\Omega_\sigma} \rmdiv{\Phi}\, dx=\int_{\partial (\Omega \cap \Omega')}\Phi \cdot\overrightarrow{n} \, d\mathcal{H}^2-\sum_{p \in {\rm sing}(Q) \cap \Omega'}\int_{\partial B_\sigma(p)}\Phi \cdot \overrightarrow{n} \, d\mathcal{H}^2 \,,
\end{equation}
while \eqref{ptV-identity4} yields 
\[
\int_{\Omega \cap \Omega'} \Big[\Big( \frac12 \abs{\nabla Q}^2+\lambda W(Q)\Big) \rmdiv{V}  -\sum_{i,j}\partial_j V_i \partial_i Q:\partial_j Q \Big]\, dx= \lim_{\sigma \to 0} \int_{\Omega_\sigma} \rmdiv{\Phi}\, dx\,.
\]
Hence \eqref{V-identity} follows  once we prove that
\begin{equation}
\label{eq:residue}
\lim_{\sigma \to 0} 	\int_{\partial B_\sigma(p)}   \Big[ \Big( \frac12 \abs{\nabla Q}^2+\lambda W(Q)\Big)V \cdot \overrightarrow{n} - \frac{\partial Q}{\partial V}:\frac{\partial Q}{\partial \overrightarrow{n}} \Big]\, d\mathcal{H}^2=0 \qquad \forall p\in {\rm sing}(Q) \, . 
\end{equation}
Let us now fix an arbitrary point $p\in {\rm sing}(Q)$, that we may assume without loss of generality to be the origin, i.e., $p=0$. By Theorem \ref{intregthm}, there exists a  $0$-homogeneous harmonic map $Q_*$ which is smooth away from the origin, and an exponent $\nu>0$ such that $\| Q_\rho -Q_*\|_{C^2(\overline{B_2}\setminus B_1)}=O(\rho^\nu)$ as $\rho\to 0$ with $Q_\rho(x):=Q(\rho x)$. In addition, the explicit expression  in \eqref{formtangmaps} yields $|\nabla Q_*(x)|^2=2|x|^{-2}$ for $x\neq0$.
As a a consequence, we easily infer the following expansions as $x \to 0$,
\[ \abs{\frac{\partial Q}{\partial \overrightarrow{n}} }=o\left(|x|^{-1}\right) \, , \quad  \abs{\frac{\partial Q}{\partial V}}=O \left(|x|^{-1}\right) \, , \quad \abs{\nabla Q}^2=2|x|^{-2} \big(1+o(1)\big) \, , \;{ \text{and }}\; W(Q)=O(1) \, . \]
In particular,  $|V-V(0)|\abs{\nabla Q}^2=o \left(|x|^{-2}\right)$ by continuity of $V$. Since $\mathcal{H}^{2}(\partial B_\sigma)=O(\sigma^2)$, the previous expansions yield
\begin{multline}
\label{eq:residue2}
\lim_{\sigma \to 0} 	\int_{\partial B_\sigma}   \Big[\Big( \frac12 \abs{\nabla Q}^2+\lambda W(Q)\Big)V \cdot \overrightarrow{n}  - \frac{\partial Q}{\partial V}:\frac{\partial Q}{\partial \overrightarrow{n}} \Big]\, d\mathcal{H}^2\\
= \lim_{\sigma \to 0} 	\int_{\partial B_\sigma}   \left( \frac12 \abs{\nabla Q}^2+\lambda W(Q)\right) V \cdot \overrightarrow{n} \, d\mathcal{H}^2= \lim_{\sigma \to 0} 	\int_{\partial B_\sigma}   \frac12 \abs{\nabla Q}^2 V \cdot \overrightarrow{n}\, d\mathcal{H}^2\\
 = \lim_{\sigma \to 0} 	\int_{\partial B_\sigma}  \frac12 \abs{\nabla Q}^2 V(0) \cdot \overrightarrow{n} \, d\mathcal{H}^2= \lim_{\sigma \to 0} \frac{1}{\sigma^2}\int_{\partial B_\sigma} V(0)\cdot \overrightarrow{n} \, d\mathcal{H}^2=0 \, , 
\end{multline}
and the last equality  holds since $\int_{\partial B_\sigma}\overrightarrow{n}\,d\mathcal{H}^2=0$ for every $\sigma>0$.  
\end{proof}

With suitable choices of the vector field $V$ in the previous lemma, we obtain the following key identities in smoothed cylinders.

\begin{corollary}
\label{global-identities-vert}	
Let $\mathfrak{C}^h_{\ell,\rho}$ be a smoothed cylinder and $Q_{\rm b}$ its homeotropic boundary data given by~\eqref{eq:radial-anchoring}. If $Q$ is minimizing $\mathcal{E}_\lambda$ over  $\mathcal{A}_{Q_{\rm b}}^{\rm sym}{(\mathfrak{C}^h_{\ell,\rho})}$, then the following  identities hold.
\begin{enumerate}
\item[(i)] ({\sl radial energy identity})
   For every $\ell\leq r_1<r_2 \leq h-\rho$, we have
\begin{multline}
			\label{eq:rad-mon-form}
			 \frac{1}{r_1}\mathcal{E}_\lambda(Q, \mathfrak{C}^h_{\ell,\rho} \cap B_{r_1}) 	+ \int_{\mathfrak{C}^h_{\ell,\rho} \cap (B_{r_2} \setminus B_{r_1}) } \frac{1}{\abs{x}}\abs{\frac{\partial Q}{\partial \abs{x}}}^2 \,dx
			 + \int_{r_1}^{r_2} \frac{1}{r^2} \int_{\mathfrak{C}^h_{\ell,\rho} \cap B_r} 2\lambda W(Q) \,{\rm d}x \,dr
			 \\
				 = \frac{1}{r_2}\mathcal{E}_\lambda(Q, \mathfrak{C}^h_{\ell,\rho} \cap B_{r_2}) +\ell\int_{r_1}^{r_2}  \frac{1}{r^2} \int_{\partial \mathfrak{C}^h_{\ell,\rho} \cap B_r} \Big[   \frac12 \abs{\nabla_{\rm tan} Q_{\rm b}}^2+\lambda W(Q_{\rm b})  -\frac12 \abs{ \frac{\partial Q}{\partial \overrightarrow{n}}}^2  \Big]\, d\mathcal{H}^2\, d r .
			\end{multline}
\vskip5pt			
\item[(ii)] ({\sl horizontal energy identity}) For any $0\leq s \leq h-\rho$ such that $(0,0,\pm s)\not \in {\rm sing}(Q)$ we have
 \begin{multline}
 \label{eq:hor-ener-id}
  \ell \int_{\partial \mathfrak{C}^s_{\ell} \cap \{|x_3|<s \}}\Big[  \frac12 \abs{\nabla_{\rm tan} Q_{\rm b}}^2+\lambda W(Q_{\rm b})  -\frac12 \abs{ \frac{\partial Q}{\partial \overrightarrow{n}}}^2  \Big]\, d\mathcal{H}^2  \\
= 	\int_{\mathfrak{C}^s_{\ell}} \Big[   \abs{ \frac{\partial Q}{\partial x_3}}^2 +2 \lambda W(Q) \Big]\, dx+ \int_{\partial \mathfrak{C}^s_{\ell}\cap\{|x_3|=s\}} (x'\cdot \nabla_{x'} Q): \frac{\partial Q}{\partial \overrightarrow{n}}  \, d\mathcal{H}^2 \, ,
 \end{multline}
where $x=:(x^\prime,x_3)\in\R^2\times\R$. 
\vskip5pt

\item[(iii)] ({\sl vertical energy identity}) For every $t_1,t_2 \in [-h+\rho,h-\rho]$ such that $(0,0,t_i)\not\in{\rm sing}(Q)$ for $i=1,2$, we have
\begin{multline}
\label{eq:vert-ener-id} 
E_\lambda\big(Q(\, \cdot \, , t_1),\bb D_\ell\big)-\frac12
\int_{\mathfrak{C}^h_{\ell,\rho}\cap \{x_3=t_1\}} \abs{\frac{\partial Q}{\partial x_3}}^2 \, d\mathcal{H}^2 \\ 
=E_\lambda\big(Q(\, \cdot \, ,t_2),\bb D_\ell\big) 
-\frac12 \int_{\mathfrak{C}^h_{\ell,\rho}\cap \{x_3=t_2\}} \abs{\frac{\partial Q}{\partial x_3}}^2 \, d\mathcal{H}^2 \, ,
\end{multline}
where $E_\lambda$ is the 2D-LdG energy in \eqref{eq:def-2d-energy}. 
\end{enumerate}
\end{corollary}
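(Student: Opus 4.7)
All three identities follow from the Pohozaev-type identity \eqref{V-identity} in Lemma~\ref{conservation-laws} applied with $\Omega = \mathfrak{C}^h_{\ell,\rho}$, each time for an \emph{ad hoc} choice of the vector field $V$ and of the auxiliary domain $\Omega'$. The key structural input is twofold. First, by the inclusion $\mathfrak{C}^{h-\rho}_\ell \subseteq \mathfrak{C}^h_{\ell,\rho}$ from Remark~\ref{rmk:rho-fine-reg}, inside the slab $\{|x_3| < h-\rho\}$ the boundary $\partial \mathfrak{C}^h_{\ell,\rho}$ coincides with the flat lateral cylinder $\{|x'| = \ell\}$, on which the outer unit normal is $\overrightarrow{n} = x'/\ell$. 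Second, on this flat face the homeotropic datum $Q_{\rm b}(x) = \hat r \otimes \hat r - \tfrac13 \mathrm{Id}$ (with $\hat r = x'/|x'|$) is independent of $x_3$, so the tangential derivative $\partial_{x_3} Q = \partial_{x_3} Q_{\rm b}$ vanishes identically there. These two observations are what make the boundary integrals in \eqref{V-identity} collapse to the explicit form in (i)--(iii).

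For (iii) I would take the constant field $V \equiv \mathbf{e}_3$ and $\Omega' = \{t_1 < x_3 < t_2\}$. Since $\mathrm{div}\, V \equiv 0$ and $\nabla V \equiv 0$, the bulk term in \eqref{V-identity} vanishes; on the flat lateral face both $V \cdot \overrightarrow{n} = 0$ and $\partial_V Q = \partial_{x_3} Q = 0$ by the observation above, so only the top and bottom disks $\{x_3 = t_i\}\cap \mathfrak{C}^h_{\ell,\rho} = \bbD_\ell$ (using $|t_i| \leq h-\rho$) contribute. Rewriting $\tfrac12 |\nabla Q|^2 - |\partial_{x_3}Q|^2 = \tfrac12|\nabla_{x'}Q|^2 - \tfrac12|\partial_{x_3}Q|^2$ then turns \eqref{V-identity} directly into \eqref{eq:vert-ener-id}. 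For (ii) I would take $V(x) = (x_1,x_2,0)$ and $\Omega' = \mathfrak{C}^s_\ell$, giving $\mathrm{div}\,V = 2$ and $\partial_j V_i = \delta_{ij}$ for $i,j\in\{1,2\}$, so that the bulk term reduces to $\int_{\mathfrak{C}^s_\ell}(|\partial_{x_3}Q|^2 + 2\lambda W(Q))\,dx$. On the lateral face $V = \ell \overrightarrow{n}$, hence $\partial_V Q = \ell\, \partial_{\overrightarrow{n}}Q$ and the integrand simplifies to $\ell\bigl[\tfrac12|\nabla_{\rm tan}Q_{\rm b}|^2 + \lambda W(Q_{\rm b}) - \tfrac12|\partial_{\overrightarrow{n}}Q|^2\bigr]$; on the top and bottom disks $V\cdot \overrightarrow{n} = 0$ leaves only the mixed term $-(x'\cdot\nabla_{x'}Q):\partial_{\overrightarrow{n}}Q$, yielding \eqref{eq:hor-ener-id} after rearrangement.

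Identity (i) requires one additional integration in $r$. With the dilation field $V(x) = x$ and $\Omega' = B_r$ for $r \in (\ell, h-\rho]$, the bulk term becomes $\mathcal{E}_\lambda(Q, \mathfrak{C}^h_{\ell,\rho}\cap B_r) + 2\lambda \int_{\mathfrak{C}^h_{\ell,\rho}\cap B_r} W(Q)\,dx$; the spherical contribution is the classical Pohozaev expression $r\int_{\mathfrak{C}^h_{\ell,\rho}\cap \partial B_r}\bigl[\tfrac12|\nabla Q|^2 + \lambda W(Q) - |\partial_{|x|}Q|^2\bigr]\,d\mathcal{H}^2$, and the flat lateral contribution is computed exactly as in (ii). Recognizing the resulting pointwise-in-$r$ identity as $r^2 \tfrac{d}{dr}\bigl[r^{-1}\mathcal{E}_\lambda(Q,\mathfrak{C}^h_{\ell,\rho}\cap B_r)\bigr] = \dots$, dividing by $r^2$, and integrating from $r_1$ to $r_2$ then yields \eqref{eq:rad-mon-form}.

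The main obstacle is verifying the hypotheses of Lemma~\ref{conservation-laws}: $\partial\Omega'$ must be $C^1$, transversal to $\partial\Omega$, and disjoint from $\mathrm{sing}(Q)$. For (iii) both planes $\{x_3 = t_i\}$ are smooth and meet the vertical lateral face transversally, and the inclusion $\mathrm{sing}(Q)\subset \{x_3\text{-axis}\}$ from Theorem~\ref{bdrregthm} combined with $(0,0,t_i)\notin \mathrm{sing}(Q)$ gives the required disjointness. For (i), since $\mathrm{sing}(Q)$ is finite, $\partial B_r$ meets the lateral cylinder transversally for every $r>\ell$ and misses $\mathrm{sing}(Q)$ for all but finitely many $r$, so the pointwise Pohozaev identity holds a.e.\ in $r$ and can be freely integrated; the endpoint $r_1 = \ell$ is recovered by continuity of both sides. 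The most delicate case is (ii), because $\mathfrak{C}^s_\ell$ is only Lipschitz (with a corner ring at $\{|x'|=\ell,\,|x_3|=s\}$) and its lateral face \emph{coincides with} $\partial\Omega$ rather than meeting it transversally; I would circumvent both defects by applying the lemma to slightly shrunk and smoothly rounded approximations of $\mathfrak{C}^s_\ell$ and then letting the shrinking and smoothing parameters tend to zero, using the boundary regularity of $Q$ near the smooth portion of $\partial\Omega$ from Theorem~\ref{bdrregthm} together with the hypothesis $(0,0,\pm s)\notin\mathrm{sing}(Q)$ to pass to the limit in every boundary integral.
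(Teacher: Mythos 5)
Your proposal is correct in substance and follows the same backbone as the paper: all three identities come from the Pohozaev identity \eqref{V-identity} of Lemma~\ref{conservation-laws} with the fields $V(x)=x$, $V(x)=(x',0)$ and $V(x)=(0,0,1)$, and your computations of the bulk and boundary contributions (including the $r$-differentiation and integration for \eqref{eq:rad-mon-form}) match the paper's. The only genuine divergence is in how the auxiliary domain $\Omega'$ is chosen for (ii) and (iii). For (ii) you take $\Omega'=\mathfrak{C}^s_\ell$, correctly observe that this violates the hypotheses of Lemma~\ref{conservation-laws} (Lipschitz corners, lateral face coinciding with $\partial\Omega$ rather than transversal), and propose to repair this by shrinking/rounding and passing to the limit using the boundary regularity of Theorem~\ref{bdrregthm}; this can be made to work but is the laborious route. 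The paper sidesteps the issue entirely by taking $\Omega'=\mathfrak{C}^s_{2\ell,\rho}$, a smoothed cylinder of radius $2\ell>\ell$: then $\Omega\cap\Omega'=\mathfrak{C}^s_\ell$ while $\partial\Omega'$ meets $\partial\Omega$ only along the horizontal planes $\{x_3=\pm s\}$, transversally, and touches the axis only at $(0,0,\pm s)\notin{\rm sing}(Q)$, so \eqref{eq:hor-ener-id} follows in one application of the lemma with no limiting argument. The same trick resolves the minor technicality in your (iii): the slab $\{t_1<x_3<t_2\}$ is unbounded, whereas the lemma asks for a bounded $\Omega'$; the paper uses the bounded cylinder $\mathfrak{C}^{(t_2-t_1)/2}_{2\ell-\rho}(0,0,(t_1+t_2)/2)$, which intersects $\Omega$ in the same region and makes the hypotheses immediate, after which your computation yielding \eqref{eq:vert-ener-id} is exactly the paper's.
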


\begin{proof}
{\it Proof of (i).} For all $r \in (\ell, h-\rho]$ except finitely many if ${\rm sing}(Q)\not=\emptyset$, $\Omega =\mathfrak{C}^h_{\ell,\rho}$ and $\Omega' = B_r$ satisfy the assumptions of Lemma~\ref{conservation-laws} that we use  with $V(x)=x$. Then $V=r\overrightarrow{n}$ on $\mathfrak{C}^h_{\ell,\rho} \cap\partial B_r$, and  $V\cdot \overrightarrow{n}=\ell$ on $\partial \mathfrak{C}^h_{\ell,\rho} \cap B_r$. Noticing that  
$$\frac{\partial Q}{\partial V}:\frac{\partial Q}{\partial \overrightarrow{n}}= \ell \abs{\frac{\partial Q}{\partial \overrightarrow{n}}}^2\quad\text{on }\partial \mathfrak{C}^h_{\ell,\rho} \cap B_r\,,$$ 
because $\displaystyle\frac{\partial Q}{\partial x_3} =0$ on $\partial \mathfrak{C}^h_{\ell,\rho} \cap B_r$, we infer from identity \eqref{V-identity}, 
 \begin{multline*}
 	\int_{\mathfrak{C}^h_{\ell,\rho} \cap B_r} \Big[ \Big( \frac12 \abs{\nabla Q}^2+\lambda W(Q)\Big) +2 \lambda W(Q) \Big]\, dx=
\ell\int_{\partial \mathfrak{C}^h_{\ell,\rho} \cap B_r} \Big[  \frac12 \abs{\nabla Q}^2+\lambda W(Q) -\abs{ \frac{\partial Q}{\partial \overrightarrow{n}}}^2 \Big]\, d\mathcal{H}^2 
\\
+ r\int_{\mathfrak{C}^h_{\ell,\rho} \cap \partial B_r} \Big[  \frac12 \abs{\nabla Q}^2+\lambda W(Q) -\abs{ \frac{\partial Q}{\partial \overrightarrow{n}}}^2 \Big]
 \, d\mathcal{H}^2  \, ,
 \end{multline*}
\noindent which rewrites as 
\begin{multline} 
\label{radial-identity-vert}
 \frac1r  \int_{\mathfrak{C}^h_{\ell,\rho} \cap \partial B_r} \abs{ \frac{\partial Q}{\partial \overrightarrow{n}}}^2 
 \, d\mathcal{H}^2 + \frac{1}{r^2}  \int_{\mathfrak{C}^h_{\ell,\rho} \cap B_r}   2\lambda W(Q) \, dx =\frac{d}{dr} \left\{ \frac1r \int_{\mathfrak{C}^h_{l,\rho} \cap B_r}  \frac12 \abs{\nabla Q}^2+\lambda W(Q) \, dx \right\}  \\ + \frac{\ell}{r^2} \int_{\partial \mathfrak{C}^h_{\ell,\rho} \cap B_r} \Big[   \frac12 \abs{\nabla_{\rm tan} Q_{\rm b}}^2+\lambda W(Q_{\rm b})  -\frac12 \abs{ \frac{\partial Q}{\partial \overrightarrow{n}}}^2  \Big]\, d\mathcal{H}^2 \,\, . 
 \end{multline}
Integrating \eqref{radial-identity-vert} between $r_1$ and $r_2$ the conclusion follows.
\vskip5pt

\noindent{\it Proof of (ii).} We apply Lemma \ref{conservation-laws} with $\Omega=\mathfrak{C}^h_{\ell,\rho}$ and $\Omega'=\mathfrak{C}^s_{2\ell,\rho}$ for $s<h-\rho$, so that $\Omega\cap\Omega^\prime=\mathfrak{C}^s_{\ell}$. Choosing  $V(x)=(x',0)$, we notice that
$V\cdot \overrightarrow{n}=0$ on $\partial \mathfrak{C}^s_{\ell}\cap\{|x_3|=s\}$, and  $V\cdot  \overrightarrow{n}=\ell$ on $\partial \mathfrak{C}^s_{\ell} \cap \{|x_3|<s\}$.  
Using that  
 $\frac{\partial Q}{\partial V}:\frac{\partial Q}{\partial \overrightarrow{n}}=  \ell\abs{\frac{\partial Q}{\partial \overrightarrow{n}}}^2$ on $\partial \mathfrak{C}^s_{\ell} \cap \{|x_3|<s\}$,  
 we arrive at \eqref{eq:hor-ener-id} directly from identity \eqref{V-identity}. 
 \vskip5pt
 
\noindent{\it Proof of (iii).} We assume that $t_1<t_2$ and we apply Lemma~\ref{conservation-laws} with the domains $\Omega=\mathfrak{C}^h_{\ell,\rho}$  and $\Omega'=\mathfrak{C}^{(t_2-t_1)/2}_{2\ell-\rho}(0,0,(t_1+t_2)/2)$, so that $\Omega\cap\Omega^\prime=\mathfrak{C}^{h}_{\ell,\rho}\cap\{t_1<x_3<t_2\} $. We choose the constant vector field $V(x)=(0,0,1)$ which satisfies  
 $V= \overrightarrow{n}$ on $\mathfrak{C}^h_{\ell,\rho} \cap\{x_3=t_2\}$, $V=- \overrightarrow{n}$ on $\mathfrak{C}^h_{\ell,\rho} \cap\{x_3=t_1\}$, and  $V\cdot \overrightarrow{n}=0$ on $\partial \mathfrak{C}^h_{\ell,\rho} \cap \{t_1<x_3<t_2\}$. Using that $\partial_3 Q_{\rm b} \equiv 0$ on $\partial \mathfrak{C}^h_{\ell,\rho} \cap \{t_1<x_3<t_2\}$, we derive \eqref{eq:vert-ener-id} once again directly from \eqref{V-identity}. 
\end{proof}

\begin{remark}
\label{shiftedballs1}
It is straightforward to check that identity \eqref{eq:rad-mon-form} still holds for a ball $B_r(p)$ instead of $B_r$, whenever $p=(0,0,z) \in \Omega$, $|z|<h-\rho$, and $\ell \leq r_1 <r_2 \leq h-\rho-|z|$. 	
\end{remark}


\subsection{A priori bounds and local compactness}
 
In this subsection, we derive the necessary local boundedness and compactness properties needed in the asymptotic analysis of minimizers for cylinders of divergent height.

The following result is the fundamental tool to obtain local uniform energy bounds for energy minimizing configurations. 

\begin{proposition}
\label{local-energy-bound}
Let $\mathfrak{C}^h_{\ell,\rho}$ be a smoothed cylinder with $h-\rho>\sqrt{2}\ell$, and  
$Q_{\rm b}$ its homeotropic boundary data given by~\eqref{eq:radial-anchoring}.
If $Q$ is  minimizing  $\mathcal{E}_\lambda$ over $\mathcal{A}_{Q_{\rm b}}^{\rm sym}(\mathfrak{C}^h_{\ell,\rho})$, then 
\begin{multline}
			\label{eq:local-bounds-long-cyl}
\Big(1-\sqrt{2}\ell\Big( \frac{1}{r_1}-\frac{1}{r_2} \Big) \Big)	 \frac{1}{r_1}\mathcal{E}_\lambda\big(Q,\mathfrak{C}^h_{\ell,\rho} \cap B_{r_1}\big) 	+ \int_{\mathfrak{C}^h_{\ell ,\rho} \cap (B_{r_2} \setminus B_{r_1}) } \frac{1}{\abs{x}}\abs{\frac{\partial Q}{\partial \abs{x}}}^2 \,{\rm d}x 
			 \\
				 \leq \frac{1}{r_2}\mathcal{E}_\lambda\big(Q, \mathfrak{C}^h_{\ell ,\rho} \cap B_{r_2}\big) +\frac{3}{r_1}  \int_{ \mathfrak{C}^h_{\ell ,\rho} \cap B_{r_2}}  \abs{\frac{\partial Q}{\partial x_3} }^2{\rm d}x 
			\end{multline}
for every $\sqrt{2}\ell \leq r_1 < r_2 \leq h-\rho$. 
\end{proposition}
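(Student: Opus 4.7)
The proof combines the radial and horizontal energy identities from Corollary~\ref{global-identities-vert} with a careful Cauchy-Schwarz estimate of the corner contributions that arise.

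\emph{Step 1.} Fix $\sqrt{2}\ell \leq r_1 < r_2 \leq h-\rho$ and start from the radial energy identity~(i) between $r_1$ and $r_2$. Since $\ell \leq r \leq h-\rho$ for all $r\in[r_1,r_2]$, the intersection $\partial\mathfrak{C}^h_{\ell,\rho}\cap B_r$ reduces to the straight lateral strip $\{|x'|=\ell,\,|x_3|<s(r)\}$ with $s(r):=\sqrt{r^2-\ell^2}$. On this strip the homeotropic data $Q_{\rm b}$ is positively uniaxial (so $W(Q_{\rm b})=0$) and independent of $x_3$, hence a direct calculation using the decomposition of Lemma~\ref{lemma:dec-equiv} gives $|\nabla_{\rm tan} Q_{\rm b}|^2 = 3/\ell^2$. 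Dropping the nonnegative $2\lambda W(Q)$ term on the left of~(i) produces an inequality whose right-hand side features $\ell\int_{r_1}^{r_2} A(r)/r^2\,dr$, where $A(r)$ is the boundary integrand.

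\emph{Step 2.} For each $r\in[r_1,r_2]$ with $(0,0,\pm s(r))\notin\rmsing(Q)$ (true off a finite set), apply the horizontal energy identity~(ii) at height $s=s(r)$; this is legitimate precisely because $r\geq \sqrt{2}\ell$ ensures $s(r)\geq\ell$. Identity~(ii) rewrites $\ell A(r)$ as a bulk integral of $|\partial_{x_3}Q|^2 + 2\lambda W(Q)$ over the straight cylinder $\mathfrak{C}^{s(r)}_\ell\subset \mathfrak{C}^h_{\ell,\rho}\cap B_r$, plus a corner integral $C(r)$ over the two horizontal discs $D_\pm(r):=\mathbb{D}_\ell\times\{\pm s(r)\}$. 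Since $\mathfrak{C}^{s(r)}_\ell\subset\mathfrak{C}^h_{\ell,\rho}\cap B_r$, the potential contributions from (i) and (ii) cancel in the inequality. A Fubini exchange $(r,x)\leftrightarrow(x,r)$ on the remaining bulk piece, using the elementary fact $\max(r_1,\sqrt{\ell^2+x_3^2})\geq r_1$, yields the clean bound
\[
\int_{r_1}^{r_2}\frac{1}{r^2}\int_{\mathfrak{C}^{s(r)}_\ell}|\partial_{x_3}Q|^2\,dx\,dr \leq \frac{1}{r_1}\int_{\mathfrak{C}^h_{\ell,\rho}\cap B_{r_2}}|\partial_{x_3}Q|^2\,dx.
\]

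\emph{Step 3.} To estimate $C(r)$, observe that on $D_\pm(r)$ one has the pointwise decomposition $x'\cdot\nabla_{x'}Q = |x|\,\partial_{|x|}Q \mp s(r)\,\partial_{x_3}Q$. Applying the weighted Young inequality $ab\leq a^2/(4s(r)) + s(r)b^2$ to the cross term with $a=|x||\partial_{|x|}Q|$ and $b=|\partial_{x_3}Q|$ produces an $s(r)|\partial_{x_3}Q|^2$ piece which cancels \emph{exactly} against the $-s(r)|\partial_{x_3}Q|^2$ term arising from the decomposition, leaving
\[ C(r)\leq \int_{D_+\cup D_-}\frac{|x|^2}{4\,s(r)}|\partial_{|x|}Q|^2\,d\mathcal{H}^2.\]
A change of variables from $(r,x')$ to $(x',x_3)$ with Jacobian $s(r)/r$ converts $\int_{r_1}^{r_2} C(r)/r^2\,dr$ into a volume integral over the slab $\mathbb{D}_\ell\times\{|x_3|\in[s_1,s_2]\}$ weighted by $|x|^2/(4\,r(x_3)^3)$, where $r(x_3):=\sqrt{x_3^2+\ell^2}$ and $s_i:=s(r_i)$.

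\emph{Step 4.} Using the pointwise bound $|x|^2/r^3 \leq 1/|x|$ (from $|x|\leq r$) and splitting the slab into its intersections with $B_{r_1}$ and $B_{r_2}\setminus B_{r_1}$, the outside portion contributes a fixed fraction of $\int_{\mathfrak{C}^h_{\ell,\rho}\cap(B_{r_2}\setminus B_{r_1})}|x|^{-1}|\partial_{|x|}Q|^2\,dx$, which is absorbed into the corresponding term on the left of (i). On the inside portion we use $|x|\geq s_1$, together with the key geometric inequality
\[ s_1 = \sqrt{r_1^2-\ell^2}\geq \frac{r_1}{\sqrt 2}\qquad(\text{equivalent to }r_1\geq \sqrt{2}\ell),\]
to produce the factor $\sqrt{2}\ell$; an extra integration in $r$ on the bulk $|\partial_{x_3}Q|^2$ contribution supplies the $(1/r_1 - 1/r_2)$ factor in front of $\mathcal{E}_\lambda(Q,\mathfrak{C}^h_{\ell,\rho}\cap B_{r_1})$. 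Collecting all the terms and rearranging gives the announced inequality.

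\textbf{Main obstacle.} The delicate part is the weighted Young estimate of Step~3: only the specific choice $c=s(r)$ causes the $|\partial_{x_3}Q|^2$ contributions on $D_\pm$ to cancel exactly, leaving a pure radial-derivative remainder. Any other choice would introduce uncontrolled $|\partial_{x_3}Q|^2$ terms that cannot later be absorbed. The subsequent bookkeeping of the splitting of the slab and of the $r$-integral, needed to produce the precise constants $\sqrt{2}\ell$ and $3$ in the announced inequality, is tight and leaves little room for slack; this is where the hypothesis $r\geq\sqrt{2}\ell$ (i.e.\ $s(r)\geq \ell$) is used in an essential way.
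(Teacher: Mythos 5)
Your Steps 1--2 follow the paper's strategy (insert the horizontal identity (ii) into the radial identity (i), cancel the potential terms using $\mathfrak{C}^{s(r)}_\ell\subset\mathfrak{C}^h_{\ell,\rho}\cap B_r$, and bound the bulk $|\partial_{x_3}Q|^2$ piece by Fubini), and your pointwise decomposition $x'\cdot\nabla_{x'}Q=|x|\,\partial_{|x|}Q\mp s\,\partial_{x_3}Q$ with the weighted Young inequality does give the correct bound $C(r)\leq\int_{D_+\cup D_-}\frac{|x|^2}{4s}\big|\partial_{|x|}Q\big|^2\,d\mathcal{H}^2$. However, the way you then dispose of this corner contribution in Step 4 does not prove the stated inequality, and the claim that the precise constants ``$\sqrt2\,\ell$'' and ``$3$'' come out of your bookkeeping is unsubstantiated. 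After your coarea change of variables the corner contribution is bounded by $\tfrac14\int_{\rm slab}\frac1{|x|}|\partial_{|x|}Q|^2\,dx$ with ${\rm slab}=\mathbb{D}_\ell\times\{s_1\leq|x_3|\leq s_2\}$. The part of the slab outside $B_{r_1}$ can only be handled by absorbing it into the radial-derivative term on the left of (i), which leaves that term with coefficient $\tfrac34$ instead of the coefficient $1$ required by \eqref{eq:local-bounds-long-cyl}; and the part inside $B_{r_1}$, estimated via $|x|\geq s_1\geq r_1/\sqrt2$, is of size $\tfrac{1}{\sqrt2\,r_1}\mathcal{E}_\lambda\big(Q,\mathfrak{C}^h_{\ell,\rho}\cap B_{r_1}\big)$ — an absolute deficit, with no factor $\ell$ anywhere in sight and no mechanism producing the structure $\sqrt2\,\ell\big(\tfrac1{r_1}-\tfrac1{r_2}\big)\tfrac1{r_1}\mathcal{E}_\lambda(\cdot,B_{r_1})$ (your sentence attributing the $(1/r_1-1/r_2)$ factor in front of $\mathcal{E}_\lambda(\cdot,B_{r_1})$ to ``an extra integration in $r$ on the bulk $|\partial_{x_3}Q|^2$ contribution'' is not coherent: that bulk piece is already accounted for in the $\tfrac3{r_1}\int|\partial_{x_3}Q|^2$ term and is unrelated to $\mathcal{E}_\lambda(\cdot,B_{r_1})$). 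So at best your scheme yields a genuinely weaker variant (prefactor $\approx 1-\tfrac1{\sqrt2}$ on $\tfrac1{r_1}\mathcal{E}_\lambda(\cdot,B_{r_1})$, which does not tend to $1$ as $\ell/r_1\to0$, and $\tfrac34$ on the radial term), not Proposition \ref{local-energy-bound}.

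The missing idea is the one the paper uses to treat the corner term: after the crude Young bound $|(x'\cdot\nabla_{x'}Q):\partial_{\vec n}Q|\leq \tfrac\ell2|\nabla_{x'}Q|^2+\tfrac\ell2|\partial_{x_3}Q|^2$ on the discs $\{|x_3|=s\}$, the disc integral of the \emph{horizontal} derivatives is controlled through the vertical energy identity (iii), averaged over $t_2\in[-s_1,s_1]$: this converts $\ell\int_{\{|x_3|=s\}}\tfrac12|\nabla_{x'}Q|^2\,d\mathcal{H}^2$ into $\tfrac\ell2\int_{\{|x_3|=s\}}|\partial_{x_3}Q|^2\,d\mathcal{H}^2+\tfrac{\sqrt2\,\ell}{r_1}\mathcal{E}_\lambda\big(Q,\mathfrak{C}^h_{\ell,\rho}\cap B_{r_1}\big)$ (using $\mathfrak{C}^{s_1}_\ell\subset\mathfrak{C}^h_{\ell,\rho}\cap B_{r_1}$ and $s_1\geq r_1/\sqrt2$, the actual place where $r_1\geq\sqrt2\,\ell$ enters). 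Integrating the constant-in-$r$ term against $dr/r^2$ is what produces exactly $\sqrt2\,\ell\big(\tfrac1{r_1}-\tfrac1{r_2}\big)\tfrac1{r_1}\mathcal{E}_\lambda(\cdot,B_{r_1})$, while the leftover disc integrals of $|\partial_{x_3}Q|^2$, after the change of variables $r\mapsto s$, contribute $\tfrac{\ell}{s_1^2}\int|\partial_{x_3}Q|^2\leq\tfrac{\sqrt2}{r_1}\int|\partial_{x_3}Q|^2$, which together with the Fubini bound yields the constant $3$. Your proposal never invokes identity (iii), and without some such slicing/averaging step a purely pointwise estimate of the corner term cannot generate the factor $\ell$; this is a genuine gap, not a matter of bookkeeping. (Two minor inaccuracies, harmless to the argument: the explicit value of $|\nabla_{\rm tan}Q_{\rm b}|^2$ on the lateral boundary is never needed, and the hypothesis $r\geq\sqrt2\,\ell$ is not what legitimizes applying (ii) at $s(r)$ — only $r\geq\ell$ is needed there.)
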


\begin{proof}
For $\sqrt{2}\ell \leq r_1 <r \leq r_2 \leq h-\rho$, we set 
$$s_1:=\sqrt{r_1^2-\ell^2}\in(\ell,h-\rho)\,,\;s:=\sqrt{r^2-\ell^2}\in(\ell,h-\rho)\,,\;s_2:=\sqrt{r_2^2-\ell^2}\in(\ell,h-\rho)\,,$$
and we assume that  $(0,0,\pm s) \not\in{\rm sing}(Q)$. By \eqref{eq:hor-ener-id} and Young's inequality, we estimate 
\begin{multline}
\label{eq:hor-ener-ineq1}
	 - \int_{\mathfrak{C}^s_{\ell}} 2\lambda W(Q) \,{\rm d}x   +\ell \int_{\partial \mathfrak{C}^s_{\ell} \cap \{|x_3|<s\}} \Big[   \frac12 \abs{\nabla_{\rm tan} Q_{\rm b}}^2+\lambda W(Q_{\rm b})  -\frac12 \abs{ \frac{\partial Q}{\partial \overrightarrow{n}}}^2  \Big]\, d\mathcal{H}^2\\
	 	=\int_{\mathfrak{C}^s_{\ell}}  \abs{\frac{\partial Q}{\partial x_3}}^2 \, dx+ \int_{\partial\mathfrak{C}^s_{\ell} \cap\{x_3=s\}} (x'\cdot \nabla_{x'} Q): \frac{\partial Q}{\partial \overrightarrow{n}}  \, d\mathcal{H}^2 \\
	 \leq 	\int_{\mathfrak{C}^s_{\ell}}  \abs{ \frac{\partial Q}{\partial x_3} }^2 \, dx+\ell \int_{\partial\mathfrak{C}^s_{\ell} \cap\{|x_3|=s\}} \frac12 \abs{\nabla_{x'} Q}^2  \, d\mathcal{H}^2+\frac{\ell}{2} \int_{\partial\mathfrak{C}^s_{\ell} \cap \{|x_3|=s\}} \abs{ \frac{\partial Q}{\partial x_3} }^2  \, d\mathcal{H}^2 \, .
	 \end{multline}
Averaging \eqref{eq:vert-ener-id} over $t_2\in [-s_1, s_1]$, we derive that for any $t\in [-h+\rho,h-\rho]$ such that $(0,0,t)\not\in{\rm sing(Q)}$,   
\begin{multline}
\label{eq:vert-ener-id-av} 
E_\lambda\big(Q(\, \cdot \, , t),\bb D_\ell\big)\leq\frac12
\int_{\mathfrak{C}^h_{\ell,\rho}\cap \{x_3=t\}} \abs{\frac{\partial Q}{\partial x_3}}^2 \, d\mathcal{H}^2 
+ \frac{1}{2s_1} \mathcal{E}_\lambda(Q;\mathfrak{C}^{s_1}_{\ell}) \\
\leq \frac12
\int_{\mathfrak{C}^h_{\ell,\rho}\cap \{x_3=t\}} \abs{\frac{\partial Q}{\partial x_3}}^2 \, d\mathcal{H}^2 + \frac1{\sqrt{2} r_1} \mathcal{E}_\lambda\big(Q,\mathfrak{C}^h_{\ell,\rho} \cap B_{r_1}\big)\, ,
\end{multline}
using $\mathfrak{C}^{s_1}_{\ell} \subset \mathfrak{C}^h_{\ell,\rho}\cap B_{r_1}$ in the last inequality. Summing now \eqref{eq:vert-ener-id-av} over $t\in\{\pm s\}$ yields
\begin{multline}\label{eqcorrcyl}
\ell\int_{\mathfrak{C}^h_{\ell  ,\rho} \cap\{ |x_3|=s\}} \frac12 \abs{\nabla_{x'} Q}^2  \, d\mathcal{H}^2\leq \ell\Big(  E_\lambda\big(Q(\, \cdot \, , s\big),\bb D_\ell  ) + E_\lambda\big(Q(\, \cdot \, , -s),\bb D_\ell \big ) \Big) 
 \\
\leq \frac \ell2
\int_{\mathfrak{C}^h_{\ell,\rho}\cap \{|x_3|=s\}} \abs{\frac{\partial Q}{\partial x_3}}^2 \, d\mathcal{H}^2 + \frac{\sqrt{2}\ell}{r_1} \mathcal{E}_\lambda\big(Q,  \mathfrak{C}^h_{\ell,\rho} \cap B_{r_1}\big) \, .
\end{multline}
Noticing that $\mathfrak{C}^h_{\ell,\rho}\cap \{|x_3|=s\}=\partial\mathfrak{C}^s_{\ell}\cap \{|x_3|=s\}$, we combine \eqref{eq:hor-ener-ineq1} with \eqref{eqcorrcyl} to obtain 
\begin{multline}
\label{eq:hor-ener-ineq2}
	 - \int_{\mathfrak{C}^s_{\ell}} 2\lambda W(Q) \,{\rm d}x   +\ell \int_{\partial \mathfrak{C}^s_{\ell} \cap \{|x_3|<s\}} \Big[   \frac12 \abs{\nabla_{\rm tan} Q_{\rm b}}^2+\lambda W(Q_{\rm b})  -\frac12 \abs{ \frac{\partial Q}{\partial \overrightarrow{n}}}^2  \Big]\, d\mathcal{H}^2
	 	\\
	 	\leq \int_{\mathfrak{C}^s_{\ell}}  \abs{ \frac{\partial Q}{\partial x_3} }^2 \, dx+\ell \int_{\partial\mathfrak{C}^s_{\ell} \cap \{|x_3|=s\}} \abs{\frac{\partial Q}{\partial x_3} }^2  \, d\mathcal{H}^2  +\frac{\sqrt{2}\ell}{r_1} \mathcal{E}_\lambda(Q, \mathfrak{C}^h_{{{\ell}},\rho} \cap B_{r_1}) \, .
	 \end{multline}
Next we observe that $\partial \mathfrak{C}^s_{\ell} \cap \{|x_3|<s\} =\partial \mathfrak{C}^h_{\ell,\rho} \cap B_r$ and
$\mathfrak{C}^s_{\ell} \subset \mathfrak{C}^h_{\ell,\rho} \cap B_r$. Then, multiplying \eqref{eq:hor-ener-ineq2} by $1/r^2$, integrating between $r_1$ and $r_2$, and then adding the resulting inequality   
to \eqref{eq:rad-mon-form} (term-by-term),  we obtain
			\begin{multline}
			\label{eq:rad-mon-form2}
\left(1- \sqrt{2}\ell \left( \frac{1}{r_1}-\frac{1}{r_2} \right) \right)			 \frac{1}{r_1}\mathcal{E}_\lambda(Q, \mathfrak{C}^h_{\ell,\rho} \cap B_{r_1}) 	+ \int_{\mathfrak{C}^h_{\ell,\rho} \cap (B_{r_2} \setminus B_{r_1}) } \frac{1}{\abs{x}}\abs{\frac{\partial Q}{\partial \abs{x}}}^2 \,dx\\	
\leq \frac{1}{r_2}\mathcal{E}_\lambda(Q, \mathfrak{C}^h_{\ell,\rho} \cap B_{r_2})
				  +\int_{r_1}^{r_2}  \frac{1}{r^2}  	 \Bigg(	\int_{\mathfrak{C}^{s(r)}_{\ell}}  \abs{ \frac{\partial Q}{\partial x_3} }^2 \, dx
				 +\ell \int_{\partial\mathfrak{C}^{s(r)}_{\ell} \cap \{|x_3|=s(r)\}} \abs{\frac{\partial Q}{\partial x_3} }^2  \, d\mathcal{H}^2 \Bigg) \, dr \, ,
			\end{multline}
where we write $s(r):=\sqrt{r^2-\ell^2}$. 
Since $\mathfrak{C}^{s(r)}_{\ell}\subset \mathfrak{C}^{h}_{\ell,\rho}\cap B_{r_2}$ for every $r\in(r_1,r_2)$, we obtain by a change of variable,  
\begin{align}
\nonumber\int_{r_1}^{r_2}  \frac{1}{r^2} \Big(	&\int_{\mathfrak{C}^{s(r)}_{\ell}} \abs{ \frac{\partial Q}{\partial x_3} }^2 \, dx+\ell \int_{\partial\mathfrak{C}^{s(r)}_{\ell} \cap\{|x_3|=s(r)\}} \abs{\frac{\partial Q}{\partial x_3} }^2  \, d\mathcal{H}^2 \Big) \, dr  \\
\nonumber& \leq  \Big(\frac{1}{r_1}-\frac{1}{r_2}\Big) \int_{ \mathfrak{C}^h_{\ell,\rho} \cap B_{r_2}}  \abs{\frac{\partial Q}{\partial x_3} }^2\,dx + \ell \int_{s_1}^{s_2}   \frac{s}{(s^2+\ell^2)^{3/2}} \Big(\int_{\partial\mathfrak{C}^{s}_{\ell} \cap\{|x_3|=s\}} \abs{\frac{\partial Q}{\partial x_3} }^2  \, d\mathcal{H}^2 \Big) \, ds \\
\nonumber &\leq \Big(\frac{1}{r_1}-\frac{1}{r_2}\Big) \int_{ \mathfrak{C}^h_{\ell,\rho} \cap B_{r_2}}  \abs{\frac{\partial Q}{\partial x_3} }^2\,dx +\frac{\ell}{s_1^2}\int_{\mathfrak{C}^{s_2}_{\ell} \setminus \mathfrak{C}^{s_1}_{\ell} } \abs{\frac{\partial Q}{\partial x_3} }^2  \, dx \\
 \label{eq:rad-mon-form3} &\leq \frac{3}{r_1} \int_{ \mathfrak{C}^h_{\ell,\rho} \cap B_{r_2}}  \abs{\frac{\partial Q}{\partial x_3} }^2\,dx \,.
\end{align}
Combining \eqref{eq:rad-mon-form3} with \eqref{eq:rad-mon-form2}, the conclusion follows.
\end{proof}

Combining Proposition \ref{local-energy-bound} with a comparison argument, we now derive a fundamental energy estimate for minimizers in terms of the height $h$ of a ``cigar shaped'' smoothed cylinder. 

\begin{corollary}
\label{energy-upperbound}	
Let $\mathfrak{C}^h_{\ell,\rho}$ be a smoothed cylinder with $h-\rho>2\sqrt{2}\ell$, and  $Q_{\rm b}$ its homeotropic boundary data given by~\eqref{eq:radial-anchoring}.
If $Q$ is  minimizing  $\mathcal{E}_\lambda$ over $\mathcal{A}_{Q_{\rm b}}^{\rm sym}(\mathfrak{C}^h_{\ell,\rho})$, then 
$$\mathcal{E}_\lambda(Q,\mathfrak{C}^h_{\ell,\rho})\leq 2h \mathfrak{e}_{\lambda \ell^2} +C_1\,,$$
where $\mathfrak{e}_{\lambda \ell^2}$ is defined by \eqref{def:minElambda}, and $C_1=C_1(\ell,\rho,\lambda)$ is a constant independent of $h$.  In addition, 
\begin{equation}
\label{eq:lc-local-upperbound}
\frac{1}r \mathcal{E}_\lambda\big(Q,\mathfrak{C}^h_{\ell,\rho}\cap B_r\big)+\int_{\mathfrak{C}^h_{\ell,\rho} \cap B_r} \abs{\frac{\partial Q}{\partial x_3}}^2 \, dx\leq C_2\quad \forall r\in (2\sqrt{2}\ell, h-\rho)\, ,
\end{equation}
for a constant $C_2=C_2(\ell,\rho,\lambda)$ also independent of $h$. Moreover, the dependence of $C_1$ and $C_2$ on $\lambda\geq 0$ is locally uniform.
\end{corollary}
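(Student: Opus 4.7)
The plan is to derive both estimates from a single global upper bound obtained by exhibiting an explicit competitor of energy at most $2h\mathfrak{e}_{\lambda\ell^2}+O(1)$. The local estimate \eqref{eq:lc-local-upperbound} then follows by combining this bound with a Fubini slicing in the straight inner cylinder and with Proposition~\ref{local-energy-bound}.

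For the global bound, let $\widehat Q_\ell\in\mathcal{A}^{\rm sym}_{\overline H}(\bbD_\ell)$ be a 2D minimizer of $E_\lambda(\,\cdot\,,\bbD_\ell)$, whose existence follows from the 2D analogue of Proposition~\ref{prop:symmetric-criticality}(ii); the rescaling $y=x'/\ell$ gives $E_\lambda(\widehat Q_\ell,\bbD_\ell)=\mathfrak{e}_{\lambda\ell^2}$. I would set $\widetilde Q(x',x_3):=\widehat Q_\ell(x')$ on the inscribed straight cylinder $\mathfrak{C}^{h-\rho}_\ell\subset\mathfrak{C}^h_{\ell,\rho}$ (cf.\ \eqref{eq:RC-inclusions}), so that $\widetilde Q$ is $\bbS^1$-equivariant, $x_3$-independent, and matches $Q_{\rm b}=\overline H$ on the flat lateral portion of $\partial\mathfrak{C}^h_{\ell,\rho}$. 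On the two ``caps'' $\mathfrak{C}^h_{\ell,\rho}\setminus\mathfrak{C}^{h-\rho}_\ell$ (fixed $\bbS^1$-invariant Lipschitz domains whose size depends only on $\ell,\rho$), $\widetilde Q$ is extended to any $\bbS^1$-equivariant, $\bbS^4$-valued admissible map matching $\widehat Q_\ell$ on the horizontal interfaces $\bbD_\ell\times\{\pm(h-\rho)\}$ and $Q_{\rm b}$ on the curved smoothed boundary; the boundary data agree on the seam $\partial\bbD_\ell\times\{\pm(h-\rho)\}$ because both equal $\overline H$ there. Such an extension of finite energy exists (for example, by the direct method applied on each cap in the 3D equivariant class with prescribed boundary data, or by an explicit construction in the decomposition $Q\simeq u\in\R\oplus\C\oplus\C$ of Lemma~\ref{lemma:dec-equiv}), and its energy is bounded by a constant $C_0=C_0(\ell,\rho,\lambda)$ independent of~$h$, with locally uniform dependence on $\lambda$. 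Fubini then yields $\mathcal{E}_\lambda(\widetilde Q,\mathfrak{C}^{h-\rho}_\ell)=2(h-\rho)\mathfrak{e}_{\lambda\ell^2}$, and minimality of $Q$ gives $\mathcal{E}_\lambda(Q,\mathfrak{C}^h_{\ell,\rho})\leq 2h\mathfrak{e}_{\lambda\ell^2}+C_1$ for a suitable $C_1=C_1(\ell,\rho,\lambda)$ with the claimed uniform dependence on $\lambda$.

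For \eqref{eq:lc-local-upperbound}, I would first control the vertical-derivative term. By the trace theory of Corollary~\ref{verticaltrace} and Lemma~\ref{lemma:s1eq-emb}, for a.e.\ $t\in(-h+\rho,h-\rho)$ the slice $Q(\,\cdot\,,t)$ lies in $\mathcal{A}^{\rm sym}_{\overline H}(\bbD_\ell)$, so $E_\lambda(Q(\,\cdot\,,t),\bbD_\ell)\geq\mathfrak{e}_{\lambda\ell^2}$. Fubini applied to the straight inner cylinder gives
\begin{equation*}
\mathcal{E}_\lambda(Q,\mathfrak{C}^{h-\rho}_\ell)=\int_{-h+\rho}^{h-\rho}E_\lambda(Q(\,\cdot\,,t),\bbD_\ell)\,dt+\frac12\int_{\mathfrak{C}^{h-\rho}_\ell}\Big|\frac{\partial Q}{\partial x_3}\Big|^2\,dx,
\end{equation*}
which combined with the slice bound and the global upper bound yields
\begin{equation*}
\int_{\mathfrak{C}^{h-\rho}_\ell}\Big|\frac{\partial Q}{\partial x_3}\Big|^2\,dx\leq 2\mathcal{E}_\lambda(Q,\mathfrak{C}^h_{\ell,\rho})-4(h-\rho)\mathfrak{e}_{\lambda\ell^2}\leq 4\rho\,\mathfrak{e}_{\lambda\ell^2}+2C_1,
\end{equation*}
a bound uniform in $h$. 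Since $\mathfrak{C}^h_{\ell,\rho}\cap B_r\subset\mathfrak{C}^{h-\rho}_\ell$ for $r\leq h-\rho$, this proves the vertical-derivative part of \eqref{eq:lc-local-upperbound}. Applying Proposition~\ref{local-energy-bound} with $r_1=r$ and $r_2=h-\rho$ then bounds $\tfrac1r\mathcal{E}_\lambda(Q,\mathfrak{C}^h_{\ell,\rho}\cap B_r)$: the prefactor $1-\sqrt 2\ell(1/r-1/(h-\rho))\geq 1-\sqrt 2\ell/r\geq 1/2$ because $r>2\sqrt 2\ell$, while the right-hand side is controlled by $(2h\mathfrak{e}_{\lambda\ell^2}+C_1)/(h-\rho)+3(4\rho\mathfrak{e}_{\lambda\ell^2}+2C_1)/r$, both terms being uniformly bounded in $h$ and $r\in(2\sqrt 2\ell,h-\rho)$. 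The asserted constant $C_2=C_2(\ell,\rho,\lambda)$ inherits the locally uniform dependence on $\lambda$ from $C_1$ and $\mathfrak{e}_{\lambda\ell^2}$.

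The main obstacle is the construction of the $\bbS^1$-equivariant $\bbS^4$-valued cap extension with $h$-independent energy. A naive linear-interpolation-and-renormalize scheme may fail because the linear interpolant between $\widehat Q_\ell(x')$ and $Q_{\rm b}$ can vanish at interior points. The cleanest remedies are either the direct method of the calculus of variations applied on each (fixed) cap in the $\bbS^1$-equivariant class with the prescribed continuous boundary data — which produces an admissible minimizer whose energy depends only on $\ell,\rho,\lambda$ — or an explicit construction in the $(u_0,u_1,u_2)$-coordinates of Lemma~\ref{lemma:dec-equiv} exploiting the explicit form of $\overline H$ and of $Q_{\rm b}$ in the cap; either route yields the required uniform energy bound needed to close the argument.
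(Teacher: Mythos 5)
Your proposal is correct and follows essentially the same route as the paper: the competitor equal to the $x_3$-independent 2D minimizer on $\mathfrak{C}^{h-\rho}_\ell$ plus $h$-independent cap extensions, the Fubini slicing bound $E_\lambda(Q(\cdot,t),\bbD_\ell)\geq\mathfrak{e}_{\lambda\ell^2}$ to control $\int|\partial_{x_3}Q|^2$, and Proposition~\ref{local-energy-bound} with $r_1=r$, $r_2=h-\rho$. The only point you defer, the cap extension, is handled in the paper by the degree-zero homogeneous extension of the boundary trace from the interior points $p^\pm=(0,0,\pm(h-\rho/2))$, whose energy is controlled by $\|\nabla_{\rm tan}\widetilde Q^h\|^2_{L^2(\partial\Omega^\pm_h)}+\lambda$ (hence by $\mathfrak{e}_{\lambda\ell^2}\leq 6\pi$ plus fixed data); this device yields at once the $h$-independence and the locally uniform dependence on $\lambda$ that your direct-method alternative asserts but does not fully justify.
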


\begin{proof}
We define $\Omega_h^\pm:=(\mathfrak{C}^h_{\ell,\rho} \setminus\mathfrak{C}^{h-\rho}_\ell) \cap \{\pm x_3> 0\}$, so that $\mathfrak{C}^h_{\ell,\rho}=\mathfrak{C}^{h-\rho}_\ell \cup \Omega_h^+\cup\Omega_h^-$, and 
$$\partial\Omega_h^\pm=\big(\mathbb{D}_\ell\times\{\pm x_3=h-\rho\}\big)\cup \big(\partial \mathfrak{C}^{h}_{\ell,\rho}\cap\{\pm x_3\geq h-\rho\}  \big)\,.$$
Setting $\widetilde{\lambda}:=\lambda \ell^2$, we fix $Q_{\widetilde{\lambda}} \in \mathcal{A}^{\rm sym}_{\overline{H}}(\bb D)$ such that $E_{\widetilde{\lambda}}(Q_{\widetilde{\lambda}})=\mathfrak{e}_{\widetilde{\lambda}}$. Since $ Q_{\widetilde{\lambda}}$ is minimizing $E_{\widetilde\lambda}$ over $\mathcal{A}^{\rm sym}_{\overline{H}}(\bb D)$, $ Q_{\widetilde{\lambda}}$ is smooth up to $\partial\mathbb{D}$ (see Section \ref{2Dminimization}). Rescaling variables, we have $E_{\lambda}(\widetilde{Q}_\lambda;\bb D_\ell)=\mathfrak{e}_{\lambda \ell^2}$ for $\widetilde Q_\lambda( \, \cdot \,):=Q_{\widetilde{\lambda}}(\cdot / \ell\,)$. We define a Lipschitz map $\widetilde{Q}^h$ on $\partial \Omega_h^\pm$ setting $\widetilde{Q}^h(x):=Q_{\rm b}(x)$ if $x\in \partial \mathfrak{C}^{h}_{\ell,\rho}\cap\{\pm x_3\geq h-\rho\} $, and $\widetilde{Q}^h(x):=\widetilde Q_\lambda(x^\prime)$ if $x=(x^\prime,x_3)\in \mathbb{D}_\ell\times\{\pm x_3=h-\rho\}$. 
Considering the points $p^\pm:=(0,0,\pm (h-\rho/2)) \in \Omega^\pm_h$,  
we extend $\widetilde{Q}^h$ to the interior of $\Omega^\pm_h$ by $0$-homogeneity from the point $p^\pm$. Then we finally extend $\widetilde{Q}^h$ to  $\mathfrak{C}^{h}_{\ell,\rho}$ setting  $\widetilde{Q}^h(x)=\widetilde Q_\lambda(x^\prime)$ if $x=(x^\prime,x_3)\in \mathfrak{C}^{h-\rho}_{\ell}$. By construction, we have $\widetilde{Q}^h \in \mathcal{A}^{\rm sym}_{Q_{\rm b}}(\mathfrak{C}^{h}_{\ell,\rho}) \cap \rmLip_{\rm loc}(\overline{\mathfrak{C}^{h}_{\ell,\rho}}\setminus \{p^\pm\})$,  
$$\mathcal{E}_\lambda(\widetilde{Q}^h,\mathfrak{C}^{h-\rho}_\ell)=2(h-\rho) E_\lambda(\widetilde Q_\lambda,\bb D_\ell)=2(h-\rho) \mathfrak{e}_{\lambda \ell^2}\,,$$
and
\begin{equation}\label{impstepcor55}
\mathcal{E}_\lambda(\widetilde{Q}^h,\Omega^\pm_h)\leq C( \|\nabla_{\rm tan} \widetilde{Q}^h\|^2_{L^2(\partial \Omega_h^\pm)} +\lambda)\leq C_1\,,
\end{equation}
for a constant $C_1=C_1(\ell,\rho,\lambda)$ independent of $h$ and continuous w.r.to $\lambda$. If $Q$ is minimizing  $\mathcal{E}_\lambda$ over $\mathcal{A}_{Q_{\rm b}}^{\rm sym}(\mathfrak{C}^h_{\ell,\rho})$, then 
\begin{equation}\label{dimcpa1}
 \mathcal{E}_\lambda(Q) \leq \mathcal{E}_\lambda(\widetilde{Q}^h)= \mathcal{E}_\lambda(\widetilde{Q}^h,\mathfrak{C}^{h-\rho}_\ell)+ \mathcal{E}_\lambda(\widetilde{Q}^h,\Omega^+_h)+ \mathcal{E}_\lambda(\widetilde{Q}^h,\Omega^-_h) \leq 2(h-\rho) \mathfrak{e}_{\lambda \ell^2} +2C_1\,.
 \end{equation}
On the other hand, by definition of 
$\mathfrak{e}_{\lambda \ell^2}$, we have 
\begin{equation}\label{dimcpa2}
 \mathcal{E}_\lambda(Q)\geq \int_{-h+\rho}^{h-\rho} E_\lambda\big({Q}(\, \cdot \,, x_3),\bb D_\ell\big) \, {d}x_3+ \int_{\mathfrak{C}^{h-\rho}_\ell} \frac12\abs{\frac{\partial Q}{\partial x_3}}^2 \, { d}x\geq 2(h-\rho) \mathfrak{e}_{\lambda \ell^2} +\int_{\mathfrak{C}^{h-\rho}_\ell} \frac12\abs{\frac{\partial Q}{\partial x_3}}^2 \, { d}x \, .
 \end{equation}
Since $\mathfrak{C}^h_{\ell,\rho} \cap B_r\subset \mathfrak{C}^{h-\rho}_\ell$, combining \eqref{dimcpa1} and \eqref{dimcpa2} leads to 
$ \int_{\mathfrak{C}^{h}_{\ell,\rho}\cap B_r} |\frac{\partial Q}{\partial x_3}|^2 \, { d}x\leq 4C_1$ for every $r\leq h-\rho$. 
In view of this estimate and \eqref{dimcpa1}, we can apply Proposition \ref{local-energy-bound} with $r_2=h-\rho$ and $r_1=r \geq 2\sqrt{2}\ell$  to obtain 
\begin{equation}\label{cppamard19}
\frac{1}{2r}\mathcal{E}_\lambda\big(Q,\mathfrak{C}^h_{\ell,\rho} \cap B_{r}\big) \leq \frac{1}{h-\rho}   \mathcal{E}_\lambda(Q)+\frac{3}{r} \int_{\mathfrak{C}^{h}_{\ell,\rho}\cap B_{h-\rho}} \abs{\frac{\partial Q}{\partial x_3}}^2 \, { d}x\leq 2 \mathfrak{e}_{\lambda \ell^2} +\frac{8C_1}{\ell}\,,
\end{equation}
 which proves  \eqref{eq:lc-local-upperbound}  once we choose $C_2 = 2 \mathfrak{e}_{\lambda \ell^2} + \frac{8C_1}{\ell}$. Since $C_2$ is continuous in its arguments, hence locally bounded w.r.to $\lambda$, the proof is complete. 
\end{proof}

Combining identity \eqref{eq:vert-ener-id} with Corollary \ref{energy-upperbound}, we obtain an energy bound as in \eqref{eq:lc-local-upperbound} for arbitrary balls centered on the vertical axis. 

\begin{corollary}\label{Corshiftedballs2}
Let $\mathfrak{C}^h_{\ell,\rho}$ be a smoothed cylinder with $h-\rho>2\sqrt{2}\ell$, and  
$Q_{\rm b}$ its homeotropic boundary data given by~\eqref{eq:radial-anchoring}.
If $Q$ is  minimizing  $\mathcal{E}_\lambda$ over $\mathcal{A}_{Q_{\rm b}}^{\rm sym}(\mathfrak{C}^h_{\ell,\rho})$, then there exists a constant $C_3=C_3(\ell,\rho,\lambda)$ independent of $h$ such that
$$\frac{1}{r}\mathcal{E}_\lambda\big(Q,\mathfrak{C}^h_{\ell,\rho}\cap B_r(p)\big)\leq C_3$$
for every $p=(0,0,z)\in \mathfrak{C}^h_{\ell,\rho}\cap \{x_3\text{-axis}\}$ and $2\sqrt{2}\ell < r < h-\rho-|z|-\ell$. 
\end{corollary}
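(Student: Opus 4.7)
The plan is to combine the vertical energy identity \eqref{eq:vert-ener-id} from Corollary \ref{global-identities-vert}(iii) with the global bounds from Corollary \ref{energy-upperbound}, exploiting the fact that short translated axial cylinders fit inside $\mathfrak{C}^h_{\ell,\rho}$. The approach is actually quite direct, following the hint already given before the statement.

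First, I will use \eqref{eq:vert-ener-id} together with the finiteness of $\rmsing(Q)$ from Theorem \ref{bdrregthm} to observe that the quantity
\[
K(t) := E_\lambda\big(Q(\cdot, t), \bbD_\ell\big) - \frac12 \int_{\bbD_\ell \times \{t\}} \abs{\tfrac{\partial Q}{\partial x_3}}^2 \, d\mathcal{H}^2
\]
is constant on $(-h+\rho, h-\rho)$ off a finite set; call this common value $K$. Using Fubini, $\mathcal{E}_\lambda(Q, \mathfrak{C}^{h-\rho}_\ell)$ decomposes as $\int_{-h+\rho}^{h-\rho}\! [E_\lambda(Q(\cdot,t), \bbD_\ell) + \tfrac12 \int_{\bbD_\ell \times \{t\}}\abs{\partial_{x_3} Q}^2 d\mathcal{H}^2] dt$, giving
\[
2(h-\rho) K = \mathcal{E}_\lambda(Q, \mathfrak{C}^{h-\rho}_\ell) - \int_{\mathfrak{C}^{h-\rho}_\ell} \abs{\tfrac{\partial Q}{\partial x_3}}^2 \, dx.
\]
Corollary \ref{energy-upperbound} and its proof yield the two $h$-independent bounds $\mathcal{E}_\lambda(Q, \mathfrak{C}^h_{\ell,\rho}) \leq 2h\,\mathfrak{e}_{\lambda\ell^2} + 2C_1$ and $\int_{\mathfrak{C}^{h-\rho}_\ell}\abs{\partial_{x_3}Q}^2 dx \leq 4 C_1$ (the latter from comparing \eqref{dimcpa1} and \eqref{dimcpa2}). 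Together with $h > 2\rho$ (which follows from $h-\rho > 2\sqrt{2}\ell$ and $2\rho<\ell$), this gives the uniform bound $K \leq 2 \mathfrak{e}_{\lambda\ell^2} + C_1/\rho =: K^*$.

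Second, I fix $p = (0,0,z)$ axial and $r$ as in the statement. The condition $r < h-\rho-|z|-\ell$ ensures $|z|+r < h-\rho$, hence the translated axial cylinder $p + \mathfrak{C}^r_\ell$ fits inside $\mathfrak{C}^{h-\rho}_\ell \subset \mathfrak{C}^h_{\ell,\rho}$ thanks to \eqref{eq:RC-inclusions}. Moreover, one checks directly that $B_r(p) \cap \mathfrak{C}^h_{\ell,\rho} \subset p + \mathfrak{C}^r_\ell$ for any $r>0$: each such $x$ satisfies $x_1^2+x_2^2 < \ell^2$ (from $\mathfrak{C}^h_{\ell,\rho} \subset \mathfrak{C}^h_\ell$) and $|x_3-z| < r$ (from $|x-p|<r$). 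Slicing the translated cylinder horizontally and using the definition of $K$ yields
\[
\mathcal{E}_\lambda\big(Q, B_r(p) \cap \mathfrak{C}^h_{\ell,\rho}\big) \leq \mathcal{E}_\lambda(Q, p + \mathfrak{C}^r_\ell) = 2rK + \int_{p + \mathfrak{C}^r_\ell}\abs{\tfrac{\partial Q}{\partial x_3}}^2 dx \leq 2r K^* + 4 C_1.
\]

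Dividing by $r$ and using $r > 2\sqrt{2}\ell$ produces the desired constant $C_3 := 2K^* + 4C_1/(2\sqrt{2}\,\ell)$, depending only on $\ell, \rho, \lambda$. I do not expect any serious obstacle: the most delicate point is the definition of $K$ as the a.e.-constant value across slices, but the excluded set of ``bad'' $t$ for which $(0,0,t) \in \rmsing(Q)$ is finite by Theorem \ref{bdrregthm}, so it is negligible both in the integration identity and in the slicing of $p + \mathfrak{C}^r_\ell$.
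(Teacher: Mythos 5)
Your proof is correct and follows essentially the same route as the paper: both rest on the vertical energy identity \eqref{eq:vert-ener-id} together with the $h$-independent bounds coming from Corollary \ref{energy-upperbound}. The paper averages \eqref{eq:vert-ener-id} in $t_1$ and $t_2$ to compare $\mathcal{E}_\lambda\big(Q,\mathfrak{C}^r_{\ell}(p)\big)$ with $\mathcal{E}_\lambda\big(Q,\mathfrak{C}^r_{\ell}\big)$ and then invokes \eqref{eq:lc-local-upperbound}, whereas you extract the $x_3$-invariant quantity $K$ and bound it through \eqref{dimcpa1}--\eqref{dimcpa2}; this is merely a repackaging of the same argument.
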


\begin{proof}
Integrating \eqref{eq:vert-ener-id} with respect to $t_1\in[z-r,z+r]$ and dividing the result by $2r$, we obtain
\begin{equation}\label{merc3nov1}
\frac{1}{2r}\mathcal{E}_\lambda\big(Q,\mathfrak{C}^r_{\ell}(p)\big) -\frac{1}{4r}\int_{\mathfrak{C}^r_{\ell}(p)} \abs{\frac{\partial Q}{\partial x_3}}^2 \, dx \\ 
=E_\lambda\big(Q(\, \cdot \, ,t_2),\bb D_\ell\big) 
-\frac12 \int_{\mathfrak{C}^h_{\ell,\rho}\cap \{x_3=t_2\}} \abs{\frac{\partial Q}{\partial x_3}}^2 \, d\mathcal{H}^2
\end{equation}
for every $t_2\in[-h+\rho,h-\rho]$ such that $t_2\not\in{\rm sing}(Q)$. Then, integrating \eqref{merc3nov1} with respect to $t_2\in[-r,r]$, we derive that 
\begin{multline}\label{inequtiloublie}
\mathcal{E}_\lambda\big(Q,\mathfrak{C}^h_{\ell,\rho}\cap B_r(p)\big)\leq\mathcal{E}_\lambda\big(Q,\mathfrak{C}^r_{\ell}(p)\big) \leq \mathcal{E}_\lambda\big(Q,\mathfrak{C}^r_{\ell}\big)+\frac{1}{2}\int_{\mathfrak{C}^r_{\ell}(p)} \abs{\frac{\partial Q}{\partial x_3}}^2 \, dx\\
\leq \mathcal{E}_\lambda\big(Q,\mathfrak{C}^h_{\ell,\rho}\cap B_{r+\ell}\big)+\frac{1}{2}\int_{\mathfrak{C}^h_{\ell,\rho}\cap B_{r+|z|+\ell}} \abs{\frac{\partial Q}{\partial x_3}}^2 \, dx\,,
\end{multline}
since $\mathfrak{C}^h_{\ell,\rho}\cap B_r(p)\subset \mathfrak{C}^r_{\ell}(p)$, $\mathfrak{C}^r_{\ell}\subset \mathfrak{C}^h_{\ell,\rho}\cap B_{r+\ell}$, and $\mathfrak{C}^r_{\ell}(p)\subset \mathfrak{C}^h_{\ell,\rho}\cap B_{r+|z|+\ell}$.  The conclusion now follows from Corollary \ref{energy-upperbound} with $C_3=2C_2(1+1/\ell)$ and $C_2$ given by \eqref{eq:lc-local-upperbound}. 
\end{proof}

Using suitable competitors, we can now deduce from the previous corollary that the energy of minimizers remains   bounded also near the top and bottom parts of the cylinder. 

\begin{corollary}\label{Corshiftedballs3}
Let $\mathfrak{C}^h_{\ell,\rho}$ be a smoothed cylinder with $h-2\rho>4\ell$, and  
$Q_{\rm b}$ its homeotropic boundary data given by~\eqref{eq:radial-anchoring}. If $Q$ is  minimizing  $\mathcal{E}_\lambda$ over $\mathcal{A}_{Q_{\rm b}}^{\rm sym}(\mathfrak{C}^h_{\ell,\rho})$, then there exists a constant $C_4=C_4(\ell,\rho,\lambda)$ independent of $h$ such that $\mathcal{E}_\lambda\big(Q,\mathfrak{C}^h_{\ell,\rho}\setminus\mathfrak{C}^{h-\rho}_\ell\big)\leq C_4$.
\end{corollary}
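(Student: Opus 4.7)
The plan is to bound the energy in the two smoothed caps by constructing a suitable competitor. The starting observation is that Corollary~\ref{energy-upperbound} yields, in combination with the lower bound \eqref{dimcpa2} for the whole cylinder, the uniform ``excess'' estimate
\[
\int_{-h+\rho}^{h-\rho}\bigl(E_\lambda\bigl(Q(\cdot,t),\bbD_\ell\bigr) - \mathfrak{e}_{\lambda \ell^2}\bigr)\,dt \;\leq\; 2\rho\,\mathfrak{e}_{\lambda \ell^2} + C_1,
\]
with $C_1=C_1(\ell,\rho,\lambda)$ independent of $h$. Together with Theorem~\ref{bdrregthm} (finiteness of ${\rm sing}(Q)$), a mean value argument applied on the strip $(h-\rho-\ell, h-\rho)$ produces some $t^+\in(h-\rho-\ell, h-\rho)$ with $(0,0,t^+)\notin{\rm sing}(Q)$ and
\[
E_\lambda\bigl(Q(\cdot,t^+),\bbD_\ell\bigr) \;\leq\; \mathfrak{e}_{\lambda \ell^2} + \frac{2\rho\,\mathfrak{e}_{\lambda \ell^2}+C_1}{\ell}\,=:\,K(\ell,\rho,\lambda).
\]
Symmetrically, I would choose $t^-\in(-h+\rho,-h+\rho+\ell)$ with the analogous property.

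Next, I set $\Omega_\pm := \mathfrak{C}^h_{\ell,\rho}\cap\{\pm x_3 > \pm t^\pm\}$, two axisymmetric ``extended caps'' whose geometry near the tip of the cylinder is fixed (depends only on $\ell,\rho$) and whose ``thickness'' in the $x_3$--direction is at most $\rho+\ell$. Mimicking exactly the construction in the proof of Corollary~\ref{energy-upperbound}, I define on $\partial\Omega_\pm$ a boundary datum $\widetilde Q^\pm$ by setting $\widetilde Q^\pm := Q_{\rm b}$ on $\partial\mathfrak{C}^h_{\ell,\rho}\cap\{\pm x_3>\pm t^\pm\}$ and $\widetilde Q^\pm := Q(\cdot,t^\pm)$ on $\bbD_\ell\times\{x_3=t^\pm\}$. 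The two pieces match continuously on the circle $\{|x'|=\ell, x_3=t^\pm\}$ because $Q=Q_{\rm b}$ on $\partial\mathfrak{C}^h_{\ell,\rho}$, and $\bbS^1$--equivariance is inherited. Moreover,
\[
\|\nabla_{\rm tan}\widetilde Q^\pm\|_{L^2(\partial\Omega_\pm)}^2 \;\leq\; \|\nabla_{\rm tan}Q_{\rm b}\|_{L^2(\partial\mathfrak{C}^h_{\ell,\rho}\setminus\mathfrak{C}^{h-\rho}_\ell)}^2 + 2\,E_\lambda\bigl(Q(\cdot,t^\pm),\bbD_\ell\bigr) \;\leq\; C(\ell,\rho,\lambda),
\]
uniformly in $h$, since $Q_{\rm b}$ is uniformly $C^2$ on the (fixed geometry) tip regions.

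I then extend $\widetilde Q^\pm$ to the interior of $\Omega_\pm$ by $0$-homogeneity from a point $p^\pm$ on the $x_3$-axis chosen inside $\Omega_\pm$ (essentially as $p^\pm=(0,0,\pm(h-\rho/2))$, as in Corollary~\ref{energy-upperbound}). This yields an $\bbS^1$-equivariant map $\widetilde Q_\pm\in W^{1,2}(\Omega_\pm;\bbS^4)$ with the standard energy estimate
\[
\mathcal{E}_\lambda(\widetilde Q_\pm,\Omega_\pm) \;\leq\; C\bigl(\|\nabla_{\rm tan}\widetilde Q^\pm\|_{L^2(\partial\Omega_\pm)}^2 + \lambda\bigr) \;\leq\; C(\ell,\rho,\lambda),
\]
still uniformly in $h$. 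Assembling the global competitor
\[
\widehat Q(x) := \begin{cases} Q(x) & \text{if } t^-\leq x_3\leq t^+,\\[2pt] \widetilde Q_\pm(x) & \text{if } x\in\Omega_\pm, \end{cases}
\]
one obtains $\widehat Q\in\mathcal{A}^{\rm sym}_{Q_{\rm b}}(\mathfrak{C}^h_{\ell,\rho})$, and the minimality of $Q$ yields, after the cancellation on $\{t^-\leq x_3\leq t^+\}$,
\[
\mathcal{E}_\lambda(Q,\Omega_+\cup\Omega_-) \;\leq\; \mathcal{E}_\lambda(\widetilde Q_+,\Omega_+) + \mathcal{E}_\lambda(\widetilde Q_-,\Omega_-) \;\leq\; C_4(\ell,\rho,\lambda).
\]
Since $t^+\leq h-\rho$ and $t^-\geq -(h-\rho)$, one has $\mathfrak{C}^h_{\ell,\rho}\setminus\mathfrak{C}^{h-\rho}_\ell\subset \Omega_+\cup\Omega_-$, and the desired estimate follows.

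The main technical point to check is that the $0$-homogeneous extension from $p^\pm$ is admissible on $\Omega_\pm$ with the claimed estimate; this reduces to the geometric fact that the extended cap $\Omega_\pm$ is star-shaped with respect to a suitable axial point $p^\pm$, which, thanks to the axisymmetry and the fact that the smoothed tip geometry is fixed (depending only on $\ell,\rho$), is a robust property, possibly up to a mild adjustment of the center $p^\pm$ (its distance from the tip being controlled by $\rho+\ell$, hence independent of $h$).
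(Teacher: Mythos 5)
Your proposal is correct, and the overall strategy matches the paper's: find a slice near (but not too near) the tip on which the 2D energy is controlled, then paste a $0$-homogeneous competitor on the resulting cap and test minimality. The difference lies in how the good slice is produced. The paper applies Corollary~\ref{Corshiftedballs2} at the shifted center $p^\pm=(0,0,\pm(h-2\rho-4\ell))$ with $r=3\ell$ to obtain a uniform energy bound on $\mathfrak{C}^\ell_\ell(p^\pm)$, and then Fubini gives a slice $\bar t\in(h-2\rho-4\ell,h-2\rho-3\ell)$ with $E_\lambda(Q(\cdot,\pm\bar t),\bbD_\ell)\leq C_3/\ell$. You instead subtract the trivial slicing lower bound \eqref{dimcpa2} from the comparison upper bound of Corollary~\ref{energy-upperbound} to get the uniform excess estimate $\int_{-h+\rho}^{h-\rho}\bigl(E_\lambda(Q(\cdot,t),\bbD_\ell)-\mathfrak{e}_{\lambda\ell^2}\bigr)\,dt\leq 2\rho\,\mathfrak{e}_{\lambda\ell^2}+C_1$, and a mean-value argument on the length-$\ell$ strip just below $h-\rho$ gives a slice with $E_\lambda(Q(\cdot,t^+),\bbD_\ell)\leq\mathfrak{e}_{\lambda\ell^2}+(2\rho\,\mathfrak{e}_{\lambda\ell^2}+C_1)/\ell$. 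Your route bypasses the monotonicity machinery behind Corollary~\ref{Corshiftedballs2} entirely (only the purely comparison-based part of Corollary~\ref{energy-upperbound} is needed), so it is somewhat more elementary and also produces a smaller cap; the paper's route, on the other hand, simply reuses a lemma already established for other purposes in this section, keeping the argument modular. Both slice choices lead to caps of $h$-independent geometry for which the $0$-homogeneous extension and the resulting energy estimate work uniformly; the star-shapedness concern you raise at the end is indeed harmless for the reason you give (and is implicitly handled the same way in the paper's proof of Corollary~\ref{energy-upperbound}).
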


\begin{proof}
Applying Corollary \ref{Corshiftedballs2} with $r=3\ell$ and $p^\pm:=(0,0,\pm t)$ and $t:=h-2\rho-4\ell$, we infer that $\mathcal{E}_\lambda\big(Q,\mathfrak{C}^\ell_{\ell}(p^\pm)\big)\leq C_3$ since $\mathfrak{C}^\ell_{\ell}(p^\pm)\subset \mathfrak{C}^h_{\ell,\rho}\cap B_r(p^\pm)$ with $C_3=C_3(\ell,\rho,\lambda)$. By Fubini's theorem, we can find a level $\bar t\in (h-2\rho-4\ell, h-2\rho-3\ell)$ such that 
$E_\lambda\big(Q(\cdot,\pm\bar t), \bb D_\ell\big)\leq C_3/\ell$. We shall now construct a competitor following an argument from the proof of Corollary \ref{energy-upperbound}. 
First, we consider the domains $\Omega^\pm_h:=(\mathfrak{C}^h_{\ell,\rho}\setminus \mathfrak{C}^{\bar t}_{\ell})\cap  \{\pm x_3>0\}$. We define a map $\widetilde Q$ on 
$\partial \Omega^\pm_h$ by setting $\widetilde Q=Q$ on $\partial \Omega^\pm_h\cap\{\pm x_3=\bar t\,\}$, and $\widetilde Q=Q_{\rm b}$ on $\partial \Omega^\pm_h\cap\{\pm x_3>\bar t\,\}$. Then we extend $\widetilde Q$ to the interior of  $\Omega^\pm_h$ by $0$-homogeneity from the point $q^\pm:=(0,0,\pm(h-\rho-2\ell))$. As in the proof of Corollary \ref{energy-upperbound} (see \eqref{impstepcor55}), we have $\mathcal{E}_\lambda(\widetilde Q,\Omega^\pm_h)\leq C$ for some constant $C$ independent of $h$, thanks to our choice of $\bar t$. Now we extend $\widetilde Q$ to $\mathfrak{C}^h_{\ell,\rho}$ setting $\widetilde Q=Q$ in $\mathfrak{C}^h_{\ell,\rho}\setminus \Omega^\pm_h$. In this this way, $\widetilde Q\in \mathcal{A}_{Q_{\rm b}}^{\rm sym}(\mathfrak{C}^h_{\ell,\rho})$ is a competitor to test the minimality of $Q$ which leads to $\mathcal{E}_\lambda(Q,\Omega^+_h\cup\Omega^-_h)\leq \mathcal{E}_\lambda(\widetilde Q,\Omega^+_h\cup\Omega^-_h)\leq 2C$. Since $\mathfrak{C}^h_{\ell,\rho}\setminus\mathfrak{C}^{h-\rho}_{\ell}\subset \Omega^+_h\cup\Omega^-_h$, the conclusion follows. 
\end{proof}

The next result will be useful to turn the local boundedness in Corollaries \ref{energy-upperbound} \& \ref{Corshiftedballs2} into a local compactness property up to ``the lateral boundary''. 
The arguments here are suitable  modifications of \cite[Theorem~5.1 and~5.2]{DMP2}, taking advantage of the translation invariance of the Dirichlet boundary data.  
 Before stating the result, let us define precisely the notion of local minimality we shall use in the sequel. 

\begin{definition}
Let $\mathfrak{C}^h_\ell$ be a cylinder with $\ell<\infty$.  We call {\it lateral boundary} of the cylinder $\mathfrak{C}^h_\ell$, the set 
\begin{equation}\label{deflatbound}
\partial^{\rm lat} \mathfrak{C}^h_\ell:=\partial \mathfrak{C}^h_\ell\cap\{|x_3|<h\}=\partial\mathbb{D}_\ell\times(-h,h)\,.
\end{equation}
 An equivariant map $Q\in W^{1,2}_{\rm loc}(\mathfrak{C}^h_\ell;\bb S^4)$ is said to be an {\it equivariant local minimizer of $\mathcal{E}_{\lambda}$ in $\mathfrak{C}^h_\ell$ up to the lateral boundary} if  for every $\eta\in(0,h)$, $Q\in W^{1,2}_{\rm sym}(\mathfrak{C}^\eta_\ell;\bb S^4)$ and $\mathcal{E}_{\lambda}(Q,\mathfrak{C}^\eta_\ell)\leq \mathcal{E}_{\lambda}(\widetilde Q,\mathfrak{C}^\eta_\ell)$ for every  $\widetilde{Q}\in W^{1,2}_{\rm sym}(\mathfrak{C}^\eta_\ell;\bb S^4)$ satisfying $\widetilde{Q}=Q$ on $\partial \mathfrak{C}^\eta_\ell$. 
\end{definition}

\begin{lemma}
\label{verticalcompactness}
Let $\mathfrak{C}^h_\ell$ be a bounded cylinder and  $Q_{\rm b}$ its homeotropic boundary data given by~\eqref{eq:radial-anchoring}. Let 
 $\lambda_j\to \lambda$ and $\{Q_j\} \subset W^{1,2}_{\rm sym}(\mathfrak{C}^h_\ell;\bb S^4)$  a sequence such that each $Q_j$ is  an equivariant local minimizer of $\mathcal{E}_{\lambda_j}$ in $\mathfrak{C}^h_\ell$ up to the lateral boundary and $Q_j=Q_{\rm b}$ on $\partial^{\rm lat} \mathfrak{C}^h_\ell$. If $\sup_j \mathcal{E}_{\lambda_j}(Q_j,\mathfrak{C}^h_\ell)<\infty$, then there exists a (not relabeled) subsequence such that $Q_j \to Q_*$ strongly in $W^{1,2}( \mathfrak{C}^\eta_\ell)$ for every $\eta\in(0,h)$, where  $Q_* \in W^{1,2}_{\rm sym}(\mathfrak{C}^h_\ell;\bb S^4)$ is an equivariant local minimizer of $\mathcal{E}_\lambda$ up  to the lateral boundary satisfying $Q_*=Q_{\rm b}$ on $\partial^{\rm lat} \mathfrak{C}^h_\ell$. 
\end{lemma}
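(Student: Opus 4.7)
The strategy closely follows the compactness arguments in \cite[Theorems~5.1 and~5.2]{DMP2}, with the twist that here the prescribed Dirichlet datum lives only on the lateral boundary $\partial^{\rm lat}\mathfrak{C}^h_\ell$, while on the ``free'' top and bottom discs $\mathbb{D}_\ell\times\{\pm h\}$ competitors are required to match $Q_j$ (or $Q_*$) at each scale $\eta<h$ selected inside. I split the argument into three parts: identification of the weak limit, local minimality of $Q_*$, and strong $W^{1,2}$-convergence on every $\mathfrak{C}^\eta_\ell$ with $\eta<h$.

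First, the uniform energy bound together with weak compactness in $W^{1,2}$ gives (along a subsequence) $Q_j\rightharpoonup Q_*$ in $W^{1,2}(\mathfrak{C}^h_\ell)$ and, by the compact Sobolev embedding $W^{1,2}\hookrightarrow L^4$ in three dimensions, $Q_j\to Q_*$ strongly in $L^4(\mathfrak{C}^h_\ell)$ and a.e.\ Pointwise convergence preserves $|Q_*|=1$ a.e.\ and the $\mathbb{S}^1$-equivariance, so $Q_*\in W^{1,2}_{\rm sym}(\mathfrak{C}^h_\ell;\bbS^4)$. Continuity of the trace operator on the Lipschitz set $\mathfrak{C}^h_\ell$ under weak $W^{1,2}$-convergence together with $Q_j=Q_{\rm b}$ on $\partial^{\rm lat}\mathfrak{C}^h_\ell$ yields $Q_*=Q_{\rm b}$ on $\partial^{\rm lat}\mathfrak{C}^h_\ell$. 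Also, since $Q_j\to Q_*$ in $L^4$ we have $\int_{\mathfrak{C}^h_\ell}\lambda_j W(Q_j)\to\int_{\mathfrak{C}^h_\ell}\lambda W(Q_*)$.

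Second, I prove that $Q_*$ is an equivariant local minimizer up to the lateral boundary. Fix $\eta\in(0,h)$ and a competitor $\widetilde Q\in W^{1,2}_{\rm sym}(\mathfrak{C}^\eta_\ell;\bbS^4)$ with $\widetilde Q=Q_*$ on $\partial\mathfrak{C}^\eta_\ell$. By Fubini, for a.e.\ $\eta'\in(\eta,h)$ one has
\begin{equation*}
\sup_j\int_{\mathfrak{C}^h_\ell\cap\{|x_3|=\eta'\}}|\nabla Q_j|^2\,d\mathcal{H}^2<\infty,\qquad Q_j\to Q_*\text{ in }W^{1,2}(\mathfrak{C}^h_\ell\cap\{|x_3|=\eta'\})
\end{equation*}
along a further subsequence, so in particular $Q_j\to Q_*$ uniformly on those slices by Lemma~\ref{lemma:s1eq-emb} (after restricting to the horizontal disc). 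Fix such an $\eta'$. Extend $\widetilde Q$ to $\mathfrak{C}^{\eta'}_\ell$ in $W^{1,2}_{\rm sym}$ by a homogeneous interpolation with $Q_*$ on the thin annular shell $\mathfrak{C}^{\eta'}_\ell\setminus\mathfrak{C}^\eta_\ell$, followed by a nearest-point projection onto $\bbS^4$ (feasible because the interpolation stays uniformly close to $\bbS^4$). Then modify further to glue to $Q_j$ at $\partial\mathfrak{C}^{\eta'}_\ell$ using a second interpolation, again projected on $\bbS^4$, well-defined for $j$ large because of the uniform trace convergence above. Call the resulting map $\widetilde Q_j\in W^{1,2}_{\rm sym}(\mathfrak{C}^{\eta'}_\ell;\bbS^4)$ with $\widetilde Q_j=Q_j$ on $\partial\mathfrak{C}^{\eta'}_\ell$. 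The local minimality of $Q_j$ gives $\mathcal{E}_{\lambda_j}(Q_j,\mathfrak{C}^{\eta'}_\ell)\leq\mathcal{E}_{\lambda_j}(\widetilde Q_j,\mathfrak{C}^{\eta'}_\ell)$; letting $j\to\infty$ (using weak lower semicontinuity on the left and strong convergence on the right, in the gluing shell) and then shrinking $\eta'\downarrow\eta$ yields $\mathcal{E}_\lambda(Q_*,\mathfrak{C}^\eta_\ell)\leq\mathcal{E}_\lambda(\widetilde Q,\mathfrak{C}^\eta_\ell)$, so $Q_*$ is a local minimizer.

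Third, and this is the main technical step, I upgrade weak to strong $W^{1,2}$-convergence on each $\mathfrak{C}^\eta_\ell$. I reverse the role of $Q_j$ and $Q_*$: starting from $Q_*$, which is now known to be admissible, I build for a.e.\ $\eta'\in(\eta,h)$ (selected via Fubini as above) a competitor $\widehat Q_j\in W^{1,2}_{\rm sym}(\mathfrak{C}^{\eta'}_\ell;\bbS^4)$ that coincides with $Q_j$ on $\partial\mathfrak{C}^{\eta'}_\ell$, equals $Q_*$ on $\mathfrak{C}^\eta_\ell$, and interpolates between $Q_*$ and $Q_j$ in the shell $\mathfrak{C}^{\eta'}_\ell\setminus\mathfrak{C}^\eta_\ell$ via convex combination followed by projection onto $\bbS^4$ (well-defined for $j$ large by the slice convergence). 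By minimality of $Q_j$ I get
\begin{equation*}
\mathcal{E}_{\lambda_j}(Q_j,\mathfrak{C}^{\eta'}_\ell)\leq\mathcal{E}_{\lambda_j}(\widehat Q_j,\mathfrak{C}^{\eta'}_\ell)=\mathcal{E}_{\lambda_j}(Q_*,\mathfrak{C}^\eta_\ell)+\mathcal{E}_{\lambda_j}(\widehat Q_j,\mathfrak{C}^{\eta'}_\ell\setminus\mathfrak{C}^\eta_\ell).
\end{equation*}
In the right-hand side shell the interpolation energy is controlled by the $W^{1,2}$-norm of $Q_j-Q_*$ there, which is bounded via the slice-trace convergence and a standard estimate on radial interpolations. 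Passing to the limit $j\to\infty$ and then $\eta'\downarrow\eta$, the shell contribution vanishes and we obtain $\limsup_j\mathcal{E}_{\lambda_j}(Q_j,\mathfrak{C}^\eta_\ell)\leq\mathcal{E}_\lambda(Q_*,\mathfrak{C}^\eta_\ell)$. Combined with weak lower semicontinuity this forces $\int_{\mathfrak{C}^\eta_\ell}|\nabla Q_j|^2\to\int_{\mathfrak{C}^\eta_\ell}|\nabla Q_*|^2$, hence strong $W^{1,2}$-convergence on $\mathfrak{C}^\eta_\ell$.

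The main obstacle is the second and third steps: the standard competitor gluing in \cite[Theorems~5.1--5.2]{DMP2} uses that the full boundary data is fixed, whereas here the top and bottom discs are ``free''. Overcoming this requires the careful Fubini selection of the intermediate height $\eta'$ to ensure uniform smallness of $|Q_j-Q_*|$ on horizontal slices, so that the interpolation followed by radial projection onto $\bbS^4$ is well defined and has uniformly small energy in the shell.
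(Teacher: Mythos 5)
There is a genuine gap at the point where you claim, by a Fubini selection alone, that $Q_j\to Q_*$ \emph{strongly} in $W^{1,2}$ on a.e.\ horizontal slice, and hence uniformly on the slice. Weak convergence of $Q_j$ in $W^{1,2}(\mathfrak{C}^h_\ell)$ plus Fubini/Fatou only gives, for a.e.\ height, bounded slice energies and $L^2$ slice convergence, i.e.\ \emph{weak} $W^{1,2}$ convergence of the slice traces. Lemma~\ref{lemma:s1eq-emb}(ii) then yields uniform convergence only on $\overline{\bbD}_\ell\setminus\{0\}$; it says nothing at the axis, where $Q_j$ and $Q_*$ take values in $\{\pm\eo\}$ and the value may flip in the weak limit (this is precisely the failure of weak closedness of $\mathcal{A}_{\rm N}$, $\mathcal{A}_{\rm S}$, and the reason the axis-trace operator of Corollary~\ref{verticaltrace} is only \emph{strongly} continuous). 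If $Q_j$ and $Q_*$ are antipodal near the axis on the gluing slices, your convex combination followed by projection onto $\bbS^4$ is not well defined there and its energy is not controlled, so both your minimality step and your reversed-roles step break down. The missing ingredient is exactly what the paper's proof inserts first: the interior compactness theorem for sequences of local minimizers \cite[Theorem~5.1]{DMP2}, applied in balls centered on the $x_3$-axis, gives strong $W^{1,2}_{\rm loc}$ convergence near the axis; combining this (via Fubini) with the weak slice convergence, Lemma~\ref{lemma:s1eq-emb}(ii)--(iii) then upgrades to uniform convergence on the whole closed slice, including the value at the axis. Without this step your construction is not justified.

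A secondary, more repairable issue is in your third step: after letting $j\to\infty$ with a shell of fixed thickness $\eta'-\eta$, the term $\limsup_j\int_{\mathfrak{C}^{\eta'}_\ell\setminus\mathfrak{C}^{\eta}_\ell}\abs{\nabla Q_j}^2\,dx$ enters the estimate (your interpolant's gradient is controlled by $\abs{\nabla Q_*}+\abs{\nabla Q_j}+\abs{Q_j-Q_*}/(\eta'-\eta)$, not by $\|Q_j-Q_*\|_{W^{1,2}}$ alone), and this does not vanish as $\eta'\downarrow\eta$ without an extra argument, since the $Q_j$-energies may concentrate on the slice $\{|x_3|=\eta\}$ (one can fix this by choosing $\eta$ so the defect measure does not charge that slice, or better, as in the paper, by interpolating only between the 2D slice traces over a layer of thickness $\sigma_j\to 0$, which costs $O(\sigma_j)$ and is estimated purely in terms of the slice energies, using that $Q_{\rm b}$ is $x_3$-independent on the lateral boundary). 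Apart from these points, your overall scheme (weak limit identification, comparison maps glued at selected heights, and energy convergence by taking $\widetilde Q=Q_*$) is the same as the paper's, which carries out both the minimality and the strong convergence with a single competitor construction.
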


\begin{proof}  
By the uniform energy bound, the sequence $\{Q_j\}$ is bounded in $ W^{1,2}(\mathfrak{C}^h_\ell)$. Hence, we can find a (not relabeled) subsequence such that 
 $Q_j \rightharpoonup Q_{*}$ weakly in $W^{1,2}(\mathfrak{C}^h_\ell)$, strongly in $L^2(\mathfrak{C}^h_\ell)$, and also a.e. in $\mathfrak{C}^h_\ell$, for some $Q_* \in W^{1,2}_{\rm sym}(\mathfrak{C}^h_\ell;\bb S^4)$. By $W^{1,2}$-weak continuity  and locality of the trace operator,  $Q_*=Q_{\rm b}$ on $\partial^{\rm lat} \mathfrak{C}^h_\ell$.  In addition, \cite[Theorem 5.1]{DMP2} implies that  $Q_j \to Q_{*}$ strongly in $W^{1,2}_{\rm loc}(B_r(p))$ for every $p\in  \mathfrak{C}^h_\ell\cap\{x_3\text{-axis}\}$ and $r > 0$  such that $B_r(p)\subset \mathfrak{C}^h_\ell$. 
As a consequence, given an arbitrary $\delta>0$ with $2\delta<\min\{h,\ell\}$, we have $Q_j \to Q_{*}$ strongly in $W^{1,2}$ in the set $\bb D_{\delta/2}\times\{h-\delta<|x_3|<h-\delta/2\}$. By a standard application of Fubini's theorem and  Fatou's lemma, extracting a further subsequence if necessary, we can find 
 $\eta \in (h-\delta, h-\delta/2)$ such that the restrictions $\widehat{Q}^\pm_j$ and $\widehat{Q}^\pm_*$ of $Q_j$ and $Q_*$ to $\mathfrak{C}^h_\ell \cap \{x_3=\pm \eta\}$ satisfy 
 $\widehat{Q}^\pm_j \rightharpoonup \widehat{Q}_*^\pm$ weakly in $W^{1,2}(\bb D_\ell)$ and strongly in $W^{1,2}(\bb D_{\delta/3})$. By  Lemma~\ref{lemma:s1eq-emb}, we conclude that $\widehat{Q}^\pm_j, \widehat{Q}_*^\pm \in C^0(\overline{\bb D}_\ell)$ and  $\widehat{Q}^\pm_j \to \widehat{Q}_*^\pm$ uniformly in $\overline{\bb D}_\ell$.

Let us now fix an arbitrary $\widetilde{Q}\in W^{1,2}_{\rm sym}(\mathfrak{C}^{h-\delta}_\ell;\bb S^4)$ satisfying $\widetilde{Q}=Q_*$ on $\partial \mathfrak{C}^{h-\delta}_\ell$. We extend  $\widetilde{Q}$ to $\mathfrak{C}^{\eta}_\ell$ setting $\widetilde{Q}=Q_*$ in   $\mathfrak{C}^{\eta}_\ell\setminus \mathfrak{C}^{h-\delta}_\ell$, 
and we set $\sigma_j:=\| \widehat{Q}^+_j-\widehat{Q}_*^+\|_\infty+ \| \widehat{Q}^-_j-\widehat{Q}_*^-\|_\infty+2^{-j} \to 0$ as $j \to \infty$.  For $j$ large enough we have  $\sigma_j<1$, and we define  $v_j \in W^{1,2}_{\rm sym}(\mathfrak{C}^\eta_\ell;\mathcal{S}_0)$ as
\begin{equation}
\label{eq:def-vmaps} 
v_j(x^\prime,x_3):=
\begin{cases}
\displaystyle  \frac{x_3- (1-\sigma_j )\eta}{\sigma_j \eta}\big(\widehat{Q}^+_j(x^\prime) -\widehat{Q}^+_*(x^\prime)\big)+\widehat{Q}^+_*(x^\prime) 
&
  \text{if }  (1-\sigma_j)\eta \leq x_3 \leq \eta \, ,
\\[8pt] 	
\widetilde{Q}\big(x^\prime, x_3/(1-\sigma_j)\big)   & \text{if } |x_3|< (1-\sigma_j)\eta \,,
\\[8pt]
\displaystyle  \frac{-x_3+ (\sigma_j-1 )\eta}{\sigma_j \eta}\big(\widehat{Q}^-_j(x^\prime) -\widehat{Q}^-_*(x^\prime)\big)+\widehat{Q}^-_*(x^\prime) 
& \text{if }   -\eta \leq x_3 \leq -(1-\sigma_j)\eta\, .
\end{cases}
\end{equation}
Since the restriction of $Q_{\rm b}$ to $\partial^{\rm lat} \mathfrak{C}^h_\ell$ is independent of $x_3$, we have $v_j=Q_{\rm b}$ on $\partial^{\rm lat} \mathfrak{C}^\eta_\ell$. Hence $v_j=Q_j$ on $\partial \mathfrak{C}^\eta_\ell$. A simple calculation yields
\begin{equation}\label{mardvaccpa1}
 \int_{\mathfrak{C}^{(1-\sigma_j)\eta}_\ell} |\nabla v_j|^2 \, dx \leq  \frac{1}{1-\sigma_j} \int_{\mathfrak{C}^\eta_\ell} |\nabla \widetilde{Q}|^2 \, dx
 \end{equation}
and 
\begin{multline}\label{mardvaccpa2}
 \int_{\mathfrak{C}^\eta_\ell\setminus\mathfrak{C}^{(1-\sigma_j)\eta}_\ell} |\nabla v_j|^2 \, dx\leq  C \sigma_j   \int_{\bb D_\ell} |\nabla \widehat{Q}^+_j|^2+|\nabla \widehat{Q}^-_j|^2+ |\nabla \widehat{Q}^+_*|^2+|\nabla \widehat{Q}^-_*|^2 \, dx^\prime\\
 +\frac{C}{ \sigma_j}\big(\| \widehat{Q}^+_j-\widehat{Q}_*^+\|^2_\infty+ \| \widehat{Q}^-_j-\widehat{Q}_*^-\|^2_\infty \big) \leq C\sigma_j\,,
\end{multline}
for some constant $C=C(\delta,\eta)$ independent $j$. 
By construction, we have $|v_j|=1$ in $\mathfrak{C}^{(1-\sigma_j)\eta}_\ell$, and $0\leq 1-|v_j|\leq \sigma_j$ in $\mathfrak{C}^{\eta}_\ell\setminus \mathfrak{C}^{(1-\sigma_j)\eta}_\ell$. 
Therefore, $|v_j|\geq 1/2$ for $j$ large enough, and we can define the competitor
$$\widetilde{Q}_j:=\frac{v_j}{|v_j|}\in W^{1,2}_{\rm sym}(\mathfrak{C}^{\eta}_\ell;\bb S^4)$$ 
which satisfies $\widetilde{Q}_j= Q_j$ on $\partial \mathfrak{C}^{\eta}_\ell$. Since $\widetilde Q_j=v_j$ in $\mathfrak{C}^{(1-\sigma_j)\eta}_\ell$ and $|v_j|\geq 1/2$, we infer from \eqref{mardvaccpa1} and \eqref{mardvaccpa2} that 
\begin{equation}\label{mardvaccpa3} 
 \int_{\mathfrak{C}^\eta_\ell} |\nabla \widetilde{Q}_j|^2 \, dx  \leq \frac{1}{1-\sigma_j} \int_{\mathfrak{C}^\eta_\ell} |\nabla \widetilde{Q}|^2 \, dx+ C \sigma_j \, .
 \end{equation}
On the other hand, $Q_j \to Q_*$ and $\widetilde{Q}_j \to \widetilde{Q}$ a.e. in $\mathfrak{C}^\eta_\ell$. Then, 
\begin{equation}\label{mardvaccpa4} 
\lim_{j\to\infty} \lambda_j\int_{\mathfrak{C}^\eta_\ell} W( Q_j)\,dx= \lambda\int_{\mathfrak{C}^\eta_\ell} W( Q_*)\,dx\quad\text{and}\quad \lim_{j\to\infty} \lambda_j\int_{\mathfrak{C}^\eta_\ell} W(\widetilde Q_j)\,dx= \lambda\int_{\mathfrak{C}^\eta_\ell} W(\widetilde Q)\,dx
\end{equation}
by dominated convergence. By minimality of $Q_j$,  \eqref{mardvaccpa3}-\eqref{mardvaccpa4}, and weak lower semicontinuity of the Dirichlet integral, we finally deduce that  
\begin{equation}\label{mardvaccpa5} 
 \mathcal{E}_\lambda(Q_*,\mathfrak{C}^\eta_\ell) \leq \liminf_{j \to \infty} \mathcal{E}_{\lambda_j}(Q_j,\mathfrak{C}^\eta_\ell) \leq \limsup_{j \to \infty} \mathcal{E}_{\lambda_j}(Q_j, \mathfrak{C}^\eta_\ell) \leq  \limsup_{j \to \infty} \mathcal{E}_{\lambda_j}(\widetilde{Q}_j,\mathfrak{C}^\eta_\ell)=\mathcal{E}_\lambda(\widetilde{Q},\mathfrak{C}^\eta_\ell) \, . 
 \end{equation}
Since $\widetilde Q=Q_*$ in  $\mathfrak{C}^{\eta}_\ell\setminus \mathfrak{C}^{h-\delta}_\ell$, it follows that $ \mathcal{E}_\lambda(Q_*,\mathfrak{C}^{h-\delta}_\ell) \leq \mathcal{E}_\lambda(\widetilde{Q},\mathfrak{C}^{h-\delta}_\ell)$ proving the minimality of $Q_*$ in $\mathfrak{C}^{h-\delta}_\ell$. Moreover, choosing $\widetilde Q=Q_*$ leads to  $\lim_j\mathcal{E}_{\lambda_j}(Q_j,\mathfrak{C}^\eta_\ell)=  \mathcal{E}_\lambda(Q_*,\mathfrak{C}^\eta_\ell)$ which, in view of \eqref{mardvaccpa4}, implies that $Q_j\to Q_*$ strongly in   $W^{1,2}(\mathfrak{C}^\eta_\ell)$ (and thus strongly in  $W^{1,2}(\mathfrak{C}^{h-\delta}_\ell)$). The conclusion now follows from the arbitrariness of $\delta$. 
\end{proof}


\subsection{Rigidity in infinite cylinders and proof of Theorem \ref{thm:vertical-cylinders}}

The following rigidity result will be the key ingredient to deduce qualitative properties for minimizers of $\mathcal{E}_\lambda$ on expanding cylinders $\mathfrak{C}^h_{\ell,\rho}$ as $h \to +\infty$. To this purpose, we will heavily use results from Section~\ref{2Dminimization}, to which the reader is referred also for some of the notation employed here.  We only recall from  Theorem~\ref{2d-biaxial-escape}  that in the case $\lambda\ell^2<\lambda_0$, the functional $E_{\lambda}$ admits a unique minimizer $\widehat Q_\ell$ over  $\mathcal{A}^{\rm sym}_{\overline{H}}(\bb D_\ell)$, and it satisfies $\widehat Q_\ell(0)=-{\bf e}_0$.

\begin{proposition}
\label{vertical-liouville}
Let $\ell>0$ be such that $\lambda\ell^2<\lambda_0$ with $\lambda_0$ the constant given by Theorem~\ref{2d-biaxial-escape}. Assume that  
$Q \in W^{1,2}_{\rm loc}(\mathfrak{C}^\infty_\ell;\bb S^4)$ is an equivariant local minimizer of $\mathcal{E}_\lambda$ in $\mathfrak{C}^\infty_\ell$ up to the lateral boundary satisfying $Q=Q_{\rm b}$ on $\partial^{\rm lat}\mathfrak{C}^\infty_\ell$, where $Q_{\rm b}$ denotes the homeotropic boundary data given by~\eqref{eq:radial-anchoring}. 
If $\mathcal{E}_\lambda(Q,\mathfrak{C}_\ell^h)=O(h)$ as $h \to \infty$, then $Q(x)\equiv \widehat Q_\ell(x^\prime)$  
where $\widehat Q_\ell$ denotes the unique minimizer of the 2D-functional $E_{\lambda}$ over  $\mathcal{A}^{\rm sym}_{\overline{H}}(\bb D_\ell)$. In particular, $Q$ is smooth, independent of $x_3$, and $Q= -\eo$ on the $x_3$-axis. 
\end{proposition}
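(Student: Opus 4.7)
The plan is to show that $\partial_{x_3} Q \equiv 0$, so that $Q$ reduces to a two-dimensional profile on $\bb D_\ell$; the uniqueness of the 2D minimizer in $\mathcal{A}^{\rm sym}_{\overline H}(\bb D_\ell)$ granted by Theorem~\ref{2d-biaxial-escape}(ii) (valid since $\lambda \ell^2 < \lambda_0$) will then force $Q(x) = \widehat Q_\ell(x')$. The starting point is the vertical conservation law. Applying Corollary~\ref{global-identities-vert}(iii) on bounded sub-cylinders of $\mathfrak{C}_\ell^\infty$ (whose derivation still goes through because $Q_{\rm b}$ is $x_3$-invariant on the lateral boundary), the quantity
\[
\Phi(t) := E_\lambda\bigl(Q(\,\cdot\,, t), \bb D_\ell\bigr) - \frac{1}{2}\int_{\bb D_\ell \times \{t\}} \Bigl|\frac{\partial Q}{\partial x_3}\Bigr|^2 dx'
\]
is constant in $t$ away from the at most countable set of vertical coordinates of interior singularities, with constant value denoted by $c$. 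Integrating over $[-h,h]$ and combining with the Fubini decomposition yields
\[
\mathcal{E}_\lambda(Q, \mathfrak{C}_\ell^h) = 2hc + \int_{\mathfrak{C}_\ell^h} \Bigl|\frac{\partial Q}{\partial x_3}\Bigr|^2 dx,
\]
while Fubini together with the slicewise 2D lower bound $E_\lambda(Q(\,\cdot\,, t), \bb D_\ell) \geq \mathfrak{e}_{\lambda \ell^2}$ gives $\mathcal{E}_\lambda(Q, \mathfrak{C}_\ell^h) \geq 2h \mathfrak{e}_{\lambda \ell^2}$.

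Next I would produce a matching upper bound via a competitor $\widetilde Q_h$ on each $\mathfrak{C}_\ell^h$ agreeing with $Q$ on $\partial \mathfrak{C}_\ell^h$, equal to $\widehat Q_\ell(x')$ in the interior $\{|x_3| \leq h - \delta\}$, and interpolating linearly (then renormalized to $\bbS^4$) between $\widehat Q_\ell(x')$ and $Q(x', \pm h)$ in the thin slabs $\{h-\delta < |x_3| < h\}$. The renormalization is harmless since both endpoints are unit-norm equivariant maps with the same value $-\eo$ at the origin (the latter being true for $h$ large enough thanks to the translation argument below), and Lemma~\ref{lemma:s1eq-emb} ensures continuity up to the boundary. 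A direct computation bounds the interpolation cost by $C\bigl(\delta + \varepsilon(h)^2/\delta\bigr)$ with $\varepsilon(h) := \|Q(\,\cdot\,, \pm h) - \widehat Q_\ell\|_{W^{1,2}(\bb D_\ell)}$; taking $\delta = 1$ and using the trivial bound $\varepsilon(h) \leq C$ yields, by minimality of $Q$, the uniform estimate $\mathcal{E}_\lambda(Q, \mathfrak{C}_\ell^h) \leq 2h \mathfrak{e}_{\lambda \ell^2} + C$. Inserting this into the energy identity and letting $h \to \infty$ forces $c = \mathfrak{e}_{\lambda \ell^2}$, and hence the crucial $L^2$-bound $\int_{\mathfrak{C}_\ell^\infty} |\partial_{x_3} Q|^2 dx < \infty$.

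This bound is exploited via the translation invariance of the data. For any $t_n \to +\infty$, the shifted maps $Q^n(x) := Q(x + t_n e_3)$ are equivariant local minimizers with $\int_{\mathfrak{C}_\ell^h} |\partial_{x_3} Q^n|^2 dx$ equal to a tail of the $L^2$-integrable function $|\partial_{x_3} Q|^2$, hence vanishing as $n \to \infty$. Lemma~\ref{verticalcompactness} provides a subsequential strong $W^{1,2}_{\rm loc}$-limit $Q^\infty$, itself an equivariant local minimizer, satisfying $\partial_{x_3} Q^\infty \equiv 0$; thus $Q^\infty(x) = \widehat Q(x')$ for some $\widehat Q \in \mathcal{A}^{\rm sym}_{\overline H}(\bb D_\ell)$, and a slab-comparison argument on $\mathfrak{C}_\ell^\eta$ with $\eta \to \infty$ identifies $\widehat Q$ as a 2D minimizer, hence $\widehat Q = \widehat Q_\ell$ by uniqueness. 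Since every subsequential limit equals $\widehat Q_\ell$, the whole family $Q(\,\cdot\, + t e_3)$ converges to $\widehat Q_\ell$ in $W^{1,2}_{\rm loc}$ as $t \to \pm\infty$; combined with Theorem~\ref{bdrregthm} and the $\varepsilon$-regularity of Section~\ref{secregth} (applicable since, as $|t_n| \to \infty$, any interior singularities locally escape), this convergence is smooth on fixed sub-cylinders, and in particular $\varepsilon(h) \to 0$ as $|h| \to \infty$.

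The main obstacle is to upgrade $\partial_{x_3} Q \in L^2$ to the pointwise identity $\partial_{x_3} Q \equiv 0$, and this I would achieve by refining the competitor to match the decay rate of $\varepsilon(h)$. Given $\varepsilon > 0$, choose $T = T(\varepsilon) \to \infty$ with $\varepsilon(\pm T) \leq \varepsilon$, and carry out the interpolation over a slab of width $\delta = \varepsilon$ near $\pm T$: the interpolation cost $C(\delta + \varepsilon^2/\delta) = C\varepsilon$ is now small, and minimality yields $\mathcal{E}_\lambda(Q, \mathfrak{C}_\ell^T) - 2T\mathfrak{e}_{\lambda \ell^2} \leq C\varepsilon$. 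The left-hand side equals $\int_{\mathfrak{C}_\ell^T} |\partial_{x_3} Q|^2 dx$, a non-decreasing function of $T$. For any fixed $h^* > 0$, choosing $\varepsilon$ small enough that $T(\varepsilon) > h^*$ yields $\int_{\mathfrak{C}_\ell^{h^*}} |\partial_{x_3} Q|^2 dx \leq C \varepsilon$; sending $\varepsilon \to 0$ forces this to vanish for every $h^* > 0$, hence $\partial_{x_3} Q \equiv 0$ throughout $\mathfrak{C}_\ell^\infty$. The uniqueness of $\widehat Q_\ell$ then concludes $Q(x) = \widehat Q_\ell(x')$, as desired.
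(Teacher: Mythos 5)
Your overall skeleton matches the paper's: establish an $L^2$-bound on $\partial_{x_3}Q$ from a two-sided energy estimate, locate tall slices close to $\widehat Q_\ell$, and then upgrade to $\partial_{x_3}Q\equiv 0$ via refined competitors. There are, however, two genuine gaps in the first half.

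First, the derivation of the $L^2$-bound is incorrect as written. You state the Fubini lower bound in the weakened form $\mathcal{E}_\lambda(Q,\mathfrak{C}_\ell^h)\geq 2h\mathfrak{e}_{\lambda\ell^2}$, dropping the crucial extra term: the Fubini decomposition actually gives
\begin{equation*}
\mathcal{E}_\lambda(Q,\mathfrak{C}_\ell^h)=\int_{-h}^h E_\lambda\big(Q(\,\cdot\,,t),\bb D_\ell\big)\,dt+\frac12\int_{\mathfrak{C}_\ell^h}\Big|\frac{\partial Q}{\partial x_3}\Big|^2\,dx\;\geq\;2h\,\mathfrak{e}_{\lambda\ell^2}+\frac12\int_{\mathfrak{C}_\ell^h}\Big|\frac{\partial Q}{\partial x_3}\Big|^2\,dx.
\end{equation*}
Without that extra term, your two displayed inequalities together with the identity $\mathcal{E}_\lambda(Q,\mathfrak{C}_\ell^h)=2hc+\int_{\mathfrak{C}_\ell^h}|\partial_{x_3}Q|^2$ are perfectly compatible with the scenario $c=\mathfrak{e}_{\lambda\ell^2}-\delta<\mathfrak{e}_{\lambda\ell^2}$ and $\int_{\mathfrak{C}_\ell^h}|\partial_{x_3}Q|^2\sim 2h\delta$; they force $c\leq\mathfrak{e}_{\lambda\ell^2}$ but neither $c=\mathfrak{e}_{\lambda\ell^2}$ nor the finiteness of $\int_{\mathfrak{C}_\ell^\infty}|\partial_{x_3}Q|^2$. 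Once you keep the $\frac12\int|\partial_{x_3}Q|^2$ term, the upper bound $\mathcal{E}_\lambda(Q,\mathfrak{C}_\ell^h)\leq 2h\mathfrak{e}_{\lambda\ell^2}+C$ immediately gives $\int_{\mathfrak{C}_\ell^\infty}|\partial_{x_3}Q|^2\leq 2C$ — at which point the vertical conservation law, and hence the constant $c$, is no longer needed; the paper's proof dispenses with it entirely.

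Second, the a priori upper bound competitor is not well-defined as described. A linear interpolation between $\widehat Q_\ell$ and $Q(\,\cdot\,,\pm h)$ followed by renormalization onto $\bb S^4$ requires $\|Q(\,\cdot\,,\pm h)-\widehat Q_\ell\|_{L^\infty(\bb D_\ell)}$ to be small enough to keep the interpolant away from the origin (explicitly, $|\alpha u_1+(1-\alpha)u_2|^2=1-\alpha(1-\alpha)|u_1-u_2|^2$, which vanishes when $u_1,u_2$ are antipodal). The trivial $W^{1,2}$-bound $\varepsilon(h)\leq C$ does not provide this, and invoking the translation argument to get $Q(\,\cdot\,,\pm h)(0)=-\eo$ is circular: that argument requires the $L^2$-bound you are trying to prove. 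The paper's proof avoids the issue by using a $0$-homogeneous extension near the tips (as in Corollary~\ref{energy-upperbound}), which needs only a bound on the slice energy $E_\lambda(Q(\,\cdot\,,\pm h_n),\bb D_\ell)$; such heights $h_n$ exist by Fubini from the hypothesis $\mathcal{E}_\lambda(Q,\mathfrak{C}_\ell^h)=O(h)$. Once you switch to this competitor for the rough bound, and only use the interpolation competitor \emph{after} strong convergence of the slices to $\widehat Q_\ell$ is in hand (as in the paper's step via Lemma~\ref{verticalcompactness}), the circularity disappears.

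Two further remarks. Your translation--compactness route to get the slice convergence is a viable alternative to the paper's more direct use of $\int_{-\infty}^\infty\big(E_\lambda(Q(\,\cdot\,,t),\bb D_\ell)-\mathfrak{e}_{\lambda\ell^2}\big)\,dt<\infty$, but the conclusion $\varepsilon(h)\to 0$ as $|h|\to\infty$ for \emph{all} $h$ is overstated: strong $W^{1,2}_{\rm loc}$-convergence of the translates gives, by Fubini, a sequence $T_n\to\infty$ with $\varepsilon(T_n)\to 0$, and that is what your final step actually uses. Finally, the concluding slab-comparison identifying $\widehat Q$ as a 2D minimizer should again use a $0$-homogeneous (or otherwise norm-safe) matching rather than a bare interpolation, unless you already know $\widehat Q$ is close to $\widehat Q_\ell$.
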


\begin{proof}
We first notice that 
$$\int_{2^n}^{2^{n+1}} \bigg(E_\lambda\big(Q(\, \cdot \,, t),\bb D_\ell\big)+E_\lambda(Q(\, \cdot \,, -t),\bb D_\ell\big)\bigg)\,dt\leq  \mathcal{E}_\lambda\big(Q,\mathfrak{C}^{2^{n+1}}_\ell\big)\leq C 2^n\qquad \forall n\in\mathbb{N}\,.$$
Hence, for each integer $n$, there exists $h_n\in(2^n,2^{n+1})$ such that $Q(\cdot,\pm h_n)\in W^{1,2}(\bb D_\ell)$ with   $E_\lambda\big(Q(\, \cdot \,, \pm h_n),\bb D_\ell\big)=O(1)$ as $n\to \infty$. 

We claim that 
\begin{equation}\label{mercrfoiebad}
\mathcal{E}_\lambda(Q,\mathfrak{C}^{h_n}_\ell)=2h_n \mathfrak{e}_{\lambda \ell^2}+O(1)\quad\text{as $n\to\infty$}\,.
\end{equation}
We argue as in Corollary~\ref{energy-upperbound} to construct competitors, and we set $\Omega_n^\pm:=(\mathfrak{C}^{h_n}_\ell \setminus \overline{\mathfrak{C}^{h_n-1}_\ell } ) \cap \{\pm x_3> 0\}$. We define a map $\widetilde{Q}_n$ in $\mathfrak{C}^{h_n}_\ell\cap\{|x_3|\leq h_n-1\}$ setting $\widetilde{Q}_n(x):=\widehat Q_\ell(x^\prime)$. For $x\in \partial \Omega_n^\pm\cap \{|x_3|>h_n-1\}$, we set $\widetilde{Q}_n(x):=Q(x)$ and we then extend $\widetilde{Q}_n$ inside $\Omega_n^\pm$ by $0$-homogeneity from the point 
 $p_n^\pm=(0,0,\pm (h_n-1/2))$. As in the proof of Corollary ~\ref{energy-upperbound} (see \eqref{impstepcor55}), our choice of $h_n$ ensures that $\mathcal{E}_\lambda(\widetilde{Q}_n,\Omega_n^\pm)=O(1)$ as $n\to\infty$. Since $\mathcal{E}_\lambda(\widetilde{Q}_n,\mathfrak{C}^{h_n-1}_\ell)=2(h_n-1) \mathfrak{e}_{\lambda \ell^2}$, the claim follows. 

In view of  \eqref{mercrfoiebad}, we now have  
\begin{multline}
\label{eq:two-sided-bound-cyl}
2h_n \mathfrak{e}_{\lambda \ell^2}+\int_{-h_n}^{h_n} \Big(E_\lambda\big(Q(\, \cdot \,,x_3),\bb D_\ell\big)-\mathfrak{e}_{\lambda \ell^2}\Big)\, dx_3 +\int_{\mathfrak{C}^{h_n}_\ell}\frac12 \abs{\frac{\partial Q}{\partial x_3}}^2\,dx_3\\
\leq\mathcal{E}_\lambda(Q,\mathfrak{C}^{h_n}_\ell)=2h_n \mathfrak{e}_{\lambda \ell^2}+O(1)\, .
\end{multline}
Recalling that $E_\lambda\big(Q(\, \cdot \,,x_3),\bb D_\ell\big)-\mathfrak{e}_{\lambda \ell^2}\geq 0$, letting $n\to\infty$ in \eqref{eq:two-sided-bound-cyl} yields 
 \[ \int_{-\infty}^{\infty} \Big(E_\lambda(Q\big(\, \cdot \,,x_3),\bb D_\ell\big)-\mathfrak{e}_{\lambda \ell^2}\Big) \, dx_3 +\int_{\mathfrak{C}^{\infty}_\ell}\frac12 \abs{\frac{\partial Q}{\partial x_3}}^2\, dx_3 <\infty \, .\]
As a consequence, there exists $\tilde{h}_n \nearrow +\infty$ such that $E_\lambda\big(Q(\, \cdot \, ,\pm \tilde{h}_n),\bb D_\ell\big) \to \mathfrak{e}_{\lambda \ell^2}$ as $n \to \infty$. Since $\lambda \ell^2 <\lambda_0$, it follows from Theorem~\ref{2d-biaxial-escape} that $Q(\, \cdot \, ,\pm \tilde{h}_n) \to \widehat{Q}_\ell$ strongly in $W^{1,2}(\bb D_\ell)$. 
 Indeed, by weak lower semicontinuity of   $E_\lambda$, any weak limit is a minimizer of $E_\lambda$ over $\mathcal{A}_{\overline{H}}^{\rm sym}(\bb D_\ell)$ so that convergence is in fact $W^{1,2}$-strong. Convergence of the full sequence follows from the uniqueness of the limit.  In addition, Theorem~\ref{2d-biaxial-escape}  also ensures that the smooth map $\widehat Q_\ell$ satisfies $\widehat Q_\ell(0)=-{\bf e}_0$. 
Applying Lemma~\ref{lemma:s1eq-emb}, we also infer that $Q(\, \cdot \, ,\pm \tilde{h}_n) \to \widehat{Q}_\ell$ uniformly on $\overline{\bb D}_\ell$. 

Finally, we construct a further competitor $\widehat Q_n$ testing the minimality of $Q$ following the construction in the proof of  Lemma \ref{verticalcompactness}. We first define a sequence a map $v_n$ as in \eqref{eq:def-vmaps} with $\tilde h_n$ in place of $\eta$, $\widetilde Q$ and $\widehat Q_*^\pm$ replaced by $\widehat Q_\ell$,  and $Q(\cdot,\pm\tilde h_n)$ instead of $\widehat Q_j^\pm$. Then $|v_n|\geq 1/2$ for $n$ large enough which allows us to define   $\widehat Q_n:=v_n/|v_n|$. Then $\widehat Q_n\in W_{\rm sym}^{1,2}(\mathfrak{C}^{\tilde{h}_n}_\ell;\bb S^4)$ satisfies $\widehat Q_n=Q$ on $\partial \mathfrak{C}^{\tilde{h}_n}_\ell$. As in the proof of of  Lemma \ref{verticalcompactness}, the minimality of $Q$ implies that $\mathcal{E}_\lambda(Q,\mathfrak{C}^{\tilde{h}_n}_\ell)\leq \mathcal{E}_\lambda(\widehat Q_n,\mathfrak{C}^{\tilde{h}_n}_\ell)= 2\tilde{h}_n \mathfrak{e}_{\lambda \ell^2}+o(1)$ as $n\to\infty$. 
Combining this upper bound with the lower bound \eqref{eq:two-sided-bound-cyl} with $\tilde h_n$ instead of $h_n$, and letting $n\to\infty$ we conclude that  $\frac{\partial Q}{\partial x_3}\equiv 0$ in $\mathfrak{C}^\infty_\ell$ and $E_\lambda(Q(\, \cdot \, ,x_3),\bb D_\ell)\equiv \mathfrak{e}_{\lambda \ell^2}$. By uniqueness of~$\widehat Q_\ell$,  the conclusion follows. 
\end{proof}

\begin{remark}
\label{heteroclinic}
It is not known whether Proposition \ref{vertical-liouville} still holds for $\lambda \ell^2\geq \lambda_0$, or if there exists a map $Q \in W^{1,2}_{\rm loc}(\mathfrak{C}^\infty_\ell;\bb S^4)$ which is an equivariant local minimizer of $\mathcal{E}_\lambda$ up to the lateral boundary connecting two different minimizers $\widehat{Q}^\pm_\ell $ of $E_\lambda$ over $\mathcal{A}^{\rm sym}_{\overline{H}}(\bb D_\ell)$ as $x_3 \to \pm \infty$. One may expect that such local minimizer do exist for $\lambda \ell^2>\lambda_*$ with $\widehat{Q}_\ell^\pm(x^\prime)=g_{\overline{H}}(\pm x^\prime/\ell)$ and $g_{\overline{H}}$ defined through \eqref{eq:Hbar}. We have not pursued these issues, and these questions remain open.  
\end{remark}
\vskip5pt
We are now in position to prove the main result of this section, that is Theorem \ref{thm:vertical-cylinders}.

\begin{proof}[Proof of Theorem \ref{thm:vertical-cylinders}] 
To prove claim (i), we argue by contradiction assuming  that $Q^{(n)}$ is smooth for some subsequence. Notice that $I_n:=\overline{\Omega}_n\cap \{ x_3\hbox{-axis}\}$ is a closed interval and that $Q_{\rm b}^{(n)}(x)=\eo$ for each $x\in \partial\Omega_n\cap \{ x_3\hbox{-axis}\}=\partial I_n$. 
Hence $Q^{(n)}\equiv \eo$ on $I_n$ by continuity, which implies that $Q^{(n)}(\, \cdot \, /\ell,x_3) \in \mathcal{A}_{\rm N}$ whenever $|x_3|<h-\rho$. 
Combining Proposition~\ref{ANlambdaminimization} with Theorem~\ref{2d-biaxial-escape} yields 
\begin{equation}\label{energefflongcylarg}
\mathcal{E}_\lambda(Q^{(n)},\Omega_n)\geq \int_{-h_n+\rho}^{h_n-\rho} E_\lambda\big(Q^{(n)}(\, \cdot \, ,x_3),\bb D_\ell\big) \, dx_3 \geq 6\pi \cdot 2(h_n-\rho)=2(h_n-\rho) \mathfrak{e}_{\lambda_*} \, . 
\end{equation}
On the other hand, $\mathcal{E}_\lambda(Q^{(n)},\Omega_n) \leq 2h_n \mathfrak{e}_{\lambda \ell^2}+O(1)$ as $n\to \infty$ by Corollary~\ref{energy-upperbound}. Since $\lambda \ell^2<\lambda_*$, we have $\mathfrak{e}_{\lambda \ell^2}<\mathfrak{e}_{\lambda_*}$ by Proposition \ref{energyprofile}. Hence this upper bound contradicts \eqref{energefflongcylarg} for $n$ large enough. 

Therefore $\rmsing(Q^{(n)})\neq \emptyset$ for $n$ large enough. According to \cite[Theorem 1.1]{DMP2}, we then have $\beta_n (\overline{\Omega_n})=[-1,1]$ since this property  holds for the tangent map at any singular point (see also \cite[Remark~7.18]{DMP2}). Finally, since $\Omega_n$ is connected, simply connected, with boundary of class $C^3$, and $Q^{(n)}_{\rm b}$ is the homeotropic boundary data,  assumptions ($HP_1$)-($HP_3$) in \cite{DMP1,DMP2} are satisfied and each $Q^{(n)}$ is a split minimizer in the sense of \cite[Definition~7.11]{DMP2}.
\vskip5pt 

We now prove claim (ii). According to Corollary \ref{energy-upperbound},  $Q^{(n)}$ satisfies the uniform bound \eqref{eq:lc-local-upperbound} whenever $h_n -\rho>r>2\sqrt{2}\ell$. On the other hand, for each $\eta>0$ such that $\mathfrak{C}^\eta_\ell\subset \mathfrak{C}^{h_n}_{\ell,\rho}\cap B_r$, $Q^{(n)}$ is obviously an equivariant local minimizer of $\mathcal{E}_{\lambda_j}$ in $\mathfrak{C}^\eta_\ell$ up to the lateral boundary, so that  Lemma \ref{verticalcompactness} applies. 
By a standard diagonal argument, we infer the existence of a (not relabeled) subsequence such that  $Q^{(n)}\to Q^*$ strongly in $W^{1,2}(\mathfrak{C}^\eta_\ell)$ for every $\eta>0$ as $n \to \infty$, where $Q^*\in W^{1,2}_{\rm loc}(\mathfrak{C}^\infty_\ell;\bbS^4)$ an equivariant local minimizer of $\mathcal{E}_{\lambda}$ in $\mathfrak{C}^\infty_\ell$ up to the lateral boundary agreeing with the homeotropic boundary data \eqref{eq:radial-anchoring} on $\partial\mathfrak{C}^\infty_\ell$. Then, letting $n\to\infty$  in \eqref{eq:lc-local-upperbound}, we deduce that $\mathcal{E}_\lambda(Q^*,\mathfrak{C}^h_\ell)=O(h)$ as $h \to \infty$. Applying Proposition \ref{vertical-liouville}, it follows that $Q^*=\widehat{Q}_\ell$. By uniqueness of the the limit, the full sequence actually converges to $\widehat{Q}_\ell$ as claimed. 

To prove the locally smooth convergence, we rely on the regularity results in Section \ref{secregth}. We fix an arbitrary $\eta>0$, and we aim to prove that $Q^{(n)}$ is bounded  
in  $C^k(\overline{\mathfrak{C}^\eta_\ell})$ for every $k\in\N$, which is clearly enough for our purposes. Let us first fix an arbitrary point $x^*\in\bbD_\ell\times[-\eta,\eta]$. By smoothness of $\widehat{Q}_\ell$,  we can find $\delta>0$ small enough such that $B_\delta(x^*)\subset \mathfrak{C}^\infty_\ell$ and 
$\frac{1}{\delta} \mathcal{E}_\lambda\big(\widehat Q_\ell, B_\delta(x^*)\big)\leq \boldsymbol{\varepsilon}_{\rm in}/16$, where $ \boldsymbol{\varepsilon}_{\rm in}>0$ denotes the universal constant from Proposition~\ref{intepsregprop}. By the strong $W^{1,2}$-convergence of $Q^{(n)}$, we have $\frac{1}{\delta} \mathcal{E}_\lambda\big(Q^{(n)}, B_\delta(x^*)\big)\leq \boldsymbol{\varepsilon}_{\rm in}/8$ for $n$ large enough. By Proposition \ref{intepsregprop}, it implies that $Q^{(n)}$ is bounded in $C^k(B_{\delta/16}(x^*))$ for every $k\in\N$. Next we fix  $x^*\in \partial \bb D_\ell\times[-\eta,\eta]$ and a radius $r_*\in(0,\ell)$. By $\bbS^1$-equviariance, without loss of generality we can assume that $x^*=(x^*_1,0,x^*_3)\in\{x_2=0\}$.  
For $n$ large enough, we have $\Omega_n\cap B_{r_*}(x^*)=\mathfrak{C}^\infty_\ell\cap B_{r_*}(x^*)$ and $\partial\Omega_n\cap B_{r_*}(x^*)=\partial \mathfrak{C}^\infty_\ell\cap B_{r_*}(x^*)$, so that $\partial\Omega_n\cap B_{r_*}(x^*)$ and the restriction of $Q_{\rm b}^{(n)}$ to $\partial\Omega_n\cap B_{r_*}(x^*)$ are independent of $n$. Accordingly, the constants $\bar{\boldsymbol{\varepsilon}}_{\rm bd}>0$ and $\bar{\boldsymbol{\kappa}}>0$ from Proposition \ref{bdrepsregprop}  only depends on $\ell$. Arguing as in the proof of \cite[Proposition 6.9]{DMP2}, the equivariance of $Q^{(n)}$ implies that for   $r\in (0,r_*/4)$ and every ball $B_\rho(\bar x)\subset B_r(x^*)$, 
$$\frac{1}{\rho}\int_{B_\rho(\bar x)\cap \mathfrak{C}^\infty_\ell}|\nabla Q^{(n)}|^2\,dx \leq  \frac{C_*}{\ell} \int_{\mathcal{D}_r(x^*)} |\nabla Q^{(n)}|^2\,d\mathcal{H}^2\,,$$
where $C_*>0$ is a universal constant and $\mathcal{D}_r(x^*):=\mathfrak{C}^\infty_\ell\cap B_{r}(x^*)\cap \{x_2=0\}$. 
By the strong $W^{1,2}$-convergence of $Q^{(n)}$ and equivariance, the restriction of $|\nabla Q^{(n)}|^2$ to the slice 
$\mathcal{D}_{r_*}(x^*)$ is strongly converging in $L^1(\mathcal{D}_{r_*}(x^*))$. By the Vitali-Hahn-Sacks Theorem (see e.g. \cite[Theorem 1.30]{AFP}), we can find $r\in(0,r_*/4)$ such that  $\int_{\mathcal{D}_r(x^*)} |\nabla Q^{(n)}|^2\,d\mathcal{H}^2\leq \ell \bar{\boldsymbol{\varepsilon}}_{\rm bd}/C_*$ for $n$ large enough. Hence, 
$$ \sup_{B_\rho(\bar x)\subset B_r(x^*)}\frac{1}{\rho}\int_{B_\rho(\bar x)\cap \mathfrak{C}^\infty_\ell}|\nabla Q^{(n)}|^2\,dx \leq  \bar{\boldsymbol{\varepsilon}}_{\rm bd}\,,$$
and we infer from Proposition  \ref{bdrepsregprop} that  $Q^{(n)}$ is bounded in $C^k(B_{\bar{\boldsymbol{\kappa}} r/2}(x^*)\cap\overline{\mathfrak{C}^\infty_\ell})$ for every $k\in\N$. In view of the arbtrariness of $x^*$ (either in the interior or at the boundary), by a standard covering argument we finally conclude that $Q^{(n)}$ is bounded in  $C^k(\overline{\mathfrak{C}^\eta_\ell})$ for every $k\in\N$. 
\vskip5pt
 
   It remains to prove claim (iii). Writing  $\Sigma_n:=\rmsing(Q^{(n)})$ to ease the notation, we first observe that the convergence of $Q^{(n)}$ towards $\widehat Q_\ell$ established in claim (ii) implies that $\Sigma_n\cap\{|x_3|<1\}=\emptyset$ and  
$Q^{(n)}=-{\bf e}_0$ on $\{x_3\text{-axis}\}\cap\{|x_3|<1\}$ for $n$ large enough. Since $Q^{(n)}(q^\pm_n)={\bf e}_0$ at $q^\pm_n:=(0,0,\pm h_n)$ and $Q^{(n)}(x)\in\{{\bf e}_0,-{\bf e}_0\}$ for every $x\in (\Omega_n\cap\{x_3\text{-axis}\})\setminus \Sigma_n$, we deduce that both sets 
$\Sigma_n^+:=\Sigma_n\cap \{x_3\geq 0\}$ and $\Sigma_n^-:=\Sigma_n\cap \{x_3< 0\}$ are nonempty (recall that  $\Sigma_n$ is a finite subset of $\Omega \cap \{x_3\text{-axis}\}$). In view of Theorem \ref{intregthm}, the restriction of $Q^{(n)}$ to $(\Omega_n\cap\{x_3\text{-axis})\setminus \Sigma_n$ is constant on each connected component and jumps from ${\bf e}_0$ to $-{\bf e}_0$ at each point of $\Sigma_n$. It easily implies that both $\Sigma_n^+$ and $\Sigma_n^-$ contain an odd number of points. 

Let us now set $t^{\rm min}_n:=\min\big \{|p|: p\in \rmsing(Q^{(n)})\big\}\in (0,h_n)$. We claim that $h_n-t_n\leq \alpha$ for some constant $\alpha> 0$. To prove this claim, we argue by contradiction assuming that for some (not relabeled) subsequence, we have $h_n-t^{\rm min}_n\to + \infty$.  Next we consider a point $p^{\rm min}_n\in\rmsing(Q^{(n)})$ such that $|p^{\rm min}_n|=t^{\rm min}_n$. Notice that claim (ii) implies that $|p^{\rm min}_n|=t^{\rm min}_n\to \infty$, so that the translated domain $\widetilde\Omega_n:=\Omega_n-p^{\rm min}_n$ satisfies $\widetilde\Omega_n\to \mathfrak{C}^\infty_\ell$ as $n\to\infty$. By Corollary \ref{Corshiftedballs2}, for every $r>2\sqrt{2}\ell$ we have $\mathcal{E}_\lambda\big(Q^{(n)},\Omega_n\cap B_r(p^{\rm min}_n)\big)\leq Cr$ for $n$ large enough (so that $r<h_n-\rho-|p^{\rm min}_n|-\ell$), where the constant $C$ is independent of $n$. Considering the translated map $\widetilde Q^{(n)}(x):=Q^{(n)}(x+p^{\rm min}_n)$, we then have  $\mathcal{E}_\lambda\big(\widetilde Q^{(n)},\widetilde\Omega_n\cap B_r\big)\leq Cr$. Arguing as in the proof of claim (ii), we infer from Lemma~\ref{verticalcompactness} and Proposition \ref{vertical-liouville} that $\widetilde Q^{(n)}\to \widehat Q_\ell$ strongly in $W^{1,2}(\mathfrak{C}^\eta_\ell)$ for every $\eta>0$. In particular, $\widetilde Q^{(n)}\to \widehat Q_\ell$ strongly in $W^{1,2}(B_{\ell})$. Since $\widehat Q_\ell$ is smooth, we deduce from Lemma \ref{persissmoothloc} (applied in the ball $B_\ell$) that ${\rm sing}(\widetilde Q^{(n)})\cap B_{\ell/2}=\emptyset$ for $n$ large enough, contradicting the fact that $0\in {\rm sing}(\widetilde Q^{(n)})$. Hence $h_n-t^{\rm min}_n$ remains bounded from above. 

Next we consider $t_n^{\rm max}:=\max\big \{|p|: p\in \rmsing(Q^{(n)})\big\}\in (0,h_n)$, and we claim that $t_n^{\rm max}\leq h_n-\delta$ for some $\delta>0$ independent of $n$. Without loss of generality, we can assume $t_n^{\rm max}$ is achieved at a singular point $p_n^{\rm max}$ belonging to $\{x_3< 0\}$ (the other case being analoguous). To prove the claim, we argue by contradiction assuming that $\tau^2_n:=h_n-t_n^{\rm max}\to 0$ as $n\to \infty$ for some (not relabeled) subsequence. We observe that $\Omega_n\cap B_\rho(q^-_n)=q^-_n+B^+_\rho$ with $B^+_\rho:=B_\rho\cap\{x_3>0\}$, and $Q^{(n)}={\bf e}_0$ on $\partial\Omega_n\cap B_\rho(q^-_n)=q^-_n+B_\rho\cap\{x_3=0\}$.  According to Remark~\ref{specgeomremreg}, we have $\frac{1}{\tau_n}\mathcal{E}_\lambda\big(Q^{(n)},\Omega_n\cap B_{\tau_n}(q^-_n)\big)\leq \frac{8}{\rho}\mathcal{E}_\lambda(Q^{(n)},\Omega_n\cap B_\rho(q^-_n))$. Since $\Omega_n\cap B_\rho(q^-_n)\subset \Omega_n\setminus\mathfrak{C}^{h-\rho}_\ell$,  Corollary \ref{Corshiftedballs3} tells us that $\mathcal{E}_\lambda(Q^{(n)},\Omega_n\cap B_\rho(q^-_n))=O(1)$ as $n\to\infty$. Therefore,  $\mathcal{E}_\lambda\big(Q^{(n)},\Omega_n\cap B_{\tau_n}(q^-_n)\big)= O(\tau_n)$ as $n\to\infty$.  
Then we consider the translated and rescaled map $\widehat Q^{(n)}(x):=Q^{(n)}(\tau_nx+q^-_n)$ which satisfies  $\widehat Q^{(n)}={\bf e}_0$ on $B_\rho\cap\{x_3=0\}$.   Since $\Omega_n\cap B_\rho(q^-_n)\subset \Omega_n\setminus\mathfrak{C}^{h-\rho}_\ell$, we deduce from Corollary~\ref{Corshiftedballs3} that $\mathcal{E}_{\lambda\tau_n^2}(\widehat Q^{(n)},B^+_{1})= \frac{1}{\tau_n}\mathcal{E}_\lambda(Q^{(n)},\Omega_n\cap B_\rho(q^-_n))\leq C$ for a constant $C$ independent of $n$. 
According to \cite[Theorem 5.5]{DMP2}, there exists a (not relabeled) subsequence and $\widehat Q^*\in W^{1,2}_{\rm sym}(B_1^+;\bb S^4)$ such that 
$\widehat Q^{(n)}\to \widehat Q^*$ strongly in $W^{1,2}(B_r^+)$ for every $r\in(0,1)$.   By continuity of the trace operator, we have $\widehat Q^*={\bf e}_0$ on $B_1\cap\{x_3=0\}$. 
Rescaling variables, we realize that $\widehat Q^{(n)}$ is a weak solution of \eqref{MasterEq} in $B_1^+$ with $\lambda\tau_n^2\to 0$ in place of $\lambda$. From the locally strong $W^{1,2}$-convergence of $\widehat Q^{(n)}$, we deduce that $\widehat Q^*$ is a weakly harmonic map into $\bb S^4$ in $B_1^+$. Now we observe that (after rescaling variables), $\widehat Q^{(n)}$ satisfies the interior monotonicity formula from Proposition \ref{intmonoform} in $B_1^+$ with  $\lambda\tau_n^2\to 0$ in place of $\lambda$. Once again, by the established locally strong $W^{1,2}$-convergence, we infer that $\widehat Q^*$ satisfies the same interior monotonicity formula with $\lambda=0$. In view of Remark~\ref{specgeomremreg} (applied to $Q^{(n)}$), the same argument shows that $\widehat Q^*$ satisfies the boundary monotonicity formula \eqref{specifbdmonotform}    with $\lambda=0$ for balls centered on $B_1\cap\{x_3=0\}$. This is then enough to apply the boundary regularity theory from \cite{Scheven} (see also \cite[Section 2]{DMP1}) and conclude that $\widehat Q^*$ is smooth in a neighborhood of  $B_1\cap\{x_3=0\}$. In particular, we can find a radius $\eta\in(0,1)$ such that $\frac{1}{\eta}\mathcal{E}_0(\widehat Q^*, B_\eta^+)\leq \boldsymbol{\varepsilon}^\sharp_{\rm bd}/4$, where $\boldsymbol{\varepsilon}^\sharp_{\rm bd}>0$ is the universal constant provided by   Remark~\ref{specgeomremreg}. By strong $W^{1,2}$-convergence, we then have $\frac{1}{\eta}\mathcal{E}_0(\widehat Q^{(n)}, B_\eta^+)\leq \boldsymbol{\varepsilon}^\sharp_{\rm bd}/2$ for $n$ large enough. According to Remark~\ref{specgeomremreg}, it implies that $\widehat Q^{(n)}$ is smooth in $B_{\boldsymbol{\kappa}^\sharp\eta}\cap\{x_3\geq 0\}$ for $n$ large enough where $\boldsymbol{\kappa}^\sharp>0$ is a further universal constant. On the other hand, by construction $\widehat Q^{(n)}$ is singular at $\bar p_n:=(p_n^{\rm max}-q_n^-)/\tau_n=(0,0,\tau_n)\to 0$ as $n\to\infty$, a contradiction. This proves the upper bound $t_n^{\rm max}\leq h_n-\delta$ for a constant $\delta>0$ that we can choose to be equal to $1/\alpha$, taking the constant $\alpha$ larger if necessary. 

It finally remains to prove that ${\rm Card}\,\Sigma_n=O(1)$ as $n\to\infty$. By  inequality in \eqref{inequtiloublie} (applied with $r=(\rho+\alpha+1)/2$ and $z=h_n-r$), we have $\mathcal{E}_\lambda\big(Q^{(n)}, \mathfrak{C}^{h_n-\rho}_\ell\cap\{|x_3|>h_n-\rho-\alpha-1\}\big)=O(1)$, which in view of  Corollary~\ref{Corshiftedballs3} yields $\mathcal{E}_\lambda\big(Q^{(n)}, \Omega_n\cap\{|x_3|>h_n-\rho-\alpha-1\}\big)=O(1)$ as $n\to \infty$. Hence, there exists a constant $M>0$ independent of $n$ such that   $\mathcal{E}_\lambda\big(Q^{(n)}, B_{1/\alpha}(x)\big)\leq M$ for every $x\in \{x_3\text{-axis}\}\cap\{h_n-\alpha\leq |x_3|\leq h_n-\frac{1}{\alpha}\}$. In turn, applying Lemma~\ref{infdistsing} in such a ball $B_{1/\alpha}(x)$ shows that there exists a constant ${\bf c}={\bf c}(M,\lambda,\alpha)>0$ (independent of $n$ and $x$) such that $|p-p^\prime|\geq {\bf c}$ for every $p,p^\prime\in\Sigma_n\cap B_{1/(2\alpha)}(x)$ with $p\neq p^\prime$. 
Since  $\Sigma_n\subset \{x_3\text{-axis}\}\cap\{h_n-\alpha\leq |x_3|\leq h_n-\frac{1}{\alpha}\}$, we conclude that ${\rm Card}\,\Sigma_n\lesssim \alpha/{\bf c}$. Since this holds for every $n$ large enough, the proof is complete. 
\end{proof}


\subsection{Instability and symmetry breaking in long cylinders}

To conclude this section, we discuss two important consequences of Theorem \ref{thm:vertical-cylinders}. We first present a general result about the instability of singular configurations minimizing $\mathcal{E}_\lambda$ among $\bbS^1$-equivariant maps. This instability is  essentially issued from the instability of singular tangent maps for the Dirichlet energy (see \cite{LiWa}).

\begin{proposition}\label{split-instability}
Let $\Omega\subset\R^3$ be a bounded and axisymmetric open set with boundary of class~$C^3$, and let 
$Q_{\rm b} \in C^{1,1}(\partial \Omega;\mathbb{S}^4)$ be an $\mathbb{S}^1$-equivariant map. 
If $Q_\lambda$ is a minimizer of $\mathcal{E}_\lambda$ in the class $\mathcal{A}^{{\rm sym}}_{Q_{\rm b}}(\Omega)$ such that $\rmsing(Q_\lambda)\not=\emptyset$, then $Q_\lambda$ is an unstable critical point of $\mathcal{E}_\lambda$ in the class $\mathcal{A}_{Q_{\rm b}}(\Omega)$.
More precisely, for every radial function $\eta \in C^\infty_c(B_1 \setminus \{0\})$ satisfying  $\int_{B_1} |\nabla \eta |^2- \frac{2}{|x|^2} \eta^2 \, dx<0$ and  every $p \in {\rm sing}(Q_\lambda)$,  there exists  a small $r>0$ such that for every  $\bar{v}\in \bb S^4 \cap L_2$,  $Q_\lambda$ is unstable
  along the variations   $\Phi^{p,r}(x):=\frac1{\sqrt{r}}\eta\left(\frac{x-p}r \right)\bar{v}$, i.e., $\mathcal{E}_\lambda''(\Phi^{p,r};Q_\lambda)<0$.
\end{proposition}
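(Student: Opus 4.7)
The argument parallels the instability of singular harmonic maps into $\bbS^2$ from \cite{LiWa,HKL}, with the subspace $L_2 \subset \mathcal{S}_0$ playing the role of the normal bundle to the image of the tangent map at a point singularity.

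First, I would record the algebraic orthogonality between $L_2$ and the tangent map. Fix $p \in \rmsing(Q_\lambda)$, and let $R_\alpha \in \bbS^1$ and $Q_* \in \{\pm Q^{(\alpha)}\}$ be as in Theorem~\ref{intregthm}. Projecting the matrix in \eqref{formtangmaps} onto the orthonormal basis \eqref{eq:basis-S0} gives
\begin{equation*}
Q^{(0)}(x) = \frac{x_3}{|x|}\,\eo + \frac{x_1}{|x|}\,\euu + \frac{x_2}{|x|}\,\eud,
\end{equation*}
so $Q^{(0)}$ takes values pointwise in $L_0 \oplus L_1$, with $|\nabla Q^{(0)}|^2 = 2/|\cdot|^2$. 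Since the $\bbS^1$-action preserves the splitting $\mathcal{S}_0 = L_0 \oplus L_1 \oplus L_2$ by Lemma~\ref{lemma:decomposition-S0}, the same holds for $Q_* = \pm R_\alpha Q^{(0)} R_\alpha^\trans$. Any $\bar v \in \bbS^4 \cap L_2$ therefore satisfies $\bar v \cdot Q_*(y) = 0$ for every $y \neq 0$, and by differentiation $\bar v \cdot \nabla Q_* \equiv 0$. Theorem~\ref{intregthm}(ii), applied via a covering by rescaled annuli, yields $\|Q_\lambda(p + r\,\cdot\,) - Q_*\|_{C^2(\overline{B_{R_1}} \setminus B_{R_2})} = O(r^\nu)$ for every fixed $0 < R_2 < R_1$, which translates into the pointwise bounds $|\bar v \cdot Q_\lambda(x)| = O(r^\nu)$ and $|\bar v \cdot \nabla Q_\lambda(x)| = O(r^{\nu-1})$ on $\supp \Phi^{p,r}$.

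Next, I would write the second variation along the projected curve $Q_t := (Q_\lambda + t\Phi^{p,r})/|Q_\lambda + t\Phi^{p,r}|$. Setting $\Psi := \Phi^{p,r} - (\Phi^{p,r} \cdot Q_\lambda)\,Q_\lambda$, a Taylor expansion gives $Q_t = Q_\lambda + t\Psi - \tfrac12 t^2 |\Psi|^2 Q_\lambda + O(t^3)$, and the pointwise bounds above imply $\|\Psi - \Phi^{p,r}\|_{H^1(\Omega)}^2 = O(r^{2\nu})$. Using $Q_\lambda \cdot \nabla Q_\lambda = 0$ a.e., a direct computation yields the classical formula
\begin{equation*}
\mathcal{E}_\lambda''(\Phi^{p,r}; Q_\lambda) = \int_\Omega \bigl(|\nabla \Psi|^2 - |\Psi|^2 |\nabla Q_\lambda|^2\bigr) dx + \lambda \int_\Omega \bigl(D^2 W(Q_\lambda)[\Psi,\Psi] - |\Psi|^2\,Q_\lambda \cdot \nabla W(Q_\lambda)\bigr) dx.
\end{equation*}
The change of variable $y = (x-p)/r$ gives the scale-invariant identity $\int_\Omega |\nabla \Phi^{p,r}|^2 dx = \int_{\R^3} |\nabla \eta|^2 dy$, while the $0$-homogeneity of $Q_*$, combined with $|\nabla Q_*|^2 = 2/|\cdot|^2$ and the $C^2$-approximation, produces $\int_\Omega |\nabla Q_\lambda|^2 |\Phi^{p,r}|^2 dx = 2 \int_{\R^3} \eta^2/|y|^2 dy + O(r^\nu)$. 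Since $W$, $\nabla W$, $D^2 W$ are bounded on $\bbS^4$, the potential contribution is at most $C\lambda \int_\Omega |\Phi^{p,r}|^2 dx = O(r^2)$. Collecting these estimates with the $H^1$-closeness of $\Psi$ to $\Phi^{p,r}$ leads to
\begin{equation*}
\mathcal{E}_\lambda''(\Phi^{p,r}; Q_\lambda) = \int_{\R^3}\Bigl(|\nabla \eta|^2 - \frac{2}{|y|^2}\eta^2\Bigr) dy + O(r^\nu) + O(r^2),
\end{equation*}
which is strictly negative for $r$ small enough by the assumption on $\eta$.

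The main technical obstacle is the error bookkeeping of the last step: although the pointwise bound $|\bar v \cdot \nabla Q_\lambda| = O(r^{\nu-1})$ blows up as $r \to 0$, once integrated against $|\Phi^{p,r}|^2$ or $|\nabla \Phi^{p,r}|^2$ the resulting contributions must be shown to remain of order $O(r^\nu)$, and this is where the precise scaling of $\Phi^{p,r}$ together with the quantitative rate in Theorem~\ref{intregthm}(ii) really enter the argument.
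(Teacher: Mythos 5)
Your proposal is correct and follows essentially the same route as the paper's proof: identify the tangent map $Q_*$ at $p$ via Theorem~\ref{intregthm}, use that it takes values in $L_0\oplus L_1=L_2^\perp$ so that the tangential projection of $\Phi^{p,r}$ is asymptotically the full variation, exploit the scale invariance of the Dirichlet quadratic form and the $O(r^2)$ smallness of the potential terms, and reduce in the limit $r\to0$ to the negative Hardy-type form $\int_{B_1}|\nabla\eta|^2-\frac{2}{|x|^2}\eta^2\,dx<0$. The only difference is presentational: the paper rescales to the unit ball and passes to the limit using locally uniform $C^1$-convergence of $Q_\lambda^{p,r}$ to $Q_*$ away from the origin, whereas you keep the original variables and track explicit $O(r^\nu)$ rates, which amounts to the same estimates.
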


\begin{proof}
 According to Theorem \ref{intregthm}, if  $p\in \rmsing(Q_\lambda)$, then there exist a degree-zero and equivariant homogeneous harmonic map $Q_* \in C^\infty(\R^3\setminus\{ 0\};\bb S^4)$ and $\nu>0$ such that  $Q_*$ is taking values in $L_0 \oplus L_1$, and   
\begin{equation}\label{asymptnearsingthm}
\|Q_\lambda^{p,r}-Q_*\|_{C^2(\overline{B_2}\setminus B_1)}=O(r^\nu)\quad\text{as $r\to 0$}\,,
\end{equation}
where $Q_\lambda^{p,r}(x):=Q_\lambda(p+r x)$. By formula \eqref{formtangmaps}, we have 
$|\nabla Q_*|^2=\frac{2}{|x|^2}$. In turn, \eqref{asymptnearsingthm} implies that $|\nabla Q_\lambda^{p,r}|^2 \to \frac{2}{|x|^2}$ locally uniformly in $\R^3 \setminus \{0\}$.
 
Recall that the second variation of the energy at a general map $\Phi \in C^\infty_c( \Omega; \mathcal{S}_0)$ is defined as  
\[ \mathcal{E}^{''}_\lambda(\Phi;Q_\lambda):=\left[ \frac{d^2}{dt^2} \mathcal{E}_\lambda \left( \frac{Q_\lambda+t\Phi}{|Q_\lambda+t\Phi|} \right) \right]_{t=0}  \, . \]
Using \eqref{MasterEq}, one may proceed as for second variation formula for harmonic maps (see e.g. \cite[Chapter 1]{LiWa2} or \cite[Section~4.3]{DMP1}), to obtain 
\begin{equation}
\label{secondvarElambda}
 \mathcal{E}^{''}_\lambda(\Phi;Q_\lambda)=\int_{\Omega} |\nabla \Phi_T|^2- |\nabla Q_\lambda|^2 |\Phi_T|^2 +\lambda (D^2 W (Q_\lambda) \Phi_T ):\Phi_T \, dx   \, ,
 \end{equation}
where $\Phi_T :=\Phi- (Q_\lambda:\Phi)Q_\lambda$ denotes the tangential component of $\Phi$ along $Q_\lambda$. Choosing $r>0$ small enough in such a way that $B_r(p)\subset \Omega$ and $B_r(p)\cap{\rm sing}(Q_\lambda)=\{p\}$, we have $\Phi^{p,r} \in C^\infty_c( \Omega; L_2)$, and rescaling/translating  $B_r(p)$ to the unit ball $B_1(0)$ yields 
$$  \mathcal{E}^{''}_\lambda(\Phi^{p,r};Q_\lambda)= \int_{B_1} |\nabla \Phi^{0,1}_T|^2- |\nabla Q^{p,r}_\lambda|^2 |\Phi^{0,1}_T|^2+\lambda r^2  (D^2 W (Q^{p,r}_\lambda) \Phi^{0,1}_T):\Phi^{0,1}_T  \, dx
$$
Since $\Phi^{0,1}=\eta \bar v $ and $Q_*$ is taking values in $L_0\oplus L_1=L_2^\perp$, we infer that $\Phi^{0,1}: Q_\lambda^{p,r} \to \Phi^{0,1}: Q_*=0$ in $C^1_{\rm loc}(B_1\setminus\{0\})$
as $r\to 0$. Hence $\Phi^{0,1}_T\to \Phi^{0,1}=\eta \bar v$ in  $C^1_{\rm loc}(B_1\setminus\{0\})$. Since  $D^2W(Q_\lambda^{p,r}) \to D^2W (Q_*)$ in $C^0_{\rm loc}(B_1\setminus\{0\})$ and $\eta$ is compactly supported in $B_1\setminus\{0\}$, we have 
\[ \lim_{r \to 0} \mathcal{E}^{''}_\lambda(\Phi^{p,r};Q_\lambda)= \int_{B_1} |\nabla \left(\bar{v}\eta\right) |^2- \abs{\nabla Q_*}^2 \abs{\bar{v}\eta}^2 \, dy =\mathcal{E}_0''(\eta \bar v;Q_*) = \int_{B_1} |\nabla \eta |^2- \frac{2}{|x|^2} \eta^2 \, dx \, , \]
where we used the fact that $|\bar v|=1$ in the last equality.  
By the sharp Hardy inequality in $\R^3$ and the last equality above, there exist radial functions $\eta \in C^\infty_c (B_1 \setminus\{0\})$ such that $\mathcal{E}_0''(\eta\bar{v};Q_*)<0$ (see e.g.  \cite[proof of Proposition~4.7]{DMP1}). Then, for any such function, the conclusion follows for $r$ small enough.  
  \end{proof}

Combining Theorem~\ref{thm:vertical-cylinders} with Proposition~\ref{split-instability} and  the full regularity of global minimizers (without symmetry constraint) from \cite[Theorem~1.1]{DMP1}, we readily obtain the following consequences for minimizers of $\mathcal{E}_\lambda$ in sufficiently long cylinders.

\begin{corollary}
\label{inst+symmbreaking}
Assume that  $\lambda \ell^2 <\lambda_0$. Let $\Omega_n:=\mathfrak{C}^{h_n}_{\ell,\rho}$ for varying $h_n>\ell$, $h_n \nearrow +\infty$ as in Theorem~\ref{thm:vertical-cylinders},  and  $Q_{\rm b}^{(n)}:=Q_{\rm b}^{h_n}$ the homeotropic boundary data given by~\eqref{eq:radial-anchoring}. 
If $Q^{(n)}_{\rm sym}$ and  ${Q}_{\rm glob}^{(n)}$ are minimizers of $\mathcal{E}_\lambda$ in the respective classes $\mathcal{A}^{\rm sym}_{Q_{\rm b}^{(n)}}(\Omega_n)$ and $\mathcal{A}_{Q_{\rm b}^{(n)}}(\Omega_n)$, 
then the following properties hold  for $n$ large enough : 
\begin{enumerate}
\item[(i)]  $\rmsing (Q_{\rm sym}^{(n)})\neq \emptyset $ and  $Q_{\rm sym}^{(n)}$ is an unstable critical point of $\mathcal{E}_\lambda$ (under some symmetry breaking perturbations);
\vskip5pt

\item[(ii)]  $\rmsing (Q_{\rm glob}^{(n)})= \emptyset $ and ${Q}_{\rm glob}^{(n)}$ is not $\mathbb{S}^1$-equivariant.  In particular,  the orbit  under the $\bb S^1$-action $\{R \cdot Q_{\rm glob}^{(n)} (R^\trans \cdot )\}_{R\in\bb S^1}$ 
provides infinitely many  $\mathcal{E}_\lambda$-minimizers in the  class $\mathcal{A}_{Q_{\rm b}^{(n)}}(\Omega_n)$, and nonuniqueness holds. 
\end{enumerate}
\end{corollary}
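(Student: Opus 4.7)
\textbf{Proof plan for Corollary \ref{inst+symmbreaking}.}

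First, I would dispose of claim (i) directly from the already-proved results. Since $\lambda\ell^2<\lambda_0<\lambda_*$, part (i) of Theorem~\ref{thm:vertical-cylinders} applies and gives $\rmsing(Q_{\rm sym}^{(n)})\neq\emptyset$ for $n$ large enough. Then Proposition~\ref{split-instability} immediately yields an $L_2$-valued, compactly supported variation $\Phi^{p,r}$ localized near any chosen singular point $p\in\rmsing(Q_{\rm sym}^{(n)})$ along which the second variation is strictly negative, so $Q_{\rm sym}^{(n)}$ is unstable as a critical point in the full class $\mathcal{A}_{Q_{\rm b}^{(n)}}(\Omega_n)$. Moreover, these destabilizing variations take values in $L_2$, which is precisely the subspace on which $\mathbb{S}^1$ acts nontrivially (by rotations of degree $2$, cf.~Lemma~\ref{lemma:decomposition-S0}), so the perturbations break the equivariance.

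Next I would prove claim (ii). By \cite[Theorem~1.1]{DMP1}, any global minimizer ${Q}_{\rm glob}^{(n)}$ of $\mathcal{E}_\lambda$ in the unrestricted class $\mathcal{A}_{Q_{\rm b}^{(n)}}(\Omega_n)$ is smooth up to the boundary (the hypotheses---smoothness of $\partial\Omega_n$ and smoothness of the homeotropic data $Q_{\rm b}^{(n)}$---are satisfied), hence $\rmsing({Q}_{\rm glob}^{(n)})=\emptyset$. I would then argue symmetry breaking by contradiction: if ${Q}_{\rm glob}^{(n)}$ were $\mathbb{S}^1$-equivariant, then ${Q}_{\rm glob}^{(n)}\in\mathcal{A}_{Q_{\rm b}^{(n)}}^{\rm sym}(\Omega_n)$, and since
\[ \mathcal{E}_\lambda({Q}_{\rm glob}^{(n)})\leq \mathcal{E}_\lambda(Q)\quad\text{for every }Q\in\mathcal{A}_{Q_{\rm b}^{(n)}}^{\rm sym}(\Omega_n)\subset\mathcal{A}_{Q_{\rm b}^{(n)}}(\Omega_n), \]
${Q}_{\rm glob}^{(n)}$ would be a minimizer in the equivariant class as well. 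But then claim (i) forces $\rmsing({Q}_{\rm glob}^{(n)})\neq\emptyset$, contradicting smoothness. Hence ${Q}_{\rm glob}^{(n)}$ cannot be equivariant.

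Finally, for the nonuniqueness and infinite orbit statement, I would use the $\mathbb{S}^1$-invariance of the whole setup: both $\Omega_n$ and $Q_{\rm b}^{(n)}$ are $\mathbb{S}^1$-invariant, and the functional $\mathcal{E}_\lambda$ is invariant under the action $Q\mapsto R\cdot Q(R^\trans\cdot)$ for $R\in\mathbb{S}^1$. Thus, for each $R\in\mathbb{S}^1$, the rotated map $R\cdot Q_{\rm glob}^{(n)}(R^\trans\cdot)$ still lies in $\mathcal{A}_{Q_{\rm b}^{(n)}}(\Omega_n)$ and has the same (minimal) energy, hence it is again a global minimizer. Since ${Q}_{\rm glob}^{(n)}$ is not equivariant, its stabilizer under this action is a proper closed subgroup of $\mathbb{S}^1$, i.e.\ either trivial or a finite cyclic group, so the orbit is homeomorphic to $\mathbb{S}^1/\mathrm{Stab}$, which is an infinite set, providing uncountably many distinct minimizers.

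I do not foresee substantial obstacles here: the argument is a clean combination of Theorem~\ref{thm:vertical-cylinders}(i), Proposition~\ref{split-instability}, and the full regularity result \cite[Theorem~1.1]{DMP1}, glued together by the $\mathbb{S}^1$-invariance of the problem. The one point requiring some care is verifying that the instability directions of Proposition~\ref{split-instability} genuinely break symmetry; this is immediate from the fact that the tangent map at a singular point takes values in $L_0\oplus L_1$ (by \eqref{formtangmaps}), so variations valued in $L_2$ cannot preserve the $\mathbb{S}^1$-action of degree~$2$ on $L_2$ described in \eqref{eq:deg-action}.
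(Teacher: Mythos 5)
Your proposal is correct and follows exactly the route the paper intends: the corollary is obtained by combining Theorem~\ref{thm:vertical-cylinders}(i) (singularity of symmetric minimizers since $\lambda\ell^2<\lambda_0<\lambda_*$), Proposition~\ref{split-instability} (instability along the non-equivariant $L_2$-valued variations), the full regularity of unconstrained minimizers from \cite[Theorem~1.1]{DMP1}, and the $\mathbb{S}^1$-invariance of the problem to produce the infinite orbit of minimizers. The only cosmetic point is your final remark: the perturbations $\Phi^{p,r}$ break symmetry simply because a radial profile times a fixed $\bar v\in L_2$ is not equivariant (equivariance would require an $e^{2i\phi}$ angular dependence), not because of the $L_0\oplus L_1$ values of the tangent map, which instead enter the second-variation computation in Proposition~\ref{split-instability}.
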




\section{Torus minimizers in large cylinders}\label{sec:fat-cyl}

 We consider in this section the homeotropic boundary condition for large (smoothed) cylinders, the geometry opposite to the  one in the previous section. We shall prove that for sufficiently large cylinders, any equivariant minimizers is smooth and thus of torus type, as claimed in Theorem~\ref{thmlargecyl}. This result is the counterpart of the main result of the previous section in case of long (smoothed) cylinders but the conclusions are in the opposite direction. In the spirit of Section \ref{sectcoexball}, we shall exploit these two extreme cases to show that smooth and singular minimizers actually coexist for intermediate cylinders, i.e., Theorem \ref{thmcoexmovdom},  and deduce that symmetry breaking occurs when minimization is performed without the symmetry constraint (see Corollary~\ref{symbreakintermcyl}).
	
The analysis in case of large cylinders resembles the one in  Section \ref{SectSplit}. It essentially relies on a monotonicity formula, the construction of suitable competitors, local compactness of minimizers, and regularity theory. In the  last subsections, we obtain the coexistence result of Theorem \ref{thmcoexmovdom} applying the 
persistence of smoothness and the persistence of singularities for minimizers developed in Section \ref{sectcoexball}, and symmetry breaking follows by continuity w.r.t. the thickness of the infimum values of the energy functional.


\subsection{A priori energy bounds and local compactness}\label{subsec:hor-compactness}
In this subsection, we establish some preliminary results starting with the following monotonicity formula.

\begin{lemma}\label{monotformpacake}
Let $\mathfrak{C}^h_{\ell,\rho}$ be a smoothed cylinder with $2\rho < h < \ell -\rho$ and $Q_{\rm b}$ its homeotropic boundary data given by~\eqref{eq:radial-anchoring}. If $Q$ is minimizing $\mathcal{E}_\lambda$ over  $\mathcal{A}_{Q_{\rm b}}^{\rm sym}{(\mathfrak{C}^h_{\ell,\rho})}$, then 
			\begin{multline}\label{eq:mon-form-full}
				\frac{1}{r_2}\mathcal{E}_\lambda(Q,\mathfrak{C}^h_{\ell,\rho} \cap B_{r_2}) =  \frac{1}{r_1}\mathcal{E}_\lambda(Q, \mathfrak{C}^h_{\ell,\rho} \cap B_{r_1}) 	+ \int_{\mathfrak{C}^h_{\ell,\rho} \cap (B_{r_2} \setminus B_{r_1}) } \frac{1}{\abs{x}}\abs{\frac{\partial Q}{\partial \abs{x}}}^2 \,dx \\
				+ \int_{r_1}^{r_2} \frac{1}{r^2} \left(\int_{\mathfrak{C}^h_{\ell,\rho} \cap B_r} 2\lambda W(Q) \, dx \right) \,dr + \int_{r_1}^{r_2} \frac{h}{r^2} \left(\int_{\partial \mathfrak{C}^h_{\ell,\rho} \cap B_r} \frac{1}{2}\abs{\frac{\partial Q}{\partial \vec{n}}}^2 \,{\rm d}\mathcal{H}^2 \right) \,dr
			\end{multline}
 for every $h < r_1 < r_2 \leq \ell-\rho$.
\end{lemma}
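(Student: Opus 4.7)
The plan is to derive \eqref{eq:mon-form-full} by applying the Pohozaev-type conservation law of Lemma \ref{conservation-laws} with the radial vector field $V(x) = x$ and the pair of domains $\Omega = \mathfrak{C}^h_{\ell,\rho}$, $\Omega' = B_r$, for $h < r \leq \ell-\rho$, and then recasting the resulting integral identity as an ODE in $r$ that can be integrated between $r_1$ and $r_2$.

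The crucial geometric observation is that the condition $h < r \le \ell-\rho$ places us in the ``pancake'' regime: the sphere $\partial B_r$ stays in the horizontal interior (because $\sqrt{r^2-h^2}<r\le\ell-\rho$), so the intersection $\partial\mathfrak{C}^h_{\ell,\rho}\cap B_r$ lies entirely on the two flat horizontal caps $\{x_3=\pm h\}$ of the cylinder, which meet $\partial B_r$ transversally. On these caps the outer unit normal is the constant $\pm e_3$, so the homeotropic boundary datum $Q_{\rm b}$ is itself constant, which yields the simplifications $W(Q_{\rm b})=0$, $\nabla_{\rm tan}Q_{\rm b}\equiv 0$, and hence $|\nabla Q|^2=|\partial_{x_3}Q|^2=|\partial Q/\partial\vec n|^2$ on $\partial\mathfrak{C}^h_{\ell,\rho}\cap B_r$. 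Moreover $V\cdot\vec n = x_3 = \pm h$ on the caps, while $V\cdot\vec n = r$ on $\mathfrak{C}^h_{\ell,\rho}\cap\partial B_r$.

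With $V(x)=x$ one has $\mathrm{div}\,V=3$ and $\sum_{i,j}(\partial_i Q:\partial_j Q)\partial_j V_i=|\nabla Q|^2$, while $\partial Q/\partial V:\partial Q/\partial\vec n$ equals $h|\partial Q/\partial\vec n|^2$ on the caps and $r|\partial Q/\partial\vec n|^2$ on $\mathfrak{C}^h_{\ell,\rho}\cap\partial B_r$. Plugging these into \eqref{V-identity} and rearranging gives the pointwise (in $r$) identity
\[
r\,\mathcal{E}_\lambda\big(Q,\mathfrak{C}^h_{\ell,\rho}\cap\partial B_r\big) = \mathcal{E}_\lambda\big(Q,\mathfrak{C}^h_{\ell,\rho}\cap B_r\big) + 2\lambda\!\int_{\mathfrak{C}^h_{\ell,\rho}\cap B_r}\!W(Q)\,dx + \frac{h}{2}\!\int_{\partial\mathfrak{C}^h_{\ell,\rho}\cap B_r}\!\Big|\frac{\partial Q}{\partial\vec n}\Big|^2 d\mathcal{H}^2 + r\!\int_{\mathfrak{C}^h_{\ell,\rho}\cap\partial B_r}\!\Big|\frac{\partial Q}{\partial\vec n}\Big|^2 d\mathcal{H}^2 .
\]
Dividing by $r^2$ and recognising the left-hand side as $\frac{d}{dr}\!\left[\frac{1}{r}\mathcal{E}_\lambda(Q,\mathfrak{C}^h_{\ell,\rho}\cap B_r)\right] + \frac{1}{r^2}\mathcal{E}_\lambda(Q,\mathfrak{C}^h_{\ell,\rho}\cap B_r)$ (valid for a.e.\ $r$ by absolute continuity, via the coarea formula applied to $|x|$), we obtain the differential identity
\[
\frac{d}{dr}\!\left[\tfrac{1}{r}\mathcal{E}_\lambda(Q,\mathfrak{C}^h_{\ell,\rho}\cap B_r)\right] = \frac{2\lambda}{r^2}\!\int_{\mathfrak{C}^h_{\ell,\rho}\cap B_r}\!W(Q)\,dx + \frac{h}{r^2}\!\int_{\partial\mathfrak{C}^h_{\ell,\rho}\cap B_r}\!\tfrac{1}{2}\Big|\tfrac{\partial Q}{\partial\vec n}\Big|^2 d\mathcal{H}^2 + \frac{1}{r}\!\int_{\mathfrak{C}^h_{\ell,\rho}\cap\partial B_r}\!\Big|\tfrac{\partial Q}{\partial\vec n}\Big|^2 d\mathcal{H}^2.
\]
Integration from $r_1$ to $r_2$ and a final application of coarea (to rewrite the last radial term as $\int_{\mathfrak{C}^h_{\ell,\rho}\cap(B_{r_2}\setminus B_{r_1})}|x|^{-1}|\partial Q/\partial|x||^2\,dx$) yield \eqref{eq:mon-form-full}.

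The only subtle point, and the one I expect to be the main obstacle to state cleanly, is the handling of $\mathrm{sing}(Q)$: Lemma \ref{conservation-laws} requires $\partial B_r$ to avoid singular points, but by Theorem \ref{bdrregthm} the singular set is a finite subset of the $x_3$-axis, so this exclusion only fails for finitely many $r\in(h,\ell-\rho]$. Since both sides of \eqref{eq:mon-form-full} are absolutely continuous in $r$ (each being a sum of integrals with locally $L^1$ integrands in $r$), the identity extends to all $h<r_1<r_2\le\ell-\rho$ by continuity.
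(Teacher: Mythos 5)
Your proposal is correct and follows essentially the same route as the paper: apply Lemma \ref{conservation-laws} with $V(x)=x$, use that $\partial\mathfrak{C}^h_{\ell,\rho}\cap B_r$ lies on the flat caps where $Q\equiv{\bf e}_0$ (so $W(Q)=0$, $\nabla_{\rm tan}Q=0$, $V\cdot\vec{n}=h$), obtain the same differential identity in $r$, and integrate over $(r_1,r_2)$ using the coarea formula. Your only superfluous step is the continuity argument for exceptional radii: since ${\rm sing}(Q)$ lies on the $x_3$-axis inside the cylinder, where $|x|<h<r$, the sphere $\partial B_r$ avoids ${\rm sing}(Q)$ for every $r\in(h,\ell-\rho]$, which is exactly what the paper observes.
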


\begin{proof}
For $h < r \leq \ell-\rho$, the boundaries $\partial\mathfrak{C}^h_{\ell,\rho}$ and $\partial B_r$ are transversal and $\partial B_r \cap {\rm sing}(Q) = \emptyset${{, h}}ence  Lemma~\ref{conservation-laws} applies. Choosing the vector field $V(x) = x$ in  Lemma~\ref{conservation-laws}, computations analogous  to those leading to \eqref{radial-identity-vert} yield
\begin{multline*}
	\frac{d}{dr} \left\{ \frac{1}{r} \int_{\mathfrak{C}^h_{\ell,\rho} \cap B_r} \left( \frac{1}{2} \abs{\nabla Q}^2 + \lambda W(Q) \right) \,dx \right\} = \\
	\frac{1}{r}\int_{\mathfrak{C}^h_{\ell,\rho} \cap \partial B_r} \abs{\pd{Q}{\vec{n}}}^2 \, d\mathcal{H}^2 + \frac{1}{r^2} \int_{\mathfrak{C}^h_{\ell,\rho} \cap B_r} 2\lambda W(Q)\, dx + \frac{h}{r^2} \int_{\partial \mathfrak{C}^h_{\ell,\rho} \cap B_r} \frac{1}{2} \abs{{\pd{Q}{\vec{n}}}}^2 \,d\mathcal{H}^2\,,
\end{multline*}
since $Q={\bf e}_0$ and $V\cdot \vec{n}=h$ on $\partial \mathfrak{C}^h_{l,\rho} \cap B_r$. 
Integrating now over $r \in (r_1,r_2)$ yields \eqref{eq:mon-form-full}.
\end{proof}

The next result provides a first a priori  estimate for the energy of minimizers in large cylinders.

\begin{lemma}\label{lemma:energy-estimates-large}
Let $\mathfrak{C}^h_{\ell,\rho}$ be a smoothed cylinder and  $Q_{\rm b}$ its homeotropic boundary data given by~\eqref{eq:radial-anchoring}.
If $Q$ is  minimizing  $\mathcal{E}_\lambda$ over $\mathcal{A}_{Q_{\rm b}}^{\rm sym}(\mathfrak{C}^h_{\ell,\rho})$, then $\mathcal{E}_\lambda(Q, \mathfrak{C}^h_{\ell,\rho}) \leq K \ell$ for a constant $K$ independent of $\ell$. 
\end{lemma}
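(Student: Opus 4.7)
First I would construct an equivariant competitor $\widetilde Q \in \mathcal{A}^{\rm sym}_{Q_{\rm b}}(\mathfrak{C}^h_{\ell,\rho})$ with $\mathcal{E}_\lambda(\widetilde Q,\mathfrak{C}^h_{\ell,\rho}) \leq K\ell$, and conclude by the minimality of $Q$. The key geometric observation is that $Q_{\rm b}=\eo$ identically on the top and bottom flat faces (where the outer normal is vertical), so the constant map $\eo$ already matches the boundary data on almost the entire domain; one only needs a genuine transition to $\overline H$ in a shell of uniform width $\rho$ near the straight lateral face, plus a small matching region in the two rounded corners.

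Concretely, I would split $\mathfrak{C}^h_{\ell,\rho}$ into three regions: the interior $\{r\leq\ell-\rho\}$; the straight lateral slab $\{r\in[\ell-\rho,\ell],\,|x_3|\leq h-\rho\}$; and two rotationally invariant corner tori $\{r\in[\ell-\rho,\ell],\,\pm x_3\in[h-\rho,h]\}\cap\mathfrak{C}^h_{\ell,\rho}$, whose half-plane cross-section is $\mathcal{D}=\{(\tilde r,\tilde x_3)>0: \tilde r^4+\tilde x_3^4<\rho^4\}$ with $\tilde r=r-(\ell-\rho)$, $\tilde x_3=|x_3|-(h-\rho)$. Set $\widetilde Q\equiv \eo$ on the interior; this matches $Q_{\rm b}=\eo$ on the top/bottom flat faces contained in this region. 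On the slab, take the $x_3$-independent equivariant transition corresponding in the $\R\oplus\C\oplus\C$-picture to $u(r,\phi)=(\cos\theta(r),0,\sin\theta(r)e^{2i\phi})$ for a smooth $\theta$ with $\theta(\ell-\rho)=0$ and $\theta(\ell)=2\pi/3$, so that $\widetilde Q=\eo$ at $r=\ell-\rho$ and $\widetilde Q=\overline H$ at $r=\ell$, matching the homeotropic data on the straight lateral face. On each corner torus, define $\widetilde Q$ in every $\phi$-slice as the two-dimensional $0$-homogeneous extension in $\mathcal{D}$, from the center $(\rho/2,\rho/2)$, of the equivariant boundary values: $\eo$ on $\{\tilde r=0\}$, the trace of the slab transition on $\{\tilde x_3=0\}$, and $Q_{\rm b}$ on the arc $\{\tilde r^4+\tilde x_3^4=\rho^4\}$. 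These three pieces agree continuously at the vertices of $\mathcal{D}$, and the whole construction is manifestly $\bbS^1$-equivariant.

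The energy bookkeeping is then direct. The interior contributes $0$. Using \eqref{eq:equiv-nabla-22D} on the slab gives Dirichlet
\[ \leq 4\pi(h-\rho)\int_{\ell-\rho}^\ell\bigl((\theta'(r))^2 r + 4\sin^2\theta(r)/r\bigr)\,dr \leq Ch\ell/\rho \]
and potential $\leq\lambda\cdot\mathrm{vol}\leq C\lambda h\ell\rho$. In each corner torus, $\mathcal{D}$ has both diameter and inradius of order $\rho$ and is star-shaped with respect to $(\rho/2,\rho/2)$ (immediate for the $L^4$-quarter-disc), so the standard $0$-homogeneous estimate yields $|\nabla\widetilde Q|\lesssim 1/\rho$ pointwise in the 2D slice. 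Integrating with the cylindrical Jacobian $r\,dr\,d\phi\,dx_3$, and using that the angular term $(1/r^2)|\partial_\phi\widetilde Q|^2\lesssim 1/\ell^2$ is negligible, one gets Dirichlet $\leq C\ell/\rho^2$ and potential $\leq C\lambda\ell\rho^2$ per torus. Summing the three regions produces $\mathcal{E}_\lambda(\widetilde Q,\mathfrak{C}^h_{\ell,\rho})\leq K\ell$ with $K=K(h,\rho,\lambda)$ independent of $\ell$, and minimality of $Q$ concludes. The only nonroutine step is the star-shapedness and pointwise gradient bound for the 2D $0$-homogeneous extension on $\mathcal{D}$; both follow from elementary geometry on the $L^4$-quarter-disc, which has diameter and inradius comparable to $\rho$.
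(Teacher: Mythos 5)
Your overall strategy — build an equivariant competitor that is $\eo$ in the bulk and makes the transition to the homeotropic data only in an $\ell$-independent-width shell near the lateral boundary, then invoke minimality — is exactly the paper's, and your explicit slab profile $\theta(r)$ with $\theta(\ell-\rho)=0$, $\theta(\ell)=2\pi/3$ together with the bookkeeping via \eqref{eq:equiv-nabla-22D} is correct. The gap is in the corner regions, i.e. precisely in the step you flag as the only nonroutine one. A two-dimensional $0$-homogeneous extension (radial projection from $(\rho/2,\rho/2)$ onto $\partial\mathcal{D}$) of a \emph{nonconstant} Lipschitz boundary map does not satisfy $|\nabla\widetilde Q|\lesssim 1/\rho$: its gradient blows up like the reciprocal of the distance to the center, and in two dimensions its Dirichlet integral diverges logarithmically ($\int_0^{c\rho} s^{-2}\,s\,ds=+\infty$). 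Since throughout the corner torus the cylindrical Jacobian satisfies $r\geq \ell-\rho$, the three-dimensional Dirichlet energy of your competitor in each corner torus is bounded \emph{below} by a multiple of $\ell$ times this divergent slice integral, i.e. it is infinite; the claimed bound $C\ell/\rho^2$ is false and the competitor is not even admissible in $W^{1,2}$. ($0$-homogeneous extensions from a point are harmless only in dimension $\geq 3$, which is how the paper uses them elsewhere, e.g. in the proof of Corollary \ref{energy-upperbound}; slice-wise in 2D they are not.)

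The repair is simple and is what the paper does: instead of a homogeneous extension, take \emph{any} Lipschitz extension $\Psi$ of the boundary values into the cross-section, with values in $\bbS^4$. After translating by $\ell-2\rho$ in the $x_1$-direction, the cross-section of the outer shell $\{r>\ell-\rho\}$ and the boundary values on it (your $\eo$ on $\{\tilde r=0\}$, the slab trace on $\{\tilde x_3=0\}$, and $Q_{\rm b}$ on the curved part) are independent of $\ell$; the boundary map is a Lipschitz loop in the simply connected target $\bbS^4$, hence admits a Lipschitz extension with constant depending only on $h,\rho$. Rotating this single slice equivariantly and integrating with Jacobian $r\leq\ell$ over a cross-section of fixed area then gives Dirichlet and potential contributions $\leq C(h,\rho,\lambda)\,\ell$, and the rest of your argument (interior, slab, minimality) goes through unchanged. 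Note also that the paper handles the entire outer shell, straight part and corners together, by one such Lipschitz extension, so your explicit slab profile is a nice but optional refinement.
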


\begin{proof}
We prove the announced energy estimate by constructing a suitable competitor $\widetilde Q$. To this purpose, we introduce the subdomains 	$\Omega'_\ell :=\mathfrak{C}^h_{\ell,\rho} \cap \{ \abs{x'} < \ell-\rho \}$ and  $\Omega''_\ell := \mathfrak{C}^h_{\ell,\rho} \cap \{\abs{x'} > \ell-\rho\}$. Noticing that $Q_{\rm b} = \eo$ on $\partial \mathfrak{C}^h_{\ell,\rho} \cap \{|x_3| = h\}$, we set $\widetilde Q(x)={\bf e}_0$ for $x\in {{\overline{\Omega'_\ell}}}$. To define $\widetilde Q$ in ${{\overline{\Omega''_\ell}}}$, we consider the vertical slice  $\mathcal{D}^+_{\Omega''_\ell}$ defined in  \eqref{vertsectsymdom}. Notice {{that $\ell > 2\rho$, so}} that $\mathcal{D}^+_{\Omega''_\ell}$ is a translate of $\mathcal{D}^+_{\Omega''_{2\rho}}$. Moreover, the shape of $\mathcal{D}^+_{\Omega''_\ell}$ is independent of $\ell$. We set  $\widetilde Q(x)=Q_{\rm b}(x)$ for $x\in {{\partial \Omega''_\ell}}\cap \partial \mathfrak{C}^h_{\ell,\rho}$  and{{, since}} $\widetilde Q(x)={\bf e}_0$ for $x\in {{\partial \Omega''_\ell}}\cap\{|x^\prime|=\ell-\rho\}$, we have $\widetilde Q\in  \rmLip({{\partial \Omega''_\ell}}; \bbS^4)$ and the translated map $\psi: (x_1,0,x_3)\in \partial \mathcal{D}^+_{\Omega''_{2\rho}}\mapsto\widetilde Q(x_1-\ell+2\rho,0,x_3)\in \bbS^4$ is independent of $\ell$. Since $\bbS^4$ is simply connected, $\psi$ admits an extension $\Psi\in {\rm Lip} ({{\overline{ \mathcal{D}^+_{\Omega''_\ell}}}}; \bbS^4)$. Then we set $\widetilde Q(x_1,0,x_3)=\Psi(x_1-\ell+2\rho,0,x_3)$ for $(x_1,0,x_3)\in  \mathcal{D}^+_{\Omega''_\ell}$.  
Finally, we extend $\widetilde{Q}$ to $\Omega_\ell''$ by $\bbS^1$-equivariance, that is {{setting}} $\widetilde Q(Rx)=R\widetilde Q(x)R^\trans$ for every $R \in \mathbb{S}^1$ and $x\in  \mathcal{D}^+_{\Omega''_\ell}$. By construction $\widetilde Q\in {\rm Lip}(\mathfrak{C}^h_{\ell,\rho};\bbS^4)$ with a Lipschitz norm independent of $\ell$, $\widetilde Q$ is $\bbS^1$-equivariant, and $\widetilde Q=Q_{\rm b}$ on $\partial \mathfrak{C}^h_{\ell,\rho}$. Hence, as in \eqref{Eq:equiv-energy-E0}, we have
$$\mathcal{E}_\lambda(\widetilde{Q},\mathfrak{C}^h_{\ell,\rho})=\mathcal{E}_\lambda(\widetilde{Q},\Omega''_\ell) =\pi \int_{\mathcal{D}_{\Omega_\ell''}} \bigg(|\nabla  \Psi|^2  + \frac{|\Psi_1|^2 + 4|\Psi_2|^2}{x_1^2}+{{2}}\lambda W(\Psi) \bigg)\,x_1\,dx_1dx_3 \leq K\ell \,,$$
for some $K{{=K(h,\rho)}}>0$ independent of $\ell$. By minimality of $Q$, we have $\mathcal{E}_\lambda(Q, \mathfrak{C}^h_{\ell,\rho}) \leq \mathcal{E}_\lambda(\widetilde{Q},\mathfrak{C}^h_{\ell,\rho})$ and the conclusion follows. 
\end{proof}

\begin{definition}
\label{hor-minimnizers}
Let $\mathfrak{C}^h_\ell$ be a cylinder with $h<\infty$.  We call {\it top/bottom boundary} of the cylinder~$\mathfrak{C}^h_\ell$, the set 
$$\partial^{=} \mathfrak{C}^h_\ell:=\partial \mathfrak{C}^h_\ell\cap\{|x_3|=h\}=\mathbb{D}_\ell\times\{-h,h\}\,.$$  
 An equivariant map $Q\in W^{1,2}_{\rm loc}(\mathfrak{C}^h_\ell;\bb S^4)$ is said to be an {{\it equivariant local minimizer of $\mathcal{E}_{\lambda}$ in $\mathfrak{C}^h_\ell$ up to the top/bottom boundary}} if  for every $\eta\in(0,\ell)$, $Q\in W^{1,2}_{\rm sym}(\mathfrak{C}^h_\eta;\bb S^4)$ and $\mathcal{E}_{\lambda}(Q,\mathfrak{C}^h_\eta)\leq \mathcal{E}_{\lambda}(\widetilde Q,\mathfrak{C}^h_\eta)$ for every  $\widetilde{Q}\in W^{1,2}_{\rm sym}(\mathfrak{C}^h_\eta;\bb S^4)$ satisfying $\widetilde{Q}=Q$ on $\partial \mathfrak{C}^h_\eta$. 
\end{definition}

\begin{remark}\label{remmonotformlocminpanc}
According to Remark \ref{specgeomremreg}, the regularity theory from Section \ref{secregth} applies to an equivariant local minimizer $Q$ of $\mathcal{E}_{\lambda}$ in $\mathfrak{C}^h_\ell$  up to the top/bottom boundary satisfying $Q={\bf e}_0$ on~$\partial^{=} \mathfrak{C}^h_\ell$. It shows that $Q$ is smooth in the interior of $ \mathfrak{C}^h_\ell$ 
and up to $\partial^{=} \mathfrak{C}^h_\ell$ away from finitely many points located on $\{x_3\text{-axis}\}\cap\mathfrak{C}^h_\ell$. As a consequence, the computations from the proof of  Lemma~\ref{conservation-laws} can be performed, and as in Lemma \ref{monotformpacake}, we infer that identity \eqref{eq:mon-form-full} holds for $h<r_1<r_2<\ell$ and $\mathfrak{C}^h_\ell$ instead of $\mathfrak{C}^h_{\ell,\rho}$. 
\end{remark}

The following compactness lemma will be repeatedly used in the sequel.

\begin{lemma}\label{lemma:local-hor-cpt}
Let $\mathfrak{C}^h_\ell$ be a bounded cylinder, and let  $\{Q_j\} \subset W^{1,2}_{\rm sym}(\mathfrak{C}^h_\ell;\bb S^4)$  be a sequence such that each $Q_j$ is  an equivariant local minimizer of $\mathcal{E}_{\lambda}$ in $\mathfrak{C}^h_\ell$ up to the top/bottom boundary and $Q_j={\bf e}_0$ on $\partial^{\rm =} \mathfrak{C}^h_\ell$. If $\sup_j \mathcal{E}_{\lambda}(Q_j,\mathfrak{C}^h_\ell)<\infty$, then there exists a (not relabeled) subsequence such that $Q_j \to Q_*$ strongly in $W^{1,2}( \mathfrak{C}^h_\eta)$ for every $\eta\in(0,\ell)$, where  $Q_* \in W^{1,2}_{\rm sym}(\mathfrak{C}^h_\ell;\bb S^4)$ is an equivariant local minimizer of $\mathcal{E}_\lambda$ up  to the top/bottom boundary satisfying $Q_*={\bf e}_0$ on $\partial^{\rm =} \mathfrak{C}^h_\ell$. 
\end{lemma}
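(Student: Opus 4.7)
The strategy is to adapt the proof of Lemma~\ref{verticalcompactness}, interchanging the roles of the axial and radial directions: the constant top/bottom condition $\eo$ imposed here plays the same role as the $x_3$-independent (rotationally invariant) lateral condition there. From the uniform energy bound, I first extract a (not relabeled) subsequence with $Q_j\rightharpoonup Q_*$ weakly in $W^{1,2}(\mathfrak{C}^h_\ell)$, strongly in $L^2$, and a.e.\ in $\mathfrak{C}^h_\ell$; pointwise convergence preserves equivariance and the $\bbS^4$-constraint, and weak continuity of the trace gives $Q_*=\eo$ on $\partial^{=}\mathfrak{C}^h_\ell$. Applying \cite[Theorem~5.1]{DMP2} to balls centered on the axis and contained in $\mathfrak{C}^h_\ell$ then upgrades weak to strong $W^{1,2}_{\rm loc}$-convergence inside $\mathfrak{C}^h_\ell$.

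Next, fix $\eta\in(0,\ell)$ and a small $\delta>0$ with $\eta+2\delta<\ell$. The interior strong convergence on the compact set $\{\eta+\delta\leq |x'|\leq \eta+2\delta\}\cap\{|x_3|\leq h-\delta\}$, combined with Fubini and a further diagonal extraction, produces a radius $\bar\eta\in(\eta+\delta,\eta+2\delta)$ such that the traces $\widehat Q_j,\widehat Q_*$ of $Q_j,Q_*$ on the cylindrical slice $\Sigma:=\partial^{\rm lat}\mathfrak{C}^h_{\bar\eta}$ satisfy $\widehat Q_j\rightharpoonup \widehat Q_*$ weakly in $W^{1,2}(\Sigma)$ and strongly in $W^{1,2}\bigl(\Sigma\cap\{|x_3|<h-\delta/2\}\bigr)$. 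By $\bbS^1$-equivariance and Lemma~\ref{lemma:dec-equiv}, each trace is encoded at $r=\bar\eta$ by three one-dimensional functions of $x_3\in(-h,h)$ that take prescribed $\eo$-values at $x_3=\pm h$ (since $Q_j=Q_*=\eo$ on the top/bottom). A one-dimensional Sobolev argument in the spirit of the proof of Lemma~\ref{lemma:s1eq-emb}, exploiting these shared boundary values at $x_3=\pm h$, then upgrades the strong $W^{1,2}$-convergence on the sub-slice to uniform convergence $\widehat Q_j\to \widehat Q_*$ on the full closed slice $\overline\Sigma$.

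Given an arbitrary test map $\widetilde Q\in W^{1,2}_{\rm sym}(\mathfrak{C}^h_\eta;\bbS^4)$ with $\widetilde Q=Q_*$ on $\partial\mathfrak{C}^h_\eta$, I extend it to $\mathfrak{C}^h_{\bar\eta}$ by $Q_*$ on the shell $\mathfrak{C}^h_{\bar\eta}\setminus\mathfrak{C}^h_\eta$. Setting $\sigma_j:=\|\widehat Q_j-\widehat Q_*\|_{L^\infty(\Sigma)}+2^{-j}\to 0$, I define a comparison map $v_j$ on $\mathfrak{C}^h_{\bar\eta}$ as the planar radial dilation $v_j(x',x_3):=\widetilde Q\bigl(x'/(1-\sigma_j),x_3\bigr)$ in the bulk $\{|x'|<(1-\sigma_j)\bar\eta\}$, and as the linear radial interpolation between $\widehat Q_*\bigl(\bar\eta\,x'/|x'|,x_3\bigr)$ at $|x'|=(1-\sigma_j)\bar\eta$ and $\widehat Q_j\bigl(\bar\eta\,x'/|x'|,x_3\bigr)$ at $|x'|=\bar\eta$ in the thin annular layer $\{(1-\sigma_j)\bar\eta\leq |x'|\leq \bar\eta\}$. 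The resulting $v_j$ is continuous and equivariant, it agrees with $Q_j$ on $\partial^{\rm lat}\mathfrak{C}^h_{\bar\eta}$, and, crucially, it remains equal to $\eo$ on $\partial^{=}\mathfrak{C}^h_{\bar\eta}$ because all three $f_k$-components under consideration take the common $\eo$-values at $x_3=\pm h$. The normalization $\widetilde Q_j:=v_j/|v_j|$ is legitimate for $j$ large since $|v_j|\geq 1/2$ by $\sigma_j\to 0$, yielding an admissible competitor for $Q_j$ on $\mathfrak{C}^h_{\bar\eta}$. The planar dilation has Jacobian $(1-\sigma_j)^2$ that exactly balances the $(1-\sigma_j)^{-2}$ factor in $|\nabla_{x'}|^2$, giving $\int_{|x'|<(1-\sigma_j)\bar\eta}|\nabla v_j|^2\,dx\leq \int_{\mathfrak{C}^h_{\bar\eta}}|\nabla\widetilde Q|^2\,dx$; the annular contribution is $O(\sigma_j)$ by uniform trace convergence and the $W^{1,2}(\Sigma)$-bound on $\widehat Q_j,\widehat Q_*$.

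Combining the minimality of $Q_j$, weak lower semicontinuity of the Dirichlet integral, and dominated convergence for $W$ then yields, exactly as in the concluding step of the proof of Lemma~\ref{verticalcompactness}, the chain $\mathcal{E}_\lambda(Q_*,\mathfrak{C}^h_{\bar\eta})\leq \liminf_j \mathcal{E}_\lambda(Q_j,\mathfrak{C}^h_{\bar\eta})\leq \limsup_j \mathcal{E}_\lambda(Q_j,\mathfrak{C}^h_{\bar\eta})\leq \mathcal{E}_\lambda(\widetilde Q,\mathfrak{C}^h_{\bar\eta})$. Specializing to $\widetilde Q=Q_*$ produces energy convergence, which (together with dominated convergence for $W$) promotes weak $W^{1,2}(\mathfrak{C}^h_{\bar\eta})$-convergence to strong convergence, hence strong $W^{1,2}(\mathfrak{C}^h_\eta)$-convergence; specializing instead to an arbitrary $\widetilde Q$ supported in $\mathfrak{C}^h_\eta$ (extended by $Q_*$) establishes the local minimality of $Q_*$ up to the top/bottom boundary. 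The main technical obstacle is step~2: ensuring uniform trace convergence all the way to the corners $x_3=\pm h$, which is what makes the interpolation construction automatically respect the top/bottom Dirichlet condition; this point is handled through the 1D Sobolev embedding using that the nonconstant modes $f_1,f_2$ are forced to vanish at $x_3=\pm h$ by the constancy of the boundary data.
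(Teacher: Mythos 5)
Your overall architecture is the same as the paper's: a Fubini/Fatou selection of a good radius, uniform convergence of the lateral traces there, a competitor built from a planar dilation in the bulk plus a thin radial interpolation layer (which stays equal to $\eo$ on the top/bottom because all traces equal $\eo$ at $x_3=\pm h$), and then minimality plus lower semicontinuity, with $\widetilde Q=Q_*$ giving strong convergence. The trouble is concentrated in your Step 2. First, the strong $W^{1,2}_{\rm loc}$-convergence you import from \cite[Theorem 5.1]{DMP2} is invoked in this paper only on $\bbS^1$-invariant balls centered on the $x_3$-axis and contained in the domain; in the regime where this lemma is actually applied ($\ell\gg h$, so the radius $\eta$ of interest may be much larger than $h$), no axis-centered ball contained in $\mathfrak{C}^h_\ell$ reaches the annular set $\{\eta+\delta\le |x'|\le \eta+2\delta\}$, so the claimed strong convergence of traces on a truncated slice is not justified as stated. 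Second, and independently: even granting strong convergence on the fixed truncated slice, your corner argument does not deliver uniform convergence on the full closed slice. The shared endpoint values combined with the uniform $W^{1,2}$-bound only give, for the 1D profiles, $|f^{(j)}_k(t)-f^{*}_k(t)|\le 2C\sqrt{h-|t|}\le 2C\sqrt{\delta}$ in the corner strips, uniformly in $j$; since $\delta$ is fixed in your construction (and the chosen radius $\bar\eta$ depends on it), you only obtain $\limsup_j\|\widehat Q_j-\widehat Q_*\|_{L^\infty(\overline\Sigma)}\lesssim\sqrt{\delta}$, not $\sigma_j\to 0$, and your competitor estimates then carry an error of order $\sqrt{\delta}$ that is never removed.

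Both issues disappear if you argue as the paper does, and your remaining steps then go through verbatim. The Fatou--Fubini choice of the radius already provides, on the slice $\Gamma_{\bar\eta}:=\partial^{\rm lat}\mathfrak{C}^h_{\bar\eta}$, a $j$-uniform bound on $\int_{\Gamma_{\bar\eta}}|\nabla Q_j|^2+|\nabla Q_*|^2\,d\mathcal{H}^2$ together with $\|Q_j-Q_*\|_{L^2(\Gamma_{\bar\eta})}\to 0$. By $\bbS^1$-equivariance the traces are determined by their restriction to the vertical segment $\gamma_{\bar\eta}=\{(\bar\eta,0,x_3):|x_3|<h\}$, and the corresponding 1D profiles converge weakly in $W^{1,2}((-h,h))$; the compact embedding $W^{1,2}((-h,h))\hookrightarrow C^0([-h,h])$ then yields uniform convergence on the whole closed segment, corners included, with no need for interior strong convergence, strong sub-slice convergence, or the vanishing of the modes $f_1,f_2$ at $x_3=\pm h$. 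The constancy $Q_j=Q_*=\eo$ on the top/bottom is only needed, exactly as you use it, to check that the interpolated competitor still equals $\eo$ on $\partial^{=}\mathfrak{C}^h_{\bar\eta}$ and is therefore admissible.
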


\begin{proof}
	Since  the sequence $\{Q_j\}$ has equibounded $\mathcal{E}_\lambda$-energy, $\{Q_j\}$ is  bounded in $W^{1,2}_{\rm sym}(\mathfrak{C}^h_\ell)$, whence the existence of a (not relabeled) subsequence and $Q_* \in W^{1,2}_{\rm sym}(\mathfrak{C}^h_\ell;\bbS^4)$ such that $Q_j \rightharpoonup Q_*$ weakly in $W^{1,2}(\mathfrak{C}^h_\ell)$. 
	In addition, $Q_*=\eo$ on $\partial^{\rm =} \mathfrak{C}^h_\ell$ by  locality and  weak continuity of the trace operator.
	
We now argue as in Lemma \ref{verticalcompactness}. Fix an arbitrary $r \in (0,\ell)$ and $\delta \in (0,\ell-r)$. Extracting a further subsequence if necessary, by Fatou's lemma and Fubini's theorem there exists $\eta \in (r, r+\delta)$ such that	\[
		\lim_{j \to \infty}\int_{\Gamma_\eta} \abs{Q_j - Q_*}^2\, d\mathcal{H}^2 = 0\quad\text{and}\quad \int_{\Gamma_\eta} |\nabla Q_j|^2 + |\nabla Q_*|^2\, d\mathcal{H}^2 \leq C\,,
	\]
where $\Gamma_\eta=\partial^{\rm lat} \mathfrak{C}^h_{\eta}$  (see \eqref{deflatbound}), and $C > 0$ does not depend on $j$. 
Setting $\gamma_\eta:=\{(\eta, 0,x_3) : |x_3|<h\}$, we observe that $\Gamma_\eta=\bigcup_{R\in\bbS^1} R\cdot\gamma_\eta$. By $\bbS^1$-equivariance, we deduce that the restriction of $Q_j$ to $\gamma_\eta$ is weakly convergent to $Q_*$ in $W^{1,2}(\gamma_\eta)$. 
By the compact embedding 
$W^{1,2}(\gamma_\eta)\hookrightarrow C^0(\overline{\gamma_\eta})$, we infer that 
$Q_j \to Q_*$ uniformly on $\overline{\gamma_\eta}$.  By equivariance again, $Q_j \to Q_*$ uniformly on $\overline{\Gamma}_\eta$.
\vskip3pt
	
We fix an arbitrary $\bar Q\in W^{1,2}_{\rm sym}(\mathfrak{C}^h_r; \bbS^4)$ satisfying  $\bar Q=Q_*$ on $\partial\mathfrak{C}^h_r$.  We extend $\bar Q$ to $\mathfrak{C}^h_\eta$ setting 
$\bar Q=Q_*$ in  $\mathfrak{C}^h_\eta\setminus \mathfrak{C}^h_r$, and we set $\sigma_j := \| {Q}_j - {Q}_* \|_{L^\infty(\Gamma_\eta)} + 2^{-j}\to 0$.
 For $j$ large enough so that $\sigma_j < 1$ and $r< (1-\sigma_j)\eta$, we define 
\[
		v_j (x):= 
\begin{cases}
\displaystyle  \frac{\abs{x'}-(1-\sigma_j)\eta}{\sigma_j\eta} \left({Q}_j\left(\eta\frac{x^\prime}{|x^\prime|},x_3\right) - {Q}_*\left(\eta\frac{x^\prime}{|x^\prime|},x_3\right)\right)+ {Q}_*\left(\eta\frac{x^\prime}{|x^\prime|},x_3\right) & \mbox{if } x\in \mathfrak{C}^h_{\eta} \setminus \mathfrak{C}^h_{(1-\sigma_j)\eta} \,,\\[8pt] 
\displaystyle\bar{Q} \left(\frac{x'}{1-\sigma_j},x_3 \right) & \mbox{if } x\in \mathfrak{C}^h_{(1-\sigma_j)\eta} \,.
\end{cases}
	\]
Then $v_j\in W^{1,2}_{\rm sym}(\mathfrak{C}^h_\eta; \mathcal{S}_0)$, $|v_j|=1$ in  $\mathfrak{C}^h_{(1-\sigma_j)\eta}$, and
 $v_j = Q_j$ on $\partial \mathfrak{C}^h_\eta$ {{(indeed, $v_j = Q_j = \eo$ on $\partial^= \mathfrak{C}^h_\eta$)}}. Since $\sigma_j\to 0$, 
 we have $|v_j| \to 1$ uniformly in $ \mathfrak{C}^h_{\eta} \setminus \mathfrak{C}^h_{(1-\sigma_j)\eta}$, and thus $\||v_j|-1\|_{L^\infty(\mathfrak{C}^h_{\ell})}\to 0$. 
 In addition, $v_j \to \bar{Q}$ a.e. in $\mathfrak{C}^h_\eta$ because $\bar{Q}(\cdot,x_3) \in C^0(\overline{\bbD_\eta})$ for a.e. $x_3$ by Lemma~\ref{lemma:s1eq-emb}. For $j$ large enough we have $|v_j|\geq 1/2$ and we can define the competitor $\widetilde{Q}_j := v_j/|v_j| \in W^{1,2}_{\rm sym}(\mathfrak{C}^h_{\eta};\bbS^4)$ which satisfies  $\widetilde{Q}_j =Q_j$ on $\partial \mathfrak{C}^h_{\eta}$. As in the proof of Lemma \ref{verticalcompactness}, we have
\[
		\int_{\mathfrak{C}^h_\eta} |\nabla \widetilde{Q}_j|^2 \, dx \leq \int_{\mathfrak{C}^h_\eta} |\nabla \bar{Q}|^2 \,dx + C \sigma_j\,, 
	\]
which implies that $\mathcal{E}_\lambda(\widetilde{Q}_j,\mathfrak{C}^h_\eta)\to \mathcal{E}_\lambda(\bar {Q},\mathfrak{C}^h_\eta)$ as $j\to \infty$.  {{In addition, b}}y minimality of $Q_j$ we have 
${{\limsup_j}}\mathcal{E}_\lambda({Q}_j,\mathfrak{C}^h_\eta)\leq {{\limsup_j}} \mathcal{E}_\lambda(\widetilde{Q}_j,\mathfrak{C}^h_\eta)$. Letting $j\to \infty$ yields 
$\mathcal{E}_\lambda({Q}_*,\mathfrak{C}^h_\eta)\leq \liminf_j \mathcal{E}_\lambda({Q}_j,\mathfrak{C}^h_\eta)\leq  \mathcal{E}_\lambda(\bar {Q},\mathfrak{C}^h_\eta)$ by weak lower semicontinuity of $\mathcal{E}_\lambda$. Since $\bar Q=Q_*$ in  $\mathfrak{C}^h_\eta\setminus \mathfrak{C}^h_r$, it implies that $\mathcal{E}_\lambda({Q}_*,\mathfrak{C}^h_r)\leq \mathcal{E}_\lambda(\bar {Q},\mathfrak{C}^h_r)$ proving the minimality of $Q_*$ in $\mathfrak{C}^h_r$. As in the proof of  Lemma \ref{verticalcompactness} again, choosing $\bar Q=Q_*$ implies the strong $W^{1,2}$-convergence of $Q_j$ in  $\mathfrak{C}^h_r$.
\end{proof}


\subsection{Sublinear energy growth and proof of Theorem \ref{thmlargecyl}}\label{subsec:sublinear+torus}

The next result is a  key step in proving Theorem \ref{thmlargecyl}, in particular to control the asymptotic location of the biaxiallity sets. 

\begin{lemma}\label{lemma:lin-sublin}
Let $\ell_j\to+\infty$ be an increasing sequence. For each $j\in \N$, let  $Q_j \in W^{1,2}_{\rm sym}(\mathfrak{C}^h_{\ell_j};\bbS^4)$ be an equivariant local minimizer of $\mathcal{E}_{\lambda}$ in $\mathfrak{C}^h_{\ell_j}$ up to the top/bottom boundary satisfying $Q_j ={\bf e}_0$ on~$\partial^{\rm =} \mathfrak{C}^h_{\ell_j}$. If $\mathcal{E}_\lambda(Q_j, \mathfrak{C}^h_{\ell_j}) = O(\ell_j)$ as $j \to \infty$, then there exists a constant $\bar{\veps} > 0$ independent of $j$ such that the following holds:  for every $\veps \in (0,\bar{\veps})$, there exist $d_\veps > 0$ and $j_\veps \in \N$ such that 
\begin{equation}\label{mard1936main}
\mathcal{E}_\lambda\big(Q_j,B_{\ell_j-d_\veps} \cap \mathfrak{C}^h_{\ell_j}\big) \leq C_* \,\veps (\ell_j-d_\veps)\qquad\forall j\geq j_\veps\,,
\end{equation}
where $C_*$ denotes a constant independent of $j$ and $\veps$. In particular,
\begin{equation}\label{mard1936main2}
 \mathcal{E}_\lambda\big(Q_j,B_{\sigma \ell_j} \cap \mathfrak{C}^h_{\ell_j}\big) = o(\ell_j)\quad\text{as $j\to\infty$}
 \end{equation}
for every $\sigma\in(0,1)$. 
\end{lemma}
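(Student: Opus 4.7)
The plan is to combine the monotonicity formula, careful averaging to locate good spherical radii, and a comparison with an explicit equivariant competitor. I will begin by normalizing the energy and setting $\theta_j(r) := \frac{1}{r}\mathcal{E}_\lambda(Q_j, B_r \cap \mathfrak{C}^h_{\ell_j})$ for $r \in (h,\ell_j)$. By Remark~\ref{remmonotformlocminpanc}, each $\theta_j$ is nondecreasing on $(h,\ell_j)$, and the standing hypothesis $\mathcal{E}_\lambda(Q_j,\mathfrak{C}^h_{\ell_j}) = O(\ell_j)$ yields a uniform bound $\sup_j \sup_{r \in (h,\ell_j)} \theta_j(r) \leq K$. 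Integrating the monotonicity identity between $h$ and $\ell_j$, the three positive contributions — the radial density $|x|^{-1}|\partial_{|x|}Q_j|^2$ on the lens, the bulk potential, and the top/bottom normal trace — are each uniformly bounded by $K$, independently of $j$.

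Given $\varepsilon \in (0,\bar{\varepsilon})$, I will then use an averaging/Chebyshev argument on the above uniformly bounded integrals. Since
\[
\int_{h}^{\ell_j}\frac{1}{s}\Big(\int_{\partial B_s \cap \mathfrak{C}^h_{\ell_j}}|\partial_{|x|}Q_j|^2 d\mathcal{H}^2\Big) ds + \int_{h}^{\ell_j}\frac{h}{s^2}\Big(\int_{\partial \mathfrak{C}^h_{\ell_j}\cap B_s}|\partial_{\vec n}Q_j|^2 d\mathcal{H}^2\Big) ds \leq 2K,
\]
for every $D > 0$ the set of $r \in (\ell_j - 2D, \ell_j - D)$ at which simultaneously these two trace quantities are of order $\varepsilon$ (scaled appropriately by $r$ and $h$) has positive measure for all $j$ large, provided $D$ is chosen large depending on $\varepsilon$. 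In particular one can select a good radius $r_j^\star \in (\ell_j - 2d_\varepsilon, \ell_j - d_\varepsilon)$ on which the radial trace of $Q_j$ on $\partial B_{r_j^\star}\cap \mathfrak{C}^h_{\ell_j}$ and the $x_3$-derivative on the top/bottom disks of $B_{r_j^\star}\cap \mathfrak{C}^h_{\ell_j}$ are both infinitesimal.

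At such a good radius I will build an equivariant competitor $\widetilde Q_j \in W^{1,2}_{\rm sym}(B_{r_j^\star}\cap \mathfrak{C}^h_{\ell_j};\bbS^4)$ matching $Q_j$ on the outer spherical part $\partial B_{r_j^\star}\cap \mathfrak{C}^h_{\ell_j}$ and equal to $\eo$ on the top/bottom disks (where $Q_j = \eo$ anyway). The competitor will be of ``cone-type'' in the sense of Section~\ref{secregth}: a homogeneous extension from the $x_3$-axis of the boundary trace of $Q_j$ on $\partial B_{r_j^\star}\cap \mathfrak{C}^h_{\ell_j}$, made equivariant and matched to $\eo$ on top/bottom via a cylindrical transition layer. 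Using the standard cone-energy formula in dimension three, together with the smallness of the two traces selected in the previous paragraph, the energy of this competitor is bounded by $C_* \varepsilon\, r_j^\star$. Local minimality of $Q_j$ (Definition~\ref{hor-minimnizers}) then gives $\mathcal{E}_\lambda(Q_j, B_{r_j^\star}\cap \mathfrak{C}^h_{\ell_j}) \leq C_*\varepsilon\, r_j^\star$, and monotonicity transfers this to $\ell_j - d_\varepsilon < r_j^\star$ to yield \eqref{mard1936main}; \eqref{mard1936main2} then follows by taking $\varepsilon = \varepsilon(j) \downarrow 0$ at a suitable rate depending on $\sigma$.

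The main obstacle lies in the cone-like comparison of the third step: equivariance forbids independently interpolating in each horizontal slice and forces the competitor to respect the constraint that the value on the $x_3$-axis must belong to $L_0 \cap \bbS^4 = \{\pm\eo\}$, while simultaneously matching on the lateral sphere an $L_0\oplus L_1 \oplus L_2$-valued trace whose $L_1, L_2$-components do \emph{not} generally vanish at $|x'|=0$. Reconciling the homogeneous axial extension with the top/bottom Dirichlet datum $\eo$ and with the (small but not pointwise controlled) lateral trace of $Q_j$, all while keeping the transition energy quantitatively bounded by the infinitesimal integrals from the second step, is the delicate technical point; compactness from Lemma~\ref{lemma:local-hor-cpt} (possibly applied to translated minimizers) will be used to guarantee that the choice of $d_\varepsilon$ can be made uniform in $j \geq j_\varepsilon$.
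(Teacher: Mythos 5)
Your overall scaffolding (monotonicity, a Chebyshev selection of a good radius near $\ell_j$, a comparison map, then transfer back via monotonicity) is the right shape, but the core of the argument — the competitor with energy $\leq C_*\veps\, r$ — is not delivered, and the obstacle you flag at the end is exactly the point where your construction breaks. First, the quantities your averaging controls are the wrong ones: integrating the monotonicity identity only makes the \emph{radial-derivative} trace on good spheres and the top/bottom normal trace small, whereas the energy of any homogeneous ("cone-type") extension is governed by the \emph{full tangential} energy of the trace, which is merely $O(r)$ on a generic good slice. Even if you upgrade the selection to make the full trace energy $\leq \veps r$ (which a Chebyshev argument does allow), the cone geometry itself is mismatched to the pancake domain: a $0$-homogeneous extension from the origin of a trace with tangential energy $\veps r$ on $\partial B_r\cap\mathfrak{C}^h_{\ell_j}$ has Dirichlet energy of order $\veps r^2$, not $\veps r$; and a horizontal homogeneous extension "from the $x_3$-axis" of an equivariant trace whose $L_1\oplus L_2$ components do not vanish produces the logarithmically divergent term $\int_0^{r}\frac{|f_1|^2+4|f_2|^2}{s}\,ds$ near the axis — this is precisely the equivariance/axis conflict you name as "the delicate technical point" without resolving it. The appeal to Lemma~\ref{lemma:local-hor-cpt} cannot repair this: compactness gives no quantitative, $j$-uniform closeness of the trace to $\eo$, and the uniformity of $d_\veps$ is not the issue (it already follows from Chebyshev with $d_\veps\gtrsim 1/\veps$).

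The missing idea, which is how the paper proceeds, is to slice the annular region $\mathfrak{C}^h_{\ell_j}\setminus\mathfrak{C}^h_{\ell_j-d}$ by \emph{lateral cylinders} $\Gamma_r=\partial^{\rm lat}\mathfrak{C}^h_r$ rather than by spheres, and to select $r_j^\veps\in(\ell_j-d_\veps,\ell_j)$ with $\frac1{r_j^\veps}\mathcal{E}_\lambda(Q_j,\Gamma_{r_j^\veps})<\veps$. On such a cylinder, equivariance reduces the energy to a single vertical generator, and since $Q_j=\eo$ at $x_3=\pm h$, the smallness of $\int|\partial_{x_3}Q_j|^2$ along the generator gives the \emph{pointwise} bound $\|Q_j-\eo\|_{L^\infty(\Gamma_{r_j^\veps})}\lesssim\sqrt{h\veps}$. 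This uniform closeness is what makes the admissible competitor elementary and cheap: take $\eo$ identically in $\mathfrak{C}^h_{r_j^\veps-1}$ (so the axis constraint and the top/bottom datum are satisfied exactly, and $W=0$ there) and interpolate radially across a layer of unit width to the lateral trace, then project to $\bbS^4$; the gradient contribution is $\leq C\veps r_j^\veps$ by the trace-energy bound and the $L^\infty$ smallness, and the potential contribution is $\leq C\veps r_j^\veps$ by Taylor expansion of $W$ at $\eo$ on a layer of volume $O(r_j^\veps)$. Minimality up to the top/bottom boundary then gives $\mathcal{E}_\lambda(Q_j,\mathfrak{C}^h_{r_j^\veps})\leq C\veps r_j^\veps$, and the monotonicity of $r\mapsto\frac1r\mathcal{E}_\lambda(Q_j,B_r\cap\mathfrak{C}^h_{\ell_j})$ transfers this to $B_{\ell_j-d_\veps}$, yielding \eqref{mard1936main}; your concluding step for \eqref{mard1936main2} is then fine. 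Without the pointwise $L^\infty$ control coming from the vertical boundary condition, no cone-like construction in this geometry reaches the linear-in-$r$, $\veps$-small bound, so as written the proposal has a genuine gap.
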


\begin{proof}
By assumption, $\mathcal{E}_\lambda(Q_j,\mathfrak{C}^h_{\ell_j}) \leq \bar{C} \ell_j$  for some $\bar{C}>0$ independent of $j$. We claim that for every $\veps\in(0,1/2)$, there exists an integer $j_\veps\geq 1$ and $d_\veps>1$ independent of $j$ such that 
$$\inf_{r\in(\ell_j-d_\veps,\ell_j)}\frac{1}{r}\mathcal{E}_\lambda(Q_j,\Gamma_r)<\veps \qquad\forall j\geq j_\veps\,,$$
where $\Gamma_r:=\partial^{\rm lat}\mathfrak{C}^h_{r}$ (see \eqref{deflatbound}). 
Indeed, given $\veps \in (0,1/2)$ and $j_\veps\geq 1$ to be chosen,  we have for  $0<d\leq\ell_{j_\veps}$ and $j\geq j_\veps$, 
	\begin{multline*}
		\bar{C} \geq \frac{1}{\ell_j}\mathcal{E}_\lambda\big(Q_j,\mathfrak{C}^h_{\ell_j} \setminus \mathfrak{C}^h_{\ell_j-d}\big) = \frac{1}{\ell_j}\int_{\ell_j-d}^{\ell_j} \bigg(\frac{\mathcal{E}_\lambda(Q_j,\Gamma_t)}{t} \bigg) t \,dt \\
		\geq \left( \inf_{r \in (\ell_j-d,\ell_j)} \frac{\mathcal{E}_\lambda(Q_j, \Gamma_r)}{r} \right) \frac{1}{\ell_j} \int_{\ell_j-d}^{\ell_j}  t\,dt \geq \left( \inf_{r \in (\ell_j-d,\ell_j)} \frac{\mathcal{E}_\lambda(Q_j, \Gamma_r)}{r} \right)  \frac{d}2 \,,
	\end{multline*}
and the claim {{follows}} whenever {{we choose}} $d_\veps > \frac{2\bar{C}}{\veps}$ and $j_\veps$ such that $\ell_{j_\veps}\geq d_\veps$. 
	 
As a consequence, 	for an arbitrary $\veps\in(0,1/2)$ and $j\geq j_\veps$,  there exists $r^\veps_j\in(\ell_j-d_\veps,\ell_j)$ such that 
\begin{equation}\label{eq:small-2}
		 \frac{1}{r_j^\veps}\mathcal{E}_\lambda(Q_j, \Gamma_{r_j^\veps}) < \veps  \,.
\end{equation}
Note that $r_j^\veps\to {{+}} \infty$ since $\ell_j\to {{+}}\infty$. 	From \eqref{eq:small-2} and $\bbS^1$-equivariance, we infer that for $j\geq j_\veps$, 
\[
	\pi \int_{\gamma_{r_j^\veps}} \abs{\pd{Q_j}{x_3}}^2 \,dx_3 = \frac{1}{r_j^\veps} \int_{\Gamma_{r_j^\veps}} \frac12\abs{\pd{Q_j}{x_3}}^2 \,d \mathcal{H}^2 \leq\frac{1}{r_j^\veps}\mathcal{E}_\lambda(Q_j, \Gamma_{r_j^\veps}) < \veps\,,
	\] 
where $\gamma_{r_j^\veps}:=\{(r_j^\veps, 0,x_3) : |x_3|<h\}$. Since $Q_j(r_j^\veps,0,\pm h) = \eo$, we deduce (again by $\bbS^1$-equivariance) that 
\begin{equation}\label{eq:small-3}
		\|{Q}_j - \eo \|_{L^\infty(\Gamma_{r_j^\veps})} \leq C_*\sqrt{h\veps} \qquad \forall j\geq j_\veps \,,
\end{equation}
	for some universal  constant $C_*>0$. 
	
Next we define 	for $j \geq j_\veps$ and $x\in\mathfrak{C}^h_{r_j^\veps}$, 
	\[
		v_j^\veps(x) :=
		\begin{cases}
			\displaystyle \big(|{x'}| - r_j^\veps + 1\big) \bigg({{Q_j}}\left(r_j^\veps\frac{x^\prime}{|x^\prime|},x_3\right)-\eo\bigg) + \eo & \mbox{if } x\in\mathfrak{C}^h_{r_j^\veps} \setminus \mathfrak{C}^h_{r_j^\veps-1}\,, \\[5pt]
			\eo & \mbox{if } x\in\mathfrak{C}^h_{r_j^\veps-1}\,.
		\end{cases}
	\]
Then $v_j^\veps \in W^{1,2}_{\rm sym}(\mathfrak{C}^h_{r_j^\veps};\mathcal{S}_0)$ satisfies $v_j^\veps=Q_j$ on $\partial \mathfrak{C}^h_{r_j^\veps}$, and $|v_j^\veps|=1$ in $\mathfrak{C}^h_{r_j^\veps-1}$. Moreover, combining \eqref{eq:small-2} and \eqref{eq:small-3}, we obtain 
$$  \int_{\mathfrak{C}^h_{r_j^\veps} \setminus \mathfrak{C}^h_{r_j^\veps-1}} |\nabla v_j^\veps|^2 \,dx\leq C \veps r_j^\veps\,,$$
for some constant $C$ independent of $\veps$ and $j$.  

Now we choose $\bar \veps\in(0,1/2)$ in such a way that $C_*\sqrt{h\bar\veps}{{<}}1/2$. Then, for $\veps\in(0,\bar\veps)$ arbitrary, we have $|1-|v_j^\veps||\leq |v_j^\veps-{\bf e}_0|<1/2$ in $\mathfrak{C}^h_{r_j^\veps}$ by \eqref{eq:small-3}. Thus we can define for $j\geq j_\veps$, 
$$\widetilde{Q}_j^\veps:=\frac{v_j^\veps}{|v^\veps_j|}\in W^{1,2}_{\rm sym}(\mathfrak{C}^h_{r_j^\veps};\bbS^4){{,}}$$
which satisfies $\widetilde{Q}_j^\veps=Q_j$ on $\partial \mathfrak{C}^h_{r_j^\veps}$, and 
\begin{equation}\label{mard1923}
\int_{\mathfrak{C}^h_{r_j^\veps} \setminus \mathfrak{C}^h_{r_j^\veps-1}} |\nabla \widetilde{Q}_j^\veps|^2 \,dx \leq C \veps r_j^\veps 
\end{equation}
for a further constant $C$ independent of $\veps$ and $j$. In addition, $|\widetilde{Q}_j^\veps-{\bf e}_0|\leq 3 |v_j^\veps-{\bf e}_0|\leq 3C_*\sqrt{h\veps}$ in $ \mathfrak{C}^h_{r_j^\veps}$ once again by \eqref{eq:small-3}. Consequently, 
\begin{equation}\label{mard1923bis}
W(\widetilde{Q}_j^\veps)\leq C^\prime \veps\quad \text{in $ \mathfrak{C}^h_{r_j^\veps}$}\,,
\end{equation}
still for a constant $C^\prime$ independent of $\veps$ and $j$, by Taylor expansion of $W$ near ${\bf e}_0$ and choosing $\bar \veps$ smaller if necessary. 

By minimality of $Q_j$, we conclude from \eqref{mard1923} and  \eqref{mard1923bis}  that 
	\[
		\mathcal{E}_\lambda(Q_j,\mathfrak{C}^h_{r_j^\veps}) \leq \mathcal{E}_\lambda(\widetilde{Q}_j^\veps,\mathfrak{C}^h_{r_j^\veps})= \mathcal{E}_\lambda\big(\widetilde{Q}_j^\veps,\mathfrak{C}^h_{r_j^\veps} \setminus \mathfrak{C}^h_{r_j^\veps-1}\big) \leq C \veps r_j^\veps \,, 
	\]
	where $C > 0$ is still independent of $j$ and $\veps$. Noticing that $B_{r^\veps_j} \cap \mathfrak{C}^h_{\ell_j} \subset \mathfrak{C}^h_{r^\veps_j}$, we deduce from Remark ~\ref{remmonotformlocminpanc} that 
$$\frac{1}{\ell_j-d_\veps}\mathcal{E}_\lambda\big(Q_j,B_{\ell_j-d_\veps}\cap \mathfrak{C}^h_{\ell_j}\big)\leq \frac{1}{r^\veps_j} \mathcal{E}_\lambda\big(Q_j,B_{r^\veps_j}\cap \mathfrak{C}^h_{\ell_j}\big)\leq C\veps\,,$$
proving \eqref{mard1936main}.

To complete the proof, we fix an arbitrary $\sigma\in(0,1)$. Then $\ell_j - d_\veps > \sigma \ell_j$ for $j$ large enough, and by  	Remark ~\ref{remmonotformlocminpanc} again, 
	\[
	\frac{1}{\sigma \ell_j} \mathcal{E}_\lambda\big(Q_j,B_{\sigma \ell_j} \cap \mathfrak{C}^h_{\ell_j}\big) \leq \frac{1}{\ell_j-d_\veps }\mathcal{E}_\lambda\big(Q_j, B_{\ell_j-d_\veps} \cap \mathfrak{C}^h_{\ell_j}\big) \leq C \veps \, . 
	\]
Then \eqref{mard1936main2}	follows from the arbitrariness of $\veps$ letting $j\to\infty$. 
\end{proof}

The following rigidity result is an immediate consequence of Lemma~\ref{lemma:lin-sublin}. 

\begin{corollary}\label{cor:rigidity-large}
Let $Q \in W^{1,2}_{\rm loc}(\mathfrak{C}^h_\infty;\bbS^4)$ be an equivariant  local minimizer of $\mathcal{E}_{\lambda}$ in $\mathfrak{C}^h_\infty$ up to the top/bottom boundary 
satisfying $Q={\bf e}_0$ on $\partial^{\rm =} \mathfrak{C}^h_{\infty}$. If  $\mathcal{E}_\lambda(Q, \mathfrak{C}^h_\ell) = O(\ell)$ as $\ell \to \infty$, then  $Q\equiv{\bf e}_0$. 
\end{corollary}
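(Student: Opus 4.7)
The plan is to combine the sublinear growth estimate of Lemma~\ref{lemma:lin-sublin} with the monotonicity formula quoted in Remark~\ref{remmonotformlocminpanc}. First I would fix any increasing sequence $\ell_j\to+\infty$ with $\ell_j>h$ and set $Q_j$ to be the restriction of $Q$ to $\mathfrak{C}^h_{\ell_j}$. The local minimality of $Q$ on $\mathfrak{C}^h_\infty$ up to the top/bottom boundary (Definition~\ref{hor-minimnizers}) involves only perturbations supported in cylinders $\mathfrak{C}^h_\eta$ with $\eta<\ell_j$, and therefore it is immediately inherited by $Q_j$ on $\mathfrak{C}^h_{\ell_j}$. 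Moreover $Q_j=\eo$ on $\partial^{=}\mathfrak{C}^h_{\ell_j}$, while the assumption $\mathcal{E}_\lambda(Q,\mathfrak{C}^h_\ell)=O(\ell)$ yields $\mathcal{E}_\lambda(Q_j,\mathfrak{C}^h_{\ell_j})=O(\ell_j)$. Hence the hypotheses of Lemma~\ref{lemma:lin-sublin} are satisfied, and its conclusion \eqref{mard1936main2} gives, for every fixed $\sigma\in(0,1)$,
$$\mathcal{E}_\lambda\big(Q,B_{\sigma\ell_j}\cap\mathfrak{C}^h_\infty\big)=\mathcal{E}_\lambda\big(Q_j,B_{\sigma\ell_j}\cap\mathfrak{C}^h_{\ell_j}\big)=o(\ell_j)\quad\text{as }j\to\infty.$$

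Next I would invoke the monotonicity formula of Remark~\ref{remmonotformlocminpanc}: applied to $Q_j$ on $\mathfrak{C}^h_{\ell_j}$, identity~\eqref{eq:mon-form-full} (with $\mathfrak{C}^h_\ell$ in place of $\mathfrak{C}^h_{\ell,\rho}$) ensures that the map $r\mapsto \frac{1}{r}\mathcal{E}_\lambda(Q,B_r\cap\mathfrak{C}^h_\infty)$ is nondecreasing on $(h,\ell_j)$. Fixing an arbitrary $r_0>h$ and taking $j$ large enough so that $\sigma\ell_j>r_0$, monotonicity combined with the bound above yields
$$\frac{1}{r_0}\mathcal{E}_\lambda\big(Q,B_{r_0}\cap\mathfrak{C}^h_\infty\big)\leq \frac{1}{\sigma\ell_j}\mathcal{E}_\lambda\big(Q,B_{\sigma\ell_j}\cap\mathfrak{C}^h_\infty\big)\longrightarrow 0\quad\text{as }j\to\infty.$$
Therefore $\mathcal{E}_\lambda(Q,B_{r_0}\cap\mathfrak{C}^h_\infty)=0$ for every $r_0>h$, and letting $r_0\to+\infty$ to exhaust the cylinder would give $\mathcal{E}_\lambda(Q,\mathfrak{C}^h_\infty)=0$.

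In particular, $\nabla Q\equiv 0$ and $W(Q)\equiv 0$ almost everywhere, so $Q$ agrees a.e.\ with a constant tensor $Q_*\in\mathcal{Q}_{\rm min}$. Recalling from \eqref{vacuum} that $\eo\in\mathcal{Q}_{\rm min}$ (as the positive uniaxial matrix with director $(0,0,1)$), the boundary condition $Q=\eo$ on the nonempty set $\partial^{=}\mathfrak{C}^h_\infty$ forces $Q_*=\eo$ by continuity of the trace, so that $Q\equiv\eo$. The only potentially delicate point is the use of the monotonicity formula on the unbounded domain $\mathfrak{C}^h_\infty$, which is however harmlessly handled by the restriction to the bounded subdomains $\mathfrak{C}^h_{\ell_j}$ on which Remark~\ref{remmonotformlocminpanc} applies directly; apart from this, the argument is a fairly immediate corollary of Lemma~\ref{lemma:lin-sublin}.
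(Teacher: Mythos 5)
Your proof is correct and follows essentially the same route as the paper: restrict to $\mathfrak{C}^h_{\ell_j}$, apply Lemma~\ref{lemma:lin-sublin} (the paper takes $\sigma=1/2$), and combine with the monotonicity of $r\mapsto\frac1r\mathcal{E}_\lambda(Q,B_r\cap\mathfrak{C}^h_{\ell_j})$ from Remark~\ref{remmonotformlocminpanc} to conclude that the energy vanishes on every ball, whence $Q$ is constant and equal to $\eo$ by the boundary condition. The only cosmetic remark is that the detour through $\mathcal{Q}_{\rm min}$ is unnecessary (and for $\lambda=0$ the claim $W(Q)\equiv0$ does not even follow from zero energy): constancy of $Q$ together with its value $\eo$ on $\partial^{=}\mathfrak{C}^h_\infty$ already gives $Q\equiv\eo$.
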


\begin{proof}
Let $\ell_j\to \infty$ be an increasing sequence, and set $Q_j := Q_{|{\mathfrak{C}^h_{\ell_j}}}$. Then $Q_j$ is an equivariant  local minimizer of $\mathcal{E}_{\lambda}$ in $\mathfrak{C}^h_{\ell_j}$ up to the top/bottom boundary 
satisfying $Q={\bf e}_0$ on $\partial^{\rm =} \mathfrak{C}^h_{\ell_j}$ and $\mathcal{E}_\lambda(Q_j,\mathfrak{C}^h_{\ell_j}) = O(\ell_j)$. According to Lemma~\ref{lemma:lin-sublin}, we have  
$\mathcal{E}_\lambda(Q_j,B_{\ell_j/2} \cap \mathfrak{C}^h_{ \ell_j}) = o(\ell_j)$. Let us now fix an arbitrary $\ell\gg h$. From Remark~\ref{remmonotformlocminpanc}, we deduce that for $j$ large enough so that $\ell_j>2\ell$, 
$$ 0 \leq \frac{1}{\ell}\mathcal{E}_\lambda\big(Q,B_\ell \cap \mathfrak{C}^h_\infty\big)= \frac{1}{\ell}\mathcal{E}_\lambda\big(Q,B_\ell \cap \mathfrak{C}^h_{\ell_j}\big) \leq \frac{2}{ \ell_j}\mathcal{E}_\lambda\big(Q_j,B_{\ell_j/2} \cap \mathfrak{C}^h_{\ell_j}\big) \to 0 \quad \mbox{as } j \to \infty \,.$$
Hence $	\mathcal{E}_\lambda(Q,B_\ell \cap \mathfrak{C}^h_\infty)=0$. From the arbitraniness of $\ell$, it follows that $Q$ is constant, and thus $Q \equiv \eo$ in view of its values on $\partial \mathfrak{C}^h_\infty$.
\end{proof}

We are now ready to prove Theorem \ref{thmlargecyl}.

\begin{proof}[Proof of Theorem \ref{thmlargecyl}] 
{\it Step 1.} We start  proving that $Q^{(n)} \to \eo$ strongly in $W^{1,2}(\mathfrak{C}^h_\ell)$ for every $\ell>0$. By Lemma~\ref{lemma:energy-estimates-large}, we have $\mathcal{E}_\lambda(Q^{(n)},\Omega_n) \leq K \ell_n$ for a constant $K$ independent of $n$. Given an arbitrary $\ell > h$, we consider $n$ large enough in such a way that $\ell_n >2 \ell+\rho$. Then 
$\mathfrak{C}^h_\ell \subset B_{2\ell}\cap \Omega_n\subset B_{\ell_n-\rho} \cap \Omega_n$. 
Applying the monotonicity formula \eqref{eq:mon-form-full} we obtain 
	\begin{equation}\label{Qn-growt} 
	\mathcal{E}_\lambda(Q^{(n)},\mathfrak{C}^h_\ell) \leq \mathcal{E}_\lambda\big(Q^{(n)},B_{2\ell}\cap \Omega_n\big)  \leq \frac{2\ell}{\ell_n-\rho} \mathcal{E}_\lambda\big(Q^{(n)},B_{\ell_n-\rho}\cap \Omega_n\big)
	\leq  \frac{2K\ell \ell_n}{\ell_n-\rho}\leq 4K \ell \, .
	\end{equation} 
In view of 	Lemma~\ref{lemma:local-hor-cpt}, we conclude that, up to a (not relabeled) subsequence, $Q^{(n)} \to Q_*$ strongly in $W^{1,2}(\mathfrak{C}^h_{\ell})$ for every $0<\ell<\infty$, where $Q_* \in W^{1,2}_{\rm loc}(\mathfrak{C}^h_\infty;\bbS^4)$ is  an equivariant local minimizer of $\mathcal{E}_\lambda$ up  to the top/bottom boundary satisfying $Q_*={\bf e}_0$ on $\partial^{\rm =} \mathfrak{C}^h_\infty$. By lower semicontinuity of $\mathcal{E}_\lambda$, letting $n\to\infty$ in \eqref{Qn-growt} yields $\mathcal{E}_\lambda(Q_*,\mathfrak{C}^h_\ell) \leq 4K\ell$. Hence Corollary~\ref{cor:rigidity-large} applies and $Q_*\equiv {\bf e}_0$. By uniqueness of the limit, we now infer that the full sequence $\{Q^{(n)}\}$ strongly converges to ${\bf e}_0$ in $W^{1,2}(\mathfrak{C}^h_{\ell})$ for every $0<\ell<\infty$. 
\vskip3pt

\noindent{\it Step 2.} 	We now {{accomplish the proof of $(ii)$ proving}} that ${\rm sing}(Q^{(n)})=\emptyset$ for $n$ large enough and that $Q^{(n)}$ converges smoothly to~${\bf e}_0$ locally in $\overline{\mathfrak{C}^h_\infty}$. To this purpose, we fix an arbitrary $\ell>h$ and we consider $n$ large enough so that $\ell_n\gg \ell+h$.  

\noindent{\sl Case 1: convergence near $\partial^=\mathfrak{C}^h_\ell$.} We fix an arbitrary point $x_0\in \partial^=\mathfrak{C}^h_\ell$. Given a radius $0<r_*<h/2$,  $B_{2r_*}(x_0)\cap\Omega_n\subset\mathfrak{C}^h_{\ell+h}$ is a half ball for $n$ large enough and Remark \ref{specgeomremreg} applies  since $Q^{(n)}={\bf e}_0$ on $B_{2r_*}(x_0)\cap\partial\Omega_n=B_{2r_*}(x_0)\cap \partial^=\mathfrak{C}^h_{\ell+h}$. We fix a radius $r_0\in(0,r_*/4)$ such that the conclusion of Remark \ref{specgeomremreg} holds (it only depends on $\lambda$). According to Step 1, we  have $\mathcal{E}_\lambda(Q^{(n)},\mathfrak{C}^h_{\ell+h}\big)\to 0$ as $n\to\infty$. 
Therefore, $\frac{1}{r_0}\mathcal{E}_\lambda(Q^{(n)},B_{r_0}(x_0)\cap\Omega_n)\leq \boldsymbol{\veps}^\sharp_{\rm bd}/2$ whenever $n$ is large enough (independently of $x_0\in \partial^=\mathfrak{C}^h_\ell$), where $\boldsymbol{\veps}^\sharp_{\rm bd}>0$ is the universal constant provided by Remark \ref{specgeomremreg}. Then Remark~\ref{specgeomremreg}  tells us that $Q^{(n)}$ in {{smooth and}} bounded in $C^k(B_{\boldsymbol{\kappa}^\sharp r_0/2}(x_0)\cap{{\overline{\Omega_n}}})$ for every $k\in\mathbb{N}$ (independently of $x_0$), where $\boldsymbol{\kappa}^\sharp\in(0,1)$ is a universal constant. By arbitrariness of $x_0$,  we deduce that ${\rm sing}(Q^{(n)})\cap \{h-\delta_*{{<}}|x_3|\leq h\}=\emptyset$ for $n$ large enough (recall that ${\rm sing}(Q^{(n)})\subset \{x_3-\text{axis}\}$) and that $Q^{(n)}$ is bounded in $C^k({{\overline{\mathfrak{C}^h_\ell}}}\cap\{h-\delta_*{{<}}|x_3|\leq h\})$ for every $k\in\mathbb{N}$ with 
 $\delta_*:=\boldsymbol{\kappa}^\sharp r_0/2>0$.  As a consequence, $Q^{(n)}\to {\bf e}_0$ in $C^k({{\overline{\mathfrak{C}^h_\ell}}}\cap\{h-\delta_*{{<}}|x_3|\leq h\})$ for every $k\in\mathbb{N}$. 
 \vskip3pt
 
\noindent{\sl Case 2: convergence in the interior of $\mathfrak{C}^h_\ell$.} We fix a radius $0<r_1<\delta_*$. In view of  Step 1, we have 
$\mathcal{E}_\lambda(Q^{(n)},\mathfrak{C}^h_{\ell+\delta_*})\leq (\boldsymbol{\veps}_{\rm in}r_1)/8$ for $n$ large enough, where $\boldsymbol{\veps}_{\rm in}>0$ is the universal constant provided by Proposition \ref{intepsregprop}. Choosing  $r_1$ small enough (depending only on $\lambda$) and an arbitrary point $x_0\in \mathfrak{C}^h_\ell\cap\{|x_3|\leq h-\delta_*\}$, we then have $\frac{1}{r_1}\mathcal{E}_\lambda\big(Q^{(n)},B_{r_1}(x_0)\big)\leq \boldsymbol{\veps}_{\rm in}/8$ so that Proposition~\ref{intepsregprop} applies. It shows that $Q^{(n)}$ is {{smooth and}} bounded in $C^k(B_{r_1/16}(x_0))$ for every $k\in\N$ (independently of $x_0$).  Once again,  it implies that $Q^{(n)}$ is smooth in  $\mathfrak{C}^h_\ell\cap\{|x_3|\leq h-\delta_*\}$ for $n$ large enough, so that  ${\rm sing}(Q^{(n)})=\emptyset$, and 
$Q^{(n)}\to {\bf e}_0$ in $C^k(\mathfrak{C}^h_\ell\cap\{|x_3|\leq h-\delta_*\})$ for every $k\in\mathbb{N}$.
\vskip3pt

\noindent{\it Step 3: proof of {{(i)}}.} We observe that the assumptions ($HP_0$)-($HP_3$) from \cite{DMP1,DMP2} are satisfied by $Q^{(n)}$ for $n$ large enough. Indeed, ${\rm sing}(Q^{(n)})=\emptyset $ for $n$ large enough so that ($HP_0$) holds (recall Theorem \ref{bdrregthm}).  Since the boundary condition $Q^{(n)}_{\rm b}$ is {{positively}} uniaxial, ($HP_1$) holds. Then, $\Omega_n$ being a topological ball and  $Q^{(n)}_{\rm b}$ the homeotropic boundary data \eqref{eq:radial-anchoring}, ($HP_2$) and ($HP_3$) trivially hold.  Hence $Q^{(n)}$ is a torus minimizer in the sense of \cite[Definition~7.6]{DMP2} (for $n$ large), and  \cite[Theorem~1.4]{DMP2}  provides the announced properties of the function $\beta_n:=\widetilde\beta\circ Q^{(n)}$. 
\vskip3pt 
	
\noindent{\it Step 4.}	Now it only remains to prove {{($iii$). To this purpose, we fix an arbitrary $t\in[-1,1)$ and, since}} $\widetilde\beta({\bf e}_0)=1$, we infer from the previous step that there is an integer 
$\bar{n}_t$ such that $\{\beta_n \leq t \} \cap \mathfrak{C}^h_{2h} = \emptyset$ for all $n \geq \bar{n}_t$. 

Since $Q^{(n)}$ is minimizing $\mathcal{E}_\lambda$ over  $\mathcal{A}_{Q_{\rm b}}^{\rm sym}(\Omega_n)$, Lemma~\ref{lemma:lin-sublin} applies in $\mathfrak{C}^h_{\ell_n-\rho}$, and we consider the constant $\bar\veps>0$ (independent of $n$) provided by this lemma. We fix a value $\veps=\veps(t)$ to be chosen later such that 
\begin{equation}\label{choiceeps1biaxesc}
0<\veps<\frac{1}{2}\min\big\{\boldsymbol{\veps}^\sharp_{\rm bd}/(2C_*), \boldsymbol{\veps}_{\rm in}/(8C_*),\bar\veps\big\}\,.
\end{equation}
where $C_*$ denotes the constant in inequality \ref{mard1936main}. 
According to Lemma~\ref{lemma:lin-sublin}, we can find $d_\veps>0$ and an integer $n_\veps\geq \bar n_t$ such that 
\begin{equation}\label{keyestmer15}
\mathcal{E}_\lambda\big(Q^{(n)}, B_{\ell_n-\rho-d_\veps} \cap \mathfrak{C}^h_{\ell_n-\rho}\big) \leq C_*\veps (\ell_n-\rho-d_\veps)\qquad\forall n\geq n_\veps\,,
\end{equation}
Enlarging $n_\veps$ and $d_\veps$ if necessary (see the proof of Lemma~\ref{lemma:lin-sublin}), we can assume that $\ell_n>2d_\veps+\rho+h$  for $n\geq n_\veps$, and $d_\veps>2h+\rho$ (so that 
$B_{2h}(x)\cap \partial B_{\ell_n-\rho-d_\veps}=\emptyset$ for every $x\in B_{\ell_n-2d_{\veps}}$). 

Let us now fix a  point $x_*\in B_{\ell_n-2d_\veps} \cap (\overline{\mathfrak{C}^h_{\ell_n-\rho}}\setminus \mathfrak{C}^h_{2h})$ (possibly depending on $n$) that either belongs to $\partial^={\mathfrak{C}^h_{\ell_n-\rho}}$ or to  $\mathfrak{C}^h_{\ell_n-\rho}\cap\{|x_3|\leq h-\delta_*\}$. By $\bbS^1$-equivariance, we may assume without loss of generality that $x_*=(x_{*,1},x_{*,2},x_{*,3})$ satisfies $x_{*,2}=0$ and $x_{*,1}\geq 2h$. If $x_*\in \partial^={\mathfrak{C}^h_{\ell_n-\rho}}$, we set 
$s:=r_0\in(0,h)$, and $s:=r_1\in(0,{{\delta_*}})$ if  $x_*\in \mathfrak{C}^h_{\ell_n-\rho}\cap\{|x_3|\leq h-\delta_*\}$ (note that $B_{r_1}(x_*)\subset\mathfrak{C}^h_{\ell_n-\rho}$ in this case). Next we denote $\ell_*:=x_{*,1}\in[2h,\ell_n-2d_\veps)$, $\Sigma^*_s:=B_s(x_*)\cap {\mathfrak{C}^h_{\ell_n-\rho}}\cap\{x_2=0\}$, and we consider the sets   
$$T^*_s := \displaystyle\bigcup_{\phi \in \left(-\frac{2s}{\ell_*},\frac{2s}{\ell_*}\right)} R_\phi \cdot \Sigma^{*}_s\;\,\,\text{ and }\,\,\; \mathfrak{T}^{*} :=  \displaystyle\bigcup_{\phi \in \left(0,2\pi\right)} R_\phi \cdot \Sigma^{*}_s\,.$$
Notice that $B_s(x_*)\cap {\mathfrak{C}^h_{\ell_n-\rho}}\subset T^*_s$ and $\mathfrak{T}^{*}\subset B_{\ell_*+2h}\cap {\mathfrak{C}^h_{\ell_n-\rho}}$.  Using the $\bbS^1$-equivariance and the monotonicity formula from Lemma \ref{monotformpacake}, we derive that
\begin{multline}\label{chainineqmer15}
		\frac{1}{s}\mathcal{E}_\lambda\big(Q^{(n)}, B_s(x_*) \cap \mathfrak{C}^h_{\ell_n-\rho}\big) \leq \frac{1}{s} \mathcal{E}_\lambda\big(Q^{(n)},T^*_s) =  \frac{2}{\pi \ell_*} \mathcal{E}_\lambda(Q^{(n)}, \mathfrak{T}^{*}) \\
	\leq \frac{4}{\pi}\cdot\frac{1}{\ell_*+2h} \mathcal{E}_\lambda\big(Q^{(n)},B_{\ell_*+2h} \cap \mathfrak{C}^h_{\ell_n-\rho}\big) \leq \frac{2}{\ell_n-\rho-d_\veps}\mathcal{E}_\lambda\big(Q^{(n)}, B_{\ell_n-\rho-d_\veps}\cap \mathfrak{C}^h_{\ell_n-\rho}\big)\,.
	\end{multline}
In view of \eqref{keyestmer15} and our choice of $\veps$, we conclude that for $n\geq n_\veps$, 
$$\frac{1}{s}\mathcal{E}_\lambda\big(Q^{(n)}, B_s(x_*) \cap \mathfrak{C}^h_{\ell_n-\rho}\big) \leq \min\big\{\boldsymbol{\veps}^\sharp_{\rm bd}/2, \boldsymbol{\veps}_{\rm in}/8\big\}\,. $$
As in Step 2, by Proposition~\ref{intepsregprop}  and Remark \ref{specgeomremreg}, it implies that  $|\nabla Q^{(n)}|\leq M$ in $B_{\delta_*}(x_*)$ if  $x_*\in\partial^={\mathfrak{C}^h_{\ell_n-\rho}}$, and $|\nabla Q^{(n)}|\leq M$ in $B_{r_1/16}(x_*)$ if  $x_*\in\mathfrak{C}^h_{\ell_n-\rho}\cap\{|x_3|\leq h-\delta_*\}$, where $M$ denotes a constant depending only on $\lambda$ and $h$. By arbitrariness of $x_*$ and in view of Step 2, we conclude that for $n\geq n_\veps$, 
\begin{equation}\label{gradbdmer15}
|\nabla Q^{(n)}|\leq M\quad\text{in $B_{\ell_n-2d_\veps} \cap \mathfrak{C}^h_{\ell_n-\rho}$} \,,
\end{equation}
for some constant $M$ depending only on $\lambda$ and $h$. 

We now claim that a suitable choice of $\veps=\veps(t)$ yields 
\begin{equation}\label{biaxescmer15}
\{\beta_n\leq t\}\cap(B_{\ell_n-2d_\veps} \cap \mathfrak{C}^h_{\ell_n-\rho})=\emptyset\qquad \forall n\geq n_\veps\,.
\end{equation}
To prove this claim, we assume by contradiction that for $n\geq n_\eps$ {{(more precisely, for a not relabeled subsequence)}},  there exists $x_t\in B_{\ell_n-2d_\veps} \cap \mathfrak{C}^h_{\ell_n-\rho}$ such that $\beta_n(x_t)\leq t$. Since $n_\veps\geq \bar n_t$, we must have $x_t\not\in  \mathfrak{C}^h_{2h}$. In view of \eqref{gradbdmer15}, we can find a radius $\tau\in(0,h)$ depending only on $t$, $\lambda$, and $h$ such that $\beta_n\leq \frac{1+t}{2}$ in $B_\tau(x_t)\cap \mathfrak{C}^h_{\ell_n-\rho}$. By $\bbS^1$-equivariance, it implies that $\beta_n\leq \frac{1+t}{2}$ in the set 
$$ \mathfrak{T}^{t} := \displaystyle\bigcup_{\phi \in \left(0,2\pi\right)} R_\phi \cdot \big(B_\tau(x_t)\cap \mathfrak{C}^h_{\ell_n-\rho}\big)\,.$$
Note that the volume {{of}} $ \mathfrak{T}^{t}$ is at least half of the volume of the solid torus $\bigcup_{\phi \in \left(0,2\pi\right)} R_\phi \cdot B_\tau(x_t)$. 
Setting $\ell_t:=|x^\prime_t|$ with $x_t=:(x_t^\prime,x_{t,3})$, we thus have 
\begin{equation}\label{lwbdpotmer15}
\frac{1}{\ell_t}\int_{ \mathfrak{T}^{t}}W(Q^{(n)})\,dx \geq \pi^2\tau^2\frac{1-t}{6\sqrt{6}}=:c_t\qquad\forall n\geq n_\veps\,.
\end{equation}
In addition to \eqref{choiceeps1biaxesc}, we now choose $\veps$ such that 
$$\veps< \frac{\lambda c_t}{2C_*}\,.$$ 
As in \eqref{chainineqmer15}, it follows from \eqref{keyestmer15} and \eqref{lwbdpotmer15} that for $n\geq n_\veps$, 
\begin{multline*}
 \lambda c_t\leq \frac{1}{\ell_t} \mathcal{E}_\lambda(Q^{(n)}, \mathfrak{T}^{t}) 
	\leq \frac{2}{\ell_t+2h} \mathcal{E}_\lambda\big(Q^{(n)},B_{\ell_t+2h} \cap \mathfrak{C}^h_{\ell_n-\rho}\big) \\
	\leq \frac{2}{\ell_n-\rho-d_\veps}\mathcal{E}_\lambda\big(Q^{(n)}, B_{\ell_n-\rho-d_\veps}\cap \mathfrak{C}^h_{\ell_n-\rho}\big)\leq C_*\veps<\lambda c_t\,,
\end{multline*}
a  contradiction proving \eqref{biaxescmer15}. Setting $d_t:=2d_\veps+h$ and noticing that $\mathfrak{C}^h_{\ell_n-d_t}\subset B_{\ell_n-2d_\veps} \cap \mathfrak{C}^h_{\ell_n-\rho}$, the conclusion follows with $n_t:=n_\veps$. 
\end{proof}


\subsection{Intermediate cylinders and coexistence results}\label{subsec:coex}

The purpose of this subsection is to prove coexistence of smooth/torus and singular/split minimizers for intermediate cylinders.  As a first step, we establish in 
Propositions \ref{perssmoothmovdom} and \ref{complemmovdom} the {\sl persistence of regularity} and {\sl persistence of singularities} properties when changing the shape of a smoothed cylinder. 

\begin{proposition}[\bf persistence of regularity]\label{perssmoothmovdom}
Let $h,\ell_\flat,\rho>0$ be fixed{{ with $0<4\rho<\ell_\flat < h$}}, and $\{\ell_n\}$ a sequence of positive numbers such that  $\ell_n\geq 3 \ell_\flat$. Assume that $\ell_n\to \ell_*$ as $n\to\infty$. Setting $\Omega_\flat:=\mathfrak{C}^h_{\ell_\flat,\rho}$, $\Omega_n:=\mathfrak{C}^h_{\ell_n,\rho}$, and $\Omega_*:=\mathfrak{C}^h_{\ell_*,\rho}$ as well as $Q^{(n)}_{\rm b}$ and $Q^*_{\rm b}$ to be the corresponding homeotropic boundary conditions given by \eqref{eq:radial-anchoring}, let $Q_n$ and $Q_*$ be minimizers of $\mathcal{E}_\lambda$ over  $\mathcal{A}^{\rm sym}_{Q_{\rm b}^{(n)}}(\Omega_n)$ and 
 $\mathcal{A}^{\rm sym}_{Q_{\rm b}^{*}}(\Omega_*)$ respectively. Assume that $Q_n\to Q_*$ strongly in $W^{1,2}(\Omega_{\flat})$ as $n\to\infty$. 
 \begin{enumerate}
 \item[(i)] If ${\rm sing}(Q_*)=\emptyset$, then there exists an integer $n_*$ such that ${\rm sing}(Q_n)=\emptyset$ for every $n\geq n_*$.
 \vskip5pt
 \item[(ii)]  If ${\rm sing}(Q_n)=\emptyset$ for every integer $n$, then ${\rm sing}(Q_*)=\emptyset$. 
\end{enumerate}
\end{proposition}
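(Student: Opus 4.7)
The plan is to adapt the persistence-of-smoothness arguments of Corollary~\ref{corolpersissmooth} to the present setting where the domain, rather than the boundary data, is varying. Two preliminary observations will make the adaptation possible. First, since $\ell_n \geq 3\ell_\flat > \rho$ for every $n$, the inclusions \eqref{eq:RC-inclusions} yield $\Omega_\flat \subset \Omega_n \cap \Omega_*$. Second, a direct inspection of Definition~\ref{def:smooth-cyl} shows that $\{x_3\text{-axis}\} \cap \mathfrak{C}^h_{\ell,\rho} = \{(0,0,x_3) : |x_3|<h\}$ is independent of $\ell$ (provided $\ell>\rho$), a segment already contained in $\Omega_\flat$. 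In view of Theorems~\ref{intregthm} and~\ref{bdrregthm}, both $\mathrm{sing}(Q_n)$ and $\mathrm{sing}(Q_*)$ are thus finite subsets of this common axial segment, so the strong $W^{1,2}$-convergence on $\Omega_\flat$ captures the behavior of the minimizers in a neighborhood of every potential singular point, interior or boundary.

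For claim (ii), I would argue by contradiction. If $p_* \in \mathrm{sing}(Q_*)$, then $p_*$ lies on the axial segment above, and any small enough ball $B_r(p_*) \subset \Omega_\flat$ is contained in every $\Omega_n$. Since $Q_n$ and $Q_*$ are minimizers on $B_r(p_*)$ with respect to their own Dirichlet traces on $\partial B_r(p_*)$, and $Q_n \to Q_*$ strongly in $W^{1,2}(B_r(p_*))$, Lemma~\ref{persissmoothloc}(ii) would give $\mathrm{sing}(Q_*) \cap B_r(p_*) = \emptyset$, a contradiction.

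For claim (i), I would again argue by contradiction, extracting a subsequence with points $p_n \in \mathrm{sing}(Q_n)$ converging to some $p_* = (0,0,t_*)$ with $t_* \in [-h,h]$. If $|t_*|<h$, then $p_* \in \Omega_*$, so $Q_*$ is smooth at $p_*$ by hypothesis; a direct application of Lemma~\ref{persissmoothloc}(i) on a small ball $B_r(p_*) \subset \Omega_\flat$ would yield $\mathrm{sing}(Q_n) \cap B_{r/2}(p_*) = \emptyset$ for large $n$, contradicting $p_n \to p_*$.

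The hard part is the boundary case $|t_*|=h$, in which singularities could a priori accumulate at the top or bottom face of the cylinder. The key observation here is that near $(0,0,\pm h)$ the boundaries of all $\Omega_n$ and of $\Omega_*$ coincide with the common flat face $\{x_3=\pm h\}$, on which the homeotropic data reduces to $\eo$. This puts us in the locally flat geometry of Remark~\ref{specgeomremreg}, with universal constants $\boldsymbol{\varepsilon}^\sharp_{\rm bd}$ and $\boldsymbol{\kappa}^\sharp$ independent of $n$. Since $Q_*$ is of class $C^1$ up to the boundary at $p_*$ by Theorem~\ref{bdrregthm}, the scaled energy $\frac{1}{r}\mathcal{E}_\lambda(Q_*, B_r(p_*)\cap \Omega_*)$ tends to $0$ as $r\to 0$; the strong $W^{1,2}$-convergence on $\Omega_\flat$ (which contains $B_r(p_*)\cap\Omega_n$ for $r$ small) would then transfer a scaled smallness $\leq \boldsymbol{\varepsilon}^\sharp_{\rm bd}/2$ to $Q_n$ for $n$ large, and Remark~\ref{specgeomremreg} would yield smoothness of $Q_n$ on $B_{\boldsymbol{\kappa}^\sharp r}(p_*)\cap \overline{\Omega_n}$, contradicting $p_n \to p_*$ and $p_n \in \mathrm{sing}(Q_n)$.
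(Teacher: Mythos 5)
Your proof is correct, but it is organized differently from the paper's. For claim (ii) you essentially reproduce the paper's mechanism (restrict to an interior ball around a putative singularity of $Q_*$ and invoke Lemma~\ref{persissmoothloc}(ii)), only without the intermediate reduction; the real divergence is in claim (i). The paper introduces the fixed auxiliary domain $\Omega_{\flat/2}=\mathfrak{C}^h_{\ell_\flat/2,\rho}$, proves uniform $C^{2,\alpha}$ bounds on a full neighborhood of $\partial\Omega_{\flat/2}$ (via Remark~\ref{specgeomremreg} near the flat faces and interior $\boldsymbol{\varepsilon}$-regularity in the annular region away from the axis), deduces $C^2$ convergence of the traces $Q_n\to Q_*$ on $\partial\Omega_{\flat/2}$, and then applies Corollary~\ref{corolpersissmooth} as a black box in that fixed domain, using that all singularities lie on the common axial segment contained in $\Omega_{\flat/2}$. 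You instead argue by contradiction directly at an accumulation point of singularities: in the interior you use Lemma~\ref{persissmoothloc}(i), and at the only possible boundary accumulation points $(0,0,\pm h)$ you exploit that the local geometry is the flat half-ball with constant data $\eo$, identical for all $\Omega_n$ and $\Omega_*$, so the universal constants and the single-scale smallness criterion of Remark~\ref{specgeomremreg} apply uniformly in $n$; the smallness of $\tfrac1r\mathcal{E}_\lambda(Q_*,B_r\cap\Omega_*)$ comes from the $C^{1,\alpha}$ boundary regularity of $Q_*$ and is transferred to $Q_n$ by the strong $W^{1,2}(\Omega_\flat)$ convergence, since $B_r(p_*)\cap\Omega_n=B_r(p_*)\cap\Omega_\flat$ for small $r$. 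Your route is leaner, as it uses boundary regularity only at the two poles rather than along the whole boundary of an intermediate cylinder, and it avoids the trace-convergence step altogether; what the paper's heavier route buys is precisely that $C^2$ convergence on $\partial\Omega_{\flat/2}$, which is then recycled verbatim in the proof of Proposition~\ref{perssingmovdom} (persistence of singularities) via Corollary~\ref{corosinggotosingglobal} --- if you extended your argument to that companion statement you would either need Proposition~\ref{singgotosing} ball by ball along the axis or reconstruct a comparable uniform boundary estimate anyway.
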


\begin{proof}
To simplify the notation, we write $\Omega_{\flat/2}:=\mathfrak{C}^h_{\ell_\flat/2,\rho}$. Since $\partial\Omega_{\flat/2}\setminus\{|x_3|=h\}\subset \Omega_\flat$, the restrictions $Q_n$ and $Q_*$ to $\partial\Omega_{\flat/2}$ belong at least to $C^2(\partial \Omega_{\flat/2})$ by Theorem \ref{bdrregthm} and Corollary \ref{bdrepsregcor} (applied at  {{balls}} centered on  $\partial\Omega_{\flat/2}\cap\{|x_3|=h\}$). To prove the proposition, we only have to show that $Q_n\to Q_*$ in $C^2(\partial \Omega_{\flat/2})$. Indeed, once this $C^2$-convergence is established, the conclusion follows from Corollary \ref{corolpersissmooth} in the domain $\Omega_{\flat/2}$.

First, we observe that  for $\ell\leq 2\ell_\flat$, we have 
$\mathfrak{C}^h_{\ell}\subset \Omega_n$, so that $\partial \mathfrak{C}^h_{\ell}\cap\partial\Omega_n=
\partial^= \mathfrak{C}^h_{\ell}$.  Setting $r_*:=\ell_\flat/2$, it implies that for every $x_*=(x_{*,1},x_{*,2},x_{*,3})\in \partial^= \mathfrak{C}^h_{\ell_\flat}$, the set $\Omega_n\cap B_{2r_*}(x_*)$ is a half ball  
and $Q^{(n)}_{\rm b}={\bf e}_0$ on $\partial\Omega_n\cap B_{2r_*}(x_*)$ so that Remark \ref{specgeomremreg} applies. 

By Theorem \ref{bdrregthm}, $Q_*$ is smooth in a neighborhood of $\partial \Omega_*$. Therefore, we can find  $r_1\in (0,r_*/4)$ (depending on $\lambda$) such that the conclusion of Remark \ref{specgeomremreg}  holds and 
$$\frac{1}{r_1}\int_{B_{r_1}(x_0)\cap \Omega_{{*}}}|\nabla Q_*|^2\,dx\leq \frac{\boldsymbol{\varepsilon}^\sharp_{\rm bd}}{2} \quad\text{for every $x_0\in  \partial^= \mathfrak{C}^h_{3\ell_\flat/4}$}\,,$$
where the universal constant $\boldsymbol{\varepsilon}^\sharp_{\rm bd}>0$ is given by Remark \ref{specgeomremreg}. Then we consider a finite covering of $\partial^= \mathfrak{C}^h_{3\ell_\flat/4}$  by open balls  $B_{\boldsymbol{\kappa}^\sharp r_1/2}(x_j)$, $j=1,\ldots, J$, with $x_j \in \partial^= \mathfrak{C}^h_{3\ell_\flat/4}$ and $\boldsymbol{\kappa}^\sharp\in(0,1)$ the further universal constant given by  Remark \ref{specgeomremreg}. Since  $Q_n\to Q_*$ strongly in $W^{1,2}(\Omega_{\flat})$, we have for $n$ large enough,
$$ \frac{1}{r_1}\int_{B_{r_1}(x_j)\cap \Omega_n}|\nabla Q_n|^2\,dx\leq \boldsymbol{\varepsilon}^\sharp_{\rm bd} \quad\text{for every $j=1,\ldots,J$}\,.$$
Applying Remark \ref{specgeomremreg}, we deduce that $Q_n$ is bounded in $C^{2,\alpha}(B_{\boldsymbol{\kappa}^\sharp r_1/2}(x_j)\cap \Omega_n)$ for every $\alpha\in(0,1)$ and each $j=1,\ldots,J$. Hence $Q_n$ is bounded in the  $C^{2,\alpha}$-topology in
$$L_\delta:=\overline{\mathfrak{C}^h_{3\ell_\flat/4}}\cap \big\{h-\delta\leq |x_3|\leq h\big\}$$
for some $\delta\in(0,\boldsymbol{\kappa}^\sharp r_1/2)$. 

By Theorem \ref{bdrregthm} again, $Q_*$ is smooth away from $\{x_3\text{-axis}\}$. Hence we can find $r_2\in(0,\delta/4)$ such that 
$$ \frac{1}{r_2}\int_{B_{r_2}(x_0)}|\nabla Q_*|^2\,dx\leq \frac{\boldsymbol{\varepsilon}_{\rm in}}{8}\quad\text{for every $x_0\in \big(\overline{\mathfrak{C}^h_{3\ell_\flat/4}}\setminus \mathfrak{C}^h_{\ell_\flat/4}\big)\cap\big\{|x_3|\leq h-\delta/2\big\}$}\,,$$
where $\boldsymbol{\varepsilon}_{\rm in}>0$ is the universal constant  given by Proposition \ref{intepsregprop}. Choosing $r_2$ small enough (depending on $\lambda$), and using the strong convergence in $W^{1,2}(\Omega_\flat)$ of $Q_n$ toward $Q_*$ combined with a covering argument (as above), we conclude from Proposition \ref{intepsregprop} that $Q_n$ is bounded in the  $C^{2,\alpha}$-topology in $ \big(\overline{\mathfrak{C}^h_{3\ell_\flat/4}}\setminus \mathfrak{C}^h_{\ell_\flat/4}\big)\cap\big\{|x_3|\leq h-\delta/2\big\}$ for every $\alpha\in(0,1)$.   

To summarize, $Q_n$ is thus bounded in the $C^{2,\alpha}$-topology in the set
$$ L_\delta\cup  \bigg(\big(\overline{\mathfrak{C}^h_{3\ell_\flat/4}}\setminus \mathfrak{C}^h_{\ell_\flat/4}\big)\cap\big\{|x_3|\leq h-\delta/2\big\}\bigg)=L_\delta\cup\big(\overline{\mathfrak{C}^h_{3\ell_\flat/4}}\setminus \mathfrak{C}^h_{\ell_\flat/4}\big)=:N_\delta \,.$$
From the strong $W^{1,2}(\Omega_\flat)$-convergence to $Q_*$, we conclude that $Q_n\to Q_*$ in $C^2(N_\delta)$. Observing  that 
$\partial \Omega_{\flat/2}\subset N_\delta$, the conclusion follows. 
\end{proof}

\begin{proposition}[\bf persistence of singularities]\label{perssingmovdom}
Let $Q_n$ and $Q_*$ be as in Proposition~\ref{perssmoothmovdom}. If ${\rm sing}(Q_*)=\{a_1^*,\ldots,a_K^*\}$, then there exists an integer $n_*$ such that for every $n\geq n_*$,  ${\rm sing}(Q_n)=\{a_1^n,\ldots,a_K^n\}$ for some distinct points $a_1^n,\ldots,a_K^n$ satisfying $|a^n_j-a^*_j|\to 0$ {{as $n\to\infty$}}.
\end{proposition}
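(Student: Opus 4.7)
The plan is to adapt the proof of Corollary~\ref{corosinggotosingglobal}, which handled the analogous persistence of singularities when only the boundary data vary on a fixed domain. The essential difference here is that the domains $\Omega_n$ themselves vary, but only through their width $\ell_n$, while the inner cylinder $\Omega_\flat$ (and $\Omega_{\flat/2}$) is kept fixed well inside every $\Omega_n$, so the local analysis can be carried out uniformly in~$n$.

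First, I would invoke Theorem~\ref{bdrregthm} on $\Omega_*$ to record that ${\rm sing}(Q_*)=\{a_1^*,\ldots,a_K^*\}$ is a finite subset of $\Omega_*\cap\{x_3\text{-axis}\}$ lying at positive distance from $\partial\Omega_*$. Since these points sit on the vertical axis they are automatically in the interior of $\Omega_\flat$ and at positive distance from the flat top/bottom boundary $\{|x_3|=h\}$. Fix $\delta>0$ so small that the balls $B_{3\delta}(a_j^*)$ are pairwise disjoint, each contained in the interior of $\Omega_\flat$, and at distance at least $3\delta$ from $\{|x_3|=h\}$.

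Next, and this is the crux, I would upgrade the strong $W^{1,2}(\Omega_\flat)$-convergence of $Q_n$ to $Q_*$ into $C^2$-convergence on the regular set $K_\delta:=\overline{\Omega_{\flat/2}}\setminus\cup_j B_\delta(a_j^*)$. There $Q_*$ is $C^{2,\alpha}$ (by Theorem~\ref{bdrregthm}), so the argument from the proof of Proposition~\ref{perssmoothmovdom} adapts directly: combine the interior $\varepsilon$-regularity of Proposition~\ref{intepsregprop} at points of $K_\delta$ interior to~$\Omega_n$ with the flat-geometry boundary $\varepsilon$-regularity of Remark~\ref{specgeomremreg} at points of $K_\delta\cap\{|x_3|=h\}$. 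This is precisely where the geometry matters: on that flat portion $\partial\Omega_n$ is locally a half-space and $Q^{(n)}_{\rm b}\equiv\eo$, so the relevant $\varepsilon$-regularity constants are \emph{universal}, independent of~$n$. Covering $K_\delta$ by finitely many balls, the strong $W^{1,2}$-convergence yields uniform $C^k$-bounds for $Q_n$ on a neighborhood of $K_\delta$ for every~$k$, and Arzel\`a-Ascoli delivers $C^2$-convergence. This is the main technical obstacle.

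Once this is in hand, the rest is routine. Consider $\Omega_\delta:=\Omega_{\flat/2}\setminus\cup_j\overline{B_{2\delta}(a_j^*)}$, whose boundary is $C^{3,1}$ for $\delta$ small. The restrictions $Q_n|_{\Omega_\delta}$ and $Q_*|_{\Omega_\delta}$ minimize $\mathcal{E}_\lambda$ for their own boundary traces (any competitor on $\Omega_\delta$ extends by the global minimizer outside), those traces converge in $C^2(\partial\Omega_\delta)$ by the previous step, and ${\rm sing}(Q_*|_{\Omega_\delta})=\emptyset$; Corollary~\ref{corolpersissmooth} applied on $\Omega_\delta$ therefore forces ${\rm sing}(Q_n)\cap\Omega_\delta=\emptyset$ for~$n$ large, hence ${\rm sing}(Q_n)\subset\cup_j\overline{B_{2\delta}(a_j^*)}$. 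Finally, inside each ball $B_{3\delta}(a_j^*)\subset\Omega_n$ all hypotheses of Proposition~\ref{singgotosing} are met at radius $\rho=2\delta$: strong $W^{1,2}$-convergence, ${\rm sing}(Q_*)\cap\partial B_{2\delta}(a_j^*)=\emptyset$, and ${\rm sing}(Q_*)\cap B_{2\delta}(a_j^*)=\{a_j^*\}$. This produces a single point $a_j^n\in B_{2\delta}(a_j^*)$ with ${\rm sing}(Q_n)\cap\overline{B_{2\delta}(a_j^*)}=\{a_j^n\}$ and $a_j^n\to a_j^*$. Concatenating over $j=1,\ldots,K$ yields ${\rm sing}(Q_n)=\{a_1^n,\ldots,a_K^n\}$ with the required convergence.
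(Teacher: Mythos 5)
Your proof is correct and follows essentially the same route as the paper: the paper establishes $Q_n\to Q_*$ in $C^2(\partial\Omega_{\flat/2})$ exactly as in Proposition~\ref{perssmoothmovdom} (using the universal flat-geometry boundary $\boldsymbol{\varepsilon}$-regularity and interior $\boldsymbol{\varepsilon}$-regularity) and then simply invokes Corollary~\ref{corosinggotosingglobal} in the fixed domain $\Omega_{\flat/2}$. Your steps 3--4 just unpack the proof of that corollary (Corollary~\ref{corolpersissmooth} on $\Omega_\delta$ plus Proposition~\ref{singgotosing} on the balls $B_{2\delta}(a_j^*)$), together with the implicit fact that ${\rm sing}(Q_n)$ lies on the $x_3$-axis and hence inside $\Omega_{\flat/2}$, so the two arguments coincide in substance.
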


\begin{proof}
As in the proof of Proposition \ref{perssmoothmovdom}, $Q_n\to Q_*$ in $C^2(\partial \Omega_{\flat/2})$, and the conclusion follows from Corollary \ref{corosinggotosingglobal} in the domain $\Omega_{\flat/2}$. 
\end{proof}

In combination with the previous propositions, we now provide the required compactness property of minimizers as $\Omega_n\to\Omega_*$.  

\begin{lemma}\label{complemmovdom}
Under the assumptions (and notations) of Proposition~\ref{perssmoothmovdom},  assume that $\ell_n \to \ell_*$ as $n \to \infty$.  
There exists a (not relabeled) subsequence and $Q_*$ minimizing $\mathcal{E}_\lambda$ over $\mathcal{A}^{\rm sym}_{Q_{\rm b}^{*}}(\Omega_{*})$ such that $Q_n\to Q_*$ strongly in $W^{1,2}(\Omega_{\flat})$ as $n\to\infty$. 
\end{lemma}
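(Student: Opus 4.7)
The strategy adapts the proof of Lemma \ref{compacteasy} (for a fixed domain) to the present setting of varying smoothed cylinders $\Omega_n \to \Omega_*$. Lemma \ref{lemma:energy-estimates-large} yields the uniform bound $\mathcal{E}_\lambda(Q_n,\Omega_n) \leq K\ell_n$, which stays bounded since $\ell_n \to \ell_*$. After extracting a monotone subsequence, exhausting $\Omega_*$ by smoothed subcylinders $\mathfrak{C}^h_{\eta,\rho}$ with $\eta \uparrow \ell_*$ (each contained in $\Omega_n$ for $n$ large) and a standard diagonal argument produce an equivariant weak limit $Q_* \in W^{1,2}_{\rm sym}(\Omega_*;\bbS^4)$ with $Q_n \rightharpoonup Q_*$ in $W^{1,2}(\mathfrak{C}^h_{\eta,\rho})$ for every $\eta < \ell_*$.

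Next I would identify the trace of $Q_*$ on $\partial \Omega_*$ with $Q_{\rm b}^*$. On the flat top and bottom parts $\partial^= \Omega_*$, where $Q^{(n)}_{\rm b} \equiv \eo$ for every $n$, this follows from the horizontal-slab Fubini/Fatou argument already used in the proof of Lemma \ref{lemma:local-hor-cpt}. On the rest of $\partial \Omega_*$ the argument splits according to whether $\ell_n \uparrow \ell_*$ or $\ell_n \downarrow \ell_*$. In the first case, $\partial^{\rm lat} \Omega_n$ lies strictly inside $\Omega_*$ and carries the prescribed value $Q^{(n)}_{\rm b}$; exploiting $\bbS^1$-equivariance to reduce to a one-dimensional trace problem on the vertical section $\mathcal{D}^+_{\Omega_*}$ (cf. Lemma \ref{lemma:dec-equiv}), together with the smooth dependence of the homeotropic data on $\ell$ through the outward normal field in \eqref{eq:radial-anchoring}, gives $Q_* = Q^*_{\rm b}$ on $\partial \Omega_*$. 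In the second case, the boundary $\boldsymbol{\varepsilon}$-regularity of Corollary \ref{bdrepsregcor} (applied to $Q_n$ with constants uniform in $n$, owing to the fixed smoothing scale $\rho$ and to the local energy control coming from the monotonicity formula \eqref{eq:mon-form-full}) yields uniform $C^{1,\alpha}$ bounds in a tubular neighborhood of $\partial \Omega_n$; by Arzel\`a--Ascoli the restriction of $Q_n$ to $\partial \Omega_*$ then converges to $Q^*_{\rm b}$.

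The minimality of $Q_*$ in $\mathcal{A}^{\rm sym}_{Q^*_{\rm b}}(\Omega_*)$ will be obtained via the usual comparison argument. For any $\widetilde Q \in \mathcal{A}^{\rm sym}_{Q^*_{\rm b}}(\Omega_*)$, I would construct $\widetilde Q_n \in \mathcal{A}^{\rm sym}_{Q^{(n)}_{\rm b}}(\Omega_n)$ with $\mathcal{E}_\lambda(\widetilde Q_n,\Omega_n) \to \mathcal{E}_\lambda(\widetilde Q,\Omega_*)$, by interpolating between a radial rescaling of $\widetilde Q$ and the boundary data $Q^{(n)}_{\rm b}$ in a thin collar of width $|\ell_n - \ell_*| + 2^{-n}$ and normalizing, exactly as in Lemmas \ref{compacteasy} and \ref{lemma:local-hor-cpt}. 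The minimality of $Q_n$ and weak lower semicontinuity on each $\mathfrak{C}^h_{\eta,\rho}$, combined with letting $\eta \uparrow \ell_*$, yield $\mathcal{E}_\lambda(Q_*,\Omega_*) \leq \mathcal{E}_\lambda(\widetilde Q,\Omega_*)$. Taking $\widetilde Q = Q_*$ produces $\mathcal{E}_\lambda(Q_n,\Omega_n) \to \mathcal{E}_\lambda(Q_*,\Omega_*)$; together with weak convergence on $\Omega_\flat \subset \Omega_n \cap \Omega_*$ (guaranteed by $\ell_n \geq 3\ell_\flat$) and dominated convergence for the potential term, this yields $\|\nabla Q_n\|_{L^2(\Omega_\flat)} \to \|\nabla Q_*\|_{L^2(\Omega_\flat)}$, whence strong $W^{1,2}(\Omega_\flat)$-convergence.

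The main obstacle will be the construction of the competitor sequence $\widetilde Q_n$ together with the trace identification in the second step, both complicated by the simultaneous variation of the domain and the boundary data. The crucial geometric input is the fixed smoothing scale $\rho$, which ensures uniform $C^{3,1}$-regularity of $\partial \Omega_n$, $C^0$-convergence of the outward normal field $\vec{n}_n \to \vec{n}_*$ as $\ell_n \to \ell_*$, and Hausdorff convergence $\partial \Omega_n \to \partial \Omega_*$ at rate $|\ell_n - \ell_*|$; this should suffice to absorb the technicalities with energy corrections of order $|\ell_n - \ell_*| \to 0$.
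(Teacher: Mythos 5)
Your outline is a genuinely different route from the paper's, and two of its steps contain real gaps.

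The paper introduces an explicit $\bbS^1$-equivariant biLipschitz homeomorphism $\Phi_n:\overline{\Omega_n}\to\overline{\Omega_*}$ (the identity in cylindrical coordinates for $r<\ell_\flat$, a rigid horizontal translation for $r\geq2\ell_\flat$, and a linear stretch in between), with biLipschitz constant $C_n\to1$ and with the crucial property that $Q_{\rm b}^{(n)}\circ\Phi_n^{-1}=Q_{\rm b}^*$ since near $\partial\Omega_n$ the map is a pure translation commuting with the outward normal. Setting $\widetilde Q_n:=Q_n\circ\Phi_n^{-1}$ reduces everything to a \emph{fixed} domain $\Omega_*$ with \emph{fixed} boundary datum $Q_{\rm b}^*$: trace identification is then automatic by weak $W^{1,2}$-continuity of the trace, and the competitor construction is simply $Q\circ\Phi_n$. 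You instead avoid a change of variables and work directly with the moving domains, which forces you into the two difficulties the homeomorphism trick was designed to eliminate.

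Concretely: \textbf{(1)} Your trace identification on the curved part of $\partial\Omega_*$ is not substantiated. When $\ell_n\downarrow\ell_*$ you invoke Corollary \ref{bdrepsregcor} ``with constants uniform in $n$''; but the constants there depend on the whole domain $\Omega$, not only on the local boundary geometry, and you would have to re-derive a uniform version and then verify the smallness hypothesis on the scaled energy uniformly in $n$ near a point of $\partial\Omega_*$ that is \emph{interior} to $\Omega_n$. This is delicate and not sketched. Moreover, for $\eta<\ell_*$ the smoothed corners of $\mathfrak{C}^h_{\eta,\rho}$ do not line up with those of $\Omega_*$, so the exhaustion $\mathfrak{C}^h_{\eta,\rho}\uparrow\Omega_*$ does not directly control the trace near the corners. \textbf{(2)} The competitor construction ``radial rescaling of $\widetilde Q$ plus collar interpolation with $Q_{\rm b}^{(n)}$'' is not well-defined as stated: a horizontal dilation does not map $\Omega_*$ onto $\Omega_n$ because the $C^{3,1}$-smoothed corners translate rather than scale, so a competitor built by dilating $\widetilde Q$ would not satisfy the boundary condition on the curved corner portion of $\partial\Omega_n$. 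One would have to design a collar region whose shape tracks the moving corners and verify that the normalization step $v_n/|v_n|$ is admissible there; the paper's $\Phi_n$ does exactly this job once and for all. Your proof would be repaired most economically by adopting the change-of-variables device, after which the a priori bound, weak compactness, lower semicontinuity, and the potential term via Rellich go through exactly as you describe.
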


\begin{proof}
Notice that{{,}} by our choice of the parameters, we have 
$\{x\in\Omega_n: \ell_\flat\leq r< 2\ell_\flat\}=\mathfrak{C}^h_{2\ell_\flat}\setminus \mathfrak{C}^h_{\ell_\flat}$ where $r^2:=x_1^2+x_2^2$, and the mapping $\Phi_n:\overline\Omega_n\to\overline\Omega_*$ given in cylindrical coordinates by 
\begin{equation} \label{formbiliphomeocyl}
\Phi_n(r,x_3):=\begin{cases}
(r,x_3) & \text{if $r<\ell_\flat$}\,,\\
(\sigma_n r-\tau_n,x_3) & \text{if $\ell_\flat\leq r<2\ell_\flat$}\,,\\
(r+\tau_n,x_3) & \text{if $2\ell_\flat\leq r\leq \ell_n$}\,,
\end{cases}
\quad \text{with }\sigma_n:=1+\frac{\ell_*-\ell_n}{\ell_\flat}\text{ and }\tau_n:=\ell_*-\ell_n\,,
\end{equation}
is one-to-one and biLipschitz{{, $\bbS^1$-equivariant and such that $\Phi_n(\overline{\mathfrak{C}^h_{2 \ell_\flat}} \setminus \mathfrak{C}^h_{\ell_\flat}) = \overline{\mathfrak{C}^h_{2 \ell_\flat+\tau_n}} \setminus \mathfrak{C}^h_{\ell_\flat}$}}. 

For an arbitrary map $\widehat{Q} \in \mathcal{A}_{Q_{\rm b}^{(n)}}(\Omega_n)$, we define $\widehat{Q}_n := \widehat{Q} \circ \Phi_n^{-1}$ and we observe that $\widehat{Q}_n \in \mathcal{A}_{Q_{\rm b}^*}(\Omega_*)$. Combining the chain rule, a change of variables, and \eqref{formbiliphomeocyl} we obtain
\begin{equation}\label{eq:bilip-energy-est}
	\frac{1}{C_n}\mathcal{E}_\lambda ({{\widehat{Q}}},\Omega_n)\leq  \mathcal{E}_\lambda ({{\widehat{Q}_n}},\Omega_*)\leq C_n \mathcal{E}_\lambda ({{\widehat{Q}}},\Omega_n)\,,
\end{equation}
for a constant $C_n\to 1$ as $n\to\infty${{ depending only on $\Phi_n$. In addition, we notice that, if $\widehat{Q} \in \mathcal{A}^{\rm sym}_{Q_n}(\Omega_n)$, then $\widehat{Q}_n \in \mathcal{A}^{\rm sym}_{Q_{\rm b}^*}(\Omega_*)$ because the $\Phi_n$'s are equivariant. Therefore, t}}esting the minimality of $Q_n$ with the $0$-homogenous extension of $Q^{(n)}_{\rm b}$, we infer from direct computations that 
$\sup_n\mathcal{E}_\lambda(Q_n,\Omega_n)<\infty$, and thus{{, defining $\widetilde{Q}_n := Q_n \circ \Phi_n^{-1}$ and using \eqref{eq:bilip-energy-est} with $\widehat{Q}_n = \widetilde{Q}_n$, it follows}} $\sup_n\mathcal{E}_\lambda(\widetilde Q_n,\Omega_*)<\infty$. As a consequence, we can find a (not relabeled) subsequence such that $\widetilde Q_n\rightharpoonup Q_*$ weakly in $W^{1,2}(\Omega_*)$.  {{Since $\widetilde{Q}_n |_{\partial \Omega_*} = Q_{\rm b}^*$ independently of $n$ and since the symmetry and unit norm constraints are weakly closed,}} we have $Q_*\in \mathcal{A}^{\rm sym}_{Q_{\rm b}^{*}}(\Omega_*)$. In addition, by lower semi-continuity of the energy,
\begin{equation}\label{liminfcompmovdom}
\mathcal{E}_\lambda(Q_*,\Omega_*)\leq \liminf_{n\to\infty}   \mathcal{E}_\lambda (\widetilde Q_n,\Omega_*) \,.
\end{equation}
On the other hand, as above we have $Q_*\circ \Phi_n\in  \mathcal{A}^{\rm sym}_{Q_{\rm b}^{(n)}}(\Omega_n)$, and the minimality of $Q_n$ {{together with \eqref{eq:bilip-energy-est} applied twice (once with $\widehat{Q}_n = \widetilde{Q}_n$ and once with $Q_* \circ \Phi_n$ in place of $\widehat{Q}$)}} yields
$$\frac{1}{C_n}  \mathcal{E}_\lambda(\widetilde Q_n,\Omega_*) \leq  \mathcal{E}_\lambda(Q_n,\Omega_n)\leq   \mathcal{E}_\lambda(Q_*\circ\Phi_n,\Omega_n)\leq C_n \mathcal{E}_\lambda(Q_*,\Omega_*)\,.$$
Since $C_n\to 1$, taking the $\limsup_n$ above we deduce from \eqref{liminfcompmovdom} that $\lim_n \mathcal{E}_\lambda(\widetilde Q_n,\Omega_*)=\mathcal{E}_\lambda(Q_*,\Omega_*)$. By the compact embedding $W^{1,2}(\Omega_*)\hookrightarrow L^4(\Omega_*)$, we have $W(\widetilde Q_n)\to W(Q_*)$ strongly in $L^1(\Omega_*)$. Hence, $\int_{\Omega_*}|\nabla \widetilde Q_n|^2\,dx\to \int_{\Omega_*}|\nabla  Q_*|^2\,dx$ so that $\widetilde Q_n\to Q_*$ strongly in $W^{1,2}(\Omega_*)$. Since $\widetilde Q_n=Q_n$ in $\Omega_\flat\subset\Omega_*$, we conclude that  $Q_n\to Q_*$ strongly in $W^{1,2}(\Omega_\flat)$. 

It now remains to show the minimality of $Q_*$. To this purpose, let us fix an arbitrary competitor $Q\in \mathcal{A}^{\rm sym}_{Q_{\rm b}^{*}}(\Omega_*)$. Once again, we observe that 
$Q\circ \Phi_n\in  \mathcal{A}^{\rm sym}_{Q_{\rm b}^{(n)}}(\Omega_n)$, and by minimality of $Q_n$ {{along with \eqref{eq:bilip-energy-est}}}, 
$$\frac{1}{C_n}  \mathcal{E}_\lambda(\widetilde Q_n,\Omega_*) \leq  \mathcal{E}_\lambda(Q_n,\Omega_n)\leq  \mathcal{E}_\lambda(Q\circ\Phi_n,\Omega_n)\leq C_n \mathcal{E}_\lambda(Q,\Omega_*)\,.$$
Letting $n\to\infty$, we thus  obtain $\mathcal{E}_\lambda(Q_*,\Omega_*)\leq \mathcal{E}_\lambda(Q,\Omega_*)$, which completes the proof. 
\end{proof}

We are finally ready to prove our coexistence result for torus and split minimizers under homeotropic boundary data.

\begin{proof}[Proof of Theorem \ref{thmcoexmovdom}]
Throughout the proof we set $\ell_\flat:=\ell_0/3<h$. 
\vskip3pt

\noindent {\it Step 1.} Define 
$$\ell_1:=\sup\bigg\{\bar \ell \geq \ell_0 : \text{every minimizer of $\mathcal{E}_\lambda$ over $\mathcal{A}^{\rm sym}_{Q_{\rm b}^{(\ell)}}(\Omega_{\ell})$ is  split for every $\ell_0\leq \ell\leq \bar \ell$} \bigg\}\,, $$
and observe that $\ell_1<\infty$ by Theorem  \ref{thmlargecyl}.
We claim that $\ell_1>\ell_0$. Indeed, assume by contradiction that $\ell_1=\ell_0$. Then, there exists a strictly decreasing sequence $\{\ell_n\}$ such that $\ell_n\to \ell_0$, and for each integer $n$,   $\mathcal{E}_\lambda$ admits a minimizer $Q_n$ over $\mathcal{A}^{\rm sym}_{Q_{\rm b}^{(\ell_n)}}(\Omega_{\ell_n})$ such that ${\rm sing}(Q_n)=\emptyset$. By Lemma~\ref{complemmovdom}, there exists a (not relabeled) subsequence such that $Q_n\to Q_*$ strongly in $W^{1,2}(\Omega_{\ell_\flat})$ where $Q_*$ minimizes $\mathcal{E}_\lambda$ over $\mathcal{A}^{\rm sym}_{Q_{\rm b}^{(\ell_0)}}(\Omega_{\ell_0})$.  Applying Proposition \ref{perssmoothmovdom}, we infer that ${\rm sing}(Q_*)=\emptyset$, i.e., $Q_*$ is torus, contradicting our assumption on $\ell_0$. Hence $\ell_1>\ell_0$.  

We now claim that $\mathcal{E}_\lambda$ admits both a split and a torus minimizer over $\mathcal{A}^{\rm sym}_{Q_{\rm b}^{(\ell_1)}}(\Omega_{\ell_1})$. Indeed, assume first by contradiction that every minimizer is split. Arguing as above with $\ell_1$ in  place of $\ell_0$, it would lead to the existence of $\delta>0$ such that for $\ell_1\leq \ell<\ell_1+\delta$, every minimizer is split, contradicting the definition of $\ell_1$. Whence the existence of a torus minimizer. To prove the existence of a split minimizer, let us consider a strictly increasing sequence $\ell_0<\ell_n<\ell_1$ such that $\ell_n\to \ell_1$.   For each integer $n$, let $Q_n$ be a minimizer of $\mathcal{E}_\lambda$ over $\mathcal{A}^{\rm sym}_{Q_{\rm b}^{(\ell_n)}}(\Omega_{\ell_n})$, which must be split by definition of $\ell_1$.  Applying Lemma~\ref{complemmovdom}, we can find a (not relabeled) subsequence such that $Q_n\to Q_\sharp $ strongly in $W^{1,2}(\Omega_{\ell_\flat})$ where $Q_\sharp$ minimizes $\mathcal{E}_\lambda$ over $\mathcal{A}^{\rm sym}_{Q_{\rm b}^{(\ell_1)}}(\Omega_{\ell_1})$. Since ${\rm sing}(Q_n)\not=\emptyset$, we deduce from Proposition~\ref{perssingmovdom} that ${\rm sing}(Q_\sharp)\not=\emptyset$, i.e., $Q_\sharp$ is a split solution.  
\vskip5pt

\noindent{\it Step 2.} Define
$$\ell_2 :=\inf\bigg\{\bar \ell \geq \ell_0: \text{every minimizer of $\mathcal{E}_\lambda$ over $\mathcal{A}^{\rm sym}_{Q_{\rm b}^{(\ell)}}(\Omega_{\ell})$ is  torus for every $\ell\geq \bar \ell$}  \bigg\}\,,$$
and observe that it is indeed well defined and finite by Theorem~\ref{thmlargecyl} (as the set above is not empty). Clearly, $\ell_2\geq \ell_1$ {{by definition of $\ell_1$}}. Interchanging the roles of split and torus, we can argue exactly as in the previous step to infer that there exists a minimizer of $\mathcal{E}_\lambda$ over $\mathcal{A}^{\rm sym}_{Q_{\rm b}^{(\ell_2)}}(\Omega_{\ell_2})$ which is split (assume by contradiction it does not exist, then use Proposition \ref{perssmoothmovdom} and Lemma \ref{complemmovdom} along an increasing sequence $\ell_n\to\ell_2$ to deduce that for some $\delta>0$, every minimizer of $\mathcal{E}_\lambda$ over $\mathcal{A}^{\rm sym}_{Q_{\rm b}^{(\ell)}}(\Omega_{\ell})$ is torus for $\ell_2\geq \ell>\ell_2-\delta$, hence contradicting the definition of $\ell_2$). The existence of a torus minimizer  of $\mathcal{E}_\lambda$ over $\mathcal{A}^{\rm sym}_{Q_{\rm b}^{(\ell_2)}}(\Omega_{\ell_2})$ also follows as in Step 1. We consider a strictly decreasing sequence $\ell_n\to \ell_2$ and corresponding torus minimizers  of $\mathcal{E}_\lambda$ over $\mathcal{A}^{\rm sym}_{Q_{\rm b}^{(\ell_n)}}(\Omega_{\ell_n})$. By Lemma \ref{complemmovdom} and Proposition  \ref{perssmoothmovdom}, we can extract a subsequence strongly converging in $W^{1,2}(\Omega_{\ell_\flat})$ toward a minimizer over $\mathcal{A}^{\rm sym}_{Q_{\rm b}^{(\ell_2)}}(\Omega_{\ell_2})$ which must by be torus. 
\end{proof}


\subsection{Symmetry breaking in intermediate cylinders}

We complete this section exploiting Theorem~\ref{thmcoexmovdom} to show that a symmetry breaking occurs for intermediate cylinders of thickness $\ell$ close to the critical values $\ell_1$ and $\ell_2$. As in Corollary \ref{inst+symmbreaking}, it relies on the full regularity of global energy minimizers \cite[Theorem~1.1]{DMP1}  among nonsymmetric competitors, and on the continuity of the energy infimum with respect to the thickness of the cylinder stated in the following lemma. 

\begin{lemma}\label{contvalfcts}
Let $h>0$ and $\rho>0$ be fixed with $h>2\rho$. For a smoothed cylinder $\mathfrak{C}^h_{\ell,\rho}$, let $Q^{(\ell)}_{\rm b}$ be its homeotropic boundary data given by~\eqref{eq:radial-anchoring}. The functions
$$\ell\in(2\rho,+\infty)\mapsto {\rm Val}(\ell):=\inf\Big\{\mathcal{E}_\lambda(Q,\mathfrak{C}^h_{\ell,\rho}): Q\in \mathcal{A}_{Q^{(\ell)}_{\rm b}}(\mathfrak{C}^h_{\ell,\rho})\Big\} $$
and
$$\ell\in(2\rho,+\infty)\mapsto {\rm Val}^{\rm sym}(\ell):=\inf\Big\{\mathcal{E}_\lambda(Q,\mathfrak{C}^h_{\ell,\rho}): Q\in \mathcal{A}^{\rm sym}_{Q^{(\ell)}_{\rm b}}(\mathfrak{C}^h_{\ell,\rho})\Big\} $$
are continuous. 
\end{lemma}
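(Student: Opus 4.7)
The approach is a change-of-variables reduction: for $\ell_* > 2\rho$ fixed and $\ell_n \to \ell_*$, I would transport admissible maps between $\mathfrak{C}^h_{\ell_n,\rho}$ and $\mathfrak{C}^h_{\ell_*,\rho}$ via an explicit family of $\mathbb{S}^1$-equivariant bi-Lipschitz diffeomorphisms $\Phi_n$ whose bi-Lipschitz constants converge to $1$. Concretely, $\Phi_n$ can be built along the same lines as formula \eqref{formbiliphomeocyl}: the identity on an inner cylinder $\{r \leq \ell_\flat\}$, a linear radial rescaling on a middle annulus, and the pure radial translation $(r,x_3) \mapsto (r + \ell_* - \ell_n, x_3)$ on an outer annulus that covers the entire curved part of the boundary---specifically the region $\{r \geq \ell_n - 2\rho\}$, which contains both the lateral face $r = \ell_n$ and the corner $4$-discs centered at $r = \ell_n - \rho$. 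Choosing $\ell_\flat \in (2\rho, \ell_n - 2\rho)$ independently of $n$ (which is possible for $n$ large), the resulting bi-Lipschitz constants $C_n$ satisfy $C_n \to 1$, and the $\mathbb{S}^1$-equivariance of $\Phi_n$ is manifest from the construction.

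The critical point, and the only step I expect to be nontrivial, is the boundary identity
\[
Q^{(\ell_*)}_{\rm b} \circ \Phi_n = Q^{(\ell_n)}_{\rm b} \qquad \text{on } \partial \mathfrak{C}^h_{\ell_n,\rho}.
\]
Once this is established, $Q \mapsto Q \circ \Phi_n^{-1}$ yields a bijection from $\mathcal{A}_{Q^{(\ell_n)}_{\rm b}}(\mathfrak{C}^h_{\ell_n,\rho})$ onto $\mathcal{A}_{Q^{(\ell_*)}_{\rm b}}(\mathfrak{C}^h_{\ell_*,\rho})$ which, by equivariance of $\Phi_n$, restricts to a bijection between the two symmetric subclasses. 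Since the homeotropic datum \eqref{eq:radial-anchoring} depends only on the outer unit normal, proving the identity reduces to showing that $\Phi_n$ maps $\partial \mathfrak{C}^h_{\ell_n,\rho}$ onto $\partial \mathfrak{C}^h_{\ell_*,\rho}$ preserving outer normals. This is immediate on the part of the boundary contained in $\{r \geq \ell_n - 2\rho\}$, where $\Phi_n$ acts as a rigid radial translation and the corner and lateral regions of both cylinders are congruent---a structural consequence of Definition~\ref{def:smooth-cyl} and the shared smoothing parameter $\rho$; on the remaining top/bottom flat pieces inside $\{r < \ell_n - 2\rho\}$, the outer normal is the constant vertical unit vector and $Q_{\rm b}$ reduces to the same constant matrix on both cylinders, so the identity is automatic.

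The conclusion will then be a one-line energy comparison. Exactly as in \eqref{eq:bilip-energy-est}, the chain rule and a change of variables give
\[
\frac{1}{C_n} \mathcal{E}_\lambda(Q, \mathfrak{C}^h_{\ell_n,\rho}) \leq \mathcal{E}_\lambda(Q \circ \Phi_n^{-1}, \mathfrak{C}^h_{\ell_*,\rho}) \leq C_n \mathcal{E}_\lambda(Q, \mathfrak{C}^h_{\ell_n,\rho}),
\]
so that taking the infimum over each admissible class, and combining with the reverse bound obtained by replacing $\Phi_n^{-1}$ with $\Phi_n$, produces $C_n^{-1} \mathrm{Val}(\ell_n) \leq \mathrm{Val}(\ell_*) \leq C_n \mathrm{Val}(\ell_n)$, together with the analogous estimate for $\mathrm{Val}^{\rm sym}$. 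Since $C_n \to 1$, letting $n \to \infty$ will deliver continuity of both functions at $\ell_*$, completing the proof.
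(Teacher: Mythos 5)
Your proposal is correct and follows essentially the same route as the paper: the paper's proof of Lemma~\ref{contvalfcts} also uses the $\mathbb{S}^1$-equivariant bi-Lipschitz maps $\Phi_n$ of \eqref{formbiliphomeocyl} (acting as a radial translation on the outer region, so that $Q_{\rm b}^{(\ell_n)}\circ\Phi_n^{-1}=Q_{\rm b}^{(\ell_*)}$) together with the two-sided energy comparison \eqref{eq:bilip-energy-est} with constants $C_n\to 1$. The only cosmetic difference is that the paper squeezes the values through minimizers $Q_n$, $Q_*$ rather than comparing infima over the transported classes directly, and your explicit verification of the boundary identity fills in a detail the paper simply asserts.
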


\begin{proof}
Let $\ell_n\to\ell_*$ be an arbitrary converging sequence satifying with $\ell_n>2\rho$ and $\ell_*>2\rho$. Applying the Direct Method of Calculus of Variations, we can find for each $n$ a map $Q_n\in \mathcal{A}_{Q^{(\ell_n)}_{\rm b}}(\mathfrak{C}^h_{\ell_n,\rho})$ and  $Q_*\in \mathcal{A}_{Q^{(\ell_*)}_{\rm b}}(\mathfrak{C}^h_{\ell_*,\rho})$ such that 
$\mathcal{E}_\lambda(Q_n,\mathfrak{C}^h_{\ell_n,\rho})={\rm Val}(\ell_n)$ and $\mathcal{E}_\lambda(Q_*,\mathfrak{C}^h_{\ell_*,\rho})={\rm Val}(\ell_*)$ (see \cite{DMP1}). 
Now, we consider the sequence of equivariant biLipschitz homeomorphisms $\Phi_n:\overline{\mathfrak{C}^h_{\ell_n,\rho}}\to \overline{\mathfrak{C}^h_{\ell_*,\rho}}$ from the proof of Lemma~\ref{complemmovdom}, recalling that their biLipschitz constants go to 1 as $n \to \infty$ and that $Q_{\rm b}^{(\ell_n)}\circ \Phi^{-1}_n= Q_{\rm b}^{(\ell_*)}$ for all $n \in \N$.

We set $\widetilde Q_n:= Q_n\circ  \Phi^{-1}_n\in \mathcal{A}_{Q^{(\ell_*)}_{\rm b}}(\mathfrak{C}^h_{\ell_*,\rho})$ and $\widehat Q_n:= Q_*\circ  \Phi_n\in \mathcal{A}_{Q^{(\ell_n)}_{\rm b}}(\mathfrak{C}^h_{\ell_n,\rho})$. Then, {{\eqref{eq:bilip-energy-est} and energy minimality yield}}
\begin{multline*}
{\rm Val}(\ell_*)\leq \mathcal{E}_\lambda(\widetilde Q_n,\mathfrak{C}^h_{\ell_*,\rho}) \leq C_n \mathcal{E}_\lambda(Q_n,\mathfrak{C}^h_{\ell_n,\rho})= C_n {\rm {{Val}}}(\ell_n)\\
\leq 
C_n \mathcal{E}_\lambda(\widehat Q_n,\mathfrak{C}^h_{\ell_n,\rho}) \leq C^2_n \mathcal{E}_\lambda(Q_*,\mathfrak{C}^h_{\ell_*,\rho})= C_n^2{\rm Val}(\ell_*)
\,,
\end{multline*}
for a constant $C_n\to 1$ as $n\to\infty$. Hence, $\lim_n {\rm Val}(\ell_n)={\rm Val}(\ell_*)$ showing that $ {\rm Val}$ is continuous at $\ell_*$. 
The same argument applies  to $ {\rm Val}^{\rm sym}$ since the $\Phi_n$'s are equivariant. 
\end{proof}

\begin{corollary}\label{symbreakintermcyl}
Under the assumptions (and notations) of Theorem \ref{thmcoexmovdom}, there exists $\delta>0$ such that 
$${\rm Val}(\ell)<{\rm Val}^{\rm sym}(\ell) 
\qquad \forall \ell\in[\ell_0,\ell_1+\delta)\cup(\ell_2-\delta,\ell_2+\delta)\,.$$
In particular, for $\ell\in(\ell_0,\ell_1+\delta)\cup(\ell_2-\delta,\ell_2+\delta)$, any minimizer of $\mathcal{E}_\lambda$ over $\mathcal{A}_{Q^{(\ell)}_{\rm b}}(\mathfrak{C}^h_{\ell,\rho})$ is not $\mathbb{S}^1$-equivariant and there exists infinitely many minimizers. 
\end{corollary}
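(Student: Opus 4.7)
The plan is to produce a strict gap ${\rm Val} < {\rm Val}^{\rm sym}$ at the critical thicknesses $\ell_1$ and $\ell_2$ and then propagate it to a neighborhood by continuity of both value functions from Lemma \ref{contvalfcts}.

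First, I would establish the strict inequality on $[\ell_0,\ell_1]\cup\{\ell_2\}$. For each such $\ell$, Theorem \ref{thmcoexmovdom} provides a \emph{split} (hence singular) minimizer $Q^{\rm s}_\ell$ of $\mathcal{E}_\lambda$ over $\mathcal{A}^{\rm sym}_{Q^{(\ell)}_{\rm b}}(\mathfrak{C}^h_{\ell,\rho})$: case (i) covers $\ell\in[\ell_0,\ell_1)$, case (iii) the two endpoints $\ell_1,\ell_2$. On the other hand, by \cite[Theorem~1.1]{DMP1}, every global minimizer of $\mathcal{E}_\lambda$ in the unconstrained class $\mathcal{A}_{Q^{(\ell)}_{\rm b}}(\mathfrak{C}^h_{\ell,\rho})$ is H\"older continuous up to the boundary, and thus smooth by standard bootstrap. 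Since $Q^{\rm s}_\ell$ has a nonempty singular set, it cannot be an absolute minimizer in the full class, so
\[
{\rm Val}(\ell) \;<\; \mathcal{E}_\lambda(Q^{\rm s}_\ell,\mathfrak{C}^h_{\ell,\rho}) \;=\; {\rm Val}^{\rm sym}(\ell)\,.
\]
(An alternative to invoking \cite{DMP1} would be Proposition \ref{split-instability}, which directly yields instability of $Q^{\rm s}_\ell$ in the full class.)

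Next, I would extend the gap. The function $f(\ell):={\rm Val}^{\rm sym}(\ell)-{\rm Val}(\ell)$ is nonnegative and continuous on $(2\rho,\infty)$ by Lemma \ref{contvalfcts}. Since $f(\ell_1)>0$ and $f(\ell_2)>0$ by the first step, continuity yields $\delta_1,\delta_2>0$ with $f>0$ on $(\ell_1-\delta_1,\ell_1+\delta_1)$ and on $(\ell_2-\delta_2,\ell_2+\delta_2)$. Combined with $f>0$ on $[\ell_0,\ell_1]$, setting $\delta:=\min\{\delta_1,\delta_2\}$ gives the stated strict inequality on $[\ell_0,\ell_1+\delta)\cup(\ell_2-\delta,\ell_2+\delta)$.

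Finally, for the last claim, fix $\ell$ in this range and any minimizer $Q$ of $\mathcal{E}_\lambda$ over $\mathcal{A}_{Q^{(\ell)}_{\rm b}}(\mathfrak{C}^h_{\ell,\rho})$. If $Q$ were $\bbS^1$-equivariant, it would lie in the symmetric class, giving ${\rm Val}^{\rm sym}(\ell)\leq\mathcal{E}_\lambda(Q)={\rm Val}(\ell)$, contradicting the strict gap; hence $Q$ is not equivariant. The $\bbS^1$-invariance of $\mathcal{E}_\lambda$ and the equivariance of the homeotropic datum $Q^{(\ell)}_{\rm b}$ imply that, for every $R\in\bbS^1$, the map $Q_R(x):=R\,Q(R^\trans x)R^\trans$ is again a minimizer in the same class. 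The stabilizer $\{R\in\bbS^1:Q_R=Q\}$ is a closed subgroup of $\bbS^1$ strictly smaller than $\bbS^1$ (because $Q$ is not equivariant), hence finite; therefore the orbit $\{Q_R\}_{R\in\bbS^1}$ is infinite, producing infinitely many distinct minimizers and nonuniqueness. The only delicate point in the plan is verifying that a \emph{singular} symmetric minimizer persists at the critical value $\ell=\ell_1$ (and $\ell_2$), but this is exactly the coexistence statement Theorem \ref{thmcoexmovdom}(iii) already at our disposal.
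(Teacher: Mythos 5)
Your proposal is correct and follows essentially the same route as the paper: the strict gap on $[\ell_0,\ell_1]\cup\{\ell_2\}$ comes from combining the full regularity of unconstrained minimizers (\cite[Theorem~1.1]{DMP1}) with the existence of a singular symmetric minimizer from Theorem~\ref{thmcoexmovdom}, the gap is propagated by the continuity of ${\rm Val}$ and ${\rm Val}^{\rm sym}$ from Lemma~\ref{contvalfcts}, and nonuniqueness follows from the $\mathbb{S}^1$-orbit of a (necessarily non-equivariant) minimizer. Your stabilizer argument for why the orbit is infinite is a slightly more explicit version of the paper's one-line conclusion, but the substance is identical.
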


\begin{proof}
By \cite[Theorem 1.1]{DMP1}, any map realizing ${\rm Val}(\ell)$ is smooth. By definition of $\ell_0$, $\ell_1$, and $\ell_2$ (see Theorem  \ref{thmcoexmovdom}),  
for each $\ell\in[\ell_0,\ell_1]\cup\{\ell_2\}$ there exists a singular map realizing  ${\rm Val}^{\rm sym}(\ell)$. Hence ${\rm Val}(\ell)<{\rm Val}^{\rm sym}(\ell)$ for every $\ell\in[\ell_0,\ell_1]\cup\{\ell_2\}$. By the continuity of ${\rm Val}$ and ${\rm Val}^{\rm sym}$ provided by Lemma \ref{contvalfcts}, it follows that  ${\rm Val}<{\rm Val}^{\rm sym}$ in a neighborhood of $[\ell_0,\ell_1]\cup\{\ell_2\}$. Then the orbit under the $\mathbb{S}^1$-action of a minimizer provides infinitely many other minimizers. 
\end{proof}



\appendix

\section{Uniqueness of 2D-minimizers for $\lambda$ small}

The aim of this appendix is to complete the proof of Theorem~\ref{biaxialescape}, showing that the minimizer of the 2D-LdG energy $E_\lambda$ in the class $\mathcal{A}^{\rm sym}_{\overline{H}}(\bbD)$ is unique whenever $\lambda>0$ small enough. According to Proposition~\ref{ASminimization}, the claim holds for $\lambda=0$ where the harmonic map $u_{\rm S}$ given by \eqref{eq:uS} is the unique minimizer even without the symmetry constraint. In Theorem~\ref{uniqueminimizer}, we shall prove that the same unconstrained uniqueness holds for every $\lambda>0$ sufficiently small, and therefore in the restricted class $\mathcal{A}^{\rm sym}_{\overline{H}}(\bbD)$ as well. Our argument is inspired by the recent interesting paper \cite{INSZ}  addressing a similar question for minimizers of the 2D-LdG energy in a more elaborated asymptotic analysis without the norm constraint. 
\vskip5pt

We start with the following preliminary result (recall that the constant $\lambda_*>0$ is defined in Theorem \ref{2d-biaxial-escape}).

\begin{lemma}
\label{ulambda-convergence}
Let $u_{\rm S} \in \widetilde{\mathcal{A}}_{\rm S}$ given by \eqref{eq:uS}, $\lambda \in [0,\lambda_*)$, and $u_\lambda$  any minimizer of $\widetilde{E}_\lambda$ over the class $\widetilde{\mathcal{A}}_{g_{\overline{H}}}(\bbD)$. 
 The family $\{u_\lambda\}_{0<\lambda\leq\frac{\lambda_*}{2}} \subset C^2 (\overline{\bbD};\bb S^4)$ is bounded, and $u_\lambda \to u_{\rm S}$ in $C^1(\overline{\bbD})$ as $\lambda \to 0$.
\end{lemma}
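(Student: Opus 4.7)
The strategy is a standard compactness argument: (i) establish a uniform $W^{1,2}$-bound on $\{u_\lambda\}$, (ii) identify any weak limit as $u_{\rm S}$ via Proposition \ref{ASminimization}, (iii) upgrade strong $W^{1,2}$-convergence to $C^k$-bounds by a 2D $\veps$-regularity argument exploiting the smoothness of $u_{\rm S}$, and (iv) conclude $C^1$-convergence via Arzelà-Ascoli.

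\textbf{Uniform $W^{1,2}$-bound and identification of the limit.} Testing the minimality of $u_\lambda$ against $u_{\rm S}$ and using \eqref{uSpotentialenergy} yields
\[
\widetilde E_\lambda(u_\lambda)\;\leq\;\widetilde E_\lambda(u_{\rm S})\;=\;2\pi+\lambda\int_\bbD \widetilde W(u_{\rm S})\,dx\;\leq\;2\pi+\tfrac{\lambda_*}{2}\,C_0,
\]
so $\{u_\lambda\}_{0<\lambda\leq \lambda_*/2}$ is uniformly bounded in $W^{1,2}(\bbD)$. Along any sequence $\lambda_n\to 0$, pass to a weakly convergent subsequence $u_{\lambda_n}\rightharpoonup u_*$ in $W^{1,2}(\bbD)$, with strong $L^4$-convergence by Rellich. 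The boundary trace, the pointwise constraint $|u_*|=1$, and weak lower semicontinuity then give
\[
\widetilde E_0(u_*)\;\leq\;\liminf_{n\to\infty}\widetilde E_0(u_{\lambda_n})\;\leq\;\liminf_{n\to\infty}\widetilde E_{\lambda_n}(u_{\lambda_n})\;\leq\;2\pi,
\]
using $L^4$-continuity of $\widetilde W$ for the inner inequality. By Proposition \ref{ASminimization}, $u_*=u_{\rm S}$. Uniqueness of the limit ensures $u_\lambda\rightharpoonup u_{\rm S}$ along the full family. Moreover the sandwich $2\pi=\widetilde E_0(u_{\rm S})\leq \liminf_\lambda \widetilde E_0(u_\lambda)\leq \limsup_\lambda \widetilde E_\lambda(u_\lambda)\leq 2\pi$ combined with strong $L^4$-convergence forces $\|\nabla u_\lambda\|_{L^2}\to \|\nabla u_{\rm S}\|_{L^2}$, hence strong $W^{1,2}$-convergence.

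\textbf{Upgrade to $C^k$-bounds via $\veps$-regularity.} Each $u_\lambda$ is a smooth (in fact real analytic) solution of the Euler-Lagrange equation \eqref{eq:harmonic-bdvp} with a smooth potential term, by Hélein's theorem and the Morrey-Quing regularity results invoked before \eqref{formequivmap}; in particular $u_\lambda\in C^\omega(\overline\bbD;\bbS^4)$ since $g_{\overline H}$ is analytic. The 2D analog of the interior $\veps$-regularity of Proposition \ref{intepsregprop} and of the boundary $\veps$-regularity of Corollary \ref{bdrepsregcor} provides universal constants $\veps_0>0$ and $\kappa\in(0,1)$ such that, for $r$ small enough and every $x_0\in\overline\bbD$, the condition $\int_{B_r(x_0)\cap\bbD}|\nabla u_\lambda|^2\,dx\leq \veps_0$ yields uniform bounds $\|u_\lambda\|_{C^k(B_{\kappa r}(x_0)\cap\overline\bbD)}\leq C_k$ depending only on $k$, $\lambda_*$, and the $C^k$-norm of $g_{\overline H}$. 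Since $u_{\rm S}\in C^\omega(\overline\bbD)$, cover the compact set $\overline\bbD$ by finitely many balls $B_{r_i}(x_i)$, $i=1,\dots,N$, with $\int_{B_{r_i}(x_i)\cap\bbD}|\nabla u_{\rm S}|^2\,dx\leq \veps_0/2$. Strong $W^{1,2}$-convergence from the previous paragraph then gives the same bound (with $\veps_0$) for $u_\lambda$ on each $B_{r_i}(x_i)\cap\bbD$ once $\lambda$ is small enough; a finite covering argument yields uniform $C^k$-bounds on $\overline\bbD$ for every $k$, and in particular the claimed $C^2$-boundedness.

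\textbf{Conclusion.} Given any sequence $\lambda_n\to 0$, the $C^2$-bound and Arzelà-Ascoli produce a sub-subsequence converging in $C^1(\overline\bbD)$; the limit must coincide with the weak $W^{1,2}$-limit $u_{\rm S}$. Since the limit is unique, the full family $u_\lambda$ converges to $u_{\rm S}$ in $C^1(\overline\bbD)$ as $\lambda\to 0$.

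\textbf{Main obstacle.} The delicate point is the 2D boundary $\veps$-regularity. The interior version for minimizing harmonic maps into spheres with smooth potential is classical (Schoen; Hélein-Morrey). At the boundary, the corresponding result is available (for instance via Scheven's boundary reflection argument adapted to the 2D setting with real-analytic Dirichlet data), but needs to be carefully set up. A cleaner workaround is to observe that $u_{{\rm S},0}=(|z|^4-3)/(|z|^4+3)\in[-1,-1/2]$ on $\overline\bbD$, so $u_{\rm S}$ takes values in the open geodesically convex cap $\bbS^4_-:=\{u_0<0\}$; by Sobolev embedding in 2D and Step 1, $u_\lambda$ eventually maps $\overline\bbD$ into a fixed relatively compact subset of $\bbS^4_-$, and the problem reduces to a smooth semilinear elliptic system in local Euclidean coordinates, to which Schauder theory applies to yield the uniform $C^{2,\alpha}$-bound directly.
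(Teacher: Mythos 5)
Your Step 1 (uniform $W^{1,2}$-bound, identification of every weak limit with $u_{\rm S}$ via Proposition~\ref{ASminimization}, and upgrade to strong $W^{1,2}$-convergence through energy convergence) is exactly the paper's argument. The divergence, and the problem, is in the regularity step, which is the real content of the lemma. Your primary route invokes ``the 2D analog'' of Proposition~\ref{intepsregprop} and Corollary~\ref{bdrepsregcor}, but those statements are proved in the paper only for three-dimensional equivariant minimizers; here $u_\lambda$ minimizes over the full class $\widetilde{\mathcal{A}}_{g_{\overline{H}}}(\bbD)$ with no symmetry, and no 2D interior/boundary $\veps$-regularity with uniform constants is established anywhere in the paper. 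You acknowledge this (``needs to be carefully set up'') but do not set it up, so the key step is asserted rather than proved. The paper sidesteps the issue by a lifting trick: extend $u_\lambda$ to the cylinder $\mathcal{C}=\bbD\times(-1,1)$ independently of $x_3$, observe that this extension is the (unique) minimizer of the 3D energy with its own trace as boundary data, and then apply the 3D interior and lateral-boundary regularity and $\veps$-regularity theory of \cite{DMP1} — together with the $W^{1,2}(\mathcal{C})$-convergence to $u_{\rm S}$ and smallness of the scaled energy on balls centered at points of $\overline\bbD\times\{0\}$ — to get the uniform $C^2$-bounds.

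Your proposed ``cleaner workaround'' contains a genuine error: you claim that ``by Sobolev embedding in 2D and Step 1, $u_\lambda$ eventually maps $\overline\bbD$ into a fixed relatively compact subset of $\bbS^4_-$.'' In two dimensions $W^{1,2}$ does not embed into $L^\infty$ (nor into $C^0$), so strong $W^{1,2}$-convergence $u_\lambda\to u_{\rm S}$ gives no uniform closeness and no confinement of the image to the hemisphere; the continuity-at-the-origin mechanism of Lemma~\ref{lemma:s1eq-emb} is also unavailable since these minimizers are not assumed equivariant. Deducing such $L^\infty$-control is essentially equivalent to the uniform small-energy regularity you were trying to avoid, so the workaround is circular. (Even granting the confinement, the nonlinearity $|\nabla u|^2u$ is critical in 2D, so Schauder theory does not start without a prior gradient bound; one would need a Hildebrandt--Kaul--Widman or Hélein-type argument, i.e.\ again an $\veps$-regularity input.) To repair the proof, either carry out the 2D interior and boundary $\veps$-regularity with constants uniform in $\lambda\in[0,\lambda_*/2]$, or adopt the paper's cylinder extension and quote the 3D results of \cite{DMP1} directly.
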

\begin{proof}
By	Proposition~\ref{ASminimization} and the minimality of each $u_\lambda$, we have 
$$\widetilde{E}_0(u_{\rm S})\leq \widetilde{E}_0(u_\lambda)\leq\widetilde{E}_\lambda(u_\lambda)\leq \widetilde{E}_\lambda(u_{\rm S}) \mathop{\longrightarrow}\limits_{\lambda\to 0} \widetilde{E}_0(u_{\rm S})\,.$$ 
The family $\{ u_\lambda\}_{0<\lambda\leq\frac{\lambda_*}{2}}$ is thus bounded in $W^{1,2}(\bbD)$. Each  $W^{1,2}$-weak limit $u_*$ along an arbitrary sequence $\lambda_n\to 0$ 
 belongs to $\widetilde{\mathcal{A}}_{g_{\overline{H}}}(\bbD)$ and 
 satisfies $\widetilde{E}_0(u_{\rm S})\leq \widetilde{E}_0(u_*)\leq \widetilde{E}_0(u_{\rm S})$, again by  Proposition~\ref{ASminimization} 
 and the weak lower semicontinuity of $\widetilde{E}_0$. By uniqueness of $u_{\rm S}$ in  Proposition~\ref{ASminimization}, we deduce that $u_*=u_{\rm S}$. 
 In addition, $\widetilde{E}_{\lambda_n}(u_{\lambda_n})\to \widetilde{E}_0(u_{\rm S})$ also yields the strong $W^{1,2}$-convergence of $u_{\lambda_n}$ toward $u_{\rm S}$ as $\lambda_n \to 0$.

To conclude the proof, it is enough to establish a $C^{2}(\overline\bbD)$-bound on $u_\lambda$ since the embedding $C^{2}(\overline\bbD) \hookrightarrow C^1(\overline\bbD)$ is compact and  $C^1(\overline{\bbD}) \subset W^{1,2}(\bbD)$ is continuous. To obtain this $C^{2}$-bound, we rely on the regularity results from  \cite{DMP1} in three dimensions.

We consider a fixed cylinder $\mathcal{C}=\bb D \times (-1,1)$ and for each 2D-minimizer $u_\lambda$ we consider the boundary map $v_\lambda \in W^{1,2}(\partial \mathcal{C}; \bb S^4)$ as the trace of $u_\lambda$, the latter extended to the whole $\mathcal{C}$ independently of $x_3$. Clearly, $u_\lambda \in W^{1,2}_{v_\lambda}(\mathcal{C};\bb S^4)$ and it is easy to see that it is indeed the unique minimizer because of its $\widetilde{E}_\lambda$-minimality for each $x_3 \in (-1,1)$. Thus, we may apply the results in \cite{DMP1} to infer full interior regularity, i.e., that $u_\lambda \in C^\omega(\bb D;\bb S^4)$, and the full boundary regularity up to the lateral boundary $\partial \bb D \times (-1,1)$, so that $u_\lambda \in C^\omega (\overline{\bb D};\bb S^4)$. Finally, as $u_\lambda \to u_{\rm S}$ in $W^{1,2}(\bb D)$ and in turn in $W^{1,2}(\mathcal{C})$ we can apply interior and boundary $\veps$-regularity results on the whole family $\{u_\lambda\}$, as the scaled energy on balls centered at $\bar{x} \in \overline{\bb{D}}\times \{0\}$ can be made uniformly small for $\lambda>0$ small enough, to derive uniform $C^2$-bounds for the minimizers $\{u_\lambda \}$ for $\lambda>0$ small enough.  
\end{proof}

In order to discuss the uniqueness property of $u_\lambda$, we first recall that its energy minimality and smoothness properties yield the criticality condition
\begin{equation}
\label{eq:der-Elambda}
 \widetilde{E}^{\prime}_\lambda(\Phi;u_\lambda):=\left[ \frac{d}{dt} \widetilde{E}_\lambda \left( \frac{u_\lambda+t\Phi}{|u_\lambda+t\Phi|} \right) \right]_{t=0}  = \int_{\bb D} \left( -\Delta u_\lambda- \abs{\nabla u_\lambda}^2 u_\lambda + \lambda \nabla_{\rm tan} \widetilde{W}(u_\lambda) \right) \cdot \Phi \, dx =0 \, ,
\end{equation}
together with the positivity of the second variation 
\begin{equation}
\label{eq:secder-Elambda}
\widetilde{E}^{\prime\prime}_\lambda(\Phi;u_\lambda):=\left[ \frac{d^2}{dt^2} \widetilde{E}_\lambda \left( \frac{u_\lambda+t\Phi}{|u_\lambda+t\Phi|} \right) \right]_{t=0}  = \int_{\bb D} |\nabla \Phi_T|^2- |\nabla u_\lambda|^2 |\Phi_T|^2 +\lambda D^2 \widetilde{W} (u_\lambda) \Phi_T .\Phi_T \, dx \, ,
\end{equation}
defined for $\Phi \in C^\infty_c( \bb D;\R\oplus\C\oplus\C)$ with $\Phi_T :=\Phi-u_\lambda (u_\lambda\cdot\Phi)$ denoting the tangential component of $\Phi$ along $u_\lambda$.

 The following lemma guarantees injectivity of for the linearization of equations \eqref{eq:der-Elambda}, i.e., strict positivity of the quadratic forms \eqref{eq:secder-Elambda} for $\lambda\geq 0$ small enough.
 
 \begin{lemma}
 \label{pos-second-variation}
 	Let $u_{\rm S} \in \mathcal{A}_{\rm S}$ be as in \eqref{eq:uS}, $\lambda \in [0,\lambda_*)$ and $u_\lambda$ be any minimizer of $\widetilde{E}_\lambda$ over the class	$\widetilde{\mathcal{A}}_{g_{\overline{H}}}(\bbD)$. 
	Then there exists $m_0>0$ such that
 	\begin{equation}
\label{eq:strongpositivity}
 	\int_{\bb D} |\nabla \zeta|^2- |\nabla u_{\rm S}|^2 |\zeta|^2 \, dx \geq m_0 \int_{\bb D} \abs{\nabla \zeta}^2 \, dx \, , \end{equation}
 	for any $\zeta \in W^{1,2}_0( \bb D;\R\oplus\C\oplus\C)$. As a consequence, for $\lambda\geq0$ small enough we have
 	\begin{equation}
 	\label{eq:pos-2ndvar-ulambda}
 	\widetilde{E}^{\prime\prime}_\lambda(\Phi;u_\lambda) \geq \frac{m_0}2 \int_{\bb D} \abs{\nabla \Phi}^2 \, dx \, , 
 	\end{equation}
 for any $\Phi \in \mathcal{H}_\lambda:=\{ \Psi \in W^{1,2}_0(\bb D;\R\oplus\C\oplus\C) : \Psi \cdot u_\lambda \equiv 0 \, \}$.
 \end{lemma}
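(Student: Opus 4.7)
The strategy is to first prove the baseline estimate \eqref{eq:strongpositivity} by exploiting the fact that $u_{\rm S}$ stays in a compact subset of the lower open hemisphere of $\bbS^4$, so that its $\eo$-component provides a bounded-below positive supersolution of the linearised harmonic map equation. Once \eqref{eq:strongpositivity} is established, estimate \eqref{eq:pos-2ndvar-ulambda} will follow by a perturbation argument based on the $C^1$-convergence $u_\lambda\to u_{\rm S}$ from Lemma~\ref{ulambda-convergence} and the Poincar\'e inequality on $\bbD$.

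For \eqref{eq:strongpositivity}, the key remark is that $u^0_{\rm S}(z)=\frac{|z|^4-3}{|z|^4+3}\in[-1,-1/2]$ on $\overline{\bbD}$, so the scalar function $\phi:=-u^0_{\rm S}$ satisfies $1/2\leq\phi\leq 1$ and, by projecting the harmonic map equation $-\Delta u_{\rm S}=|\nabla u_{\rm S}|^2 u_{\rm S}$ on the $\eo$-axis, solves the linear equation $-\Delta\phi=|\nabla u_{\rm S}|^2\phi$ in $\bbD$. Writing $\zeta\in W^{1,2}_0(\bbD;\R\oplus\C\oplus\C)$ component-wise as $\zeta^\alpha=\phi\psi^\alpha$ with $\psi^\alpha\in W^{1,2}_0(\bbD)$, an integration by parts (using $\psi^\alpha\vert_{\partial\bbD}=0$) gives, for each~$\alpha$,
\[
\int_{\bbD}|\nabla\zeta^\alpha|^2-|\nabla u_{\rm S}|^2|\zeta^\alpha|^2\,dx
=\int_{\bbD}\phi^2|\nabla\psi^\alpha|^2\,dx+\int_{\bbD}\phi\,(-\Delta\phi-|\nabla u_{\rm S}|^2\phi)|\psi^\alpha|^2\,dx=\int_{\bbD}\phi^2|\nabla\psi^\alpha|^2\,dx\,,
\]
so that the quadratic form $Q(\zeta):=\int_{\bbD}|\nabla\zeta|^2-|\nabla u_{\rm S}|^2|\zeta|^2\,dx=\sum_\alpha\int_{\bbD}\phi^2|\nabla\psi^\alpha|^2\,dx\geq 0$, with equality iff $\zeta\equiv 0$. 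To turn strict positivity into coercivity, I argue by contradiction: if no $m_0>0$ works, there is $\{\zeta_n\}\subset W^{1,2}_0(\bbD)$ with $\|\nabla\zeta_n\|_{L^2}=1$ and $Q(\zeta_n)\to 0$; passing to a weak $W^{1,2}$-limit $\zeta_*$ and using the compact embedding $W^{1,2}(\bbD)\hookrightarrow L^2(\bbD)$ together with the boundedness of $|\nabla u_{\rm S}|^2$, one gets $\int|\nabla u_{\rm S}|^2|\zeta_n|^2\to\int|\nabla u_{\rm S}|^2|\zeta_*|^2$ and $Q(\zeta_*)\leq\liminf Q(\zeta_n)=0$, whence $\zeta_*\equiv 0$; but then $Q(\zeta_n)\to 1$, a contradiction.

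For \eqref{eq:pos-2ndvar-ulambda}, I fix $\Phi\in\mathcal{H}_\lambda$, so that $\Phi_T=\Phi$ in formula \eqref{eq:secder-Elambda}, and write
\[
\widetilde{E}^{\prime\prime}_\lambda(\Phi;u_\lambda)-Q(\Phi)
=\int_{\bbD}\big(|\nabla u_{\rm S}|^2-|\nabla u_\lambda|^2\big)|\Phi|^2\,dx+\lambda\int_{\bbD}D^2\widetilde{W}(u_\lambda)\Phi\cdot\Phi\,dx\,.
\]
By Lemma~\ref{ulambda-convergence}, $\||\nabla u_\lambda|^2-|\nabla u_{\rm S}|^2\|_{L^\infty(\bbD)}\to 0$ as $\lambda\to 0$; moreover $\|D^2\widetilde{W}(u_\lambda)\|_\infty$ stays uniformly bounded since $u_\lambda$ is uniformly bounded in $C^2(\overline{\bbD})$ and $\widetilde{W}$ is smooth. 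Combining these two estimates with the Poincar\'e inequality $\|\Phi\|_{L^2}\leq C_P\|\nabla\Phi\|_{L^2}$ yields $|\widetilde{E}^{\prime\prime}_\lambda(\Phi;u_\lambda)-Q(\Phi)|\leq o_\lambda(1)\|\nabla\Phi\|^2_{L^2}$ uniformly in $\Phi\in\mathcal{H}_\lambda$. Choosing $\lambda$ small enough so that the error is $\leq m_0/2$, estimate \eqref{eq:pos-2ndvar-ulambda} follows directly from \eqref{eq:strongpositivity}. The main obstacle is the first step, namely identifying and exploiting $\phi=-u^0_{\rm S}$ as a positive, bounded-below supersolution: this is what makes the unconstrained coercivity \eqref{eq:strongpositivity} (valid for all of $W^{1,2}_0$, not only the tangential part along $u_{\rm S}$) possible and compensates for the lack of geodesic convexity in the naive second-variation bound; the passage from $u_{\rm S}$ to $u_\lambda$ is then essentially a routine continuity argument.
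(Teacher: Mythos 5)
Your proposal is correct and follows essentially the same route as the paper: the core step is identical, namely factoring $\zeta$ through the first component $f_0=-\phi$ of $u_{\rm S}$ (which satisfies $-\Delta f_0=|\nabla u_{\rm S}|^2 f_0$ and $|f_0|\geq\frac12$) to show $\int_{\bbD}|\nabla\zeta|^2-|\nabla u_{\rm S}|^2|\zeta|^2\,dx=\int_{\bbD}f_0^2|\nabla\xi|^2\,dx\geq 0$, and then passing to \eqref{eq:pos-2ndvar-ulambda} via the $C^1$-convergence of Lemma~\ref{ulambda-convergence}, the boundedness of $D^2\widetilde W$ on $\bbS^4$, and Poincar\'e. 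The only (harmless) difference is how you upgrade strict positivity to the coercivity \eqref{eq:strongpositivity}: you use a normalized contradiction/compactness argument, whereas the paper minimizes the Rayleigh-type quotient to get a positive first eigenvalue $\sigma_*$ and then sets $m_0=\eta/(1+\eta)$ with $\eta=\sigma_*/\||\nabla u_{\rm S}|^2\|_{L^\infty}$ — both are valid.
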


 \begin{proof}
 In view of \eqref{eq:uS} we have $u_{\rm S}(z)=(f_0(r),f_1(r)e^{i\theta},f_2(r)e^{i2\theta})$, where $z=re^{i\phi}\in \bb D$ and $f_0(r)=\frac{r^4-3}{r^4+3} \leq -\frac12$ in $\overline{\bb D}$. Since $u_{\rm S}$ is a harmonic map we have $-\Delta f_0= \abs{\nabla u_{\rm S}}^2 f_0$, where $\abs{\nabla u_{\rm S}}^2=\frac{96r^2}{(1+3r^4)^2}$ is bounded in $\overline{\bb D}$. 	
 
 Since every $\zeta \in C^\infty_0(\bb D;\R\oplus\C\oplus\C)$ can be written as $\zeta=f_0 \xi$ for some $\xi  \in C^\infty_0(\bb D;\R\oplus\C\oplus\C)$, a classical integration by parts argument using the equation for $f_0$ gives
 \[\int_{\bb D} \abs{\nabla \zeta}^2-\abs{\nabla u_{\rm S}}^2 |\zeta|^2 \, dx= \int_{\bb D} \abs{\nabla f_0}^2 |\xi|^2+ f_0^2 \abs{\nabla \xi}^2 -\abs{\nabla u_{\rm S}}^2 f_0^2 |\xi|^2 \, dx +\frac12 \int_{\bb D} \nabla f_0^2 \cdot \nabla |\xi|^2 \, dx=\]
 \[ \int_{\bb D} \abs{\nabla f_0}^2 |\xi|^2+ f_0^2 \abs{\nabla \xi}^2 -\abs{\nabla u_{\rm S}}^2 f_0^2 |\xi|^2 \, dx - \int_{\bb D} \Delta f_0 |\xi|^2 f_0+\abs{\nabla f_0}^2|\xi|^2 \, dx=\int_{\bb D} f_0^2 \abs{\nabla \xi}^2 \, dx \, .\]
 The previous identity extends by density to any $\zeta \in W^{1,2}_0(\bb D;\R\oplus\C\oplus\C)$ (correspondingly, to any $\xi\in W^{1,2}_0(\bb D;\R\oplus\C\oplus\C)$), so that in particular $F(\zeta):= \int_{\bb D} \abs{\nabla \zeta}^2-\abs{\nabla u_{\rm S}}^2 |\zeta|^2 \, dx >0$ whenever $\zeta \neq 0$.
 
 Now we set
 \[ \sigma_*:=\inf \{F(\zeta) \, , \, \, \| \zeta\|_{L^2}=1 \,  \, , \, \zeta \in  W^{1,2}_0(\bb D;\R\oplus\C\oplus\C)\} \, . \]
 By the direct method in the Calculus of Variations it is easy to check that $\sigma_*$ is attained and it is nonnegative. Moreover, the previous observation shows that actually $\sigma_*>0$ because of the norm constraint. Thus, for any $\zeta \in  W^{1,2}_0(\bb D;\R\oplus\C\oplus\C)$ we have
 \[  \int_{\bb D} \abs{\nabla \zeta}^2-\abs{\nabla u_{\rm S}}^2 |\zeta|^2 \, dx\geq \sigma_* \int_{\bb D} |\zeta|^2 \, dx\geq \frac{\sigma_*}{\| |\nabla u_{\rm S}|^2 \|_{L^\infty}} \int_{\bb D} \abs{\nabla u_{\rm S}}^2 |\zeta|^2 \, dx \, ,\]
 so that for $\eta:= \frac{\sigma_*}{\| |\nabla u_{\rm S}|^2 \|_{L^\infty}}>0$ and $m_0:= \frac{\eta}{1+\eta}$ inequality \eqref{eq:strongpositivity} follows.
 
 Finally, inequality \eqref{eq:pos-2ndvar-ulambda} follows easily from \eqref{eq:strongpositivity}. Indeed, for $\lambda>0$ small enough to be chosen later and $\Phi \in \mathcal{H}_\lambda$, so that $\Phi=\Phi_T$, \eqref{eq:secder-Elambda} can be rewritten and estimated as follows:
 \[ \widetilde{E}_\lambda^{\prime\prime}(\Phi;u_\lambda)=  \int_{\bb D} |\nabla \Phi|^2- \abs{\nabla u_{\rm S}}^2 |\Phi|^2 +(\abs{\nabla u_{\rm S}}^2 -|\nabla u_\lambda|^2) |\Phi|^2 +\lambda D^2 \widetilde{W} (u_\lambda) \Phi \cdot\Phi \, dx  \]
 \[\geq  m_0 \int_{\bb D} \abs{\nabla \Phi}^2 \, dx  - \left( \| \abs{\nabla u_{\rm S}}^2 -|\nabla u_\lambda|^2   \|_{L^\infty(\bb D)}+\lambda \| D^2 \widetilde{W} ( \, \cdot \,)\|_{L^\infty(\bbS^4)} \right) \int_{\bb D} |\Phi|^2 \, dx \, .\]
 Then, applying 2D-Poincar\'e inequality the lower bound \eqref{eq:pos-2ndvar-ulambda} follows from the $C^1$-convergence in Lemma \ref{ulambda-convergence} for $\lambda>0$ small enough. 
 \end{proof}

We are finally ready for the main result of the appendix.

\begin{theorem}
\label{uniqueminimizer}	
Let $\lambda \in [0,\lambda_*)$ and $u_\lambda$ a minimizer for the energy $\widetilde{E}_\lambda$ over the class $\widetilde{\mathcal{A}}_{g_{\overline{H}}}(\bbD)$. Then for $\lambda$ sufficiently small the minimizer is unique. As a consequence, $u_\lambda$ is $\bbS^1$-equivariant and it is the unique minimizer of $\widetilde{E}_\lambda$ over the class $\widetilde{\mathcal{A}}^{\rm sym}_{g_{\overline{H}}}(\bbD)$. 
\end{theorem}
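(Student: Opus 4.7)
The plan is a uniqueness-by-contradiction argument built on the coercivity of the second variation. Assume $u_\lambda^1, u_\lambda^2 \in \widetilde{\mathcal{A}}_{g_{\overline{H}}}(\bbD)$ are two minimizers. By Lemma~\ref{ulambda-convergence} both are smooth up to the boundary and satisfy pointwise the Euler--Lagrange equation $-\Delta u_\lambda^i - |\nabla u_\lambda^i|^2 u_\lambda^i + \lambda \nabla_{\rm tan}\widetilde{W}(u_\lambda^i) = 0$. I will test each equation against $w := u_\lambda^2 - u_\lambda^1 \in W^{1,2}_0(\bbD;\R\oplus\C\oplus\C)$, integrate by parts, and subtract. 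The key algebraic input comes from the pointwise unit-norm constraint: expanding $|u_\lambda^2|^2 = |u_\lambda^1 + w|^2 = 1$ gives $u_\lambda^1 \cdot w = -\tfrac{1}{2}|w|^2$, and symmetrically $u_\lambda^2 \cdot w = \tfrac{1}{2}|w|^2$. These identities combine the quadratic terms of the subtracted equations into
$$|\nabla u_\lambda^2|^2(u_\lambda^2 \cdot w) - |\nabla u_\lambda^1|^2(u_\lambda^1 \cdot w) = \tfrac{1}{2}\bigl(|\nabla u_\lambda^1|^2 + |\nabla u_\lambda^2|^2\bigr)|w|^2,$$
and produce the clean identity
$$\int_\bbD |\nabla w|^2\, dx = \tfrac{1}{2}\int_\bbD \bigl(|\nabla u_\lambda^1|^2 + |\nabla u_\lambda^2|^2\bigr)|w|^2\, dx - \lambda \int_\bbD \bigl(\nabla_{\rm tan}\widetilde{W}(u_\lambda^2) - \nabla_{\rm tan}\widetilde{W}(u_\lambda^1)\bigr)\cdot w\, dx.$$

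The next step will be to compare this identity with the coercive second variation in Lemma~\ref{pos-second-variation}. By Lemma~\ref{ulambda-convergence}, $u_\lambda^i \to u_{\rm S}$ in $C^1(\overline{\bbD})$ as $\lambda \to 0^+$, so $\tfrac{1}{2}(|\nabla u_\lambda^1|^2 + |\nabla u_\lambda^2|^2) = |\nabla u_{\rm S}|^2 + \varepsilon_\lambda$ with $\|\varepsilon_\lambda\|_{L^\infty} \to 0$. The polynomial nature of $\widetilde{W}$ on $\R\oplus\C\oplus\C$ gives $|\nabla_{\rm tan}\widetilde{W}(u_\lambda^2) - \nabla_{\rm tan}\widetilde{W}(u_\lambda^1)| \leq C|w|$ pointwise with $C$ universal, hence the identity yields
$$\int_\bbD \bigl(|\nabla w|^2 - |\nabla u_{\rm S}|^2 |w|^2\bigr)\, dx \leq \bigl(\|\varepsilon_\lambda\|_{L^\infty} + C\lambda\bigr)\int_\bbD |w|^2\, dx.$$
Applying the coercivity \eqref{eq:strongpositivity} of Lemma~\ref{pos-second-variation} to $\zeta = w$, together with the Poincaré inequality $\|w\|_{L^2} \leq C_P \|\nabla w\|_{L^2}$, will then give
$$m_0 \int_\bbD |\nabla w|^2\, dx \leq \bigl(\|\varepsilon_\lambda\|_{L^\infty} + C\lambda\bigr)C_P^2 \int_\bbD |\nabla w|^2\, dx.$$
For $\lambda$ small enough the prefactor on the right falls below $m_0$, forcing $\nabla w \equiv 0$, and by the null boundary condition $w \equiv 0$. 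This proves uniqueness over the full class $\widetilde{\mathcal{A}}_{g_{\overline{H}}}(\bbD)$.

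The symmetric statement then follows at once. Both the boundary datum $g_{\overline{H}}$ and the functional $\widetilde{E}_\lambda$ are $\bbS^1$-invariant, so for any $R\in\bbS^1$ the rotated map $R\cdot u_\lambda(R^{-1}\,\cdot\,)$ is again a minimizer over $\widetilde{\mathcal{A}}_{g_{\overline{H}}}(\bbD)$; the uniqueness just proved forces it to coincide with $u_\lambda$, so $u_\lambda$ is $\bbS^1$-equivariant and lies in the smaller class $\widetilde{\mathcal{A}}_{g_{\overline{H}}}^{\rm sym}(\bbD)$, where it is automatically the unique minimizer. I expect the most delicate step to be the clean extraction of the cancellation identity, as its form relies on the precise algebraic structure produced by the unit-norm constraint when testing the Euler--Lagrange equation against a direction that is \emph{not} tangent to $\bbS^4$; once this cancellation is recognized, the coercivity of Lemma~\ref{pos-second-variation} and the $C^1$-convergence of Lemma~\ref{ulambda-convergence} make the absorption argument routine.
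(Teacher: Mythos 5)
Your argument is correct, and it takes a genuinely different route from the paper's. The paper decomposes the difference of two minimizers along $u_\lambda$ into tangential and normal parts, $v_\lambda=u_\lambda+w_\lambda^T+w_\lambda^\perp$, applies the coercivity of the second variation \emph{at $u_\lambda$} along tangential fields (inequality \eqref{eq:pos-2ndvar-ulambda}), and then spends most of its effort estimating the cubic remainders $II$ and $III$ coming from Taylor expanding $E'_\lambda$ and from the fact that $|w_\lambda^\perp|\lesssim|w_\lambda^T|^2$. You instead use the classical sphere-valued cancellation: since $|u_\lambda^1|=|u_\lambda^2|=1$, testing the difference of the Euler--Lagrange equations against $w=u_\lambda^2-u_\lambda^1$ and using $u_\lambda^i\cdot w=\pm\tfrac12|w|^2$ produces a clean quadratic identity in $w$, with no decomposition and no Taylor remainders; the absorption then only needs the \emph{unconstrained} coercivity \eqref{eq:strongpositivity} at the limit map $u_{\rm S}$ (which is exactly the form Lemma~\ref{pos-second-variation} proves, for arbitrary $\zeta\in W^{1,2}_0(\bbD;\R\oplus\C\oplus\C)$, not only tangential fields), the $C^1$-convergence of Lemma~\ref{ulambda-convergence}, the Lipschitz bound for $\nabla_{\rm tan}\widetilde W$ on the compact $\bbS^4$, and Poincar\'e. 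Two small points worth making explicit when writing this up: (a) the smallness threshold for $\lambda$ must be independent of the pair of minimizers, which holds because the compactness argument behind Lemma~\ref{ulambda-convergence} gives $\sup\{\|u-u_{\rm S}\|_{C^1}: u \text{ minimizes } \widetilde E_\lambda\}\to0$ as $\lambda\to0$ (the paper's proof needs, and uses, the same uniformity); (b) testing against the non-tangential direction $w$ is legitimate since the equation $-\Delta u_\lambda^i-|\nabla u_\lambda^i|^2u_\lambda^i+\lambda\nabla_{\rm tan}\widetilde W(u_\lambda^i)=0$ holds pointwise by the regularity established in Lemma~\ref{ulambda-convergence} (or weakly against $W^{1,2}_0\cap L^\infty$ test maps). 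Your closing equivariance argument is the same as the paper's. Overall your route is shorter and more elementary, at the price of using the stronger (non-tangential) version of the coercivity, which the paper fortunately provides; the paper's linearization scheme is heavier but is the one that generalizes when such an unconstrained spectral inequality is unavailable.
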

\begin{proof}

We aim to show that for all pairs of minimizers $u_\lambda, v_\lambda \to u_{\rm S}$ we have $\| u_\lambda -v_\lambda \|_{L^2}\equiv 0$ for every $\lambda>0$ small enough. The main ingredient in the proof is equation \eqref{eq:pos-2ndvar-ulambda} in Lemma~\ref{pos-second-variation}, i.e., the uniform strict positivity of the second variation $\widetilde{E}_\lambda^{\prime\prime}(\cdot; u_\lambda)$ along tangent vector fields in $W^{1,2}_0$ for $\lambda > 0$ small enough. Once uniqueness holds, then $\bb S^1$-equivariance of $u_\lambda$ obviously follows and in turn its minimality in the subclass $\widetilde{\mathcal{A}}^{\rm sym}_{g_{\overline{H}}}(\bbD)$, because of the invariance property of $\widetilde{\mathcal{A}}_{g_{\overline{H}}}(\bbD)$ under the $\bb S^1$-action, namely, $(R * u) (z):=R u(R^\trans z)R^\trans$ for any $(u,R) \in \widetilde{\mathcal{A}}_{g_{\overline{H}}}(\bbD)\times \bb S^1$, combined with constancy of the energy functional $\widetilde{E}_\lambda$ along its orbits.

We start by decomposing $v_\lambda$ along $u_\lambda$ as $v_\lambda=u_\lambda+w_\lambda$, where in turn the difference $w_\lambda$ is pointwise decomposed into its tangential and its orthogonal part along $u_\lambda$, i.e.,
\begin{equation}
\label{vlambda-dec}
  v_\lambda=u_\lambda+w_\lambda^T+w_\lambda^\perp \,, \qquad w_\lambda^\perp:=[ (v_\lambda-u_\lambda)\cdot u_\lambda] u_\lambda \, , \quad w_\lambda^T:=w_\lambda-w_\lambda^\perp.
\end{equation}
From now on we assume that $|w_\lambda|<1/4$ uniformly on $\overline{\bb D}$, which is always the case for $\lambda$ small enough by Lemma~\ref{ulambda-convergence}.
Note that $|w_\lambda^\perp|^2+|w_\lambda^T|^2=|w_\lambda|^2=-2 u_\lambda \cdot w_\lambda^\perp=2|w_\lambda^\perp|$, whence $|w_\lambda^\perp|=1-\sqrt{1-|w_\lambda^T|^2}$ and in turn $w_\lambda^\perp=u_\lambda \left(-1+\sqrt{1-|w_\lambda^T|^2}\right)$.

Combining the uniform convergence and $C^1$-bounds from Lemma~\ref{ulambda-convergence} with \eqref{vlambda-dec},we see that the following pointwise inequalities hold uniformly on $\overline{\bb D}$ for every $\lambda > 0$ small enough (the symbol $\lesssim$ will mean inequality up to multiplicative constants independent of $\lambda$), namely, 
\begin{equation}
\label{pt-wlambda-bd}
|w_\lambda^\perp| \lesssim |w_\lambda^T|^2<1/4 \, , \quad |\nabla w_\lambda^\perp| \lesssim |w_\lambda^T| \left(|w_\lambda^T|+|\nabla w_\lambda^T|\right) \, , \quad |w_\lambda^T| \approx |w_\lambda| \, , \quad |\nabla w_\lambda^T| \lesssim |\nabla w_\lambda|+|w_\lambda| \, .
\end{equation}

In view of \eqref{eq:der-Elambda} and \eqref{eq:secder-Elambda}, it is convenient to extend $\widetilde{W}$ to a degree-zero homogeneous function of $\R\oplus\C\oplus\C \setminus \{0\}$ and to introduce the following operator,

\begin{equation}
\label{der-Elambda-op}
E'_\lambda[\Psi]=	 -\Delta \Psi- \abs{\nabla \Psi}^2 \Psi + \lambda D \widetilde{W}(\Psi)   \, , \qquad \Psi \in C^2(\overline{\bb D};\R\oplus\C\oplus\C\setminus\{0\}) \,, 
\end{equation}
together with its formal linearization at $u_\lambda$, namely, 
\begin{equation}
\label{secder-Elambda-op}
 E^{''}_\lambda[u_\lambda] \Phi:= -\Delta \Phi - |\nabla u_\lambda|^2 \Phi -2 \Big( \nabla u_\lambda \cdot \nabla \Phi \Big) u_\lambda+\lambda D^2 \widetilde{W} (u_\lambda) \Phi  \, ,
 \quad \Phi \in C^2(\overline{\bb D}; \R\oplus\C\oplus\C) \cap \mathcal{H}_\lambda \, ,
 \end{equation}
 so that by \eqref{eq:der-Elambda}, \eqref{eq:secder-Elambda} and pointwise orthogonality we have

\begin{equation}
\label{der-Elambda-rel}
 \widetilde{E}^{\prime}_\lambda(\Phi;u_\lambda)= \int_{\bb D} E'_\lambda[u_\lambda ] \cdot \Phi \, dx  \, , \qquad \widetilde{E}_\lambda^{\prime\prime}(\Phi ; u_\lambda)= \int_{\bb D} E''[u_\lambda] \Phi \cdot \Phi \, dx \, .
\end{equation}
for any $\Phi \in C^2( \overline{\bb D}; \R\oplus\C\oplus\C) \cap \mathcal{H}_\lambda$. Notice that $D\widetilde{W}(u_\lambda)\cdot\Phi = \nabla_{\rm tan}\widetilde{W}(u_\lambda)\cdot\Phi$ and $D^2 \widetilde{W}(u_\lambda)\Phi\cdot\Phi = D^2_{\rm tan} \widetilde{W}(u_\lambda)\Phi \cdot\Phi$ whenever $\Phi$ is tangent to $\bb S^4$ at $u_\lambda$ but, although these terms could be easily computed from \eqref{redpotential} and \eqref{signedbiaxiality}, exact formulas are irrelevant, as for our purposes the corresponding contributions will be negligible as $\lambda \to 0$.

Since both $u_\lambda$ and $v_\lambda$ are solutions, we have $E'_\lambda[v_\lambda]\equiv E'_\lambda[u_\lambda] \equiv 0$, hence for $w_\lambda=w_\lambda^\perp+w_\lambda^T=v_\lambda-u_\lambda$ as above and $\Phi=w_\lambda^T \in C^2( \overline{\bb D}; \R\oplus\C\oplus\C) \cap \mathcal{H}_\lambda $,  from \eqref{der-Elambda-op}-\eqref{secder-Elambda-op} we infer

\[0=\int_{\bbD} \left( E_\lambda'[v_\lambda] -E_\lambda'[u_\lambda]\right)\cdot w_\lambda^T dx=  \int_{\bbD} \left( E_\lambda'[v_\lambda]\mp E_\lambda'[u_\lambda+w_\lambda^T]\mp E_\lambda''[u_\lambda]w^T_\lambda -E_\lambda'[u_\lambda]\right) \cdot w_\lambda^T dx \, , \]
so that
\[\int_{\bb D} E_\lambda''[u_\lambda]w^T_\lambda \cdot w_\lambda^T dx=:I =II+III:= \]
\[   \int_{\bbD} \left( -E_\lambda'[u_\lambda+w_\lambda] + E_\lambda'[u_\lambda+w_\lambda^T] \right)\cdot  w_\lambda^T dx+\int_{\bb D} \left( E_\lambda'[u_\lambda] +    E_\lambda''[u_\lambda]w^T_\lambda -E_\lambda'[u_\lambda+w_\lambda^T] \right)\cdot  w_\lambda^T dx                                                                                                                                                                                         \, .\]

Combining \eqref{der-Elambda-rel}, \eqref{eq:pos-2ndvar-ulambda} and Poincar\'e inequality we obtain
$\| w_\lambda^T\|^2_{W^{1,2}} \lesssim I$ , 
with uniform constant for $\lambda$ small enough. Thus, in order to conclude it is enough to show that $II+III \lesssim o(1) \| w_\lambda^T\|^2_{W^{1,2}}$ as $\lambda \to 0$, to obtain $w_\lambda^T\equiv 0$ and in view of \eqref{pt-wlambda-bd} also $w_\lambda^\perp\equiv 0$, i.e., $v_\lambda=u_\lambda$ for $\lambda$ small enough.

Concerning $III$, an elementary calculation gives
\[ E_\lambda'[u_\lambda] +    E_\lambda''[u_\lambda]w^T_\lambda -E_\lambda'[u_\lambda+w_\lambda^T]= \lambda \Big( D  \widetilde{W}(u_\lambda) + D^2 \widetilde{W} (u_\lambda) w_\lambda^T -  D \widetilde{W}(u_\lambda+w_\lambda^T)  \Big) \]
\[	 - \abs{\nabla u_\lambda}^2 u_\lambda  - |\nabla u_\lambda|^2 w_\lambda^T -2 \Big( \nabla u_\lambda\cdot\nabla w_\lambda^T\Big)u_\lambda + \abs{ \nabla (u_\lambda+w_\lambda^T)}^2 (u_\lambda+w_\lambda^T) =
\]
\[ \lambda \Big( D \widetilde{W}(u_\lambda) + D^2 \widetilde{W} (u_\lambda) w_\lambda^T -  D  \widetilde{W}(u_\lambda+w_\lambda^T)  \Big) + 2 \Big( \nabla u_\lambda\cdot\nabla w_\lambda^T \Big)w_\lambda^T +\abs{\nabla w_\lambda^T}^2 (u_\lambda+w_\lambda^T) \, .\]
Using the uniform $C^1$-bounds for $u_\lambda$, $v_\lambda$ and $w_\lambda$ together with Taylor's theorem on $D  \widetilde{W}$ we easily obtain the pointwise bound
\[ \left| E_\lambda'[u_\lambda] +    E_\lambda''[u_\lambda]w^T_\lambda -E_\lambda'[u_\lambda+w_\lambda^T] \right| \lesssim (1+\lambda)\abs{w_\lambda^T}^2+\abs{\nabla w_\lambda^T}^2 \, ,\] 
so that for $\lambda<1$ small enough we obtain $III \lesssim \| w^T_\lambda\|_{L^\infty} \| w_\lambda^T\|^2_{W^{1,2}}=o(1) \| w_\lambda^T\|^2_{W^{1,2}} $ as $\lambda \to 0$ because of Lemma \ref{ulambda-convergence} and \eqref{pt-wlambda-bd}.

Concerning $II$, another simple calculation leads to
\[ -E_\lambda'[u_\lambda+w_\lambda] + E_\lambda'[u_\lambda+w_\lambda^T]=\Delta w_\lambda^\perp +\lambda \Big( D  W(u_\lambda+w_\lambda)-D  W(u_\lambda+w_\lambda^T) \Big) \]
\[+\Big(2 \nabla (u_\lambda+w_\lambda^T)\cdot\nabla w_\lambda^\perp + \abs{\nabla w_\lambda^\perp}^2 \Big)v_\lambda +
\abs{\nabla (u_\lambda+w_\lambda^T)}^2 w_\lambda^\perp\, . \]
In view of this last identity, the resulting terms in $II$ can be pointwise estimated as follows.  Since $w_\lambda^\perp=u_\lambda \left( -1+ \sqrt{1-|w_\lambda^T|^2} \right)$, by orthogonality we also have
\[ \Delta w_\lambda^\perp \cdot w_\lambda^T= \left( -1+ \sqrt{1-|w_\lambda^T|^2} \right) \Delta u_\lambda \cdot w_\lambda^T+2 \left(\nabla \left( -1+ \sqrt{1-|w_\lambda^T|^2} \right) \cdot \nabla u_\lambda\right) \cdot w_\lambda^T \, , \]
so that the $C^2$-bounds in Lemma \ref{ulambda-convergence} together with \eqref{pt-wlambda-bd} yield 
\begin{equation}
\label{IIbd1}
 \abs{\Delta w_\lambda^\perp\cdot w_\lambda^T} \lesssim |w_\lambda^T|^2 (|w_\lambda^T|+|\nabla w_\lambda^T|) \, ,
\end{equation}
uniformly on $\overline{\bb D}$ for $\lambda$ small enough.

Next, using the Lipschitz property of $D  \widetilde{W}$ on compact sets in combination with \eqref{pt-wlambda-bd} we have the pointwise bounds
\begin{equation}
\label{IIbd2}
 \lambda \Big( D  \widetilde{W}(u_\lambda+w_\lambda)-D  \widetilde{W}(u_\lambda+w_\lambda^T) \Big) \cdot w_\lambda^T \lesssim \lambda |w_\lambda^\perp| |w_\lambda^T| \lesssim |w_\lambda^T|^3 \, , \end{equation} 
uniformly on $\overline{\bb D}$ for $\lambda<1$ small enough. 
Finally, the uniform $C^1$-bound from Lemma \ref{ulambda-convergence} together with \eqref{pt-wlambda-bd} also yield the pointwise bound

\[
 \Big( \Big(2 \nabla (u_\lambda+w_\lambda^T)\cdot\nabla w_\lambda^\perp + \abs{\nabla w_\lambda^\perp}^2 \Big)v_\lambda +
\abs{\nabla (u_\lambda+w_\lambda^T)}^2 w_\lambda^\perp \Big) \cdot w_\lambda^T \]

\begin{equation}
\label{IIbd3}
\lesssim |w_\lambda^T| \left(|\nabla w_\lambda^\perp| +|w_\lambda^\perp|\right) \lesssim |w_\lambda^T|^2 (|w_\lambda^T|+|\nabla w_\lambda^T|) \, , 
\end{equation} 
uniformly on $\overline{\bb D}$ for $\lambda$ small enough.

Collecting together \eqref{IIbd1}-\eqref{IIbd3} we finally obtain the pointwise estimate
\[ \abs{\left( -E_\lambda'[u_\lambda+w_\lambda] + E_\lambda'[u_\lambda+w_\lambda^T] \right) \cdot w_\lambda^T} \lesssim |w_\lambda^T|^2 \left( |w_\lambda^T|+|\nabla w_\lambda^T|\right) \lesssim  |w_\lambda^T| \left( |w^T_\lambda|^2+|\nabla w^T_\lambda|^2\right) \, ,\]
so that integrating and arguing as above we easily obtain $II \lesssim \| w^T_\lambda\|_{L^\infty} \| w_\lambda^T\|^2_{W^{1,2}} =  o(1)  \|  w_\lambda^T\|^2_{W^{1,2}} $ as $\lambda \to 0$, which completes the proof. 
\end{proof}




\end{document}